\documentclass[10pt]{amsart}
\usepackage{amsxtra, amsfonts, amsmath, amsthm, amstext, amssymb, amscd, mathrsfs, verbatim, color}
\usepackage{threeparttable}
\usepackage{mathtools}
\usepackage[ansinew]{inputenc}\usepackage[T1]{fontenc}
\usepackage[all,cmtip]{xy}
\usepackage{tikz} % the tikz package  
\usetikzlibrary{shapes.geometric, arrows}  

\addtolength{\topmargin}{-0.8cm}
\addtolength{\textheight}{0.4cm}
\addtolength{\evensidemargin}{-0.6cm}
\addtolength{\oddsidemargin}{-0.6cm}
\addtolength{\textwidth}{1.2cm}
\theoremstyle{plain}
%%%%%%%%%%%%%%%%%%%%%%%%%%% Equation counting %%%%%%%%%%%%%%%%%%%%%%%%%%%%%

%%%%%%%%%%%%%%%%%%%%%%%%%%%%%%%%%%%%%%%%%%%%%%%%%%%%%%%%%%%%%%%%%%%%%%%%%%%

%%%%%%%%%%%%%%%%%%%%%%%%%%%%%%%%%%%%%%%%%%%%%%%%%%%%%%%%%%%%%%%%%%%%%%%%%%%
\newtheorem{theorem}{Theorem}[section]
\newtheorem{lemma}[theorem]{Lemma}
\newtheorem{definition-theorem}[theorem]{Definition-Theorem}
\newtheorem{proposition}[theorem]{Proposition}
\newtheorem{corollary}[theorem]{Corollary}

\theoremstyle{definition}
\newtheorem{definition}[theorem]{Definition}
\newtheorem{example}[theorem]{Example}
\newtheorem{remark}[theorem]{Remark}
\newtheorem{notation}[theorem]{Notation}
%%%%%%%%%%
\newcommand \bth[1] { \begin{theorem}\label{t#1} }
\newcommand \ble[1] { \begin{lemma}\label{l#1} }

\newcommand \bpr[1] { \begin{proposition}\label{p#1} }
\newcommand \bco[1] { \begin{corollary}\label{c#1} }
\newcommand \bde[1] { \begin{definition}\label{d#1}\rm }
\newcommand \bex[1] { \begin{example}\label{e#1}\rm }
\newcommand \bre[1] { \begin{remark}\label{r#1}\rm }

\newcommand \bnota[1] {\begin{notation}\label{n#1}\rm }
%%%%%%%%%%
%\newcommand {\eth} { \end{theorem} }
\newcommand {\ele} { \end{lemma} }

\newcommand {\epr} { \end{proposition} }
\newcommand {\eco} { \end{corollary} }
\newcommand {\ede} { \end{definition} }
\newcommand {\eex} { \end{example} }
\newcommand {\ere} { \end{remark} }
\newcommand {\enota} { \end{notation} }

%%%%%%%%%%
%\nc \eqref[1] {{\rm{(\ref{#1})}}}

%%%%%%%%%%%%%%%%%%%%%%%%%%%%%%%%%%%%%%%%%%%%%%%%%%%%%%%%%%%%%%%%%%%%%%%%%%%

%%%%%%%%%%%%%%%%%%%%%%%%%%%%%%%%%%%%%%%%%%%%%%%%%%%%%%%%%%%%%%%%%%%%%%%%%%%

%%%%%%%%%%%%%%%%%%%%%%%%%%%%%%%%%%%%%%%%%%%%%%%%%%%%%%%%%%%%%%%%%%%%%%%%%%%
%%%%%%%%%%%%%%%%%%%%%%%%%%%%%%%%%%%%%%%%%%%%%%%%%%%%%%%%%%%%%%%%%%%%%%%
%\input xy
%\xyoption{all}

\begin{document}
\title[Quotient branching laws]{Quotient branching law for $p$-adic $(\mathrm{GL}_{n+1}, \mathrm{GL}_n)$ I: generalized Gan-Gross-Prasad relevant pairs } 

\author[Kei Yuen Chan]{Kei Yuen Chan}
\address{Department of Mathematics, The University of Hong Kong}
%\address{
%Shanghai Center for Mathematical Sciences, Fudan University \\
%}
\email{kychan1@hku.hk}
\maketitle
\setcounter{tocdepth}{1}

\begin{abstract}
Let $G_n=\mathrm{GL}_n(F)$ be the general linear group over a non-Archimedean local field $F$. We formulate and prove a necessary and sufficient condition on determining when 
\[  \mathrm{Hom}_{G_n}(\pi, \pi') \neq 0 
\]
for irreducible smooth representations $\pi$ and $\pi'$ of $G_{n+1}$ and $G_n$,  respectively. This resolves the problem of the quotient branching law. 

We also prove that any simple quotient of a Bernstein-Zelevinsky derivative of an irreducible representation can be constructed by a sequence of derivatives of essentially square-integrable representations. This result transferred to affine Hecke algebras of type A gives a generalization of the classical Pieri's rule of symmetric groups. 

One key new ingredient is a characterization of the layer in the Bernstein-Zelevinsky filtration that contributes to the branching law, obtained by the multiplicity one theorem for standard representations, which also gives a refinement of the branching law. Another key new ingredient is constructions of some branching laws and simple quotients of Bernstein-Zelevinsky derivatives by taking certain highest derivatives.
\end{abstract}

\tableofcontents
\part{Introduction}

\section{Background}

Let $G_n=\mathrm{GL}_n(F)$, the general linear group over a local non-Archimedean field $F$. All representations of $G_n$ are smooth and over $\mathbb C$. We regard $G_n$ as a subgroup of $G_{n+1}$ via the embedding $g \mapsto \begin{pmatrix} g & \\ & 1 \end{pmatrix}$. The quotient branching law asks: for an irreducible representation $\pi$ of $G_{n+1}$ and an irreducible representation $\pi'$ of $G_n$, determine the space $\mathrm{Hom}_{G_n}(\pi, \pi')$, where $\pi$ is viewed as $G_n$-representation via restriction to the subgroup. The multiplicity-at-most-one theorem, first fully established by Aizenbud-Gourevitch-Rallis-Schiffmann \cite{AGRS10} using distributions, asserts that:
\[  \mathrm{dim}~\mathrm{Hom}_{G_n}(\pi, \pi') \leq 1 .
\]
An alternative proof heavily using Bernstein-Zelevinsky (BZ) theory and the multiplicity-one theorem \cite{GK71, Sh74} of Whittaker models is given in \cite{Ch21+}. As mentioned in \cite[Page 1411]{AGRS10}, a more difficult question is to determine when the dimension of the Hom space is one. We give a practical solution to this question in terms of the Langlands correspondence and in full generality (for $p$-adic general linear groups). Along the way, we also solve the problem of an affine Pieri rule. Further applications of our results on other branching problems will also be considered elsewhere, see e.g. \cite{CW25}.

We review some previously known cases of the quotient branching law in the literature:
\begin{enumerate}
\item When $\pi$ is cuspidal and $\pi'$ is arbitrary, it is known by the work of Bernstein-Zelevinsky \cite{BZ77} by restricting to a mirabolic subgroup.
\item When both $\pi$ and $\pi'$ are generic, the Hom is always non-zero by the work of Jacquet--Piateski-Shapiro--Shalika \cite{JPSS83} using Rankin-Selberg integrals. We also point out a related study on the construction of Shintani functions for spherical representations by Murase-Sugano \cite{MS96}. 
\item When $\pi'$ is the trivial representation and $\pi$ is arbitrary, it is established for $n=2$ by D. Prasad \cite{Pr93} and completed for general $n$ by his student Venketasubramanian \cite{Ve13}.
\item When $\pi$ is a generalized Steinberg representation and $\pi'$ is arbitrary, it is known in the work of Chan-Savin \cite{CS21}, in which we use both left and right BZ theory.
\item When both $\pi$ and $\pi'$ are Arthur-type representations, a conjecture determining their quotient branching law is formulated by Gan-Gross-Prasad \cite{GGP20}. Earlier evidence dealing with one direction is established by M. Gurevich \cite{Gu22} using results in quantum groups. The conjecture is now settled by Chan \cite{Ch22}, in which we use some functorial properties of parabolic inductions \cite{Ch22+b}.
\end{enumerate}

We shall give a condition unifying all above cases. The condition for Arthur type representations in \cite{GGP20} is called {\it relevant}. Following their terminology, we shall call our condition (Definition \ref{def g relevant pair}) to be {\it generalized GGP relevant} (or simply relevant).

In a more general framework of the relative Langlands program developed by Sakellaridis-Venkatesh \cite{SV18}, our main result also solves the smooth distinction problem for the homogeneous spherical variety $\Delta G_n\setminus (G_{n+1}\times G_n)$. 

We remark that the unitary branching law from $G_{n+1}$ to $G_n$ could follow from the Kirillov conjecture \cite{Ki62} for restricting to the mirabolic subgroup, proved by Bernstein \cite{Be84}. On the other hand, the constituents in the restriction are also described as a special case of Clozel's conjecture \cite{Cl04}, proved by Venkatesh \cite{Ve05}, which is also a motivation for the non-tempered conjecture in \cite{GGP20}.

We now turn to discussions on BZ derivatives. For more discussions, see the introduction of \cite{Ch22+}. Here we only mention some classical important results: the highest BZ derivatives of irreducible representations by Zelevinsky \cite{Ze80}, partial BZ derivatives of ladder representations by Tadi\'c \cite{Ta87} and Lapid-M\'inguez \cite{LM14}, and an Hecke algebra approach of BZ derivatives for Speh representations in \cite{CS19}. As seen in \cite{Ch21} and \cite{Ch22+} as well as in this article, there is a large amount of interplay between BZ derivatives and branching laws, as suggested by their theory \cite{BZ77}. 

Our another main result asserts that any simple quotient of a BZ derivative of an irreducible representation can be obtained by a sequence of derivatives of essentially square-integrable representations. Translating this to the setup of affine Hecke algebras, this result essentially generalizes Pieri's rule \cite{Pi93} from symmetric groups to affine Hecke algebras of type A with generic parameters. We refer the reader to \cite{CS19, Ch22+} for the details of transferring those results, including the multiplicity-freeness for socles and cosocles in \cite{Ch22+}. We plan to discuss the combinatorics aspect of such generalized rule in the sequel.

A special case in this theme is obtained by Grojnowski-Vazirani \cite{GV01} and Vazirani \cite{Va02}. Indeed, our terminologies in defining the generalized relevance combinatorially (more precisely, Definition \ref{def comb commute}) and the use of derivatives also reflect our original viewpoint from using Hecke algebras, started from the work \cite{CS19, CS21, Ch21} (c.f. symplectic root number and $L$-function perspective in \cite{GGP12, GGP22+}). It seems that the proof of \cite{GV01} cannot be directly adapted to proving our generalized Pieri's rule (but see some related ideas for $\eta$-invariants), and our proof essentially uses some nature of branching laws specific to $p$-adic groups. 

As we are working representations over $\mathbb C$, the modular quotient branching laws for symmetric groups and finite Hecke algebras of type A by Kleshchev \cite{Kl95} and Brundan \cite{Br98} respectively are not in our scope at this point while one may hope that developments from Vigneras' school on mod $\ell$ representations \cite{Vi96} shed light on this aspect, see e.g. the work of S\'echerre-Venketasubramanian \cite{SV17} on the distinguished case.

%We remark that for generic finite Hecke algebras of type A, the generalized Pieri's law follows from the explicit construction of Specht modules, see e.g. Mathas \cite{Ma99} while the algebra is semisimple in such case. 

Indeed, it is known in \cite{GGP20} that the original GGP relevance is not a necessary condition for the quotient branching law for other classical groups, and so we hope this work also sheds light on the quotient branching law outside GL. The work on generic case by M\oe glin-Waldspurger \cite{MW12} for orthogonal groups, adapted to unitary groups by Beuzart-Plessis \cite{BP16}, has similar spirit to the use of BZ filtrations. For more background and results on the GGP problems, see a survey \cite{Gr21+}. On the other hand, the works of Chan-Savin \cite{CS20} and Baki\'c-Savin \cite{BS22} have started the line of research for orthogonal groups along the GL ones from Hecke algebra viewpoint \cite{CS19, Ch21}.

An irreducible representation of $G_{n+1}$ restricting to a $G_n$-representation is far from being semisimple in general. In \cite{Pr18}, D. Prasad initiated to study some higher structure in the branching law. In particular, he conjectured the higher Ext groups for generic representations, which is now proved in \cite{CS21} and extended to standard representations in \cite{Ch21+}. In \cite{Ch21}, we determine all the indecomposable components of an irreducible representation restricted from $G_{n+1}$ to $G_n$, classify irreducible representations which are projective under restriction and determine the Ext-group at the cohomological degree in quotient branching law, marking some substantial progress in homological branching laws. However, a more difficult question, in determining when $\mathrm{Ext}^i_{G_n}(\pi, \pi')$ is non-zero or even determining the precise dimensions, remains largely open.

\section{Generalized GGP relevance}

\subsection{Basic notations} \label{ss basic notations}
All representations we consider are smooth and over $\mathbb C$, and we shall usually omit those descriptions. We sometimes do not distinguish representations in the same isomorphism class. Let $\mathrm{Alg}(G)$ be the category of smooth representations of a reductive group $G$ over $\mathbb C$. For $\pi \in \mathrm{Alg}(G)$, let $\pi^{\vee}$ be its smooth dual. Let $\mathrm{Irr}(G)$ be the set of irreducible representations of $G$.

Let $\mathrm{Irr}=\sqcup_n \mathrm{Irr}(G_n)$. Let $\mathrm{Irr}^c(G_n)$ be the set of irreducible cuspidal representations of $G_n$ and let $\mathrm{Irr}^c=\sqcup_n \mathrm{Irr}^c(G_n)$. Let $S_n$ be the permutation group on $n$ elements.

For $n_1+\ldots+n_r=n$, let $P_{n_1, \ldots, n_r}$ be the associated standard parabolic subgroup in $G_n$ i.e. the parabolic subgroup containing upper triangular matrices and matrices $\mathrm{diag}(g_1, \ldots, g_r)$ for $g_i \in G_{n_i}$. Denote by $\mathrm{Ind}_P^{G_n}$ the normalized parabolic induction from a parabolic subgroup $P$ of $G_n$. As usual, for $\pi_1 \in \mathrm{Alg}(G_{n_1})$ and $\pi_2 \in \mathrm{Alg}(G_{n_2})$, let $\pi_1 \times \pi_2 =\mathrm{Ind}_{P_{n_1,n_2}}^{G_n}\pi_1\boxtimes \pi_2$. 

For $n_1+\ldots+n_r=n$, let $N_{n_1, \ldots, n_r}$ be the unipotent radical in $P_{n_1, \ldots, n_r}$. For $\pi \in \mathrm{Alg}(G_n)$, denote by $\pi_{N_{n_1, \ldots, n_r}}$ its associated Jacquet module, viewed as a $G_{n_1}\times \ldots \times G_{n_r}$-representation. We shall sometimes abbreviate $N_{n_1, n_2}$ by $N_{n_2}$ to lighten notations. (The choice for $n_2$ is compatible with our convention to usually consider derivatives taken on the "right", see Section \ref{ss notion simplify} below.) Let $U_n=N_{1,\ldots, 1}$, the subgroup of all upper triangular unipotent matrices in $G_n$.

We now introduce some combinatorial objects following \cite{Ze80}. A {\it segment} $\Delta$ is a datum of the form $[a,b]_{\rho}$ for $a,b \in \mathbb Z$ and $\rho \in \mathrm{Irr}^c$. For $g \in G_n$, let $\nu(g)=|\mathrm{det}(g)|_F$ be the non-Archimedean absolute value of the determinant of $g$. We also write $a(\Delta)=\nu^a\rho$ and $b(\Delta)=\nu^b\rho$. We sometimes write $[a]_{\rho}$ for $[a,a]_{\rho}$, and write $[a,b]$ for $[a,b]_1$ (here $1$ is the trivial representation of $G_1$). As standard, we also consider $[a,a-1]_{\rho}$ as the empty set. We shall also regard a segment $[a,b]_{\rho}$ as a set $\left\{ \nu^a\rho, \ldots, \nu^b\rho \right\}$ so that one can consider the intersection and union of two segments. A {\it multisegment} is a multiset of non-empty segments. (For our convention and convenience, we also consider an empty set to be a segment and a multisegment.) For a segment, let $l_a(\Delta)=(b-a+1)n(\rho)$, called the {\it absolute length} of $\Delta$; and let $l_r(\Delta)=b-a+1$, called the {\it relative length} of $\Delta$. For a multisegment $\mathfrak m=\left\{ \Delta_1, \ldots, \Delta_r \right\}$, let $l_a(\mathfrak m)=\sum_i l_a(\Delta_i)$, and let $\mathrm{csupp}(\mathfrak m)=\cup_{\Delta} \Delta$. Let $\mathrm{Mult}$ be the set of multisegments.

Two segments $\Delta$ and $\Delta'$ are {\it linked} if $\Delta\cup \Delta'$ is still a segment. For two cuspidal representations $\rho_1$ and $\rho_2$, we write $\rho_1 <\rho_2$ if there exists a positive integer $c$ such that $\nu^c \rho_1\cong \rho_2$, and write $\rho_1 \leq \rho_2$ if $\rho_1<\rho_2$ or $\rho_1\cong \rho_2$. We write $\Delta_1<\Delta_2$ if $\Delta_1$ and $\Delta_2$ are linked and $b(\Delta_1) <b(\Delta_2)$, and write $\Delta_1 \leq \Delta_2$ if $\Delta_1<\Delta_2$ or $\Delta_1=\Delta_2$. 

For $\pi \in \mathrm{Irr}$, there exists a unique collection of cuspidal representations $\rho_1, \ldots, \rho_k$ such that $\pi$ is a simple composition factor $\rho_1\times \ldots \times \rho_k$. Let $\mathrm{csupp}(\pi)=\left\{ \rho_1, \ldots, \rho_k \right\}$ (as a multiset), called the {\it cuspidal support} of $\pi$. 

 For each segment $\Delta$, we denote by $\langle \Delta \rangle$ the Zelevinsky segment representation \cite[Section 3.1]{Ze80}. Denote by $\mathrm{St}(\Delta)$  the generalized Steinberg representation \cite[Section 9.1]{Ze80}. In particular, $\mathrm{csupp}(\langle \Delta \rangle)=\mathrm{csupp}(\mathrm{St}(\Delta))=\Delta$. 

For a multisegment $\mathfrak m=\left\{ \Delta_1, \ldots, \Delta_r\right\}$, we label the segments such that $\Delta_1 \not\leq \ldots \not\leq \Delta_r$. Let $\zeta(\mathfrak m)=\langle \Delta_1\rangle \times \ldots \times \langle \Delta_r \rangle$. Let $\lambda(\mathfrak m)=\mathrm{St}(\Delta_1)\times \ldots \times \mathrm{St}(\Delta_r)$, which we shall refer to a {\it standard module} (in the sense of Langlands). Let $\langle \mathfrak m \rangle$ be the unique submodule of $\zeta(\mathfrak m)$ \cite[Section 6.5]{Ze80} and let $\mathrm{St}(\mathfrak m)$ be the unique simple quotient of $\lambda(\mathfrak m)$. 

\subsection{Generalized GGP relevance} \label{ss gGGP relevance}

In this section, we define a notion of generalized GGP relevant pairs from a representation-theoretic viewpoint. For the practical purpose, the reader may first adapt Definition \ref{def integral derivative} and then go to Definition \ref{def combinatorial comm triple} for the notion of strongly commutative triples (also see Theorem \ref{thm combinatorial def}). We first define a notion of derivatives and integrals:

\begin{definition} \label{def integral derivative}
Let $\pi \in \mathrm{Irr}$. 
\begin{itemize}
\item[(1)] Denote by $I_{\Delta}^R(\pi)$ (resp. $I_{\Delta}^L(\pi)$) to be the unique submodule of $\pi \times \mathrm{St}(\Delta)$ (resp. $\mathrm{St}(\Delta)\times \pi$). We shall call $I_{\Delta}^L$ and $I^R_{\Delta}$ to be {\it integrals}.
\item[(2)] Suppose $l_a(\Delta)\leq n$. Let $N$ (resp. $N'$) be the unipotent subgroup of the standard parabolic subgroup corresponding to the partition $(n-l_a(\Delta), l_a(\Delta))$ (resp. ($l_a(\Delta), n-l_a(\Delta))$). Let $D^R_{\Delta}(\pi)$ (resp. $D^L_{\Delta}(\pi)$) be the unique irreducible representation (if exists) such that 
\[  D^R_{\Delta}(\pi) \boxtimes \mathrm{St}(\Delta) \hookrightarrow \pi_N, \quad (\mbox{resp. } \mathrm{St}(\Delta)\boxtimes D^L_{\Delta}(\pi) \hookrightarrow \pi_{N'} ).
\]
If such module does not exist, set $D^R_{\Delta}(\pi)=0$ (resp. $D^L_{\Delta}(\pi)=0$). We shall call those $D^R_{\Delta}$ and $D^L_{\Delta}$ to be {\it derivatives}.
\end{itemize}
\end{definition}

For the above uniqueness, see \cite{KKKO15, LM16} and also see \cite{Ch22+, Ch22+b}.

We define a notion of strongly commutative triples in terms of how a certain structure of Jacquet module is embedded in the layers of the geometric lemma of the induced module.

\begin{definition} \label{def strong commu intro} (c.f. \cite{Ch22+d})
Let $\Delta, \Delta'$ be two segments. Let $\pi \in \mathrm{Irr}$ and let $\tau=I^L_{\Delta'}(\pi)$. We say that $(\Delta, \Delta', \pi)$ is a {\it pre-RdLi-commutative triple} if $D^R_{\Delta}(\pi)\neq 0$ and the following composition 
\begin{align} \label{eqn pre commutative triple}
 D^R_{\Delta}(\tau)\boxtimes \mathrm{St}(\Delta) \hookrightarrow \tau_N \hookrightarrow (\mathrm{St}(\Delta')\times \pi)_N \twoheadrightarrow \mathrm{St}(\Delta') \dot{\times}^1 \pi_{N'} ,
\end{align}
where $N$ and $N'$ are corresponding unipotent radical, and the last term is the top layer in the geometric lemma (see, Section \ref{ss geometric lemma for pre commutativity}, for precise notations). Here the first map comes from the unique map in Definition \ref{def integral derivative}(2) and the second map is induced from the embedding in Definition \ref{def integral derivative}(1). The last map is the natural projection from the geometric lemma.

We say that $(\Delta, \Delta', \pi)$ is a {\it strongly RdLi-commutative triple} if the triple is pre-RdLi-commutative and moreover, the map in (\ref{eqn pre commutative triple}) factors through the embedding:
\[ (\mathrm{St}(\Delta')\times D^R_{\Delta_1}(\pi))\boxtimes \mathrm{St}(\Delta) \hookrightarrow \mathrm{St}(\Delta') \dot{\times}^1 \pi_{N'} ,
\]
which is induced from the unique map $D^R_{\Delta}(\pi)\boxtimes \mathrm{St}(\Delta)\hookrightarrow \pi_{N'}$.
\end{definition}

%Definition \ref{def strong comm} is general for $\square$-irreducible representations (see Definition \ref{def square irr repn}), but it has an extra condition. It turns out that, by Theorem \ref{thm pre imply strong}, Definition \ref{def strong commu intro} coincides with Definition \ref{def strong comm} in the current case. A key to prove Theorem \ref{thm pre imply strong} is the property of Rd-irreducible pair defined in Section \ref{s irr pair der}  and is closely related to $\eta$-invariants in Section \ref{s eta invariants}.

%The advantage of this definition is to make the geometry of the geometric lemma flexible in proving some statements, for example Proposition \ref{prop two strong commute dual 2}. One significance is to prove the following duality result:

%\begin{theorem} (=Corollary \ref{cor dual strong commutative triple})
%Let $\Delta_1, \Delta_2$ be two segments. Then $(\Delta_1, \Delta_2, \pi)$ is a strongly RdLi-commutative triple if and only if $(\Delta_2, \Delta_1, I_{\Delta_2}\circ D_{\Delta_1}(\pi))$ is a strongly LdRi-commutative triple.
%\end{theorem}

In \cite{Ch22+d}, we formulate and show other equivalent definitions for the strong commutation in terms of $\eta$-invariants (See  Definitions \ref{def combinatorial comm triple}, \ref{def dual com commutative} and Theorem \ref{thm combinatorial def}). Such triples can then be checked combinatorially in terms of Langlands parameters (as well as Zelevinsky parameters).

For a multisegment $\mathfrak m$, we label the segments in $\mathfrak m$ as: $\Delta_i\not> \Delta_j$  (resp. $\Delta_i \not<\Delta_j$) for any $i<j$. For such an ordering, we shall call it an {\it ascending order} (resp. {\it descending order}) (c.f. \cite[Theorem 6.1]{Ze80}). Define 
\[  D^R_{\mathfrak m}(\pi)=D^R_{\Delta_r}\circ \ldots \circ D^R_{\Delta_1}(\pi), \quad (\mathrm{resp.}\ D^L_{\mathfrak n}(\pi)=D^L_{\Delta_r}\circ \ldots \circ D^L_{\Delta_1}(\pi)) ,
\]
\[  I^L_{\mathfrak m}(\pi)=I^L_{\Delta_r}\circ \ldots \circ I^L_{\Delta_1}(\pi), \quad (\mathrm{resp. }\ I^R_{\mathfrak n}(\pi)=I^R_{\Delta_r}\circ \ldots \circ I^R_{\Delta_1}(\pi)) .
\]

We now define the notion of strongly RdLi-commutative triples for multisegments:

\begin{definition} \label{def commutative triple}
Let $\mathfrak m, \mathfrak n$ be multisegments. We write $\mathfrak m=\left\{ \Delta_1, \ldots, \Delta_r \right\}$ and $\mathfrak n=\left\{ \Delta_1', \ldots, \Delta_s'\right\}$ in an ascending order. Let $\mathfrak m_i=\left\{ \Delta_1, \ldots, \Delta_i \right\}$ and let $\mathfrak n_j=\left\{ \Delta_1', \ldots, \Delta_s' \right\}$. We say that $(\mathfrak m, \mathfrak n, \pi)$ is a {\it strongly RdLi-commutative triple} if  for any $r \geq i\geq 1$ and $s \geq j \geq 1$, 
\[  (\Delta_{i}, \Delta_{j}', I_{\mathfrak n_{j-1}}\circ D_{\mathfrak m_{i-1}}(\pi))
\]
is a strongly RdLi-commutative triple. (When $\mathfrak m$ or $\mathfrak n$ is empty, the triple $(\mathfrak m, \mathfrak n, \pi)$ is automatically relevant.)
\end{definition}

It follows from Propositions \ref{prop unlinked commute integral} and \ref{prop unlinked commute derivative} that the above definition is independent of a choice of orderings.

\begin{definition}(c.f. Definition \ref{def relevant pair content}) (Generalized GGP relevant pair) \label{def g relevant pair}
Let $\pi \in \mathrm{Irr}(G_n)$ and let $\pi' \in \mathrm{Irr}(G_m)$. We say that $(\pi, \pi')$ is (generalized) {\it relevant} if there exist multisegments $\mathfrak m$ and $\mathfrak n$ such that 
\begin{enumerate}
\item $D^R_{\mathfrak m}(\nu^{1/2}\cdot \pi) \cong D^L_{\mathfrak n}(\pi')$;
\item $(\mathfrak m, \mathfrak n, \nu^{1/2}\cdot\pi)$ is a strongly RdLi-commutative triple.
\end{enumerate}
\end{definition}

The original GGP relevant pair \cite{GGP20} (for Arthur type representations and all classical groups) is defined in terms of Langlands parameters, and reformulated by Zhiwei Yun more geometrically in terms of moment maps \cite[Section 4]{GGP12}. Our main result, Theorem \ref{thm main branching} below, shows in a rather indirect way that the original GGP relevance coincides with Definition  \ref{def g relevant pair} for Arthur type representations. 

The first main property is the symmetry of such notion:

\begin{theorem} \label{thm symmetry of left right branching} (=Theorem \ref{thm symmetric property of relevant})
Let $\pi \in \mathrm{Irr}(G_n)$ and let $\pi' \in \mathrm{Irr}(G_m)$. Then $(\pi, \pi')$ is relevant if and only if $(\pi', \pi)$ is relevant.
\end{theorem}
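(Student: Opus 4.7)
The plan is to pass to a combinatorial reformulation of relevance in which the symmetry becomes manifest, then translate back to Definition \ref{def g relevant pair}. First, I would invoke the equivalent combinatorial characterization of strongly RdLi-commutative triples in terms of $\eta$-invariants (Theorem \ref{thm combinatorial def}, in conjunction with Definitions \ref{def combinatorial comm triple} and \ref{def dual com commutative}), which recasts Definition \ref{def strong commu intro} as a condition purely on the Langlands/Zelevinsky multisegments of $\pi$ and $\pi'$ together with the bridging data $(\mathfrak m, \mathfrak n)$. In this form, the asymmetric $\nu^{1/2}$-shift can be absorbed into a uniform relabelling of segments, exposing a symmetric underlying combinatorial condition.

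Second, given $(\mathfrak m, \mathfrak n)$ witnessing relevance of $(\pi, \pi')$, the strategy is to construct explicit candidate data $(\mathfrak m', \mathfrak n')$ for $(\pi', \pi)$ by ``reversing the bridge''. Writing $\sigma = D^R_{\mathfrak m}(\nu^{1/2}\pi) \cong D^L_{\mathfrak n}(\pi')$, the natural candidates $\mathfrak m'$ and $\mathfrak n'$ are obtained from $\mathfrak n$ and $\mathfrak m$ by applying a uniform $\nu$-shift calibrated to realign with the convention $\nu^{1/2}\pi'$ on the left-hand side of the identity in Definition \ref{def g relevant pair}(1). Using the twist-equivariance of derivatives, namely $\nu^s D^R_\Delta(\pi) \cong D^R_{\nu^s\Delta}(\nu^s\pi)$ (and the analogous statement for $D^L$), the required identity $D^R_{\mathfrak m'}(\nu^{1/2}\pi') \cong D^L_{\mathfrak n'}(\pi)$ would follow from the original identity after tracking the shift.

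Third, verify that $(\mathfrak m', \mathfrak n', \nu^{1/2}\pi')$ is a strongly RdLi-commutative triple. This is where the combinatorial reformulation pays off: the $\eta$-invariant condition is essentially symmetric in $\mathfrak m$ and $\mathfrak n$ after absorbing the twist, so the new triple inherits strong commutativity from the original. The unlinked commutativity results (Propositions \ref{prop unlinked commute integral} and \ref{prop unlinked commute derivative}) would be used to freely reorder the derivative and integral operations as needed, ensuring that the verification is independent of the chosen ascending/descending orderings.

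The main obstacle I anticipate is verifying that strong commutativity is preserved under the proposed swap, since this condition is defined via asymmetric embeddings into specific layers of the geometric lemma (Definition \ref{def strong commu intro}). Direct manipulation at the level of Jacquet module embeddings appears intractable, and the argument seems to genuinely require the combinatorial $\eta$-invariant reformulation as a preliminary input; without it, the symmetry is not apparent from the geometric definition alone.
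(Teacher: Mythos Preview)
Your proposal has a genuine gap at the core of Step 2 and Step 3. The swap $(\mathfrak m,\mathfrak n)\mapsto(\mathfrak n,\mathfrak m)$ with a $\nu$-shift does not convert a strongly RdLi-commutative triple into another strongly RdLi-commutative triple; it converts it into a strongly \emph{LdRi}-commutative triple. This is exactly what Theorem \ref{thm combinatorial def}(1)$\Leftrightarrow$(2) says: $(\Delta_1,\Delta_2,\pi)$ strongly RdLi-commutative is equivalent to $(\Delta_2,\Delta_1,D_{\Delta_1}\circ I_{\Delta_2}(\pi))$ strongly \emph{LdRi}-commutative, and Proposition \ref{prop dual multi strong comm} extends this to multisegments. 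The $\eta$-invariant condition in Definition \ref{def combinatorial comm triple} is not symmetric in $\mathfrak m$ and $\mathfrak n$ in the way you claim: $\eta_\Delta$ and $\eta_\Delta^L$ are genuinely different invariants, and no uniform $\nu$-shift interchanges right derivatives $D^R_{\mathfrak m}$ with left derivatives $D^L_{\mathfrak n}$. Concretely, from $D^L_{\mathfrak n}(\pi')=\sigma$ you cannot recover $D^R_{\mathfrak m'}(\nu^{1/2}\pi')$ for any shift $\mathfrak m'$ of $\mathfrak n$, because the Jacquet functors are taken on opposite sides.

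The paper's proof (Theorem \ref{thm symmetric property of relevant}) does not reverse the bridge at all. Instead it constructs entirely new multisegments $\mathfrak m',\mathfrak n'$ from scratch using the double derivative and double integral theorems (Theorems \ref{thm double derivative} and \ref{thm double integral}): one finds $\mathfrak n'$ so that ${}^-(I_{\mathfrak n'}\circ I_{\mathfrak n}(\nu^{1/2}\pi))\cong\nu^{1/2}\pi$, and $\mathfrak m'$ so that $D_{\mathfrak m'}\circ D_{\mathfrak m}$ realizes the highest right derivative of $I_{\mathfrak n}(\nu^{1/2}\pi)$. The strong commutativity of $(\mathfrak m',\mathfrak n',\pi')$ is then established not by any symmetry of the $\eta$-condition but via level-preserving integrals (Proposition \ref{level preserving strong comm}), and the identification with $\pi$ uses Zelevinsky's relation ${}^-\tau\cong\nu\cdot\tau^-$ between left and right highest derivatives. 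The passage from left to right is thus mediated by highest derivatives, not by a combinatorial swap.
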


Theorem \ref{thm symmetry of left right branching} can be viewed as a compatibility with the left-right BZ filtrations (see discussions in Section \ref{ss technique bz filtrations}). 

To state the second property, we need more notations. For two multisegments $\mathfrak m$ and $\mathfrak m'$, we write $\mathfrak m'\leq_Z \mathfrak m$ if either $\mathfrak m'$ is obtained from $\mathfrak m$ by a sequence of intersection-union operations (see Section \ref{ss intersect union}) or $\mathfrak m'=\mathfrak m$. 

\begin{definition}
Let $\pi \in \mathrm{Irr}$. A multisegment $\mathfrak m$ is said to be {\it Rd-minimal} (resp. {\it Li-minimal}) to $\pi$ if $D^R_{\mathfrak m}(\pi)\neq 0$ and $D^R_{\mathfrak m}(\pi) \not\cong D^R_{\mathfrak n}(\pi)$ (resp. $I^L_{\mathfrak m}(\pi)\not\cong I^L_{\mathfrak n}(\pi)$) for any other multisegment $\mathfrak n$ with $\mathfrak n\lneq_Z \mathfrak m$. We have analogous notions for {\it Ld-minimal} and {\it Ri-minimal}.
\end{definition}

One has uniqueness for such minimal multisegments \cite{Ch22+}, see Theorem \ref{thm minimality of d and i}. We have the following uniqueness statement.

%For an irreducible representation $\pi$ and multisegments $\mathfrak m, \mathfrak m'$, we define the equivalence relation $(\mathfrak m, \pi)\sim^R (\mathfrak m', \pi)$ (resp. $(\mathfrak m, \pi)\sim^L (\mathfrak m', \pi)$) if $D_{\mathfrak m}^R(\pi)\cong D_{\mathfrak m'}^R(\pi)\neq 0$ (resp. $D_{\mathfrak m}^L(\pi)\cong D_{\mathfrak m'}^L(\pi)\neq 0$). One similarly defines $(\mathfrak m, \pi)\leq_Z (\mathfrak m', \pi)$ if $\mathfrak m'\leq_Z \mathfrak m$. It is shown in \cite{Ch22+} that there exists a unique $\leq_Z$-minimal element in each equivalence class. For such minimal element $(\mathfrak m, \pi)$ for $\sim^R$ (resp. $\sim^L$) , we shall say that $\mathfrak m$ is $Rd$-minimal (resp. $Ld$-minimal) to $\pi$. 

\begin{theorem} (=Theorem \ref{thm unique of relevant pairs}) \label{thm uniquess relevant intro}
Let $(\pi, \pi')$ be a relevant pair in Definition \ref{def g relevant pair}. There exist unique multisegments $\mathfrak m$ and $\mathfrak n$ such that $\mathfrak m$ is $Rd$-minimal to $\nu^{1/2}\cdot \pi$ and $\mathfrak n$ is $Ld$-minimal to $\pi'$, and the conditions in Definition \ref{def g relevant pair} are satisfied.
\end{theorem}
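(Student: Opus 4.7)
My plan for proving Theorem \ref{thm uniquess relevant intro} is to combine a descent argument in the Zelevinsky order $\leq_Z$ for existence with the uniqueness of minimal multisegments recorded in Theorem \ref{thm minimality of d and i} for the uniqueness side.

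For existence, I would begin with an arbitrary witness $(\mathfrak{m}_0,\mathfrak{n}_0)$ of the relevance of $(\pi,\pi')$. Whenever $\mathfrak{m}_0$ fails to be Rd-minimal to $\nu^{1/2}\cdot\pi$, its definition produces some $\mathfrak{m}'\lneq_Z \mathfrak{m}_0$ with $D^R_{\mathfrak{m}'}(\nu^{1/2}\cdot\pi)\cong D^R_{\mathfrak{m}_0}(\nu^{1/2}\cdot\pi)$, and the crucial step is to check that $(\mathfrak{m}',\mathfrak{n}_0,\nu^{1/2}\cdot\pi)$ remains a strongly RdLi-commutative triple. To do so I plan to use the combinatorial $\eta$-invariant reformulation of strong commutativity cited after Definition \ref{def strong commu intro}, which translates the factorization in (\ref{eqn pre commutative triple}) into a condition on the Langlands/Zelevinsky parameters that is transparently stable under the intersection-union operations preserving the derivative. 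An analogous Ld-reduction on $\mathfrak{n}_0$ is handled symmetrically, and the order-independence from Propositions \ref{prop unlinked commute integral} and \ref{prop unlinked commute derivative} allows the two reductions to be carried out without interference. Iterating, and terminating by descent on $l_a(\mathfrak{m})+l_a(\mathfrak{n})$, yields a minimal pair.

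For uniqueness, let $(\mathfrak{m}_i,\mathfrak{n}_i)$, $i=1,2$, be two such minimal pairs and set $\sigma_i:=D^R_{\mathfrak{m}_i}(\nu^{1/2}\cdot\pi)\cong D^L_{\mathfrak{n}_i}(\pi')$. By Theorem \ref{thm minimality of d and i}, an Rd-minimal (respectively Ld-minimal) multisegment producing a prescribed derivative on a fixed irreducible is unique, so the problem reduces to establishing $\sigma_1\cong\sigma_2$. I plan to show that the combinatorial criterion for strong commutativity, once coupled with the minimality of both sides, forces the Langlands parameter of any admissible $\sigma$ to be determined by $(\pi,\pi')$ alone: intuitively, each segment in $\mathfrak{m}_i$ is paired with a unique compatible segment in the parameter of $\nu^{1/2}\cdot\pi$, and symmetrically for $\mathfrak{n}_i$ and $\pi'$, with minimality selecting this canonical pairing. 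The symmetry in Theorem \ref{thm symmetry of left right branching} should help confirm that the pairings extracted from the $\mathfrak{m}$-side and the $\mathfrak{n}$-side of the condition are consistent.

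The main obstacle I anticipate is the preservation of strong RdLi-commutativity under a single step of the $\leq_Z$ reduction. The geometric-lemma filtration appearing in Definition \ref{def strong commu intro} is delicate, and bridging it with the combinatorics that governs $\leq_Z$ will rely essentially on the $\eta$-invariant description from \cite{Ch22+d}, which is the only framework in which the condition becomes manifestly stable under intersection-union operations. If that translation behaves as expected, the remaining arguments are book-keeping with the minimality and symmetry results already stated.
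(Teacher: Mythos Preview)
Your existence argument is essentially the paper's: start with a witness, repeatedly perform intersection-union reductions, and check that strong RdLi-commutativity survives each step. The paper packages this as Propositions \ref{prop intersect union comm} and \ref{prop two strong commute dual 2} (stability of strong commutation under intersection-union on the integral and derivative sides, respectively), which are precisely the $\eta$-invariant statements you allude to, together with a closedness property from \cite{Ch22+}. So that half is fine.

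The uniqueness half, however, has a real gap. You correctly reduce to showing $\sigma_1\cong\sigma_2$, but the mechanism you propose---a ``canonical pairing'' of segments in $\mathfrak m_i$ with segments in the parameter of $\nu^{1/2}\pi$, selected by minimality---is not made precise and it is unclear how strong commutativity alone would force such a pairing to be unique. Invoking the symmetry Theorem \ref{thm symmetry of left right branching} does not help here: that result relates the relevance of $(\pi,\pi')$ and $(\pi',\pi)$, not two different minimal witnesses for the same pair, and the paper does not use it in the uniqueness proof.

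What the paper actually does is a direct inductive comparison of $\mathfrak m$ and $\mathfrak m'$ using the numerical invariant $|\eta|_{\Delta_*}$. One picks a $\leq$-maximal $\rho$ among the $b(\Delta)$ occurring in $\mathfrak m+\mathfrak m'$, and lets $\Delta_*$, $\Delta_*'$ be the shortest segments in $\mathfrak m_{b=\rho}$, $\mathfrak m'_{b=\rho}$. If $\Delta_*=\Delta_*'$, one peels it off (using commutativity from minimality) and inducts. If $\Delta_*\subsetneq\Delta_*'$, one computes via Proposition \ref{prop eta function derivative} that applying $D_{\mathfrak m}\circ I_{\mathfrak n}$ strictly decreases $|\eta|_{\Delta_*}(\pi)$ (because removing a copy of $\Delta_*$ drops it by one), whereas applying $D_{\mathfrak m'}\circ I_{\mathfrak n'}$ does not decrease it (because all segments in $\mathfrak p'=\mathfrak m'_{b=\rho}$ strictly contain $\Delta_*$, so Proposition \ref{prop eta function derivative}(1) applies). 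Since both give $\pi'$, this is a contradiction. The strong commutation enters only to control $\eta_{\Delta_*}$ through the integral part $I_{\mathfrak n}$, via Lemma \ref{lem strong commutative combin}. Your proposal is missing this invariant and the precise case analysis of Proposition \ref{prop eta function derivative} that drives the contradiction.
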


The notion of a strongly commutative triple has good properties with respect to the intersection-union process, see Propositions \ref{prop intersect union comm} and \ref{prop two strong commute dual 2}. This roughly gives the existence part in Theorem \ref{thm uniquess relevant intro}.

The uniqueness of Theorem \ref{thm uniquess relevant intro} is related to the layers of right BZ filtration that governs the branching law. More precisely, the uniqueness allows one to define a refined terminology: $i^*$-relevant, where $i^*$ is equal to $l_a(\mathfrak m)$.
% The $i^*$-relevance plays role in the refined branching laws in Theorems \ref{thm sufficiency} and \ref{thm refine brancing}.

\section{The Bernstein-Zelevinsky filtration} \label{s bz filtration}

The Bernstein-Zelevinsky filtration (a.k.a. Mackey theory approach) is our primary tool for branching laws. Such approach has recently been found in much success for dealing with branching laws for non-tempered cases \cite{GGP20, CS21, Ch21, Gu22, Ch22, Ch22+, Ch22+d} (also see \cite{Pr93, MW12, Ve13, BP16, SV17, Pr18}). R. Chen and C. Wang \cite{CW} deal with some non-tempered cases for unitary groups along such line. For other approaches such as using theta correspondence and relative trace formulae, one sees \cite{Gr21+}. 
%Indeed, one may hope to relate some of distinction problems of other classical groups to distinction problems of general linear groups, see e.g. Getz \cite{GW14} and Zhang \cite{Zh14}.

\subsection{Bernstein-Zelevinsky functors} \label{ss bz functors}

Let $V_n$ be the unipotent radical of $M_n$. Let $\overline{\psi}$ be a non-degenerate character on $F$. Let $\psi: V_n \rightarrow \mathbb C$ given by $\psi(v)=\overline{\psi}(v_{n-1})$, where $v_{n-1}$ is the $(n-1,n)$-entry in $v$. For $\pi \in \mathrm{Alg}(G_n)$, let
\[  \pi_{V_n, \psi}= \delta^{-1/2} \pi/\langle n.v-\psi(n)v :n \in V_n, v \in \pi \rangle \]
where $\delta$ is a modular character of $M_n$.

Following \cite{BZ76, BZ77},
\[ \Phi^+: \mathrm{Alg}(M_{n-1})\rightarrow \mathrm{Alg}(M_n), \Phi^-: \mathrm{Alg}(M_n)\rightarrow \mathrm{Alg}(M_{n-1}),\] 
\[ \Psi^+: \mathrm{Alg}(G_{n-1})\rightarrow \mathrm{Alg}(M_n), \Psi^-: \mathrm{Alg}(M_n) \rightarrow \mathrm{Alg}(G_{n-1}) \]
 given by:
\[  \Phi^+(\pi)=\mathrm{Ind}_{M_{n-1}V_n}^{M_n} \pi \boxtimes \psi, \quad \Phi^-(\pi)=\pi_{V_n, \psi} ,
\]
and let 
\[  \Psi^+(\pi)=\mathrm{Ind}_{G_{n-1}V_n}^{M_n} \pi \boxtimes 1, \quad \Psi^-(\pi)=\pi_{V_n,1} .
\]
The $i$-th right Bernstein-Zelevinsky derivative of $\pi$ is defined as:
\[  \pi^{(i)}= \Psi^-\circ(\Phi^-)^{i-1}(\pi) .
\]
Let $\theta=\theta_n: G_n\rightarrow G_n$ given by $\theta(g^{-t})$, the inverse transpose (a.k.a Gelfand-Kazhdan involution). It induces a categorical auto-equivalence on $\mathrm{Alg}(G_n)$, still denoted by $\theta$. The left $i$-the BZ derivatives is defined as:
\[ {}^{(i)}\pi =\theta(\theta(\pi)^{(i)}). 
\]

Their {\it shifted derivatives} are defined as:
\[  \pi^{[i]}=\nu^{1/2}\cdot \pi^{(i)}, \quad {}^{[i]}\pi=\nu^{-1/2}\cdot {}^{(i)}\pi .
\]

Let $\pi \in \mathrm{Irr}$. The {\it level} of $\pi$, denoted $\mathrm{lev}(\pi)$, is the largest integer $i^*$ such that $\pi^{(i^*)}\neq 0$. Let $\mathfrak m\in \mathrm{Mult}$ such that $\pi \cong \langle \mathfrak m \rangle$. The following statements are equivalent, due to \cite{Ze80}:
\begin{enumerate}
\item $i^*$ is the level of $\pi$;
\item $i^*$ is the largest integer such that ${}^{(i^*)}\pi\neq 0$;
\item $i^*$ is equal to the number of segments in $\mathfrak m$. 
\end{enumerate}
We shall call $\pi^{(i^*)}$ to be the {\it highest right derivative}, denoted by $\pi^-$, and call ${}^{(i^*)}\pi$ to be the {\it highest left derivative}, denoted by ${}^-\pi$. Similarly, set $\pi^{[-]}=\pi^{[i^*]}$ and ${}^{[-]}\pi={}^{[i^*]}\pi$.

\subsection{Bernstein-Zelevinsky filtrations} \label{ss bz filtration describe}

For $\tau \in \mathrm{Alg}(G_k)$ and $i \geq 1$, define, as $M_{k+i}$-representations,
\[  \Gamma^i(\tau)=(\Phi^+)^{i-1}\circ \Psi^+(\tau) .
\]

For $\pi \in \mathrm{Alg}(G_{n+1})$, define
\[  \Lambda_i(\pi)= (\Phi^+)^i\circ (\Phi^-)^i(\pi) , \quad \Sigma_i(\pi)=\Gamma^i(\pi^{(i)}) .
\]
Bernstein-Zelevinsky \cite{BZ76, BZ77} shows that there are natural inclusions in $\mathrm{Alg}(M_{n+1})$:
\[  \Lambda_{n+1}(\pi):=0\hookrightarrow \Lambda_n(\pi)\hookrightarrow \ldots \Lambda_1(\pi) \hookrightarrow \Lambda_0(\pi)=\pi .
\]
Furthermore,
\[   \Lambda_{i-1}(\pi)/\Lambda_{i}(\pi) \cong \Sigma_i(\pi)  .
\]
We shall refer it to be the right BZ filtration of $\pi$. We shall frequently use the following for computations: for $\pi$ and $\pi'$ to be admissible, and for any $j$,
\[  \mathrm{Ext}^j_{G_n}(\Sigma_i(\pi), \pi')  \cong \mathrm{Ext}^j_{G_{n+i-i}}(\pi^{[i]}, {}^{(i-1)}\pi') ,
\]
see e.g. \cite[Lemma 2.4]{CS21}.

Note that $\theta$ also induces a categorical equivalence from $\mathrm{Alg}(M_{n+1})$ to $\mathrm{Alg}(M_{n+1}^{t})$, again denoted by $\theta$. For $\tau \in \mathrm{Alg}(G_k)$,
\[  {}^i\Gamma(\tau):=  \theta(\Gamma^i(\theta(\tau))) .
\]
For $\pi \in \mathrm{Alg}(G_{n+1})$, define
\[  {}_i\Lambda(\pi):=\theta(\Lambda_i(\theta(\pi))), \quad {}_i\Sigma(\pi):=\theta(\Sigma_i(\theta(\pi))) .
\]
We shall refer 
\[  {}_{n+1}\Lambda(\pi)\hookrightarrow {}_n\Lambda(\pi)=0\hookrightarrow \ldots \hookrightarrow {}_0\Lambda(\pi)=\pi 
\]
to be the left BZ filtration. For more discussions on left-right BZ filtrations, see \cite{CS21}.

When restricted to $G_n$ (via the embedding $g \mapsto \mathrm{diag}(g,1)$), the left and right BZ filtrations give two filtrations, as $G_n$-representations, on $\pi$.

%\begin{definition} \label{def bz filtration} 
%Let $\Pi_i=\mathrm{ind}_{U_i}^{G_i}\bar{\psi}_i$ be the Gelfand-Graev representation. Let $\pi \in \mathrm{Alg}(G_{n+1})$. Define $\Sigma_i(\pi)=\pi^{[i]}\times \Pi_i$. The theory of Bernstein-Zelevinsky gives a filtration on $\pi$, as a $G_n$-representation,
%\[   \pi_{n+1} \subset \pi_n \subset \ldots \subset \pi_{0} =\pi|_{G_n}\]
%such that 
%\[   \pi_{i-1}/\pi_i \cong \Sigma_i(\pi) . \]
%\end{definition}

\subsection{Variants of Bernstein-Zelevinsky filtrations} \label{ss bz filtrations variant}

For $\pi \in \mathrm{Alg}(G_n)$, we denote by $\pi \otimes \zeta_F \in \mathrm{Alg}(G_n)$ its Fourier-Jacobi model of $\pi$, and $\mathrm{RS}_k(\pi) \in \mathrm{Alg}(G_{n+k+1})$ its Rankin-Selberg model. We shall not recall (and need) its explicit definition (see \cite{Ch22, Ch21+}), and the main property we need is the following:

\begin{lemma} \label{lem multiplicity one of standard module}
Let $\lambda$ be a quotient of a standard module of $G_n$. Then,
\begin{enumerate}
\item for any quotient $\lambda'$ of a standard module of $G_n$,
\[ \mathrm{dim}~ \mathrm{Hom}_{G_n}(\lambda \otimes \zeta_F, \lambda'{}^{\vee}) \leq 1   ,
\]
\item for any quotient $\lambda'$ of a standard module of $G_{n+k}$,
\[  \mathrm{dim}~\mathrm{Hom}_{G_{n+k}}(\mathrm{RS}_{k-1}(\lambda), \lambda'{}^{\vee}) \leq 1  .
\]
\end{enumerate}
\end{lemma}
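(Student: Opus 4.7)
The plan is to reduce both inequalities to the corresponding statements when both arguments are standard modules, and then invoke the multiplicity one theorem for standard representations of \cite{Ch21+} (cited in the introduction as the extension of \cite{CS21} to standards).

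First, I would exploit exactness of the model functors. Write surjections $\Lambda \twoheadrightarrow \lambda$ and $\Lambda' \twoheadrightarrow \lambda'$ from standard modules. The Fourier--Jacobi twist $\pi \mapsto \pi \otimes \zeta_F$ is built from a character twist together with a $\psi$-coinvariant, and $\mathrm{RS}_{k-1}$ is assembled from parabolic induction and analogous Jacquet-type ingredients; both are exact. Hence one obtains surjections $\Lambda \otimes \zeta_F \twoheadrightarrow \lambda \otimes \zeta_F$ and $\mathrm{RS}_{k-1}(\Lambda) \twoheadrightarrow \mathrm{RS}_{k-1}(\lambda)$. On the other side, dualizing the surjection $\Lambda' \twoheadrightarrow \lambda'$ gives an inclusion $\lambda'{}^\vee \hookrightarrow \Lambda'{}^\vee$; and since the smooth dual of an essentially square-integrable representation of $\mathrm{GL}$ is again essentially square-integrable, $\Lambda'{}^\vee$ is itself a standard module (after reordering its Langlands data). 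Composing,
\[
\mathrm{Hom}_{G_n}(\lambda \otimes \zeta_F, \lambda'{}^\vee) \hookrightarrow \mathrm{Hom}_{G_n}(\Lambda \otimes \zeta_F, \Lambda'{}^\vee),
\]
and analogously for the Rankin--Selberg Hom. Thus it suffices to prove both bounds when $\lambda = \Lambda$ and $\lambda' = \Lambda'$ are standard modules.

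For standard modules, I would then quote directly the multiplicity one statements proved in \cite{Ch21+}, which extend the classical Jacquet--Piateski-Shapiro--Shalika result from the generic case to standards; combined with the reduction above, this yields $\mathrm{dim} \leq 1$ in both items. The main obstacle I anticipate is exactly this last input. If \cite{Ch21+} gives the bound in precisely the form required, the lemma is an immediate repackaging. If only the generic or irreducible case is stated there, one has to do the generic-to-standard extension by hand: filter $\Lambda$ along its sub-standard modules, apply exactness of $\cdot \otimes \zeta_F$ and $\mathrm{RS}_{k-1}$ on one side and the submodule structure of $\Lambda'{}^\vee$ on the other, apply the irreducible multiplicity one of \cite{AGRS10, JPSS83} layer by layer, and rule out reassembly contributions via the vanishing of the relevant $\mathrm{Ext}^1$ groups between standard modules. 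Verifying this Ext-vanishing is the only delicate point; every other step is a formal consequence of the functorial properties already recorded.
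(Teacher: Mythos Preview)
Your approach is correct and matches the paper's: the paper simply cites the multiplicity one theorem for standard modules together with \cite[Section 5]{Ch22} (where the Fourier--Jacobi and Rankin--Selberg models are tied back to the basic restriction problem), and your reduction from quotients to standards via exactness of the model functors and the inclusion $\lambda'^\vee \hookrightarrow \Lambda'^\vee$ is precisely the implicit step behind that citation. Your backup plan involving $\mathrm{Ext}^1$-vanishing is not needed, since the cited results already give the bound for the FJ and RS models of standard modules directly.
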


\begin{proof}
This follows from the multiplicity one theorem for standard modules in \cite{Ch21} and \cite[Section 5]{Ch22} (also see \cite{GGP12}).
\end{proof}

We recall that we have the following filtration:

\begin{proposition} \label{prop filtration on parabolic} \cite[Proposition 5.13]{Ch22}
Let $\pi_1 \in \mathrm{Alg}(G_{n_1})$ and let $\pi_2 \in \mathrm{Alg}(G_{n_2})$. Then $(\pi_1 \times \pi_2)|_{G_{n_1+n_2-1}}$ admits a filtration with successive subquotients as follows:
\[  \pi_1^{[0]} \times (\pi_2|_{G_{n_2-1}}) , \quad  \pi_1^{[1]} \times (\pi_2 \otimes \zeta^F) \]
and, for $k \geq 2$,
\[   \pi_1^{[k]} \times \mathrm{RS}_{k-2}(\pi_2) .\]
\end{proposition}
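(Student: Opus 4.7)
The plan is to restrict the parabolic induction $\pi_1 \times \pi_2$ from $G_{n_1+n_2}$ down to $G_{n_1+n_2-1}$ via Mackey theory, organized through the right Bernstein-Zelevinsky filtration of Section \ref{ss bz filtration describe}. I would first view $\pi_1 \times \pi_2$ as an $M_{n_1+n_2}$-representation and invoke the right BZ filtration $\Lambda_{n+1}(\pi_1 \times \pi_2) \subset \ldots \subset \Lambda_0(\pi_1 \times \pi_2)$ whose $k$-th subquotient is $\Sigma_k(\pi_1 \times \pi_2) = \Gamma^k((\pi_1 \times \pi_2)^{(k)})$. Since the image of $G_{n_1+n_2-1}$ under the embedding $g \mapsto \mathrm{diag}(g,1)$ lies inside $M_{n_1+n_2}$, restricting this filtration gives a filtration on $(\pi_1 \times \pi_2)|_{G_{n_1+n_2-1}}$ indexed by $k$, which I would aim to match layer-by-layer with the statement.

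Next I would compute $(\pi_1 \times \pi_2)^{(k)}$ via the Leibniz rule for BZ derivatives, which yields a filtration whose pieces are of the form $\pi_1^{[a]} \times \pi_2^{(b)}$ for $a+b=k$, up to the appropriate modular shifts. The observation that makes the statement clean is that, after applying the $\Gamma^k$-functor and further restricting from $M_{n_1+n_2}$ to $G_{n_1+n_2-1}$, only the piece with $a=k$, $b=0$ (i.e. all derivatives landing on $\pi_1$) survives non-trivially, because the leftover unipotent integration on the $\pi_2$-factor is exactly what produces the corresponding canonical model: the plain restriction $\pi_2|_{G_{n_2-1}}$ when $k=0$, the Fourier--Jacobi model $\pi_2\otimes \zeta^F$ when $k=1$, and the Rankin--Selberg model $\mathrm{RS}_{k-2}(\pi_2)$ when $k\geq 2$. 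Combining this with $\pi_1^{[k]}$ in the left factor gives the claimed $k$-th subquotient.

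The main obstacle is the final identification: namely, checking that the mirabolic-level subquotient $\Gamma^k(\pi_1^{[k]} \boxtimes \pi_2)$ (schematically), once restricted to $G_{n_1+n_2-1}$, reproduces $\pi_1^{[k]} \times \mathrm{RS}_{k-2}(\pi_2)$ for $k\geq 2$, and the analogous identifications for $k=0,1$. This requires explicitly unwinding the Fourier--Jacobi and Rankin--Selberg constructions of \cite{Ch22, Ch21+} so as to match the non-degenerate characters and unipotent twists that emerge from iterated $\Phi^+$'s and $\Psi^+$'s. The appearance of $\pi_1^{[k]}=\nu^{1/2}\pi_1^{(k)}$ rather than the unshifted derivative is a bookkeeping consequence of the normalization factor used in the parabolic induction and in the BZ functors; ensuring these $\nu^{1/2}$-shifts track correctly through each layer, and in particular that the transitions between the three regimes $k=0$, $k=1$, and $k\geq 2$ are consistent with the Fourier--Jacobi and Rankin--Selberg normalizations, is the most error-prone part of the argument.
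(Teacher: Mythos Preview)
Your approach has a genuine gap. You propose to obtain the filtration from the full BZ filtration of $\pi_1\times\pi_2$, whose $k$-th layer is $\Gamma^k((\pi_1\times\pi_2)^{(k)})$, and then to apply Leibniz to $(\pi_1\times\pi_2)^{(k)}$, getting pieces $\pi_1^{(a)}\times\pi_2^{(b)}$ with $a+b=k$. You then assert that ``only the piece with $a=k$, $b=0$ survives non-trivially.'' This is not correct: there is no vanishing of the pieces with $b>0$. The filtration in the proposition is a \emph{different}, coarser filtration than the full BZ filtration; it is indexed by the derivative degree on $\pi_1$ alone, while the full information about $\pi_2$ (including all of its BZ layers) is packaged wholesale into the models $\pi_2|_{G_{n_2-1}}$, $\pi_2\otimes\zeta^F$, and $\mathrm{RS}_{k-2}(\pi_2)$.

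The paper itself does not prove the proposition here (it is cited from \cite{Ch22}), but the remark immediately following it explains the correct mechanism: one works at the $M_{n_1+n_2}$-level and uses the mirabolic product of \cite[Section~4.12]{BZ77} and \cite[Section~3.1]{Ch22} (a Mackey/geometric-lemma decomposition for the parabolic induction restricted to the mirabolic), giving layers
\[
\pi_1\times(\pi_2|_{M_{n_2}}),\quad \pi_1^{(1)}\times(\pi_2\otimes\zeta^F),\quad \pi_1^{(k)}\times\bigl((\Phi^+)^{k-1}(\nu^{-1/2}\cdot\mathrm{ind}_{G_{n_2}}^{M_{n_2+1}}\pi_2)\bigr).
\]
Restricting to $G_{n_1+n_2-1}$ then yields the stated form. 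In particular the Rankin--Selberg model $\mathrm{RS}_{k-2}(\pi_2)$ still contains all of $\pi_2$ undifferentiated; it is only upon applying a \emph{further} BZ filtration to $\nu^{-1/2}\cdot\mathrm{ind}_{G_{n_2}}^{M_{n_2+1}}\pi_2$ that one sees the pieces $\pi_2^{(r)}$, as the remark also notes. So the route is Mackey theory on the induced module producing a filtration by $\pi_1$-derivatives, not the global BZ filtration followed by a Leibniz collapse.
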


We remark that one can also consider the filtration as a $M_{n_1+n_2}$-representation as in \cite{Ch22}, and the filtration in Proposition \ref{prop filtration on parabolic} is then obtained under restriction to $G_{n_1+n_2-1}$. More precisely, the corresponding layers, as a $M_{n_1+n_2}$-filtration, takes the form:
\[  \pi_1 \times (\pi_2|_{M_{n_2}}) , \quad  \pi_1^{(1)} \times ( \pi_2\otimes \zeta^F),\quad  \pi_1^{(k)} \times  ((\Phi^+)^{k-1} (\nu^{-1/2}\cdot \mathrm{ind}_{G_{n_2}}^{M_{n_2+1}}\pi_2)).
\]
Here the product is the one for mirabolic subgroups, see \cite[Section 4.12]{BZ77} and \cite[Section 3.1]{Ch22}. Viewing as $M_{n_1+n_2}$-representations, one can then talk about the layer supporting properties in Definition \ref{def integer determining branching}. One can further take a $M_{n_2+1}$-filtration on $\nu^{-1/2}\cdot \mathrm{ind}_{G_{n_2}}^{M_{n_2+1}}\pi_2$ to give layers of the form $(\Phi^+)^{r-1}\circ \Psi^+(\pi_2^{(r)})$.

\section{Main results and techniques}
\subsection{Main results}
We now state two main results. The first one shows that the generalized GGP relevant pair governs the branching law.

\begin{theorem} \label{thm main branching} (= Part of Theorem \ref{thm sufficiency}+Theorem \ref{thm refine brancing})
Let $\pi \in \mathrm{Irr}(G_{n+1})$ and let $\pi' \in \mathrm{Irr}(G_n)$. Then $(\pi, \pi')$ is relevant if and only if $\mathrm{Hom}_{G_n}(\pi, \pi') \neq 0$. 
\end{theorem}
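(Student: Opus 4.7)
The plan is to handle both directions by combining the right and left Bernstein-Zelevinsky filtrations on $\pi|_{G_n}$ with the multiplicity-one theorem for standard modules (Lemma \ref{lem multiplicity one of standard module}), which pins down the unique layer that can support a non-zero Hom. The basic bridge between the analytic Hom and the combinatorial multisegment data is the shift
\[  \mathrm{Hom}_{G_n}(\Sigma_i(\pi),\pi') \cong \mathrm{Hom}_{G_{n+1-i}}(\pi^{[i]}, {}^{(i-1)}\pi') , \]
together with its left counterpart coming from ${}_i\Sigma(\pi)$. A non-zero global Hom will concentrate in a single layer indexed by an integer $i^{*}$ which will coincide with $l_a(\mathfrak m)$ for the $Rd$-minimal $\mathfrak m$ provided by Theorem \ref{thm uniquess relevant intro}.

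For the sufficiency direction I would induct on $l_a(\mathfrak m) + l_a(\mathfrak n)$, the total absolute length of the data witnessing relevance. Setting $\sigma := D^R_{\mathfrak m}(\nu^{1/2}\cdot\pi) \cong D^L_{\mathfrak n}(\pi')$, one starts from the tautological non-zero Hom $\sigma \to \sigma$ and propagates it back up along the chains of derivatives to $\pi$ and to $\pi'$. A single inductive step peels off a segment from either $\mathfrak m$ or $\mathfrak n$; the strongly RdLi-commutative property on multisegments guarantees that the resulting shorter triple is still strongly commutative, so the induction hypothesis supplies a non-zero Hom at the shorter-data stage. The lift back to the full $(\pi,\pi')$ branching uses Proposition \ref{prop filtration on parabolic} applied to an embedding of $\pi$ (respectively $\pi'$) into a product of a generalized Steinberg with the peeled derivative, with the strong commutation assumption ensuring that the induced map lives precisely in the right BZ layer $\pi_1^{[l_a(\Delta)]}\times\mathrm{RS}_{\bullet}(\pi_2)$ and is not destroyed by the quotient to higher layers.

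For the necessity direction, assume $\mathrm{Hom}_{G_n}(\pi,\pi')\neq 0$. Via the right BZ filtration, some layer $\Sigma_{i^*}(\pi)$ must contribute, and Lemma \ref{lem multiplicity one of standard module} forces $i^*$ to be unique. The shift isomorphism above converts this into a non-zero Hom $\pi^{[i^*]} \to {}^{(i^*-1)}\pi'$, which factors through a common irreducible $\sigma$. Reading off a sequence of derivatives realizing $\sigma$ as $D^R_{\mathfrak m}(\nu^{1/2}\cdot\pi)$ and, symmetrically via the left BZ filtration, as $D^L_{\mathfrak n}(\pi')$, and then selecting $\mathfrak m$ and $\mathfrak n$ to be $Rd$-minimal and $Ld$-minimal as in Theorem \ref{thm uniquess relevant intro}, produces candidate data. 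The strong commutation of the resulting triple $(\mathfrak m,\mathfrak n,\nu^{1/2}\cdot\pi)$ is then forced: any failure of strong commutation at an intermediate stage would produce a second non-zero Hom through a different BZ layer, contradicting the uniqueness supplied by Lemma \ref{lem multiplicity one of standard module}.

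The main obstacle I foresee is the sufficiency lift: arranging that the inductively produced non-zero Hom at the shorter-data stage both lands in the distinguished right BZ layer and is not cancelled by higher layers of the filtration on $\pi|_{G_n}$. The strongly RdLi-commutative condition in Definition \ref{def strong commu intro} is tailored precisely to provide this compatibility at the top layer of the geometric lemma, but keeping track of the $\nu^{1/2}$-shifts, the uniqueness of the intertwining maps in Definition \ref{def integral derivative}, and the interaction with the Fourier-Jacobi and Rankin-Selberg pieces appearing in Lemma \ref{lem multiplicity one of standard module} and Proposition \ref{prop filtration on parabolic} will require careful book-keeping. Theorem \ref{thm symmetry of left right branching} can be applied throughout to interchange the roles of $\mathfrak m$ and $\mathfrak n$, shortening the induction whenever one side has smaller total length than the other.
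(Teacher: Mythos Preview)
Your proposal has the right large-scale architecture (right and left BZ filtrations, multiplicity one to isolate a layer, induction), but there are two genuine gaps.

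\textbf{Necessity.} The step ``reading off a sequence of derivatives realizing $\sigma$ as $D^R_{\mathfrak m}(\nu^{1/2}\cdot\pi)$'' is precisely the Exhaustion Theorem (Theorem \ref{thm exhaustion intro}), and it is \emph{not} a matter of reading off. A simple quotient of a BZ derivative $\pi^{(i)}$ need not a priori be of the form $D^R_{\mathfrak m}(\pi)$; establishing this is one of the two main results of the paper and requires deforming to thickened representations, the double-integral construction of Section~\ref{s double der int}, and a simultaneous induction with the branching theorem itself (see Sections~\ref{s taking highest derivatives in BL}--\ref{s necessarity and exhaustion}). Your second necessity step is also unsupported: the claim that a failure of strong commutation would create a second Hom through a different BZ layer does not follow from multiplicity one alone. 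In the paper, strong commutation of $(\mathfrak m'+\nu^{1/2}\mathfrak p,\mathfrak n,\nu^{1/2}\pi)$ is obtained by first observing it holds automatically for the $\mathfrak p$-part via Example~\ref{example pre commutative}, then invoking the commutativity of minimal sequences (Corollary~\ref{cor commute minimal strong rdli comm}); multiplicity one is used instead to force \emph{admissibility} of $\mathfrak m'+\nu^{1/2}\mathfrak p$ (the Claim in Step~3 of Proposition~\ref{prop refinement conjecture}).

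\textbf{Sufficiency.} Your inductive lift ``via Proposition~\ref{prop filtration on parabolic}'' is the correct starting point, but you have not said how to prevent cancellation by higher layers, and the obstacle you flag is exactly the heart of the paper's argument. The paper does \emph{not} induct on $l_a(\mathfrak m)+l_a(\mathfrak n)$; it inducts on the relative rank $\mathcal{RR}(\pi,\pi')$, and each step replaces a $\Delta$-saturated block by a good cuspidal $\sigma$ (the standard trick, Lemma~\ref{lem construct one more non-zero element}). The actual lift needs three further ingredients you do not mention: (i) the smallest-derivative computation (Proposition~\ref{prop smallest integer in strong triple} and Corollary~\ref{cor strict inequality for integer db}) to know exactly which layer of $\mathrm{St}(\mathfrak p)\times D_{\mathfrak p}(\pi)$ can contribute; (ii) a Rankin--Selberg deformation (Lemma~\ref{lem construct branching from deformation}) to produce a non-zero Hom with controlled $\mathcal L_{rBL}$; and (iii) the diagram-chase of Lemma~\ref{lem general strategy} to extract $\mathrm{Hom}(\pi,\pi')\neq 0$ from the three auxiliary Hom spaces $\mathrm{Hom}(\widetilde{\tau},\pi')$, $\mathrm{Hom}(\widetilde{\pi},\tau')$, $\mathrm{Hom}(\widetilde{\tau},\tau')$ all being one-dimensional. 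Without these, ``the induced map lives precisely in the right BZ layer'' remains an assertion.
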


\begin{theorem} (=Theorem \ref{thm exhaustion thm}) \label{thm exhaustion intro}
Let $\pi \in \mathrm{Irr}(G_n)$. Let $\tau$ be a simple quotient of $\pi^{(i)}$ for some $i$. Then there exists a multisegment $\mathfrak m$ such that 
\[   D^R_{\mathfrak m}(\pi) \cong \tau .
\]
\end{theorem}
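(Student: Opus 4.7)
The plan is to proceed by induction on $\mathrm{lev}(\pi)$, the number of segments in the multisegment $\mathfrak m_\pi$ with $\pi = \langle \mathfrak m_\pi \rangle$. The base case $\mathrm{lev}(\pi) \leq 1$ (cuspidal or segment representations) follows directly from Zelevinsky's explicit computation of derivatives: every nonzero $\pi^{(i)}$ is itself a segment representation and can be realized as $D^R_{\Delta'}(\pi)$ for an appropriate sub-segment $\Delta'$. The case $i = \mathrm{lev}(\pi)$ for arbitrary $\pi$ is also immediate from Theorem \ref{thm minimality of d and i}, which asserts $\pi^{-} \cong D^R_{\mathfrak m_{\min}}(\pi)$ for the $Rd$-minimal multisegment $\mathfrak m_{\min}$.

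For the inductive step with $0 < i < \mathrm{lev}(\pi)$, the strategy is to peel off one segment at a time. Given $\tau$ a simple quotient of $\pi^{(i)}$, I aim to construct a segment $\Delta_1$ such that $D^R_{\Delta_1}(\pi) \neq 0$ and $\tau$ remains a simple quotient of $(D^R_{\Delta_1}(\pi))^{(i - l_a(\Delta_1))}$. Then the inductive hypothesis applied to $D^R_{\Delta_1}(\pi)$, whose cuspidal support is strictly smaller than that of $\pi$, yields a multisegment $\mathfrak m'$ with $D^R_{\mathfrak m'}(D^R_{\Delta_1}(\pi)) \cong \tau$, and the desired multisegment is $\mathfrak m = \{\Delta_1\} \cup \mathfrak m'$, rearranged into an ascending order using Propositions \ref{prop unlinked commute integral} and \ref{prop unlinked commute derivative} to match the composition convention for $D^R_{\mathfrak m}$.

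To construct $\Delta_1$, I would work at the level of Jacquet modules using the characterization of $D^R_\Delta$ in Definition \ref{def integral derivative}(2). The surjection $\pi^{(i)} \twoheadrightarrow \tau$ gives, via the twisted-Jacquet-module realization of the BZ derivative, a distinguished quotient of a Jacquet module of $\pi$ in which $\tau$ appears as a tensor factor. I would then search among the unique embeddings $D^R_\Delta(\pi) \boxtimes \mathrm{St}(\Delta) \hookrightarrow \pi_{N_{n-l_a(\Delta),l_a(\Delta)}}$ for a segment $\Delta_1$ whose associated generalized Steinberg $\mathrm{St}(\Delta_1)$ matches, in a compatible way, the piece extracted by the quotient onto $\tau$. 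Such a $\Delta_1$ is guided by the cuspidal support of $\tau$ together with the linkage combinatorics that ensures the non-vanishing of $D^R_{\Delta_1}(\pi)$; a natural candidate is a minimal segment ending at one of the cuspidals removed from $\mathrm{csupp}(\pi)$ in passing to $\tau$.

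The main obstacle is verifying that $\tau$ persists as a genuine \emph{simple quotient} — not just a subquotient — of $(D^R_{\Delta_1}(\pi))^{(i - l_a(\Delta_1))}$ after the first peel-off. This compatibility is essentially the content of the strongly RdLi-commutative triple notion (Definition \ref{def strong commu intro}) specialized to the present setting: it ensures that extracting $D^R_{\Delta_1}$ and subsequently taking a BZ derivative can be interchanged up to controlled Jacquet-module manipulations. The multiplicity one statement of Lemma \ref{lem multiplicity one of standard module}, combined with the layer-support properties of Proposition \ref{prop filtration on parabolic}, would be invoked to pin down the relevant morphism uniquely and to upgrade subquotient relations into honest quotient maps, thereby closing the induction.
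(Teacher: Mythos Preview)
Your proposal has a genuine gap at the crucial step, and the overall strategy differs substantially from the paper's.

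First, the induction parameter is unstable. You begin by inducting on $\mathrm{lev}(\pi)$, but $\mathrm{lev}(D^R_{\Delta_1}(\pi))$ is typically \emph{equal} to $\mathrm{lev}(\pi)$ (the number of segments in the Zelevinsky multisegment does not drop under a single $D^R_{\Delta_1}$ in general). You then slide to ``cuspidal support is strictly smaller,'' which is a different parameter; either way, the induction still needs the persistence claim below, which is where the real difficulty lies.

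Second, the heart of your argument---finding $\Delta_1$ so that $\tau$ remains a \emph{simple quotient} of $(D^R_{\Delta_1}(\pi))^{(i-l_a(\Delta_1))}$---is precisely the content of the theorem, and you have not supplied a mechanism. Your appeal to strongly RdLi-commutative triples is a misapplication: Definition~\ref{def strong commu intro} governs the interaction of a right \emph{derivative} with a left \emph{integral}, not of two derivatives or of $D^R_{\Delta}$ with a BZ derivative. Likewise, Lemma~\ref{lem multiplicity one of standard module} and Proposition~\ref{prop filtration on parabolic} concern Hom spaces for branching models, not the persistence of a simple quotient under Jacquet-module truncation; they do not upgrade subquotient to quotient in the situation you describe. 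The paper in fact remarks (Section~1) that the segment-peeling proof of Grojnowski--Vazirani ``cannot be directly adapted'' to this generality.

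The paper's route is entirely different and is intertwined with the branching law. One embeds the problem into a branching statement: for $\sigma$ a good cuspidal, $\mathrm{Hom}_{G_n}(\pi,\tau\times\sigma)\neq 0$, and Theorem~\ref{thm simple quotient branching law} pins down $\mathcal L_{lBL}(\pi,\tau\times\sigma)=\mathrm{lev}(\pi)$ (this requires the deformation/thickening arguments of Sections~\ref{s taking highest derivatives in BL}--\ref{s deform and thicken simple quotients}). One then applies the standard trick to reduce to a smaller group, invokes the \emph{relevance condition} (i.e.\ the necessity direction of Theorem~\ref{thm main branching}, established simultaneously by an interlocking induction on $n$), and uses level-preserving commutations together with double integrals (Theorem~\ref{thm double integral}) to produce the multisegment $\mathfrak m'$ with $D^R_{\mathfrak m'}(\nu^{1/2}\pi)\cong\tau$; see Proposition~\ref{prop exhaustion thm from branching} and Section~\ref{ss proof of sufficiency exh}. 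None of these ingredients appears in your outline.
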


 We give some remarks.
\begin{itemize}
\item Indeed, we prove a refinement for Theorem \ref{thm main branching}, which also determines the BZ layer contributing the branching law from $i$-relevance.
\item We shall refer Theorem \ref{thm exhaustion intro} to the exhaustion theorem. The construction part is largely studied in \cite{Ch22+}.
\item The $n$ and $m$ are arbitrary in Definition \ref{def g relevant pair}. We shall explain briefly the corresponding problems without introducing new terminologies. When $n-m$ is positive and odd (resp. non-negative and even), the related restriction problem arises from so-called Bessel models (resp. Fourier-Jacobi models) \cite{GGP12}. When $m-n$ is negative, the related problem can be phrased as an induction (by taking the smooth duals and Frobenius reciprocity in the restriction problem of the corresponding positive case).

In \cite{GGP12}, it is shown that the quotient branching law for Bessel models and Fourier-Jacobi models, follows from the corank one case. Some variants of those models, see Chen-Sun \cite{CSu15} and Y. Liu \cite[Section 1.4]{Li14}, are also now established in \cite[Section 5]{Ch22} by relating various models via BZ theory.
\item The Langlands correspondence for GL has now been established by Laumon-Rapoport-Stuhler \cite{LRS93}, Harris-Taylor \cite{HT01},  Henniart \cite{He00} and Scholze \cite{Sc13} (also a recent geometric construction of Fargues-Scholze \cite{FS21}). The quotient branching law can be checked using the combinatorial formulation for strongly RdLi-commutative triple (Theorem \ref{thm combinatorial def}).

More precisely, the cuspidal support between $\pi \in \mathrm{Irr}(G_{n+1})$ and $\pi'\in \mathrm{Irr}(G_n)$ reduces to check finitely many $\mathfrak m$ and $\mathfrak n$ for the relevance. Thus, our Theorem \ref{thm main branching} provides a finite deterministic algorithm to the quotient branching law, and see \cite{CP25+} for some practical algorithms in computing derivatives and integrals. The proofs give hints on how to find a more effective algorithm for checking the relevance, and we plan to deal with that in the sequel.
\end{itemize}

\subsection{Examples}

We compare some known examples with Theorem \ref{thm main branching}. 

\begin{enumerate}
\item (Example from distinguished representations \cite{Pr93,Ve13}) Let $\pi'$ be the trivial representation of $G_n$. Let $\pi=\langle [-(n-2)/2, n/2], [(n+2)/2]\rangle$. Let $\Delta=[(n+1)/2, (n+3)/2]$ and let $\Delta'=[-(n-1)/2]$. Then, it follows from a simple cuspidal condition that $(\Delta_1, \Delta_2, \pi)$ is a strongly RdLi-commutative triple. This gives the relevance for $(\pi, \pi')$. On the other hand, it is shown in \cite[THEOREM 2]{Pr93} and \cite[Corollary 6.15]{Ve13} that $\pi$ is $G_n$-distinguished. 
\item (Example from relatively projective representations \cite{BZ77, CS21, Ch21}) Let $\pi$ be the Steinberg representation of $G_{n+1}$. Assume $n \geq 2$. Let $\pi'=\mathrm{St}([-(n-5)/2,(n+1)/2]+[-(n-3)/2])$. Let 
\[ \Delta_1=[-(n-1)/2,-(n-3)/2], \quad \Delta_2=[-(n-3)/2]. \]
We shall use the notations in Definition \ref{def comb commute}. In such case, we have: 
\[  \eta_{\Delta_1}(\pi)=(1,0), \quad \eta_{\Delta_1}(I_{\Delta_2}(\pi))=(1,1) .
\]
Hence, $\eta_{\Delta_1}(\pi)\neq \eta_{\Delta_1}(I_{\Delta_2}(\pi))$. Using the equivalent condition in Definition \ref{def combinatorial comm triple} and checking other possible strong commutations, one has that $(\pi, \pi')$ is not relevant. On the other hand, it is shown in \cite[Theorem 3.3]{CS21} that $\mathrm{Hom}_{G_n}(\pi, \pi')=0$. 
\item (Example from non-tempered GGP \cite{GGP20, Gu22, Ch22}) Let $\pi =\langle [0] \rangle \times \langle [0] \rangle \times \langle[-1,1] \rangle$ and let $\pi'=\langle [-1/2,1/2] \rangle \times \mathrm{St}([-1/2,1/2])$. Let $\mathfrak m=\left\{[1/2], [1/2],[3/2] \right\}$ and let $\mathfrak n=\left\{ [-1/2,1/2] \right\}$. One checks that $\mathfrak m$ and $\mathfrak n$ define the relevance for $\pi$ and $D^R_{\mathfrak m}(\nu^{1/2}\pi)\cong D^L_{\mathfrak n}(\pi')$. On the other hand, from \cite{GGP20} and \cite{Ch22}, we have $\mathrm{Hom}_{G_n}(\pi, \pi')\neq 0$.
\end{enumerate}

\subsection{Techniques in branching laws and derivatives} \label{ss technique bz filtrations}

We highlight that the fifth one for standard modules below is established in \cite{Ch21+}, and the seventh and last ones below are mainly developed in \cite{Ch22+}, and the sixth, eighth and ninth ones below seem to be new in studying branching law. We consider that the sixth one provides more fundamental structure while in view of Appendix B, the eighth one provides structural information refining the asymmetry property in \cite{Ch21}.

\begin{enumerate}
\item {\bf Bernstein-Zelevinsky theory.} %Let $M_{n+1}$ be the mirabolic subgroup of $G_{n+1}$. The classical theory of Bernstein-Zelevinsky \cite{BZ77} gives a functorial filtration on a $M_{n+1}$-representation $\pi$. Thus, one may first restrict a $G_{n+1}$-representation to $M_{n+1}$-representation, and then obtain a filtration from the BZ theory, and then further restrict to $G_n$-representations. We shall refer such filtration to be a Bernstein-Zelevinsky filtration. 
The use of BZ filtration in branching laws goes back to the earlier work of D. Prasad \cite{Pr93} and Flicker \cite{Fl93} for studying distinguished representations.
\item {\bf Left-Right BZ filtrations.} %Instead of considering restriction to $M_{n+1}$-representations above, one can also first restrict to representations of $M_{n+1}^t$ and then to get another filtration on restricting to $G_n$. 
It turns out that the left and right BZ filtrations in Section \ref{ss bz filtration describe} give complementary information in most of cases. Substantial uses include branching law for the Steinberg representation \cite{CS21} and for indecomposability \cite{Ch21}. 
\item {\bf Rankin-Selberg integrals for constructing branching laws.} One can realize irreducible generic representations $\pi$ and $\pi'$ of $G_{n+1}$ and $G_n$ via their Whittaker models. Then one can roughly define $G_n$-invariant linear functionals on $\pi\otimes \pi'{}^{\vee}$ via the Rankin-Selberg integrals on their Whittaker functions.  A detailed argument appears in \cite{Pr93}. We shall use some variations from Cogdell--Piatetski-Shapiro \cite{CPS17} to construct branching laws in Section \ref{s construct branching via rs}.
\item {\bf Gan-Gross-Prasad type reduction and duality.} 
The GGP type reduction relates restriction problems of different coranks. With similar ideas, one has: for any representation $\pi$ of $G_{n+1}$ and $\pi'$ of $G_n$
\begin{align} \label{eqn duality for restriction}
  \mathrm{Hom}_{G_n}(\pi, \pi'{}^{\vee}) \cong \mathrm{Hom}_{G_{n+1}}(\pi' \times \sigma, \pi^{\vee}) 
\end{align}
for some cuspidal representation $\sigma$ of $G_2$. Switching the dual combined with left-right filtration above is useful in the proofs, see for example the proof of Theorem \ref{thm sufficiency} and Proposition \ref{prop refinement conjecture}.
\item {\bf Multiplicity-one theorems for irreducible and standard representations.} 
The multiplicity-one theorem for irreducible and standard representations \cite{AGRS10, Ch21+} provides a uniqueness result and thus one can combine with constructing branching laws (e.g. Rankin-Selberg integral method above) to identify the required maps. For example, this is used in Lemma \ref{lem construct branching from deformation}. 

We also remark that the multiplicity one for irreducible ones for fields of positive characteristics is also established in the work of Aizenbud-Avni-Gourevitch \cite{AAG12} and Mezer \cite{Me21}.
\item {\bf Characterizing the layer supporting a branching law from BZ derivatives.} Another application of the multiplicity-one theorem for standard representations (combined with some homological properties of standard representations, see Lemma \ref{lem vanishing ext filtration}) is to give a characterization in Corollary \ref{cor integer for branching law equal layer}. Such characterization combined with the computation for smallest derivatives in the relevance pair (Section \ref{s smallest derivative strongly}) automatically gives the refined branching law, once we know the original branching law from relevance.  In the proof of the sufficiency statement, the refined branching law is crucial in singling out a quotient that contributes to the branching law in Lemma \ref{lem a branching factor through quotient}, and such quotient satisfies certain multiplicity one property allowing to use the strategy in Lemma \ref{lem general strategy}. In the proof of the necessity statement, the refined branching law is crucial in showing certain admissibility for some multisegments to get a relevance. Another use is to give a refinement on the standard trick in Lemma \ref{lem construct branching from deformation} while it could possibly be avoided.
%We remark that in most of previous work \cite{CS21, Gu22, Ch21, GGP20}, one use $\mathrm{Ext}^1$-vanishing to identify the layer involving in the branching law. However, such Ext-vanishing for general case is hard to prove or even does not hold. Corollary \ref{cor integer for branching law equal layer} gives a possible replacement to identify those layers (see e.g. Lemma \ref{lem construct one more non-zero element}).
\item {\bf Minimal sequences in constructing simple quotients of BZ derivatives.} As suggested by the definition of generalized relevant pairs and Theorem \ref{thm uniquess relevant intro}, minimal sequences of derivatives are crucial in the study. Remarkable properties of the minimal sequences   include \cite{Ch22+}: Let $\mathfrak m$ be Rd-minimal to $\pi \in \mathrm{Irr}$. Then
\begin{itemize}
  \item (Subsequent property) for any submultisegment $\mathfrak n$ of $\mathfrak m$, $\mathfrak n$ is also Rd-minimal to $\pi$;
	\item (Commutativity) for any  submultisegment $\mathfrak n$ of $\mathfrak m$, $\mathfrak m-\mathfrak n$ is also Rd-minimal to $D^R_{\mathfrak n}(\pi)$.
\end{itemize}
It turns out that both properties behave well in the context of strong commutation, see Corollaries \ref{cor seq strong commut under minimal and sub} and \ref{cor seq strong commut under minimal}.
\item {\bf Deforming branching laws.}  Let $\pi \in \mathrm{Irr}(G_{n+1})$ and let $\tau$ be a simple quotient of $\pi^{(i)}$ for some $i$. Then $\tau \times \sigma$ is a quotient of $\pi$ for some good choice of a cuspidal representation $\sigma \in \mathrm{Irr}(G_{i-1})$. When $\pi$ is thickened (Definition \ref{def thickened multisegments}), one can carry out certain highest derivatives to construct branching laws, see more discussions in the beginning of Section \ref{s taking highest derivatives in BL}.
\item {\bf Deforming simple quotients of BZ derivatives.}  For an irreducible representation $\pi$, the Zelevinsky theory \cite{Ze80} implies that there exists an irreducible thickened representation such that ${}^-\pi' \cong \pi$ and $\mathrm{lev}(\pi')=\mathrm{lev}(\pi)$. It is shown in Theorem \ref{thm deform segments} that there is a natural one-to-one correspondence between the set of simple quotients of $\pi'{}^{(i)}$ and the set of simple quotients of $\pi^{(i)}$ via taking the highest left derivatives. 

The two ideas (8) and (9) of deformations are useful since thickened cases can be reduced to by simple combinatorics. They are also closely related.
\item {\bf Double derivatives and integrals.} Let $\pi \in \mathrm{Irr}$ and let $\mathfrak m \in \mathrm{Mult}$ with $D^R_{\mathfrak m}(\pi)\neq 0$. Then there exists a multisegment $\mathfrak m'$ such that $D^R_{\mathfrak m'}\circ D^R_{\mathfrak m}(\pi) \cong \pi^-$ \cite{Ch22+}. The integral analogue is established in Theorem \ref{thm double integral}. It is an important tool in the proof of Proposition \ref{prop exhaustion thm from branching}.
\end{enumerate}

\subsection{Standard trick} \label{ss standard trick intro}

We are going to outline the proofs which are inductive in nature. We first explain some more notations and a standard trick below in doing induction. We shall call it a standard trick since idea of this sort has been used in e.g. \cite{GGP12, Ch22}. 

Let $\Delta=[a,b]_{\rho}$ be a segment. A segment $\Delta'$ is said to be {\it R-$\Delta$-saturated} if $\Delta'$ takes the form $[a',b]_{\rho}$ for some $a\leq a'\leq b$ and a multisegment $\mathfrak p$ is said to be {\it R-$\Delta$-saturated} if any segment in $\mathfrak p$ is $R$-$\Delta$-saturated. A representation $\pi$ is said to be {\it R-$\Delta$-reduced} if $D^R_{\Delta'}(\pi)=0$ (i.e. $\varepsilon_{\Delta'}(\pi)=0$) for any R-$\Delta$-saturated segment $\Delta'$. These notions indicate a reduction technique. One also has analogous left version of those notions, and we only remark that we use the segments of the form $[a,b']_{\rho}$ (for $a\leq b'\leq b$) in the left version.

For a cuspidal representation $\rho$, we say that $\pi$ is {\it strongly R-$\rho$-reduced} (resp. {\it strongly L-$\rho$-reduced}) if for any $\Delta$ with $b(\Delta)\cong \rho$ (resp. $a(\Delta)\cong \rho$), $D^R_{\Delta}(\pi)=0$ (resp. $D^L_{\Delta}(\pi)=0$). A multisegment $\mathfrak m$ is said to be {\it strongly R-$\rho$-saturated} (resp. {\it strongly L-$\rho$-saturated}) if any segment $\Delta$ in $\mathfrak m$ satisfies $b(\Delta)\cong \rho$ (resp. $a(\Delta)\cong \rho$).

For $\pi \in \mathrm{Irr}$, define 
\[  \mathrm{csupp}_{\mathbb Z}(\pi)=\left\{\nu^c\cdot \rho : \rho \in \mathrm{csupp}(\pi), \quad c \in \mathbb Z \right\}. 
\]
An irreducible cuspidal representation $\sigma$ is said to be {\it good} to an irreducible representation $\pi$ if $\sigma$ is not in $\mathrm{csupp}_{\mathbb Z}(\pi)$. In particular, we have $\sigma \times \pi$ is irreducible for such $\sigma$ \cite{Ze80}.

\begin{lemma} \label{lem standard trick first form}
Let $\Delta=[a,b]_{\rho}$ be a segment. Let $\pi \in \mathrm{Irr}(G_{n+1-k})$ and let $\pi' \in \mathrm{Irr}(G_n)$. Let $\mathfrak p$ be a R-$\Delta$-saturated multisegment. Let $k=l_a(\mathfrak p)$. Suppose $\nu^b\rho$ is good to $\nu^{-1/2}\pi'$. Let $\sigma \in \mathrm{Irr}^c(G_{k})$ good to $\pi$ and $\nu^{-1/2}\pi'$. We also assume that $\nu^b\rho$ is $\leq$-maximal in $\mathrm{csupp}(\mathfrak p)+\mathrm{csupp}(\pi)$. Then 
\[  \mathrm{Hom}_{G_n}(\mathrm{St}(\mathfrak p)\times \pi, \pi') \cong \mathbb C
\] 
if and only if  
\[  \mathrm{Hom}_{G_n}(\sigma \times \pi, \pi') \cong \mathbb C .
\]
\end{lemma}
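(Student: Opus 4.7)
My plan is to compute both $\mathrm{Hom}_{G_n}(\mathrm{St}(\mathfrak{p})\times\pi,\pi')$ and $\mathrm{Hom}_{G_n}(\sigma\times\pi,\pi')$ via the right Bernstein--Zelevinsky filtration together with the Ext-duality
\[
\mathrm{Ext}^*_{G_n}(\Sigma_j(\xi),\pi')\cong \mathrm{Ext}^*_{G_{n+1-j}}(\xi^{[j]},{}^{(j-1)}\pi'),
\]
applied to $\xi=\pi_1\times\pi$ for $\pi_1=\sigma$ and $\pi_1=\mathrm{St}(\mathfrak{p})$, and to expand $(\pi_1\times\pi)^{(j)}$ through the Leibniz rule into subquotients $\pi_1^{(j_1)}\times\pi^{(j-j_1)}$ with $0\leq j_1\leq k$. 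As a preliminary, both $\mathrm{St}(\mathfrak{p})\times\pi$ and $\sigma\times\pi$ are quotients of standard modules of $G_{n+1}$, so Lemma \ref{lem multiplicity one of standard module} forces each Hom space to have dimension at most one; it therefore suffices to prove simultaneous non-vanishing.

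For $\pi_1=\sigma$ cuspidal of degree $k$, the derivatives $\sigma^{(j_1)}$ vanish for $0<j_1<k$ while $\sigma^{(0)}=\sigma$ and $\sigma^{(k)}=\mathbb{C}$; hence the Leibniz decomposition produces at most two subquotients, $\sigma\times\pi^{(j)}$ (at $j_1=0$) and $\pi^{(j-k)}$ (at $j_1=k$, when $j\geq k$). The $j_1=0$ subquotient carries $\sigma$ in its cuspidal support and therefore pairs trivially with ${}^{(j-1)}\pi'$ by the goodness of $\sigma$ to $\pi'$, so only the $j_1=k$ pieces contribute. For $\pi_1=\mathrm{St}(\mathfrak{p})$, the corresponding $j_1=k$ piece is $\mathrm{St}(\mathfrak{p})^{(k)}\times\pi^{(j-k)}\cong \pi^{(j-k)}$ (using that $\mathrm{St}(\mathfrak{p})$ is generic of degree $k$), giving an identical contribution. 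The remaining task is to show that every $j_1<k$ piece $\mathrm{St}(\mathfrak{p})^{(j_1)}\times\pi^{(j-j_1)}$ has vanishing Hom into ${}^{(j-1)}\pi'$: by the $R$-$\Delta$-saturation of $\mathfrak{p}$, any such $\mathrm{St}(\mathfrak{p})^{(j_1)}$ still retains $\nu^b\rho$ (or some $\leq$-larger cuspidal) in its cuspidal support; the $\leq$-maximality of $\nu^b\rho$ in $\mathrm{csupp}(\mathfrak{p})+\mathrm{csupp}(\pi)$ prevents $\pi^{(j-j_1)}$ from absorbing that obstruction, and the goodness of $\nu^b\rho$ to $\nu^{-1/2}\pi'$ rules out a match on the ${}^{(j-1)}\pi'$-side (after the $\nu^{1/2}$-shifts built into the BZ derivatives).

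Applying Lemma \ref{lem vanishing ext filtration} to kill the relevant Ext-groups between adjacent filtration layers then allows the spectral sequence to collapse, producing the same effective identification for both sides and yielding the claimed equivalence. The principal obstacle I foresee is the cuspidal-support vanishing for intermediate $j_1$ on the $\mathrm{St}(\mathfrak{p})$-side, which I expect to reduce to a combinatorial check using the explicit formulas for BZ derivatives of products of generalized Steinbergs (Lapid--M\'inguez, Tadi\'c), with the $R$-$\Delta$-saturation and the $\leq$-maximality of $\nu^b\rho$ together forcing $\nu^b\rho$ to remain a cuspidal obstruction that cannot be cancelled across the Leibniz decomposition.
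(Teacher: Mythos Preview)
Your cuspidal--support obstruction analysis is correct: for every $j$ and every $j_1<k$, the Leibniz piece $\mathrm{St}(\mathfrak p)^{(j_1)}\times\pi^{(j-j_1)}$ (resp.\ the $j_1=0$ piece $\sigma\times\pi^{(j)}$) carries $\nu^b\rho$ (resp.\ $\sigma$) in its cuspidal support, and the goodness hypotheses force all $\mathrm{Ext}^*$ into ${}^{(j-1)}\pi'$ to vanish by the Bernstein decomposition (not by Lemma~\ref{lem vanishing ext filtration}, which concerns \emph{standard} modules and is not the right tool here). So for each $j$ you do get $\mathrm{Ext}^*(\Sigma_j(\sigma\times\pi),\pi')\cong\mathrm{Ext}^*(\Gamma^j(\pi^{(j-k)}),\pi')\cong\mathrm{Ext}^*(\Sigma_j(\mathrm{St}(\mathfrak p)\times\pi),\pi')$.

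The gap is the passage from these layer-wise isomorphisms to an isomorphism of the full Hom spaces. There is no map between $\sigma\times\pi$ and $\mathrm{St}(\mathfrak p)\times\pi$, so the two BZ spectral sequences are not \emph{a priori} related; knowing that their $E_1$-pages are abstractly isomorphic does not force the differentials (equivalently, the boundary maps among $\mathrm{Ext}^*(\Sigma_j,\pi')$) to agree. Concretely, to lift a nonzero class in $\mathrm{Hom}(\Sigma_{j_0},\pi')$ to $\mathrm{Hom}(\Lambda_0,\pi')$ you would need $\mathrm{Ext}^1(\Sigma_j,\pi')=0$ for $k\le j<j_0$, i.e.\ $\mathrm{Ext}^1(\nu^{1/2}\pi^{(j-k)},{}^{(j-1)}\pi')=0$; there is no reason for this to hold, and neither the multiplicity-one bound nor Lemma~\ref{lem vanishing ext filtration} supplies it.

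The paper's argument (sketched for the companion Lemma~\ref{lem construct one more non-zero element} and declared to transfer verbatim) avoids this by using the filtration of Proposition~\ref{prop filtration on parabolic} on $(\pi_1\times\pi)|_{G_n}$, which is indexed by $j_1$ (the amount of derivative taken on the \emph{first} factor) rather than by the total BZ index $j$. In that filtration the layers with $j_1<k$ are exactly the obstructed ones and have all $\mathrm{Ext}^*$ into $\pi'$ vanishing, while the single $j_1=k$ layer is $\pi_1^{[k]}\times\mathrm{RS}_{k-2}(\pi)=\mathrm{RS}_{k-2}(\pi)$, literally the same submodule for $\pi_1=\sigma$ and $\pi_1=\mathrm{St}(\mathfrak p)$. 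Hence $\mathrm{Hom}_{G_n}(\pi_1\times\pi,\pi')\cong\mathrm{Hom}_{G_n}(\mathrm{RS}_{k-2}(\pi),\pi')$ in both cases, and the equivalence follows with no gluing issue. Reorganising your double filtration by fixing $j_1$ first (rather than $j$) is precisely this move.
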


We shall prove an alternative and stronger form in Lemma \ref{lem construct one more non-zero element}. We remark that the assumption $\nu^b\rho$ is $\leq$-maximal in $\mathrm{csupp}(\mathfrak p)+\mathrm{csupp}(\pi)$ is not essential in the above lemma, but it is more convenient to consider the stronger form since we then have a standard module projecting to $\mathrm{St}(\mathfrak p)\times \pi$.

\section{Outline of proofs}
\subsection{Outline of the proof of sufficiency part} \label{ss outline sufficient}

We start with $\pi \in \mathrm{Irr}(G_{n+1})$ and $\pi' \in \mathrm{Irr}(G_n)$ such that $(\pi, \pi')$ is relevant. We mainly explain steps on showing the (non-refined) branching law, and as mentioned before, the refined one follows from Proposition \ref{prop smallest integer in strong triple} and Corollary \ref{cor integer for branching law equal layer}.  

 Let $\rho'$ be a $\leq$-minimal element in $\mathrm{csupp}(\pi')$ (see Section \ref{ss basic notations} for the ordering $\leq$). We shall illustrate the idea by only considering the case that $\rho' \notin \mathrm{cuspp}(\nu^{1/2}\cdot \pi)$. Then we find a multisegment $\mathfrak p'$ such that $D^L_{\mathfrak p'}(\pi')$ is strongly L-$\rho'$-reduced. 

Now the compatibility of commutativity of minimal sequences and strongly commutative sequences gives that $(\pi, D^L_{\mathfrak p'}(\pi'))$ is still relevant (see Corollary \ref{cor seq strong commut under minimal} and Lemma \ref{lem delta reduced subset}), and so for a suitable choice of cuspidal representation $\sigma$ of $G_{l_a(\mathfrak p')}$, one can use induction to obtain
\[  \mathrm{Hom}_{G_n}(\pi, \sigma \times D^L_{\mathfrak p'}(\pi')) \cong \mathbb C .
\]
Now, let $\tau'=\mathrm{St}(\mathfrak p') \times D^L_{\mathfrak p'}(\pi')$ (so that $\pi'$ is a submodule of $\tau'$) and the standard trick gives that:
\begin{align} \label{eqn non-zero construct inductive intro}
  \mathrm{Hom}_{G_n}(\pi, \tau') \cong \mathbb C .
\end{align}
(Note that the strong form of the standard trick in Lemma \ref{lem construct one more non-zero element} also determines the layer in the BZ filtration contributing the branching law, say the one from $i^*$-th one. Thus, this gives a non-zero map in 
\[  \mathrm{Hom}_{G_n}(\pi^{[i^*]}, {}^{(i^*-1)}\tau') .
\]
Then uniqueness of the Hom (see Corollary \ref{cor integer determining branching law and layers}) forces such map factors through the submodule map from ${}^{(i^*-1)}\pi'$ to ${}^{(i^*-1)}\tau'$. This gives a strong evidence for $\mathrm{Hom}_{G_n}(\pi, \pi')\neq 0$. One may also compare with some arguments of this type in the necessity part below.)

To illustrate how to conclude the proof, we first consider a simpler situation. Let $\rho$ be a $\leq$-maximal element in $\mathrm{csupp}(\pi)$. Suppose $\rho$ is good to $\nu^{-1/2}\cdot \pi'$. (This situation may not always happen and we shall explain how to resolve this next.) Under this assumption, there exists a multisegment $\mathfrak p$ such that $D^R_{\mathfrak p}(\pi)$ is strongly R-$\rho$-reduced (for the existence of such multisegment, see \cite[Section 7]{LM22} and \cite[Section 2]{Ch22+b}).

Thus, let $\tau=\mathrm{St}(\mathfrak p)\times D^R_{\mathfrak p}(\pi)$ (so that $\pi$ is a quotient of $\tau$) and similar reasoning as before with the standard trick gives that:
\[  \mathrm{Hom}_{G_n}( \tau, \pi') \cong \mathbb C 
\]
A variation of the standard trick also gives that
\[  \mathrm{Hom}_{G_n}(\tau, \tau') \cong \mathbb C .\]
Now $\mathrm{Hom}_{G_n}(\pi, \pi') \cong \mathbb C$ follows from some constraints in module theory, see Lemma \ref{lem general strategy}.

Now the technical issue is to deal with when such $\rho$ does not exist. To do so, we introduce more notations. Let $\mathfrak m, \mathfrak n \in \mathrm{Mult}$ such that:
\begin{itemize}
\item $D^R_{\mathfrak m}(\nu^{1/2}\cdot \pi) \cong D^L_{\mathfrak n}(\pi')$;
\item $(\mathfrak m, \mathfrak n, \nu^{1/2}\cdot \pi)$ is a minimal strongly RdLi-commutative triple (in the sense of Theorem \ref{thm uniquess relevant intro}).
\end{itemize}
Let $\rho$ be a $\leq$-maximal element in $\mathrm{csupp}(\mathfrak m)$. Then find a longest segment $\Delta$ in $\mathfrak m_{b=\rho}$ (see Section \ref{ss notations a b points} for unexplained notation). Let $\mathfrak p \in \mathrm{Mult}$ such that $D_{\mathfrak p}^R(\pi)$ is R-$\Delta$-reduced. 

A main problem is that it is unclear that
\[  \mathrm{Hom}_{G_n}(\mathrm{St}(\mathfrak p)\times D_{\mathfrak p}^R(\pi), \pi') 
\]
has dimension $1$. We circumvent this issue by considering certain quotients of $\mathrm{St}(\mathfrak p)\times D_{\mathfrak p}^R(\pi)$ and $\pi$. 

In order to do so, let $\omega =D_{\mathfrak p}^R(\pi)$ and we consider a filtration on $\mathrm{St}(\mathfrak p)\times \omega$ via a variant of Bernstein-Zelevinsky filtration in Proposition \ref{prop filtration on parabolic} of the form:
\begin{align} \label{eqn bz layers 1}
   \mathrm{St}(\mathfrak p)^{[0]} \times (\omega|_{G_{n-l_a(\mathfrak p)}}) , \quad    \mathrm{St}(\mathfrak p)^{[1]} \times (\omega \otimes \zeta_F) , \\
 \label{eqn bz layers 3}  \mathrm{St}(\mathfrak p)^{[k]} \times \mathrm{RS}_{k-2}(\omega), \quad \mbox{ for $k \geq 2$ }. 
	\end{align}
 Let $k^*=l_a(\mathfrak p)-l_a(\mathfrak{mx}(\pi', \nu^{1/2} \Delta))$ (see Section \ref{ss multisegment for eta} for unexplained notions). We let $\tau$ be the quotient of $(\mathrm{St}(\mathfrak p)\times \omega)|_{G_n}$ modulo out the submodule containing those factors
\[  \kappa:=\mathrm{St}(\mathfrak p)^{[k]} \times \mathrm{RS}_{k-2}(\omega), \quad k> k^*\geq 1  .
\]
and let
\[  \widetilde{\tau}=\tau/ \kappa .
\]
Let 
\[   \widetilde{\pi}= \mathrm{pr}(\tau)/\mathrm{pr}(\kappa) ,
\]
where $\mathrm{pr}$ is the quotient map from $\tau$ to $\pi$.  Let $\tau'=\mathrm{St}(\mathfrak p) \times \omega$.
It turns out that 
\begin{align} \label{eqn three part to multiplicity one}
   \mathrm{Hom}_{G_n}(\widetilde{\tau}, \pi') \cong \mathbb C ,\quad  \mathrm{Hom}_{G_n}(\widetilde{\pi}, \tau') \cong \mathbb C , \quad \mathrm{Hom}_{G_n}(\widetilde{\tau}, \tau')\cong \mathbb C. 
\end{align}
Then one applies Lemma \ref{lem general strategy} to obtain that $\mathrm{Hom}_{G_n}(\widetilde{\pi}, \pi') \cong \mathbb C$, and so $\mathrm{Hom}_{G_n}(\pi, \pi')\cong \mathbb C$. 

We finally comment on (\ref{eqn three part to multiplicity one}). The uniqueness part for $\mathrm{Hom}_{G_n}(\widetilde{\tau}, \pi')$ and $\mathrm{Hom}_{G_n}(\widetilde{\tau}, \tau')$ follow from analysing the layers in (\ref{eqn bz layers 1}) and (\ref{eqn bz layers 3}) and seeing that the only possible layer contributing the non-zero Hom is 
\[ \mathrm{St}(\mathfrak p)^{[k^*]} \times \mathrm{RS}_{k^*-2}(\omega)
\]
Then one applies Frobenius reciprocity and induction. (The uniqueness part for $\mathrm{Hom}_{G_n}(\widetilde{\pi}, \tau')$ follows from (\ref{eqn non-zero construct inductive intro}).)

For the existence part of $\mathrm{Hom}_{G_n}(\widetilde{\pi}, \tau')$, one first considers the non-zero map $f$ (by (\ref{eqn non-zero construct inductive intro})) in 
\[  \quad \mathrm{Hom}_{G_n}(\pi, \tau')
\]
and then shows that $f|_{\mathrm{pr}(\kappa)}=0$, which one analyses the possible layer contributing the branching law in $\mathrm{pr}(\kappa)$ and $\kappa$ (using the technique of smallest derivatives). The details of computation is done in Section \ref{s smallest derivative strongly} (particularly Corollary \ref{cor strict inequality for integer db}). For the existence part of $\mathrm{Hom}_{G_n}(\widetilde{\tau}, \pi')$, the reasoning is similar, but one uses a Rankin-Selberg construction (with control on the layer supporting the refined branching law, see Lemma \ref{lem construct branching from deformation}) to replace (\ref{eqn non-zero construct inductive intro}). (The existence for $\mathrm{Hom}_{G_n}(\widetilde{\tau}, \tau')$ follows from any of the above two cases.)

\subsection{Outline of the proof of necessity part} \label{ss outline necc}

We start with  $\pi \in \mathrm{Irr}(G_{n+1})$ and $\pi' \in \mathrm{Irr}(G_n)$ with $\mathrm{Hom}_{G_n}(\pi, \pi')\neq 0$. Again, we mainly outline how to show the relevance and the refined relevance will follow from Proposition \ref{prop smallest integer in strong triple} and Corollary \ref{cor integer for branching law equal layer}. 

Let $\rho$ be a $\leq$-maximal element in $\mathrm{csupp}(\pi)$. We shall only deal with the case that $\rho \notin \mathrm{csupp}(\nu^{-1/2}\pi')$. The remaining case can be dealt with using the technique of left-right BZ filtrations and the symmetry of relevant pairs in Theorem \ref{thm symmetry of left right branching}. This technique has been used in \cite{CS21, Ch21, Ch22, Ch21+} before and so we also refer the reader to those articles.

One can again find a strongly R-$\rho$-reduced multisegment $\mathfrak p$ such that 
\[  \mathrm{St}(\mathfrak p)\times D^R_{\mathfrak p}(\pi) \twoheadrightarrow \pi  .
\]
This gives 
\[  \mathrm{Hom}_{G_n}(\mathrm{St}(\mathfrak p)\times D^R_{\mathfrak p}(\pi), \pi') \neq 0 .
\]
Thus, the standard trick in Lemma \ref{lem standard trick first form} gives that 
\[  \mathrm{Hom}_{G_n}(\sigma \times D^R_{\mathfrak p}(\pi), \pi') \neq 0
\]
for some suitable choice of a cuspidal representation $\sigma$. This inductively gives the relevance for $(D_{\mathfrak p}^R(\pi), \pi')$. In other words, there exist multisegments $\mathfrak m$ and $\mathfrak n$ such that $(\mathfrak m, \mathfrak n, \nu^{1/2}\cdot D_{\mathfrak p}^R(\pi))$ is a minimal strongly RdLi-commutative triple and 
\[  D_{\mathfrak m}^R(\nu^{1/2}\cdot D_{\mathfrak p}^R(\pi)) \cong D_{\mathfrak n}^L(\pi') .
\]

Let $\mathfrak p'=\nu^{1/2}\mathfrak p$. Now showing the relevance of $(\pi, \pi')$ can be accomplished by the following steps:
\begin{enumerate}
\item[(1)] First, show that $\mathfrak m+\mathfrak p'$ is admissible to $\nu^{1/2}\cdot\pi$ and
\begin{align} \label{eqn isom strong comm nec part}
  D^R_{\mathfrak m+\mathfrak p'}(\nu^{1/2}\cdot \pi)\cong D^L_{\mathfrak n}(\pi') .
\end{align}
 The main idea is that the multiplicity-one theorem for standard modules (see Theorem \ref{cor quotient of standard mult one} for a precise form) forces that
\[ \mathrm{dim}~\mathrm{Hom}_{G_n}((\mathrm{St}(\mathfrak p)\times D^R_{\mathfrak p}( \pi))^{[i]},  {}^{(i-1)}\pi') \leq 1
\]
and
\[  \mathrm{dim}~ \mathrm{Hom}_{G_n}(\pi^{[i]}, {}^{(i-1)}\pi') \leq 1 ,
\]
where $i=l_a(\mathfrak m+\mathfrak p')$. Indeed, both inequalities are equalities. The first one follows from the induction and a stronger form of the standard trick (see Lemma \ref{lem construct one more non-zero element}). The second one follows from some control in multiplicity one theorems (see Corollary \ref{cor integer determining branching law and layers}). 

The multiplicity-one results above then give the simple quotient $D^R_{\mathfrak m}(\nu^{1/2} \cdot D^R_{\mathfrak p}( \pi))$ in
\[  (\mathrm{St}(\mathfrak p)\times D^R_{\mathfrak p}( \pi))^{[i]}
\] 
must come from $\pi^{[i]}$. (We remark that one has to use $i$-relevance to show the appearance of the simple quotient.) The exhaustion theorem then forces the admissibility (see Lemma \ref{lem admissible exhaust form}). It turns out that $\mathfrak m+\mathfrak p'$ is also Rd-minimal to $\nu^{1/2}\cdot \pi$ from the theory of minimal sequences and hence (\ref{eqn isom strong comm nec part}) follows from the commutativity of a minimal sequence. 
\item[(2)] It remains to show that $(\mathfrak m+\mathfrak p', \mathfrak n, \nu^{1/2}\cdot\pi)$ is a strongly RdLi-commutative triple. This part again relies on the commutativity of minimal sequences of derivatives and the compatibility with strong commutation in Corollary \ref{cor commute minimal strong rdli comm}.
%. It turns out that if we choose $\mathfrak m$ to be Rd-minimal to $\nu^{1/2}\cdot D_{\mathfrak p}(\pi)$, $\mathfrak m+\mathfrak p'$ is Rd-minimal to $\nu^{1/2}\cdot \pi$. Then the strong commutativity follows from the commutativity properties in the minimal sequences of strong commutativity. 
\end{enumerate}

\subsection{Outline of the proof of exhaustion part} \label{ss outline exh} 

We now discuss the exhaustion theorem of simple quotients of BZ derivatives (Theorem \ref{thm exhaustion intro}). Note that our proof of Theorem \ref{thm main branching} also relies on Theorem \ref{thm exhaustion intro}. We shall also use Theorem \ref{thm main branching} in our proof of Theorem \ref{thm exhaustion intro}, but only in a smaller rank case. We shall refer the reader to Section \ref{ss proof of sufficiency exh} for a more precise argument for the interactions of these two theorems.

Let $\tau$ be a simple quotient of $\pi^{[i]}$ for some $i$. Let $\sigma' \in \mathrm{Irr}^c(G_{n-i-1})$ be good to $\nu^{1/2}\cdot \pi$. Then, a standard computation using right BZ filtration gives that 
\[  \mathrm{Hom}_{G_n}(\pi, \sigma' \times \tau )\neq 0 . \]
One can now switch to the left BZ filtration, and an intriguing point here is that the layer in the BZ filtration contributing to the branching law comes from the bottom layer one, that is Theorem \ref{thm simple quotient branching law}. (We remark that in order to prove Theorem \ref{thm simple quotient branching law}, one first has to consider a thickened case in Definition \ref{def thickened multisegments}, which can be done with some combinatorial arguments. Then one deduces the non-thickened case by the technique of deformation above.)

Let $\widetilde{\pi}=\nu^{-1/2}\cdot \pi$. The upshot of using the left BZ filtration is that one is then in the position for applying the standard trick. Now let $\mathfrak h$ be the highest left derivative multisegment of $\pi$, which we mean $D^L_{\mathfrak h}(\widetilde{\pi})={}^-\widetilde{\pi}$ (also see Definition \ref{def highest der multi}). Then Theorem \ref{thm main branching} (which becomes available in the induction process after the standard trick) with Theorem \ref{thm symmetry of left right branching}  and some commutativity for minimal sequences (with using a duality of Proposition \ref{prop dual multi strong comm} switching to a LdRi-version) gives that 
\[    D_{\mathfrak h}^L\circ I^R_{\mathfrak n}(\widetilde{\pi}) \cong I^R_{\mathfrak n}\circ D_{\mathfrak h}^L(\widetilde{\pi}) \cong  \tau .
\]
The remaining step is to apply double integrals (Theorem \ref{thm double integral}), but for that we first have to use deformation of derivatives to show that $\mathrm{lev}(I^R_{\mathfrak n}(\pi))=\mathrm{lev}(\pi)$, that is Corollary \ref{cor unique highest derivative to give submodule}.

Then, using double integrals, one finds a multisegment $\mathfrak n'$  such that 
\[ D_{\mathfrak h}^L\circ I^R_{\mathfrak n'}\circ I^R_{\mathfrak n}(\widetilde{\pi}) \cong {}^-(I^R_{\mathfrak n'}\circ I^R_{\mathfrak n}(\widetilde{\pi}))\cong \nu\cdot \widetilde{\pi}\]
 (the first isomorphism uses the $\mathrm{lev}(I^R_{\mathfrak n'}\circ I^R_{\mathfrak n}(\widetilde{\pi}))=\mathrm{lev}(I^R_{\mathfrak n}(\widetilde{\pi}))=\mathrm{lev}(\widetilde{\pi})$). With the commutativity from level-preserving integrals (Proposition \ref{level preserving strong comm}), we have
\[ I^R_{\mathfrak n'}\circ D^L_{\mathfrak h}\circ I^R_{\mathfrak n}(\widetilde{\pi})\cong D_{\mathfrak h}^L\circ I^R_{\mathfrak n'}\circ I^R_{\mathfrak n}(\widetilde{\pi}) .\] 
The RHS is isomorphic to $ \nu\cdot \widetilde{\pi}$ as discussed above. Applying $D_{\mathfrak n'}^R$ to give
\[  D_{\mathfrak n'}^R(\nu^{1/2}\cdot\pi) \cong  \tau .
\]

\section{Summary of some key results and their relations}
We shall abbreviate BL for branching law and also abbreviate the following results:
\begin{itemize}
\item (StCo): Properties for (sequences of) strongly commutative triples (Sections \ref{s unlinked commute triples}-\ref{s strong commutation for sequences})
\item (BZd): Characterize BZ layers determining branching law in terms of derivatives (Corollary \ref{cor integer for branching law equal layer})
\item (UqR): Uniqueness of relevant pairs (Theorem \ref{thm unique of relevant pairs})
\item (SmD): The smallest derivative for relevant pairs (Proposition \ref{prop smallest integer in strong triple})
\item (RS): Rankin-Selberg construction (Lemma \ref{lem construct branching from deformation})
\item (DdDi): Double derivatives and double integrals (Theorems \ref{thm double derivative} and \ref{thm double integral})
\item (SyR): Symmetry of relevant pairs (Theorem \ref{thm symmetric property of relevant}) + duality (Proposition \ref{prop dual multi strong comm})
\item (Su): Sufficiency of relevant pairs (Theorem \ref{thm sufficiency})
\end{itemize}
\[
\xymatrix{  &   &\mathrm{(BZd)} \ar[ld]_{\mbox{\footnotesize{relation to relevance}}} \ar@{<->}[rd]^{\mbox{\footnotesize{BL-relevance interpretation}}}  &  & \\
 &  \mathrm{(UqR)}  &               &  \mathrm{(SmD)} \ar[ld]_{\mbox{\footnotesize{characterize BL}}} \ar[rd]^{\mbox{\footnotesize{characterize BL}}} & \\
\mathrm{(StCo)} \ar[rrd]_{\mbox{\footnotesize{level-preserving}}} \ar[rr]^{\mbox{\footnotesize{tool}}}	 &               & \mathrm{(Su)}  &     &  \mathrm{(RS)} \ar[ll]^{\mbox{\footnotesize{construct BL}}} \\
&  & \mathrm{(SyR)} \ar[u]^{\mbox{\footnotesize{duality}}} &  & \ar[ll]^{\mbox{\footnotesize{derivative interpretation}}} \mathrm{(DdDi)} 
}
\]

\begin{itemize}
\item (BDth): Left-right branching law from BZ derivatives in thickened cases (Section \ref{ss bz for derivatives thickened})
\item (BD): Left-right branching law from some BZ derivatives (Theorem \ref{thm simple quotient branching law})
\item (DDer): Deform simple quotients of BZ derivatives by taking highest derivatives (Theorem \ref{thm deform segments})
\item (ExT): Exhaustion theorem for BZ derivatives (Theorem \ref{thm exhaustion thm})
\item (Ne): Necessity of relevant pairs (Theorem \ref{thm refine brancing})
\end{itemize}
\[  \xymatrix{ &  & \mathrm{(BDth)} \ar[d]^{\mbox{\footnotesize{deform (Section \ref{s taking highest derivatives in BL})}}} &  & & \\
               &  & \mathrm{(BD)} \ar[drrr]^{\mbox{\footnotesize{BL interpretation}}}  \ar[d]_{\mbox{\footnotesize{construct derivatives from BL}}} & &&   \\
\mathrm{(SyR)} \ar[r]_{\mbox{\footnotesize{duality}}} &	\mathrm{(Ne)}\ar@{<->}[r]	 & \mathrm{(ExT)} & & & \mathrm{(DdDi)} \ar[lll]_{\mbox{\footnotesize{construct derivatives}}} \\
	                           &  (\mathrm{StCo}) \ar[u]^{\mbox{\footnotesize{tool}}} \ar[ur]_{\mbox{\footnotesize{tool}}} &      (\mathrm{DDer}) \ar[u]_{\mbox{\footnotesize{construct/characterize derivatives}}}& & &   }
\]

In Appendix B, we also have $(\mathrm{BD})+(\mathrm{BZd}) \Rightarrow$ the asymmetry property of left-right BZ-derivatives in \cite{Ch21}.

We also summarize key techniques in constructing (sequences of) strongly commutative triples:
\begin{itemize}
\item Unlinked pairs (Section \ref{s unlinked commute triples})
\item Intersection-union process (Sections \ref{s linked strong commutation} and \ref{s comm minimal pairs})
\item Commutativity from minimality (Sections \ref{s linked strong commutation} and \ref{s comm minimal pairs})
\item Level-preserving integrals (Proposition \ref{level preserving strong comm})
\item Completing to $\Delta$-reduced triples (Corollary \ref{cor reduced one strong commutation})
\end{itemize}
For example, level-preserving integrals play an important role in proving the symmetry property while the commutativity from minimality is usually combined with induction from the standard trick.

We finally remark that the above relations mainly follow from our logical development. One can also reverse some arrows once the main results are established.

\section{Acknowledgments} 
This article is benefited from communications with Wee Teck Gan, Erez Lapid, Dipendra Prasad and Gordan Savin. The author would like to thank them all. This project is supported in part by the Research Grants Council of the Hong Kong Special Administrative Region, China (Project No: 17305223, 17308324) and the National Natural Science Foundation of China (Project No. 12322120).

\part{Equivalent definitions of strongly commutative triples} \label{part defintions comm triples}

We review some key results in \cite{Ch22+} and \cite{Ch22+d} in this part.

\section{Notations on $\eta$-invariants}

\subsection{Notations} \label{ss notion simplify}

Most of time, we shall usually give/prove statements involving right derivatives and/or left integrals. The analogous version switching between left and right can be formulated/proved similarly and will be left to the reader.

For lightening notation, set $D_{\Delta}=D^R_{\Delta}$ and $I_{\Delta}=I^L_{\Delta}$ for a segment $\Delta$.

\subsection{$\eta$-invariants}

\begin{definition} \label{def comb commute}
Let $\Delta=[a,b]_{\rho}$ be a segment. Let $\pi \in \mathrm{Irr}$.
\begin{itemize}
\item  Define $\varepsilon_{\Delta}(\pi)=\varepsilon_{\Delta}^R(\pi)$ (resp. $\varepsilon_{\Delta}^L(\pi)$) to be the largest non-negative integer $k$ such that $D^k_{\Delta}(\pi)\neq 0$ (resp. $(D^L_{\Delta})^k(\pi)\neq 0$). (The power $k$ means doing compositions $k$-times.)
\item  Define 
\[ \eta_{\Delta}(\pi)=\eta^R_{\Delta}(\pi)=(\varepsilon_{[a,b]_{\rho}}(\pi), \varepsilon_{[a+1,b]_{\rho}}(\pi), \ldots, \varepsilon_{[b,b]_{\rho}}(\pi));\]
\[  (resp. \quad \eta_{\Delta}^L(\pi)=(\varepsilon^L_{[a,b]_{\rho}}(\pi), \varepsilon^L_{[a,b-1]_{\rho}}(\pi), \ldots, \varepsilon^L_{[a,a]_{\rho}}(\pi)) ).
\] 
\item We may simply write $\eta_{\Delta}(\pi)=0$ if $\varepsilon_{[c,b]_{\rho}}(\pi)=0$ for all $c=a, \ldots, b$. Similarly, we may write $\eta_{\Delta}(\pi)\neq 0$ if $\varepsilon_{[c,b]_{\rho}}(\pi)\neq 0$ for some $c=a, \ldots, b$. 
\item We write $\eta_{\Delta}(\pi) ? \ \eta_{\Delta}(\pi')$ for $?\in \left\{ =,  \leq , <, \geq, > \right\}$ if $\varepsilon_{[c,b]_{\rho}}(\pi) ? \varepsilon_{[c,b]_{\rho}}(\pi')$ for all $c$ satisfying $a\leq c\leq b$. 
\item We similarly define the left version terminologies for $\varepsilon^L_{\Delta}$ and $\eta^L_{\Delta}$ if one uses the left derivatives $D^L_{\Delta}$. 
\end{itemize}
\end{definition}

\subsection{Multisegment counterpart of the $\eta$-invariant} \label{ss multisegment for eta}

\begin{definition} \label{def max multisegment}
For $\pi \in \mathrm{Irr}$ and a segment $\Delta=[a,b]_{\rho}$, define 
\[  \mathfrak{mx}(\pi, \Delta):=\mathfrak{mx}^R(\pi, \Delta):=\sum_{k=0}^{b-a}\varepsilon_{[a+k,b]_{\rho}}(\pi)\cdot [a+k,b]_{\rho} ,
\]
\[  \mathfrak{mx}^L(\pi, \Delta):=\sum_{k=0}^{b-a}\varepsilon^L_{[a,b-k]_{\rho}}(\pi)\cdot [a,b-k]_{\rho} .
\]
Here the numbers $\varepsilon_{[a+k,b]_{\rho}}(\pi)$ and $\varepsilon^L_{[a,b-k]_{\rho}}(\pi)$ are the multiplicities of those segments in the corresponding multisegments. 

For $\pi \in \mathrm{Irr}$ and $\rho \in \mathrm{Irr}^c$, similarly define:
\[   \mathfrak{mxpt}(\pi, \rho) :=\mathfrak{mxpt}^R(\pi, \rho):=\sum_{\Delta: b(\Delta)\cong \rho}\varepsilon_{\Delta}(\pi)\cdot \Delta ,\]
\[  \mathfrak{mxpt}^L(\pi, \rho) =\sum_{\Delta: a(\Delta)\cong \rho} \varepsilon^L_{\Delta}(\pi) \cdot \Delta
\]
\end{definition}

We have that $D_{\mathfrak{mx}(\pi, \Delta)}(\pi)$ is R-$\Delta$-reduced (see Section \ref{ss standard trick intro}), and similarly $D_{\mathfrak{mx}^L(\pi, \Delta)}^L(\pi)$ is L-$\Delta$-reduced (i.e. $\eta_{\Delta}^L(\pi)=0$). For the details, one sees \cite[Corollary 7.23]{Ch22+} (also see \cite{Ch22+b, LM22}). Moreover, if $\mathfrak m$ is a $\Delta$-saturated multisegment such that $D_{\mathfrak m}(\pi)\neq 0$, then $\mathfrak m$ is a submultisegment of $\mathfrak{mx}(\pi, \Delta)$.

%\section{Strong commutation between derivatives and integrals}

\subsection{More notations on multisegments} \label{ss notations a b points}

For $\mathfrak m \in \mathrm{Mult}$ and $\rho \in \mathrm{Irr}^c$, let 
\[  \mathfrak m_{a=\rho}=\left\{ \Delta \in \mathfrak m: a(\Delta)\cong \rho \right\}, \quad  \mathfrak m_{b=\rho}=\left\{ \Delta \in \mathfrak m: b(\Delta)\cong \rho \right\}.
\]

 We remark that $\varepsilon_{\Delta}(\pi)$ is equal to the cardinality of the following set (c.f. \cite[Lemma 5.1]{Ch22+}):
 \[ \left\{ \widetilde{\Delta} \in \mathfrak{hd}(\pi)_{a=a(\Delta)}: \Delta \subset \widetilde{\Delta}         \right\} .
\] 

\subsection{Highest derivative multisegments $\mathfrak{hd}$} \label{ss highest derivative multisegment}

Following results \cite[Theorem 7.3, Section 8.1]{Ch22+} and \cite[Theorem 7.1]{Ch22+bb}, we shall use the following quick definition of the highest derivative multisegment.

\begin{definition} \cite{Ch22+, Ch22+bb} \label{def highest der multi}
Let $\pi \in \mathrm{Irr}$. Define the {\it highest right (resp. left) derivative multisegment}, denoted by $\mathfrak{hd}(\pi):=\mathfrak{hd}^R(\pi)$ (resp. $\mathfrak{hd}^L(\pi)$), of $\pi$ to be the unique minimal multisegment such that 
\[   D_{\mathfrak{hd}(\pi)}(\pi) \cong \pi^- , \quad (\mbox{resp. $D_{\mathfrak{hd}^L(\pi)}^L(\pi)\cong {}^-\pi$}) .
\]
\end{definition}

\begin{definition} \label{def removal process} \cite[Section 7]{Ch22+}
Let $\mathfrak h \in \mathrm{Mult}$. Let $\Delta=[a,b]_{\rho}$. The {\it removal process} for $(\Delta, \mathfrak h)$ is an algorithm to carry out the following steps:
\begin{enumerate}
\item Find the shortest segment $[a,b']_{\rho}$ in $\mathfrak h$ with $b' \geq b$. Name the segment to be $\Delta_1=[a_1,b_1]_{\rho}$.
\item We inductively define the shortest segment $\Delta_i=[a_i,b_i]_{\rho}$ (for $i \geq 2$) in $\mathfrak h$ satisfying $a_{i-1}<a_i$ and $b\leq b_i<b_{i-1}$. The process terminates when no more such a segment can be found and we denote $\Delta_2, \ldots, \Delta_r$ to be all those segments.
\item Define new segments as follows:
\begin{itemize}
\item $\Delta^{tr}_i=[a_{i+1}, b_i]_{\rho}$ for $i <r$
\item $\Delta^{tr}_r=[b_{r-1}+1,b_r]_{\rho}$ (possibly empty).
\end{itemize}
\item Define 
\[ \mathfrak r(\Delta, \mathfrak h)=\mathfrak h-\sum_{i=1}^r\Delta_i +\sum_{i=1}^r\Delta_i^{tr} .\]
\end{enumerate}
We shall call $\Delta_1, \ldots, \Delta_r$ to be the {\it removal sequence} for $(\Delta, \mathfrak h)$.
\end{definition}

For a multisegment $\mathfrak m=\left\{ \Delta_1, \ldots, \Delta_r \right\}$ written in an ascending order, define 
\[  \mathfrak r(\mathfrak m, \mathfrak h)=\mathfrak r(\Delta_r, \ldots , \mathfrak r(\Delta_1, \mathfrak h)\ldots ).
\]
It is shown in \cite[Section 7]{Ch22+} that $\mathfrak r(\mathfrak m, \mathfrak h)$ is independent of a choice of such an ordering.

For a segment $\Delta$ and a multisegment $\mathfrak h$, define $\varepsilon_{\Delta}(\mathfrak h)$ to be the number of segments $\widetilde{\Delta}$ in $\mathfrak h_{a=a(\Delta)}$ with $\widetilde{\Delta}\subset \Delta$. 

As shown in \cite{Ch22+}, the removal process is a simpler process in keep-tracking the effect of derivatives. For a multisegment $\mathfrak h$, define
\[  \varepsilon_{[a,b]_{\rho}}(\mathfrak h)=|\left\{ [a,c]_{\rho} \in \mathfrak h: b \leq c \right\} |.
\]
We have:

\begin{theorem} \label{thm change in highest derivative after d} \cite[Theorem 9.3]{Ch22+}
Let $\pi \in \mathrm{Irr}$ and let $\mathfrak h=\mathfrak{hd}(\pi)$. Let $\Delta$ be a segment such that $D_{\Delta}(\pi) \neq 0$. Then for any segment $\Delta' \not< \Delta$, $\varepsilon_{\Delta'}(D_{\Delta}(\pi))$ is equal to $\varepsilon_{\Delta'}(\mathfrak r(\Delta, \mathfrak h))$.
\end{theorem}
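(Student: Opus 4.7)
My plan is to reduce the theorem to a combinatorial identity on multisegments and then prove it by induction on the length of the removal sequence.

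First, using the basic fact $\varepsilon_{\Delta'}(\sigma) = \varepsilon_{\Delta'}(\mathfrak{hd}(\sigma))$ for any $\sigma \in \mathrm{Irr}$ (as recalled in Section \ref{ss notations a b points}), the assertion reduces to
\[ \varepsilon_{\Delta'}\bigl(\mathfrak{hd}(D_\Delta(\pi))\bigr) = \varepsilon_{\Delta'}\bigl(\mathfrak r(\Delta, \mathfrak h)\bigr) \quad \text{for all } \Delta' \not< \Delta , \]
a statement entirely on multisegments. The restriction $\Delta' \not< \Delta$ is essential and signals where one cannot expect an equality of multisegments: when $\Delta' < \Delta$, the derivative may refine the highest derivative multisegment in ways the removal process is not designed to record, so we only expect agreement in the $\varepsilon_{\Delta'}$-statistics on the $\not<$ side of $\Delta$.

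Next I induct on the length $r$ of the removal sequence $\Delta_1, \ldots, \Delta_r$ for $(\Delta, \mathfrak h)$. For $r = 1$ there is a unique $\Delta_1 = [a, b_1]_\rho$ in $\mathfrak h$ of sufficient length, and $D_\Delta$ ``trims'' $\Delta_1$ down to $\Delta_1^{tr}$ in the highest derivative multisegment; this can be verified directly from the explicit description of $\mathfrak{hd}(\langle \mathfrak m \rangle)$ together with the cuspidal support bookkeeping, using that $\pi \cong \langle \mathfrak m \rangle$ for a multisegment $\mathfrak m$ closely tied to $\mathfrak h$. For general $r$, factor $D_\Delta$ through a shorter derivative corresponding to the first removal step, so that $D_\Delta(\pi) \cong D_{\Delta^{(2)}}\bigl(D_{\Delta^{(1)}}(\pi)\bigr)$ for a suitable $\Delta^{(1)}$ (capturing the extraction of $\Delta_1$) and $\Delta^{(2)}$ (capturing the remaining $\Delta_2,\ldots,\Delta_r$). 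Such a factorization is available from the commutativity and subsequence properties of Rd-minimal sequences established in \cite{Ch22+}, which guarantee that reordering derivatives within a minimal sequence preserves the outcome. Applying the induction hypothesis to each factor and composing the (combinatorial) removal processes accordingly yields the identity.

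The main obstacle will be the delicate bookkeeping in the inductive step: after applying $D_{\Delta^{(1)}}$ the highest derivative multisegment changes in a non-trivial way, and one must verify that the remaining removal process $\mathfrak r(\Delta^{(2)}, \mathfrak{hd}(D_{\Delta^{(1)}}\pi))$ precisely reproduces steps $2,\ldots,r$ of the original $\mathfrak r(\Delta, \mathfrak h)$. Equally subtle is the propagation of the hypothesis $\Delta' \not< \Delta$ through the factorization: an intermediate $\Delta'$ with $\Delta' \not< \Delta$ may well satisfy $\Delta' < \Delta^{(1)}$ or $\Delta' < \Delta^{(2)}$, so one must show that the potential contributions from such $\Delta'$ to the intermediate $\varepsilon_{\Delta'}$-statistics either fall outside the $\not< \Delta$ range we ultimately track or cancel when the two factored removal processes are composed. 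Getting this compatibility right — essentially, that the greedy selection of $\Delta_1, \Delta_2, \ldots$ commutes with the representation-theoretic factorization of $D_\Delta$ — is the combinatorial heart of the argument.
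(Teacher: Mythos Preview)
The paper does not prove this theorem: it is quoted verbatim from \cite[Theorem 7.20]{Ch22+} and used as a black box throughout (e.g.\ in Propositions \ref{prop eta function derivative} and \ref{level preserving strong comm}). So there is no ``paper's own proof'' to compare against, and any assessment of your argument must stand on its own.

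Your proposal has a genuine gap at the inductive step. You write $D_\Delta(\pi) \cong D_{\Delta^{(2)}}\bigl(D_{\Delta^{(1)}}(\pi)\bigr)$ with $\Delta^{(1)}$ ``capturing the extraction of $\Delta_1$'', but no such factorization is supplied by the commutativity/subsequence properties you cite. Those results in \cite{Ch22+} (restated here as Lemma \ref{lem commute and minimal} and Corollary \ref{cor minimal derivative in any order}) concern reordering the segments \emph{within} a multisegment $\mathfrak m$ that is Rd-minimal to $\pi$; they say nothing about splitting a \emph{single} segment derivative $D_\Delta$ into a composition of two others. The removal sequence $\Delta_1,\ldots,\Delta_r$ lives in $\mathfrak h = \mathfrak{hd}(\pi)$, not inside $\Delta$, and there is no evident candidate for $\Delta^{(1)}$ such that applying $D_{\Delta^{(1)}}$ to $\pi$ performs exactly ``step 1'' of the removal process on $\mathfrak h$. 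Indeed the Remark following Lemma \ref{lem commute and intersect union} already exhibits $\pi$ with $D_{[1,2]}(\pi) \not\cong D_{[1]}\circ D_{[2]}(\pi)$, so even the most naive factorizations fail. Without a concrete $\Delta^{(1)},\Delta^{(2)}$ and a proof that the factorization holds, the induction does not get off the ground.

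Your base case is also thinner than it looks: ``can be verified directly from the explicit description of $\mathfrak{hd}(\langle \mathfrak m\rangle)$'' hides the real work, since computing $\mathfrak{hd}(D_\Delta(\pi))$ from $\mathfrak{hd}(\pi)$ is precisely the content of the theorem. The actual argument in \cite{Ch22+} proceeds by analysing how $D_\Delta$ acts on the Zelevinsky multisegment of $\pi$ (not on $\mathfrak h$ directly) and then recomputing the highest derivative multisegment; the removal process is engineered to summarise that two-step passage. If you want an inductive structure, a more promising parameter is the relative length of $\Delta$ (factoring $D_{[a,b]_\rho}$ as $D_{[a+1,b]_\rho}\circ D_{[a]_\rho}$ when this composition agrees with $D_\Delta$, and handling the obstruction when it does not), rather than the length of the removal sequence.
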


\subsection{Change of $\eta_{\Delta}$ by a left integral}

The following lemma will be useful later.

\begin{lemma} \label{lem right multi submulti}
Let $\pi \in \mathrm{Irr}$. For any segments $\Delta$ and $\Delta'$, $\mathfrak{mx}(\pi, \Delta)$ is a submultisegment of $\mathfrak{mx}(I_{\Delta'}(\pi), \Delta)$, equivalently
\[   \eta_{\Delta}(\pi) \leq \eta_{\Delta}(I_{\Delta'}(\pi)).
\]
\end{lemma}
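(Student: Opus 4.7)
The goal is to establish the inequality $\varepsilon_{\widetilde\Delta}(\pi)\le \varepsilon_{\widetilde\Delta}(I_{\Delta'}(\pi))$ componentwise, for every segment $\widetilde\Delta=[c,b]_\rho$ with $a\le c\le b$, where $\Delta=[a,b]_\rho$. Set $k=\varepsilon_{\widetilde\Delta}(\pi)$ and $\sigma=D^k_{\widetilde\Delta}(\pi)\ne 0$. Iterating Definition \ref{def integral derivative}(2) produces an embedding
\[
\sigma\boxtimes \mathrm{St}(\widetilde\Delta)^{\boxtimes k}\hookrightarrow \pi_{N_k},
\]
where $N_k$ is the unipotent radical of the standard parabolic with Levi $G_{n-k\,l_a(\widetilde\Delta)}\times G_{l_a(\widetilde\Delta)}^k$. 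By iterated use of the uniqueness in Definition \ref{def integral derivative}(2), it suffices to produce an embedding $\omega\boxtimes \mathrm{St}(\widetilde\Delta)^{\boxtimes k}\hookrightarrow I_{\Delta'}(\pi)_{N_k}$ for some irreducible $\omega$, since that will force $D^k_{\widetilde\Delta}(I_{\Delta'}(\pi))\ne 0$ and hence $\varepsilon_{\widetilde\Delta}(I_{\Delta'}(\pi))\ge k$.

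Applying the exact Jacquet functor to the defining embedding $I_{\Delta'}(\pi)\hookrightarrow \mathrm{St}(\Delta')\times\pi$ yields $I_{\Delta'}(\pi)_{N_k}\hookrightarrow (\mathrm{St}(\Delta')\times\pi)_{N_k}$. The geometric lemma filters the target by layers indexed by double cosets, and its open layer has the form $\mathrm{St}(\Delta')\times \pi_{N_k}$ (compare with the end of Section \ref{ss bz filtrations variant}). Combining the embedding above with the socle inclusion $I_{\Delta'}(\sigma)\hookrightarrow \mathrm{St}(\Delta')\times\sigma$ singles out a natural copy of $I_{\Delta'}(\sigma)\boxtimes \mathrm{St}(\widetilde\Delta)^{\boxtimes k}$ sitting inside this open layer of $(\mathrm{St}(\Delta')\times\pi)_{N_k}$. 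The remaining point is then to verify that this copy lies within the submodule $I_{\Delta'}(\pi)_{N_k}$ rather than being pushed into the quotient $(\mathrm{St}(\Delta')\times\pi)_{N_k}/I_{\Delta'}(\pi)_{N_k}$.

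I would accomplish this via second adjunction, reformulating the containment as the nonvanishing of a map in $\mathrm{Hom}_{G_n}(I_{\Delta'}(\sigma)\times \mathrm{St}(\widetilde\Delta)^{\times k},\, \mathrm{St}(\Delta')\times\pi)$ whose image lies in $I_{\Delta'}(\pi)$, and constructing such a map by composing the socle inclusion $I_{\Delta'}(\sigma)\times \mathrm{St}(\widetilde\Delta)^{\times k}\hookrightarrow \mathrm{St}(\Delta')\times\sigma\times \mathrm{St}(\widetilde\Delta)^{\times k}$ with the surjection $\mathrm{St}(\Delta')\times\sigma\times \mathrm{St}(\widetilde\Delta)^{\times k}\twoheadrightarrow \mathrm{St}(\Delta')\times\pi$ coming from iterating the equivalence that $D^k_{\widetilde\Delta}(\pi)\neq 0$ is equivalent to $\pi$ being an irreducible quotient of $\sigma\times \mathrm{St}(\widetilde\Delta)^{\times k}$. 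The main obstacle, and most delicate step, is verifying nonvanishing of this composite: I would establish it by cuspidal-support bookkeeping through the geometric lemma filtration, showing that the open layer is the only one of the correct cuspidal type for $I_{\Delta'}(\sigma)\boxtimes \mathrm{St}(\widetilde\Delta)^{\boxtimes k}$, so that the composite cannot vanish. Once nonvanishing is secured, the image is a nonzero submodule of $\mathrm{St}(\Delta')\times\pi$, hence automatically contains the unique irreducible submodule $I_{\Delta'}(\pi)$ of Definition \ref{def integral derivative}(1), closing the argument.
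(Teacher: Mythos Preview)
Your overall strategy—show $\varepsilon_{\widetilde\Delta}(I_{\Delta'}(\pi))\ge k$ by exhibiting an embedding of $I_{\Delta'}(\pi)$ into something of the form $(\text{stuff})\times\mathrm{St}(\widetilde\Delta)^{\times k}$—is the right idea and is essentially what the paper does (with the whole multisegment $\mathfrak m=\mathfrak{mx}(\pi,\Delta)$ at once). But your execution takes an unnecessary detour through second adjunction and surjections, and this detour introduces real problems.

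First, the claimed surjection $\sigma\times\mathrm{St}(\widetilde\Delta)^{\times k}\twoheadrightarrow\pi$ does not follow from $D^k_{\widetilde\Delta}(\pi)=\sigma$. Iterating Definition~\ref{def integral derivative}(2) gives $\sigma\boxtimes\mathrm{St}(\widetilde\Delta)^{\boxtimes k}\hookrightarrow\pi_{N_k}$, and second adjointness turns this into a surjection $\mathrm{St}(\widetilde\Delta)^{\times k}\times\sigma\twoheadrightarrow\pi$ (factors reversed), while Frobenius reciprocity gives the embedding $\pi\hookrightarrow\sigma\times\mathrm{St}(\widetilde\Delta)^{\times k}$. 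Neither is what you wrote. Second, your final step is self-contradictory: you set out to show the image of your composite \emph{lies in} $I_{\Delta'}(\pi)$ (so that $I_{\Delta'}(\pi)$ is a quotient of the source), but then argue that the image \emph{contains} $I_{\Delta'}(\pi)$. The latter only shows $I_{\Delta'}(\pi)$ is a subquotient of $I_{\Delta'}(\sigma)\times\mathrm{St}(\widetilde\Delta)^{\times k}$, which does not imply $D^k_{\widetilde\Delta}(I_{\Delta'}(\pi))\ne 0$. Third, the cuspidal-support bookkeeping you invoke to prove nonvanishing is exactly the delicate point, and since $\widetilde\Delta$ and $\Delta'$ may share cuspidal support there is no reason the open layer is the only one of the correct type.

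The paper's route avoids all of this: from $\pi\hookrightarrow D_{\mathfrak m}(\pi)\times\mathrm{St}(\mathfrak m)$ one composes directly to get
\[
I_{\Delta'}(\pi)\hookrightarrow \mathrm{St}(\Delta')\times\pi\hookrightarrow \mathrm{St}(\Delta')\times D_{\mathfrak m}(\pi)\times\mathrm{St}(\mathfrak m),
\]
so $D_{\mathfrak m}(I_{\Delta'}(\pi))\ne 0$, and then the submultisegment containment follows from the cited results on $\Delta$-saturated derivatives. No geometric-lemma analysis or nonvanishing of composites is needed.
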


\begin{proof}
Let $\mathfrak m=\mathfrak{mx}(\pi,\Delta)$ and let $\sigma=\mathrm{St}(\Delta')$. We have the embedding:
\[   \pi \hookrightarrow D_{\mathfrak m}(\pi) \times \mathrm{St}(\mathfrak m) .
\]
Then 
\[  I_{\sigma}(\pi) \hookrightarrow \sigma \times \pi \hookrightarrow \sigma \times D_{\mathfrak m}(\pi) \times \mathrm{St}(\mathfrak m) .
\]
Thus $D_{\mathfrak m}(I_{\sigma}(\pi))\neq 0$. Now, one applies Theorem \ref{thm change in highest derivative after d} and \cite[Proposition 5.5]{Ch22+} to see that $\mathfrak m \subset \mathfrak{mx}(I_{\sigma}(\pi), \Delta)$.
\end{proof}

\subsection{Some properties on $|\eta|_{\Delta}$} 

%We now give an important property related to minimal sequences.

%\begin{definition}
%Let $\pi \in \mathrm{Irr}$. Let $\Delta_1, \Delta_2$ be two linked segments with $\Delta_1 <\Delta_2$. We say that $(\Delta_1, \Delta_2, \pi)$ satisfies the property $(\dagger)$ if 
%\[ \eta_{\Delta_2}(D_{\Delta_1}(\pi))= \eta_{\Delta_2}(\pi) .\]
%\end{definition}

\begin{definition} (c.f. \cite[Section 7]{LM22})
For $\pi \in \mathrm{Irr}$ and a segment $\Delta=[a,b]_{\rho}$, define
\[   |\eta|_{\Delta}(\pi)=\varepsilon_{[a,b]_{\rho}}(\pi)+\varepsilon_{[a+1,b]_{\rho}}(\pi)+\ldots+\varepsilon_{[b,b]_{\rho}}(\pi) . \]
\end{definition}

We shall use the following properties in Section \ref{thm unique of relevant pairs}.

\begin{proposition} \label{prop eta function derivative}
Let $\pi \in \mathrm{Irr}$ and let $\Delta=[a,b]_{\rho}$ be a segment. Let $\Delta'=[c,d]_{\rho} $ be a segment with $D_{\Delta'}(\pi)\neq 0$. Then the following statements hold:
\begin{enumerate}
\item For $c<a$ and $d=b$, $\eta_{\Delta}(\pi)=\eta_{\Delta}(D_{\Delta'}(\pi))$.
\item For $ b\geq c\geq a$ and $d=b$, $|\eta|_{\Delta}(\pi)=|\eta|_{\Delta}(D_{\Delta'}(\pi))-1$.
\item For $b>d \geq c\geq a$, $|\eta|_{\Delta}(\pi)=|\eta|_{\Delta}(D_{\Delta'}(\pi))$.
\item For $b>d$ and $a >c$, $\eta_{\Delta}(D_{\Delta'}(\pi))\geq \eta_{\Delta}(\pi)$.
\end{enumerate}
\end{proposition}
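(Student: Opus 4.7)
My plan is to use Theorem \ref{thm change in highest derivative after d} to reduce each of the four assertions to a combinatorial comparison between $\mathfrak h := \mathfrak{hd}(\pi)$ and $\mathfrak r(\Delta', \mathfrak h)$, then perform a case-by-case bookkeeping on the removal process. The crucial preliminary observation is that every test segment $\widetilde{\Delta} = [e, b]_\rho$ with $e \in [a, b]$ satisfies $\widetilde{\Delta} \not< \Delta'$: in cases (1)-(2) because $b(\widetilde{\Delta}) = b = d = b(\Delta')$, and in cases (3)-(4) because $b > d$. Hence Theorem \ref{thm change in highest derivative after d} applies and gives
\[
\varepsilon_{[e,b]_\rho}(D_{\Delta'}(\pi)) \;=\; \varepsilon_{[e,b]_\rho}(\mathfrak r(\Delta', \mathfrak h))
\]
for every such $e$, so each assertion becomes a purely combinatorial statement about the multisegments $\mathfrak h$ and $\mathfrak r(\Delta', \mathfrak h)$.

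Next I would write the removal sequence for $(\Delta', \mathfrak h)$ as $\Delta_i = [c_i, b_i]_\rho$ with $c = c_1 < \ldots < c_r$ and $d \leq b_1 > \ldots > b_r$, and let $k$ be the largest index with $b_k \geq b$ (taking $k = 0$ if no such index exists). Since $\varepsilon_{[e,b]_\rho}$ only counts segments with $b$-endpoint at least $b$, the only contributions to the change come from $\Delta_1, \ldots, \Delta_k$ (each subtracting $1$ at the left endpoint $c_i$) and the corresponding $\Delta_i^{tr} = [c_{i+1}, b_i]_\rho$ for $i < k$ (each adding $1$ at $c_{i+1}$), plus a single residual boundary term at $e = c_{k+1}$ (if $k < r$) or $e = b_{r-1}+1$ (if $k = r$). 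Most of these contributions cancel telescopically, leaving a net $-1$ at $e = c$ and a net $+1$ at the residual boundary point.

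Each of the four cases then follows by locating these residual endpoints relative to the window $[a, b]$. In case (1), $c < a$ places the $-1$ outside $[a, b]$, and a minimality argument forces the residual $+1$ to fall outside as well, yielding pointwise equality of $\eta_\Delta$. In case (2), the $-1$ at $c \in [a, b]$ contributes the claimed unit shift in $|\eta|_\Delta$, while the same minimality argument shows the residual $+1$ lies outside the window. In case (3), with $a \leq c \leq d < b$, the analogous telescoping inside the window preserves the total $|\eta|_\Delta$. Case (4), where $c < a$ and $d < b$, is the easiest: the removed $\Delta_i$ with $c_i < a$ contribute nothing inside $[a, b]$, and the added segments $\Delta_i^{tr}$ with left endpoint $\geq a$ can only increase $\varepsilon_{[e, b]_\rho}$, giving the componentwise inequality $\eta_\Delta(D_{\Delta'}(\pi)) \geq \eta_\Delta(\pi)$.

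The main obstacle will be the structural claim used in cases (1)-(3), namely that the residual boundary endpoint ($c_{k+1}$ or $b_{r-1}+1$) cannot land inside $[a, b]$. This relies on the $\leq_Z$-minimality of $\mathfrak h = \mathfrak{hd}(\pi)$ together with the shortest-segment selection rule of Definition \ref{def removal process}: were the residual endpoint to lie in the window $[a, b]$, one could construct a strictly smaller multisegment still realizing $\pi^-$, contradicting the minimality of $\mathfrak{hd}(\pi)$. Carrying out this verification cleanly, probably by an appeal to the subsequent property of minimal sequences recalled in Section \ref{ss highest derivative multisegment}, is the technical heart of the proof.
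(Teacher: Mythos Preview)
Your reduction via Theorem \ref{thm change in highest derivative after d} and the telescoping bookkeeping on the removal sequence are exactly the paper's approach, and your identification of which $\Delta_i$ and $\Delta_i^{tr}$ contribute is essentially correct. However, you have misdiagnosed the ``main obstacle.'' The structural claim you flag --- that the residual endpoint cannot land inside $[a,b]$ --- does \emph{not} require any $\leq_Z$-minimality argument about $\mathfrak{hd}(\pi)$; it drops out of the removal process itself. The removal sequence for $(\Delta',\mathfrak h)$ with $\Delta'=[c,d]_\rho$ satisfies $a_i\le d$ and $b_i\ge d$ for every $i$ (this is implicit in Definition \ref{def removal process}: the sequence must exhaust exactly the cuspidal support of $\Delta'$, so in particular the final truncation $\Delta_r^{tr}$ has left endpoint $d+1$). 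In cases (1)--(2), where $d=b$, this immediately gives $a_i\le b$ and $b_i\ge b$ for all $i$, so your index $k$ equals $r$ and the only residual term is $\Delta_r^{tr}$ with left endpoint $b+1$, automatically outside the window. In case (3), the bound $a_i\le d<b$ shows the transition index $j$ (where contributions stop) is governed by $b_j\ge b>b_{j+1}$, and then $\Delta_j^{tr}=[a_{j+1},b_j]$ still has $a_{j+1}\le d<b$, so it contributes and the count balances.

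In short: your plan is the paper's plan, but you should replace the proposed minimality argument with the direct observation that the removal sequence is confined to segments covering $\Delta'$ (hence $a_i\le d\le b_i$). Once you use this, the case analysis you sketched goes through with no further obstacle.
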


\begin{proof}
We shall give a proof using removal sequences. A possible alternative is to do analysis in the geometric lemma.

%Alternatively, it follows from the proof of Lemma \ref{lem non Rd irreducible} that 
%\[ \mathfrak{mx}^R(D_{\Delta'}(\pi), \Delta')=\mathfrak{mx}^R(\pi, \Delta')-\Delta' .\]
%Since $\Delta \subsetneq \Delta'$, we have that $\mathfrak{mx}^R(\pi, \Delta)=\mathfrak{mx}^R(D_{\Delta'}(\pi), \Delta)$ (see Lemma \ref{lem non Rd irreducible}).

For (2), let $\Delta_1, \ldots, \Delta_r$ be the removal sequence for $(\Delta', \pi)$, and let $\Delta_1^{tr}, \ldots, \Delta_r^{tr}$ be the truncated segments. Note that, by Definition \ref{def removal process}, those $\Delta_1, \ldots, \Delta_r$ contribute to $|\eta|_{\Delta}(\pi)$. Only $\Delta_1^{tr}, \ldots, \Delta_{r-1}^{tr}$, but not $\Delta_r^{tr}$ contribute to $|\eta|_{\Delta}(\pi)$. Now Theorem \ref{thm change in highest derivative after d} concludes that $|\eta|_{\Delta}(\pi)$ and $|\eta|_{\Delta}(D_{\Delta'}(\pi))$ differs by $1$. 

For (1), it is similar to the one for (2). Use the terminologies above. Suppose $i$ is the smallest integer such that $\Delta_i$ contributes $\eta_{\Delta}(\pi)$. We have that $i>1$ from the condition of (1). We only have $\Delta_i, \ldots, \Delta_r$ (among those $\Delta_k$'s) contributing $\eta_{\Delta}(\pi)$ and only have $\Delta_{i-1}^{tr}, \ldots, \Delta_{r-1}^{tr}$ (among those $\Delta_k^{tr}$'s)  contributing $\eta_{\Delta}(D_{\Delta'}(\pi))$. From the removal process in Definition \ref{def removal process}(3), one also has that $\Delta_{k-1}^{tr}$ and $\Delta_k$ contribute to the same $\varepsilon_{\widetilde{\Delta}}$ for some $\Delta$-saturated segment $\widetilde{\Delta}$. Now Theorem \ref{thm change in highest derivative after d} concludes (1). 

(3) is similar to (2) and (1). Using notations in (2), suppose  $\Delta_1, \ldots, \Delta_j$ (possibly $i=0$) are those all, among those $\Delta_k$'s, contributing to $|\eta|_{\Delta}(\pi)$. Then, $\Delta_1^{tr}, \ldots, \Delta_j^{tr}$ are all, among those $\Delta_k^{tr}$'s, contributing to $|\eta|_{\Delta}(D_{\Delta'}(\pi))$. Now (3) again follows from Theorem \ref{thm change in highest derivative after d}.

(4) is also similar. Use the notations in (2). By (3), we may assume that $a > c$. Suppose $\Delta_i, \ldots, \Delta_j$ are those all contributing to $\eta_{\Delta}(\pi)$. (If those segments $\Delta_i, \ldots, \Delta_j$ do not exist, then the case is slightly simpler as the number may increase by the contribution from a truncated segment.) Then $\Delta_{i-1}^{tr}, \ldots, \Delta_j^{tr}$ are those all contributing to $\eta_{\Delta}(\pi)$. Again, we have that among those, $\Delta_{k-1}^{tr}$ and $\Delta_k$ give contribution to the same $\varepsilon_{\widetilde{\Delta}}$ for some $\Delta$-saturated segment $\widetilde{\Delta}$; for $\Delta_j^{tr}$, it increases $\varepsilon_{\Delta_j^{tr}}$ in $\eta_{\Delta}$ by $1$. One then uses Theorem \ref{thm change in highest derivative after d} again.
\end{proof}

We give a refinement of Proposition \ref{prop eta function derivative}(3).

\begin{proposition} \label{prop removal sequence for intermediate case}
We use the notations in the previous lemma. Suppose $b>d\geq c\geq a$. Let $\widetilde{\Delta}=[\widetilde{a}, \widetilde{b}]_{\rho}$ be the first segment in the removal sequence for $(\Delta', \mathfrak{hd}(\pi))$. 
\begin{enumerate}
\item If $\widetilde{b}< b$, then $\eta_{\Delta}(D_{\Delta'}(\pi))=\eta_{\Delta}(\pi)$.
\item If $\widetilde{b}\geq b$, then there exists a $\Delta$-saturated segment $\overline{\Delta}=[\overline{a}, b]_{\rho}$ with $\overline{a}>c$ such that $\eta_{\overline{\Delta}}(D_{\Delta'}(\pi))\neq \eta_{\overline{\Delta}}(\pi)$ and $\varepsilon_{\overline{\Delta}}(D_{\Delta'}(\pi))>0$. 
\end{enumerate}
\end{proposition}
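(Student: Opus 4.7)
The proposition sharpens Proposition \ref{prop eta function derivative}(3) by pinning down which individual $\varepsilon$-invariants shift when the derivative is applied. The strategy is to combine Theorem \ref{thm change in highest derivative after d}, which identifies the relevant $\varepsilon$-invariants of the derivative with combinatorial counts in $\mathfrak{r}(\Delta', \mathfrak{hd}(\pi))$ whenever the evaluation segment is not $<\Delta'$, with a detailed inspection of the removal process of Definition \ref{def removal process}.

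First I would check the applicability of Theorem \ref{thm change in highest derivative after d}: any $\Delta$-saturated segment $[c', b]_{\rho}$ satisfies $b([c', b]_{\rho}) = \nu^{b}\rho \neq \nu^{d}\rho = b(\Delta')$ because $b>d$, and so it is not $<\Delta'$. Thus the problem reduces to counting segments with right endpoint $\geq b$ in $\mathfrak{r}(\Delta', \mathfrak{hd}(\pi))$. I would then unwind the removal sequence $\widetilde{\Delta}_1 = \widetilde{\Delta}, \widetilde{\Delta}_2, \ldots, \widetilde{\Delta}_r$, writing $\widetilde{\Delta}_i = [c_i, \widetilde{b}_i]_{\rho}$ with $c_1 = c$, the $c_i$ strictly increasing, and the $\widetilde{b}_i$ strictly decreasing, together with its truncations $\widetilde{\Delta}_i^{tr}$. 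The key structural observation is that every truncation's starting point strictly exceeds $c$: it equals $c_{i+1} > c_1 = c$ for $i<r$, and it equals $\widetilde{b}_{r-1} + 1 > d \geq c$ for the terminal piece.

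For case (1), $\widetilde{b} < b$ combined with the strict decrease of the $\widetilde{b}_i$ forces $\widetilde{b}_i < b$ for all $i$. Consequently, neither the removed segments nor the truncated segments (whose right endpoints are bounded by the corresponding $\widetilde{b}_i$) contribute to any $\varepsilon_{[c', b]_{\rho}}$, so the claimed equality of $\eta$-vectors follows immediately from Theorem \ref{thm change in highest derivative after d}. For case (2), $\widetilde{b} \geq b$ means $\widetilde{\Delta}_1 = [c, \widetilde{b}]_{\rho}$ contributes one to $\varepsilon_{[c, b]_{\rho}}(\mathfrak{hd}(\pi))$ and is deleted in forming $\mathfrak{r}(\Delta', \mathfrak{hd}(\pi))$. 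By the conservation of $|\eta|_{\Delta}$ from Proposition \ref{prop eta function derivative}(3), this deficit must be offset by an increase in some $\varepsilon_{[\overline{a}, b]_{\rho}}$ with $\overline{a} \neq c$; the structural observation above places $\overline{a} > c$. Setting $\overline{\Delta} = [\overline{a}, b]_{\rho}$ then gives both $\varepsilon_{\overline{\Delta}}(D_{\Delta'}(\pi)) > \varepsilon_{\overline{\Delta}}(\pi) \geq 0$ and a genuine change in the leading entry of $\eta_{\overline{\Delta}}$.

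The main obstacle is the boundary bookkeeping of the removal process. When $r=1$, or when the index $i^*$ with $\widetilde{b}_{i^*} \geq b > \widetilde{b}_{i^*+1}$ coincides with $r$, the compensating $+1$ is forced to sit on the terminal truncation $[\widetilde{b}_{r-1}+1, \widetilde{b}_r]_{\rho}$. One must verify, directly from Definition \ref{def removal process}, that this piece is nonempty precisely when it is required to balance the books, and that its starting point is a genuine $\overline{a} > c$. This check is purely combinatorial but requires careful case analysis to rule out pathological degenerations.
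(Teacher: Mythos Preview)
Your approach is correct and aligns with the paper's. Both arguments reduce the question to counting contributions of the removal sequence $\widetilde{\Delta}_1,\ldots,\widetilde{\Delta}_r$ and their truncations $\widetilde{\Delta}_i^{tr}$ to the coordinates $\varepsilon_{[c',b]_\rho}$, invoking Theorem~\ref{thm change in highest derivative after d} to transfer this combinatorics back to the $\eta$-invariants of $D_{\Delta'}(\pi)$. (Note that both the statement and your write-up should read $D_{\Delta'}$ rather than $D_\Delta$; the paper's own proof uses $D_{\Delta'}$ throughout.)

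The one genuine difference is in part~(2). The paper proceeds constructively: it takes the largest index $k$ with $\widetilde{b}_k \geq b$ and explicitly sets $\overline{\Delta}=[c_{k+1},b]_\rho$ when $k<r$ and $\overline{\Delta}=[d+1,b]_\rho$ when $k=r$, then checks directly that $\widetilde{\Delta}_k^{tr}$ raises $\varepsilon_{\overline{\Delta}}$ by one while no other change touches this coordinate. You instead argue by conservation: the removal of $\widetilde{\Delta}_1=[c,\widetilde{b}]_\rho$ drops $\varepsilon_{[c,b]_\rho}$ by one, Proposition~\ref{prop eta function derivative}(3) forces a compensating $+1$ elsewhere in $|\eta|_\Delta$, and since every truncation starts strictly to the right of $c$ the compensating coordinate sits at some $\overline{a}>c$. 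This is a clean shortcut, and it automatically yields the strict positivity $\varepsilon_{\overline{\Delta}}(D_{\Delta'}(\pi))>0$ that the statement requires. The paper's explicit construction, on the other hand, identifies $\overline{\Delta}$ precisely and makes the boundary case $k=r$ transparent: there one has $\widetilde{b}_r\geq b>d$, so the terminal truncation $[d+1,\widetilde{b}_r]_\rho$ is nonempty and contributes at $\overline{a}=d+1>c$. Your final paragraph correctly flags this boundary verification; the explicit formula $\overline{a}=d+1$ (rather than the value $\widetilde{b}_{r-1}+1$ you quote from Definition~\ref{def removal process}, which appears to be a misprint for $b(\Delta')+1$) settles it immediately.
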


\begin{proof}
Let $\Delta_1, \ldots, \Delta_r$ be the removal sequence for $(\Delta', \mathfrak{hd}(\pi))$. Let $\Delta_1^{tr}, \ldots, \Delta_r^{tr}$ be the truncated ones. For (1), by Definition \ref{def removal process} (2) and (3), none of those segments contributes to $\eta_{\Delta}(D_{\Delta'}(\pi))$ and $\eta_{\Delta}(\pi)$. Thus, we have the equality.

We now consider (2). Let $k$ be the largest integer such that $b(\Delta_k)\geq b(\Delta)$. Such integer exists by the assumption. If $k\neq r$, we pick $\overline{\Delta}=[a(\Delta_{k+1}), b(\Delta)]$; and if $k=r$, we pick $\overline{\Delta}=[\nu b(\Delta'),b(\Delta)] (\neq \emptyset)$. Then $\Delta_k^{tr}$ will add extra one to the coordinates $\varepsilon_{\overline{\Delta}}(\pi)$ to get $\varepsilon_{\overline{\Delta}}(D_{\Delta'}(\pi))$ while other coordinates are unchanged. This gives the desired statement. 
\end{proof}

\subsection{A criterion on minimality}

We first have the following minimality result. The following is shown in \cite[Proposition 4.10]{Ch22+}, while we demonstrate a different argument (which however depends on results \cite{Ch22+} as well).

\begin{lemma} \label{lem minimal eta criterai} \cite[Proposition 4.10]{Ch22+bb}
Let $\mathfrak m \in \mathrm{Mult}$ be minimal to $\pi$. Let $\Delta$ be a segment such that 
\[   a(\widetilde{\Delta}) < a(\Delta), \quad b(\widetilde{\Delta}) < b(\Delta) 
\]
for any segment $\widetilde{\Delta} \in \mathfrak m$. We also assume $D_{\Delta}\circ D_{\mathfrak m}(\pi)\neq 0$. Then $\mathfrak m+\Delta$ is minimal to $\pi$ if and only if 
\[  \eta_{\Delta}(D_{\mathfrak m}(\pi))=\eta_{\Delta}(\pi) .
\]
\end{lemma}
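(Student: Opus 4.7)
The hypothesis on $\Delta$ forces $\Delta$ to come last in any ascending order for $\mathfrak m + \Delta$, so $D_{\mathfrak m + \Delta}(\pi) = D_{\Delta}\circ D_{\mathfrak m}(\pi)$. Also, the condition $a(\widetilde \Delta)<a(\Delta)$ and $b(\widetilde\Delta)<b(\Delta)$ for all $\widetilde\Delta\in\mathfrak m$ puts every derivative $D_{\widetilde\Delta}$ in the regime of Proposition \ref{prop eta function derivative}(4) with respect to $\Delta$, so applying that proposition iteratively yields $\eta_{\Delta}(D_{\mathfrak m}(\pi))\geq \eta_{\Delta}(\pi)$. Therefore the equality in the statement is really the extremal case, and the proof amounts to showing that any strict inequality at some coordinate witnesses a non-minimal intersection-union move.

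\textbf{Backward direction ($\eta$-equality $\Rightarrow$ minimality).} Suppose $\mathfrak m+\Delta$ is not minimal to $\pi$. Since $\mathfrak m$ is already minimal, any witness $\mathfrak n \lneq_Z \mathfrak m+\Delta$ with $D_{\mathfrak n}(\pi)\cong D_{\mathfrak m+\Delta}(\pi)$ must come from an intersection-union involving $\Delta$ and some $\widetilde\Delta\in\mathfrak m$ that is linked to $\Delta$. By the hypothesis on $a$- and $b$-points, the only possibility is $\widetilde\Delta =[\widetilde a,\widetilde b]_\rho$ with $\widetilde a<a\leq \widetilde b<b$, giving $\Delta\cap\widetilde\Delta =[a,\widetilde b]_\rho$ and $\Delta\cup\widetilde\Delta=[\widetilde a,b]_\rho$. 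I will then use Theorem \ref{thm change in highest derivative after d} to compare $\mathfrak{hd}$'s and Proposition \ref{prop removal sequence for intermediate case}(2) applied to $(\Delta, D_{\mathfrak m}(\pi))$ at the coordinate $[a,\widetilde b]_\rho$ to show that the presence of such an intersection-union must record itself as a strict jump $\varepsilon_{[c,b]_\rho}(D_{\mathfrak m}(\pi))>\varepsilon_{[c,b]_\rho}(\pi)$ at some $c\in(\widetilde a, a]$, contradicting the $\eta$-equality.

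\textbf{Forward direction (minimality $\Rightarrow$ $\eta$-equality).} Assume for contradiction that $\varepsilon_{[c,b]_\rho}(D_{\mathfrak m}(\pi))>\varepsilon_{[c,b]_\rho}(\pi)$ for some $a\leq c\leq b$. By Proposition \ref{prop eta function derivative}(4), this strict increase is created by at least one derivative $D_{\widetilde\Delta}$ in the ascending sequence building $D_{\mathfrak m}$, with $\widetilde\Delta=[\widetilde a,\widetilde b]_\rho$ satisfying $\widetilde a<c$ and $\widetilde b<b$. Tracking this through the removal process of Definition \ref{def removal process} and Theorem \ref{thm change in highest derivative after d}, one reads off that replacing $\{\widetilde\Delta,\Delta\}$ by $\{\widetilde\Delta\cap\Delta,\widetilde\Delta\cup\Delta\}$ inside $\mathfrak m+\Delta$ produces a strictly smaller $\mathfrak n \lneq_Z \mathfrak m+\Delta$ with $\mathfrak{r}(\mathfrak n,\mathfrak{hd}(\pi))=\mathfrak{r}(\mathfrak m+\Delta,\mathfrak{hd}(\pi))$, hence with $D_{\mathfrak n}(\pi)\cong D_{\mathfrak m+\Delta}(\pi)$ by the removal-process characterization of derivatives from \cite{Ch22+}. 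This contradicts minimality.

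\textbf{Main obstacle.} The delicate point is the bookkeeping for the forward direction: a single strict jump in some coordinate of $\eta_{\Delta}$ need not be caused by a uniquely identifiable $\widetilde\Delta\in\mathfrak m$, and one must choose a correct exchange segment to produce a genuine $\mathfrak n \lneq_Z \mathfrak m+\Delta$ (rather than one merely $\leq_Z$). The cleanest way, which I would adopt, is to induct on $l_a(\mathfrak m)$ using the subsequent-property so as to isolate a single linked $\widetilde\Delta$ responsible for the jump, and then invoke Proposition \ref{prop removal sequence for intermediate case} to guarantee that the intersection-union replacement preserves the derivative.
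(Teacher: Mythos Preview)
Your overall strategy---tracking the chain $\eta_\Delta(\pi)\leq\eta_\Delta(D_{\Delta_1}(\pi))\leq\cdots\leq\eta_\Delta(D_{\mathfrak m}(\pi))$ via Proposition~\ref{prop eta function derivative}(4) and looking for a strict jump---matches the paper's. The paper's proof, however, does not attempt to manipulate removal sequences directly; it reduces both directions to the two-segment case \cite[Lemma 14.3 and Proposition 14.4]{Ch22+} by induction using the subsequent property (Lemma~\ref{lem commute and minimal}). For the only-if direction, minimality of $\mathfrak m+\Delta$ gives minimality of $\Delta_1+\Delta$, hence $\eta_\Delta(D_{\Delta_1}(\pi))=\eta_\Delta(\pi)$ by the two-segment result, and one recurses on $D_{\Delta_1}(\pi)$. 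For the if direction, non-minimality of $\mathfrak m+\Delta$ yields a consecutive intersection-union pair one of whose members must be $\Delta$; relabeling the other as $\Delta_r$, the two-segment result forces $\eta_\Delta(D_{\mathfrak m}(\pi))>\eta_\Delta(D_{\mathfrak m-\Delta_r}(\pi))$, and Proposition~\ref{prop eta function derivative}(4) chains this back to $\eta_\Delta(\pi)$.

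Your proposal has two concrete gaps. First, in your forward direction you assert that $\mathfrak r(\mathfrak n,\mathfrak{hd}(\pi))=\mathfrak r(\mathfrak m+\Delta,\mathfrak{hd}(\pi))$ implies $D_{\mathfrak n}(\pi)\cong D_{\mathfrak m+\Delta}(\pi)$ ``by the removal-process characterization of derivatives.'' No such characterization is available: the removal process (Theorem~\ref{thm change in highest derivative after d}) only controls certain $\varepsilon$-coordinates of $D_{\mathfrak m}(\pi)$, essentially $\mathfrak{hd}(D_{\mathfrak m}(\pi))$, and two non-isomorphic irreducibles can share the same highest-derivative multisegment. So equal $\mathfrak r$'s do not give equal derivatives, and your intersection-union replacement is not shown to preserve $D_{\cdot}(\pi)$. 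Second, your invocation of Proposition~\ref{prop removal sequence for intermediate case}(2) in the backward direction does not fit: that proposition requires $a\leq c$ for the segment $\Delta'=[c,d]_\rho$ being removed, whereas your $\widetilde\Delta$ has $\widetilde a<a$. The missing piece in both directions is precisely the two-segment base case from \cite{Ch22+}, which packages the equivalence ``$\{\widetilde\Delta,\Delta\}$ non-minimal $\Leftrightarrow$ strict jump in $\eta_\Delta$ at the step $D_{\widetilde\Delta}$'' without appealing to removal sequences.
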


\begin{proof}
Note that when $\mathfrak m$ has only one segment. This is proved in \cite[Lemma 14.3 and Proposition 14.4]{Ch22+}.

Let $\mathfrak m=\left\{ \Delta_1, \ldots, \Delta_r \right\}$ in an ascending order. Suppose $\mathfrak m+\Delta$ is minimal to $\pi$. Then, by the subsequent property \cite[Theorem 1.3]{Ch22+bb}, $\Delta_1+\Delta$ is still minimal to $\pi$. Thus, by the two segment case \cite[Proposition 9.5]{Ch22+aa}, $\eta_{\Delta}(D_{\Delta_1}(\pi))=\eta_{\Delta}(\pi)$. We have that $\left\{ \Delta_2, \ldots, \Delta_r\right\}+\Delta$ is minimal to $D_{\Delta_1}(\pi)$ by \cite[Theorem 1.4]{Ch22+bb} (also see Lemma \ref{lem commute and minimal}  below) and so by induction, $\eta_{\Delta}(D_{\mathfrak m}(\pi))=\eta_{\Delta}(D_{\Delta_1}(\pi))$ and so we have the only if direction.

For the if direction, suppose $\mathfrak m+\Delta$ is not minimal. Then, by \cite[Corollary 1.6]{Ch22+}, we can find a consecutive pair such that the intersection-union still gives the same derivatives. Then, we must have one segment to be $\Delta$. Then, by using consecutive pairs, we may relabel the segments such that another segment is $\Delta_r$. Then \cite[Proposition 14.4]{Ch22+} and Proposition \ref{prop eta function derivative}(4) imply that 
\[   \eta_{\Delta}(D_{\Delta_r}\circ D_{\mathfrak m-\Delta_r}(\pi))>\eta_{\Delta}(D_{\mathfrak m-\Delta_r}(\pi)) .
\]
On the other hand, by Proposition \ref{prop eta function derivative}(4):
\[  \eta_{\Delta}(D_{\Delta_{r-1}}\circ \ldots \circ D_{\Delta_1}(\pi))\geq \ldots \geq \eta_{\Delta}(D_{\Delta_1}(\pi)) .
\]
 Thus, combining, we obtain that $\eta_{\Delta}(D_{\mathfrak m}(\pi))>\eta_{\Delta}(\pi)$.
\end{proof}

%\subsection{More on $\Delta$-reduced representations}

%For $\pi \in \mathrm{Irr}(G_n)$, we say that $\pi$ is {\it $\Delta$-reduced} if $\mathfrak{mx}(\pi, \Delta)=\emptyset$. 

%\begin{lemma}
%Let $\Delta$ be a segment. Let $\pi \in \mathrm{Irr}$ be $\Delta$-reduced. Let $\mathfrak m$ be a $\Delta$-saturated multisegment. 
%\end{lemma}

\section{Strong commutativity}

Since most of results in this section have been shown in \cite{Ch22+d}, some parts will be brief. For generic representations $\sigma, \sigma'$, we can extend the definition of pre-commutativity and strong commutativity for a triple $(\sigma, \sigma', \pi)$ in an obvious manner (see \cite{Ch22+d} for more discussions). In particular, when $\sigma=\mathrm{St}(\Delta)$ and $\sigma'=\mathrm{St}(\Delta')$, their terminologies for $(\sigma, \sigma', \pi)$ coincide with $(\Delta, \Delta', \pi)$.

\subsection{Geometric lemma} \label{ss geometric lemma for pre commutativity}

For a $G_k \times G_l$-representation $\omega$ and a $G_n$-representation $\pi$, inflate the $G_n\times G_k \times G_l$-representation $\pi \boxtimes \omega$ to a $P_{n+k}\times G_l$-representation. Denote the (normalized) parabolically induced module
\[  \mathrm{Ind}_{P_{n,k}\times G_l}^{G_{n+k}\times G_l} (\pi \boxtimes \omega)
\]
by $\pi \dot{\times}^1 \omega$.

Recall that the geometric lemma is shown in \cite{BZ77}, which is important in formulating the pre-commutativity in Definition \ref{def strong commu intro} and \cite{Ch22+d}. For $\pi \in \mathrm{Alg}(G_k)$ and $\pi' \in \mathrm{Alg}(G_l)$ and for $r\leq l$, $P_{k+l}P_{k+l-r,r}$ is the closed set in $G_{k+1}$. This gives rise the top layer $\pi \dot{\times}^1 (\pi'_{N_r})$ in the geometric lemma on $(\pi \times \pi')_{N_r}$. Hence, it gives a natural surjection from $(\pi \times \pi')_{N_r}$ to $\pi \dot{\times}^1 (\pi'_{N_r})$ .

\subsection{Strong commutativity $\Rightarrow$ Commutativity}

It is not hard to prove the following proposition from definitions:

\begin{proposition} \cite[Proposition 6.2]{Ch22+d} \label{prop strong commute imply commute}
Let $\sigma_1, \sigma_2$ be generic representations in $\mathrm{Irr}$ and let $\pi \in \mathrm{Irr}$. Let $(\sigma_1, \sigma_2, \pi)$ be a strongly RdLi-commutative triple. Then $I_{\sigma_2}\circ D_{\sigma_1}(\pi)\cong D_{\sigma_1}\circ I_{\sigma_2}(\pi)$. 
\end{proposition}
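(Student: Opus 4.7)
The plan is to reduce the desired isomorphism to the existence of an embedding $D_{\sigma_1}(\tau) \hookrightarrow \sigma_2 \times D_{\sigma_1}(\pi)$, where $\tau := I_{\sigma_2}(\pi)$. Since $I_{\sigma_2}(D_{\sigma_1}(\pi))$ is, by definition, the unique simple submodule of $\sigma_2 \times D_{\sigma_1}(\pi)$, once such an embedding is exhibited with $D_{\sigma_1}(\tau)$ irreducible, the identification $D_{\sigma_1}(\tau) \cong I_{\sigma_2}(D_{\sigma_1}(\pi))$ follows immediately, which is the content of the proposition.

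First I would unwind the strong RdLi-commutativity hypothesis to extract the factored map
\[ f : D_{\sigma_1}(\tau)\boxtimes\sigma_1 \longrightarrow (\sigma_2\times D_{\sigma_1}(\pi))\boxtimes\sigma_1 \]
whose postcomposition with the embedding $(\sigma_2\times D_{\sigma_1}(\pi))\boxtimes\sigma_1 \hookrightarrow \sigma_2\dot{\times}^1\pi_{N'}$ equals the canonical composition
\[ D_{\sigma_1}(\tau)\boxtimes\sigma_1 \hookrightarrow \tau_N \hookrightarrow (\sigma_2\times\pi)_N \twoheadrightarrow \sigma_2\dot{\times}^1\pi_{N'} \]
from Definition \ref{def strong commu intro}. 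The key preliminary step is to verify that $f \neq 0$ (equivalently, that $D_{\sigma_1}(\tau)\neq 0$). I would do so by tracing the defining embedding $D_{\sigma_1}(\pi)\boxtimes\sigma_1 \hookrightarrow \pi_{N'}$ through the parabolic induction against $\sigma_2$ to show that the $(\sigma_2\times D_{\sigma_1}(\pi))\boxtimes\sigma_1$ layer of $\sigma_2\dot{\times}^1\pi_{N'}$ is nontrivially hit by the image of $\tau_N$, combining $D_{\sigma_1}(\pi)\neq 0$ with the nonvanishing of the simple submodule $\tau \hookrightarrow \sigma_2\times\pi$.

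Next, since $\sigma_1$ is generic and hence irreducible, and $D_{\sigma_1}(\tau)$ is irreducible by the definition of the derivative operator, both source and target of $f$ are external tensors whose second factor is $\sigma_1$. A Schur-type identification
\[ \mathrm{Hom}_{G\times H}(A\boxtimes\sigma_1,\ A'\boxtimes\sigma_1) \cong \mathrm{Hom}_G(A, A') \]
then shows that the nonzero $f$ corresponds, up to scalar, to a unique nonzero $G$-map $\alpha : D_{\sigma_1}(\tau) \to \sigma_2\times D_{\sigma_1}(\pi)$. Irreducibility of the source upgrades $\alpha$ to an embedding, and the uniqueness of the simple submodule of $\sigma_2\times D_{\sigma_1}(\pi)$ closes the argument.

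I expect the main obstacle to be the nonvanishing of $f$. This is the step where the full force of strong RdLi-commutativity is used (as opposed to the bare pre-commutativity), because one must identify the factored map as the natural one coming from the universal property of $D_{\sigma_1}(\pi)$; without this, the factorization could collapse to the zero map. A careful geometric-lemma computation on $(\sigma_2\times\pi)_N$, comparing the image of $D_{\sigma_1}(\tau)\boxtimes\sigma_1$ via $\tau\hookrightarrow \sigma_2\times\pi$ with the embedding $D_{\sigma_1}(\pi)\boxtimes\sigma_1\hookrightarrow\pi_{N'}$ inside the top layer, should resolve this.
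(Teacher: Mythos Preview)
Your approach is correct and is exactly what the paper has in mind: it simply says the result ``is not hard to prove from definitions,'' and your unwinding of Definition~\ref{def strong commu intro} to extract an embedding $D_{\sigma_1}(\tau)\hookrightarrow \sigma_2\times D_{\sigma_1}(\pi)$, followed by the socle-uniqueness of $I_{\sigma_2}$, is the intended argument.

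One correction, however: you have misdiagnosed where the work lies. The nonvanishing of $f$ is not an obstacle at all, and it does not require any geometric-lemma computation. Pre-commutativity (which is part of the strong commutativity hypothesis) already asserts that the full composition
\[
D_{\sigma_1}(\tau)\boxtimes\sigma_1 \hookrightarrow \tau_N \hookrightarrow (\sigma_2\times\pi)_N \twoheadrightarrow \sigma_2\dot{\times}^1\pi_{N'}
\]
is nonzero. Strong commutativity then says this nonzero map factors through the \emph{injection} $(\sigma_2\times D_{\sigma_1}(\pi))\boxtimes\sigma_1 \hookrightarrow \sigma_2\dot{\times}^1\pi_{N'}$. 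A nonzero map factoring through an injection has nonzero first factor, so $f\neq 0$ is immediate. Your last paragraph inverts the roles: strong commutativity supplies the factorization, while pre-commutativity supplies the nonvanishing; together they give $f\neq 0$ for free.
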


\subsection{Examples of pre-commutativity}
We provide simple examples of pre-commutativity, which can be deduced from a simple application of the geometric lemma:

\begin{example} \label{example pre commutative}
\begin{enumerate}
\item Suppose $\Delta_1 \cap \Delta_2 =\emptyset$. Then $(\Delta_1, \Delta_2, \pi)$ is a pre-RdLi-commutative triple (for any $\pi \in \mathrm{Irr}$).
\item Let $\Delta_1=[a_1,b_1]_{\rho}$ and let $\Delta_2=[a_2,b_2]_{\rho}$ be segments. Suppose $a_2 <a_1$ or $b_2<b_1$. Then $(\Delta_1, \Delta_2, \pi)$ is a pre-RdLi-commutative triple (for any $\pi \in \mathrm{Irr}$). 
\end{enumerate}
\end{example}

\subsection{Pre-commutativity $\Rightarrow$ Strong commutativity}
\begin{proposition} \label{prop pre implies max pre} \cite[Lemma 8.4]{Ch22+d}
Let $(\mathrm{St}(\Delta), \mathrm{St}(\Delta'), \pi)$ be a pre-RdLi-commutative triple. Let $\mathfrak p=\mathfrak{mx}(\Delta, \pi)$. Then $(\mathrm{St}(\mathfrak p), \mathrm{St}(\Delta'), \pi)$ is also a pre-RdLi-commutative triple. 
\end{proposition}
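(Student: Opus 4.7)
The approach is by induction on $|\mathfrak p|$, the number of segments in $\mathfrak p=\mathfrak{mx}(\Delta,\pi)$. Write $\Delta=[a,b]_\rho$; the condition $D_\Delta(\pi)\neq 0$ which is built into pre-RdLi-commutativity forces $\varepsilon_\Delta(\pi)\geq 1$, so the segment $\Delta$ itself appears in $\mathfrak p$ with positive multiplicity. When $|\mathfrak p|=1$ this forces $\mathfrak p=\{\Delta\}$ and the conclusion is exactly the hypothesis. For the inductive step, pick $\Delta_1=\Delta$ and set $\pi_1=D_\Delta(\pi)$ and $\mathfrak p_1=\mathfrak p-\Delta$. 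Proposition \ref{prop eta function derivative}(2) on the change of $|\eta|_\Delta$ under a single $\Delta$-derivative, combined with the characterization of $\mathfrak{mx}$ recalled after Definition \ref{def max multisegment} and the factorization $D_{\mathfrak p}=D_{\mathfrak p_1}\circ D_\Delta$, identifies $\mathfrak p_1$ with $\mathfrak{mx}(\Delta,\pi_1)$. In particular $|\mathfrak p_1|=|\mathfrak p|-1$, so the induction hypothesis becomes available for $(\pi_1,\mathfrak p_1)$ provided we can first establish pre-RdLi-commutativity of $(\mathrm{St}(\Delta),\mathrm{St}(\Delta'),\pi_1)$.

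This latter upgrade is the principal technical step and the main obstacle. The strategy is to apply $D_\Delta$ to the nonzero composition (\ref{eqn pre commutative triple}) for $\pi$ and to check that the analogous composition for $\pi_1$ remains nonzero. Two ingredients feed in: first, the commutation $D_\Delta\circ I_{\Delta'}(\pi)\cong I_{\Delta'}\circ D_\Delta(\pi)$, which is a single-segment instance of Proposition \ref{prop strong commute imply commute} (directly accessible from the hypothesis and Example \ref{example pre commutative}); and second, the naturality of the top-layer projection $\tau_N\twoheadrightarrow \mathrm{St}(\Delta')\,\dot{\times}^1\,\pi_{N'}$ along any map in the second factor, since the geometric-lemma filtration on $(\mathrm{St}(\Delta')\times\pi)_N$ is natural in $\pi$. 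The delicate point is to verify that the image of the pre-commutative map for $\pi$ projects nontrivially onto $\mathrm{St}(\Delta')\,\dot{\times}^1\,(\pi_1)_{N'}$ after the descent along $\pi_1\boxtimes\mathrm{St}(\Delta)\hookrightarrow\pi_N$; this is precisely where the strict decrement $\varepsilon_\Delta(\pi_1)=\varepsilon_\Delta(\pi)-1$ from Proposition \ref{prop eta function derivative}(2), together with the uniqueness clause in Definition \ref{def integral derivative}(2), rules out degeneracy.

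With pre-RdLi-commutativity for $(\mathrm{St}(\Delta),\mathrm{St}(\Delta'),\pi_1)$ in place, the induction hypothesis supplies pre-RdLi-commutativity for $(\mathrm{St}(\mathfrak p_1),\mathrm{St}(\Delta'),\pi_1)$. To conclude, one concatenates this with the hypothesized pre-RdLi-commutativity for $(\mathrm{St}(\Delta),\mathrm{St}(\Delta'),\pi)$ by factoring the Jacquet functor associated to the partition $(n-l_a(\mathfrak p),l_a(\mathfrak p))$ through the two-step descent via $(n-l_a(\mathfrak p),l_a(\mathfrak p_1),l_a(\Delta))$; transitivity of the top-layer projections in the geometric lemma along this refinement stitches the two pieces together into the required pre-RdLi-commutativity for $(\mathrm{St}(\mathfrak p),\mathrm{St}(\Delta'),\pi)$, completing the induction.
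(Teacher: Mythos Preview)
Your argument has two genuine gaps that the paper's approach avoids.

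First, you invoke the commutation $D_\Delta\circ I_{\Delta'}(\pi)\cong I_{\Delta'}\circ D_\Delta(\pi)$ via Proposition~\ref{prop strong commute imply commute}, but that proposition requires a \emph{strongly} commutative triple as input, while your hypothesis is only pre-commutativity. The implication pre $\Rightarrow$ strong is Theorem~\ref{thm pre implies strong}, whose proof in the paper uses Proposition~\ref{prop pre implies max pre} itself; so this is circular. Example~\ref{example pre commutative} does not help: it only produces pre-commutativity in special configurations of $\Delta,\Delta'$, whereas your hypothesis is on a general pair. Second, and more seriously, the ``delicate point'' you flag is where the actual content lives, and you do not prove it. Asserting that the decrement $\varepsilon_\Delta(\pi_1)=\varepsilon_\Delta(\pi)-1$ together with uniqueness in Definition~\ref{def integral derivative}(2) ``rules out degeneracy'' is not an argument: one must control where the image of $D_\Delta(\tau)\boxtimes\mathrm{St}(\Delta)$ sits inside the further geometric-lemma layers after a second $\Delta$-derivative, and nothing in your sketch does this. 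There is also a structural problem with your induction: when $\varepsilon_\Delta(\pi)=1$, after one $D_\Delta$ you have $D_\Delta(\pi_1)=0$, so the triple $(\mathrm{St}(\Delta),\mathrm{St}(\Delta'),\pi_1)$ is not even defined as pre-commutative, and the induction hypothesis in the form you stated cannot be invoked.

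The paper's proof is organized differently and sidesteps all of this. It fixes $\pi$ and adds one $\Delta$-saturated segment at a time to $\mathfrak p_k$, ordered so that $a(\Delta_1)\geq\cdots\geq a(\Delta_r)$. The easy case $a(\Delta_{k+1})>a(\Delta')$ is handled directly by the shape of the geometric lemma (no commutation of $D$ and $I$ needed). The hard case $a(\Delta_{k+1})\leq a(\Delta')$ is argued by contradiction: one identifies the unique geometric layer that could receive the embedding, and then shows this forces a structural incompatibility with the socle-irreducibility of the big derivative $\mathbb D_{\Delta_1}(\pi)$ established in \cite{Ch22+b}. That external input is doing the real work, and your approach has no substitute for it.
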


\begin{proof}
We only sketch the proof. Now one labels the segments in $\mathfrak p$ such that 
\[  a(\Delta_r) \leq \ldots \leq a(\Delta_1) .
\]
Let $\mathfrak p_k=\left\{ \Delta_1, \ldots, \Delta_k \right\}$, for $k=1, \ldots, r$. One proceeds inductively on $k$. When $k=1$, it is automatic from the given hypothesis. We shall assume that we are not in a case of Example \ref{example pre commutative}.

Suppose we have that $(\mathrm{St}(\mathfrak p_k), \mathrm{St}(\Delta'), \pi)$ is a strongly RdLi-commutative triple. We now proceed in two cases:

\begin{enumerate}
\item[(1)] $a(\Delta_{k+1})> a(\Delta')$. This follows from a simple application of the geometric lemma (or uses Example \ref{example pre commutative}(2)) and the inductive hypothesis. 
\item[(2)] $a(\Delta_{k+1}) \leq a(\Delta')$. Suppose $(\mathrm{St}(\mathfrak p), \mathrm{St}(\Delta_2), \pi)$ is not strongly RdLi-commutative triple. Let $l=l_a(\mathfrak p_k)$. Write $\Delta=[a,b]_{\rho}$ and $\Delta'=[a',b']_{\rho}$. Let 
\[  \overline{\Delta}'=[b+1 ,b']_{\rho}, \quad \underline{\Delta}'=[a',b]_{\rho} .
\]
Let $p=l_a(\mathfrak p_k)$, $n=l_a(\Delta_2)+n(\pi)$, $r=l_a(\overline{\Delta}')$ and $s=l_a(\underline{\Delta}')$. 
Note that only one possible geometric layer can work: 
\[   D_{\mathfrak p_{k+1}}(\pi)\boxtimes \mathrm{St}(\mathfrak p_{k+1}) \hookrightarrow  \mathrm{Ind}_{P_{r,n-p-r} \times P_{s,p-s}}^{G_{n-p}\times G_p} (\mathrm{St}(\overline{\Delta}')\boxtimes \mathrm{St}(\underline{\Delta}') \boxtimes \pi_{N_l})^{\phi},
\]
where $\phi$ is a twist bringing to a $G_r \times G_{n-p-r}\times G_s\times G_{p-s}$-representation. Let $\lambda$ be the rightmost representation. Now the pre-commutativity for $(\mathrm{St}(\mathfrak p_k), \mathrm{St}(\Delta_2), \pi)$ forces that the submodule 
\[ (*)\quad D_{\mathfrak p_{k+1}}(\pi)\boxtimes \mathrm{St}(\underline{\Delta}')\boxtimes D^L_{\underline{\Delta}'}(\mathrm{St}(\mathfrak p_{k+1})) \hookrightarrow \lambda_{N_{r,q-r}},
\]
where $q=l_a(\mathfrak p_{k+1})$, lies in the top layer of the geometric lemma on $\lambda_{N_{r,q-r}}$. Indeed, one has the embedding:
\[   \mathrm{St}(\mathfrak p_{k+1}) \hookrightarrow \mathrm{St}(\underline{\Delta}') \dot{\times}^2 \pi_{N_{p+s}}
\]
to be in the top layer of the geometric lemma from induction. This contradicts to a structure arising from the socle-irreducible property of a big derivative in \cite[Corollary 11.8]{Ch22+b}. See \cite[Lemma 8.4]{Ch22+d} for more details.
\end{enumerate}

\end{proof}

\begin{theorem} \label{thm pre implies strong} \cite[Theorem 9.10]{Ch22+d}
Let $(\Delta_1, \Delta_2, \pi)$ be a pre-RdLi-commutative triple. Then $(\Delta_1, \Delta_2, \pi)$ is a strongly RdLi-commutative triple. 
\end{theorem}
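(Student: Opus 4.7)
The plan is to reduce to the maximal multisegment case via Proposition \ref{prop pre implies max pre}, establish strong commutativity there, and then descend back to the single-segment triple.

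First, I would apply Proposition \ref{prop pre implies max pre} to obtain pre-RdLi-commutativity of the enlarged triple $(\mathrm{St}(\mathfrak p), \mathrm{St}(\Delta_2), \pi)$, where $\mathfrak p = \mathfrak{mx}(\Delta_1, \pi)$. The key rigidity supplied by $\mathfrak p$ is that $D^R_{\mathfrak p}(\pi)$ is R-$\Delta_1$-reduced, so $\eta_{\Delta_1}(D^R_{\mathfrak p}(\pi)) = 0$, and every segment of $\mathfrak p$ shares the endpoint $b(\Delta_1)$.

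Next, I would show strong commutativity for the enlarged triple. Assume for contradiction it is pre- but not strongly commutative. The non-zero pre-commutative map $D^R_{\mathfrak p}(\tau) \boxtimes \mathrm{St}(\mathfrak p) \to \mathrm{St}(\Delta_2) \dot{\times}^1 \pi_{N_{l_a(\mathfrak p)}}$ would then meet a non-top layer when projected through the inner geometric lemma. Writing $\Delta_1 = [a,b]_\rho$ and $\Delta_2 = [a', b']_\rho$, and setting $\overline{\Delta}_2 = [b+1, b']_\rho$ and $\underline{\Delta}_2 = [a', b]_\rho$, a careful layer analysis in the geometric lemma on $(\mathrm{St}(\Delta_2) \times \pi)_{N_{l_a(\mathfrak p)}}$ identifies a unique candidate layer that can support such a map, essentially dual to the argument in case (2) of the proof of Proposition \ref{prop pre implies max pre}. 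That layer then forces an embedding that contradicts the socle-irreducibility of the big derivative $D^R_{\mathfrak p}$ from \cite[Section 9]{Ch22+b}.

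Finally, I would descend to the original triple. Since all segments of $\mathfrak p$ share the endpoint $b(\Delta_1)$, we can write $\mathfrak p = \Delta_1 + \mathfrak q$ and reorder derivatives so that $D^R_{\mathfrak p} = D^R_{\mathfrak q} \circ D^R_{\Delta_1}$. An intermediate Jacquet module isolating $D^R_{\Delta_1}$ then sits between $\tau_N$ and $\tau_{N_{l_a(\mathfrak p)}}$, and the strong commutativity embedding for the enlarged triple specializes to the required factorization through $(\mathrm{St}(\Delta_2) \times D^R_{\Delta_1}(\pi)) \boxtimes \mathrm{St}(\Delta_1)$ for $(\Delta_1, \Delta_2, \pi)$. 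The main obstacle I anticipate is the Jacquet module bookkeeping in this descent step, in particular tracking how the factorization is preserved when passing between $\pi_{N_{l_a(\mathfrak p)}}$ and $\pi_{N_{l_a(\Delta_1)}}$; the common endpoint property of segments of $\mathfrak p$ should make this tractable via the socle-irreducibility applied at each intermediate step.
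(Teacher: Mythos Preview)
Your route is genuinely different from the paper's. The paper never ascends to $\mathfrak p$ and descends back; it stays at the single-segment level throughout. It reformulates pre-commutativity as an embedding $D_{\Delta_1}\circ I_{\Delta_2}(\pi)\hookrightarrow \mathrm{St}(\Delta_2)\times \mathbb D_{\Delta_1}(\pi)$ into the \emph{big derivative} $\mathbb D_{\Delta_1}(\pi)=\mathrm{Hom}_{G_l}(\mathrm{St}(\Delta_1),\pi_{N_l})$, then uses the socle-irreducibility of $\mathbb D_{\Delta_1}(\pi)$ from \cite{Ch22+b} to reduce to excluding embeddings $D_{\Delta_1}\circ I_{\Delta_2}(\pi)\hookrightarrow \mathrm{St}(\Delta_2)\times\tau$ for composition factors $\tau$ of $\mathbb D_{\Delta_1}(\pi)/D_{\Delta_1}(\pi)$. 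These are ruled out by comparing the invariants $\eta_{\Delta_1}$ and $\eta^L_{\Delta_2}$ of the two sides; Proposition~\ref{prop pre implies max pre} is used only to extract the identities $\eta_{\Delta_1}(\pi)=\eta_{\Delta_1}(I_{\Delta_2}(\pi))$ and $\eta^L_{\Delta_2}(D_{\Delta_1}\circ I_{\Delta_2}(\pi))=\eta^L_{\Delta_2}(I_{\Delta_2}(\pi))$ from pre-commutativity.

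Your Step~2 can be made to work, but not by the argument you sketch. The $\mathfrak p$-level is easier not because of a ``dual of case~(2)'' contradiction, but because once $\eta_{\Delta_1}(\pi)=\eta_{\Delta_1}(I_{\Delta_2}(\pi))$ (which pre-commutativity gives, via Proposition~\ref{prop pre implies max pre}), the summand $D_{\mathfrak p}\circ I_{\Delta_2}(\pi)\boxtimes\mathrm{St}(\mathfrak p)$ is \emph{direct} in $I_{\Delta_2}(\pi)_{N_{l_a(\mathfrak p)}}$, and strong commutativity at the $\mathfrak p$-level follows almost formally.

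The genuine gap is Step~3. You need to deduce, from a factorization inside $\mathrm{St}(\Delta_2)\dot\times^1\pi_{N_{l_a(\mathfrak p)}}$, a factorization inside $\mathrm{St}(\Delta_2)\dot\times^1\pi_{N_{l_a(\Delta_1)}}$. Passing from the finer to the coarser Jacquet module goes the wrong way: the $\mathfrak p$-level statement is obtained from the $\Delta_1$-level one by applying a further right-exact Jacquet functor, and that functor is not faithful enough on Hom spaces to let you read the $\Delta_1$-level factorization back off. The ``common endpoint'' observation tells you $\mathrm{St}(\mathfrak p)\cong\mathrm{St}(\mathfrak q)\times\mathrm{St}(\Delta_1)$ is irreducible, but it does not supply the needed injectivity. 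Tellingly, in the proof of Theorem~\ref{thm combinatorial def} (3)$\Rightarrow$(1), the paper establishes strong commutativity at the $\mathfrak p$-level by the direct-summand argument and then \emph{invokes Theorem~\ref{thm pre implies strong} itself} to pass to the segment level---so the descent you propose is precisely the step for which the paper has no independent mechanism, and attempting to use it here to prove Theorem~\ref{thm pre implies strong} would be circular in spirit. The direct $\eta$-invariant argument on $\mathbb D_{\Delta_1}(\pi)$ avoids this entirely.
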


\begin{proof}
We sketch main ideas of a proof that relies on more explicit structure. Let $l=l_a(\Delta_2)$. As in \cite{Ch22+b}, we define the big derivative as a $G_l$-representation:
\[   \mathbb D_{\Delta_2}(\pi) :=\mathrm{Hom}_{G_{l}}( \mathrm{St}(\Delta_2), \pi_{N_{l}}) ,\]
where the $G_{l}$-action is defined via embedding to the second factor in $G_{n(\pi)-l}\times G_l$. The big derivative is equipped with $G_{n(\pi)-l}$-action via embedding to the factor in $G_{n(\pi)-l}\times G_l$.

 By the condition of the pre-RdLi-commutativity, we have that:
\[   D_{\Delta_1}\circ I_{\Delta_2}(\pi)  \hookrightarrow \mathrm{St}(\Delta_2)\times \mathbb D_{\Delta_1}(\pi)
\]
To show strong commutativity, it is equivalent to show that the map factors through the embedding $\mathrm{St}(\Delta_2)\times D_{\Delta_1}(\pi) \hookrightarrow \mathrm{St}(\Delta_2)\times \mathbb D_{\Delta_1}(\pi)$. 

It is shown in \cite[Theorem 12.7]{Ch22+b} that $\mathbb D_{\Delta_1}(\pi)$ satisfies the socle-irreducible property. Thus, it suffices to show that for any composition factor $\tau$ in $\mathbb D_{\Delta_1}(\pi)/D_{\Delta_1}(\pi)$, there is no embedding from $D_{\Delta_1}\circ I_{\Delta_2}(\pi)$ to $\mathrm{St}(\Delta_2)\times \tau$. 

We shall not go through all the details. The main idea is to consider the invariants $\eta_{\Delta_1}$ and $\eta_{\Delta_2}^L$. By the arguments in the proof of Theorem \ref{thm combinatorial def} below (which we only need pre-commutativity rather than strong commutativity, and use Proposition \ref{prop pre implies max pre}), we have that 
\[  \eta_{\Delta_1}(\pi) =\eta_{\Delta_1}(I_{\Delta_2}(\pi)), \quad \eta_{\Delta_2}^L(D_{\Delta_1}\circ I_{\Delta_2}(\pi))=\eta^L_{\Delta_2}(I_{\Delta_2}(\pi)) .
\]
On the other hand, some computations on Jacquet functors give that 
\[  |\eta|_{\Delta_1}(\tau) \leq |\eta|_{\Delta_1}(D_{\Delta_1}(\pi))=|\eta|_{\Delta_1}(\pi)-1, \quad |\eta|_{\Delta_2}^L(\tau)\leq |\eta|_{\Delta_2}^L(\pi)
\]
and $\eta_{\Delta_1}(\tau)\neq \eta_{\Delta_1}(D_{\Delta_1}(\pi))$. Then one uses those data to analyse and show that the embeddings $D_{\Delta_1}\circ I_{\Delta_2}(\pi) \hookrightarrow \mathrm{St}(\Delta_2)\times \tau$ cannot happen.
\end{proof}

\section{Combinatorial commutativity} \label{s combinatorial commutation}

%\subsection{Combinatorially commutative triples}

\begin{definition} \label{def combinatorial comm triple}
Let $\Delta_1, \Delta_2$ be segments. Let $\pi \in \mathrm{Irr}$. We say that $(\Delta_1, \Delta_2, \pi)$ is a {\it combinatorially RdLi-commutative triple} if $D_{\Delta_1}(\pi)\neq 0$ and
\[ \eta_{\Delta_1}(I_{\Delta_2}(\pi)) = \eta_{\Delta_1}(\pi) . 
\]
\end{definition}

There is a dual definition:

\begin{definition} \label{def dual com commutative}
Let $\Delta_1, \Delta_2$ be segments. Let $\pi \in \mathrm{Irr}$. We say that $(\Delta_1, \Delta_2, \pi)$ is a {\it dual combinatorially RdLi-commutative triple} if 
\[ \eta_{\Delta_2}^L(D_{\Delta_1}\circ I_{\Delta_2}(\pi))=\eta_{\Delta_2}^L(I_{\Delta_2}(\pi)). \]
\end{definition}

%\subsection{A commutation}

%\begin{lemma} \label{lem weak commutation}
%Let $(\Delta_1, \Delta_2, \pi)$ be a strongly commutative triple. Let $\mathfrak p=\mathfrak{mx}(\pi, \Delta_1)$. Then 
%\[  D_{\mathfrak p} \circ I_{\Delta_2}(\pi) \cong I_{\Delta_2}\circ D_{\mathfrak p}(\pi) .\]
%\end{lemma}

%\begin{proof}
%We have embeddings:
%\[     I_{\Delta_2}(\pi) \hookrightarrow  \mathrm{St}(\Delta_2)\times \pi \hookrightarrow \mathrm{St}(\Delta_2)\times D_{\mathfrak p}(\pi)\times \mathrm{St}(\mathfrak p) .\]
%By using $\eta_{\Delta_1}(I_{\Delta_2}(\pi))=\mathfrak p$, we have that
%\[  D_{\mathfrak p}\circ I_{\Delta_2}(\pi)\hookrightarrow \mathrm{St}(\Delta_2) \times D_{\mathfrak p}(\pi) .\]
% $D_{\mathfrak p}\circ I_{\Delta_2}(\pi)\cong I_{\Delta_2}\circ D_{\mathfrak p}(\pi)$. 
%\end{proof}

%\subsection{Equivalent definitions}

\begin{theorem} \label{thm combinatorial def} \cite[Theorem 10.3]{Ch22+d}
Let $\Delta_1, \Delta_2$ be segments. Let $\pi \in \mathrm{Irr}$. Then the following statements are equivalent:
\begin{enumerate}
\item $(\mathrm{St}(\Delta_1), \mathrm{St}(\Delta_2), \pi)$ is a strongly RdLi-commutative triple;
\item $(\mathrm{St}(\Delta_2), \mathrm{St}(\Delta_1), D_{\Delta_1}\circ I_{\Delta_2}(\pi))$ is a strongly LdRi-commutative triple;
\item $(\Delta_1, \Delta_2, \pi)$ is a combinatorially RdLi-commutative triple;
\item $(\Delta_1, \Delta_2, \pi)$ is a dual combinatorially RdLi-commutative triple.
\end{enumerate}
\end{theorem}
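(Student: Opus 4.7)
The plan is to prove the four-way equivalence by inserting the intermediate notion of pre-RdLi-commutativity, establishing the cycle
$$(3) \iff \text{pre-RdLi} \iff (1) \iff (2) \quad \text{and} \quad (3) \iff (4).$$
The nontrivial implication pre-RdLi $\Rightarrow$ (1) is precisely Theorem \ref{thm pre implies strong}, while (1) $\Rightarrow$ pre-RdLi is immediate from Definition \ref{def strong commu intro}. For (1) $\iff$ (2), I would use a direct structural duality on the strong-commutativity diagram: tracking Jacquet functors on both sides and applying the categorical equivalence coming from the socle-irreducibility of big derivatives \cite{Ch22+b} converts the RdLi top-layer factorization for $(\Delta_1,\Delta_2,\pi)$ into the LdRi top-layer factorization for $(\Delta_2,\Delta_1,D_{\Delta_1}I_{\Delta_2}(\pi))$, and vice versa.

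The substantive content is the equivalence pre-RdLi $\iff$ (3). For the forward direction, apply Proposition \ref{prop pre implies max pre} to the given pre-commutative triple $(\Delta_1,\Delta_2,\pi)$ to obtain a pre-commutative triple $(\mathrm{St}(\mathfrak p),\mathrm{St}(\Delta_2),\pi)$ with $\mathfrak p=\mathfrak{mx}(\pi,\Delta_1)$. This provides the embedding
$$D_{\mathfrak p}(I_{\Delta_2}(\pi)) \boxtimes \mathrm{St}(\mathfrak p) \hookrightarrow \mathrm{St}(\Delta_2) \dot{\times}^1 \pi_{N'},$$
which yields $\mathfrak p \subseteq \mathfrak{mx}(I_{\Delta_2}(\pi),\Delta_1)$. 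For the reverse containment, I would argue by contradiction: a strictly larger $\Delta_1$-saturated multisegment $\widetilde{\mathfrak p}\supsetneq\mathfrak p$ witnessing a non-zero derivative on $I_{\Delta_2}(\pi)$ would have to absorb cuspidal support coming from $\Delta_2$, but analyzing the geometric lemma layers on $(\mathrm{St}(\Delta_2)\times\pi)_N$ together with the top-layer factorization above shows the excess support forces a decomposition incompatible with the maximality of $\mathfrak p$ on $\pi$ itself. Combined with the automatic lower bound of Lemma \ref{lem right multi submulti}, this gives the equality $\eta_{\Delta_1}(I_{\Delta_2}(\pi))=\eta_{\Delta_1}(\pi)$.

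For (3) $\Rightarrow$ pre-RdLi, the hypothesis $\mathfrak{mx}(I_{\Delta_2}(\pi),\Delta_1)=\mathfrak{mx}(\pi,\Delta_1)$ guarantees $D_{\Delta_1}(I_{\Delta_2}(\pi))\neq 0$ and forces the canonical embedding
$$D_{\Delta_1}(I_{\Delta_2}(\pi)) \boxtimes \mathrm{St}(\Delta_1) \hookrightarrow (\mathrm{St}(\Delta_2)\times\pi)_N$$
to land in the top geometric layer $\mathrm{St}(\Delta_2)\dot{\times}^1\pi_{N'}$. Indeed, contribution from a lower layer would encode some $\Delta_2$-cuspidal content being consumed by the derivative, and translating this through the removal-process description (Theorem \ref{thm change in highest derivative after d} and Proposition \ref{prop eta function derivative}) would produce a strict increase in some coordinate of $\eta_{\Delta_1}$, contradicting the hypothesis. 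The equivalence (3) $\iff$ (4) then follows from the same analysis applied on the dual side: the left analog of Proposition \ref{prop pre implies max pre} extends pre-commutativity to $\mathfrak{mx}^L(I_{\Delta_2}(\pi),\Delta_2)$, and the left removal-process controls how $\eta^L_{\Delta_2}$ changes under $D_{\Delta_1}$.

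The main obstacle will be the reverse containment in pre-RdLi $\Rightarrow$ (3), that is, ruling out a strictly larger $\Delta_1$-saturated multisegment for $I_{\Delta_2}(\pi)$. This requires a careful layer-by-layer analysis of $(\mathrm{St}(\Delta_2)\times\pi)_N$ combined with socle-irreducibility of the big derivative $\mathbb D_{\Delta_1}$ from \cite{Ch22+b}, in the same spirit as the proof sketch of Theorem \ref{thm pre implies strong}. All the other implications should reduce, via Proposition \ref{prop pre implies max pre} and standard removal-sequence bookkeeping, to the two core geometric-lemma statements above.
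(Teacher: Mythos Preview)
Your overall cycle is sound, and you correctly identify Theorem \ref{thm pre implies strong} as the bridge from pre-commutativity to strong commutativity. However, your argument for $(3)\Rightarrow\text{pre-RdLi}$ has a real gap. You claim that a lower-layer contribution in the geometric lemma ``translating through the removal-process description \ldots\ would produce a strict increase in some coordinate of $\eta_{\Delta_1}$.'' But the removal process (Theorem \ref{thm change in highest derivative after d}, Proposition \ref{prop eta function derivative}) describes how $\varepsilon_{\Delta'}$ changes under \emph{derivatives}; it says nothing about which geometric-lemma layer an embedding lands in. The hypothesis $\eta_{\Delta_1}(I_{\Delta_2}(\pi))=\eta_{\Delta_1}(\pi)$ constrains derivative multiplicities, not the position of the submodule $D_{\Delta_1}(\tau)\boxtimes\mathrm{St}(\Delta_1)$ inside $(\mathrm{St}(\Delta_2)\times\pi)_N$. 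I do not see how to close this gap along the lines you propose.

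The paper circumvents this entirely. The key observation is that once $\mathfrak p=\mathfrak{mx}(\pi,\Delta_1)=\mathfrak{mx}(I_{\Delta_2}(\pi),\Delta_1)$ (which is exactly condition (3)), the module $D_{\mathfrak p}(I_{\Delta_2}(\pi))\boxtimes\mathrm{St}(\mathfrak p)$ is a \emph{direct summand} of $I_{\Delta_2}(\pi)_{N_l}$ (a general fact from \cite{Ch22+b}). This direct-summand property, together with $\pi\hookrightarrow D_{\mathfrak p}(\pi)\times\mathrm{St}(\mathfrak p)$, lets one verify strong commutativity of $(\mathrm{St}(\mathfrak p),\mathrm{St}(\Delta_2),\pi)$ directly, from which pre-commutativity of $(\Delta_1,\Delta_2,\pi)$ drops out. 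No layer-by-layer analysis is needed. The same direct-summand idea drives the paper's $(1)\Rightarrow(2)$ diagram, so your ``structural duality'' is correct in spirit but the mechanism is more concrete than socle-irreducibility alone.

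For the obstacle you flag in pre-RdLi $\Rightarrow(3)$ (the reverse containment $\mathfrak{mx}(I_{\Delta_2}(\pi),\Delta_1)\subseteq\mathfrak p$), the paper handles it more simply: the proof of Proposition \ref{prop pre implies max pre} actually goes through for any $\Delta_1$-saturated $\mathfrak p'$ with $D_{\mathfrak p'}(I_{\Delta_2}(\pi))\neq 0$, so taking $\mathfrak p'=\mathfrak{mx}(I_{\Delta_2}(\pi),\Delta_1)$ and reading off the top-layer factorization forces $D_{\mathfrak p'}(\pi)\neq 0$, hence $\mathfrak p'\subseteq\mathfrak p$. Finally, the paper obtains $(3)\Leftrightarrow(4)$ not by a parallel left-side analysis as you suggest, but by first proving $(1)\Leftrightarrow(2)$ and then running the analogue of $(1)\Leftrightarrow(3)$ on the LdRi side to get $(2)\Leftrightarrow(4)$.
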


\begin{proof}
We sketch a proof.  Let $\mathfrak p=\mathfrak{mx}(\Delta_1, \pi)$. For (1) $\Rightarrow$ (3), note that Proposition \ref{prop pre implies max pre}  holds for any $\Delta$-saturated $\mathfrak p'$ such that $D_{\mathfrak p'}(I_{\Delta_2}(\pi)) \neq 0$. One can now deduce (3) with Lemma \ref{lem right multi submulti}. An alternative proof is given in \cite[Theorem 8.4]{Ch22+d}.

 For (3) $\Rightarrow$ (1). It is simpler to prove that $(\mathrm{St}(\mathfrak p), \mathrm{St}(\Delta_2), \pi)$ is a strongly RdLi-commutative triple \cite{Ch22+d} by using the property that $D_{\mathfrak p}\circ I_{\Delta_2}(\pi)\boxtimes \mathrm{St}(\mathfrak p)$ is a direct summand in $I_{\Delta_2}(\pi)_{N_l}$, where $l=l_a(\mathfrak p)$. (For instance, one may use \cite[Proposition 4.4]{Ch22+d} and the decomposition $\pi \hookrightarrow D_{\mathfrak p}(\pi)\times \mathrm{St}(\mathfrak p)$.) Then one sees that $(\Delta_1, \Delta_2, \pi)$ is pre-RdLi-commutative and so (1) follows from Proposition \ref{prop pre implies max pre}.  
%Then (3) with the geometric lemma forces that $(\Delta_1, \Delta_2, \pi)$ is a pre-RdLi-commutative triple. Then (1) follows from Proposition \ref{prop pre implies max pre}. 

We now prove (1) $\Rightarrow$ (2). Let $\sigma_1=\mathrm{St}(\mathfrak p)$ and let $\sigma_2=\mathrm{St}(\Delta_2)$. Let $\tau=I_{\Delta_2}\circ D_{\mathfrak p}(\pi) \cong D_{\mathfrak p}\circ I_{\Delta_2}(\pi)$ (the commutativity follows from Proposition \ref{prop pre implies max pre} and the strong commutativity of $(\mathrm{St}(\mathfrak p), \mathrm{St}(\Delta_2), \pi)$). Let $m=n(\pi)$, $l_1=l_a(\sigma_1)$, $l_2=l_a(\sigma_2)$. Let $N=N_{l_2,m,l_1}$. We consider the following commutative diagram:
\[ \xymatrix{                                  &   \tau_{N_{l_2,m}} \boxtimes \sigma_1 \ar[dr]   &       &         \\
\sigma_2 \boxtimes D_{\Delta_2}^L(\tau) \boxtimes \sigma_1 \ar[ur] \ar[dr] &                            &   I_{\Delta_2}(\pi)_N   \ar[r]  & (\tau \times \sigma_1)_N              \\
                                           &   \sigma_2 \boxtimes \pi_{N_{m,l_1}} \ar[ur]  &        &
																			}
\]
(The commutativity of the diagram follows from some socle irreducible property of parabolic inductions, see \cite[Section 2]{Ch22+d}.) Indeed, $\tau \boxtimes \sigma_1$ is a direct summand in $I_{\Delta_2}(\pi)_{N_{l_1}}$ (see \cite[Proposition 7.1]{Ch22+d}) and so gives a non-zero composition as follows:
\[  \tau \boxtimes \sigma_1  \hookrightarrow I_{\Delta_2}(\pi)_{N_{l_1}} \rightarrow (\tau \times \sigma_1)_{N_{l_1}} \twoheadrightarrow \tau \boxtimes \sigma_1 ,
\]
where the last projection comes from the quotient in the geometric lemma. (For more details, see \cite{Ch22+d}.) This gives the layer that the copy $\sigma_2\boxtimes D_{\sigma_2}(\tau)\boxtimes \sigma_1$ lies in $(\tau \times \sigma_1)_N$ from geometric lemma. Now from the commutativity diagram, one must have that $(\sigma_2, \sigma_1, \tau)$ is a pre-LdRi-commutative triple. This implies that $(\Delta_2, \Delta_1, \tau)$ is also pre-LdRi-commutative triple (see e.g. \cite[Proposition 7.2]{Ch22+d}). Then $(\Delta_2, \Delta_1, \tau)$ is also strongly LdRi-commutative triple by Theorem \ref{thm pre implies strong}  as desired.

 %Let $\sigma_1=\mathrm{St}(\mathfrak p)$ and let $\sigma_2=\mathrm{St}(\Delta_2)$. Let $l_1=l_a(\mathfrak p)$ and let $l_2=l_a(\Delta_2)$ and let $m=n(D_{\mathfrak p}\circ I_{\Delta_2}(\pi))$. Let $N=N_{m,l_1}$. We consider the following commutative diagram:
%\[   \xymatrix{                                        &  (\sigma_2\times \pi)_{N}  \ar[dr]       &     \\
%\sigma_2 \boxtimes D_{\sigma_1}(\pi)\boxtimes \sigma_1 \ar[ur] \ar[dr] &                            & (\sigma_2 \times D_{\sigma_1}(\pi)\times \sigma_1)_N        \\
%                                                       &  (\tau \times \sigma_1)_N \ar[ur] &                      } \]
%Here the left top map is induced from $\tau \boxtimes \sigma_1 \hookrightarrow I_{\Delta_2}(\pi)_{N_{m,l_1}} \hookrightarrow (\sigma_2 \times \pi)_{N_{m,l_1}}$, and the left bottom map is induced from $\sigma_2 \boxtimes \pi \hookrightarrow I_{\Delta_2}(\pi)_{N_{l_2,m}} \hookrightarrow (\tau \times \sigma_1)_{N_{l_2,m}}$. The right maps are induced from $\pi \hookrightarrow D_{\mathfrak p}(\pi)\times \sigma_1$ and $\tau \hookrightarrow \sigma_2 \times D_{\mathfrak p}(\pi)$. The strong commutation property allows one to compute the layer form the geometric lemma of $ (\sigma_2 \times D_{\sigma_1}(\pi)\times \sigma_1)_N$, which $\sigma_2\boxtimes D_{\sigma_1}(\pi)\boxtimes \sigma_1$  lies in. Then the commutative diagram forces that $(\sigma_2, \sigma_1, \tau)$ is pre-LdRi-commutative. The strong commutation then follows from Theorem \ref{thm pre implies strong}.

Proof for (2)$\Rightarrow$(1) is similar to that of (1)$\Rightarrow$(2). A proof for (2)$\Leftrightarrow$(4) is similar to that of (1)$\Leftrightarrow$(3).
\end{proof}

\part{Generalized GGP relevance from sequences of strongly commutative triples}

In this section, we define and study the strong commutativity for multisegments, extending the segment case in Part \ref{part defintions comm triples}. As shown in \cite{Ch22+aa} (see Theorem \ref{thm minimality of d and i}), there is a good theory of minimal multisegments for derivatives and integrals. Sections \ref{s unlinked commute triples} to \ref{s minimal strong commutative triples for seq} show the compatibility of such theory with the strong commutation for multisegments, which will be useful in the later proofs. Another two main results in this part are the uniqueness property (Theorem \ref{thm unique of relevant pairs}) and the symmetry property (Theorem \ref{thm symmetric property of relevant}) for relevance.

\section{Unlinked segments for strongly commutative triples} \label{s unlinked commute triples}

\subsection{Unlinked segments} \label{ss unlinked seg}

We first have the following result, see e.g. \cite[Lemma 4.10]{Ch22+}:
\begin{lemma} \label{lem unlinked comm der and integral basic} 
Let $\Delta_1, \Delta_2$ be unlinked segments. Let $\pi \in \mathrm{Irr}$. Then $D_{\Delta_1}\circ D_{\Delta_2}(\pi)\cong D_{\Delta_2}\circ D_{\Delta_1}(\pi)$ and $I_{\Delta_1}\circ I_{\Delta_2}(\pi) \cong I_{\Delta_2}\circ I_{\Delta_1}(\pi)$. 
\end{lemma}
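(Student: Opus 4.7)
The plan is to reduce both assertions to the foundational fact that when $\Delta_1,\Delta_2$ are unlinked the parabolic induction $\mathrm{St}(\Delta_1)\times \mathrm{St}(\Delta_2)$ is irreducible and canonically isomorphic to $\mathrm{St}(\Delta_2)\times \mathrm{St}(\Delta_1)$; denote this common irreducible representation by $\sigma$. All further work is bookkeeping that transports this symmetry across the relevant Hom/Jacquet adjunctions.

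For the integral statement, I would iterate Definition \ref{def integral derivative}(1) together with the exactness of parabolic induction to get the two embeddings
\[
  I_{\Delta_1}\circ I_{\Delta_2}(\pi) \hookrightarrow \mathrm{St}(\Delta_1)\times \mathrm{St}(\Delta_2)\times \pi \cong \sigma\times\pi,
\]
\[
  I_{\Delta_2}\circ I_{\Delta_1}(\pi) \hookrightarrow \mathrm{St}(\Delta_2)\times \mathrm{St}(\Delta_1)\times \pi \cong \sigma\times\pi.
\]
Both images are simple submodules. The socle-irreducibility of parabolic inductions by standard modules, as recalled just after Definition \ref{def integral derivative} from \cite{KKKO15, LM16}, guarantees that $\sigma\times \pi$ has a unique simple submodule, forcing the two integrals to agree.

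For the derivative statement, set $l_i=l_a(\Delta_i)$. Transitivity of Jacquet functors and iterating Definition \ref{def integral derivative}(2) produce the embedding
\[
  D_{\Delta_1}\circ D_{\Delta_2}(\pi)\boxtimes \mathrm{St}(\Delta_1)\boxtimes \mathrm{St}(\Delta_2) \hookrightarrow \pi_{N_{n-l_1-l_2, l_1, l_2}} \cong \bigl(\pi_{N_{n-l_1-l_2, l_1+l_2}}\bigr)_{N_{l_1,l_2}}.
\]
Frobenius reciprocity applied in the last two factors converts this into a non-zero map
\[
  D_{\Delta_1}\circ D_{\Delta_2}(\pi)\boxtimes \sigma \to \pi_{N_{n-l_1-l_2, l_1+l_2}}.
\]
Since $\sigma$ is irreducible, the source is irreducible and this map is therefore injective. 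The symmetric argument provides an embedding of $D_{\Delta_2}\circ D_{\Delta_1}(\pi)\boxtimes \sigma$ into the \emph{same} Jacquet module. Socle-irreducibility again supplies uniqueness of an irreducible subrepresentation whose second factor is $\sigma$, forcing $D_{\Delta_1}\circ D_{\Delta_2}(\pi)\cong D_{\Delta_2}\circ D_{\Delta_1}(\pi)$.

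The main obstacle, modest but not purely formal, is the lift from the iterated Jacquet module to the single Jacquet module in the derivative argument: one needs the Frobenius-transposed morphism to be injective, and the clean way to see this is precisely the irreducibility of $\sigma$, which rules out any proper subquotient absorbing part of the image. The integral case is easier because socle-irreducibility of $\sigma\times \pi$ applies directly without any intermediate adjunction step.
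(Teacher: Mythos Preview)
Your argument is correct and is the standard route to this lemma. The paper itself does not give a proof here; it simply cites \cite[Lemma 4.10]{Ch22+}, and what you have written is essentially how that argument runs: reduce to the irreducibility of $\sigma=\mathrm{St}(\Delta_1)\times\mathrm{St}(\Delta_2)\cong\mathrm{St}(\Delta_2)\times\mathrm{St}(\Delta_1)$ for unlinked segments, then invoke the socle-irreducibility results of \cite{KKKO15,LM16} (valid because $\sigma$ is generic, hence square-irreducible) to identify both iterates with $I_\sigma(\pi)$, respectively $D_\sigma(\pi)$.

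One small point worth making explicit: your derivative argument, as written, shows that \emph{if both iterated derivatives are nonzero} then each embeds as the unique $?\boxtimes\sigma$ submodule of $\pi_{N_{l_1+l_2}}$ and hence they agree. You should also rule out the case where exactly one vanishes. This follows from the same circle of ideas: if $D_{\Delta_1}\circ D_{\Delta_2}(\pi)\neq 0$ then, chaining the embeddings $\pi\hookrightarrow D_{\Delta_2}(\pi)\times\mathrm{St}(\Delta_2)$ and $D_{\Delta_2}(\pi)\hookrightarrow D_{\Delta_1}D_{\Delta_2}(\pi)\times\mathrm{St}(\Delta_1)$, one gets $\pi\hookrightarrow D_{\Delta_1}D_{\Delta_2}(\pi)\times\sigma$; rewriting $\sigma\cong\mathrm{St}(\Delta_2)\times\mathrm{St}(\Delta_1)$ and using socle-irreducibility twice in the other order forces $D_{\Delta_1}(\pi)\neq 0$ and then $D_{\Delta_2}\circ D_{\Delta_1}(\pi)\neq 0$. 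With that addition the proof is complete.
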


\begin{proposition} \label{prop unlinked commute integral}
Let $\Delta_1', \Delta_2'$ be unlinked segments. Let $\sigma=\mathrm{St}(\left\{ \Delta_1', \Delta_2' \right\})$. Let $\Delta$ be another segment. Let $\pi \in \mathrm{Irr}$. The following conditions are equivalent:
\begin{enumerate}
 \item $(\Delta, \Delta_1', \pi)$ and $(\Delta, \Delta_2', I_{\Delta_1'}(\pi))$ are strongly RdLi-commutative triples;
 \item $(\Delta, \Delta_2', \pi)$ and $(\Delta, \Delta_1', I_{\Delta_2'}(\pi))$ are strongly RdLi-commutative triples. 
\end{enumerate}
\end{proposition}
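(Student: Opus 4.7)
The plan is to reduce the equivalence to a comparison of $\eta$-invariants via Theorem~\ref{thm combinatorial def}. The relevant part of that theorem, namely the equivalence of its conditions (1) and (3), recasts strong RdLi-commutativity of a triple $(\Delta,\Delta',\tau)$ as the combinatorial condition that $D_{\Delta}(\tau)\neq 0$ together with $\eta_{\Delta}(I_{\Delta'}(\tau))=\eta_{\Delta}(\tau)$. Under this translation, condition (1) becomes the two equalities
\begin{equation*}
  \eta_{\Delta}(I_{\Delta_1'}(\pi))=\eta_{\Delta}(\pi),\qquad \eta_{\Delta}(I_{\Delta_2'}\circ I_{\Delta_1'}(\pi))=\eta_{\Delta}(I_{\Delta_1'}(\pi)),
\end{equation*}
plus the non-vanishing of $D_{\Delta}$ on $\pi$ and on $I_{\Delta_1'}(\pi)$; condition (2) has the analogous description with the roles of $\Delta_1'$ and $\Delta_2'$ swapped.

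Assuming (1), the two displayed equalities concatenate to $\eta_{\Delta}(I_{\Delta_2'}\circ I_{\Delta_1'}(\pi))=\eta_{\Delta}(\pi)$. Since $\Delta_1'$ and $\Delta_2'$ are unlinked, Lemma~\ref{lem unlinked comm der and integral basic} gives $I_{\Delta_2'}\circ I_{\Delta_1'}(\pi)\cong I_{\Delta_1'}\circ I_{\Delta_2'}(\pi)$, hence also $\eta_{\Delta}(I_{\Delta_1'}\circ I_{\Delta_2'}(\pi))=\eta_{\Delta}(\pi)$. Applying the monotonicity $\eta_{\Delta}(\tau)\leq \eta_{\Delta}(I_{\Delta''}(\tau))$ from Lemma~\ref{lem right multi submulti} twice now yields the sandwich
\begin{equation*}
  \eta_{\Delta}(\pi)\;\leq\;\eta_{\Delta}(I_{\Delta_2'}(\pi))\;\leq\;\eta_{\Delta}(I_{\Delta_1'}\circ I_{\Delta_2'}(\pi))\;=\;\eta_{\Delta}(\pi),
\end{equation*}
forcing equality throughout. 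The two $\eta$-equalities demanded by (2) fall out at once. The non-vanishing conditions in (2) are handled as follows: $D_{\Delta}(\pi)\neq 0$ is inherited from (1), while the equality $\eta_{\Delta}(I_{\Delta_2'}(\pi))=\eta_{\Delta}(\pi)$ forces in particular $\varepsilon_{\Delta}(I_{\Delta_2'}(\pi))=\varepsilon_{\Delta}(\pi)\geq 1$, giving $D_{\Delta}(I_{\Delta_2'}(\pi))\neq 0$. The reverse implication (2)$\Rightarrow$(1) follows by the same argument with the roles of $\Delta_1'$ and $\Delta_2'$ exchanged.

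No essential obstacle is anticipated: Theorem~\ref{thm combinatorial def} absorbs the entire representation-theoretic content, and what remains is a pure sandwich built from monotonicity of $\eta_{\Delta}$ (Lemma~\ref{lem right multi submulti}) together with commutativity of unlinked integrals (Lemma~\ref{lem unlinked comm der and integral basic}). The only minor bookkeeping point is that Lemma~\ref{lem right multi submulti} is stated for the left integral $I_{\Delta'}=I_{\Delta'}^L$, in agreement with the convention fixed in Section~\ref{ss notion simplify}, so the lemma applies directly in the RdLi setting without any auxiliary symmetry argument.
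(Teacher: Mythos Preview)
Your proof is correct and follows essentially the same approach as the paper: translate strong RdLi-commutativity into $\eta$-invariant equalities via Theorem~\ref{thm combinatorial def}, then use the monotonicity of Lemma~\ref{lem right multi submulti} together with the commutativity of unlinked integrals (Lemma~\ref{lem unlinked comm der and integral basic}) to sandwich the required equalities. You are in fact slightly more explicit than the paper in verifying the non-vanishing condition $D_{\Delta}(I_{\Delta_2'}(\pi))\neq 0$, which the paper leaves implicit.
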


\begin{proof}
Suppose (1) holds. By Theorem \ref{thm combinatorial def},
\[   \eta_{\Delta}(\pi) =\eta_{\Delta}(I_{\Delta_1'}(\pi))=\eta_{\Delta}(I_{\Delta_2'}\circ I_{\Delta_1'}(\pi))
\]
and, by Lemma \ref{lem right multi submulti},
\[  \eta_{\Delta}(\pi) \leq \eta_{\Delta}(I_{\Delta_2'}(\pi))\leq \eta_{\Delta}(I_{\Delta_1'}\circ I_{\Delta_2'}(\pi))=\eta_{\Delta}(I_{\Delta_1'}\circ I_{\Delta_2'}(\pi)).
\]
Thus, those inequalities are equalities. This implies (2). The other direction can be proved similarly.  
\end{proof}

\begin{proposition} \label{prop unlinked commute derivative}
Let $\Delta_1, \Delta_2$ be unlinked segments. Let $\Delta'$ be another segment. Let $\pi\in \mathrm{Irr}$. The following conditions are equivalent:
\begin{enumerate}
\item $(\Delta_1, \Delta', \pi)$ and $(\Delta_2, \Delta', D_{\Delta_1}(\pi))$ are strongly RdLi-commutative triples;
\item $(\Delta_2, \Delta', \pi)$ and $(\Delta_1, \Delta', D_{\Delta_2}(\pi))$ are strongly RdLi-commutative triples.
\end{enumerate}
\end{proposition}

\begin{proof}
Let $\tau =I_{\Delta'}\circ D_{\Delta_2}\circ D_{\Delta_1}(\pi)$. Note that if $(\Delta', \Delta_2, \tau)$ is strongly LdRi-commutative, then $D_{\Delta'}\circ I_{\Delta_2}(\tau)=D_{\Delta_1}(\pi)$ by Proposition \ref{prop strong commute imply commute} and so $I_{\Delta_2}(\tau)=I_{\Delta'}\circ D_{\Delta_1}(\pi)$. 

By the duality in Theorem \ref{thm combinatorial def}, the two conditions can be rephrased as:
\begin{enumerate}
\item $(\Delta', \Delta_1, I_{\Delta_2}(\tau))$ and $(\Delta', \Delta_2, \tau)$ are strongly LdRi-commutative triples;
\item $(\Delta', \Delta_2, I_{\Delta_1}(\tau))$ and $(\Delta', \Delta_1, \tau)$ are strongly LdRi commutative triples. 
\end{enumerate}
Now the proposition follows from the LdRi version of the integral one in Proposition \ref{prop unlinked commute integral}.
\end{proof}

\section{Intersection-union operations for integrals in commutative triples} \label{s linked strong commutation}

\subsection{Intersection-union process} \label{ss intersect union}

Let $\mathfrak m_1, \mathfrak m_2 \in \mathrm{Mult}$. We say that $\mathfrak m_2$ is obtained from $\mathfrak m_1$ by an {\it intersection-union process} if there exists a pair of linked segments $\Delta_1, \Delta_2$ such that 
\[  \mathfrak m_2=\mathfrak m_1-\left\{ \Delta_1, \Delta_2 \right\}+\Delta_1\cup \Delta_2+\Delta_1\cap \Delta_2 .
\] 
We simply drop the last term if $\Delta_1\cap \Delta_2=\emptyset$. Recall that $\leq_Z$ on multisegments is defined in Section \ref{ss gGGP relevance}.

\subsection{Strong commutativity under intersection-union process for integrals}
In this section, we shall prove:

\begin{proposition} \label{prop intersect union comm}
Let $\Delta_1, \Delta_2$ be linked segments. Let $\widetilde{\Delta}$ be another segment. Let $\Delta_1'=\Delta_1\cup \Delta_2$ and $\Delta_2'=\Delta_1\cap \Delta_2$. Let $\pi \in \mathrm{Irr}$. Suppose $I_{\Delta_2}\circ I_{\Delta_1}(\pi) \cong I_{\Delta_1'}\circ I_{\Delta_2'}(\pi)$. Then, the following statements are equivalent:
\begin{enumerate}
\item $(\widetilde{\Delta}, \Delta_1, \pi)$ and $(\widetilde{\Delta}, \Delta_2, I_{\Delta_1}(\pi))$ are strongly RdLi-commutative triples;
\item $(\widetilde{\Delta}, \Delta_1', \pi)$ and $(\widetilde{\Delta}, \Delta_2', I_{\Delta_1'}(\pi))$
are strongly RdLi-commutative triples;
\item $(\widetilde{\Delta}, \Delta_2', \pi)$ and $(\widetilde{\Delta}, \Delta_1', I_{\Delta_2'}(\pi))$ are strongly RdLi-commutative triples.
\end{enumerate}
Moreover, if any of the equivalent conditions holds, then $I_{\Delta_2}\circ I_{\Delta_1}\circ D_{\widetilde{\Delta}}(\pi)\cong I_{\Delta_2'}\circ I_{\Delta_1'}\circ D_{\widetilde{\Delta}}(\pi)$. 
\end{proposition}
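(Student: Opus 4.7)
The plan is to reduce each of the three strong commutativity conditions to an equality of $\eta_{\widetilde{\Delta}}$-invariants via the combinatorial criterion in Theorem \ref{thm combinatorial def}, and then exploit the given hypothesis together with the monotonicity in Lemma \ref{lem right multi submulti} to close all implications. Concretely, by the equivalence of (1) and (3) in Theorem \ref{thm combinatorial def}, condition (1) translates into the two equalities $\eta_{\widetilde{\Delta}}(I_{\Delta_1}(\pi)) = \eta_{\widetilde{\Delta}}(\pi)$ and $\eta_{\widetilde{\Delta}}(I_{\Delta_2}\circ I_{\Delta_1}(\pi)) = \eta_{\widetilde{\Delta}}(I_{\Delta_1}(\pi))$, together with $D_{\widetilde{\Delta}}(\pi)\neq 0$. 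Similarly, (2) amounts to $\eta_{\widetilde{\Delta}}(\pi) = \eta_{\widetilde{\Delta}}(I_{\Delta_1'}(\pi)) = \eta_{\widetilde{\Delta}}(I_{\Delta_2'}\circ I_{\Delta_1'}(\pi))$, and (3) to $\eta_{\widetilde{\Delta}}(\pi) = \eta_{\widetilde{\Delta}}(I_{\Delta_2'}(\pi)) = \eta_{\widetilde{\Delta}}(I_{\Delta_1'}\circ I_{\Delta_2'}(\pi))$.

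Next, by Lemma \ref{lem right multi submulti}, each application of $I_{\Delta}$ only increases $\eta_{\widetilde{\Delta}}$ coordinatewise, so each chain of two equalities above collapses to the single outermost equality. Thus (1) is equivalent to $\eta_{\widetilde{\Delta}}(\pi) = \eta_{\widetilde{\Delta}}(I_{\Delta_2}\circ I_{\Delta_1}(\pi))$, (2) to $\eta_{\widetilde{\Delta}}(\pi) = \eta_{\widetilde{\Delta}}(I_{\Delta_2'}\circ I_{\Delta_1'}(\pi))$, and (3) to $\eta_{\widetilde{\Delta}}(\pi) = \eta_{\widetilde{\Delta}}(I_{\Delta_1'}\circ I_{\Delta_2'}(\pi))$. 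The equivalence (1) $\Leftrightarrow$ (3) is then immediate from the hypothesis $I_{\Delta_2}\circ I_{\Delta_1}(\pi) \cong I_{\Delta_1'}\circ I_{\Delta_2'}(\pi)$. For (2) $\Leftrightarrow$ (3), I will use that $\Delta_2' \subset \Delta_1'$ is a nested pair, hence unlinked in Zelevinsky's sense, so $\mathrm{St}(\Delta_1')\times\mathrm{St}(\Delta_2')$ is irreducible; consequently both $I_{\Delta_2'}\circ I_{\Delta_1'}(\pi)$ and $I_{\Delta_1'}\circ I_{\Delta_2'}(\pi)$ coincide with the unique irreducible submodule of $\mathrm{St}(\Delta_1')\times\mathrm{St}(\Delta_2')\times\pi$, whence their $\eta_{\widetilde{\Delta}}$-values agree. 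The non-vanishing $D_{\widetilde{\Delta}}(\pi)\neq 0$ is transported across all conditions via Proposition \ref{prop strong commute imply commute}.

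For the moreover assertion, under (1) two applications of Proposition \ref{prop strong commute imply commute} yield $D_{\widetilde{\Delta}}\circ I_{\Delta_2}\circ I_{\Delta_1}(\pi) \cong I_{\Delta_2}\circ I_{\Delta_1}\circ D_{\widetilde{\Delta}}(\pi)$, and under (3) similarly $D_{\widetilde{\Delta}}\circ I_{\Delta_1'}\circ I_{\Delta_2'}(\pi) \cong I_{\Delta_1'}\circ I_{\Delta_2'}\circ D_{\widetilde{\Delta}}(\pi)$. Applying $D_{\widetilde{\Delta}}$ to both sides of the hypothesis $I_{\Delta_2}\circ I_{\Delta_1}(\pi)\cong I_{\Delta_1'}\circ I_{\Delta_2'}(\pi)$ and combining with the nested-segment commutation $I_{\Delta_1'}\circ I_{\Delta_2'}(-)\cong I_{\Delta_2'}\circ I_{\Delta_1'}(-)$ from the previous paragraph produces the desired $I_{\Delta_2}\circ I_{\Delta_1}\circ D_{\widetilde{\Delta}}(\pi)\cong I_{\Delta_2'}\circ I_{\Delta_1'}\circ D_{\widetilde{\Delta}}(\pi)$.

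The step I expect to require the most care is the (2) $\Leftrightarrow$ (3) implication: the hypothesis only identifies one ordering of the iterated integral of $\Delta_1',\Delta_2'$ with $I_{\Delta_2}\circ I_{\Delta_1}(\pi)$, so the argument must pass through the nested irreducibility of $\mathrm{St}(\Delta_1')\times\mathrm{St}(\Delta_2')$, which is a feature of the non-linked Zelevinsky configuration rather than the paper's broader ``linked'' convention. Should a direct irreducibility reference prove awkward, an alternative is to leverage the sandwich $\eta_{\widetilde{\Delta}}(\pi) \leq \eta_{\widetilde{\Delta}}(I_{\Delta_j'}(\pi)) \leq \eta_{\widetilde{\Delta}}(I_{\Delta_{3-j}'}\circ I_{\Delta_j'}(\pi))$ for $j=1,2$ and deduce the coincidence of the two outermost $\eta_{\widetilde{\Delta}}$-values from either (1) or (3) plus the hypothesis, thereby still closing the triangle of equivalences.
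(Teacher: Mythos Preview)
Your proposal is correct and follows essentially the same approach as the paper: translate each strong commutativity condition into an $\eta_{\widetilde{\Delta}}$-equality via Theorem~\ref{thm combinatorial def}, use the monotonicity of Lemma~\ref{lem right multi submulti} to sandwich the intermediate terms, invoke the hypothesis to link the two orderings, and use the unlinkedness of $\Delta_1',\Delta_2'$ to swap them. The only minor remark is that your ``unique irreducible submodule of $\mathrm{St}(\Delta_1')\times\mathrm{St}(\Delta_2')\times\pi$'' step is more cleanly handled by directly citing Lemma~\ref{lem unlinked comm der and integral basic} (as the paper does) rather than arguing via irreducibility of the Steinberg product; your alternative sandwich argument is exactly what the paper writes out.
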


\begin{proof}

We use the notations in the statement of the proposition. Suppose (1) holds. By Theorem \ref{thm combinatorial def}, 
\[ (*)\quad  \eta_{\widetilde{\Delta}}(\pi)=\eta_{ \widetilde{\Delta}}(I_{\Delta_1}(\pi))=\eta_{ \widetilde{\Delta}}(I_{\Delta_2}\circ I_{\Delta_1}(\pi))=\eta_{ \widetilde{\Delta}}(I_{\Delta_2'}\circ I_{\Delta_1'}(\pi)) .
\]
On the other hand, by Lemma \ref{lem right multi submulti},
\[      \eta_{\widetilde{\Delta}}(\pi) \leq \eta_{\widetilde{\Delta}}(I_{\Delta_1'}(\pi)) \leq \eta_{\widetilde{\Delta}}(I_{\Delta_2'}\circ I_{\Delta_1'}(\pi)) .
\]
Now, (*) forces that the inequalities are actually equations. Hence, we have that $(\widetilde{\Delta}, \Delta_1', \pi)$ and $(\widetilde{\Delta}, \Delta_2', I_{\Delta_2'}(\pi))$ are combinatorially commutative triples, and so are strongly RdLi-commutative triples by Theorem \ref{thm combinatorial def}. This proves (2).

Since $\Delta_1'$ and $\Delta_2'$ are unlinked, we have $I_{\Delta_2'}\circ I_{\Delta_1'}(\pi)\cong I_{\Delta_1'}\circ I_{\Delta_2'}(\pi)$ by Lemma \ref{lem unlinked comm der and integral basic}. Then a similar argument as above will prove (3). 

For (3) $\Rightarrow$ (2), one can argue similarly. Indeed,
\[  \eta_{\widetilde{\Delta}}(\pi) =\eta_{\widetilde{\Delta}}(I_{\Delta_1'}(\pi))=\eta_{\widetilde{\Delta}}(I_{\Delta_2'}\circ I_{\Delta_1'}(\pi)) =\eta_{\widetilde{\Delta}}(I_{\Delta_2}\circ I_{\Delta_1}(\pi)) ,
\]
where the first two equations follow from strong commutation and Theorem \ref{thm combinatorial def}; and the last equation follows from the given assumption.  We also have:
\[  \eta_{\widetilde{\Delta}}(\pi) \leq \eta_{\widetilde{\Delta}}(I_{\Delta_1}(\pi)) \leq \eta_{\widetilde{\Delta}}(I_{\Delta_2}\circ I_{\Delta_1}(\pi)) ,
\]
where the inequalities follow from Lemma \ref{lem right multi submulti}. Thus, we then must have the inequalites to be equalties and so Theorem \ref{thm combinatorial def} implies (2).

Proving (3) $\Rightarrow$ (1) is similar. The last assertion follows by applying $D_{\widetilde{\Delta}}$ on $I_{\Delta_2}\circ I_{\Delta_1}(\pi) \cong I_{\Delta_1'}\circ I_{\Delta_2'}(\pi)$ and then using Proposition \ref{prop strong commute imply commute}.
\end{proof}

%\begin{theorem} \label{thm intersect union comm 2}
%Let $\Delta_1, \Delta_2$ be linked segments. Let $\widetilde{\Delta}$ be another segment. Let $\sigma=\left\{ \Delta_1, \Delta_2 \right\}$. Let $\pi \in \mathrm{Irr}$. Suppose $I_{\Delta_2}\circ I_{\Delta_1}(\pi) \cong I_{\sigma}(\pi)$. Then the following two statements are equivalent.
%\begin{enumerate}
%\item $(\widetilde{\Delta}, \Delta_1, \pi)$ and $(\widetilde{\Delta}, \Delta_2, I_{\Delta_1}(\pi))$ are strongly commutative triples;
%\item $(\mathrm{St}(\widetilde{\Delta}), \sigma, \pi)$ is strongly commutative triple.
%\end{enumerate}
%\end{theorem}

\begin{remark}
An analogous statement of Proposition \ref{prop intersect union comm} does not hold in general for derivatives (while it is not so far away, c.f. Lemma \ref{lem commute and intersect union}). For example, consider $\Delta_1=[0,1]$ and $\Delta_2=[2,3]$ and $\widetilde{\Delta}=[1,2]$. Let $\pi=\mathrm{St}([0,3])$. It is clear that $D_{\Delta_2}\circ D_{\Delta_1}(\pi)=D_{[0,3]}(\pi)=\mathbb C$ as $G_0$-representation. It is clear that $([0,3], \widetilde{\Delta}, \pi)$ is a strongly RdLi-commutative triple, but $([0,1], \widetilde{\Delta}, \pi)$ is not even a pre-RdLi-commutative triple. 
\end{remark}

\subsection{Commutativity for derivatives and integrals}

We first have the following preliminary form of commutativity of minimal sequences:

\begin{lemma} \label{lem commute and intersect union} \cite[Theorem 1.4]{Ch22+bb}
Let $\pi \in \mathrm{Irr}$. Let $\Delta_1$ and $\Delta_2$ be linked segments with $\Delta_1< \Delta_2$. If $D_{\Delta_2}\circ D_{\Delta_1}(\pi) \not\cong D_{\Delta_1\cup \Delta_2}\circ D_{\Delta_1\cap \Delta_2}(\pi)$, then 
\[   D_{\Delta_2}\circ D_{\Delta_1}(\pi) \cong D_{\Delta_1}\circ D_{\Delta_2}(\pi) .
\]
\end{lemma}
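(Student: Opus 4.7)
My plan is to establish the desired commutativity via a dichotomy based on the Rd-minimality of $\mathfrak m := \{\Delta_1, \Delta_2\}$ with respect to $\pi$. Since $\Delta_1$ and $\Delta_2$ are linked with $\Delta_1 < \Delta_2$, the multisegment $\mathfrak m$ admits exactly one proper intersection-union descendant, namely $\{\Delta_1\cap\Delta_2,\Delta_1\cup\Delta_2\}$ (with the first term dropped if $\Delta_1\cap\Delta_2=\emptyset$). The proof proceeds in two steps: deduce Rd-minimality of $\mathfrak m$ from the hypothesis, then extract commutativity from minimality.

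\textbf{Step 1: The hypothesis forces Rd-minimality of $\mathfrak m$.} I would invoke the intersection-union characterization of non-minimality from \cite[Corollary 1.6]{Ch22+}, as used for instance in the proof of Lemma \ref{lem minimal eta criterai}: if $\mathfrak m$ is not Rd-minimal to $\pi$, then one can find a pair of consecutive linked segments in $\mathfrak m$ on which the intersection-union operation preserves the derivative. For $\mathfrak m$ consisting of only two segments, the only such operation replaces $\mathfrak m$ by $\{\Delta_1\cap\Delta_2,\Delta_1\cup\Delta_2\}$, and taking the associated derivative in ascending order yields $D_{\Delta_1\cup\Delta_2}\circ D_{\Delta_1\cap\Delta_2}(\pi)$. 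Thus non-minimality would force $D_{\Delta_2}\circ D_{\Delta_1}(\pi)\cong D_{\Delta_1\cup\Delta_2}\circ D_{\Delta_1\cap\Delta_2}(\pi)$, contradicting our hypothesis. Hence $\mathfrak m$ is Rd-minimal to $\pi$, and in particular $D_{\mathfrak m}^R(\pi)=D_{\Delta_2}\circ D_{\Delta_1}(\pi)\neq 0$.

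\textbf{Step 2: Rd-minimality of $\mathfrak m$ forces commutativity.} I would then apply the subsequent and commutativity properties of minimal multisegments recalled in Section \ref{ss technique bz filtrations}. By the subsequent property, the singleton $\mathfrak n:=\{\Delta_2\}$ is itself Rd-minimal to $\pi$, so $D_{\Delta_2}^R(\pi)\neq 0$. Applying the commutativity property to the pair $\mathfrak n\subseteq \mathfrak m$ shows that $\mathfrak m-\mathfrak n=\{\Delta_1\}$ is Rd-minimal to $D_{\Delta_2}^R(\pi)$ and that $D_{\Delta_1}^R\circ D_{\Delta_2}^R(\pi)\cong D_{\mathfrak m}^R(\pi)$. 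Since the canonical ascending-order computation already gives $D_{\mathfrak m}^R(\pi)=D_{\Delta_2}^R\circ D_{\Delta_1}^R(\pi)$, the two sides of the claimed isomorphism coincide.

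\textbf{Main obstacle.} The delicate point is in Step 2: one must know that the commutativity property from \cite{Ch22+} yields true order-independence for minimal multisegments, not merely invariance under ascending (or descending) reorderings. In effect one must apply $D_{\Delta_2}$ \emph{before} $D_{\Delta_1}$, even though $\Delta_1<\Delta_2$ makes the opposite order the canonical one for $D_{\mathfrak m}^R$. This strong form is exactly what is stated as the commutativity property of minimal sequences, and verifying that the formulation in \cite{Ch22+} supports this full permutation invariance is the only genuine piece of work. A minor secondary point concerns the degenerate case $\Delta_1\cap\Delta_2=\emptyset$, where $D_{\Delta_1\cap\Delta_2}$ is to be read as the identity; the same dichotomy argument applies without modification.
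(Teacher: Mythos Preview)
The paper does not supply its own proof of this lemma; it merely cites \cite[Proposition 15.3]{Ch22+}. Your derivation is correct as a deduction from other results the paper also imports from \cite{Ch22+}. Step 1 is in fact almost immediate from the definition of Rd-minimality: for a two-segment multisegment $\{\Delta_1,\Delta_2\}$ with $\Delta_1<\Delta_2$, the unique proper $\leq_Z$-descendant is $\{\Delta_1\cap\Delta_2,\Delta_1\cup\Delta_2\}$, so the hypothesis is literally the statement that $\mathfrak m$ is Rd-minimal (invoking \cite[Corollary 1.6]{Ch22+} is not needed here). Step 2 is then a direct application of Corollary \ref{cor minimal derivative in any order}, which does assert full permutation invariance (``in any order''), not merely invariance among ascending orderings---so the concern you raise under ``Main obstacle'' is already answered by the formulation of that corollary.

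The one genuine caveat is circularity internal to \cite{Ch22+}. If the proof there of the commutativity property for minimal multisegments (Lemma \ref{lem commute and minimal}, hence Corollary \ref{cor minimal derivative in any order}) relies on the two-segment case recorded as Proposition 15.3, then your argument collapses to a tautology. From the standpoint of this paper, where both statements are black-box imports, your derivation is legitimate; but as an independent proof you would need to verify the dependency order in \cite{Ch22+}. The numbering (Proposition 15.3 versus results announced in Section 1) is suggestive but not conclusive, since introductory theorems in \cite{Ch22+} may well be proved in later sections.
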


%\begin{proof}
%This is shown in \cite{Ch22+} and so we only sketch some steps. Let $\mathfrak n=\mathfrak{mx}(\pi, \Delta_2)$. Then, by definition, we have the following embedding: 
%\[  \pi \hookrightarrow D_{\mathfrak n}(\pi)\times \mathrm{St}(\mathfrak n) .\]
% The geometric lemma gives that:
%\[   D_{\Delta_1}(\pi)\hookrightarrow  \mathbb{D}_{\Delta_1}(\pi) \hookrightarrow \mathbb{D}_{\Delta_1}(D_{\mathfrak n}(\pi)) \times \mathrm{St}(\mathfrak n) . \]
%The non-isomorphic condition is equivalent to $\mathfrak{mx}(D_{\Delta_1}(\pi), \Delta_2)=\mathfrak{mx}(\pi, \Delta_2)$ (see Lemma \ref{lem eta invariant}). Since $(\mathrm{St}(\mathfrak n), \pi)$ is a Rd-irreducible pair, Forbenius reciproicity then gives that:
%\[     D_{\mathfrak n}\circ D_{\Delta_1}(\pi) \hookrightarrow \mathbb D_{\Delta_1}(D_{\mathfrak n}(\pi)) .\]
%This implies that $D_{\mathfrak n}\circ D_{\Delta_1}(\pi) \cong D_{\Delta_1}\circ D_{\mathfrak n}(\pi)$. Now, one checks (which is shown in \cite{Ch22+} by using removal sequences) that one can similarly obtain:
%\[   D_{\Delta_1} \circ D_{\mathfrak n-\Delta_2}(D_{\Delta_2}(\pi)) \cong D_{\mathfrak n-\Delta_2}\circ D_{\Delta_1}(D_{\Delta_2}(\pi)) .\]
%Now cancel $D_{\mathfrak n-\Delta_2}$ to give the desired inequalities.
%\end{proof}

\begin{remark}
It is not true in general if one switches the condition $\Delta_1<\Delta_2$ to $\Delta_2<\Delta_1$. For example, let 
\[ \pi = \langle \left\{ [0,2], [0,1], [1,2] \right\} \rangle . \]
Let $\Delta_1=[1]$ and let $\Delta_2=[2]$. Note that $D_{[1]}\circ D_{[2]}(\pi)\cong \langle \left\{ [0,1], [0], [1,2] \right\} \rangle$, but $D_{[1,2]}(\pi)\cong D_{[2]}\circ D_{[1]}(\pi)=\langle \left\{ [0,2],[0],[1] \right\} \rangle$.
\end{remark}

\begin{lemma} \label{lem commutative for integrals}
Let $\pi \in \mathrm{Irr}$. Let $\Delta_1'$ and $\Delta_2'$ be linked segments with $\Delta_1' < \Delta_2'$. If $I_{\Delta_2'}\circ I_{\Delta_1'}(\pi) \not\cong I_{\Delta_1'\cup \Delta_2'}\circ I_{\Delta_1'\cap \Delta_2'}(\pi)$, then 
\[   I_{\Delta_2'}\circ I_{\Delta_1'}(\pi) \cong I_{\Delta_1'}\circ I_{\Delta_2'}(\pi). 
\]
\end{lemma}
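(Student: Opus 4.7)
This lemma is the integral counterpart of Lemma \ref{lem commute and intersect union}. My plan is to reduce it to the derivative version via duality, then fall back on a direct $\eta$-analysis if the dictionary is too coarse.

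First I would exploit the smooth contragredient $(-)^\vee$, which is an exact contravariant equivalence sending $\mathrm{St}(\Delta)$ to $\mathrm{St}(\Delta^\vee)$. Dualizing the defining embedding $I^L_\Delta(\pi) \hookrightarrow \mathrm{St}(\Delta) \times \pi$ produces a surjection $\pi^\vee \times \mathrm{St}(\Delta^\vee) \twoheadrightarrow I^L_\Delta(\pi)^\vee$, so $I^L_\Delta(\pi)^\vee$ is the unique simple quotient of the parabolic induction $\pi^\vee \times \mathrm{St}(\Delta^\vee)$. By the socle/cosocle coincidence results underlying the big derivative machinery of \cite{Ch22+b} (used, for example, in the proof of Proposition \ref{prop pre implies max pre}), this unique simple quotient matches a right-side integral/derivative operation on $\pi^\vee$. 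Iterating gives a dictionary that transports each iterated left integral of $\pi$ to an iterated right operation on $\pi^\vee$, and preserves linkedness as well as intersection-union combinatorics of segments (with the ordering on the dual side reversed accordingly).

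Under this dictionary, the three candidate modules $I_{\Delta_2'}\circ I_{\Delta_1'}(\pi)$, $I_{\Delta_1'}\circ I_{\Delta_2'}(\pi)$, and $I_{\Delta_1'\cup\Delta_2'}\circ I_{\Delta_1'\cap\Delta_2'}(\pi)$ translate to the three candidate derivative modules for $\pi^\vee$ and the segments $(\Delta_1')^\vee, (\Delta_2')^\vee$. The hypothesis that the iterated integral and the intersection-union integral disagree becomes exactly the non-intersect-union hypothesis of Lemma \ref{lem commute and intersect union} for the dualized data. Applying that lemma and dualizing back yields $I_{\Delta_2'}\circ I_{\Delta_1'}(\pi) \cong I_{\Delta_1'}\circ I_{\Delta_2'}(\pi)$.

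The main obstacle is making the dictionary between left integrals and right derivatives on the dual side fully precise. Since $\mathrm{St}(\Delta)\times\pi$ need not be cosocle-irreducible, the dual of $I^L_\Delta$ is \emph{a priori} only a unique simple quotient, not literally a derivative, so one needs the multiplicity-one/socle-irreducibility theorems from \cite{Ch22+,Ch22+b} to identify it. If this identification turns out to be delicate in the linked case, I would instead mirror the proof of Proposition \ref{prop intersect union comm}: compute $\eta^L_{\widetilde\Delta}$ of all three candidate modules for a well-chosen auxiliary $\widetilde\Delta$, invoke the LdRi-version of the combinatorial criterion in Theorem \ref{thm combinatorial def} together with the monotonicity of $\eta^L$ under integrals (the left analog of Lemma \ref{lem right multi submulti}), and conclude directly—this self-contained combinatorial route replicates the structure of the derivative proof without needing the duality dictionary.
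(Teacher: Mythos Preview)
Your contragredient route does not land where you think. Applying $\theta$ (equivalently $(-)^\vee$) to the socle embedding $I^L_\Delta(\pi)\hookrightarrow \mathrm{St}(\Delta)\times\pi$ yields $\theta(I^L_\Delta(\pi))=\mathrm{soc}\bigl(\theta(\pi)\times\mathrm{St}(\Delta^\vee)\bigr)=I^R_{\Delta^\vee}(\pi^\vee)$, since $\theta$ is a covariant equivalence reversing the order of factors. So duality sends left integrals to \emph{right integrals}, not to derivatives; the translated statement is just the right-hand version of the very lemma you are trying to prove, and Lemma~\ref{lem commute and intersect union} is never reached. Your suspicion that the dictionary is the obstacle is correct, but the obstacle is fatal for this strategy rather than merely technical.

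The paper's argument bypasses duality entirely by using the much simpler fact that $D^L_\Delta$ and $I^L_\Delta$ are mutual inverses on the \emph{same} side. Set $\tau=I_{\Delta_2'}\circ I_{\Delta_1'}(\pi)$; then $D^L_{\Delta_1'}\circ D^L_{\Delta_2'}(\tau)=\pi$, and the hypothesis translates (via the unlinked commutation of $\Delta_1'\cup\Delta_2'$ and $\Delta_1'\cap\Delta_2'$) to $D^L_{\Delta_1'\cup\Delta_2'}\circ D^L_{\Delta_1'\cap\Delta_2'}(\tau)\not\cong\pi$. Now the \emph{left} version of Lemma~\ref{lem commute and intersect union} applies directly to $\tau$, giving $D^L_{\Delta_2'}\circ D^L_{\Delta_1'}(\tau)\cong\pi$, and applying $I^L$'s back recovers the desired isomorphism. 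No $\eta$-analysis and no contragredient are needed.
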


\begin{proof}
Let $\tau=I_{\Delta_2'}\circ I_{\Delta_1'}(\pi)$. Then $D^L_{\Delta_1'}\circ D^L_{\Delta_2'}(\tau) =\pi$ and $D^L_{\Delta_1'\cup \Delta_2'}\circ D^L_{\Delta_1'\cap \Delta_2'}(\tau) \not\cong D^L_{\Delta_1'}\circ D^L_{\Delta_2'}(\tau)$. Thus, the left version of Lemma \ref{lem commute and intersect union} implies that 
\[ D^L_{\Delta_1'} \circ D^L_{\Delta_2'}(\tau) \cong D^L_{\Delta_2'}\circ D^L_{\Delta_1'}(\tau),
\]
which implies the lemma.
\end{proof}

\begin{proposition} \label{prop commut triples ladder one}

Let $\pi \in \mathrm{Irr}$. Let $\Delta_1'$ and $\Delta_2'$ be linked segments with $\Delta_1'<\Delta_2'$. Let $\Delta$ be another segment. Suppose 
\[ I_{\Delta_2'}\circ I_{\Delta_1'}(\pi) \not\cong  I_{\Delta_1'\cup \Delta_2'}\circ I_{\Delta_1'\cap \Delta_2'}(\pi) .\]
Then $(\Delta, \Delta_1', \pi)$ and $(\Delta, \Delta_2', I_{\Delta_1'}(\pi))$ are strongly RdLi-commutative triples if and only if $(\Delta, \Delta_2', \pi)$ and $(\Delta, \Delta_1', I_{\Delta_2'}(\pi))$ are strongly RdLi-commutative triples. Moreover, if one of the equivalent conditions holds, then
\[     I_{\Delta_2'}\circ I_{\Delta_1'}\circ D_{\Delta}(\pi)\not\cong I_{\Delta_2'\cup \Delta_1'}\circ I_{\Delta_2'\cap \Delta_1'}\circ D_{\Delta}(\pi) .
\]
\end{proposition}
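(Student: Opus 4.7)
The plan is to reduce both strong-commutativity conditions to a single combinatorial equality for the $\eta_\Delta$-invariant, using Theorem \ref{thm combinatorial def} together with the monotonicity supplied by Lemma \ref{lem right multi submulti}.

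First, I would invoke Lemma \ref{lem commutative for integrals}: the hypothesis $I_{\Delta_2'}\circ I_{\Delta_1'}(\pi)\not\cong I_{\Delta_1'\cup\Delta_2'}\circ I_{\Delta_1'\cap\Delta_2'}(\pi)$ forces $I_{\Delta_2'}\circ I_{\Delta_1'}(\pi)\cong I_{\Delta_1'}\circ I_{\Delta_2'}(\pi)$; denote this common module by $\tau$. By Theorem \ref{thm combinatorial def}, condition (1) is equivalent to the chain
\[
\eta_\Delta(\pi)=\eta_\Delta(I_{\Delta_1'}(\pi))=\eta_\Delta(\tau),
\]
while condition (2) is equivalent to $\eta_\Delta(\pi)=\eta_\Delta(I_{\Delta_2'}(\pi))=\eta_\Delta(\tau)$. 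Lemma \ref{lem right multi submulti} supplies
\[
\eta_\Delta(\pi)\leq \eta_\Delta(I_{\Delta_j'}(\pi))\leq \eta_\Delta(\tau),\qquad j=1,2,
\]
so each of (1) and (2) collapses to the single condition $\eta_\Delta(\pi)=\eta_\Delta(\tau)$, yielding the desired equivalence.

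For the moreover, set $\omega=D_\Delta(\pi)$. Proposition \ref{prop strong commute imply commute} applied to the strong commutativities furnished by (1) and (by the equivalence just established) (2) gives $D_\Delta\circ I_{\Delta_j'}(\pi)\cong I_{\Delta_j'}(\omega)$ for $j=1,2$, together with $D_\Delta\circ I_{\Delta_2'}\circ I_{\Delta_1'}(\pi)\cong I_{\Delta_2'}\circ I_{\Delta_1'}(\omega)$ and symmetrically for the reverse order. Applying $D_\Delta$ to the equality $I_{\Delta_2'}\circ I_{\Delta_1'}(\pi)\cong I_{\Delta_1'}\circ I_{\Delta_2'}(\pi)$ thus produces the commutativity $I_{\Delta_2'}\circ I_{\Delta_1'}(\omega)\cong I_{\Delta_1'}\circ I_{\Delta_2'}(\omega)$.

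It remains to rule out $I_{\Delta_2'}\circ I_{\Delta_1'}(\omega)\cong I_{\Delta_1'\cup\Delta_2'}\circ I_{\Delta_1'\cap\Delta_2'}(\omega)$. I plan to argue by contradiction using the left invariant $\varepsilon^L_{\Delta_1'\cup\Delta_2'}$: inserting the long segment through the intersection-union composition adds a copy of $\mathrm{St}(\Delta_1'\cup\Delta_2')$ on the left and hence strictly increments $\varepsilon^L_{\Delta_1'\cup\Delta_2'}$, whereas $I_{\Delta_2'}\circ I_{\Delta_1'}$ contributes only the shorter $\mathrm{St}(\Delta_1')$ and $\mathrm{St}(\Delta_2')$ and leaves this multiplicity untouched. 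The non-coincidence assumed for $\pi$ can be transferred to $\omega$ because the strong commutativity with $\Delta$ established in the first part commutes $D_\Delta$ past both compositions (the intersection-union version is reached via the LdRi-dual form of Theorem \ref{thm combinatorial def} applied to $\tau$). The hard part is precisely this last piece of combinatorial bookkeeping—verifying that the two compositions on $\omega$ are genuinely separated by the highest-left-derivative data of Section \ref{ss highest derivative multisegment}; once that separation is in hand, the remainder of the argument is formal.
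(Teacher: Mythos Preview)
Your argument for the equivalence of the two strong-commutativity conditions is essentially identical to the paper's: reduce both to the single equation $\eta_\Delta(\pi)=\eta_\Delta(\tau)$ via Theorem \ref{thm combinatorial def}, Lemma \ref{lem right multi submulti}, and the commutativity $I_{\Delta_2'}\circ I_{\Delta_1'}(\pi)\cong I_{\Delta_1'}\circ I_{\Delta_2'}(\pi)$ from Lemma \ref{lem commutative for integrals}.

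The genuine gap is in your treatment of the moreover clause. Your plan is to separate $I_{\Delta_2'}\circ I_{\Delta_1'}(\omega)$ from $I_{\Delta_1'\cup\Delta_2'}\circ I_{\Delta_1'\cap\Delta_2'}(\omega)$ by the invariant $\varepsilon^L_{\Delta_1'\cup\Delta_2'}$, arguing that the latter composition increments it while the former does not. But the second assertion is exactly what fails in the situation you are trying to exclude: when the two compositions coincide on $\omega$, the successive left integrals by $\Delta_1'$ and $\Delta_2'$ \emph{do} merge to produce the longer segment, so $\varepsilon^L_{\Delta_1'\cup\Delta_2'}$ jumps for $I_{\Delta_2'}\circ I_{\Delta_1'}(\omega)$ as well. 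The argument is circular. Your alternative suggestion, to ``transfer'' the non-coincidence from $\pi$ to $\omega$ by commuting $D_\Delta$ past the intersection-union composition, is not available either: you have no strong commutativity of $(\Delta,\Delta_1'\cup\Delta_2',\pi)$ to invoke (Proposition \ref{prop intersect union comm} would supply it only under the hypothesis you have explicitly negated), and in any case applying $D_\Delta$ to a non-isomorphism need not preserve it.

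The paper in fact remarks that a combinatorial route via ladder invariants is possible (using the Kret--Lapid description of Jacquet modules), but it does not take that route; instead it argues module-theoretically. Assuming the moreover fails, it analyzes the embedding $D_\Delta(\tau)\boxtimes\mathrm{St}(\Delta)\hookrightarrow (\mathrm{St}(\Delta_1')\times\mathrm{St}(\Delta_2')\times\pi)_{N_l}$ through a commutative diagram built from the geometric lemma, and shows (using a result from \cite{Ch22+}) that the image must meet the layer coming from $\sigma'=\mathrm{St}(\Delta_1'\cup\Delta_2'+\Delta_1'\cap\Delta_2')$, forcing $\tau\hookrightarrow\sigma'\times\pi$ and hence $\tau\cong I_{\sigma'}(\pi)$, contradicting the hypothesis. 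This is the missing idea in your proposal.
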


\begin{proof}
We first prove the if direction. The strong commutation implies 
\[ \eta_{\Delta}(\pi)=\eta_{\Delta}(I_{\Delta_1'}(\pi))=\eta_{\Delta}(I_{\Delta_2'}\circ I_{\Delta_1'}(\pi)).\]
 Moreover, Lemma \ref{lem right multi submulti} implies that $\eta_{\Delta}(\pi)\leq \eta_{\Delta}(I_{\Delta_2'}(\pi)) \leq \eta_{\Delta}(I_{\Delta_1'}\circ I_{\Delta_2'}(\pi))$. Combining the equation with Lemma \ref{lem commutative for integrals}, the above inequalities have to be equalities. Now the strong commutations of $(\Delta, \Delta_2', \pi)$ and $(\Delta, \Delta_1', I_{\Delta_2'}(\pi))$ follow from Theorem \ref{thm combinatorial def}. This proves the only if direction. For the if direction, one proceeds with a similar proof by using Lemma \ref{lem commutative for integrals}.

We now prove the last assertion. One approach is to develop those combinatorial invariants for ladder representations, in which one may use the Kret-Lapid description for the Jacquet modules of ladder representations \cite{KL12}. Here we give a proof using module structures directly. Suppose not to derive a contradiction. Let $\tau=I_{\Delta_1'}\circ I_{\Delta_2'}(\pi)(\cong I_{\Delta_2'}\circ I_{\Delta_1'}(\pi))$. Let $l=l_a(\Delta)$. We consider the following diagram:
\[ \xymatrix{ & D_{\Delta}(\tau) \boxtimes \mathrm{St}(\Delta) \ar[r]^{i'} & \tau_{N_l}  \ar[d]^{i} &    \\ 
0    \ar[r] &   (\sigma\times \pi)_{N_l} \ar[r]^{u''} \ar@{->>}[d]^{s_1} &  (\mathrm{St}(\Delta_1')\times \mathrm{St}(\Delta_2') \times \pi)_{N_l} \ar[r]^{t''} \ar@{->>}[d]^{s_2} &  (\sigma' \times \pi)_N \ar[r] \ar@{->>}[d]^{s_3} & 0 \\ 
0    \ar[r] &   \sigma \dot{\times}^1\pi_{N_l}  \ar[r]^{u'} & (\mathrm{St}(\Delta_1')\times \mathrm{St}(\Delta_2'))\dot{\times}^1\pi_{N_l} \ar[r]^{t'} & \sigma'  \dot{\times}^1\pi_{N_l} \ar[r] & 0  \\ 
0    \ar[r] &   (\sigma \times D_{\Delta}(\pi))\boxtimes \mathrm{St}(\Delta) \ar[r]^{u}\ar@{^{(}->}[u]^{j_1}  & \lambda \boxtimes \mathrm{St}(\Delta)\ar[r]^{t} \ar@{^{(}->}[u]^{j_2} & \sigma'\times D_{\Delta}(\pi)\boxtimes \mathrm{St}(\Delta) \ar[r] \ar@{^{(}->}[u]^{j_3} & 0 } ,\]
where $s_1, s_2, s_3$ are the surjections to the top layer in the geometric lemma; and $\lambda=(\mathrm{St}(\Delta_1')\times \mathrm{St}(\Delta_2') \times D_{\Delta}(\pi)$; 
\[ \sigma=\mathrm{St}(\Delta_1'+\Delta_2'), \quad \sigma'=\mathrm{St}(\Delta_1'\cup \Delta_2'+\Delta_1\cap \Delta_2').\]

Using the strong commutativity (see \cite[Proposition 5.5]{Ch22+d}), we have a map $p: D_{\Delta}(\tau)\boxtimes \mathrm{St}(\Delta) \rightarrow \lambda \boxtimes \mathrm{St}(\Delta)$ such that $j_2\circ p= s_2\circ  i\circ i'$. Using \cite[Appendix B]{Ch22+} and our assumption (and so $D_{\Delta}(\tau)\cong D_{\Delta}\circ I_{\Delta_1'}\circ I_{\Delta_2'} \cong D_{\Delta}\circ I_{\Delta_2'}\circ I_{\Delta_1'}(\pi)\cong  I_{\Delta_2'}\circ I_{\Delta_1'}\circ D_{\Delta}(\pi)$), we have that $t\circ p\neq 0$ and so $j_3\circ t\circ p\neq 0$. This implies that $t''\circ i \neq 0$. Hence, this implies that $\tau \hookrightarrow \sigma' \times \pi$. Thus, \[ \tau \cong I_{\sigma'}(\pi) ,\]
With Lemma \ref{lem commutative for integrals}, it gives a contradiction to the given condition.
\end{proof}

%Thus, we have pre-commutativity for $(\mathrm{St}(\Delta), \sigma', \pi)$ (from the map $s_3\circ (t''\circ i)\circ i'\neq $) and so we also have pre-commutativity for $(\Delta, \Delta_1'\cup \Delta_2', \pi)$  (see \cite[Proposition 7.2]{Ch22+}) and so strong-commutativity for that pair by Theorem \ref{thm pre implies strong}. But, then we also have  $(\Delta, \Delta_1'\cap %\Delta_2', D_{\Delta_1'\cup \Delta_2'}(\pi))$ is pre-commutative and so strong-commutative by again Theorem \ref{thm pre implies strong}. Then, we have:
%\[  D_{\Delta}\circ I_{\Delta_2'\cup \Delta_1'}\circ I_{\Delta_2'\cap \Delta_1'}(\pi) \cong I_{\Delta_2'\cup \Delta_1'}\circ I_{\Delta_2'\cap \Delta_1'}\circ D_{\Delta}(\pi) \cong I_{\Delta_2'}\circ I_{\Delta_1'}\circ D_{\Delta}(\pi) \cong D_{\Delta}\circ I_{\Delta_2'}\circ I_{\Delta_1'}(\pi) , \]
%where the first and last isomorphisms follow from Proposition \ref{prop strong commute imply commute}, and the middle one follows from our assumption. However, cancelling $D_{\Delta}$, we obtain a contradiction.

\section{Intersection-union operations for derivatives in commutative triples} \label{s comm minimal pairs}

\subsection{Strong commutativity under intersection-union process for derivatives}

\begin{lemma} \label{lem derivative second intersect union}
Let $\Delta_1', \Delta_2'$ be linked segments. Let $\Delta$ be another segment. Let $\widetilde{\Delta}_1'=\Delta_1'\cup \Delta_2'$ and $\widetilde{\Delta}_2'=\Delta_1'\cap \Delta_2'$. Suppose $D_{\Delta}(\pi)\neq 0$ and 
\[   I_{\Delta_2'}\circ I_{\Delta_1'}\circ D_{\Delta}(\pi) \cong I_{\widetilde{\Delta}_2'}\circ I_{\widetilde{\Delta}_1'} \circ D_{\Delta}(\pi) .
\]
If $(\Delta, \Delta_1', \pi)$ and $(\Delta, \Delta_2', I_{\Delta_1'}(\pi))$ are strongly RdLi-commutative triples, then the following conditions hold:
\begin{enumerate}
\item $(\Delta, \widetilde{\Delta}_1', \pi)$ and $(\Delta, \widetilde{\Delta}_2', I_{\widetilde{\Delta}_1'}(\pi))$ are strongly RdLi-commutative triples;
\item $(\Delta, \widetilde{\Delta}_2', \pi)$ and $(\Delta, \widetilde{\Delta}_1', I_{\widetilde{\Delta}_2'}(\pi))$ are strongly RdLi-commutative triples;
\item $I_{\Delta_2'}\circ I_{\Delta_1'}(\pi)\cong I_{\widetilde{\Delta}_2'}\circ I_{\widetilde{\Delta}_1'}(\pi)$. 
\end{enumerate}
\end{lemma}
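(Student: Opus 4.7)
The strategy is to first establish (3) by a short contrapositive argument using Proposition \ref{prop commut triples ladder one}, and then deduce (1) and (2) as formal consequences of Proposition \ref{prop intersect union comm}.

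For (3), I would argue by contradiction. Suppose that
\[  I_{\Delta_2'}\circ I_{\Delta_1'}(\pi) \not\cong I_{\widetilde{\Delta}_2'}\circ I_{\widetilde{\Delta}_1'}(\pi) . \]
Since the linked segments $\Delta_1', \Delta_2'$ are totally ordered by $<$, after possibly relabeling (and using the iff of Proposition \ref{prop commut triples ladder one} to convert between its conditions (1) and (2)) I may assume $\Delta_1' < \Delta_2'$. The displayed non-isomorphism is precisely the premise of Proposition \ref{prop commut triples ladder one}, and the strong-commutativity hypothesis of our lemma matches one of the two equivalent conditions there. The \textit{moreover} clause then yields
\[   I_{\Delta_2'}\circ I_{\Delta_1'}\circ D_{\Delta}(\pi) \not\cong I_{\widetilde{\Delta}_2'}\circ I_{\widetilde{\Delta}_1'}\circ D_{\Delta}(\pi) , \]
contradicting the isomorphism assumed at the $D_\Delta(\pi)$ level. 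Hence (3) holds. With (3) now available, the hypothesis of Proposition \ref{prop intersect union comm} is satisfied with $(\Delta_1, \Delta_2) := (\Delta_1', \Delta_2')$ and $\widetilde{\Delta} := \Delta$; the strong-commutativity hypothesis of our lemma is exactly condition (1) of that proposition, so the equivalent conditions (2) and (3) also hold, which are the desired conclusions (1) and (2) of our lemma.

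The main obstacle is essentially bookkeeping around the ordering convention $\Delta_1' < \Delta_2'$ required by Proposition \ref{prop commut triples ladder one}; this is handled by the symmetry/relabeling argument above, and Lemma \ref{lem unlinked comm der and integral basic} can be invoked if one needs to check that $I_{\widetilde{\Delta}_1'} \circ I_{\widetilde{\Delta}_2'}$ is order-independent when translating between the two orderings. Beyond that, the proof is essentially mechanical once Propositions \ref{prop commut triples ladder one} and \ref{prop intersect union comm} are in hand.
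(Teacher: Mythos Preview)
Your proof is correct and follows essentially the same approach as the paper: the paper's proof is just the two-line remark that (3) follows from Proposition~\ref{prop commut triples ladder one} and then (1), (2) from Proposition~\ref{prop intersect union comm}, and your argument is precisely the contrapositive unpacking of that first step followed by the direct application of the second. Your extra care with the ordering convention $\Delta_1'<\Delta_2'$ and the invocation of Lemma~\ref{lem unlinked comm der and integral basic} for the unlinked pair $\widetilde{\Delta}_1',\widetilde{\Delta}_2'$ is appropriate bookkeeping that the paper leaves implicit.
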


\begin{proof}
(3) follows from  Proposition \ref{prop commut triples ladder one}. Then (1) and (2) follow from Proposition \ref{prop intersect union comm}. 
\end{proof}

The following is the derivative analog for Proposition \ref{prop intersect union comm}.
% and we shall give somehow different proof for that using a duality. (Alternatively, one use $\eta$-invariant first to prove Proposition \ref{prop two strong commute dual 2}, and then use duality to show Lemma \ref{lem derivative second intersect union}. Then one can use Lemma \ref{lem derivative second intersect union} to show the last assertion of Proposition \ref{prop commut triples ladder one}.)

\begin{proposition} \label{prop two strong commute dual 2} 
Let $\Delta_1, \Delta_2$ be linked segments. Let $\Delta'$ be another segment. Let $\widetilde{\Delta}_1=\Delta_1\cup \Delta_2$ and let $\widetilde{\Delta}_2=\Delta_1\cap \Delta_2$. Suppose $D_{\Delta_1}(\pi)\neq 0$ and $D_{\Delta_2}(\pi)\neq 0$, and $D_{\Delta_2}\circ D_{\Delta_1}(\pi)\cong D_{\widetilde{\Delta}_2}\circ D_{\widetilde{\Delta}_1}(\pi)$. If $(\Delta_1, \Delta', \pi)$ and $(\Delta_2, \Delta', D_{\Delta_1}(\pi))$ are strongly RdLi-commutative triples, then the following holds:
\begin{enumerate}
\item $(\widetilde{\Delta}_1, \Delta', \pi)$ and $(\widetilde{\Delta}_2, \Delta', D_{\widetilde{\Delta}_1}(\pi))$ are strongly RdLi-commutative triple;
\item $(\widetilde{\Delta}_2, \Delta', \pi)$ and $(\widetilde{\Delta}_1, \Delta', D_{\widetilde{\Delta}_2}(\pi))$ are strongly RdLi-commutative triple;
\item $D_{\Delta_2}\circ D_{\Delta_1}\circ I_{\Delta'}(\pi) \cong D_{\widetilde{\Delta}_2}\circ D_{\widetilde{\Delta}_1}\circ I_{\Delta'}(\pi)$.
\end{enumerate}
\end{proposition}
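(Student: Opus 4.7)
The proof adapts the strategy of Proposition~\ref{prop intersect union comm} from the integral setting to the derivative setting, using the (dual) combinatorial characterization of strong commutativity from Theorem~\ref{thm combinatorial def}. First, observe that part (3) will follow once (1) and (2) are established: combining Proposition~\ref{prop strong commute imply commute} with the hypothesized strong commutativities and hypothesis (iii) gives
\[
D_{\Delta_2}\circ D_{\Delta_1}\circ I_{\Delta'}(\pi)\cong I_{\Delta'}\circ D_{\Delta_2}\circ D_{\Delta_1}(\pi)\cong I_{\Delta'}\circ D_{\widetilde{\Delta}_2}\circ D_{\widetilde{\Delta}_1}(\pi),
\]
and (1) then allows us to push $I_{\Delta'}$ back through $D_{\widetilde{\Delta}_2}\circ D_{\widetilde{\Delta}_1}$ via Proposition~\ref{prop strong commute imply commute}.

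For (1) and (2), we translate to the combinatorial setting. The two hypothesized strong commutativities yield
\[
\eta_{\Delta_1}(\pi)=\eta_{\Delta_1}(I_{\Delta'}(\pi)) \quad\text{and}\quad \eta_{\Delta_2}(D_{\Delta_1}(\pi))=\eta_{\Delta_2}(D_{\Delta_1}\circ I_{\Delta'}(\pi)),
\]
and we wish to deduce analogous equalities at $\widetilde{\Delta}_1$ and $\widetilde{\Delta}_2$. WLOG assuming $\Delta_1<\Delta_2$ with $\Delta_i=[a_i,b_i]_\rho$, a useful observation is that since $\widetilde{\Delta}_2\subset\Delta_1$ share the same right endpoint $b_1$, the tuple $\eta_{\widetilde{\Delta}_2}(\pi)$ is exactly the ``tail'' of $\eta_{\Delta_1}(\pi)$ consisting of entries for starts in $[a_2,b_1]$; consequently the first equation of~(2) is immediate from hypothesis~(i), and similarly $\eta_{\Delta_2}$ is a tail of $\eta_{\widetilde{\Delta}_1}$ so the two are related. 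For the remaining three equations---most crucially $\eta_{\widetilde{\Delta}_1}(\pi)=\eta_{\widetilde{\Delta}_1}(I_{\Delta'}(\pi))$---we combine these combinatorial equalities with Proposition~\ref{prop eta function derivative} (which computes the effect of a derivative on $|\eta|_\Delta$) and hypothesis~(iii) to sandwich each invariant between inequalities forced to become equalities by Lemma~\ref{lem right multi submulti}.

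The main obstacle will be constructing the sandwich for $\eta_{\widetilde{\Delta}_1}$: the entries of $\eta_{\widetilde{\Delta}_1}$ indexed by starts in $[a_1,a_2-1]$ are not directly controlled by either hypothesis~(i) or~(ii), and one must exploit hypothesis~(iii) to tie them together through the highest-derivative multisegment $\mathfrak{hd}$ and the removal process of Definition~\ref{def removal process}. An alternative route, closely paralleling the proof of Lemma~\ref{lem derivative second intersect union}, is to first establish a derivative analog of Proposition~\ref{prop commut triples ladder one}, stating that under the given strong commutativities, $D_{\Delta_2}\circ D_{\Delta_1}(\pi)\not\cong D_{\widetilde{\Delta}_2}\circ D_{\widetilde{\Delta}_1}(\pi)$ forces the corresponding non-isomorphism after applying $I_{\Delta'}$. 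Its contrapositive combined with hypothesis~(iii) then delivers (3), after which (1) and (2) follow from a derivative analog of Proposition~\ref{prop intersect union comm}; proving these analogs requires a diagram chase in the Jacquet modules analogous to the one carried out for Proposition~\ref{prop commut triples ladder one}.
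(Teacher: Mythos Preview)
Your first approach---directly manipulating $\eta$-invariants---stalls exactly where you say it does: for starts in $[a_1,a_2-1]$ the invariant $\eta_{\widetilde{\Delta}_1}$ is controlled neither by $\eta_{\Delta_1}$ nor by $\eta_{\Delta_2}$, and the removal-process argument you gesture at is not fleshed out. Your alternative route (proving derivative analogs of Propositions~\ref{prop intersect union comm} and~\ref{prop commut triples ladder one} from scratch via Jacquet-module diagram chases) would eventually work, but it is substantially more effort than what is actually needed.

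The idea you are missing is \emph{duality}. Set $\tau=I_{\Delta'}\circ D_{\Delta_2}\circ D_{\Delta_1}(\pi)$; the hypothesis $D_{\Delta_2}\circ D_{\Delta_1}(\pi)\cong D_{\widetilde{\Delta}_2}\circ D_{\widetilde{\Delta}_1}(\pi)$ gives $\tau\cong I_{\Delta'}\circ D_{\widetilde{\Delta}_2}\circ D_{\widetilde{\Delta}_1}(\pi)$ as well. Now apply the equivalence (1)$\Leftrightarrow$(2) of Theorem~\ref{thm combinatorial def}: the two hypothesized RdLi-commutativities $(\Delta_1,\Delta',\pi)$ and $(\Delta_2,\Delta',D_{\Delta_1}(\pi))$ become the LdRi-commutativities $(\Delta',\Delta_1,I^R_{\Delta_2}(\tau))$ and $(\Delta',\Delta_2,\tau)$, and the desired conclusions (1)--(2) likewise translate into LdRi statements with base point $\tau$. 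On the $\tau$-side the right derivatives have become right integrals, so the problem is precisely the (LdRi version of) Lemma~\ref{lem derivative second intersect union}, which is already established from the integral-side Propositions~\ref{prop intersect union comm} and~\ref{prop commut triples ladder one}. Part~(3) then falls out as you indicated. In short, rather than rebuilding the integral-side machinery on the derivative side, the paper transports the question to the integral side via Theorem~\ref{thm combinatorial def} and quotes what is already proved there.
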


\begin{proof}
Let 
\[\tau=D_{\Delta_2}\circ D_{\Delta_1}\circ I_{\Delta'}(\pi)\cong I_{\Delta'}\circ D_{\Delta_2}\circ D_{\Delta_1}(\pi) \]
Using the given condition and Lemma \ref{lem unlinked comm der and integral basic}, one also has:
\[ \tau\cong  I_{\Delta'}\circ D_{\widetilde{\Delta}_1}\circ D_{\widetilde{\Delta}_2}(\pi)\cong I_{\Delta'}\circ D_{\widetilde{\Delta}_2}\circ D_{\widetilde{\Delta}_1}(\pi) .\]
 One first reformulates into equivalent conditions and statements involving $\tau$ by Theorem \ref{thm combinatorial def} and Proposition \ref{prop strong commute imply commute}: e.g. the original two strong commutations become that $(\Delta', \Delta_1, I_{\Delta_2}(\tau))$ and $(\Delta', \Delta_2, \tau)$ are strongly LdRi-commutative triples; (1) equivalent to $(\Delta', \widetilde{\Delta}_1, I_{\widetilde{\Delta}_2}(\tau))$ and $(\Delta', \widetilde{\Delta}_2, \tau)$ are strongly LdRi-commutative triples, etc. Then the equivalent statements follow from Lemma \ref{lem derivative second intersect union}.
\end{proof}

\subsection{Commutativity for derivatives}

\begin{lemma} \label{lem dual form for commute derivatives}
Let $\pi \in \mathrm{Irr}$. Let $\Delta_1'$ and $\Delta_2'$ be linked segments with $\Delta_1'< \Delta_2'$. Let $\Delta$ be another segment. Suppose 
\[  I_{\Delta_2'}\circ I_{\Delta_1'} \circ D_{\Delta}(\pi) \not\cong I_{\Delta_2'\cup \Delta_1'}\circ I_{\Delta_2' \cap \Delta_1'}\circ D_{\Delta}(\pi) .
\] 
Then,  $(\Delta, \Delta_1', \pi)$ and $(\Delta, \Delta_2', I_{\Delta_1'}(\pi))$ are strongly RdLi-commutative triples if and only if $(\Delta, \Delta_2', \pi)$ and $(\Delta, \Delta_1', I_{\Delta_2'}(\pi))$ are strongly RdLi-commutative triples. Moreover, if one of the equivalent conditions holds, then 
\[    I_{\Delta_2'}\circ I_{\Delta_1'}(\pi) \not\cong I_{\Delta_1' \cup \Delta_2'}\circ I_{\Delta_1'\cap \Delta_2'}(\pi).
\]
\end{lemma}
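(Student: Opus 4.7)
The plan is to reduce this lemma to Proposition~\ref{prop intersect union comm} and Proposition~\ref{prop commut triples ladder one}, which together cover the dichotomy of whether the intersection-union process yields an isomorphism on $\pi$. The present lemma is a formal ``mirror'' of Proposition~\ref{prop commut triples ladder one}, obtained by transferring the non-isomorphism across the functor $D_\Delta$ via the ``moreover'' clause of Proposition~\ref{prop intersect union comm}.

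I will first establish the ``moreover'' statement by a contrapositive argument. Assume, for definiteness, that condition~(1) of the biconditional holds, i.e.\ $(\Delta, \Delta_1', \pi)$ and $(\Delta, \Delta_2', I_{\Delta_1'}(\pi))$ are strongly RdLi-commutative triples. Suppose for contradiction that
\[ I_{\Delta_2'}\circ I_{\Delta_1'}(\pi) \cong I_{\Delta_1' \cup \Delta_2'}\circ I_{\Delta_1'\cap \Delta_2'}(\pi). \]
Then the hypothesis of Proposition~\ref{prop intersect union comm} (applied with $\Delta_1=\Delta_1'$, $\Delta_2=\Delta_2'$, $\widetilde{\Delta}=\Delta$) is satisfied, and the assumed strong commutativity is precisely its condition~(1). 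The ``moreover'' clause of Proposition~\ref{prop intersect union comm} then forces
\[ I_{\Delta_2'}\circ I_{\Delta_1'}\circ D_{\Delta}(\pi) \cong I_{\Delta_2'\cup \Delta_1'}\circ I_{\Delta_2'\cap \Delta_1'}\circ D_{\Delta}(\pi), \]
contradicting the hypothesis of the present lemma. The parallel contrapositive starting from condition~(2) yields the ``moreover'' in that direction as well; the extra step needed, because the ordering $\Delta_1'<\Delta_2'$ breaks the naive left-right symmetry, is to match condition~(2) of the present lemma against one of the three equivalent conditions of Proposition~\ref{prop intersect union comm} (most naturally its condition~(3)) under the assumed isomorphism, using the combinatorial reformulation from Theorem~\ref{thm combinatorial def} together with the monotonicity of $\eta_\Delta$ under integrals from Lemma~\ref{lem right multi submulti}.

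Once the non-isomorphism $I_{\Delta_2'}\circ I_{\Delta_1'}(\pi) \not\cong I_{\Delta_1' \cup \Delta_2'}\circ I_{\Delta_1'\cap \Delta_2'}(\pi)$ is established whenever either strong commutativity condition holds, the biconditional is immediate: this non-isomorphism is exactly the hypothesis of Proposition~\ref{prop commut triples ladder one}, whose conclusion is the desired equivalence of conditions~(1) and~(2).

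I anticipate the principal obstacle to be the second direction of the ``moreover'', namely the combinatorial identification of condition~(2) with a condition of Proposition~\ref{prop intersect union comm} under the (hypothetical) isomorphism on $\pi$; the asymmetry of the assumption $\Delta_1'<\Delta_2'$ means one cannot simply invoke a swap argument. Apart from this bookkeeping, the proof is a direct contrapositive application of the two auxiliary propositions, with Theorem~\ref{thm combinatorial def} serving as the combinatorial bridge.
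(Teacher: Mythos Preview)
Your approach is exactly the paper's: derive the ``moreover'' by the contrapositive of Proposition~\ref{prop intersect union comm}, then feed the resulting non-isomorphism into Proposition~\ref{prop commut triples ladder one} for the biconditional. The paper's one-line proof is equally terse about the $(2)\Rightarrow$``moreover'' direction you flag; your proposed $\eta$-sandwich via Theorem~\ref{thm combinatorial def} and Lemma~\ref{lem right multi submulti} is the intended mechanism, though note that condition~(2) controls $\eta_\Delta(I_{\Delta_1'}\circ I_{\Delta_2'}(\pi))$ rather than $\eta_\Delta(I_{\Delta_2'}\circ I_{\Delta_1'}(\pi))$, so the matching to Proposition~\ref{prop intersect union comm} is not literally its condition~(3) but requires the same monotonicity squeeze run through the assumed isomorphism on $\pi$.
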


\begin{proof}
The last assertion follows from Proposition \ref{prop intersect union comm} and then the equivalent condition follows from Proposition \ref{prop commut triples ladder one}.
\end{proof}

The following is the derivative analog of Proposition \ref{prop commut triples ladder one}. One may also use Definition \ref{def dual com commutative} to show the last assertion of Proposition \ref{cor strong commute linked commute case} and then use a similar proof as in Proposition \ref{prop intersect union comm}. Once we have Proposition \ref{cor strong commute linked commute case}, we can also deduce back Lemma \ref{lem dual form for commute derivatives} by the duality.

\begin{proposition} \label{cor strong commute linked commute case}
Let $\pi \in \mathrm{Irr}$. Let $\Delta_1$ and $\Delta_2$ be linked segments with $\Delta_1<\Delta_2$. Let $\Delta'$ be another segment.  Suppose 
\[  D_{\Delta_2}\circ D_{\Delta_1}(\pi) \not\cong D_{\Delta_1\cup\Delta_2}\circ D_{\Delta_1\cap \Delta_2}(\pi) .\]
Then $(\Delta_1, \Delta', \pi)$ and $(\Delta_2, \Delta', D_{\Delta_1}(\pi))$ are strongly RdLi-commutative triples if and only if $(\Delta_2, \Delta', \pi)$ and $(\Delta_1, \Delta', D_{\Delta_2}(\pi))$ are strongly RdLi-commutative triples. Moreover, if one of the equivalent conditions holds, then 
\[  D_{\Delta_2}\circ D_{\Delta_1}\circ I_{\Delta'}(\pi) \not\cong D_{\Delta_1\cap \Delta_2}\circ D_{\Delta_1\cup \Delta_2}\circ I_{\Delta'}(\pi) .
\]
\end{proposition}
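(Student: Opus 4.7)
The plan is to mimic the proofs of Propositions \ref{prop intersect union comm} and \ref{prop commut triples ladder one}, with the roles of derivatives and integrals interchanged, using the dual combinatorial definition (Definition \ref{def dual com commutative}) as suggested by the remark preceding the statement. By the symmetry between the two conditions in the equivalence, it suffices to prove the forward direction together with the last assertion.

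I would first establish the last assertion. Assuming $(\Delta_1, \Delta', \pi)$ and $(\Delta_2, \Delta', D_{\Delta_1}(\pi))$ are strongly RdLi-commutative, Proposition \ref{prop strong commute imply commute} yields
\[ D_{\Delta_2}\circ D_{\Delta_1}\circ I_{\Delta'}(\pi) \cong I_{\Delta'}\circ D_{\Delta_2}\circ D_{\Delta_1}(\pi). \]
Suppose for contradiction that this is also isomorphic to $D_{\Delta_1\cap\Delta_2}\circ D_{\Delta_1\cup\Delta_2}\circ I_{\Delta'}(\pi)$. Translating the given strong commutations via Theorem \ref{thm combinatorial def}(1)$\Leftrightarrow$(4) into chained $\eta^L_{\Delta'}$-equalities gives $\eta^L_{\Delta'}(D_{\Delta_2}\circ D_{\Delta_1}\circ I_{\Delta'}(\pi)) = \eta^L_{\Delta'}(I_{\Delta'}(\pi))$, and hence $\eta^L_{\Delta'}(D_{\Delta_1\cap\Delta_2}\circ D_{\Delta_1\cup\Delta_2}\circ I_{\Delta'}(\pi)) = \eta^L_{\Delta'}(I_{\Delta'}(\pi))$. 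Combined with appropriate monotonicity (the left version of Lemma \ref{lem right multi submulti}), this would force the strong RdLi-commutations of $(\Delta_1\cap\Delta_2, \Delta', \pi)$ and $(\Delta_1\cup\Delta_2, \Delta', D_{\Delta_1\cap\Delta_2}(\pi))$. A second application of Proposition \ref{prop strong commute imply commute} to these commutations gives $D_{\Delta_1\cap\Delta_2}\circ D_{\Delta_1\cup\Delta_2}\circ I_{\Delta'}(\pi)\cong I_{\Delta'}\circ D_{\Delta_1\cap\Delta_2}\circ D_{\Delta_1\cup\Delta_2}(\pi)$; applying $D^L_{\Delta'}$ to the combined isomorphism then produces $D_{\Delta_2}\circ D_{\Delta_1}(\pi)\cong D_{\Delta_1\cap\Delta_2}\circ D_{\Delta_1\cup\Delta_2}(\pi)$, contradicting the hypothesis.

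Next, for the main equivalence, combining the just-established last assertion with Lemma \ref{lem commute and intersect union} applied to $I_{\Delta'}(\pi)$ yields the commutation
\[ D_{\Delta_2}\circ D_{\Delta_1}\circ I_{\Delta'}(\pi)\cong D_{\Delta_1}\circ D_{\Delta_2}\circ I_{\Delta'}(\pi), \]
which plays the role of Lemma \ref{lem commutative for integrals} in the proof of Proposition \ref{prop commut triples ladder one}. The hypothesis on $\pi$ combined with Lemma \ref{lem commute and intersect union} gives the parallel commutation $D_{\Delta_2}\circ D_{\Delta_1}(\pi)\cong D_{\Delta_1}\circ D_{\Delta_2}(\pi)$, and in particular $D_{\Delta_2}(\pi)\neq 0$. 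I would then encode the strong commutation hypotheses via Theorem \ref{thm combinatorial def}(3) as $\eta_{\Delta_1}(I_{\Delta'}(\pi))=\eta_{\Delta_1}(\pi)$ and $\eta_{\Delta_2}(D_{\Delta_1}\circ I_{\Delta'}(\pi))=\eta_{\Delta_2}(D_{\Delta_1}(\pi))$, and combine them with the monotonicity from Lemma \ref{lem right multi submulti} and the two new derivative commutations to force the equalities $\eta_{\Delta_2}(I_{\Delta'}(\pi))=\eta_{\Delta_2}(\pi)$ and $\eta_{\Delta_1}(I_{\Delta'}\circ D_{\Delta_2}(\pi))=\eta_{\Delta_1}(D_{\Delta_2}(\pi))$, giving the strong commutations of $(\Delta_2, \Delta', \pi)$ and $(\Delta_1, \Delta', D_{\Delta_2}(\pi))$.

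The main obstacle will be Step 1 (the last assertion): converting the module-theoretic isomorphism between the two double-derivative forms of $I_{\Delta'}(\pi)$ into $\eta^L_{\Delta'}$-equalities, and then showing that these equalities, sharpened by the appropriate monotonicity of $\eta^L_{\Delta'}$, actually force the strong commutations of the intersected/unioned pair. Some case analysis (notably when $D_{\Delta_1\cup\Delta_2}(\pi)=0$) and the combinatorial refinement in Proposition \ref{prop eta function derivative} may be needed to track precisely how $\eta$-invariants propagate under the iterated derivatives.
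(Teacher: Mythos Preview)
Your overall strategy—working directly on the derivative side with the dual combinatorial criterion $\eta^L_{\Delta'}$ rather than passing through the duality to $\omega = I_{\Delta'}\circ D_{\Delta_2}\circ D_{\Delta_1}(\pi)$ as the paper does—is legitimate, and the paper even remarks that this route is available. However, your Step 2 as written does not go through.

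The problem is the switch from criterion (4) to criterion (3). You propose to deduce $\eta_{\Delta_1}(I_{\Delta'}\circ D_{\Delta_2}(\pi))=\eta_{\Delta_1}(D_{\Delta_2}(\pi))$ from the hypotheses using ``monotonicity from Lemma \ref{lem right multi submulti} and the two new derivative commutations''. But Lemma \ref{lem right multi submulti} controls $\eta_{\Delta}$ under \emph{integrals}, not under derivatives; and Proposition \ref{prop eta function derivative} does not cover the effect of $D_{\Delta_2}$ on $\eta_{\Delta_1}$ when $\Delta_1<\Delta_2$ (all its cases have $d\le b$). So there is no tool in your list that lets you compare $\eta_{\Delta_1}(D_{\Delta_2}(\tau))$ with $\eta_{\Delta_1}(\tau)$, and the chain you need cannot be closed via criterion (3). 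The fix is simple: stay with criterion (4). From Step 1 you already have $\eta^L_{\Delta'}(D_{\Delta_2}\circ D_{\Delta_1}\circ I_{\Delta'}(\pi))=\eta^L_{\Delta'}(I_{\Delta'}(\pi))$; combining this with the derivative commutation $D_{\Delta_2}\circ D_{\Delta_1}\circ I_{\Delta'}(\pi)\cong D_{\Delta_1}\circ D_{\Delta_2}\circ I_{\Delta'}(\pi)$ (from the last assertion and Lemma \ref{lem commute and intersect union}) and the monotonicity chain
\[
\eta^L_{\Delta'}(D_{\Delta_1}\circ D_{\Delta_2}\circ I_{\Delta'}(\pi))\le \eta^L_{\Delta'}(D_{\Delta_2}\circ I_{\Delta'}(\pi))\le \eta^L_{\Delta'}(I_{\Delta'}(\pi))
\]
forces both inequalities to be equalities, giving $(\Delta_2,\Delta',\pi)$ and then $(\Delta_1,\Delta',D_{\Delta_2}(\pi))$ directly via criterion (4).

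A secondary point on Step 1: the case you flag, where $D_{\Delta_1\cup\Delta_2}\circ D_{\Delta_1\cap\Delta_2}(\pi)=0$, is not merely a matter of extra bookkeeping. Your contradiction argument needs to invoke strong commutativity of $(\Delta_1\cap\Delta_2,\Delta',\pi)$ and $(\Delta_1\cup\Delta_2,\Delta',D_{\Delta_1\cap\Delta_2}(\pi))$ in order to apply Proposition \ref{prop strong commute imply commute}, but the very definition of strong commutativity requires the relevant derivative of $\pi$ to be nonzero—and Theorem \ref{thm combinatorial def} only asserts the equivalence (1)$\Leftrightarrow$(4) under that nonvanishing. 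The paper's route through $\omega$ and Lemma \ref{lem dual form for commute derivatives} sidesteps this entirely because integrals are always defined, so on the dual side there is no vanishing case to worry about. If you insist on the direct route, you will need a genuinely separate argument for this degenerate case.
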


\begin{proof}
We only prove the only if direction. Suppose $(\Delta_1, \Delta', \pi)$ and $(\Delta_2, \Delta', D_{\Delta_1}(\pi))$ are strongly RdLi-commutative triples. 

Let $\omega=I_{\Delta'}\circ D_{\Delta_2}\circ D_{\Delta_1}(\pi)$. By Theorem \ref{thm combinatorial def}, $(\Delta', \Delta_1, I_{\Delta_2}(\omega))$ and   $(\Delta', \Delta_2, \omega)$ are strongly LdRi-commutative triples. Note that
\[  I_{\Delta_1}\circ I_{\Delta_2}(D_{\Delta'}(\omega))\cong \pi \not\cong I_{\Delta_1\cup \Delta_2}\circ I_{\Delta_1\cap \Delta_2}(D_{\Delta'}(\omega)) .\]
Hence, by Lemma \ref{lem dual form for commute derivatives}, $(\Delta', \Delta_1, \omega)$ and $(\Delta', \Delta_2, I_{\Delta_1}(\omega))$ are strongly LdRi-commutative triples. Now, by Theorem \ref{thm combinatorial def}, $(\Delta_1, \Delta', I_{\Delta_1}\circ D_{\Delta'}(\omega))$ and $(\Delta_2, \Delta', D_{\Delta'}\circ I_{\Delta_2}\circ I_{\Delta_1}(\omega))$ are strongly RdLi-commutative triples. Now using $D_{\Delta_2}\circ D_{\Delta_1}(\pi)\cong D_{\Delta_1}\circ D_{\Delta_2}(\pi)$ in Lemma \ref{lem commute and intersect union}, we have that  $(\Delta_2, \Delta', \pi)$ and $(\Delta_1, \Delta', D_{\Delta_2}(\pi))$ are strongly LdRi-commutative triples. 

The last assertion follows by reformulating the last assertion in Lemma \ref{lem dual form for commute derivatives} (with Proposition \ref{prop strong commute imply commute} for some commutations).
\end{proof}

\section{Strong commutation for multisegments} \label{s strong commutation for sequences}

\subsection{Strongly commutative triples for multisegments}

Recall that the RdLi- commutativity is in Definition \ref{def commutative triple}. We now define the LdRi-commutative triples:

\begin{definition} \label{def strong commutation sequences}
Let $\mathfrak m, \mathfrak n \in \mathrm{Mult}$. Let $\pi \in \mathrm{Irr}$. Suppose $D_{\mathfrak m}(\pi)\neq 0$. We write $\mathfrak m=\left\{ \Delta_1, \ldots, \Delta_r\right\}$ in an ascending order and $\mathfrak n=\left\{ \Delta_1', \ldots, \Delta_s' \right\}$ in an ascending order. For $1 \leq i \leq r$, let $ \widetilde{\mathfrak m}_i=\left\{ \Delta_i, \ldots, \Delta_r\right\}$. For $1 \leq j \leq s$, let $\widetilde{\mathfrak n}_j=\left\{ \Delta_j', \ldots, \Delta_s' \right\}$, and let $\mathfrak m_0=\emptyset$ and $\mathfrak n_0=\emptyset$. We say that $(\mathfrak m, \mathfrak n, \pi)$ is a {\it strongly RdLi-commutative triple} if for any $1 \leq i \leq r$ and $1 \leq j \leq s$, 
\[ (\Delta_i, \Delta_j', I^R_{\widetilde{\mathfrak n}_{j+1}}\circ D^L_{\widetilde{\mathfrak m}_{i+1}}(\pi)) \]
is a strongly RdLi-commutative triple in the sense of Definition \ref{def strong commu intro}. We emphasis that $\mathfrak m$ and $\mathfrak n$ are written in an ascending order (c.f. $D^L$ and $I^R$ defined before in Definition \ref{def commutative triple}).

Note that, by Propositions \ref{prop unlinked commute integral} and \ref{prop unlinked commute derivative}, the strong commutation for multisegments is independent for a choice of an ascending order.
\end{definition}

We have the following duality in view of Theorem \ref{thm combinatorial def}.

\begin{proposition} \label{prop dual multi strong comm}
Let $(\mathfrak m, \mathfrak n, \pi)$ be a strongly RdLi-commutative triple. Then $(\mathfrak n, \mathfrak m, I_{\mathfrak n}\circ D_{\mathfrak m}(\pi))$ is a strongly LdRi-commutative triple.
\end{proposition}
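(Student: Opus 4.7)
The plan is to reduce the LdRi segment-pair conditions of $(\mathfrak n, \mathfrak m, \tau)$ (where $\tau = I_{\mathfrak n} \circ D_{\mathfrak m}(\pi)$) to the RdLi segment-pair conditions of $(\mathfrak m, \mathfrak n, \pi)$ supplied by the hypothesis, invoking the segment-level duality of Theorem \ref{thm combinatorial def}.

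Write $\mathfrak m = \{\Delta_1, \ldots, \Delta_r\}$ and $\mathfrak n = \{\Delta_1', \ldots, \Delta_s'\}$ in ascending orders. For each pair $(i, j) \in \{1, \ldots, r\} \times \{1, \ldots, s\}$, the hypothesis provides strong RdLi-commutativity of the segment triple $(\Delta_i, \Delta_j', I^L_{\mathfrak n_{j-1}} \circ D^R_{\mathfrak m_{i-1}}(\pi))$. Under the equivalence (1) $\Leftrightarrow$ (2) of Theorem \ref{thm combinatorial def}, this yields strong LdRi-commutativity of $(\Delta_j', \Delta_i, D^R_{\Delta_i} \circ I^L_{\Delta_j'} \circ I^L_{\mathfrak n_{j-1}} \circ D^R_{\mathfrak m_{i-1}}(\pi))$. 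Iterated use of Proposition \ref{prop strong commute imply commute}, justified by the other segment-level commutativities of the hypothesis, lets me move $D^R_{\Delta_i}$ past each $I^L$ factor on its right and combine it with $D^R_{\mathfrak m_{i-1}}$, simplifying the triple to
\[
(\Delta_j', \Delta_i, \, I^L_{\mathfrak n_j} \circ D^R_{\mathfrak m_i}(\pi)).
\]

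Next, I would match $I^L_{\mathfrak n_j} \circ D^R_{\mathfrak m_i}(\pi)$ with the LdRi intermediate representation of $(\mathfrak n, \mathfrak m, \tau)$ at the step corresponding to the pair $(\Delta_j', \Delta_i)$ (at LdRi-positions $s-j+1$ and $r-i+1$ under descending orderings, consistent with the conventions fixing $D^L_{\cdot}$ and $I^R_{\cdot}$). The LdRi intermediate there takes the form $I^R_{\{\Delta_{i+1}, \ldots, \Delta_r\}} \circ D^L_{\{\Delta_{j+1}', \ldots, \Delta_s'\}}(\tau)$. Iterating the cancellation $D^L_\Delta \circ I^L_\Delta \cong \mathrm{Id}$ (which holds on irreducibles by the unique-submodule definition of $I^L_\Delta$) peels off the outer $I^L$'s from $\tau$; then $D^R_\Delta \circ I^R_\Delta \cong \mathrm{Id}$ together with the commutativity of $I^R$ with $I^L$ yields the claimed identification. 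Since $(i, j) \mapsto (s-j+1, r-i+1)$ is a bijection, this exhausts all LdRi segment-pair conditions.

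The main obstacle is the commutativity of $I^R$ with $I^L$ required in the last identification step, which is not among the results explicitly recorded in the excerpt. I would establish it via a unique-irreducible-submodule argument on the parabolic product $\mathrm{St}(\Delta') \times \sigma \times \mathrm{St}(\Delta)$: both $I^L_{\Delta'} \circ I^R_\Delta(\sigma)$ and $I^R_\Delta \circ I^L_{\Delta'}(\sigma)$ embed as the unique irreducible submodule of this product. Alternatively, this obstacle is circumvented by inducting on $|\mathfrak n|$: peel off the largest segment $\Delta_s' \in \mathfrak n$, apply the inductive hypothesis to $(\mathfrak m, \mathfrak n - \Delta_s', \pi)$ for all LdRi conditions not involving $\Delta_s'$, and dispatch the remaining conditions by direct applications of Theorem \ref{thm combinatorial def} after using the $D^L_{\Delta_s'} \circ I^L_{\Delta_s'} \cong \mathrm{Id}$ cancellation to rewrite the relevant intermediates.
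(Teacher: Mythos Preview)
Your approach is the same as the paper's: the paper's proof reads in full ``This follows from repeated uses of Theorem~\ref{thm combinatorial def}~(1)$\Leftrightarrow$(2). We omit the details.'' You have correctly fleshed out what those repeated uses look like and, in doing so, isolated the one nontrivial bookkeeping step the paper does not spell out---identifying the intermediate $I^L_{\mathfrak n_j}\circ D^R_{\mathfrak m_i}(\pi)$ with the LdRi intermediate $I^R_{\{\Delta_{i+1},\ldots,\Delta_r\}}\circ D^L_{\{\Delta'_{j+1},\ldots,\Delta'_s\}}(\tau)$, which boils down to the commutation of $I^L$ and $I^R$.

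Two remarks on your proposed resolutions. Your socle argument is on the right track: since $\mathrm{St}(\Delta)$ is $\square$-irreducible (this is the KKKO/LM input already invoked for Definition~\ref{def integral derivative}), the product $(\mathrm{St}(\Delta')\times\sigma)\times\mathrm{St}(\Delta)$ has simple socle, and both $I^L_{\Delta'}\circ I^R_\Delta(\sigma)$ and $I^R_\Delta\circ I^L_{\Delta'}(\sigma)$ embed in it, forcing them to coincide. This is the cleanest way to close the gap. By contrast, your inductive alternative does not actually avoid the obstacle: the LdRi segment conditions involving $\Delta'_s$ itself (i.e.\ those at LdRi position $a=1$, before any $D^L$ is applied) have intermediates of the form $I^R_{\{\Delta_r,\ldots,\Delta_{i+1}\}}(\tau)$, and identifying these with $I^L_{\mathfrak n}\circ D^R_{\mathfrak m_i}(\pi)$ again requires commuting $I^R$ past $I^L$. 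So keep the socle argument and drop the inductive workaround.
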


\begin{proof}
This follows from repeated uses of Theorem \ref{thm combinatorial def} (1)$\Longleftrightarrow$(2). For more details, let $\tau=I_{\mathfrak n}\circ D_{\mathfrak m}(\pi)$. Write
\[  \mathfrak m=\left\{ \Delta_1, \ldots, \Delta_r\right\}, \quad \mathfrak n=\left\{ \Delta_1', \ldots, \Delta_s' \right\} .\]
Let 
\[ \mathfrak m_i=\left\{ \Delta_1, \ldots, \Delta_i\right\}, \quad \widetilde{\mathfrak m}_{i+1}=\left\{ \Delta_{i+1}, \ldots, \Delta_r\right\};\]
 and let 
\[ \mathfrak n_j=\left\{ \Delta_1', \ldots, \Delta_j' \right\}, \quad\widetilde{\mathfrak n}_{j+1}=\left\{ \Delta_{j+1}',\ldots, \Delta_s' \right\}. \] 
The RdLi-commutativity implies that $(\Delta_i, \Delta_j', I_{\mathfrak n_{j-1}}\circ D_{\mathfrak m_{i-1}}(\pi))$ is a RdLi-commutative triple for any $i,j$. By Theorem \ref{thm combinatorial def}, $(\Delta_j', \Delta_i, I_{\Delta_j'}\circ D_{\Delta_i}\circ  I_{\mathfrak n_{j-1}}\circ D_{\mathfrak m_{i-1}}(\pi))$ is a LdRi-commutative triple. 

By Proposition \ref{prop strong commute imply commute} multiple times, we then have that $I_{\Delta_j'}\circ D_{\Delta_i}\circ  I_{\mathfrak n_{j-1}}\circ D_{\mathfrak m_{i-1}}(\pi)) \cong I^R_{\widetilde{\mathfrak m}_{i+1}}\circ D^L_{\widetilde{\mathfrak n}_{j+1}}(\tau)$. In order words, we have that 
\[  (\Delta_j', \Delta_i, I^R_{\widetilde{\mathfrak m}_{i+1}}\circ D^L_{\widetilde{\mathfrak n}_{j+1}}(\tau))
\]
is a LdRi-commutative triple, as desired.
\end{proof}

In terms of branching laws, one may consider Proposition \ref{prop dual multi strong comm} is compatible with the duality in (\ref{eqn duality for restriction}).

\subsection{Combinatorial commutation for a sequence}

\begin{lemma} \label{lem strong commutative combin}
Let $\mathfrak m, \mathfrak n \in \mathrm{Mult}$. Let $\Delta, \Delta'$ be segments. Let $\pi \in \mathrm{Irr}$. The following conditions are equivalent:
\begin{enumerate}
\item $(\mathfrak m, \Delta', \pi)$ is strongly RdLi-commutative;
\item $\eta^L_{\Delta'}(D_{\mathfrak m}\circ I_{\Delta'}(\pi))=\eta^L_{\Delta'}(I_{\Delta'}(\pi))$.
\end{enumerate} 
Similarly, the following conditions are equivalent:
\begin{enumerate}
\item $(\Delta, \mathfrak n, \pi)$ is strongly RdLi-commutative;
\item $\eta_{\Delta}(I_{\mathfrak n}(\pi))=\eta_{\Delta}(\pi)$. 
\end{enumerate}
\end{lemma}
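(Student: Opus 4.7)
The plan is to reduce both equivalences to the segment case in Theorem~\ref{thm combinatorial def} and telescope via appropriate monotonicity chains for the $\eta$-invariants. The second equivalence is the cleaner of the two, so I would handle it first. By Definition~\ref{def strong commutation sequences} the strong RdLi-commutativity of $(\Delta,\mathfrak n,\pi)$ is exactly the strong commutativity of every $(\Delta,\Delta_j', I_{\mathfrak n_{j-1}}(\pi))$, and Theorem~\ref{thm combinatorial def}$(1)\Leftrightarrow(3)$ translates each such triple into the scalar condition $\eta_\Delta(I_{\mathfrak n_j}(\pi))=\eta_\Delta(I_{\mathfrak n_{j-1}}(\pi))$ for $j=1,\ldots,s$. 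Iterating Lemma~\ref{lem right multi submulti} along $\mathfrak n_0\subset\mathfrak n_1\subset\ldots\subset\mathfrak n$ produces the monotone chain $\eta_\Delta(\pi)\leq\eta_\Delta(I_{\mathfrak n_1}(\pi))\leq\ldots\leq\eta_\Delta(I_{\mathfrak n}(\pi))$, so these step equalities hold simultaneously if and only if the endpoints coincide, namely $\eta_\Delta(I_{\mathfrak n}(\pi))=\eta_\Delta(\pi)$.

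The first equivalence proceeds in the same spirit, but uses Theorem~\ref{thm combinatorial def}$(1)\Leftrightarrow(4)$: the strong commutativity of $(\Delta_i,\Delta',D_{\mathfrak m_{i-1}}(\pi))$ becomes $\eta^L_{\Delta'}(D_{\Delta_i}\circ I_{\Delta'}\circ D_{\mathfrak m_{i-1}}(\pi))=\eta^L_{\Delta'}(I_{\Delta'}\circ D_{\mathfrak m_{i-1}}(\pi))$. For the forward direction I would feed Proposition~\ref{prop strong commute imply commute} into an induction on $i$ to commute the integral past all previously handled derivatives, producing $I_{\Delta'}\circ D_{\mathfrak m_{i-1}}(\pi)\cong D_{\mathfrak m_{i-1}}\circ I_{\Delta'}(\pi)$; the step-$i$ equality then reads $\eta^L_{\Delta'}(D_{\mathfrak m_i}\circ I_{\Delta'}(\pi))=\eta^L_{\Delta'}(D_{\mathfrak m_{i-1}}\circ I_{\Delta'}(\pi))$, and telescoping over $i$ delivers the single equality at the top. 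For the reverse direction I would use the monotonicity sub-claim $\eta^L_{\Delta'}(D_\Delta(\tau))\leq \eta^L_{\Delta'}(\tau)$ discussed below to squeeze the chain $\eta^L_{\Delta'}(D_\mathfrak m\circ I_{\Delta'}(\pi))\leq\ldots\leq \eta^L_{\Delta'}(I_{\Delta'}(\pi))$ into equalities at every step, and then a second induction on $i$ — at each stage rewriting the step equality via Proposition~\ref{prop strong commute imply commute} applied to the strong commutations already established for $j<i$, and then invoking Theorem~\ref{thm combinatorial def}$(4)\Rightarrow(1)$ — promotes the chain of scalar equalities back to the strong commutativity of each $(\Delta_i,\Delta',D_{\mathfrak m_{i-1}}(\pi))$.

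The main obstacle is establishing the dual monotonicity $\eta^L_{\Delta'}(D_\Delta(\tau))\leq \eta^L_{\Delta'}(\tau)$, which plays the role for the first equivalence that Lemma~\ref{lem right multi submulti} plays for the second. My plan is to mirror the proof of Lemma~\ref{lem right multi submulti}: set $\mathfrak p=\mathfrak{mx}^L(D_\Delta(\tau),\Delta')$, which is left-$\Delta'$-saturated and satisfies $D^L_{\mathfrak p}(D_\Delta(\tau))\neq 0$, and compose the embeddings $\tau\hookrightarrow D_\Delta(\tau)\times\mathrm{St}(\Delta)$ and $D_\Delta(\tau)\hookrightarrow \mathrm{St}(\mathfrak p)\times D^L_{\mathfrak p}(D_\Delta(\tau))$ (the left analogue of the standard right embedding used in the proof of Lemma~\ref{lem right multi submulti}) to obtain $\tau\hookrightarrow \mathrm{St}(\mathfrak p)\times\bigl(D^L_{\mathfrak p}(D_\Delta(\tau))\times\mathrm{St}(\Delta)\bigr)$. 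Invoking the socle-irreducibility of the relevant Jacquet module from \cite{Ch22+,Ch22+b}, together with Theorem~\ref{thm change in highest derivative after d} and \cite[Corollary~7.23]{Ch22+} exactly as cited in the proof of Lemma~\ref{lem right multi submulti}, should yield $D^L_{\mathfrak p}(\tau)\neq 0$ and hence $\mathfrak p\subset \mathfrak{mx}^L(\tau,\Delta')$, which is precisely the claimed inequality. The delicate point is that a left derivative is being extracted from $\tau$ after applying a right derivative, so one must check that the non-exactness of $D_\Delta$ does not obstruct the left saturation argument; the left-$\Delta'$-saturatedness of $\mathfrak p$ together with the socle-irreducible big-derivative framework from \cite{Ch22+b} is what allows this to go through, in exact parallel with the right-sided case.
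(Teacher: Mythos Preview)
Your proposal is correct and follows essentially the same route as the paper: both equivalences are reduced to the segment case via Theorem~\ref{thm combinatorial def} and a telescoping monotone chain of $\eta$-invariants, with Proposition~\ref{prop strong commute imply commute} used inductively to commute $I_{\Delta'}$ past the already-handled $D_{\Delta_j}$'s.

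The one place you make extra work for yourself is the monotonicity sub-claim $\eta^L_{\Delta'}(D_\Delta(\tau))\leq \eta^L_{\Delta'}(\tau)$. The paper simply cites ``the left version of Lemma~\ref{lem right multi submulti}'', namely $\eta^L_{\Delta'}(\sigma)\leq \eta^L_{\Delta'}(I^R_{\Delta}(\sigma))$, and applies it with $\sigma=D_\Delta(\tau)$: since $D_\Delta(\tau)\neq 0$ gives $I^R_\Delta(D_\Delta(\tau))\cong\tau$, the inequality drops out immediately. There is thus no ``delicate point'' about extracting a left derivative after a right derivative --- the derivative formulation is literally the integral formulation after one substitution, and the left version of Lemma~\ref{lem right multi submulti} is proved by swapping left and right throughout its two-line proof. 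Your direct argument via $\mathfrak p=\mathfrak{mx}^L(D_\Delta(\tau),\Delta')$ and composed embeddings is not wrong (it unwinds to the same computation), but it is more work than needed and the caveat you flag is a non-issue.
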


\begin{proof}
Write $\mathfrak m=\left\{ \Delta_1, \ldots, \Delta_r \right\}$ with the labelling in an ascending order. Let
\[  \mathfrak m_j = \left\{ \Delta_1, \ldots, \Delta_j \right\} .
\]
We only prove the former one; and the latter equivalence is similar and slightly easier by repeatedly using combinatorial commutativity (and Theorem \ref{thm combinatorial def}). 

For the former one, we first have the following inequalities (by the left version of Lemma \ref{lem right multi submulti}):
\[  \eta^L_{\Delta'}(D_{\mathfrak m} \circ I_{\Delta'}(\pi))\leq \ldots \leq \eta^L_{\Delta'}(D_{\mathfrak m_j}\circ I_{\Delta'}(\pi)) \leq \ldots \leq \eta^L_{\Delta'}(I_{\Delta'}(\pi)) .
\]

We now assume (2). Then all the inequalities above are equalities. Thus, inductively, we have that 
\[ \eta^L_{\Delta'}(D_{\Delta_j}\circ I_{\Delta'}\circ D_{\mathfrak m_{j-1}}(\pi))=\eta^L_{\Delta'}(D_{\Delta_j}\circ D_{\mathfrak m_{j-1}}\circ I_{\Delta'}(\pi))=\eta^L_{\Delta'}(D_{\mathfrak m_j}\circ I_{\Delta'} (\pi)), \]
where the first equality follows from the strong commutativity in previous $j-1$ cases and Proposition \ref{prop strong commute imply commute}. Using Theorem \ref{thm combinatorial def}, we now have $(\Delta_j,\Delta', D_{\mathfrak m_{j-1}}(\pi))$ is strongly RdLi-commutative. 

(1) $\Rightarrow$ (2) follows from Theorem \ref{thm combinatorial def} and Proposition \ref{prop strong commute imply commute}.
\end{proof}

\section{Minimal strongly commutative triples} \label{s minimal strong commutative triples for seq}

\subsection{Minimality for pairs}

\begin{lemma} \label{lem minimal multisegments int}
Let $(\Delta, \Delta_1', \pi)$ and $(\Delta, \Delta_2', I_{\Delta_1'}(\pi))$ be strongly RdLi-commutative triples. Then
$I_{\Delta_2'}\circ I_{\Delta_1'}(\pi) \not\cong I_{\Delta_1'\cup \Delta_2'} \circ I_{\Delta_1'\cap \Delta_2'}(\pi)$ if and only if 
$I_{\Delta_2'}\circ I_{\Delta_1'}\circ D_{\Delta}(\pi) \not\cong I_{\Delta_1'\cup \Delta_2'}\circ I_{\Delta_1'\cap \Delta_2'}\circ D_{\Delta}(\pi)$.
\end{lemma}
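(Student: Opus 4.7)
The plan is to deduce this lemma directly from the two propositions on the intersection-union process for integrals that were proved in this section, namely Proposition \ref{prop intersect union comm} and Proposition \ref{prop commut triples ladder one}. Indeed, the statement is exactly the ``moreover'' conclusions of those two results, packaged as a single biconditional; no new technique is needed, only a careful matching of notation (the roles of the linked pair and of the union/intersection pair are swapped between the two propositions). I will also note at the outset that if $\Delta_1'$ and $\Delta_2'$ are unlinked then $\Delta_1'\cup\Delta_2'$ is not a segment and the intersection-union operation is not defined, so both sides of the biconditional are vacuously (or, under the natural reading via Lemma \ref{lem unlinked comm der and integral basic}, trivially) equivalent. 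Hence I may assume $\Delta_1'$ and $\Delta_2'$ are linked and, after relabeling, that $\Delta_1'<\Delta_2'$.

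For the forward direction, suppose
\[ I_{\Delta_2'}\circ I_{\Delta_1'}(\pi)\not\cong I_{\Delta_1'\cup\Delta_2'}\circ I_{\Delta_1'\cap\Delta_2'}(\pi). \]
This is precisely the standing assumption of Proposition \ref{prop commut triples ladder one}, while the two strongly RdLi-commutative triples $(\Delta,\Delta_1',\pi)$ and $(\Delta,\Delta_2',I_{\Delta_1'}(\pi))$ given in the hypothesis are exactly one of the equivalent conditions in that proposition. Its ``moreover'' clause then yields
\[ I_{\Delta_2'}\circ I_{\Delta_1'}\circ D_{\Delta}(\pi)\not\cong I_{\Delta_1'\cup\Delta_2'}\circ I_{\Delta_1'\cap\Delta_2'}\circ D_{\Delta}(\pi), \]
as required.

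For the reverse direction I argue by contrapositive, so assume
\[ I_{\Delta_2'}\circ I_{\Delta_1'}(\pi)\cong I_{\Delta_1'\cup\Delta_2'}\circ I_{\Delta_1'\cap\Delta_2'}(\pi). \]
With the identification $\Delta_1\leftrightarrow\Delta_1'$, $\Delta_2\leftrightarrow\Delta_2'$, $\widetilde{\Delta}\leftrightarrow\Delta$, this is exactly the standing hypothesis of Proposition \ref{prop intersect union comm}. The given strongly RdLi-commutative triples $(\Delta,\Delta_1',\pi)$ and $(\Delta,\Delta_2',I_{\Delta_1'}(\pi))$ satisfy its condition~(1), so its ``moreover'' clause provides
\[ I_{\Delta_2'}\circ I_{\Delta_1'}\circ D_{\Delta}(\pi)\cong I_{\Delta_1'\cup\Delta_2'}\circ I_{\Delta_1'\cap\Delta_2'}\circ D_{\Delta}(\pi), \]
which is the contrapositive of what we wanted, and completes the proof.

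The only point one has to be careful about is that the hypothesis on the triples in our lemma is asymmetric in $\Delta_1'$ and $\Delta_2'$ (the second triple is built on $I_{\Delta_1'}(\pi)$, not on $I_{\Delta_2'}(\pi)$), so one must verify that both Propositions \ref{prop intersect union comm} and \ref{prop commut triples ladder one} are applied with the \emph{same} matching of $(\Delta_1',\Delta_2')$ that appears in our hypothesis. This is automatic once the notation is aligned as above; no genuine obstacle is encountered.
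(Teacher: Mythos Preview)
Your proof is correct and matches the paper's approach. The paper phrases the ``only if'' direction as the contrapositive of Lemma~\ref{lem derivative second intersect union}(3) rather than citing Proposition~\ref{prop commut triples ladder one} directly, but since that lemma's part~(3) is itself deduced from Proposition~\ref{prop commut triples ladder one}, the two arguments are the same.
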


\begin{proof}
The if direction follows from Proposition \ref{prop intersect union comm}. For the only if direction, it follows from Lemma \ref{lem derivative second intersect union}.
\end{proof}
%We shall give a quicker proof for the only if direction by using the duality. Let $\tau=D_{\Delta}\circ I_{\Delta_2'}\circ I_{\Delta_1'}(\pi)$. By Theorem \ref{thm combinatorial def}, we have that $(\Delta_1', \Delta, D^L_{\Delta_2}(\tau))$ and $(\Delta_2', \Delta, \tau)$ are strongly RdLi-commutative triples. Suppose
%\[  I_{\Delta_2'}\circ I_{\Delta_1'}\circ D_{\Delta}(\pi) \cong I_{\Delta_1'\cup \Delta_2'}\circ I_{\Delta_1'\cap \Delta_2'}\circ D_{\Delta}(\pi) ,\]
%which can be rephrased as:
%\[ D_{\Delta_1'\cap \Delta_2'}^L\circ D_{\Delta_1'\cup \Delta_2'}^L(\tau) \cong D_{\Delta_1'}^L\circ D_{\Delta_2'}^L(\tau) .\]
%By Proposition \ref{prop two strong commute dual 2}, we then have that:
%\[   D_{\Delta_1' \cap \Delta_2'}^L\circ D_{\Delta_1'\cup \Delta_2'}^L\circ I_{\Delta}^R(\tau) \cong D_{\Delta_1'}^L\circ D_{\Delta_2'}^L\circ I_{\Delta}^R(\tau) ,\]
%which can be rephrased as 
%\[ I_{\Delta_2'}\circ I_{\Delta_1'}(\pi) \cong I_{\Delta_1'\cup \Delta_2'}\circ I_{\Delta_1'\cap \Delta_2'}(\pi) . \]
%This proves the only if direction.

\begin{lemma} \label{lem minimal multisegments der} 
Let $(\Delta_1, \Delta', \pi)$ and $(\Delta_2, \Delta', D_{\Delta_1}(\pi))$ be strongly RdLi-commutative triples. Then $D_{\Delta_2}\circ D_{\Delta_1}(I_{\Delta'}(\pi)) \not\cong D_{\Delta_1 \cup \Delta_2} \circ D_{\Delta_1\cap \Delta_2}(I_{\Delta'}(\pi))$ if and only if $D_{\Delta_2}\circ D_{\Delta_1}(\pi) \not\cong D_{\Delta_1\cup \Delta_2}\circ D_{\Delta_1\cap \Delta_2}(\pi)$.
\end{lemma}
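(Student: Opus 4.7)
The plan is to read this lemma as the exact derivative analogue of Lemma \ref{lem minimal multisegments int}, and to extract both directions from the two derivative-side intersection--union results already established, namely Proposition \ref{prop two strong commute dual 2} and Proposition \ref{cor strong commute linked commute case}. Write $\widetilde{\Delta}_1=\Delta_1\cup\Delta_2$ and $\widetilde{\Delta}_2=\Delta_1\cap\Delta_2$ throughout.

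For the ``if'' direction, I would argue by contrapositive. Assume that $D_{\Delta_2}\circ D_{\Delta_1}(\pi)\cong D_{\widetilde{\Delta}_1}\circ D_{\widetilde{\Delta}_2}(\pi)$. The two strong commutations in the hypothesis of the lemma are exactly the input needed to apply Proposition \ref{prop two strong commute dual 2}; part (3) of that proposition yields
\[
 D_{\Delta_2}\circ D_{\Delta_1}\circ I_{\Delta'}(\pi)\;\cong\; D_{\widetilde{\Delta}_2}\circ D_{\widetilde{\Delta}_1}\circ I_{\Delta'}(\pi),
\]
which is the negation of the left hand side of the lemma. Contrapositing gives the desired implication.

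For the ``only if'' direction, assume $D_{\Delta_2}\circ D_{\Delta_1}(\pi)\not\cong D_{\widetilde{\Delta}_1}\circ D_{\widetilde{\Delta}_2}(\pi)$. In this linked-and-non-commuting situation Proposition \ref{cor strong commute linked commute case} applies directly: the pair of strong RdLi-commutations assumed in the lemma is precisely the hypothesis of its ``moreover'' clause, and that clause outputs
\[
 D_{\Delta_2}\circ D_{\Delta_1}\circ I_{\Delta'}(\pi)\;\not\cong\; D_{\widetilde{\Delta}_2}\circ D_{\widetilde{\Delta}_1}\circ I_{\Delta'}(\pi),
\]
which is what the lemma claims.

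No step here looks like a real obstacle: the entire content of the lemma is bookkeeping that compares the behaviour of the intersection--union process for $D_{\Delta_2}D_{\Delta_1}$ before and after applying $I_{\Delta'}$, and both directions have already been done at the level of triples in the cited propositions. The only mild care is to match conventions (here the intersection--union is being applied to the derivative pair $(\Delta_1,\Delta_2)$ while the integral segment $\Delta'$ plays the role of the ``other'' segment), but this matches the set-up of Propositions \ref{prop two strong commute dual 2} and \ref{cor strong commute linked commute case} verbatim. One could also give a more conceptual proof using the duality of Theorem \ref{thm combinatorial def} to convert the statement into its LdRi counterpart (at which point it becomes a direct corollary of Lemma \ref{lem minimal multisegments int} applied to $I_{\Delta'}\circ D_{\Delta_2}\circ D_{\Delta_1}(\pi)$), but the direct route through the two derivative propositions seems to be the shortest and to parallel most cleanly the proof of the integral version.
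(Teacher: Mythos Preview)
Your proof is correct and matches the paper's approach exactly: the paper's proof is the one-line ``the if part follows from Proposition \ref{cor strong commute linked commute case}; for the only if direction, it follows from Proposition \ref{prop two strong commute dual 2}.'' You have, however, swapped the labels ``if'' and ``only if'': the contrapositive you run through Proposition \ref{prop two strong commute dual 2}(3) (assuming the $\pi$-side isomorphism and deducing the $I_{\Delta'}(\pi)$-side one) is the contrapositive of the \emph{only if} direction, and the direct application of the ``moreover'' clause of Proposition \ref{cor strong commute linked commute case} is the \emph{if} direction. This is purely cosmetic; the two implications are both there and both correct.
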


\begin{proof}
The if part follows from Proposition \ref{cor strong commute linked commute case}. For the only if direction, it follows from \ref{prop two strong commute dual 2}. 
\end{proof}
%Again, we rephrase by using the duality. Let $\tau=D_{\Delta_2}\circ D_{\Delta_1}\circ I_{\Delta'}(\pi)$. Then, using Theorem \ref{thm combinatorial def} as before, we have that $(\Delta', \Delta_2, \tau)$ and $(\Delta', \Delta_1, I_{\Delta_2}(\tau))$ are strongly RdLi-commutative triples. Then the first non-isomorphism in the lemma can be rephrased as: $I^R_{\Delta_1}\circ I^R_{\Delta_2}(\tau) \not\cong I^R_{\Delta_1\cup\Delta_2}\circ I^R_{\Delta_1\cap \Delta_2}(\tau)$. The second non-isomorphism in the lemma can be rephrased as $I_{\Delta_1}^R\circ I_{\Delta_2}^R\circ D_{\Delta'}^L(\tau)\not\cong I^R_{\Delta_1\cup \Delta_2}\circ I^R_{\Delta_1\cap \Delta_2}\circ D_{\Delta'}^L(\tau)$. Thus the lemma follows from the version of Lemma \ref{lem minimal multisegments int} switching between left and right ones.

%The if direction follows from Proposition \ref{prop two strong commute dual 2}.

%A proof is similar to that of Lemma \ref{lem minimal multisegments int}, which one uses Proposition \ref{prop intersect union comm}.

\subsection{Minimality for a sequence}

We recall some theory of minimal sequences established in \cite{Ch22+aa}.

\begin{theorem} \label{thm minimality of d and i} \cite{Ch22+aa} (Uniqueness of minimality)
Let $\pi \in \mathrm{Irr}$. 
\begin{enumerate}
\item For $\mathfrak m, \mathfrak n \in \mathrm{Mult}$ with $D_{\mathfrak m}(\pi)\cong D_{\mathfrak n}(\pi)$ (resp. $D^L_{\mathfrak m}(\pi)\cong D^L_{\mathfrak n}(\pi)$), if both $\mathfrak m$ and $\mathfrak n$ are Rd-minimal (resp. Ld-minimal) to $\pi$, then $\mathfrak m=\mathfrak n$. 
\item For $\mathfrak m, \mathfrak n \in \mathrm{Mult}$ with $I_{\mathfrak m}(\pi)\cong I_{\mathfrak n}(\pi)$ (resp. $I^R_{\mathfrak m}(\pi)\cong I^R_{\mathfrak n}(\pi)$), if both $\mathfrak m$ and $\mathfrak n$ are Li-minimal (resp. Ri-minimal) to $\pi$, then $\mathfrak m=\mathfrak n$.
\end{enumerate}
\end{theorem}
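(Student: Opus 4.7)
The plan is to establish part (1) for Rd-minimality first; the Ld-minimality case follows by applying the Gelfand--Kazhdan involution $\theta$, which swaps right and left derivatives. Part (2) then reduces to (1): the multisegment $\mathfrak m$ is Li-minimal to $\pi$ precisely when it is Ld-minimal to $I_\mathfrak m(\pi)$ (since $D^L_\mathfrak m \circ I_\mathfrak m = \mathrm{id}$), and an analogous duality handles Ri-minimality, so any non-uniqueness for integrals would produce non-uniqueness for derivatives.

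For the Rd-minimal case, I would proceed by induction on $l_a(\mathfrak m)$ (which equals $l_a(\mathfrak n)$ by cuspidal-support bookkeeping: $\mathrm{csupp}(\mathfrak m) + \mathrm{csupp}(D_\mathfrak m(\pi)) = \mathrm{csupp}(\pi) = \mathrm{csupp}(\mathfrak n) + \mathrm{csupp}(D_\mathfrak n(\pi))$). In the inductive step, I pick a cuspidal $\rho$ that is $\leq$-maximal in $\mathrm{csupp}(\mathfrak m)$, and then pick a segment $\Delta \in \mathfrak m$ with $b(\Delta) = \rho$ that is $\leq$-maximal among such. The first task is to show that this same $\Delta$ appears in $\mathfrak n$. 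For this, the minimality criterion of Lemma \ref{lem minimal eta criterai} gives $\eta_{\Delta}(D_{\mathfrak m - \Delta}(\pi)) = \eta_\Delta(\pi)$, and the maximality of $\rho$ together with Theorem \ref{thm change in highest derivative after d} allows one to read off the multiset $\mathfrak m_{b=\rho}$ from $\mathfrak{hd}(\pi)$ and the $\eta$-invariants of $D_\mathfrak m(\pi)$ via iterated removal processes; since $D_\mathfrak m(\pi) \cong D_\mathfrak n(\pi)$, the same recipe applied to $\mathfrak n$ forces $\mathfrak m_{b=\rho} = \mathfrak n_{b=\rho}$, and in particular $\Delta \in \mathfrak n$.

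Having shown $\Delta \in \mathfrak m \cap \mathfrak n$, I would invoke the commutativity property of minimal sequences from \cite{Ch22+} (cf.\ Lemma \ref{lem commute and minimal}): since $\Delta$ is extremal and $\mathfrak m$ is Rd-minimal to $\pi$, the complement $\mathfrak m - \Delta$ is Rd-minimal to $D_\Delta(\pi)$, and similarly for $\mathfrak n - \Delta$. Applying $D_\Delta$ commutes (up to the chosen ordering) with both sequences, so
\[
  D_{\mathfrak m - \Delta}(D_\Delta(\pi)) \cong D_\mathfrak m(\pi) \cong D_\mathfrak n(\pi) \cong D_{\mathfrak n - \Delta}(D_\Delta(\pi)) .
\]
The inductive hypothesis applied to $D_\Delta(\pi)$ with the strictly smaller multisegments $\mathfrak m - \Delta$ and $\mathfrak n - \Delta$ gives $\mathfrak m - \Delta = \mathfrak n - \Delta$, hence $\mathfrak m = \mathfrak n$.

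The main obstacle is the combinatorial step of recovering $\mathfrak m_{b=\rho}$ from $\mathfrak{hd}(\pi)$ together with the isomorphism class of $D_\mathfrak m(\pi)$. Theorem \ref{thm change in highest derivative after d} only controls $\varepsilon_{\Delta'}$ for $\Delta' \not< \Delta$, so tracking the removal processes through a sequence of derivatives requires the right inductive bookkeeping; the $\leq$-maximality of $b(\Delta) = \rho$ is crucial here, as it guarantees that no segment in $\mathfrak m - \Delta$ interferes with the invariant $\eta_\Delta$, and any discrepancy between $\mathfrak m_{b=\rho}$ and $\mathfrak n_{b=\rho}$ would be detected by the minimality criterion of Lemma \ref{lem minimal eta criterai} at the intermediate stage. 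Once this combinatorial rigidity is in place, the induction closes cleanly.
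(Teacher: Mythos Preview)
Your reduction of (2) to (1) matches the paper exactly: set $\tau = I_{\mathfrak m}(\pi)$, observe that Li-minimality to $\pi$ is the same as Ld-minimality to $\tau$, and apply (1). The paper gives nothing more for (2).

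For (1), the paper simply cites \cite{Ch22+} and does not reproduce the argument. Your inductive outline is in the right spirit, but the step you yourself flag as the main obstacle is a genuine gap, not just missing bookkeeping. Specifically, your invocation of Lemma~\ref{lem minimal eta criterai} to get $\eta_\Delta(D_{\mathfrak m - \Delta}(\pi)) = \eta_\Delta(\pi)$ does not apply as stated: that lemma requires every $\widetilde\Delta$ in the smaller multisegment to satisfy both $a(\widetilde\Delta) < a(\Delta)$ and $b(\widetilde\Delta) < b(\Delta)$, but $\mathfrak m - \Delta$ may well contain other segments with $b(\widetilde\Delta) = \rho$, so the hypothesis fails. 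More broadly, the claim that $\mathfrak m_{b=\rho}$ is determined by $\mathfrak{hd}(\pi)$ and the $\eta$-invariants of $D_{\mathfrak m}(\pi)$ via iterated removal processes is precisely the substantive content of the cited result; Theorem~\ref{thm change in highest derivative after d} only controls $\varepsilon_{\Delta'}$ for $\Delta' \not< \Delta$ after a \emph{single} derivative, and chaining this through a sequence in which several segments share the same $b$-endpoint requires a careful argument that you have not supplied. So for part (1) you have identified the correct inductive skeleton but not the mechanism that makes the key step go through; the paper's choice to cite out reflects that this is where the real work lies.
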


\begin{proof}
(1) is proved in \cite[Theorem 1.2]{Ch22+aa}. For (2), let $\tau=I_{\mathfrak m}(\pi)$. Hence $D^R_{\mathfrak m}(\tau)\cong D^R_{\mathfrak n}(\tau)$. Li-minimality to $\pi$ implies Ld-minimality to $\tau$. Thus (2) follows from (1).
\end{proof}

\begin{definition} \label{def minimal strong commutation}
A strongly RdLi-commutative triple $(\mathfrak m, \mathfrak n, \pi)$ is said to be {\it minimal} if $\mathfrak m$ is Rd-minimal to $\pi$ and $\mathfrak n$ is Li-minimal to $\pi$.
\end{definition}

While we only require that the minimality holds for $\pi$ in Definition \ref{def minimal strong commutation}, the following proposition shows that the minimality holds for "intermediate terms".

\begin{proposition} \label{prop minimal for all}
Let $(\mathfrak m, \mathfrak n, \pi)$ be a minimal strongly RdLi-commutative triple.  Write $\mathfrak m=\left\{ \Delta_1, \ldots, \Delta_r \right\}$ in an ascending order and write $\mathfrak n=\left\{ \Delta_1', \ldots, \Delta_s' \right\}$ in an ascending order. Let $\mathfrak m_i=\left\{ \Delta_1, \ldots, \Delta_i\right\}$ and let $\mathfrak n_i=\left\{ \Delta_1', \ldots, \Delta_i' \right\}$  (with $\mathfrak n_0=\emptyset$ and $\mathfrak m_0=\emptyset$). Let $\bar{\mathfrak m}_i=\mathfrak m-\mathfrak m_i$ and let $\bar{\mathfrak n}_i=\mathfrak n-\mathfrak n_i$. Then, for any $i$ and $j$, $\bar{\mathfrak m}_i$ (resp. $\bar{\mathfrak n}_j$) is Rd-minimal (resp. Li-minimal) to $I_{\mathfrak n_j}\circ D_{\mathfrak m_i}(\pi)$.
\end{proposition}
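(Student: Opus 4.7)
The plan is to prove the Rd-minimality assertion by induction on $j$ with $i$ held fixed; the Li-minimality assertion is obtained symmetrically by induction on $i$ with $j$ fixed (alternatively, by dualizing through Proposition \ref{prop dual multi strong comm} and applying the LdRi-analog). For the base case $j = 0$, one has $I_{\mathfrak n_0} = \mathrm{id}$, so the required Rd-minimality of $\bar{\mathfrak m}_i$ at $D_{\mathfrak m_i}(\pi)$ is exactly the commutativity property of minimal sequences for derivatives of \cite{Ch22+} recalled in the introduction: if $\mathfrak m$ is Rd-minimal to $\pi$, then $\mathfrak m - \mathfrak m_i$ is Rd-minimal to $D_{\mathfrak m_i}(\pi)$. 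The base case for Li-minimality is handled by the analogous commutativity property for integrals.

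For the inductive step, set $\tau = I_{\mathfrak n_{j-1}} \circ D_{\mathfrak m_i}(\pi)$ and $\Delta' = \Delta_j'$, so $I_{\mathfrak n_j} \circ D_{\mathfrak m_i}(\pi) = I_{\Delta'}(\tau)$. Suppose for contradiction that $\bar{\mathfrak m}_i$ fails to be Rd-minimal at $I_{\Delta'}(\tau)$. Invoking \cite[Corollary 1.6]{Ch22+} (together with Lemma \ref{lem commute and intersect union} to ensure the offending pair is genuinely linked and non-commutative), one obtains a consecutive linked pair $\Delta_a < \Delta_b$ in the ascending ordering of $\bar{\mathfrak m}_i$ at which an intersection-union reduction occurs. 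Write $\bar{\mathfrak m}_i = \mathfrak s + \{\Delta_a, \Delta_b\} + \mathfrak t$ in ascending order and put $\omega = D_{\mathfrak s}(\tau)$. By the strong commutativity of $(\mathfrak m, \mathfrak n, \pi)$ and Proposition \ref{prop strong commute imply commute}, $I_{\Delta'}$ commutes with each derivative appearing in $\mathfrak s$ (and $\mathfrak t$) at the relevant stage, so the reduction localizes to the pair level:
\[
D_{\Delta_b} \circ D_{\Delta_a}(I_{\Delta'}(\omega)) \cong D_{\Delta_a \cup \Delta_b} \circ D_{\Delta_a \cap \Delta_b}(I_{\Delta'}(\omega)).
\]

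Next, Definition \ref{def strong commutation sequences} implies that $(\Delta_a, \Delta', \omega)$ and $(\Delta_b, \Delta', D_{\Delta_a}(\omega))$ are both strongly RdLi-commutative triples, so Lemma \ref{lem minimal multisegments der} converts the displayed isomorphism at $I_{\Delta'}(\omega)$ into
\[
D_{\Delta_b} \circ D_{\Delta_a}(\omega) \cong D_{\Delta_a \cup \Delta_b} \circ D_{\Delta_a \cap \Delta_b}(\omega),
\]
which exhibits a forbidden intersection-union reduction of $\bar{\mathfrak m}_i$ at $\tau$, contradicting the inductive hypothesis. The inductive step for Li-minimality follows the same template with Lemma \ref{lem minimal multisegments int} replacing Lemma \ref{lem minimal multisegments der}. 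The main obstacle is the precise reduction of non-minimality at the whole-multisegment level to a single-pair isomorphism at the intermediate representation $D_{\mathfrak s}(\xi)$: an intersection-union replacement may force the new segments $\Delta_a \cap \Delta_b$ and $\Delta_a \cup \Delta_b$ to be resorted relative to $\mathfrak s$ and $\mathfrak t$ in the ascending order, and one must rigorously commute them through these blocks using Lemmas \ref{lem unlinked comm der and integral basic} and \ref{lem commute and intersect union} and the commutativity/subsequent properties of minimal sequences from \cite{Ch22+}. This is the technical heart of the argument, but once it is set up the rest reduces cleanly to iterated applications of Lemma \ref{lem minimal multisegments der}.
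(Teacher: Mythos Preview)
Your proposal is correct and follows essentially the same route as the paper: reduce the failure of minimality to a single consecutive linked pair via \cite[Corollary 1.6]{Ch22+}, then use Lemma \ref{lem minimal multisegments der} (resp.\ Lemma \ref{lem minimal multisegments int}) to transfer the intersection-union coincidence across the integral (resp.\ derivative) and contradict the known minimality at the previous stage. The paper's proof compresses this into a single sentence by citing the two-segment reduction from \cite{Ch22+} and then invoking the two lemmas directly; your induction on $j$ and the explicit localization to $\omega = D_{\mathfrak s}(\tau)$ are precisely the unpacking of that citation.
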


\begin{proof}
It is shown in \cite[Theorem 1.1]{Ch22+aa} that checking minimality is reduced to two segment cases. Then, for integrals, it follows from Lemma \ref{lem minimal multisegments int}; and for derivatives, it follows from Lemma \ref{lem minimal multisegments der}. 
\end{proof}

\subsection{Commutativity under minimality and RdLi-commutativity}

We first recall the following commutativity result:

\begin{lemma} \label{lem commute and minimal} \cite[Theorem 5.4]{Ch22+bb}
Let $\pi \in \mathrm{Irr}$. Let $\mathfrak m \in \mathrm{Mult}$ be minimal to $\pi$. For any submultisegment $\mathfrak m'$ of $\mathfrak m$,
\begin{enumerate}
\item $ D_{\mathfrak m}(\pi) \cong D_{\mathfrak m-\mathfrak m'}\circ D_{\mathfrak m'}(\pi)$ and $\mathfrak m-\mathfrak m'$ is minimal to $D_{\mathfrak m'}(\pi)$; and
\item $\mathfrak m'$ is still minimal to $\pi$. 
\end{enumerate}
\end{lemma}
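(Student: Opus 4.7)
I would prove (1) and (2) together by reducing both to commutativity of derivatives on adjacent segments in an ascending ordering of $\mathfrak m$.

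Fix an ascending ordering $\mathfrak m = \{\Delta_1, \ldots, \Delta_r\}$ and set $\pi_j = D_{\Delta_j} \circ \cdots \circ D_{\Delta_1}(\pi)$. The first step is to establish the adjacent-swap commutation: for each $i < r$,
\[
  D_{\Delta_{i+1}} \circ D_{\Delta_i}(\pi_{i-1}) \;\cong\; D_{\Delta_i} \circ D_{\Delta_{i+1}}(\pi_{i-1}).
\]
If $\Delta_i, \Delta_{i+1}$ are unlinked, this is Lemma \ref{lem unlinked comm der and integral basic}. If they are linked with $\Delta_i < \Delta_{i+1}$, Lemma \ref{lem commute and intersect union} reduces the claim to ruling out $D_{\Delta_{i+1}} \circ D_{\Delta_i}(\pi_{i-1}) \cong D_{\Delta_i \cup \Delta_{i+1}} \circ D_{\Delta_i \cap \Delta_{i+1}}(\pi_{i-1})$. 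Were this to hold, applying $D_{\Delta_{i+2}}, \ldots, D_{\Delta_r}$ to both sides (and using Lemma \ref{lem unlinked comm der and integral basic} to massage the outer segments into an admissible ascending ordering of $\mathfrak m'' := \mathfrak m - \{\Delta_i, \Delta_{i+1}\} + \{\Delta_i \cup \Delta_{i+1}, \Delta_i \cap \Delta_{i+1}\}$) would yield $D_{\mathfrak m''}(\pi) \cong D_{\mathfrak m}(\pi)$ with $\mathfrak m'' \lneq_Z \mathfrak m$, contradicting the Rd-minimality of $\mathfrak m$ to $\pi$.

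With adjacent swaps secured, one can compute $D_{\mathfrak m}(\pi)$ using any permutation of the derivatives; in particular, placing the segments of $\mathfrak m'$ first yields $D_{\mathfrak m}(\pi) \cong D_{\mathfrak m - \mathfrak m'} \circ D_{\mathfrak m'}(\pi)$, the first half of (1). For (2), suppose $\mathfrak m'$ were not Rd-minimal to $\pi$; by shortening the witnessing chain $\mathfrak n \lneq_Z \mathfrak m'$ with $D_{\mathfrak n}(\pi) \cong D_{\mathfrak m'}(\pi)$ step by step (at each step invoking Lemma \ref{lem commute and intersect union} to locate a single intersection-union operation that preserves the derivative), one finds linked $\Delta, \Delta' \in \mathfrak m'$ with $D_{\Delta'} \circ D_\Delta(\omega) \cong D_{\Delta \cup \Delta'} \circ D_{\Delta \cap \Delta'}(\omega)$ where $\omega = D_{\mathfrak m' - \{\Delta, \Delta'\}}(\pi)$. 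Applying $D_{\mathfrak m - \mathfrak m'}$ (well-defined by the commutativity just established), one lifts this to an intersection-union equality on $\mathfrak m$ acting on $\pi$, contradicting the minimality of $\mathfrak m$. The assertion in (1) that $\mathfrak m - \mathfrak m'$ is minimal to $D_{\mathfrak m'}(\pi)$ follows by the same lifting argument.

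The principal technical obstacle lies in rigorously tracking the ascending orderings through the various re-compositions, in particular ensuring that after an intersection-union replacement the resulting derivative operations can still be reordered to a genuinely ascending sequence on $\mathfrak m''$. Replacing $\Delta_i, \Delta_{i+1}$ by their intersection and union can alter comparability with previously non-adjacent segments, so the outer reordering in the first paragraph above requires some care. One clean way around this is to invoke the $\eta$-invariant criterion of Lemma \ref{lem minimal eta criterai} as a numerical measure of how far a multisegment is from being minimal; this invariant is easier to transport through both derivatives and intersection-union operations than the derivative modules themselves, and permits an induction on $|\mathfrak m|$ that sidesteps the circular dependence between the commutation step and the global statement.
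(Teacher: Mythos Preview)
The paper does not prove this lemma; it is simply imported from \cite{Ch22+} (as the citation in the lemma header indicates), so there is no in-paper argument to compare against. Your reduction to adjacent swaps via Lemma~\ref{lem commute and intersect union} is exactly the shape of argument one expects, and is in the spirit of what \cite{Ch22+} does.

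That said, there is a genuine gap in your proposed workaround. You correctly flag the ordering obstacle: after an intersection-union replacement $\{\Delta_i,\Delta_{i+1}\}\mapsto\{\Delta_i\cap\Delta_{i+1},\Delta_i\cup\Delta_{i+1}\}$, the sequence $\Delta_1,\ldots,\Delta_{i-1},\cap,\cup,\Delta_{i+2},\ldots,\Delta_r$ need not be ascending (e.g.\ $\cup$ may now be linked and larger than some later $\Delta_k$ that was previously unlinked to both $\Delta_i$ and $\Delta_{i+1}$), so identifying the composite with $D_{\mathfrak m''}(\pi)$ requires more than unlinked swaps. Your suggested fix via Lemma~\ref{lem minimal eta criterai} is circular within this paper: the proof of Lemma~\ref{lem minimal eta criterai} given here explicitly invokes the subsequent property and the commutativity of Lemma~\ref{lem commute and minimal} (see the lines ``by the subsequent property \cite{Ch22+}'' and ``is minimal to $D_{\Delta_1}(\pi)$ by \cite{Ch22+} (also see Lemma \ref{lem commute and minimal} below)''). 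So you cannot use it to bootstrap.

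The honest resolution is the one the paper takes: treat this as a black box from \cite{Ch22+}, where the closedness and consecutive-pair machinery (cf.\ \cite[Corollary 1.6]{Ch22+}, referenced in the proof of Lemma~\ref{lem minimal eta criterai}) is developed independently and handles the ordering bookkeeping directly. If you want a self-contained argument, you would need to prove the well-definedness of $D_{\mathfrak m''}$ and the two-segment minimality criterion (\cite[Lemma 14.3, Proposition 14.4]{Ch22+}) without already knowing Lemma~\ref{lem commute and minimal}, which is more work than your sketch suggests.
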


One can then apply Lemma \ref{lem commute and minimal} multiple times to obtain that:

\begin{corollary} \cite[Corollary 1.5]{Ch22+bb} \label{cor minimal derivative in any order}
Let $\mathfrak m \in \mathrm{Mult}$ be minimal to $\pi \in \mathrm{Irr}$. Write $\mathfrak m=\left\{ \Delta_1, \ldots, \Delta_r \right\}$ in any order. Then 
\[  D_{\Delta_r}\circ \ldots \circ D_{\Delta_1}(\pi) \cong D_{\mathfrak m}(\pi).
\]
\end{corollary}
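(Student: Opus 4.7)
The plan is to deduce this from Lemma \ref{lem commute and minimal} by straightforward induction on $r=|\mathfrak m|$. The base case $r=1$ is tautological. For the inductive step, given any ordering $\Delta_1,\ldots,\Delta_r$ of the segments in $\mathfrak m$, I would apply Lemma \ref{lem commute and minimal} with the submultisegment $\mathfrak m'=\{\Delta_1\}$ (which is a submultisegment since $\Delta_1 \in \mathfrak m$). Part (1) of that lemma yields the isomorphism
\[
D_{\mathfrak m}(\pi) \cong D_{\mathfrak m - \Delta_1} \circ D_{\Delta_1}(\pi),
\]
together with the crucial side information that $\mathfrak m-\Delta_1$ is minimal to $D_{\Delta_1}(\pi)$.

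With that minimality in hand, I would apply the inductive hypothesis to the pair $(\mathfrak m-\Delta_1, D_{\Delta_1}(\pi))$ using the inherited ordering $\Delta_2,\ldots,\Delta_r$ of the remaining $r-1$ segments. This gives
\[
D_{\Delta_r}\circ \ldots \circ D_{\Delta_2}\bigl(D_{\Delta_1}(\pi)\bigr) \cong D_{\mathfrak m-\Delta_1}\bigl(D_{\Delta_1}(\pi)\bigr).
\]
Chaining the two displayed isomorphisms yields the desired identity $D_{\Delta_r}\circ \ldots \circ D_{\Delta_1}(\pi) \cong D_{\mathfrak m}(\pi)$, and since the ordering was arbitrary at each step, the result holds for every ordering.

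There is no serious obstacle: the entire content is packaged into Lemma \ref{lem commute and minimal}, and the role of the proof is just to iterate it. The only point requiring a moment of care is that the lemma must be re-applied at each stage to a \emph{different} base representation (successive partial derivatives), so one must carry along both the multisegment identity and the minimality statement throughout the induction; part (1) of Lemma \ref{lem commute and minimal} was designed precisely to make this propagation work, and the right-hand side $D_{\mathfrak m}(\pi)$ itself depends on $\mathfrak m$ only as an unordered multiset, so no ambiguity arises on that side of the equation.
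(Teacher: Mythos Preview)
Your proof is correct and is essentially the same as the paper's: the paper simply says to apply Lemma \ref{lem commute and minimal} multiple times, and you have spelled out the induction that this phrase encodes.
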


We leave the formulations of an integral version of the above two results to the reader (also see the proof of Lemma \ref{lem commutative for integrals}). 

We have an analog of above two results for strongly RdLi-commutative triples:

\begin{corollary} \label{cor commute minimal strong rdli comm}
Let $\pi \in \mathrm{Irr}$. Let $(\mathfrak m, \mathfrak n, \pi)$ be a minimal strongly RdLi-commutative triple. Write the segments in $\mathfrak m=\left\{ \Delta_1, \ldots, \Delta_r \right\}$ and write the segments in $\mathfrak n=\left\{ \Delta_1', \ldots, \Delta_s' \right\}$ in any order. For any $\delta \in S_r$ and any $\delta'\in S_s$, let $\mathfrak m_i^{\delta}=\left\{ \Delta_{\delta(1)}, \ldots, \Delta_{\delta(i)} \right\}$ and let $\mathfrak n_j^{\delta'}=\left\{ \Delta_{\delta'(1)}', \ldots, \Delta_{\delta'(j)}' \right\}$. Then, for all $\delta \in S_r, \delta' \in S_s$, and for any $i, j \geq 1$, $(\Delta_{\delta(i)}, \Delta_{\delta'(j)}', I_{\mathfrak n^{\delta'}_{j-1}} \circ D_{\mathfrak m^{\delta}_{i-1}}(\pi))$ is a strongly RdLi-commutative triple.
\end{corollary}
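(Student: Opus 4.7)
The plan is to prove stability of the strong commutativity conditions at all $rs$ grid positions under a single adjacent transposition in either $\delta$ or $\delta'$, then iterate, since adjacent transpositions generate $S_r$ and $S_s$. Fix a pair $(\delta, \delta')$ satisfying (1), and let $\widetilde\delta$ be obtained from $\delta$ by swapping positions $i$ and $i+1$. For $k \notin \{i, i+1\}$ one has $\widetilde\delta(k) = \delta(k)$ and $\mathfrak m^{\widetilde\delta}_{k-1} = \mathfrak m^\delta_{k-1}$ as multisegments; Lemma \ref{lem commute and minimal}(2) (using Rd-minimality of $\mathfrak m$) together with Corollary \ref{cor minimal derivative in any order} then gives $D_{\mathfrak m^{\widetilde\delta}_{k-1}}(\pi) \cong D_{\mathfrak m^\delta_{k-1}}(\pi)$, so the entire triple at position $(k, j)$ is unchanged. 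Thus only the cells $(i,j)$ and $(i+1,j)$ for $j = 1, \ldots, s$ require re-verification; in both the common input representation is $\pi_{0,j} := I_{\mathfrak n^{\delta'}_{j-1}} \circ D_{\mathfrak m^\delta_{i-1}}(\pi)$, and the task reduces to exchanging the order of the two derivatives $D_{\Delta_{\delta(i)}}$ and $D_{\Delta_{\delta(i+1)}}$ relative to a fixed third segment $\Delta_{\delta'(j)}'$.

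For each such $j$, if $\Delta_{\delta(i)}, \Delta_{\delta(i+1)}$ are unlinked, Proposition \ref{prop unlinked commute derivative} yields the required equivalence at once. Otherwise Proposition \ref{cor strong commute linked commute case} applies, provided
\[
  D_{\Delta_{\delta(i+1)}}\circ D_{\Delta_{\delta(i)}}(\pi_{0,j}) \;\not\cong\; D_{\Delta_{\delta(i)}\cup\Delta_{\delta(i+1)}}\circ D_{\Delta_{\delta(i)}\cap\Delta_{\delta(i+1)}}(\pi_{0,j}).
\]
At $j = 0$, Lemma \ref{lem commute and minimal}(1) and (2) combined with Rd-minimality of $\mathfrak m$ imply that $\{\Delta_{\delta(i)}, \Delta_{\delta(i+1)}\}$ is Rd-minimal to $\pi_{0,0} = D_{\mathfrak m^\delta_{i-1}}(\pi)$, which for a linked pair is exactly the displayed non-vanishing. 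For $j \geq 1$, propagate the inequality from $\pi_{0,j-1}$ to $\pi_{0,j} = I_{\Delta_{\delta'(j-1)}'}(\pi_{0,j-1})$ using Lemma \ref{lem minimal multisegments der}, whose hypothesis is supplied by the strongly commutative triples at $(i, j-1)$ and $(i+1, j-1)$ provided by (1).

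The case of an adjacent transposition in $\delta'$ is entirely dual: only cells $(i, j)$ and $(i, j+1)$ for $i = 1, \ldots, r$ require re-verification; the unlinked case is handled by Proposition \ref{prop unlinked commute integral}, and the linked case by Proposition \ref{prop commut triples ladder one}, whose non-degeneracy hypothesis is initialized by Li-minimality of $\mathfrak n$ at $i=0$ and then propagated through each $D_{\Delta_{\delta(k)}}$ by Lemma \ref{lem minimal multisegments int}. Combining the two types of transpositions iteratively connects the initial $(\delta,\delta')$ in (1) to any other pair, yielding (2). The main obstacle is the careful propagation of the intersection-union non-degeneracy hypothesis through the opposite family of operations (integrals for a derivative swap, derivatives for an integral swap) to reach the concrete input $\pi_{0,j}$ at each cell; Lemmas \ref{lem minimal multisegments der} and \ref{lem minimal multisegments int} are designed precisely to carry out this transfer under a strongly commutative triple, so once the correct strong commutativity is invoked at each propagation step, no further obstacle arises.
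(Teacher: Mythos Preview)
Your proof is correct and follows essentially the same approach as the paper: reduce to adjacent transpositions, observe only two rows (or columns) change, then at each affected cell use Propositions \ref{prop unlinked commute derivative}/\ref{prop unlinked commute integral} in the unlinked case and Propositions \ref{cor strong commute linked commute case}/\ref{prop commut triples ladder one} in the linked case, with the non-degeneracy hypothesis supplied by minimality via Lemmas \ref{lem commute and minimal} and \ref{lem minimal multisegments der} (resp.\ \ref{lem minimal multisegments int}). The paper compresses your explicit step-by-step propagation of minimality through the integrals into a single citation of those two lemmas; aside from a harmless off-by-one in your $\pi_{0,0}$ versus $\pi_{0,1}$ indexing, the arguments coincide.
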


\begin{proof}
By relabeling and using transpositions generating $S_n$, it suffices to show that if $(\Delta_i, \Delta_j', I_{\mathfrak n_{j-1}}\circ D_{\mathfrak m_{i-1}}(\pi))$ is a strongly RdLi-commutative triple for any $i,j$, then $(\Delta_{\delta(i)}, \Delta_{j}', \pi)$ (resp. $(\Delta_i, \Delta_{\delta'(j)}', \pi)$) is also a strongly RdLi-commutative triple for a transposition $\delta \in S_r$ (resp. $\delta'\in S_s$). We consider the statement for $\delta$ and suppose $\delta$ switches $x$ and $x+1$. Then we only have to prove the strong commutation of the following pairs for any $j$:
\begin{align} \label{eqn consecutive minimal pair}
  (\Delta_x, \Delta_j, \omega), \quad (\Delta_{x+1}, \Delta_j, D_{\Delta_x}(\omega)),
\end{align}
where $\omega=I_{\mathfrak n_{j-1}}\circ D_{\mathfrak m_{x-1}}(\pi)$. But, we have minimality of $\left\{ \Delta_x, \Delta_{x+1} \right\}$ to $\omega$ by Lemmas \ref{lem commute and minimal} and \ref{lem minimal multisegments der}, and so (\ref{eqn consecutive minimal pair}) follows from Proposition \ref{cor strong commute linked commute case} (for linked case) as well as Proposition \ref{prop unlinked commute derivative} (for unlinked case). For the statement for $\delta'$, one uses Lemma \ref{lem minimal multisegments int} and integral version of Lemma \ref{lem commute and minimal}.
\end{proof}
%We sketch the proof. One first writes the segments in $\mathfrak m$ and $\mathfrak n$ in ascending orders. Now, by Propositions \ref{prop unlinked commute integral}, \ref{prop unlinked commute derivative}, \ref{prop commut triples ladder one} and \ref{cor strong commute linked commute case} and Lemmas \ref{lem commute and minimal} and \ref{lem minimal multisegments der}, one can switch a consecutive pair in either $\mathfrak m$ or $\mathfrak n$ (for such ordering) to get new strongly RdLi-commutative triples. By Lemma \ref{lem commute and minimal} (and its integral version), we can repeat the process to get strongly RdLi-commutative triples in any order. 

In particular, Proposition \ref{prop minimal for all}, Lemma \ref{lem commute and minimal} and Corollary \ref{cor commute minimal strong rdli comm} give the following two special cases:

\begin{corollary} (Subsequent property) \label{cor seq strong commut under minimal and sub}
Let $\mathfrak m'$ be a submultisegment of $\mathfrak m$ and let $\mathfrak n'$ be a submultisegment of $\mathfrak n$. Then $(\mathfrak m', \mathfrak n', \pi)$ is still a minimal strongly RdLi-commutative triple.
\end{corollary}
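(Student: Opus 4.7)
The plan is to combine the ordering flexibility provided by Corollary \ref{cor commute minimal strong rdli comm} with the subsequence property of minimal sequences in Lemma \ref{lem commute and minimal}(2). The corollary is exactly designed to let us rearrange the segments so that any chosen subcollection appears first; once they appear first in ascending orders within the submultisegments, the conditions in Definition \ref{def strong commutation sequences} for $(\mathfrak m',\mathfrak n',\pi)$ fall out as a subfamily of those for $(\mathfrak m,\mathfrak n,\pi)$.

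First I would handle the two minimality claims. Since $\mathfrak m$ is Rd-minimal to $\pi$, Lemma \ref{lem commute and minimal}(2) yields immediately that every submultisegment $\mathfrak m'\subset \mathfrak m$ is Rd-minimal to $\pi$. For the integral side, the dual version of Lemma \ref{lem commute and minimal}(2) (obtained from the same argument as the derivation of Theorem \ref{thm minimality of d and i}(2) from Theorem \ref{thm minimality of d and i}(1), using $\tau=I_{\mathfrak n}(\pi)$ together with $D^L_{\mathfrak n}\circ I_{\mathfrak n}=\mathrm{id}$) shows that any submultisegment $\mathfrak n'\subset \mathfrak n$ is Li-minimal to $\pi$.

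Next I would establish the strong RdLi-commutativity of $(\mathfrak m',\mathfrak n',\pi)$. Set $r'=|\mathfrak m'|$ and $s'=|\mathfrak n'|$. Choose permutations $\delta\in S_r$ and $\delta'\in S_s$ whose initial segments enumerate $\mathfrak m'$ and $\mathfrak n'$ respectively, in an ascending order within each submultisegment; the remaining positions of $\delta,\delta'$ can be filled in arbitrarily. The hypothesis that $(\mathfrak m,\mathfrak n,\pi)$ is strongly RdLi-commutative in the sense of Definition \ref{def strong commutation sequences} is exactly condition (1) of Corollary \ref{cor commute minimal strong rdli comm} for the identity permutations, so the corollary upgrades this to condition (2), giving for all $1\leq i\leq r$ and $1\leq j\leq s$ the strong RdLi-commutativity of
\[
(\Delta_{\delta(i)},\Delta'_{\delta'(j)}, I_{\mathfrak n^{\delta'}_{j-1}}\circ D_{\mathfrak m^{\delta}_{i-1}}(\pi)).
\]
Restricting to $1\leq i\leq r'$ and $1\leq j\leq s'$ gives precisely the constituent strong commutation conditions required by Definition \ref{def strong commutation sequences} for $(\mathfrak m',\mathfrak n',\pi)$, with $\mathfrak m'$ and $\mathfrak n'$ arranged in the chosen ascending orders.

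The only genuine obstacle is confirming that Corollary \ref{cor commute minimal strong rdli comm} is applicable here, i.e.\ that the premise of minimality on $(\mathfrak m,\mathfrak n,\pi)$ carries over to make the corollary available; this is just the minimality hypothesis on our triple. The rest is ordering bookkeeping, and uses nothing beyond the permutation-invariance already proven in that corollary and the integral analog of Lemma \ref{lem commute and minimal}(2).
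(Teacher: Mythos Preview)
Your proposal is correct and follows essentially the same approach as the paper. The paper merely cites Proposition \ref{prop minimal for all}, Lemma \ref{lem commute and minimal}, and Corollary \ref{cor commute minimal strong rdli comm} as jointly yielding this corollary and its companion; your argument is a faithful elaboration of that, using Lemma \ref{lem commute and minimal}(2) (and its integral analog) for the minimality claims and Corollary \ref{cor commute minimal strong rdli comm} for the strong commutation, with Proposition \ref{prop minimal for all} not actually needed for this particular corollary.
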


\begin{corollary} \label{cor seq strong commut under minimal} 
Let $\mathfrak m'$ be a submultisegment of $\mathfrak m$ and let $\mathfrak n'$ be a submultisegment of $\mathfrak n$. Then $(\mathfrak m-\mathfrak m', \mathfrak n-\mathfrak n', I_{\mathfrak n'}\circ D_{\mathfrak m'}(\pi))$ is also a minimal strongly RdLi-commutative triple.
\end{corollary}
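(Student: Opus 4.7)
The plan is to combine the reordering flexibility of \ref{cor commute minimal strong rdli comm} with the prefix-minimality statement in \ref{prop minimal for all}, specialised to a suitable non-ascending labelling. First, I would invoke \ref{cor commute minimal strong rdli comm} to reorder the segments of $\mathfrak m$ so that $\mathfrak m'$ occupies the first $|\mathfrak m'|$ positions (in some ascending order within $\mathfrak m'$) and $\mathfrak m - \mathfrak m'$ occupies the remaining positions (in ascending order within $\mathfrak m - \mathfrak m'$); reorder $\mathfrak n$ analogously with $\mathfrak n'$ first. By that corollary each consecutive pair $(\Delta_i, \Delta_j', I_{\mathfrak n_{j-1}}\circ D_{\mathfrak m_{i-1}}(\pi))$ is still strongly RdLi-commutative, and by \ref{prop strong commute imply commute} the corresponding $D$ and $I$ operators commute pairwise. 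In particular $I_{\mathfrak n'}\circ D_{\mathfrak m'}(\pi)$ is unambiguously defined and equals $D_{\mathfrak m'}\circ I_{\mathfrak n'}(\pi)$, and the same holds for any sub-multisegments.

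Reading off those pairs with $i>|\mathfrak m'|$ and $j>|\mathfrak n'|$, and using the commutation to rewrite
\[
I_{\mathfrak n_{j-1}}\circ D_{\mathfrak m_{i-1}}(\pi)\;=\;I_{\mathfrak n_{j-1}-\mathfrak n'}\circ D_{\mathfrak m_{i-1}-\mathfrak m'}\bigl(I_{\mathfrak n'}\circ D_{\mathfrak m'}(\pi)\bigr),
\]
yields precisely the pairwise strong RdLi-commutativity required by Definition \ref{def strong commutation sequences} for the triple $(\mathfrak m-\mathfrak m',\mathfrak n-\mathfrak n',I_{\mathfrak n'}\circ D_{\mathfrak m'}(\pi))$.

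For minimality I would adapt the argument behind \ref{prop minimal for all} to the (not globally ascending) labelling above. That proof reduces the question to two-segment non-equivalences supplied by \ref{lem minimal multisegments int} and \ref{lem minimal multisegments der}; both lemmas are pairwise in nature and the ambient ordering enters only to specify which prefixes $\mathfrak m_i,\mathfrak n_j$ are being considered. Specialising at the cut $(i,j)=(|\mathfrak m'|,|\mathfrak n'|)$ then gives that $\mathfrak m-\mathfrak m'$ is Rd-minimal to $I_{\mathfrak n'}\circ D_{\mathfrak m'}(\pi)$, and, via the dual statement \ref{prop dual multi strong comm} together with the integral version \ref{lem minimal multisegments int}, that $\mathfrak n-\mathfrak n'$ is Li-minimal to the same representation.

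The main obstacle is to justify that the adaptation of \ref{prop minimal for all} is legitimate when the labelling is not globally ascending. I expect this to be handled by the observation that at each step of the pair-by-pair reduction one only needs strong commutativity of a single two-segment sub-multisegment of $\mathfrak m$ against a single segment of $\mathfrak n$ at the appropriate intermediate representation; such a hypothesis can always be supplied by applying \ref{cor seq strong commut under minimal and sub} to the relevant sub-multisegments of $(\mathfrak m,\mathfrak n,\pi)$ and then reordering their elements once more via \ref{cor commute minimal strong rdli comm}. Once this bookkeeping is carried out, the minimality argument runs exactly as in \ref{prop minimal for all}, and the two assertions together complete the proof.
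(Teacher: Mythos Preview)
Your approach is correct and matches the paper's: the paper simply notes that this corollary follows from Proposition~\ref{prop minimal for all}, Lemma~\ref{lem commute and minimal}, and Corollary~\ref{cor commute minimal strong rdli comm}, which is exactly the toolkit you invoke. The one simplification you might adopt is that the ordering obstacle you flag largely dissolves if you first apply Lemma~\ref{lem commute and minimal} (and its integral version) directly to obtain that $\mathfrak m-\mathfrak m'$ is Rd-minimal to $D_{\mathfrak m'}(\pi)$ and $\mathfrak n-\mathfrak n'$ is Li-minimal to $I_{\mathfrak n'}(\pi)$; then the role of Proposition~\ref{prop minimal for all} is only to propagate these minimalities across the remaining integral/derivative via the two-segment Lemmas~\ref{lem minimal multisegments int} and~\ref{lem minimal multisegments der}, with the needed strong commutations supplied at each step by Corollary~\ref{cor commute minimal strong rdli comm}---no detour through \ref{prop dual multi strong comm} is required.
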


\subsection{Duality under minimal commutative triples}

\begin{proposition} \label{prop minimal strong commute}
  The triple $(\mathfrak m, \mathfrak n, \pi)$ is a minimal strongly RdLi-commutative triple if and only if $(\mathfrak n, \mathfrak m, I_{\mathfrak n}\circ D_{\mathfrak m}(\pi))$ is a minimal strongly LdRi-commutative triple.
\end{proposition}

\begin{proof}
The commutative part follows from Proposition \ref{prop dual multi strong comm}. The minimality part follows from the duality between derivatives and integrals.
\end{proof}

\section{Generalized GGP relevant pairs and their uniqueness}

\subsection{Generalized relevant pairs}

\begin{definition} \label{def relevant pair content}
Let $\pi_1, \pi_2 \in \mathrm{Irr}$. We say that a pair $(\pi_1, \pi_2)$ is {\it relevant} if there exist multisegments $\mathfrak m$ and $\mathfrak n$ such that $I_{\mathfrak n}\circ D_{ \mathfrak m}(\nu^{1/2}\cdot\pi_1) \cong \pi_2$ and $(\mathfrak m, \mathfrak n, \nu^{1/2}\cdot \pi_1)$ is a strongly RdLi-commutative triple. We further say that $(\pi_1, \pi_2)$ is {\it $i^*$-relevant} if such $\mathfrak m$ satisfies $i^*=l_a(\mathfrak m)$.

%We say that $(\pi_1, \pi_2)$ is $i^*$-relevant if $(\pi_1, \pi_2)$ are relevant and those multisegments $\mathfrak m$ and $\mathfrak n$ involved in (*) satisfy $l_a(\mathfrak m)=i^*$ and $l_a(\mathfrak n)=i^*-n(\pi_1)+n(\pi_2)$. 
\end{definition}

Our first main result is the following uniqueness statement, which shows that the $i^*$-relevance in Definition \ref{def relevant pair content} is well-defined. The main idea is to compare the invariant for $|\eta|_{\Delta}$ for a suitable choice of $\Delta$. 

\begin{theorem} \label{thm unique of relevant pairs}
Let $\pi_1, \pi_2 \in \mathrm{Irr}$. Suppose $(\pi_1, \pi_2)$ is relevant. There exist unique multisegments $\mathfrak m, \mathfrak n$ such that
\begin{itemize}
\item $(\mathfrak m, \mathfrak n, \nu^{1/2} \cdot\pi_1)$ is a minimal strongly RdLi-commutative triple; and
\item $D^R_{\mathfrak m}(\nu^{1/2} \cdot \pi_1)\cong D^L_{\mathfrak n}(\pi_2)$.
\end{itemize}
\end{theorem}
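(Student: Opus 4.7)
The plan splits the theorem into existence and uniqueness. Existence is obtained by reducing the data in Definition~\ref{def relevant pair content} via the intersection-union process, and uniqueness is handled by an induction on the $\leq$-maximal cuspidal using the numerical invariants $|\eta|_\Delta$ from Proposition~\ref{prop eta function derivative}.

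For existence, I would start from multisegments $\mathfrak m_0,\mathfrak n_0$ furnished by Definition~\ref{def relevant pair content}. First, iteratively apply intersection-union operations to $\mathfrak m_0$ that preserve $D^R_{\mathfrak m_0}(\nu^{1/2}\cdot\pi_1)$, stopping when $\mathfrak m$ is Rd-minimal; at each step the strong RdLi-commutativity with $\mathfrak n_0$ survives thanks to Proposition~\ref{prop two strong commute dual 2}. Next, holding $\mathfrak m$ fixed, apply intersection-union operations to $\mathfrak n_0$ preserving $I_{\mathfrak n_0}(\nu^{1/2}\cdot\pi_1)$, using Proposition~\ref{prop intersect union comm} to propagate the commutativity with $\mathfrak m$, until $\mathfrak n$ is Li-minimal. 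The identification $D^R_{\mathfrak m}(\nu^{1/2}\cdot\pi_1)\cong D^L_{\mathfrak n}(\pi_2)$ is preserved throughout because each operation leaves the relevant derivative (respectively integral) unchanged by the last assertions of Propositions~\ref{prop intersect union comm} and \ref{prop two strong commute dual 2}.

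For uniqueness, suppose $(\mathfrak m,\mathfrak n)$ and $(\mathfrak m',\mathfrak n')$ both satisfy the conclusions, and induct on the total length $l_a(\mathfrak m)+l_a(\mathfrak m')+l_a(\mathfrak n)+l_a(\mathfrak n')$. Pick $\rho$ a $\leq$-maximal element of $\mathrm{csupp}(\mathfrak m)\cup\mathrm{csupp}(\mathfrak m')\cup\mathrm{csupp}(\mathfrak n)\cup\mathrm{csupp}(\mathfrak n')$. For each $\Delta=[a,b]_\rho$ the $\leq$-maximality of $\rho$ together with Corollary~\ref{cor minimal derivative in any order} (which lets me put the segments of $\mathfrak m_{b=\rho}$ first in the Rd-minimal sequence for $\mathfrak m$) restricts the behaviour of $|\eta|_\Delta$ under the successive $D^R_{\Delta'}$ to cases~(1)--(3) of Proposition~\ref{prop eta function derivative}; only case~(2) decreases $|\eta|_\Delta$, and it does so by exactly one for each segment of $\mathfrak m_{b=\rho}$ contained in $\Delta$. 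Varying $a$ recovers the whole $\mathfrak m_{b=\rho}$ from the drop $|\eta|_\Delta(\nu^{1/2}\cdot\pi_1)-|\eta|_\Delta(D^R_\mathfrak m(\nu^{1/2}\cdot\pi_1))$, and analogously for $\mathfrak m'_{b=\rho}$. The two counts match because $D^R_\mathfrak m(\nu^{1/2}\cdot\pi_1)\cong D^L_\mathfrak n(\pi_2)$ and $D^R_{\mathfrak m'}(\nu^{1/2}\cdot\pi_1)\cong D^L_{\mathfrak n'}(\pi_2)$, and the $\leq$-maximality of $\rho$ also cuts off cases~(2) and (4) in the Ld-minimal sequences for $\mathfrak n$ and $\mathfrak n'$, forcing $|\eta|_\Delta(D^L_\mathfrak n(\pi_2))=|\eta|_\Delta(\pi_2)=|\eta|_\Delta(D^L_{\mathfrak n'}(\pi_2))$. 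Hence $\mathfrak m_{b=\rho}=\mathfrak m'_{b=\rho}$, and Corollary~\ref{cor seq strong commut under minimal} passes to smaller minimal strongly RdLi-commutative triples, to which the induction hypothesis applies, yielding $\mathfrak m=\mathfrak m'$. Uniqueness of $\mathfrak n$ then follows: with $\mathfrak m=\mathfrak m'$ in hand, Proposition~\ref{prop strong commute imply commute} implies $I_\mathfrak n(\nu^{1/2}\cdot\pi_1)\cong I_{\mathfrak n'}(\nu^{1/2}\cdot\pi_1)$, and Theorem~\ref{thm minimality of d and i}(2) combined with Li-minimality gives $\mathfrak n=\mathfrak n'$.

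The hardest part will be the equality $|\eta|_\Delta(D^L_\mathfrak n(\pi_2))=|\eta|_\Delta(\pi_2)$ needed to compare the two $\mathfrak m$-counts. One must translate the Li-minimality of $\mathfrak n$ with respect to $\nu^{1/2}\cdot\pi_1$ into a usable Ld-minimality statement with respect to $\pi_2$, using the duality of Proposition~\ref{prop dual multi strong comm} together with the identification $D^L_\mathfrak n(\pi_2)\cong D^R_\mathfrak m(\nu^{1/2}\cdot\pi_1)$, and then verify that $\leq$-maximality of $\rho$ indeed kills all type~(2) and type~(4) contributions for the left analogue of Proposition~\ref{prop eta function derivative} along the Ld-minimal sequence for $\pi_2$. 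The combinatorial backbone of this bookkeeping is Lemma~\ref{lem minimal eta criterai}, whose criterion ties the numerical invariant to the minimality of the extracted submultisegment and is what makes the inductive step rigorous.
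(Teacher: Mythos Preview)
Your existence argument matches the paper's. The uniqueness argument, however, has a genuine gap, and the paper's route is different in a way that matters.

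You claim that putting $\mathfrak m_{b=\rho}$ first confines the effect of each $D^R_{\Delta'}$ on $|\eta|_{\Delta}$ to cases (1)--(3) of Proposition~\ref{prop eta function derivative}. This is false: once you pass to a segment $\Delta'\in\mathfrak m\setminus\mathfrak m_{b=\rho}$ with $b(\Delta')<\rho$ and $a(\Delta')<a$, you are squarely in case~(4), which can \emph{increase} $|\eta|_\Delta$. So the ``drop'' $|\eta|_\Delta(\nu^{1/2}\pi_1)-|\eta|_\Delta(D^R_{\mathfrak m}(\nu^{1/2}\pi_1))$ need not equal the count you want for a general $\Delta=[a,\rho]$. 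The paper neutralizes case~(4) via Lemma~\ref{lem minimal eta criterai}, but that lemma hinges on the minimality of $(\text{submultisegment})+\Delta$, which is only available when $\Delta$ actually belongs to $\mathfrak m$; varying $a$ freely destroys this.

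Your second claimed equality $|\eta|_\Delta(D^L_{\mathfrak n}(\pi_2))=|\eta|_\Delta(\pi_2)$ is also unjustified. The ``left analogue'' of Proposition~\ref{prop eta function derivative} controls the \emph{left} invariant $\eta^L$, not the right $\eta$ you are tracking; Lemma~\ref{lem right multi submulti} only yields the inequality $\leq$ going from $D^L_{\mathfrak n}(\pi_2)$ to $\pi_2$. The equality you need is a genuine consequence of the strong commutativity hypothesis, and in the paper it is only invoked for a segment $\Delta_*$ that lies in $\mathfrak m$ (via Corollary~\ref{cor seq strong commut under minimal and sub} and Theorem~\ref{thm combinatorial def}), not for arbitrary $[a,\rho]$.

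The paper therefore does \emph{not} try to reconstruct all of $\mathfrak m_{b=\rho}$ at once. Instead it picks the \emph{shortest} segments $\Delta_*\in\mathfrak m_{b=\rho}$ and $\Delta_*'\in\mathfrak m'_{b=\rho}$; if $\Delta_*=\Delta_*'$ one peels off that segment and inducts. If $\Delta_*\subsetneq\Delta_*'$, the argument is deliberately asymmetric: on the $(\mathfrak m,\mathfrak n)$ side, since $\Delta_*\in\mathfrak m$, strong commutativity (Step~5) plus Lemma~\ref{lem minimal eta criterai} (Step~6) plus Proposition~\ref{prop eta function derivative}(1)--(3) give the \emph{strict} inequality $|\eta|_{\Delta_*}(\pi)>|\eta|_{\Delta_*}(\pi_2)$; on the $(\mathfrak m',\mathfrak n')$ side one uses only Lemma~\ref{lem right multi submulti} for $I_{\mathfrak n'}$ and Proposition~\ref{prop eta function derivative}(1),(4) for $D_{\mathfrak m'}$ to get the \emph{weak} inequality $|\eta|_{\Delta_*}(\pi_2)\geq|\eta|_{\Delta_*}(\pi)$, with no need for equality anywhere. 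The contradiction comes from combining a strict and a weak inequality, not from matching two exact counts. Rework your induction around this single shortest-segment comparison.
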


\begin{proof}
The existence part follows from the definition of relevance with Propositions \ref{prop intersect union comm} and \ref{prop two strong commute dual 2}, and the closedness property shown in \cite[Theorem 1.1]{Ch22+}.

We now prove the uniqueness part in the following steps: \\
\noindent
{\bf Step 1: Case of both $\mathfrak m$ and $\mathfrak m'$ to be non-empty}

Let $\pi=\nu^{1/2}\cdot \pi_1$. If both $\mathfrak m$ and $\mathfrak m'$ are empty, then the equality $\mathfrak n=\mathfrak n'$ follows from the uniqueness of a minimal multisegment.

%Let $\rho \in \cup_{\Delta \in \mathfrak m} \Delta$. Let $c$ be the largest integer such that $\nu^c\rho \in \cup_{\Delta \in \mathfrak m}\Delta$. Let $c'$ be the largest integer such that $\nu^{c'}\rho \in \cup_{\Delta \in \mathfrak m'}\Delta$. If such $c'$ does not exist, set $c'=-\infty$.

\ \\
\noindent
{\bf Step 2: Choose an appropriate $\Delta_*$}

%Let $c^*=\mathrm{max}\left\{ c, c' \right\}$. We shall assume that $c^*=c$. Otherwise, we switch the labelling.

From now on, we assume at least one of $\mathfrak m, \mathfrak m'$ is non-empty. Let 
\[  \mathcal B=\left\{ b(\Delta) : \Delta \in \mathfrak m+\mathfrak m' \right\} .
\]
Choose a $\leq$-maximal $\rho$ in $\mathcal B$. 

Set $\mathfrak p=\mathfrak m_{b=\rho}$ and $\mathfrak p'=\mathfrak m'_{b=\rho}$. We shall only consider the case that both $\mathfrak p$ and $\mathfrak p'$ are non-empty. When one of $\mathfrak p$ and $\mathfrak p'$ is empty, it can be dealt similarly. Let $\Delta_*$ (resp. $\Delta_*'$) be the shortest segment in $\mathfrak p$ (resp. $\mathfrak p'$).  \\

\noindent
{\bf Step 3: A reduction using inductive hypothesis.}
Suppose $\Delta_* = \Delta_*'$. Then 
\[ \pi_2\cong  I^R_{\mathfrak n}\circ D_{\mathfrak m}^L(\pi) \cong I^R_{\mathfrak n}\circ D_{\Delta_*}^L\circ D_{\mathfrak m-\Delta_*}^L(\pi) \cong D_{\Delta_*}^L\circ  I^R_{\mathfrak n}\circ D_{\mathfrak m-\Delta_*}^L(\pi_1) 
\]
One can similarly obtain the expression by replacing $\mathfrak m$ and $\mathfrak n$ respectively by $\mathfrak m'$ and $\mathfrak n'$. Thus, by cancelling the term $D_{\Delta_*}^L$, we have:
\[ I^R_{\mathfrak n}\circ D_{\mathfrak m-\Delta_*}^L(\pi_1) \cong I^R_{\mathfrak n'}\circ D_{\mathfrak m'-\Delta_*}^L(\pi_1) .
\]
Since $\mathfrak m-\Delta_*$ and $\mathfrak m'-\Delta_*$ are still Rd-minimal to $\pi$ (see e.g. Corollary \ref{cor seq strong commut under minimal and sub}), the induction gives that $\mathfrak n=\mathfrak n'$ and $\mathfrak m-\Delta_*=\mathfrak m'-\Delta_*$. Thus we also have $\mathfrak m=\mathfrak m'$. \\

\noindent
{\bf Step 4: Setup notations for computing $\eta_{\Delta_*}$ in another case}

Thus it remains to show that $\Delta'_* \neq \Delta_*$ is not possible. By switching the labelling if necessary, we assume that $ \Delta_* \subsetneq \Delta'_*$. 

\begin{itemize}
%\item Let $\mathfrak p$ (resp. $\mathfrak p'$) be the submultisegment of $\mathfrak m$ (resp. $\mathfrak m'$) containing all segments $\widetile{\Delta}$ with $b(\widetilde{\Delta})=b(\Delta_*)$;
\item Let $\mathfrak q$ (resp. $\mathfrak q'$) be the submultisegment of $\mathfrak m-\mathfrak p$ (resp. $\mathfrak m'-\mathfrak p'$) containing all segments $\widetilde{\Delta}$ with $\widetilde{\Delta} \subset \Delta_*$.
\end{itemize}

\ \\
%\item Let $\Delta_*$ (resp. $\Delta_*'$) be the shortest segment in $\mathfrak p$ (resp. $\mathfrak p'$). Set $\Delta_*=[a^*, c^*]_{\rho}$ (resp. $\Delta_*'=\em$) if $\mathfrak p=\emptyset$ (resp. $\mathfrak p'=\emptyset$) for a sufficiently small $a^*$ (e.g. choose small $a^*$ such that any segment in $\mathfrak m$ and $\mathfrak m'$ is subset of $[a^*,c^*]_{\rho}$).

\noindent
{\bf Step 5: Begin to compare $|\eta|_{\Delta}(\pi)$ and $|\eta|_{\Delta}(I_{\mathfrak n}\circ D_{\mathfrak m}(\pi))$.}

Write segments in $\mathfrak n$ as $\overline{\Delta}_1, \ldots, \overline{\Delta}_s$ in an ascending order. For $1\leq j \leq s$, write $\mathfrak n_j=\left\{ \overline{\Delta}_1, \ldots, \overline{\Delta}_j\right\}$. We first note that the commutativity of linked segments gives the following equations: for any $j$,
\begin{align}
  D_{\mathfrak m}(I_{\mathfrak n_j}(\pi)) &= D_{\mathfrak p}\circ D_{\mathfrak m-\mathfrak p}(I_{\mathfrak n_j}(\pi)) \\
	                & = D_{\mathfrak p} \circ D_{\mathfrak q} \circ D_{\mathfrak m-\mathfrak p-\mathfrak q}(I_{\mathfrak n_j}(\pi)) \\
									& = D_{\mathfrak q} \circ D_{\mathfrak p}\circ D_{\mathfrak m-\mathfrak p-\mathfrak q}(I_{\mathfrak n_j}(\pi))  ,
\end{align}
where the first equation follows by arranging the segments in an ascending order of $b(\Delta)$ and the second equation follows by arranging the segments in an ascending order of $a(\Delta)$.

%We first consider $|\eta| _{\Delta_*}(D_{\mathfrak m}(I_{\mathfrak n_j}(\pi)))$. By strong commutation (which is independent of an ascending sequence as observed before), we have that
%\[  D_{\mathfrak m-\mathfrak p-\mathfrak q}(I_{\mathfrak n_j}(\pi))=I_{\overline{\Delta}_j}\circ D_{\mathfrak m-\mathfrak p-\mathfrak q}\circ I_{\mathfrak n_{j-1}}(\pi) .\]
Since $(\Delta_*, \overline{\Delta}_j, D_{\mathfrak m-\mathfrak p-\mathfrak q}\circ I_{\mathfrak n_{j-1}}(\pi))$ is a strongly RdLi-commutative triple by Corollary \ref{cor seq strong commut under minimal and sub}, we have that, by Theorem \ref{thm combinatorial def},  
\[  \eta_{\Delta_*}( D_{\mathfrak m-\mathfrak p-\mathfrak q}\circ I_{\mathfrak n_{j-1}}(\pi)))=\eta_{\Delta_*}(D_{\mathfrak m-\mathfrak p-\mathfrak q}\circ I_{\mathfrak n_{j}}(\pi)) .
\]
Thus inductively, we have the first equation: for any $j$,
\[ (\star) \quad  \eta_{\Delta_*}( D_{\mathfrak m-\mathfrak p-\mathfrak q}\circ I_{\mathfrak n_{j}}(\pi)))=\eta_{\Delta_*}(D_{\mathfrak m-\mathfrak p-\mathfrak q}(\pi)) .
\]

\noindent
{\bf Step 6: Compute $\eta_{\Delta_*}(D_{\mathfrak m-\mathfrak p-\mathfrak q}(\pi))$} 

It follows from Lemma \ref{lem minimal eta criterai} that
\[ (\star\star)\quad  \eta_{\Delta_*}(\pi)= \eta_{\Delta_*}(D_{\mathfrak m-\mathfrak p-\mathfrak q}(\pi)) .\]
%We now write the segments in $\mathfrak m-\mathfrak p-\mathfrak q$ as: $\Delta_1, \ldots, \Delta_k$ with $b(\Delta_1) \leq \ldots \leq b(\Delta_k)$. Then, some combinatorics in CH22 on removal process showed that for any $i$, if $(\Delta_i, \Delta_*, D_{\mathfrak m_{i-1}(\pi))$ satisfies the ($\dagger$) property, then $(\Delta_k, \Delta_*, D_{\mathfrak m_{i-1}}(\pi))$ satisfies the ($\dagger$) property. However, this contradicts the Rd-minimality of $\mathfrak m$ for $\pi$. In other words, we have the following second equations:
%\[ (\star\star)\quad  \eta_{\Delta_*}(\pi)=\eta_{\Delta_*}(D_{\mathfrak m-\mathfrak p-\mathfrak q}(\pi))=\ldots = \eta_{\Delta_*}(D_{\mathfrak m-\mathfrak p-\mathfrak q}(\pi)) .\]

\noindent
{\bf Step 7: Compute $|\eta|_{\Delta_*}(D_{\mathfrak m-\mathfrak q}\circ I_{\mathfrak n}(\pi))$} \\

We now consider $|\eta|_{\Delta_*}(D_{\mathfrak p}\circ D_{\mathfrak m-\mathfrak p-\mathfrak q}\circ I_{\mathfrak n}(\pi))$. By using Proposition \ref{prop eta function derivative}(1) and (2), we have that
\[  |\eta|_{\Delta_*}(D_{\mathfrak p}\circ D_{\mathfrak m-\mathfrak p-\mathfrak q}\circ I_{\mathfrak n}(\pi)) =|\eta|_{\Delta_*}(D_{\mathfrak m-\mathfrak p-\mathfrak q}\circ I_{\mathfrak n}(\pi))- l,
\]
where $l$ is the number of $\Delta_*$ in $\mathfrak p$ (which is at least one from our choice). Thus we have the following strict inequality, by Proposition \ref{prop eta function derivative}(2),
\[  (\star\star\star)\quad |\eta|_{\Delta_*}(D_{\mathfrak m-\mathfrak p-\mathfrak q}\circ I_{\mathfrak n}(\pi)) > |\eta|_{\Delta_*}(D_{\mathfrak p}\circ D_{\mathfrak m-\mathfrak p-\mathfrak q}\circ I_{\mathfrak n}(\pi)) .\]

\noindent
{\bf Step 8: Compute $|\eta|_{\Delta_*}(D_{\mathfrak m}\circ I_{\mathfrak n}(\pi))$ and get the comparison} \\
We finally have that, by Proposition \ref{prop eta function derivative}(3),
\[ (\star\star\star\star) \quad |\eta|_{\Delta_*}(D_{\mathfrak q}\circ D_{\mathfrak p}\circ D_{\mathfrak m-\mathfrak p-\mathfrak q}\circ I_{\mathfrak n}(\pi)) =|\eta|_{\Delta_*}(D_{\mathfrak p}\circ D_{\mathfrak m-\mathfrak p-\mathfrak q}\circ I_{\mathfrak n}(\pi)) . \]

Now combining $(\star)-(\star\star\star\star)$, we have:
\[(\bullet)\quad   |\eta|_{\Delta_*}(\pi)>|\eta|_{\Delta_*}(D_{\mathfrak m}\circ I_{\mathfrak n}(\pi)) .\]

\ \\
\noindent 
{\bf Step 9: Compare $|\eta|_{\Delta_*}(I_{\mathfrak n'}(\pi))$ and $|\eta|_{\Delta_*}(\pi)$}

On the other hand, by Lemma \ref{lem right multi submulti},
\[ (\ast)\quad   |\eta|_{\Delta_*}(I_{\mathfrak n'}(\pi))\geq |\eta|_{\Delta_*}(\pi) .
\]
Now, by Proposition \ref{prop eta function derivative}(4), 
\[  (\ast\ast)\quad |\eta|_{\Delta_*}(D_{\mathfrak m'-\mathfrak p'}\circ I_{\mathfrak n'}(\pi)) \geq |\eta|_{\Delta_*}( I_{\mathfrak n'}(\pi))
\]
Now, since we are assuming $ \Delta_* \subsetneq \Delta_*'$, Proposition \ref{prop eta function derivative}(1) gives that:
\[ (\ast\ast\ast)  \quad  |\eta|_{\Delta_*}(D_{\mathfrak p'}\circ D_{\mathfrak m'-\mathfrak p'}\circ I_{\mathfrak n'}(\pi)) = |\eta|_{\Delta_*}( D_{\mathfrak m'-\mathfrak p'}\circ I_{\mathfrak n'}(\pi)) .
\]
Thus combining $(\ast), (\ast\ast), (\ast\ast\ast)$, we have that 
\[ (\bullet \bullet) \quad |\eta|_{\Delta_*}(D_{\mathfrak m'}\circ I_{\mathfrak n'}(\pi))\geq |\eta|_{\Delta_*}(\pi) .
\]

\noindent
{\bf Step 10: Arrive a contradiction} \\
Since $D_{\mathfrak m'}\circ I_{\mathfrak n'}(\pi)\cong D_{\mathfrak m}\circ I_{\mathfrak n}(\pi)$, we arrive a contradiction from $(\bullet)$ and $(\bullet \bullet)$ as desired.
\end{proof}

%Thus we must have that $\Delta_*\subset \Delta_*'$. As mentioned before, another inequality $\Delta_*'\subset \Delta_*$ is similar. Thus we have $\Delta_*=\Delta_*'$. 

%Now we have that $(\mathfrak m-\Delta_*, \mathfrak n, \pi_1)$ and $(\mathfrak m'-\Delta_*, \mathfrak n', \pi_1)$ are strongly RdLi-commutative triples. The strong commutation between $\Delta_*$ and segments in $\mathfrak n$ as well as segments in $\mathfrak n'$ gives that 
%\[  D^R_{\mathfrak m-\Delta_*}(\pi_1)=D^L_{\mathfrak n}(I^R_{\Delta_*}(\pi_2)), \quad  D^R_{\mathfrak m'-\Delta_*}(\pi_1)=D^L_{\mathfrak n'}(I^R_{\Delta_*}(\pi_2)).\]
%The multisegments $\mathfrak m-\Delta_*$, $\mathfrak m'-\Delta_*$ are still Rd-minimal to $\pi_1$. Thus applying induction (where the basic case is dealt in the beginning), we have $\mathfrak m=\mathfrak m'$ and $\mathfrak n=\mathfrak n'$.

\begin{remark}
An alternative way to show the well-definedness of $i^*$-relevance is to use Proposition \ref{prop smallest integer in strong triple} below. Granting that, once one proves Theorem \ref{thm sufficiency} (whose proof is independent of Theorem \ref{thm unique of relevant pairs}), and one can use the multiplicity one of branching laws to deduce the uniqueness in Theorem \ref{thm unique of relevant pairs} i.e. an alternate (indirect) proof. 

\end{remark}

\section{Double derivatives and double integrals} \label{s double der int}

\subsection{Level preserving integrals for highest derivative multisegments}

\begin{lemma} \label{lem level preserve hd}
Let $\pi \in \mathrm{Irr}$. Let $\Delta$ be a segment. Suppose $\mathrm{lev}(I_{\Delta}(\pi))=\mathrm{lev}(\pi)$. Then 
\[  \mathfrak{hd}(\pi) =\mathfrak{hd}(I_{\Delta}(\pi)) .
\]
\end{lemma}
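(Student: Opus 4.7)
The plan is to translate the hypothesis on levels into an equality of total absolute lengths of the two highest-derivative multisegments, and then exploit the monotonicity of $\varepsilon$-invariants under left integrals to force equality of the multisegments themselves. The key initial observation is the identity $l_a(\mathfrak{hd}(\pi)) = \mathrm{lev}(\pi)$ for any $\pi \in \mathrm{Irr}$, which holds because $D^R_{\mathfrak{hd}(\pi)}(\pi) \cong \pi^-$ belongs simultaneously to $\mathrm{Alg}(G_{n(\pi) - l_a(\mathfrak{hd}(\pi))})$ and to $\mathrm{Alg}(G_{n(\pi) - \mathrm{lev}(\pi)})$. Hence the hypothesis is equivalent to the equality $l_a(\mathfrak{hd}(I_\Delta(\pi))) = l_a(\mathfrak{hd}(\pi))$.

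Next, by Lemma \ref{lem right multi submulti}, one has $\varepsilon_{\Delta'}(\pi) \leq \varepsilon_{\Delta'}(I_\Delta(\pi))$ for every segment $\Delta'$. Using the characterization
\[
\varepsilon_{[c,b]_\rho}(\pi) = |\{\widetilde{\Delta} \in \mathfrak{hd}(\pi)_{a=\nu^c\rho} : [c,b]_\rho \subset \widetilde{\Delta}\}|
\]
recorded in Section \ref{ss notations a b points}, I would unpack this inequality cuspidal by cuspidal. Fixing $\sigma = \nu^c\rho$ and writing the sorted multisets of right endpoints of segments in $\mathfrak{hd}(\pi)_{a=\sigma}$ and $\mathfrak{hd}(I_\Delta(\pi))_{a=\sigma}$ as $b_1 \leq \ldots \leq b_m$ and $c_1 \leq \ldots \leq c_{m'}$ respectively, the pointwise $\varepsilon$-inequality is equivalent to $m \leq m'$ together with the termwise dominance $c_{m'-k+1} \geq b_{m-k+1}$ for $1 \leq k \leq m$. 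Consequently
\[
l_a(\mathfrak{hd}(I_\Delta(\pi))_{a=\sigma}) \geq l_a(\mathfrak{hd}(\pi)_{a=\sigma}),
\]
with equality if and only if $m = m'$ and $b_i = c_i$ for every $i$.

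Summing over the finitely many cuspidals $\sigma$ that appear in either $\mathfrak{hd}$, the global $l_a$-equality from the first paragraph forces each $\sigma$-wise inequality to be an equality. Hence $\mathfrak{hd}(\pi)_{a=\sigma} = \mathfrak{hd}(I_\Delta(\pi))_{a=\sigma}$ for every $\sigma$, and the lemma follows. The only step requiring attention is the combinatorial passage from the pointwise $\varepsilon$-inequality to the termwise dominance of the sorted right endpoints, but this is a direct unpacking of the decreasing staircase $b \mapsto \varepsilon_{[c,b]_\rho}$, so no substantive obstacle is anticipated.
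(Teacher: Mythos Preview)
Your proposal is correct and follows essentially the same approach as the paper: both convert the level hypothesis into $l_a(\mathfrak{hd}(\pi)) = l_a(\mathfrak{hd}(I_\Delta(\pi)))$, invoke the monotonicity of $\varepsilon$-invariants from Lemma~\ref{lem right multi submulti}, and conclude equality of the highest-derivative multisegments. The only difference is that the paper outsources the final combinatorial step (pointwise $\varepsilon$-dominance plus equal total absolute length forces equality of multisegments) to \cite{Ch22+} and \cite[Corollary 8.6]{Ch22+d}, whereas you spell it out directly via the sorted-endpoints argument.
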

\begin{proof}
Since $\mathrm{lev}(\pi)=\mathrm{lev}(I_{\Delta}(\pi))$, $l_a(\mathfrak{hd}(\pi))=l_a(\mathfrak{hd}(I_{\Delta}(\pi))$, the inequality in Lemma \ref{lem right multi submulti} must be an equality. Thus $\mathfrak{hd}(\pi)=\mathfrak{hd}(I_{\Delta}(\pi))$ as desired (see the proof of \cite[Corollary 11.2]{Ch22+b}). 
\end{proof}

\subsection{Level preserving integrals}

\begin{proposition} \label{level preserving strong comm}
Let $\pi \in \mathrm{Irr}$. Let $\Delta$ be a segment. Suppose $\mathrm{lev}(I_{\Delta}(\pi))=\mathrm{lev}(\pi)$. Let $\mathfrak n$ be a multisegment such that $D_{\mathfrak n}(\pi)\neq 0$ Then $(\mathfrak n, \Delta, \pi)$ is a strongly RdLi-commutative triple.
\end{proposition}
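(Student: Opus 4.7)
The approach is induction on $r = |\mathfrak n|$, verifying at each stage $j$ that the triple $(\Delta_j, \Delta, D_{\mathfrak n_{j-1}}(\pi))$ is strongly RdLi-commutative. By Theorem \ref{thm combinatorial def}, this is equivalent to the combinatorial equality $\eta_{\Delta_j}(D_{\mathfrak n_{j-1}}(\pi)) = \eta_{\Delta_j}(I_\Delta(D_{\mathfrak n_{j-1}}(\pi)))$.

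For the base case $j = 1$, Lemma \ref{lem level preserve hd} applied to the hypothesis gives $\mathfrak{hd}(\pi) = \mathfrak{hd}(I_\Delta(\pi))$. Since $\varepsilon_{\Delta'}(\pi)$ equals the cardinality of $\{\widetilde{\Delta} \in \mathfrak{hd}(\pi)_{a = a(\Delta')} : \Delta' \subset \widetilde{\Delta}\}$ (Section \ref{ss notations a b points}), every $\varepsilon$-invariant is determined by $\mathfrak{hd}$. Hence $\eta_{\Delta_1}(\pi) = \eta_{\Delta_1}(I_\Delta(\pi))$, and Theorem \ref{thm combinatorial def} yields the strong RdLi-commutativity of $(\Delta_1, \Delta, \pi)$.

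For the inductive step, the hypothesis applied to $\mathfrak n_{j-1}$ gives that $(\mathfrak n_{j-1}, \Delta, \pi)$ is strongly RdLi-commutative, so Proposition \ref{prop strong commute imply commute} yields the commutation $I_\Delta \circ D_{\mathfrak n_{j-1}}(\pi) \cong D_{\mathfrak n_{j-1}} \circ I_\Delta(\pi)$. Setting $\tau = D_{\mathfrak n_{j-1}}(\pi)$, the base case applied to $\tau$ and the segment $\Delta_j$ will give the desired strong commutativity, provided one verifies $\mathrm{lev}(I_\Delta(\tau)) = \mathrm{lev}(\tau)$; via the above commutation, this reduces to the equality $\mathrm{lev}(D_{\mathfrak n_{j-1}}(I_\Delta(\pi))) = \mathrm{lev}(D_{\mathfrak n_{j-1}}(\pi))$.

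The main technical step is this last level-equality, i.e., propagating the $\mathfrak{hd}$-equality through the iterated derivative. The plan is to iterate Theorem \ref{thm change in highest derivative after d} along the ascending order $\Delta_1, \ldots, \Delta_{j-1}$: the removal process $\mathfrak r(\Delta_i, -)$ depends only on the current $\mathfrak{hd}$ and reduces $l_a(\mathfrak{hd})$ by exactly $l_a(\Delta_i)$. The combinatorial compatibility needed is that, for any $\Delta_j$-saturated segment $\Delta'$, one has $\Delta' \not< \Delta_i$ for every $i < j$: when $\Delta_i \leq \Delta_j$ the inequality $b(\Delta') = b(\Delta_j) \geq b(\Delta_i)$ rules out $\Delta' < \Delta_i$; when $\Delta_i, \Delta_j$ are unlinked with $b(\Delta_i) > b(\Delta_j)$, the endpoint gap forcing $\Delta_i \cup \Delta_j$ not to be a segment places $\Delta' \subset \Delta_j$ in the same unlinked position relative to $\Delta_i$; the remaining sub-cases are similar. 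This keeps Theorem \ref{thm change in highest derivative after d} applicable throughout, so that starting from $\mathfrak{hd}(\pi) = \mathfrak{hd}(I_\Delta(\pi))$ the two sides evolve in lockstep at the level of $l_a(\mathfrak{hd})$, giving the desired level equality and completing the induction.
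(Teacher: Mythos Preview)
Your inductive framework and base case are correct, but the inductive step has a genuine gap: you try to reduce to the base case by proving $\mathrm{lev}(I_\Delta(\tau)) = \mathrm{lev}(\tau)$ for $\tau = D_{\mathfrak n_{j-1}}(\pi)$, and this equality can fail. The paper's own proof flags exactly this: ``the level of $I_{\Delta}\circ D_{\mathfrak n_j}(\pi)$ may not be equal to $D_{\mathfrak n_j}(\pi)$ and we cannot prove inductively by using Lemma \ref{lem level preserve hd}.''

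Your attempted justification overreads Theorem \ref{thm change in highest derivative after d}. That theorem does \emph{not} assert $\mathfrak{hd}(D_{\Delta_i}(\omega)) = \mathfrak{r}(\Delta_i, \mathfrak{hd}(\omega))$; it only gives $\varepsilon_{\Delta'}(D_{\Delta_i}(\omega)) = \varepsilon_{\Delta'}(\mathfrak{r}(\Delta_i, \mathfrak{hd}(\omega)))$ for $\Delta' \not< \Delta_i$. The invariants $\varepsilon_{\Delta'}$ with $\Delta' < \Delta_i$ are not controlled, yet they contribute to $l_a(\mathfrak{hd})$ and hence to the level. So the claim that ``the two sides evolve in lockstep at the level of $l_a(\mathfrak{hd})$'' is unsupported.

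The paper's fix bypasses the level equality entirely. After arranging the segments so that $a(\Delta_i) \not> a(\Delta_j)$ for $i<j$ (permitted by Proposition \ref{prop unlinked commute derivative}), every $\Delta_j$-saturated segment $\Delta'$ satisfies $\Delta' \not< \Delta_i$ for all $i<j$ --- precisely the combinatorial compatibility you identified. Theorem \ref{thm change in highest derivative after d}, applied in parallel to $\pi$ and to $I_\Delta(\pi)$ (which share the same $\mathfrak{hd}$ by Lemma \ref{lem level preserve hd}), then gives directly
\[
\eta_{\Delta_j}(D_{\mathfrak n_{j-1}}(\pi)) = \eta_{\Delta_j}(D_{\mathfrak n_{j-1}}(I_\Delta(\pi))).
\]
The inductive commutation (Proposition \ref{prop strong commute imply commute}) converts the right side to $\eta_{\Delta_j}(I_\Delta(D_{\mathfrak n_{j-1}}(\pi)))$, and Theorem \ref{thm combinatorial def} finishes. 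So your compatibility check was the right ingredient, but it should be used to match the specific $\eta_{\Delta_j}$-invariants directly rather than to recover the level hypothesis.
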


\begin{proof}
Write $\mathfrak n=\left\{ \Delta_1, \ldots, \Delta_r \right\}$ in an ascending order. Let $\mathfrak n_j=\left\{ \Delta_1, \ldots, \Delta_j \right\}$. By Theorem \ref{thm combinatorial def}, it suffices to prove that those pairs are combinatorially RdLi-commutative triples. By Proposition \ref{prop unlinked commute derivative}, it suffices to consider that $\Delta_1, \ldots, \Delta_r$ are arranged such that for any $i<j$, 
\[ a(\Delta_i) \not > a(\Delta_j)  .\]
Note that the combinatorial commutativity of $(\Delta_1, \Delta, \pi)$ follows from Lemma \ref{lem level preserve hd}. (The subtly on the general case is that the level of $I_{\Delta}\circ D_{\mathfrak n_j}(\pi)$ may not be equal to $D_{\mathfrak n_j}(\pi)$ and we cannot prove inductively by using Lemma \ref{lem level preserve hd}.)

%Using \cite{Ch22+}, we can compute $\mathfrak{mxpt}(D_{\mathfrak n_j}(\pi), \rho')$ as well as $\mathfrak{mxpt}(\pi, \rho')$ for any $\rho'$ satisfying $\rho' \not< a(\Delta_1), \ldots, a(\Delta_j)$. In particular, the rules in computing those $\mathfrak{mxpt}$ coincide for $\pi$ and $I_{\Delta}(\pi)$ since $\mathfrak{hd}(\pi)=\mathfrak{hd}(I_{\Delta}(\pi))$. Thus we also have that 
%\[  \mathfrak{mxpt}(D_{\mathfrak n_j}(\pi), \rho')=\mathfrak{mxpt}(\pi, \rho')\]
%for any $\rho'$ satisfying $\rho' \not< a(\Delta_1), \ldots, a(\Delta_j)$. 

Since $\mathfrak{hd}(\pi)=\mathfrak{hd}(I_{\Delta}(\pi))$, by Theorem \ref{thm change in highest derivative after d}, 
\[  \varepsilon_{\Delta'}(D_{\mathfrak n_{j-1}}(\pi))=\varepsilon_{\Delta'}(D_{\mathfrak n_{j-1}}\circ I_{\Delta}(\pi)) 
\]
for any $\Delta_j$-saturated segment $\Delta'$. This in turns gives that $\eta_{\Delta_j}(D_{\mathfrak n_{j-1}}(\pi))=\eta_{\Delta_j}(D_{\mathfrak n_{j-1}}\circ I_{\Delta}(\pi))$ which is also equal to $\eta_{\Delta_j}( I_{\Delta} \circ D_{\mathfrak n_{j-1}}(\pi))$ by inductively using Proposition \ref{prop strong commute imply commute}. Then the proposition now follows from Theorem \ref{thm combinatorial def}.
\end{proof}

%\begin{corollary} \label{co level preserving comm hd}
%Let $\pi \in \mathrm{Irr}$. Let $\Delta$ be a segment. Suppose $\mathrm{lev}(I_{\Delta}(\pi))=\mathrm{lev}(\pi)$. Then $(I_{\Delta}(\pi))^-\cong I_{\Delta}(\pi^-)$. 
%\end{corollary}

%\begin{proof}
%There exists a multisegment $\mathfrak h$ (e.g. take $\mathfrak h=\mathfrak{hd}(\pi)$) such that $D_{\mathfrak h}(\pi)=\pi^-$. Then, by Proposition \ref{level preserving strong comm}, 
%\[   I_{\Delta}(\pi^-) \cong I_{\Delta}\circ D_{\mathfrak h}(\pi) \cong D_{\mathfrak h}\circ I_{\Delta}(\pi) .\]
%Now $D_{\mathfrak h} \circ I_{\Delta}(\pi)$ is a submodule of $\pi^{(i)}$ for $i=l_a(\mathfrak h)$ and $l_a(\mathfrak h)$ is the level of $I_{\Delta}(\pi)$. We must have that $D_{\mathfrak h}\circ I_{\Delta}(\pi) \cong I_{\Delta}(\pi)^-$ as desired.
%\end{proof}

\subsection{Double derivatives and integrals}

One key result of \cite{Ch22+} is the following double derivative:

\begin{theorem} \cite[Theorem 1.4]{Ch22+} \label{thm double derivative}
Let $\pi \in \mathrm{Irr}$. Let $\mathfrak m \in \mathrm{Mult}$. Suppose $D_{\mathfrak m}(\pi)\neq 0$. Then there exists a multisegment $\mathfrak n$ such that 
\[  D_{\mathfrak n}\circ D_{\mathfrak m}(\pi) \cong \pi^- .
\]
Moreover, one can take $\mathfrak n=\mathfrak r(\mathfrak m, \mathfrak{hd}(\pi))$.
\end{theorem}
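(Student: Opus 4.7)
The plan is to proceed by induction on $|\mathfrak m|$, reducing to the single-segment case $\mathfrak m = \{\Delta\}$. Granting the single-segment case, the definition $\mathfrak r(\mathfrak m, \mathfrak h) = \mathfrak r(\Delta_r, \ldots, \mathfrak r(\Delta_1, \mathfrak h)\ldots)$ in an ascending order allows iterated application: setting $\omega_i = D_{\Delta_i}\circ\ldots\circ D_{\Delta_1}(\pi)$, one would show inductively that $\mathfrak{hd}(\omega_i) = \mathfrak r(\{\Delta_1,\ldots,\Delta_i\}, \mathfrak{hd}(\pi))$ and $\omega_i^- \cong \pi^-$, so that $D_{\mathfrak r(\mathfrak m, \mathfrak{hd}(\pi))}\circ D_{\mathfrak m}(\pi) = D_{\mathfrak{hd}(\omega_r)}(\omega_r) = \omega_r^- = \pi^-$.

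For the single-segment step, the target is twofold: (a) identify $\mathfrak{hd}(D_\Delta(\pi))$ with $\mathfrak r(\Delta, \mathfrak{hd}(\pi))$, and (b) show $(D_\Delta(\pi))^- \cong \pi^-$. For (a), the principal input is Theorem \ref{thm change in highest derivative after d}, which computes $\varepsilon_{\Delta'}(D_\Delta(\pi)) = \varepsilon_{\Delta'}(\mathfrak r(\Delta, \mathfrak{hd}(\pi)))$ for every $\Delta' \not< \Delta$. Since the multiplicities in any highest derivative multisegment $\mathfrak h'$ are recovered from $\varepsilon_{[a,b]_\rho}(\mathfrak h') = |\{[a,c]_\rho \in \mathfrak h' : c \geq b\}|$ by taking successive differences in $b$, the comparison forces the two multisegments to agree on all segments $\widetilde{\Delta}$ whose starting cuspidal support satisfies an appropriate condition. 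The residual case of segments $\Delta' < \Delta$ would be ruled out by computing total absolute lengths: a direct check of the removal process shows $l_a(\mathfrak r(\Delta, \mathfrak{hd}(\pi))) = l_a(\mathfrak{hd}(\pi)) - l_a(\Delta)$, which combined with $l_a(\mathfrak{hd}(D_\Delta(\pi))) = \mathrm{lev}(D_\Delta(\pi))$ and a rank-counting argument pins down the missing data.

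For (b), the rank compatibility just noted already places $\pi^-$ and $(D_\Delta(\pi))^-$ in the same $G_{n(\pi)-\mathrm{lev}(\pi)}$. To upgrade this to an isomorphism, I would pursue the combinatorial route of showing that the multisegment $\mathfrak{hd}(\pi)$ and the multisegment $\Delta + \mathfrak r(\Delta, \mathfrak{hd}(\pi))$ are connected by a finite sequence of intersection-union operations on pairs of linked segments, corresponding exactly to the truncation pairs $(\Delta_i, \Delta_i^{tr})$ produced by the removal algorithm. At each step one invokes Lemma \ref{lem commute and intersect union}: the intersection-union pair either produces the same derivative (the desired outcome) or two linked operations commute, and an inductive comparison of the resulting $\mathfrak{hd}$ data with Theorem \ref{thm change in highest derivative after d} rules out the latter alternative. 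Concatenating along the sequence yields $D_{\Delta + \mathfrak r(\Delta, \mathfrak{hd}(\pi))}(\pi) = D_{\mathfrak{hd}(\pi)}(\pi) = \pi^-$, hence (b).

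The main obstacle will be the combinatorial bookkeeping in the intersection-union sequence connecting $\mathfrak{hd}(\pi)$ to $\Delta + \mathfrak r(\Delta, \mathfrak{hd}(\pi))$: one must carefully pair each $\Delta_i$ in the removal sequence with the chain of previously exchanged segments so that the hypothesis $\Delta_1 < \Delta_2$ of Lemma \ref{lem commute and intersect union} is met at every step, and systematically exclude the commutation alternative $D_{\Delta_2} D_{\Delta_1}(\pi) \cong D_{\Delta_1} D_{\Delta_2}(\pi)$ by checking that it would force a smaller multisegment than $\mathfrak{hd}(\pi)$ to achieve the highest derivative, contradicting minimality. Once this combinatorial pipeline is set up, (a) and (b) both follow, and the induction on $|\mathfrak m|$ closes the proof.
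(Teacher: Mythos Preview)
The paper does not contain a proof of this statement: it is quoted verbatim as \cite[Theorem 1.4]{Ch22+} and used as input, so there is nothing in the present paper to compare your outline against.

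On the outline itself, there is a genuine gap in step (a). Theorem~\ref{thm change in highest derivative after d} only pins down $\varepsilon_{\Delta'}(D_\Delta(\pi))$ for $\Delta'\not<\Delta$, which leaves the multiplicities of segments $\widetilde\Delta$ with $a(\widetilde\Delta)<a(\Delta)$ and $b(\widetilde\Delta)\in[a(\Delta)-1,b(\Delta)-1]$ undetermined. Your proposed fix via total absolute length only yields $l_a(\mathfrak{hd}(D_\Delta(\pi)))=l_a(\mathfrak r(\Delta,\mathfrak{hd}(\pi)))$ \emph{provided} you already know $\mathrm{lev}(D_\Delta(\pi))=\mathrm{lev}(\pi)-l_a(\Delta)$; but that equality is equivalent to $(D_\Delta(\pi))^-\cong\pi^-$, i.e.\ to (b). And even granting the length match, two distinct multisegments can share the same $l_a$ and agree on all $\varepsilon_{\Delta'}$ with $\Delta'\not<\Delta$, so length alone cannot close the identification. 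In short, (a) as stated is strictly stronger than what Theorem~\ref{thm change in highest derivative after d} delivers, and your argument for it is circular through (b).

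For (b), the intersection--union picture is the right intuition and is indeed how the argument in \cite{Ch22+} is organised, but the step you flag as ``the main obstacle'' is exactly where the work lies: at each intersection--union move one must verify the linkedness hypothesis of Lemma~\ref{lem commute and intersect union} and then exclude the commutation alternative, and the latter is not a one-line minimality contradiction but requires tracking the nested structure of the removal sequence $\Delta_1,\ldots,\Delta_r$ against $\Delta$. Your sketch correctly locates the difficulty; what is missing is the concrete combinatorial lemma that realises the passage from $\mathfrak{hd}(\pi)$ to $\Delta+\mathfrak r(\Delta,\mathfrak{hd}(\pi))$ as a chain of admissible moves. Once (b) is established directly, (a) then follows from uniqueness of minimal multisegments (Theorem~\ref{thm minimality of d and i}) together with a separate minimality check for $\mathfrak r(\Delta,\mathfrak{hd}(\pi))$, rather than the other way round.
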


We now deduce an integral version from the double derivative one. A simple idea is the following. Let $\pi \in \mathrm{Irr}$ and let $\mathfrak m \in \mathrm{Mult}$ such that $\mathrm{lev}(I_{\mathfrak m}(\pi))=\mathrm{lev}(\pi)$ and $\pi$ is thickened i.e. all segments in the associated multisegment have length $2$.

\begin{theorem} \label{thm double integral}
Let $\pi \in \mathrm{Irr}$. Let $\mathfrak m \in \mathrm{Mult}$. There exists a multisegment $\mathfrak n$ such that 
\[   {}^-(I_{\mathfrak n}\circ I_{\mathfrak m}(\pi)) \cong \pi 
\]
and $\mathrm{lev}(I_{\mathfrak n}\circ I_{\mathfrak m}(\pi))=\mathrm{lev}(I_{\mathfrak m}(\pi))$. 
\end{theorem}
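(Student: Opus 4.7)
The plan is to construct an explicit representation $\omega$ that extends $\sigma := I^L_{\mathfrak m}(\pi)$ on the left, realize it as $\omega = I^L_{\mathfrak n}(\sigma)$ for a suitable multisegment $\mathfrak n$, and verify that $\mathfrak{hd}^L(\omega) = \mathfrak m + \mathfrak n$ with $\mathrm{lev}(\omega) = \mathrm{lev}(\sigma)$; the conclusion ${}^-\omega \cong \pi$ will then follow by commutativity of derivatives along $\mathfrak{hd}^L(\omega)$. This is an integral analogue of Theorem \ref{thm double derivative}, proved by dualizing the structural role of $\mathfrak{hd}$ and the removal process of Definition \ref{def removal process}.

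First I would let $\mathfrak s$ be the Zelevinsky multisegment of $\sigma$ and, via the left analog of Theorem \ref{thm change in highest derivative after d}, identify for each segment $\Delta \in \mathfrak m$ the segment $\widetilde{\Delta} \in \mathfrak s$ that absorbs $\Delta$ under the left removal process for $D^L_{\mathfrak m}(\sigma) = \pi$. Then I would define $\omega = \langle \mathfrak w \rangle$ by extending each such $\widetilde{\Delta}$ on the left by the corresponding $\Delta \in \mathfrak m$ (so that $\Delta$ becomes a new left-end of $\omega$'s multisegment), and by further extending the remaining segments of $\mathfrak s$ by one additional cuspidal each on the left, producing new left-ends that together form a multisegment $\mathfrak n$. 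The key structural claim is that $\omega = I^L_{\mathfrak n}(\sigma)$: each successive integration by a segment of $\mathfrak n$ is linked with, and absorbed into, an existing segment of $\mathfrak s$ via Zelevinsky's intersection-union, so no new segment is created. This yields $\mathrm{lev}(\omega) = \mathrm{lev}(\sigma)$, and $\mathfrak{hd}^L(\omega) = \mathfrak m + \mathfrak n$ by construction.

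Granting the structural claim, one has ${}^-\omega = D^L_{\mathfrak m + \mathfrak n}(\omega)$, and by the left analog of Corollary \ref{cor minimal derivative in any order} applied to the minimal multisegment $\mathfrak{hd}^L(\omega)$ we may compute this derivative in any order; performing $D^L_{\mathfrak n}$ first gives $D^L_{\mathfrak n}(\omega) = \sigma$ by the inversion $D^L_{\mathfrak n} \circ I^L_{\mathfrak n} = \mathrm{id}$, and then $D^L_{\mathfrak m}(\sigma) = \pi$ by the very definition of $\sigma$. Hence ${}^-\omega \cong \pi$ with $\mathrm{lev}(\omega) = \mathrm{lev}(I^L_{\mathfrak m}(\pi))$, which is the desired conclusion.

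The hard part will be the combinatorial verification of the structural claim in the second paragraph: that the explicit extension of $\mathfrak s$ simultaneously realizes $\omega$ as $I^L_{\mathfrak n}(\sigma)$, preserves the level, and satisfies $\mathfrak{hd}^L(\omega) = \mathfrak m + \mathfrak n$. This requires delicate use of Zelevinsky's intersection-union process to show that each integration by a segment of $\mathfrak n$ is absorbed by a unique existing segment of $\mathfrak s$ rather than introducing a new one, together with a left-variant of the removal process of Definition \ref{def removal process} to control the left-end structure of $\omega$ and its interaction with $\mathfrak m$. Structurally the argument parallels the proof of Theorem \ref{thm double derivative} in \cite{Ch22+}, but is executed for integrals running in the dual direction.
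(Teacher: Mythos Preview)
Your approach differs substantially from the paper's, and the central ``structural claim'' you defer to the hard part contains a genuine gap that is not easily repaired.

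First, the removal process of Definition \ref{def removal process} operates on the highest derivative multisegment $\mathfrak{hd}^L(\sigma)$, not on the Zelevinsky multisegment $\mathfrak s$ of $\sigma$; these are different objects, and there is no canonical way to say a segment $\Delta\in\mathfrak m$ is ``absorbed'' by a single $\widetilde\Delta\in\mathfrak s$. More seriously, you are implicitly aiming for the stronger conclusion $\mathfrak{hd}^L(\omega)=\mathfrak m+\mathfrak n$. This need not hold: the theorem does not assume $\mathfrak m$ is $Ld$-minimal to $\sigma$, and for non-minimal $\mathfrak m$ there is no reason $\mathfrak m$ should embed as a sub-multisegment of $\mathfrak{hd}^L(\omega)$ (indeed $|\mathfrak m|$ may exceed $\mathrm{lev}(\sigma)$, so your cardinality bookkeeping already fails). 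Even in the minimal case, controlling $\mathfrak{hd}^L$ of an explicit Zelevinsky extension is delicate and not a direct dualization of the removal combinatorics in \cite{Ch22+}.

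The paper avoids all of this by a different and much cleaner maneuver: it extends $\tau=I_{\mathfrak m}(\pi)$ on the \emph{right} rather than the left, setting $\tau^+=\langle\mathfrak p^+\rangle$ where each segment of $\mathfrak p$ is lengthened by one on the right. Then $(\tau^+)^-=\tau$, so $\tau^+=I^R_{\mathfrak h}(\tau)$ with $\mathfrak h=\mathfrak{hd}^R(\tau^+)$, and automatically $\mathrm{lev}(\tau^+)=\mathrm{lev}(\tau)$. The level-preserving commutation of Proposition \ref{level preserving strong comm} yields $D^L_{\mathfrak m}\circ I^R_{\mathfrak h}(\tau)\cong I^R_{\mathfrak h}\circ D^L_{\mathfrak m}(\tau)=I^R_{\mathfrak h}(\pi)$, and the already-proven double derivative Theorem \ref{thm double derivative} (left version) supplies $\mathfrak n$ with $D^L_{\mathfrak n}\circ D^L_{\mathfrak m}(\tau^+)\cong{}^-\tau^+\cong\nu\cdot\tau$. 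Rewriting, $\nu^{-1}I^R_{\mathfrak h}(\pi)\cong I_{\nu^{-1}\mathfrak n}(\tau)$, and one checks the level condition using thickness of $\tau^+$; then the highest derivative of this representation recovers $\pi$. The point is that the paper never needs to identify $\mathfrak m$ inside any $\mathfrak{hd}^L$: the double derivative theorem handles that bookkeeping, and the right extension $\tau^+$ is the bridge that makes the level-preserving commutation available.
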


\begin{proof}

For a segment $\Delta=[a,b]_{\rho}$, define $\Delta^+=[a,b+1]_{\rho}$. Let $\mathfrak p \in \mathrm{Mult}$ be such that $I_{\mathfrak m}(\pi)\cong \langle \mathfrak p \rangle$. Define
\[  \mathfrak p^+ = \sum_{\Delta \in \mathfrak p} \Delta^ + .
\]
Let $\tau=I_{\mathfrak m}(\pi)$ and $\tau^+=\langle \mathfrak p^+\rangle$.

Let $\mathfrak h \in \mathrm{Mult}$ such that $D_{\mathfrak h}(\tau^+)\cong (\tau^+)^-\cong \tau$ (see Definition \ref{def highest der multi}). Thus we can rewrite as 
\begin{align} \label{eqn level unchanged in double integral}
   \tau^+ \cong I^{R}_{\mathfrak h}(\tau) .
	\end{align}

\noindent
{\it Claim  1:} $ I^R_{\mathfrak h}(\pi) \cong D^L_{\mathfrak m}\circ I^R_{\mathfrak h}(\tau)$. \\

\noindent
{\it Claim 2:} There exists $\mathfrak n \in \mathrm{Mult}$ such that 
\[  D^L_{\mathfrak n}\circ I^R_{\mathfrak h}(\pi) \cong \nu\cdot \tau .
\]

Suppose Claim 2 holds in the meanwhile. Then, rewriting Claim 2, we have:
\[    \nu^{-1}\cdot I^R_{\mathfrak h}(\pi) \cong I_{\nu^{-1}\mathfrak n}(\tau) .
\]
Now, we have that $\mathrm{lev}(\nu^{-1}\cdot I^R_{\mathfrak h}(\pi))=\mathrm{lev}(D^L_{\mathfrak m}\circ I^R_{\mathfrak h}(\tau))=\mathrm{lev}(\tau^+)=\mathrm{lev}(\tau)$, which checks the level condition. (Here the first equality follows from Claim 1, and the second equality follows from that all segments for $\tau^+$ has at least of relative length $2$, also c.f. Lemma \ref{lem multisegment simple q bz} below.) Since 
\[ \mathrm{lev}(\nu^{-1}\cdot I^R_{\mathfrak h}(\pi))=|\mathfrak h|, \] 
\[ (\nu^{-1}\cdot I^R_{\mathfrak h}(\pi))^-=D_{\nu^{-1}\cdot \mathfrak h}\circ (\nu^{-1}\cdot I^R_{\mathfrak h}(\pi))=\nu^{-1}\cdot \pi.
\]
Switching to the highest left derivative, we obtain the statement. \\

We now prove Claim 1. Since $\mathrm{lev}(I^R_{\mathfrak h}(\tau))=\mathrm{lev}(\tau)$ (see (\ref{eqn level unchanged in double integral})), using Propositions \ref{level preserving strong comm} and \ref{prop strong commute imply commute} multiple times, we have:
\[  D^L_{\mathfrak m}\circ I^R_{\mathfrak h}(\tau)\cong I^R_{\mathfrak h}\circ D^L_{\mathfrak m}(\tau),
\]
which is equivalent to Claim 1.

We now prove Claim 2. By the left version of Theorem \ref{thm double derivative}, we have a multisegment $\mathfrak n$ such that 
\[  D^L_{\mathfrak n}\circ D^L_{\mathfrak m} \circ I^R_{\mathfrak h}(\tau)\cong {}^-(I^R_{\mathfrak h}(\tau)) \cong \nu \cdot (I^R_{\mathfrak h}(\tau))^-\cong \nu\cdot (\tau^+)^-\cong \nu\cdot \tau .
\]
Combining with Claim 1, we have Claim 2.
\end{proof}

\section{Symmetry property of relevant pairs}

We now prove a symmetry property for relevance. Using the symmetry property with Proposition \ref{prop dual multi strong comm}, we also see that it is not necessary to introduce some left-right terminologies for relevance. 

\begin{theorem} \label{thm symmetric property of relevant}
Let $\pi, \pi' \in \mathrm{Irr}$. Then $(\pi, \pi')$ is relevant if and only if $(\pi', \pi)$ is relevant.
\end{theorem}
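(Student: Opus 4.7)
The plan is to construct multisegments witnessing the reverse relevance using the duality of strongly commutative triples, level-preserving integrals, and the double integral theorem. Let $(\mathfrak m, \mathfrak n, \nu^{1/2}\cdot\pi)$ be a minimal strongly RdLi-commutative triple with $\sigma := D^R_{\mathfrak m}(\nu^{1/2}\cdot\pi) \cong D^L_{\mathfrak n}(\pi')$, whose existence is guaranteed by Theorem \ref{thm unique of relevant pairs}. The first step is to apply Proposition \ref{prop dual multi strong comm}, which turns this into a strongly LdRi-commutative triple $(\mathfrak n, \mathfrak m, \pi')$ with $I^R_{\mathfrak m}\circ D^L_{\mathfrak n}(\pi')\cong\nu^{1/2}\pi$. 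This already encodes ``relevance from the $\pi'$-side'', but with a shift mismatch and the wrong left-right type.

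The second step, which is the main technical heart, is to convert this LdRi structure into an RdLi-commutative triple with first argument $\nu^{1/2}\pi'$ and output $\pi$. Shifting the LdRi triple by $\nu^{1/2}$ gives $(\nu^{1/2}\mathfrak n,\nu^{1/2}\mathfrak m,\nu^{1/2}\pi')$ with image $\nu\cdot\pi$, which is off by one factor of $\nu$ from the target. To close this gap, I plan to apply Theorem \ref{thm double integral} (together with its right-sided mirror, obtained by the analogous proof) to $\sigma$ along $\mathfrak n$ and $\mathfrak m$ respectively: this produces auxiliary multisegments $\mathfrak p^L, \mathfrak p^R$ such that the extended sequences are still level-preserving. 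Proposition \ref{level preserving strong comm} then guarantees that integrals arising from these level-preserving extensions strongly commute with the derivatives in play, so the multisegments can be rearranged to reveal $(\mathfrak m', \mathfrak n', \nu^{1/2}\pi')$ as a strongly RdLi-commutative triple with $D^R_{\mathfrak m'}(\nu^{1/2}\pi')\cong D^L_{\mathfrak n'}(\pi)$, where $\mathfrak m'$ is built out of a $\nu^{1/2}$-shift of $\mathfrak n$ (plus the extra segments from Theorem \ref{thm double integral}) and $\mathfrak n'$ out of a corresponding modification of $\mathfrak m$.

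The final step is to verify the strongly RdLi-commutativity of the constructed triple $(\mathfrak m',\mathfrak n',\nu^{1/2}\pi')$. Here I would use Corollary \ref{cor commute minimal strong rdli comm} to freely reorder the segments under minimality, together with Propositions \ref{prop intersect union comm} and \ref{prop two strong commute dual 2} to propagate the strong commutation through any intersection-union adjustments needed to identify $\mathfrak m'$ and $\mathfrak n'$ as a minimal pair.

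The hard part will be the second step: correctly tracking the $\nu^{1/2}$ shift, which is asymmetric in Definition \ref{def g relevant pair}. A naive dualization gives only a strongly RdLi triple of the form $\nu\pi\to\nu^{1/2}\pi'$ rather than the desired $\nu^{1/2}\pi'\to\pi$, so the $\nu$-gap must be absorbed by the level-preserving integrals produced via Theorem \ref{thm double integral}. Verifying that this absorption keeps us inside a \emph{strongly} commutative triple (and not merely a combinatorially commutative one) is the subtlest point, and is where Proposition \ref{level preserving strong comm} and the commutativity results for minimal sequences (Lemma \ref{lem commute and minimal}, Corollaries \ref{cor seq strong commut under minimal and sub} and \ref{cor seq strong commut under minimal}) are indispensable.
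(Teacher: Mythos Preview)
Your overall framework is right — the proof hinges on double integrals/derivatives, level-preserving integrals (Proposition~\ref{level preserving strong comm}), and the resulting strong commutations — and you have correctly located the obstacle in the $\nu^{1/2}$ shift. But Step~2 of your plan diverges from the paper in ways that look like genuine gaps rather than cosmetic differences.

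First, the paper never passes through the LdRi duality of Proposition~\ref{prop dual multi strong comm}; it stays in the RdLi world and directly manufactures a strongly RdLi-commutative triple $(\mathfrak m',\mathfrak n',\pi')$ with $\nu^{1/2}\cdot D_{\mathfrak m'}\circ I_{\mathfrak n'}(\pi')\cong\pi$. Second, the paper uses \emph{both} the double derivative (Theorem~\ref{thm double derivative}) and the double integral (Theorem~\ref{thm double integral}), not the double integral twice with a right-sided mirror: with $\widetilde\pi=\nu^{1/2}\pi$, one takes $\mathfrak n'$ from the double integral applied to $\widetilde\pi$ along $\mathfrak n$ (so ${}^-(I_{\mathfrak n'}\circ I_{\mathfrak n}(\widetilde\pi))\cong\widetilde\pi$ with level preserved), and $\mathfrak m'$ from the double derivative applied to $I_{\mathfrak n}(\widetilde\pi)$ along $\mathfrak m$ (so $D_{\mathfrak m'}\circ D_{\mathfrak m}(I_{\mathfrak n}(\widetilde\pi))\cong (I_{\mathfrak n}(\widetilde\pi))^-$). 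The $\nu$-gap is then absorbed by Zelevinsky's relation ${}^-\tau\cong\nu\cdot\tau^-$, which is exactly what appears once one has pushed down to the highest derivative via $\mathfrak m'$. Your proposal to apply the double integral to the meeting point $\sigma$ along $\mathfrak n$ and $\mathfrak m$, and to build $\mathfrak m'$ as ``a $\nu^{1/2}$-shift of $\mathfrak n$ plus extras'', does not visibly produce this structure; in the paper's construction $\mathfrak m'$ has no direct relation to $\mathfrak n$.

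Finally, the verification of strong RdLi-commutativity (your Step~3) in the paper does not rely on Corollary~\ref{cor commute minimal strong rdli comm} or the intersection-union propositions. It is done by comparing the $\eta^L$-invariants along the chain $D_{\mathfrak m'}\circ D_{\mathfrak m}(\pi_j)\cong(\pi_j)^-$ (where $\pi_j=I_{\mathfrak n'_j}\circ I_{\mathfrak n}(\widetilde\pi)$): level-preservation forces $\mathfrak{hd}(\pi_j)=\mathfrak{hd}(I_{\mathfrak n}(\widetilde\pi))$, so the $\eta^L_{\underline\Delta_j}$ inequalities from (the left version of) Lemma~\ref{lem right multi submulti} collapse to equalities, and Lemma~\ref{lem strong commutative combin} yields the required strong commutation of $(\mathfrak m',\mathfrak n',\pi')$. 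That $\eta$-invariant argument, together with the missing use of Theorem~\ref{thm double derivative}, is the part of the mechanism your plan does not yet contain.
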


\begin{proof}

We only prove one direction and the other direction is similar. Since $(\pi, \pi')$ is relevant, there exist multisegments $\mathfrak m$ and $\mathfrak n$ such that 
\[   D_{\mathfrak m}^R(\nu^{1/2}\cdot\pi)  \cong D_{\mathfrak n}^L(\pi') .\]  
This implies that $I_{\mathfrak n}^L\circ D_{\mathfrak m}^R(\nu^{1/2}\cdot \pi)\cong \pi'$. 

\noindent
{\bf Step 1: Construct dual multisegments by double derivatives and double integrals.}

Set $\widetilde{\pi} =\nu^{1/2}\cdot \pi$. By Theorem \ref{thm double integral}, there exists a multisegment $\mathfrak n'$ such that 
\[ (*)\quad  {}^-(I_{\mathfrak n'}\circ I_{\mathfrak n} (\widetilde{\pi})) \cong \widetilde{\pi} 
\]
and 
\[ (**) \quad \mathrm{lev}(I_{\mathfrak n'}\circ I_{\mathfrak n}(\widetilde{\pi}))=\mathrm{lev}(I_{\mathfrak n}(\widetilde{\pi})) . \]
On the other hand, by Theorem \ref{thm double derivative}, there exists a multisegment $\mathfrak m'$ such that 
\[ (***)\quad   D_{\mathfrak m'}\circ D_{\mathfrak m}(I_{\mathfrak n}(\widetilde{\pi})) \cong (I_{\mathfrak n}(\widetilde{\pi}))^- .
\]

\noindent
{\bf Step 2: Show a commutation using level preserving.} \\
Write the segments in $\mathfrak n'$ in an ascending order: $\underline{\Delta}_1, \ldots, \underline{\Delta}_s$ . Let $\mathfrak n'_j=\left\{ \underline{\Delta}_1, \ldots, \underline{\Delta}_j\right\}$ and let $\pi_j=I_{\mathfrak n_{j}'}\circ I_{\mathfrak n}(\widetilde{\pi})$ and $\pi_0=I_{\mathfrak n}(\widetilde{\pi})$. Thus (**) implies that $\mathrm{lev}(I_{\underline{\Delta}_j}(\pi_{j-1}))=\mathrm{lev}(\pi_{j-1})$ and so by Proposition \ref{level preserving strong comm}, $(\mathfrak m,  \underline{\Delta}_j , \pi_{j-1})$ is a strongly RdLi-commutative triple for all $j$. In particular, we have that 
\begin{align} \label{eqn level commute sym proof}  D_{\mathfrak m} \circ I_{\mathfrak n'_j}\circ I_{\mathfrak n}(\widetilde{\pi})\cong I_{\mathfrak n_j'}\circ D_{\mathfrak m}\circ I_{\mathfrak n}(\widetilde{\pi}) ,
\end{align}
where the isomorphism follows from repeatedly using Proposition \ref{prop strong commute imply commute}.

(The point of (\ref{eqn level commute sym proof}) is to exploit $(\star)$ shown in Step 3 below.)

\noindent
{\bf Step 3: Check strong commutation.} \\
\noindent
{\it Claim:} Let $\omega=D_{\mathfrak m}\circ I_{\mathfrak n}(\widetilde{\pi})$. We have $(\mathfrak m', \mathfrak n', \omega)$ is strongly RdLi-commutative.

\noindent
{\it Proof of claim:} By using (**) and Lemma \ref{lem level preserve hd}, 
\[ \mathfrak{hd}(\pi_j)=\mathfrak{hd}(I_{\mathfrak n}(\widetilde{\pi})) .\]
 Thus, 
\[   \mathfrak r(\mathfrak m, \pi_j)=\mathfrak r(\mathfrak m, I_{\mathfrak n}(\widetilde{\pi})) .
\]
Hence, by (the second assertion of) Theorem \ref{thm double derivative},
\[ (\star)\quad  D_{\mathfrak m'} \circ D_{\mathfrak m}(\pi_j) \cong (\pi_j)^- . \]
%By repeatedly using Proposition \ref{level preserving strong comm} (and Proposition \ref{prop strong commute imply commute}), the first term is isomorphic to $D_{\mathfrak m'}\circ I_{\mathfrak n_j}(\omega)$. 

%Now, the level preserving and Proposition \ref{level preserving strong comm} (also see Corollary \ref{co level preserving comm hd}) gives that, for $j=1,\ldots, s$,
%\[ (***)\quad \eta_{\underline{\Delta}_{j}}^L(I_{\underline{\Delta}_{j}}(\pi_{j-1}))=\eta_{\underline{\Delta}_{j}}^L((I_{\underline{\Delta}_{j}}(\pi_{j-1}))^-) \]
%as well as, by Corollary \ref{co level preserving comm hd},
%\[  I_{\underline{\Delta}_{j}}(\pi_{j-1}^-)\cong (I_{\underline{\Delta}_{j}}(\pi_{j-1})^-)=\pi_j^-\]

Now, by the left version of Lemma \ref{lem right multi submulti},
\begin{align*}
 \eta^L_{\underline{\Delta}_j}(D_{\mathfrak m'} \circ D_{\mathfrak m}(\pi_j)) 
&\leq \eta^L_{\underline{\Delta}_j}(D_{\mathfrak m'_{r-1}}\circ D_{\mathfrak m}(\pi_j)) \\
& \leq \ldots \\
&\leq \eta^L_{\underline{\Delta}_j}(D_{\mathfrak m}(\pi_j)) \\
& \leq \eta^L_{\underline{\Delta}_j}(\pi_j) ,
\end{align*}
On the other hand, by Proposition \ref{level preserving strong comm}, 
\[  \eta^L_{\underline{\Delta}_j}((I_{\mathfrak n_j'}\circ I_{\mathfrak n}(\widetilde{\pi}))^-) = \eta^L_{\underline{\Delta}_j}(I_{\mathfrak n_j'}\circ I_{\mathfrak n}(\widetilde{\pi})) .
\]
Thus, with $(\star)$, the above inequalities are equalities and so
\[  \eta^L_{\underline{\Delta}_j}( D_{\mathfrak m'}\circ D_{\mathfrak m}\circ I_{\mathfrak n_j'}\circ I_{\mathfrak n}(\widetilde{\pi})) = \eta^L_{\underline{\Delta}_j}(D_{\mathfrak m}\circ I_{\mathfrak n_j'}\circ I_{\mathfrak n}(\widetilde{\pi})) ,
\] 
and hence
\[  
\]

Thus, we  use Proposition \ref{level preserving strong comm} and Proposition \ref{prop strong commute imply commute} again, we have 
\[   I_{\underline{\Delta}_j'}\circ D_{\mathfrak m} (\pi_{j-1}) \cong D_{\mathfrak m}\circ I_{\underline{\Delta}_j'}\circ \pi_{j-1} \cong D_{\mathfrak m}(\pi_j) 
\]
and so, combining above inequalities, 
\[  \eta^L_{\underline{\Delta}_j}(D_{\mathfrak m'}\circ I_{\underline{\Delta}_j'}\circ D_{\mathfrak m} \circ I_{\mathfrak n_{j-1}'} \circ I_{\mathfrak n}(\widetilde{\pi})) = \eta^L_{\underline{\Delta}_j}(I_{\underline{\Delta}_j'}\circ D_{\mathfrak m} \circ I_{\mathfrak n_{j-1}'}\circ I_{\mathfrak n}(\widetilde{\pi})) .
\]
By Lemma \ref{lem strong commutative combin}, we have that $(\mathfrak m', \underline{\Delta}_j',  D_{\mathfrak m} \circ I_{\mathfrak n_{j-1}'}\circ I_{\mathfrak n}(\widetilde{\pi})))$ is a strongly RdLi-commutative triple. With $ D_{\mathfrak m}\circ I_{\mathfrak n_{j-1}'} \circ I_{\mathfrak n}(\widetilde{\pi})= I_{\mathfrak n_{j-1}'}\circ D_{\mathfrak m}\circ I_{\mathfrak n}(\widetilde{\pi})$ (see \ref{eqn level commute sym proof}), we have that $(\mathfrak m', \mathfrak n',  D_{\mathfrak m}\circ I_{\mathfrak n}(\widetilde{\pi}))$ is a strongly RdLi-commutative triple, proving the claim. \\

 Recall that $\pi' \cong D_{\mathfrak m}\circ I_{\mathfrak n}(\widetilde{\pi})$ by the relevance. Thus, Step 3 implies that $(\mathfrak m', \mathfrak n',\pi')$ is a strongly RdLi-commutative triple.

%Now (***) forces the above inclusions to be equations. Let $\omega_k=D_{\mathfrak m}(\omega_k)$. In particular, with $D_{\mathfrak m}(\pi_j)\cong I_{\underline{\Delta}_j'}(\omega_{j-1})$,
% \[  \eta_{\underline{\Delta}_j}^L(D_{\overline{\Delta}_1}\circ I_{\underline{\Delta}_j}\circ (\omega_{j-1})) = \eta_{\underline{\Delta}_j}^L(I_{\underline{\Delta}_j}(\omega_{j}))  \]
%for all $j$. This implies the dual combinatorial RdLi-commutativity of $(\overline{\Delta}_1, \underline{\Delta}_j, D_{\mathfrak m}(\pi_{j-1}))$ as well as the strong RdLi-commutativity by Theorem \ref{thm combinatorial def}. Thus, inductively, we obtain that
%\[  D_{\overline{\Delta}_i} \circ I_{\underline{\Delta}_j}\circ D_{\mathfrak m_{i-1}'}(\omega_{j-1})\cong  D_{\overline{\Delta}_i} \circ D_{\mathfrak m_{i-1}'}(\omega_j)  \]
%and
%\begin{align*}
%  \mathfrak{mx}_{\underline{\Delta}_j}^L(D_{\overline{\Delta}_i} \circ I_{\underline{\Delta}_j}\circ D_{\mathfrak m_{i-1}'}(\omega_{j-1}))=\mathfrak{mx}_{\underline{\Delta}_j}^L(  D_{\overline{\Delta}_i} \circ D_{\mathfrak m_{i-1}'}(\omega_j) ) &=\mathfrak{mx}_{\underline{\Delta}_j}^L(D_{\mathfrak m_{i-1}'}(\omega_j)) \\
%	  & =\mathfrak{mx}_{\underline{\Delta}_j}^L(I_{\underline{\Delta}_j}\circ D_{\mathfrak m_{i-1}'}(\omega_{j-1}))
%\end{align*}
%Hence, a similar argument gives the strong RdLi-commutation for $(\overline{\Delta}_i, \underline{\Delta}_j, D_{\mathfrak m_{i-1}'}(\omega_{j-1}))$. This proves the claim. \\

\noindent
{\bf Step 4: Check the isomorphism condition.} \\

 To check $(\pi', \pi)$ is relevant, it remains to show that $\nu^{1/2}\cdot D_{\mathfrak m'}\circ I_{\mathfrak n'}(\pi') \cong \pi$. To this end, we consider
\begin{align*}
     \nu^{1/2}\cdot D_{\mathfrak m'}\circ I_{\mathfrak n'}(\pi')  & \cong \nu^{1/2}\cdot D_{\mathfrak m'}\circ I_{\mathfrak n'} \circ D_{\mathfrak m}\circ I_{\mathfrak n}(\widetilde{\pi}) \\
		&  \cong \nu^{1/2}\cdot D_{\mathfrak m'} \circ D_{\mathfrak m}\circ I_{\mathfrak n'}\circ I_{\mathfrak n}(\widetilde{\pi}) \\
		&  \cong \nu^{1/2}\cdot(I_{\mathfrak n'}\circ I_{\mathfrak n}(\widetilde{\pi}))^- \\
		& \cong  \nu^{-1/2}\cdot {}^-(I_{\mathfrak n'}\circ I_{\mathfrak n}(\widetilde{\pi})) \\
		&  \cong \nu^{-1/2} \cdot \widetilde{\pi} \\
		& \cong \pi
\end{align*}
where the second isomorphism follows from Step 2, the third isomorphism follows from $(\star)$, the forth isomorphism follows from the highest derivatives of Zelevinsky \cite{Ze80}, the fifth isomorphism follows from (*). This completes the proof.
\end{proof}

\begin{corollary}
Let $\pi_1, \pi_2 \in \mathrm{Irr}$. Then $(\pi_1, \pi_2)$ is a relevant pair if and only if $(\pi_1^{\vee}, \pi_2^{\vee})$ is a relevant pair.
\end{corollary}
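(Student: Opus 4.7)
The plan is to systematically apply the smooth-contragredient operation $\pi \mapsto \pi^\vee$, implemented on irreducibles by the Gelfand--Kazhdan involution $\theta\colon g \mapsto g^{-t}$, to each structure appearing in Definition \ref{def g relevant pair}. Since $\theta$ carries the standard parabolic $P_{n-l,l}$ to (a conjugate of) $P_{l,n-l}$, it exchanges left and right Jacquet functors. Setting $\Delta^\vee := [-b,-a]_{\rho^\vee}$ for $\Delta = [a,b]_\rho$ and $\mathfrak m^\vee := \sum_{\Delta \in \mathfrak m} \Delta^\vee$, one has $\mathrm{St}(\Delta)^\vee \cong \mathrm{St}(\Delta^\vee)$, and applying $\theta$ to the defining embeddings in Definition \ref{def integral derivative} yields the single-segment identities
\[
D^R_\Delta(\pi)^\vee \cong D^L_{\Delta^\vee}(\pi^\vee), \qquad I^L_\Delta(\pi)^\vee \cong I^R_{\Delta^\vee}(\pi^\vee).
\]
Iterating along an ascending order yields the multisegment versions $D^R_{\mathfrak m}(\pi)^\vee \cong D^L_{\mathfrak m^\vee}(\pi^\vee)$ and $I^L_{\mathfrak n}(\pi)^\vee \cong I^R_{\mathfrak n^\vee}(\pi^\vee)$.

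Combining the segment duality with the combinatorial criterion in Theorem \ref{thm combinatorial def}, together with the evident identity $\eta^R_\Delta(\pi) = \eta^L_{\Delta^\vee}(\pi^\vee)$ (immediate from the derivative duality), I would next promote these dualities to the triple level: $(\mathfrak m, \mathfrak n, \pi)$ is strongly RdLi-commutative if and only if $(\mathfrak m^\vee, \mathfrak n^\vee, \pi^\vee)$ is strongly LdRi-commutative. Proposition \ref{prop dual multi strong comm}, applied in both directions (its converse holds by the same argument, via the equivalence (1)$\Leftrightarrow$(2) in Theorem \ref{thm combinatorial def}), then converts LdRi-triples back into RdLi-triples at the cost of swapping the two multisegments.

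Now assume $(\pi_1, \pi_2)$ is relevant, witnessed by $\mathfrak m, \mathfrak n$ satisfying $D^R_{\mathfrak m}(\nu^{1/2}\pi_1) \cong D^L_{\mathfrak n}(\pi_2)$ and $(\mathfrak m, \mathfrak n, \nu^{1/2}\pi_1)$ strongly RdLi-commutative. Dualizing the isomorphism via the formulas of the first paragraph (using $(\nu^{1/2}\pi_1)^\vee = \nu^{-1/2}\pi_1^\vee$) and multiplying through by $\nu^{1/2}$ via the shift identity $D^R_\Delta(\nu^c\pi) = \nu^c D^R_{\nu^{-c}\Delta}(\pi)$ (and its integral analogue), one arrives at
\[
D^R_{\nu^{1/2}\mathfrak n^\vee}(\nu^{1/2}\pi_2^\vee) \cong D^L_{\nu^{1/2}\mathfrak m^\vee}(\pi_1^\vee).
\]
Thus $(\mathfrak m', \mathfrak n') := (\nu^{1/2}\mathfrak n^\vee, \nu^{1/2}\mathfrak m^\vee)$ realizes the isomorphism required for the relevance of $(\pi_2^\vee, \pi_1^\vee)$; the strong RdLi-commutativity of the triple $(\mathfrak m', \mathfrak n', \nu^{1/2}\pi_2^\vee)$ follows by applying the second paragraph to the original triple. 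Theorem \ref{thm symmetric property of relevant} then upgrades this to the relevance of $(\pi_1^\vee, \pi_2^\vee)$. The reverse implication is the same argument run on $(\pi_1^\vee, \pi_2^\vee)$ together with the biduality $\pi^{\vee\vee} \cong \pi$.

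The main obstacle is the simultaneous bookkeeping of three operations --- the segment contragredient $\Delta \mapsto \Delta^\vee$, the left/right swap forced by $\theta$, and the asymmetric $\nu^{1/2}$ shift built into Definition \ref{def g relevant pair} --- in a way that returns output data in exactly the normalization the definition requires. Proposition \ref{prop dual multi strong comm} neutralizes the first two, the shift identity neutralizes the third, and Theorem \ref{thm symmetric property of relevant} collapses the leftover swap between the first and second arguments of the relevant pair.
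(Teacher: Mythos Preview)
Your proposal is correct and follows essentially the same route as the paper: dualize via $\theta$ (interchanging left and right derivatives/integrals and sending $\Delta$ to $\Delta^\vee$), invoke Proposition \ref{prop dual multi strong comm} to pass from the resulting LdRi-triple on $\nu^{-1/2}\pi_1^\vee$ to an RdLi-triple on $\pi_2^\vee$, and finish with Theorem \ref{thm symmetric property of relevant}. The only difference is cosmetic: the paper records the conclusion as $\nu^{1/2}\cdot D_{\mathfrak n^\vee}\circ I_{\mathfrak m^\vee}(\pi_2^\vee)\cong \pi_1^\vee$ without spelling out the shifted multisegments, whereas you make the $\nu^{1/2}$-bookkeeping explicit by writing $(\mathfrak m',\mathfrak n')=(\nu^{1/2}\mathfrak n^\vee,\nu^{1/2}\mathfrak m^\vee)$.
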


\begin{proof}
For a segment $\Delta=[a,b]_{\rho}$, let $\Delta^{\vee}=[-b,-a]_{\rho^{\vee}}$. For a multisegment $\mathfrak m=\left\{ \Delta_1, \ldots, \Delta_k \right\}$, let $\mathfrak m^{\vee}=\left\{ \Delta_1^{\vee}, \ldots, \Delta_k^{\vee}\right\}$. In general, we have 
\[ \eta_{\Delta}^L(\nu^{1/2}\pi_1) = \eta_{\Delta^{\vee}}^R(\nu^{-1/2}\pi_1^{\vee}) , \]
and $(I_{\Delta}^R(\nu^{1/2}\pi_1))^{\vee}=I_{\Delta^{\vee}}^L(\nu^{-1/2}\pi_1^{\vee})$. This implies that $(\mathfrak m^{\vee}, \mathfrak n^{\vee}, \nu^{-1/2}\cdot \pi_1^{\vee})$ is a strongly LdRi-commutative triple. Hence, $(\mathfrak n^{\vee}, \mathfrak m^{\vee}, \pi_2^{\vee})$ is a strongly RdLi-commutative triple by Proposition \ref{prop dual multi strong comm} and
\[  \nu^{1/2}\cdot D_{\mathfrak n^{\vee}}\circ I_{\mathfrak m^{\vee}}(\pi_2^{\vee}) \cong \pi_1^{\vee} .
\]
Thus $(\pi_2^{\vee}, \pi_1^{\vee})$ is relevant and so is $(\pi_1^{\vee}, \pi_2^{\vee})$ by Theorem \ref{thm symmetric property of relevant}.
\end{proof}
%In general, for $\pi \in \mathrm{Irr}$ and a segment $\Delta$ with $D_{\Delta}(\pi)\neq 0$,
%\[  \pi \hookrightarrow D_{\Delta}(\pi) \times \mathrm{St}(\Delta) .\]
%Then $\pi^{\vee} \hookrightarrow  \mathrm{St}(\Delta)^{\vee}\times D_{\Delta}(\pi)^{\vee}$. Thus, $D_{\Delta^{\vee}}^L(\pi^{\vee})=D_{\Delta}(\pi)^{\vee}$.

%$D_{\mathfrak n}^R(\pi)^{\vee})\cong D^L_{\mathfrak n^{\vee}}(\pi)$. This implies that $(\pi_1, \pi_2)$ is relevant if and only if $(\pi_2^{\vee}, \pi_1^{\vee})$ is relevant. Now the corollary follows from Theorem \ref{thm symmetric of relevant pairs}. 

\begin{corollary} \label{cor relevant under theta}
Let $\pi_1, \pi_2 \in \mathrm{Irr}$. Then $(\pi_1, \pi_2)$ is a relevant pair if and only if $(\theta(\pi_1), \theta(\pi_2))$ is a relevant pair.
\end{corollary}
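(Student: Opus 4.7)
The plan is to deduce this corollary as an immediate consequence of the preceding corollary on duals, together with the Gelfand--Kazhdan theorem. The latter asserts that for any irreducible smooth representation $\pi$ of $\mathrm{GL}_n(F)$, the involution $\theta(g)=g^{-t}$ satisfies $\theta(\pi)\cong \pi^{\vee}$. Granted this, the proof becomes formal: applying $\theta$ to $\pi_1$ and $\pi_2$ yields a pair isomorphic to $(\pi_1^{\vee},\pi_2^{\vee})$, and since relevance depends only on the isomorphism classes of the two representations, the preceding corollary yields the desired equivalence.

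Alternatively, one could aim for an intrinsic proof that does not invoke Gelfand--Kazhdan, working directly with strongly RdLi-commutative triples. The plan would then be to track how $\theta$ interacts with the combinatorial data: since $\theta$ interchanges left and right BZ derivatives (this is already built into the definition ${}^{(i)}\pi=\theta(\theta(\pi)^{(i)})$), it should similarly interchange $D^R_{\Delta}$ with $D^L_{\theta\Delta}$ and $I^L_{\Delta}$ with $I^R_{\theta\Delta}$ for a suitable segment $\theta\Delta$. A witnessing triple $(\mathfrak m,\mathfrak n,\nu^{1/2}\cdot \pi_1)$ for the relevance of $(\pi_1,\pi_2)$ would then transport to a strongly LdRi-commutative triple witnessing, via Proposition \ref{prop dual multi strong comm}, the relevance of $(\theta(\pi_2),\theta(\pi_1))$; Theorem \ref{thm symmetric property of relevant} then upgrades this to the relevance of $(\theta(\pi_1),\theta(\pi_2))$.

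The main obstacle in the intrinsic approach would be to identify $\theta\Delta$ correctly and to keep track of the $\nu^{1/2}$-shift and cuspidal support under $\theta$; the Gelfand--Kazhdan route bypasses all this bookkeeping. I would therefore adopt the first approach as the main proof: just one or two sentences recording $\theta(\pi_1)\cong \pi_1^{\vee}$ and $\theta(\pi_2)\cong \pi_2^{\vee}$, followed by an appeal to the preceding corollary.
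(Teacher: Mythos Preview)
Your primary approach is correct and matches the paper's own proof exactly: the paper simply records that, by Gelfand--Kazhdan, $\theta(\pi_1)\cong\pi_1^{\vee}$ and $\theta(\pi_2)\cong\pi_2^{\vee}$, and then invokes the preceding corollary on duals.
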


\begin{proof}
It follows from a result of Gelfand-Kazhdan that $\theta(\pi_1)\cong \pi_1^{\vee}$ and $\theta(\pi_2)\cong \pi_2^{\vee}$. 
\end{proof}

%We give a refinement of the result:

%\begin{corollary}
%Let $\pi_1, \pi_2 \in \mathrm{Irr}$. Suppose $(\pi_1, \pi_2)$ is $i^*$-relevant determined by strongly RdLi-commutative triple $(\mathfrak m, \mathfrak n, \pi_1)$.  Then $(\pi_2, \pi_1)$ is $\mathrm{lev}(I_{\mathfrak n}(\pi_1))-l_a(\mathfrak n)$-relevant.
%\end{corollary}

%\begin{proof}
%In the proof of Theorem \ref{thm symmetric property of relevant} (*), (**) and (***), we constructed the strongly commutative triple $(\mathfrak n', \mathfrak m', \pi_2)$. By using (*) and (**) in the proof, we see that $l_a(\mathfrak n)+l_a(\mathfrak n')=\mathrm{lev}(I_{\mathfrak n}(\pi_1))$ since $I_{\mathfrak n'}\circ I_{\mathfrak n}(\pi_1)$ is in $G_{n(\pi_1)+l_a(\mathfrak n)+l_a(\mathfrak n')}$ and $\pi \in G_{n(\pi)}$.

%\end{proof}

%%%%%%%%%%%%%
%\begin{example}
%Let $\pi_1= \mathrm{St}([0,3]+[-2,0])$ and let $\pi_2=\mathrm{St}([-2+\frac{1}{2}, \frac{1}{2}])$. Let $\mathfrak m= [\frac{1}{2}, 3+\frac{1}{2}]$ and let $\mathfrak n=\emptyset$. This determines the relevance of $(\pi_1, \pi_2)$. 

%The multisegments defining the relevance of $(\pi_2, \pi_1)$ are given by: $\mathfrak m'=\left\{ [-1]  \right\}$ and $\mathfrak n'=\left\{ [-2,0]+[2,3] \right\}$. 

%\end{example}

\part{Proof of sufficiency of generalized relevance} \label{part proof sufficiency}

For an overview of this part, see Section \ref{ss outline sufficient}. The first goal of this part is to compute a certain smallest derivative for achieving the relevance in Section \ref{s smallest derivative strongly}, for which we need tools in Sections \ref{s delta reduced repn} and \ref{s complete to reduced}. Section \ref{s characterize bz filtration for bl} explains connections of branching laws with the smallest derivatives. Section \ref{s construct branching via rs} studies a construction of branching law from Rankin-Selberg integrals. Section \ref{s smallest der control branching law} studies a BZ filtration and analyzes which layers could contribute a branching law by using the smallest derivatives. We prove the sufficiency of the relevance in Section \ref{s proof sufficiency}.

\section{ $\Delta$-reduced representations} \label{s delta reduced repn}

\subsection{$\Delta$-reduced representation}

\begin{lemma} \label{lem produce more strong rdli commutative triples} (c.f. \cite[Section 8.3]{Ch22+d})
Let $\pi \in \mathrm{Irr}$. Let $\Delta$ be a segment. Let $\mathfrak p=\mathfrak{mx}(\pi, \Delta)$. Suppose $(\Delta, \mathfrak n, \pi)$ is a strongly RdLi-commutative triple. Then,
\begin{enumerate}
\item $(\mathfrak p, \mathfrak n, \pi)$ is also a strongly RdLi-commutative triple.
\item $D_{\mathfrak p}\circ I_{\mathfrak n}(\pi)$ is R-$\Delta$-reduced (see Section \ref{ss standard trick intro}).
\end{enumerate}
\end{lemma}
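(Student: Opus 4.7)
The plan is to deduce (2) quickly from the combinatorial equivalent of the strong commutativity of $(\Delta,\mathfrak n,\pi)$, and to establish (1) by an induction on the segments of $\mathfrak p$ that reduces the multi-step condition of Definition \ref{def strong commutation sequences} to a single identity of $\eta$-invariants, which is in turn forced by the maximality characterization of $\mathfrak{mx}(\pi,\Delta)$.

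For (2), I would first invoke Theorem \ref{thm combinatorial def} together with Lemma \ref{lem strong commutative combin} to read the hypothesis as $\eta_\Delta(I_\mathfrak n(\pi)) = \eta_\Delta(\pi)$. Compared with the monotonicity of Lemma \ref{lem right multi submulti}, this equality forces $\mathfrak{mx}(I_\mathfrak n(\pi),\Delta) = \mathfrak p$, and then the characterization of $\mathfrak{mx}$ recalled after Definition \ref{def max multisegment} (cf.\ \cite[Corollary 7.23]{Ch22+}) directly yields that $D_{\mathfrak p}(I_\mathfrak n(\pi))$ is R-$\Delta$-reduced. Once (1) is in hand, Proposition \ref{prop strong commute imply commute} supplies the commutation $D_\mathfrak p \circ I_\mathfrak n(\pi) \cong I_\mathfrak n \circ D_\mathfrak p(\pi)$, transporting the same conclusion to $I_\mathfrak n \circ D_\mathfrak p(\pi)$.

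For (1), since every segment of $\mathfrak p$ shares the common right endpoint $\nu^b\rho$, any enumeration $\mathfrak p = \{\Delta_1,\ldots,\Delta_r\}$ is simultaneously ascending and descending, and the sequence condition may be checked in any order (Propositions \ref{prop unlinked commute integral}, \ref{prop unlinked commute derivative} handle the ambiguity). Writing $\mathfrak p_i = \{\Delta_1,\ldots,\Delta_i\}$ and fixing an ascending order $\mathfrak n = \{\Delta_1',\ldots,\Delta_s'\}$ with $\mathfrak n_j = \{\Delta_1',\ldots,\Delta_j'\}$, I plan to induct on $i$ that $(\mathfrak p_i,\mathfrak n,\pi)$ is strongly RdLi-commutative. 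The inductive hypothesis combined with Proposition \ref{prop strong commute imply commute} yields the commutations $D_{\mathfrak p_{i-1}}\circ I_{\mathfrak n_j}(\pi) \cong I_{\mathfrak n_j}\circ D_{\mathfrak p_{i-1}}(\pi)$ for every $j$. By Theorem \ref{thm combinatorial def}, the extra strong commutations $(\Delta_i,\Delta_j',I_{\mathfrak n_{j-1}}\circ D_{\mathfrak p_{i-1}}(\pi))$ that must be verified unwind to the equalities $\eta_{\Delta_i}(I_{\mathfrak n_j}\circ D_{\mathfrak p_{i-1}}(\pi)) = \eta_{\Delta_i}(I_{\mathfrak n_{j-1}}\circ D_{\mathfrak p_{i-1}}(\pi))$ for each $j$. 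After commuting, Lemma \ref{lem right multi submulti} makes $j\mapsto \eta_{\Delta_i}(D_{\mathfrak p_{i-1}}\circ I_{\mathfrak n_j}(\pi))$ a monotone non-decreasing sequence, so all of these equalities collapse to the single identity
\[
\eta_{\Delta_i}\bigl(D_{\mathfrak p_{i-1}}(\pi)\bigr) \;=\; \eta_{\Delta_i}\bigl(D_{\mathfrak p_{i-1}}\circ I_{\mathfrak n}(\pi)\bigr).
\]

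The heart of the proof, and the step I expect to require most care, is this final identity. The strategy is to identify both sides via a common maximal multisegment. Using Lemma \ref{lem commute and minimal}, and the fact that $\mathfrak p$ is Rd-minimal to both $\pi$ and $I_\mathfrak n(\pi)$ by the argument used in (2), I would factor $D_\mathfrak p(\pi) \cong D_{\mathfrak p - \mathfrak p_{i-1}}\circ D_{\mathfrak p_{i-1}}(\pi)$ and likewise for $I_\mathfrak n(\pi)$; the maximality characterization after Definition \ref{def max multisegment} -- that any $\Delta$-saturated multisegment with non-vanishing derivative embeds into $\mathfrak{mx}(\cdot,\Delta)$ -- then pins down
\[
\mathfrak{mx}\bigl(D_{\mathfrak p_{i-1}}(\pi),\Delta\bigr) \;=\; \mathfrak p - \mathfrak p_{i-1} \;=\; \mathfrak{mx}\bigl(D_{\mathfrak p_{i-1}}\circ I_\mathfrak n(\pi),\Delta\bigr).
\]
Because $\Delta_i$ is itself $\Delta$-saturated, every coordinate of $\eta_{\Delta_i}$ is the multiplicity $\varepsilon_{[c,b]_\rho}$ of a $\Delta$-saturated segment, and so is read off from the common multisegment $\mathfrak p - \mathfrak p_{i-1}$; the identity drops out. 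The main obstacle is precisely the clean simultaneous identification of these two $\mathfrak{mx}$'s: the bookkeeping blends Rd-minimality of submultisegments (Lemma \ref{lem commute and minimal}) with the maximality characterization of $\mathfrak{mx}$, and once it is executed both (1) and (2) follow.
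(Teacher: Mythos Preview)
Your proposal is correct and follows essentially the same approach as the paper's proof: both reduce to the combinatorial criterion via Theorem \ref{thm combinatorial def} / Lemma \ref{lem strong commutative combin}, and both induct through the segments of $\mathfrak p$ using the key fact that applying $D_{\Delta'}$ for a $\Delta$-saturated $\Delta'$ changes the $\eta_\Delta$-invariant (equivalently $\mathfrak{mx}(\cdot,\Delta)$) in a completely predictable way, so that the equality $\eta_\Delta(\pi)=\eta_\Delta(I_{\mathfrak n}(\pi))$ propagates to all intermediate stages. The only cosmetic differences are that the paper phrases the invariant-tracking at the level of the individual coordinates $\varepsilon_{\Delta'}$ (``reduce $\varepsilon_\Delta$ by $1$, leave the rest unchanged'') while you package it as $\mathfrak{mx}(D_{\mathfrak p_{i-1}}(\pi),\Delta)=\mathfrak p-\mathfrak p_{i-1}$, and that your final commutation step in (2) is not actually needed (you have already shown $D_{\mathfrak p}(I_{\mathfrak n}(\pi))$ is R-$\Delta$-reduced before invoking (1)).
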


\begin{proof}
By the strong commutativity of $(\Delta, \mathfrak n, \pi)$ and Lemma \ref{lem strong commutative combin}, 
\[   \eta_{\Delta}(\pi)=\eta_{\Delta}(I_{\mathfrak n}(\pi)) .
\]
Hence, by Theorem \ref{thm change in highest derivative after d},
\[  \eta_{\Delta}(D_{\mathfrak p}\circ I_{\mathfrak n}(\pi))=\eta_{\Delta}(D_{\mathfrak p}(\pi))=0 .
\]
This shows (2).

Let $\Delta'$ be a $\Delta$-saturated segment. Since both $\eta_{\Delta'}(D_{\Delta}(\pi))$ and $\eta_{\Delta'}(D_{\Delta}\circ I_{\mathfrak n}(\pi))$ are obtained by reducing the factor $\varepsilon_{\Delta}$ by $1$ and all other $\varepsilon_{\Delta'}$ unchanged, we still have that:
\[   \eta_{\Delta'}(D_{\Delta}(\pi)) = \eta_{\Delta'}(D_{\Delta}\circ I_{\mathfrak n}(\pi)) .
\]
Now by Proposition \ref{prop strong commute imply commute}, we have:
\[  \eta_{\Delta'}(D_{\Delta}(\pi)) = \eta_{\Delta'}(I_{\mathfrak n}\circ D_{\Delta}(\pi)) ,
\]
and so we have the strong commutativity for $(\Delta', \mathfrak n, D_{\Delta}(\pi))$ by Lemma \ref{lem strong commutative combin}. Thus, we inductively have that $(\mathfrak p-\Delta, \mathfrak n, D_{\Delta}(\pi))$ is still strongly RdLi-commutative triple. Thus, this combines to have $(\mathfrak p, \mathfrak n, \pi)$ to be a strongly RdLi-commutative triple by definitions. This proves (1).
\end{proof}

\subsection{Reduced triples} \label{ss reduced and saturated parts}

For segments $\Delta_1, \Delta_2$, we write $\Delta_1 \prec^R \Delta_2$ if either $b(\Delta_1)<b(\Delta_2)$; or $b(\Delta_1)\cong b(\Delta_2)$ and $a(\Delta_2) <a(\Delta_1)$. We write $\Delta_1\preceq^R \Delta_2$ if $\Delta_1\prec^R\Delta_2$ or $\Delta_1=\Delta_2$. In particular, if $\Delta$ is a $\preceq^R$-maximal segment in $\mathfrak m \in \mathrm{Mult}$, then $b(\Delta)$ is $\leq$-maximal in $\left\{ b(\widetilde{\Delta}): \widetilde{\Delta} \in \mathfrak m \right\}$ and $\Delta$ is the longest segment in $\mathfrak m_{b=b(\Delta)}$. 

\begin{definition}
Let $\mathfrak m \in \mathrm{Mult}$. Let $\Delta$ be a segment. Let $\Delta$ be a $\preceq^R$-maximal segment in $\mathfrak m$. We say that $(\mathfrak m, \Delta, \pi)$ is a {\it R-reduced triple} if $\eta_{\Delta}(D_{\mathfrak m}(\pi))=0$ and $\mathfrak m$ is minimal to $\pi$. 
\end{definition}

We first prove a slightly technical lemma.

\begin{lemma} \label{lem delta reduced subset}
Suppose $(\mathfrak m, \Delta, \pi)$ is a R-reduced triple. Let $\mathfrak p=\mathfrak{mx}(\pi, \Delta)$. Then $\mathfrak p \subset \mathfrak m$. 
\end{lemma}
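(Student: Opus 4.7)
The plan is to split $\mathfrak m = \mathfrak m_1 + \mathfrak m_2$, where $\mathfrak m_1$ collects the segments of $\mathfrak m$ whose $b$-point equals $b(\Delta)$, and then to identify $\mathfrak m_1$ with $\mathfrak p$ by pinning down $|\mathfrak m_1|$ from both sides. The $\preceq^R$-maximality of $\Delta$ in $\mathfrak m$ will force every segment of $\mathfrak m_1$ to take the form $[a', b]_{\rho}$ with $a \le a' \le b$, so $\mathfrak m_1$ is itself $R$-$\Delta$-saturated, while every $\widetilde{\Delta} \in \mathfrak m_2$ satisfies $b(\widetilde{\Delta}) < b(\Delta)$. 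Since $\mathfrak m$ is Rd-minimal to $\pi$, Corollary \ref{cor minimal derivative in any order} will let us commute derivatives freely, so that $D_{\mathfrak m}(\pi) \cong D_{\mathfrak m_1} D_{\mathfrak m_2}(\pi) \cong D_{\mathfrak m_2} D_{\mathfrak m_1}(\pi)$.

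For the easy inclusion $\mathfrak m_1 \subset \mathfrak p$, I would apply $D_{\mathfrak m_1}$ first. Non-vanishing of $D_{\mathfrak m}(\pi)$ forces $D_{\mathfrak m_1}(\pi) \neq 0$, so the $R$-$\Delta$-saturated multisegment $\mathfrak m_1$ must be a submultisegment of $\mathfrak p = \mathfrak{mx}(\pi, \Delta)$ by the defining property of $\mathfrak{mx}$ recalled right after Definition \ref{def max multisegment}; in particular $|\mathfrak m_1| \le |\mathfrak p|$.

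For the reverse inequality $|\mathfrak m_1| \ge |\mathfrak p|$, I would instead apply $D_{\mathfrak m_2}$ first. Since $D_{\mathfrak m_1} D_{\mathfrak m_2}(\pi) = D_{\mathfrak m}(\pi)$ is R-$\Delta$-reduced and $\mathfrak m_1$ is $R$-$\Delta$-saturated, $\mathfrak m_1$ must coincide with $\mathfrak{mx}(D_{\mathfrak m_2}(\pi), \Delta)$: any strict $R$-$\Delta$-saturated enlargement would, using the trivial Rd-minimality of $\mathfrak{mx}(D_{\mathfrak m_2}(\pi), \Delta)$ (no non-trivial intersection-union is available on $R$-$\Delta$-saturated multisegments) together with Lemma \ref{lem commute and minimal}, produce a non-zero $\Delta$-saturated derivative of the already R-$\Delta$-reduced module $D_{\mathfrak m}(\pi)$, a contradiction. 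Hence $|\mathfrak m_1| = |\eta|_{\Delta}(D_{\mathfrak m_2}(\pi))$. Next, for each $\widetilde{\Delta} = [c,d]_{\rho} \in \mathfrak m_2$ the relation $d < b$ places the pair $(\Delta, \widetilde{\Delta})$ into case (3) of Proposition \ref{prop eta function derivative} when $c \ge a$ and into case (4) when $c < a$; case (3) preserves $|\eta|_{\Delta}$, while case (4) makes $\eta_{\Delta}$ non-decreasing componentwise. Iterating along any ordering of $\mathfrak m_2$ (legitimate by Corollary \ref{cor minimal derivative in any order} applied to the submultisegment $\mathfrak m_2$, which is still minimal by Lemma \ref{lem commute and minimal}) then yields $|\eta|_{\Delta}(D_{\mathfrak m_2}(\pi)) \ge |\eta|_{\Delta}(\pi) = |\mathfrak p|$.

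Combining both inequalities with $\mathfrak m_1 \subset \mathfrak p$ forces $\mathfrak m_1 = \mathfrak p$, and hence $\mathfrak p \subset \mathfrak m$. The main obstacle is that one cannot reason componentwise: by Proposition \ref{prop removal sequence for intermediate case}(2), a case (3) derivative in $\mathfrak m_2$ can strictly decrease some entries of $\eta_{\Delta}$ while preserving its total mass, so the naive monotonicity $\mathfrak{mx}(\pi, \Delta) \subset \mathfrak{mx}(D_{\mathfrak m_2}(\pi), \Delta)$ can genuinely fail. The rescue is to descend to the coarser invariant $|\eta|_{\Delta}$ and combine it with R-$\Delta$-reducedness at the end, which together restore enough cardinality information to pin down $\mathfrak m_1$.
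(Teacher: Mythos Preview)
Your argument is correct and takes a genuinely different route from the paper. The paper proceeds by induction on the number of segments in $\mathfrak m$: it peels off the ascending-first segment $\Delta_1$ and, in the non-$\Delta$-saturated case, proves the finer equality $\eta_{\Delta}(D_{\Delta_1}(\pi))=\eta_{\Delta}(\pi)$ via a four-case analysis that calls on Lemma~\ref{lem minimal eta criterai} and Proposition~\ref{prop removal sequence for intermediate case} (the latter to rule out a subtle subcase by contradiction with minimality). Your approach instead splits $\mathfrak m=\mathfrak m_1+\mathfrak m_2$ by $b$-point once and for all, and replaces the componentwise control of $\eta_{\Delta}$ by the coarser invariant $|\eta|_{\Delta}$, using only parts (3) and (4) of Proposition~\ref{prop eta function derivative}; the lost information is recovered at the end by the R-$\Delta$-reducedness of $D_{\mathfrak m}(\pi)$, which pins down $\mathfrak m_1=\mathfrak{mx}(D_{\mathfrak m_2}(\pi),\Delta)$ exactly. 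This is cleaner for the lemma at hand and avoids both Lemma~\ref{lem minimal eta criterai} and Proposition~\ref{prop removal sequence for intermediate case}; what the paper's inductive argument buys is the stronger intermediate statement $\eta_{\Delta}(D_{\Delta_1}(\pi))=\eta_{\Delta}(\pi)$, which may be of independent use. One small point: your claim that every $\widetilde{\Delta}\in\mathfrak m_2$ satisfies $b(\widetilde{\Delta})<b(\Delta)$ holds only for segments in the same cuspidal line as $\Delta$; segments on other lines are incomparable under $\preceq^R$, but they leave $\eta_{\Delta}$ unchanged and so do not affect the count.
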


\begin{proof}
 We write $\mathfrak m$ in an ascending sequence: $\Delta_1, \ldots, \Delta_k$. When there is only one segment in $\mathfrak m$, $\mathfrak p=\mathfrak m=\left\{ \Delta \right\}$ from $R$-reducedness of $(\mathfrak m, \Delta, \pi)$. Then we are done. We now proceed on the number of segments in $\mathfrak m$.

If $\Delta_1$ is $\Delta$-saturated, then $\mathfrak{mx}(D_{\Delta_1}(\pi), \Delta)=\mathfrak{mx}(\pi, \Delta)-\Delta_1=\mathfrak p-\Delta_1$. (This can be proved by either using the removal sequence or some direct computations of the geometric lemma.) Then, by induction on the number of segments in $\mathfrak m$, we have 
\[  \mathfrak p-\Delta_1 \subset \mathfrak m-\Delta_1 
\]
as desired.

Suppose $\Delta_1$ is not $\Delta$-saturated. Thus, we have $\eta_{\Delta}(D_{\Delta_1}(\pi))=\eta_{\Delta}(\pi)$ by considering the following cases:
\begin{enumerate}
\item If $b(\Delta_1)$ is not a $\nu$-integral shift of $b(\Delta)$, this case is simple.
\item If $b(\Delta_1)<b(\Delta)$ and $a(\Delta_1)<a(\Delta)$, then, by the subsequent property (Lemma \ref{lem commute and minimal}), $\Delta_1+\Delta$ is minimal to $\pi$. Then the equality follows by using minimality and Lemma \ref{lem minimal eta criterai}.
\item If $b(\Delta_1)\cong b(\Delta)$, then one uses Proposition \ref{prop eta function derivative}(1).
\item If $b(\Delta_1)<b(\Delta)$ and $a(\Delta) \leq a(\Delta_1)$, then one further considers two cases: let $\widetilde{\Delta}$ be the first segment in the removal sequence for $(\Delta_1, \mathfrak{hd}(\pi))$.
\begin{itemize}
\item If $b(\widetilde{\Delta})< b(\Delta)$, then the equality follows from Proposition \ref{prop removal sequence for intermediate case}(1);
\item If $b(\widetilde{\Delta}) \geq b(\Delta)$, then Proposition \ref{prop removal sequence for intermediate case}(2) gives a certain segment $\overline{\Delta}$ satisfying the properties in that proposition. Then, by using $\overline{\Delta}$-reducedness (from $\Delta$-reducedness) for $D_{\mathfrak m}(\pi)$ and minimality of $\mathfrak m-\Delta_1$ to $D_{\Delta_1}(\pi)$, induction gives that $\mathfrak{mx}(D_{\Delta_1}(\pi), \overline{\Delta}) \subset \mathfrak m-\Delta_1$. In particular, $\overline{\Delta} \in \mathfrak m$ (by Proposition \ref{prop removal sequence for intermediate case}(2)). Then, $\Delta_1+\overline{\Delta}$ is minimal to $\pi$ by the subsequent property (Lemma \ref{lem commute and minimal}) and the inequality in Proposition \ref{prop removal sequence for intermediate case}(2) then contradicts to Lemma \ref{lem minimal eta criterai}. (In other words, this case could not happen.)
\end{itemize}
\end{enumerate}
 This implies $\mathfrak{mx}(D_{\Delta_1}(\pi), \Delta)=\mathfrak p$. Then, by induction on the number of segments in $\mathfrak m$, we have 
\[ \mathfrak p \subset \mathfrak m-\Delta_1 \subset \mathfrak m.
\]

\end{proof}

\subsection{More minimality and commutativity results}

We shall now show more minimality and commutativity results, and some are useful when we prove the necessity part and the exhaustion part in Part \ref{part necessity part}.

\begin{lemma} \label{lem commute max rho}
Let $\pi \in \mathrm{Irr}$. Let $\rho$ be a $\leq$-maximal element in $\mathrm{csupp}(\pi)$ and let $\mathfrak p=\mathfrak{mxpt}(\pi, \rho)$. Let $\Delta$ be a segment such that $\Delta+\mathfrak p$ is admissible to $\pi$. Then $\Delta+\mathfrak p$ is minimal to $\pi$ and in particular, 
\[   D_{\Delta+\mathfrak p}(\pi) \cong D_{\mathfrak p}\circ D_{\Delta}(\pi) \cong D_{\Delta}\circ D_{\mathfrak p}(\pi) .
\]
\end{lemma}

\begin{proof}
We find a minimal multisegment $\mathfrak n$ such that $D_{\mathfrak n}(\pi) \cong D_{\mathfrak p+\Delta}(\pi)$. Suppose $\mathfrak n$ is not minimal. Then $\mathfrak n$ is obtained from $\mathfrak p+\Delta$ by an intersection-union process between a segment in $\mathfrak p$ and $\Delta$. However, by Lemma \ref{lem commute and minimal}, 
\[  D_{\mathfrak n_{b=\rho}}(\pi) \neq 0  \]
and so this contradicts to the maximality of $\mathfrak{mxpt}(\pi, \rho)$.

For the second assertion, the first isomorphism follows from the asecending ordering for $\Delta+\mathfrak p$, and the second isomorphism follows from Lemma \ref{lem commute and minimal}.
\end{proof}

\begin{lemma} \label{lem commute max rho 2}
Let $\pi \in \mathrm{Irr}$. Let $\rho$ be a $\leq$-maximal element in $\mathrm{csupp}(\pi)$ and let $\mathfrak p=\mathfrak{mxpt}(\pi, \rho)$. Let $\Delta$ be a segment such that $\Delta+\mathfrak p$ is admissible to $\pi$. Then 
\[   \mathfrak{mxpt}(D_{\Delta}(\pi), \rho) =\mathfrak p.
\]
\end{lemma}

\begin{proof}
Since $\Delta+\mathfrak p$ is admissible to $\pi$, we must have that $b(\Delta)\not\geq \rho$. Now $D_{\mathfrak p}(D_{\Delta}(\pi))\neq 0$ implies that $\mathfrak p\subset \mathfrak{mxpt}(D_{\Delta}(\pi), \rho)$. By \cite[Theorem 9.3]{Ch22+}, $\mathfrak{mxpt}(D_{\Delta}(\pi), \rho)=\mathfrak r(\Delta, \pi)_{b=\rho}$.  Now it follows from the removal process for $\mathfrak r(\Delta, \pi)$, the only possibility is that $\mathfrak{mxpt}(D_{\Delta}(\pi), \rho) =\mathfrak p$.
\end{proof}

\begin{corollary} \label{cor commute max rho pre} (c.f. \cite[Proposition 15.2]{Ch22+bb})
Let $\pi \in \mathrm{Irr}$. Let $\rho$ be a $\leq$-maximal element in $\mathrm{csupp}(\pi)$ and let $\mathfrak p=\mathfrak{mxpt}(\pi, \rho)$. Let $\mathfrak m$ be a multisegment such that $\mathfrak m+\mathfrak p$ is admissible to $\pi$. Then 
\[ D_{\mathfrak p+\mathfrak m}(\pi)\cong D_{\mathfrak m}\circ D_{\mathfrak p}(\pi) \neq 0. \]
Futhermore, if $\mathfrak m$ is minimal to $D_{\mathfrak p}(\pi)$, then $\mathfrak m$ is minimal to $\pi$.
\end{corollary}

\begin{proof}
Let $\mathfrak m=\left\{ \Delta_1, \ldots , \Delta_r\right\}$ be in an ascending order. Note that if $\mathfrak m+\mathfrak p$ is admissible to $\pi$, then $\left\{ \Delta_1, \ldots, \Delta_i\right\}+\mathfrak p$ is admissible to $\pi$ for any $i$. 

Now, by repeatedly applying Lemmas \ref{lem commute max rho} and \ref{lem commute max rho 2}, we have the first assertion. For the second assertion, suppose $\mathfrak n$ is minimal to $\pi$ such that $D_{\mathfrak n}(\pi)\cong D_{\mathfrak m}(\pi)$. Then, the same reasoning using Lemmas \ref{lem commute max rho} and \ref{lem commute max rho 2}, one has 
\[   D_{\mathfrak p}\circ D_{\mathfrak n}(\pi) \cong D_{\mathfrak n}\circ D_{\mathfrak p}(\pi) \neq 0 .
  \]
Hence, combining equations, we have
\[  D_{\mathfrak n}\circ D_{\mathfrak p}(\pi) \cong D_{\mathfrak m}\circ D_{\mathfrak p}(\pi) .
\]
Since $\mathfrak n$ is obtained from $\mathfrak m$ by a sequence of intersection-union processes, the minimality of $\mathfrak m$ to $D_{\mathfrak p}(\pi)$ and the uniquenss in Theorem \ref{thm minimality of d and i} imply that $\mathfrak n=\mathfrak m$ as desired.
\end{proof}

\subsection{Consequences on commutative triples}

\begin{lemma} \label{lem induced strongly commutate}
Let $\pi \in \mathrm{Irr}$. Let $\rho$ be a $\leq$-maximal element in $\mathrm{csupp}(\pi)$ and let $\mathfrak p=\mathfrak{mxpt}(\pi, \rho)$. Let $(\Delta, \Delta', D_{\mathfrak p}(\pi))$ be a minimal strongly RdLi-commutative triple such that for any $\rho' \in \mathrm{csupp}(\Delta')$, $\rho'\not\geq \rho$. Suppose $\Delta+\mathfrak p$ is admissible to $\pi$. Then $(\Delta+\mathfrak p, \Delta', \pi)$ is a minimal strongly RdLi-commutative triple.
\end{lemma}

\begin{proof}
Let $\tau=D_{\Delta+\mathfrak p} \circ I_{\Delta'}(\pi)$. From Example  \ref{example pre commutative}  and Lemma \ref{lem strong commutative combin}, 
\[ \mathfrak{mxpt}(I_{\Delta'}(\pi), \rho)=\mathfrak p . \]
We also have that $\Delta+\mathfrak p$ is admissible to $\pi$ and so is admissible to $I_{\Delta'}(\pi)$. Thus, we also have:
\begin{align} \label{eqn commute derivatives}
\tau \cong D_{\Delta+\mathfrak p}\circ I_{\Delta'}(\pi) \cong D_{\mathfrak p}\circ D_{\Delta}\circ I_{\Delta'}(\pi) \cong D_{\Delta}\circ D_{\mathfrak p}\circ I_{\Delta'}(\pi) . 
\end{align}

The strong commutativity condition implies that:
\[   \eta^L_{\Delta'}(I^R_{\Delta}(\tau))=\eta^L_{\Delta'}(\tau) .\]
On the other hand, $(\mathfrak p, \Delta', I^R_{\Delta}(\tau))$ is LdRi-commutative triple by the LdRi-version of Example \ref{example pre commutative} and Theorem \ref{thm pre implies strong}. Hence, by Theorem \ref{thm combinatorial def},
\[  \eta^L_{\Delta'}(I^R_{\mathfrak p}\circ I^R_{\Delta}(\tau))=\eta^L_{\Delta'}(I^R_{\Delta}(\tau)) .
\]
Similarly, 
\[  \eta^L_{\Delta'}(I^R_{\mathfrak p}(\tau))=\eta^L_{\Delta'}(\tau) .
\]
Thus, we have $\eta^L_{\Delta'}(I^R_{\mathfrak p}(\tau))=\eta^L_{\Delta'}(I_{\mathfrak p}^R\circ I_{\Delta}^R(\tau))$. 

Combining with (\ref{eqn commute derivatives}), we have:
\[  \eta^L_{\Delta'}(D_{\Delta}\circ I_{\Delta'}(\pi))=\eta^L_{\Delta'}(I_{\Delta'}(\pi)) .\]
By Theorem \ref{thm combinatorial def}, we have that $(\Delta, \Delta', \pi)$ is a RdLi-commutative triple. To show $(\Delta+\mathfrak p, \Delta', \pi)$ is a RdLi-commutative triple, one again applies Example \ref{example pre commutative} and Theorem \ref{thm pre implies strong} several times. This completes the proof.
\end{proof}

\begin{corollary} \label{cor commute max rho}
Let $\pi \in \mathrm{Irr}$. Let $\rho$ be a $\leq$-maximal element in $\mathrm{csupp}(\pi)$ and let $\mathfrak p=\mathfrak{mxpt}(\pi, \rho)$. Let $(\mathfrak m, \mathfrak n, D_{\mathfrak p}(\pi))$ be a minimal strongly RdLi-commutative triple such that for any $\rho' \in \mathrm{csupp}(\mathfrak n)$, $\rho'\not\geq \rho$. Suppose $\mathfrak m+\mathfrak p$ is admissible to $\pi$. Then $(\mathfrak m+\mathfrak p, \mathfrak n, \pi)$ is also a minimal strongly RdLi-commutative triple. 
\end{corollary}

\begin{proof}
This follows from a multiple use of Proposition \ref{cor strong commute linked commute case} and Corollary \ref{cor commute max rho pre}. In more details, write
\[  \mathfrak m=\left\{ \Delta_1, \ldots, \Delta_r\right\}
\]
in an ascending order. Let $\mathfrak m_i=\left\{ \Delta_1, \ldots, \Delta_{i-1}\right\}$. By Corollary \ref{cor commute max rho pre}, we have $\mathfrak m_{i}+\Delta_i+\mathfrak p$ is admissible to $\pi$, and so by Corollary \ref{cor commute max rho pre}, $\mathfrak m_i+\Delta_i$ is also minimal to $\pi$.

On the other hand, we also have 
\begin{align} \label{eqn commute multisegments} 
D_{\mathfrak p}\circ D_{\mathfrak m_i}(\pi) \cong D_{\mathfrak m_i}\circ D_{\mathfrak p}(\pi). 
\end{align} 

Now, we have the following:
\begin{enumerate}
\item $(\mathfrak p, \mathfrak n, D_{\mathfrak m_i}(\pi))$ is a strongly RdLi-commutative triple by Example \ref{example pre commutative}(2);
\item $(\Delta_i, \mathfrak n, D_{\mathfrak p}\circ D_{\mathfrak m_i}(\pi))$ is a strongly RdLi-commutative triple by given assumption and (\ref{eqn commute multisegments});
\item $\Delta_i+\mathfrak p$ is minimal to $D_{\mathfrak m_i}(\pi)$ by above discussion.
\end{enumerate}

In other words, $(\mathfrak m_i+\mathfrak p, \mathfrak n, \pi)$ is a strongly RdLi-commutative triple. Now, by Corollaries \ref{cor seq strong commut under minimal and sub} and \ref{cor seq strong commut under minimal}, $(\Delta_i, \mathfrak n, D_{\mathfrak m_i}(\pi))$ is a strongly RdLi-commutative triple. 
\end{proof}

%%%%%%%%
%\subsection{Commutativity in a special case}

%We give a quicker reasoning on a special case of Lemma \ref{lem commute and minimal} that we used in . We now consider $(\mathfrak m, \Delta, \pi)$ to be right $\Delta$-reduced and $b(\Delta)$ is a maximal element in $\left\{ b(\Delta'): \Delta' \in \mathfrak m \right\}$. 

%One may first commute all the segments $\Delta'$ in $\mathfrak m-\mathfrak p$ with $b(\Delta') \cong b(\Delta)$ and so we may assume that $\mathfrak m-\mathfrak p$ does not contain any segment with $b(\Delta')\cong b(\Delta)$. Then we see that 
%\[ \eta_{\Delta}(D_{\mathfrak m-\mathfrak p}(\pi))=\eta_{\Delta}(\pi) .\]
%Thus, if we pick a maximal segment $\Delta'$ in $\mathfrak m-\mathfrak p$, then we have that $D_{\Delta'}\circ D_{\mathfrak m-\mathfrak p-\Delta'}(\pi) \cong D_{\mathfrak m-\mathfrak p}(\pi)$. Set $\tau= D_{\mathfrak m-\mathfrak p-\Delta'}(\pi)$. In such case, we must have that:
%\[  D_{\mathfrak p}\circ D_{\Delta'}(\tau) \cong D_{\Delta'}\circ D_{\mathfrak p}(\tau) .\]
%Otherwise, using Lemma \ref{lem commute and intersect union}, we will obtain a derivative which is not $\Delta$-reduced, giving a contradiction.

\section{Completing to $\Delta$-reduced triples} \label{s complete to reduced}

Recall that the notion of reducedness and saturatedness is defined in Section \ref{ss standard trick intro}. For $\pi \in \mathrm{Irr}$ and a segment $\Delta$, let $\mathfrak p=\mathfrak{mx}(\pi, \Delta)$ and one has an embedding
\[  \pi  \hookrightarrow D_{\mathfrak p}(\pi)\times \mathrm{St}(\mathfrak p) 
\]
(see e.g. \cite[Section 4]{Ch22+}, \cite[Section 7]{LM22}). As also seen in \cite{Ch22+b}, such the decomposition is particularly useful in some reduction process. We first explain how to do such reduction in our context of strong commutativity.

The main idea is that if one starts with a strongly commutative triple $(\mathfrak m, \mathfrak n, \pi)$, one can then easily produce another strongly commutative triple by adding a segment on $\mathfrak m$ (with $\Delta_1, \ldots, \Delta_k$ in an ascending ordering) so that the ordering $\Delta_1, \ldots, \Delta_k, \Delta$ is still ascending, and $(\Delta, \mathfrak n, D_{\mathfrak m}(\pi))$ is still a strongly commutative triple. For this, one may use Lemma \ref{lem produce more strong rdli commutative triples}. Then one uses the intersection-union processes to produce a strongly commutative triple, and this is addressed in Corollary \ref{cor reduced one strong commutation}.

\subsection{Reduced multisegments}

\begin{definition}
Let $\mathfrak m \in \mathrm{Mult}$ and let $\pi \in \mathrm{Irr}$ with $D_{\mathfrak m}(\pi)\neq 0$. We say that $\mathfrak n$ is the {\it minimized multisegment} for $(\mathfrak m, \pi)$ if $\mathfrak n$ is minimal to $\pi$ and $D_{\mathfrak n}(\pi)\cong D_{\mathfrak m}(\pi)$. (Again, the uniqueness of minimality is shown in \cite[Theorem 1.2]{Ch22+aa}.)
\end{definition}

\begin{definition}
Let $\mathfrak m \in \mathrm{Mult}$. Let $\pi \in \mathrm{Irr}$ and let $\Delta$ be a $\preceq^R$-maximal segment in $\mathfrak m$. We say that a $\Delta$-saturated multisegment $\mathfrak r$ is {\it completing $(\mathfrak m, \Delta, \pi)$} if $D_{\mathfrak m+\mathfrak r}(\pi)$ is $\Delta$-reduced. 

We say that $\mathfrak q$ is the {\it reduced multisegment for $(\mathfrak m, \Delta, \pi)$} if $\mathfrak q+\mathfrak{mx}(\pi, \Delta)$ is the minimized multisegment for $(\mathfrak m+\mathfrak r, \pi)$. (Note that the definition is well-defined by Lemma \ref{lem delta reduced subset}.)
\end{definition}

\begin{example}
Let $\pi=\mathrm{St}([0,4]+[1,3])$. Let $\mathfrak m=[0,2]+[1,3]$. The minimized multisegment $\mathfrak n$ for $(\mathfrak m, \pi)$ is $[0,3]+[1,2]$. Let $\Delta=[0,3]$. Then $[3]$ is completing $(\mathfrak m, \Delta, \pi)$. The reduced multisegment for $(\mathfrak m, \Delta, \pi)$ is $\emptyset$. 
\end{example}

\begin{corollary} \label{cor reduced one strong commutation}
Let $(\mathfrak m, \mathfrak n, \pi)$ be a strongly RdLi-commutative triple. Let $\Delta$ be a $\preceq^R$-maximal segment in $\mathfrak m$. Let $\mathfrak p=\mathfrak{mx}(\pi, \Delta)$ and let $\mathfrak q$ be the reduced multisegment for $(\mathfrak m, \Delta, \pi)$. Then $(\mathfrak q, \mathfrak n, D_{\mathfrak p}(\pi))$ is a strongly RdLi-commutative triple. 
\end{corollary}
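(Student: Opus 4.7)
The plan is to prove the stronger intermediate statement that $(\mathfrak{q} + \mathfrak{p}, \mathfrak{n}, \pi)$ is itself a strongly RdLi-commutative triple, with $\mathfrak{q} + \mathfrak{p}$ Rd-minimal to $\pi$ by construction; the corollary then follows by peeling off the $\mathfrak{p}$-part from the front of a suitable ascending order.

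The first and main step is to establish that $(\mathfrak{p}, \mathfrak{n}, \pi)$ is a strongly RdLi-commutative triple. By Lemma \ref{lem produce more strong rdli commutative triples} it suffices to obtain the segment-versus-multisegment strong commutativity of $(\Delta, \mathfrak{n}, \pi)$, which by the combinatorial characterization in Lemma \ref{lem strong commutative combin} is equivalent to $\eta_{\Delta}(I_{\mathfrak{n}}(\pi)) = \eta_{\Delta}(\pi)$. Ordering $\mathfrak{m}$ ascendingly with $\Delta = \Delta_{i_{0}}$ and chaining the combinatorial characterization of Theorem \ref{thm combinatorial def} along $\mathfrak{n}$, the strong commutativity of $(\mathfrak{m}, \mathfrak{n}, \pi)$ gives $\eta_{\Delta}(D_{\mathfrak{m}_{i_{0}-1}}(\pi)) = \eta_{\Delta}(I_{\mathfrak{n}} D_{\mathfrak{m}_{i_{0}-1}}(\pi))$, and Proposition \ref{prop strong commute imply commute} lets me commute $D_{\mathfrak{m}_{i_{0}-1}}$ past $I_{\mathfrak{n}}$. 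The $\preceq^{R}$-maximality of $\Delta$ in $\mathfrak{m}$ forces every segment preceding $\Delta$ in the ordering into the cases (1), (3), or (4) of Proposition \ref{prop eta function derivative} with respect to $\eta_{\Delta}$ (the dropping case (2) is excluded since $b(\Delta') \leq b(\Delta)$ and any tie forces $\Delta' \supsetneq \Delta$); combined with the monotonicity $\eta_{\Delta}(\pi) \leq \eta_{\Delta}(I_{\mathfrak{n}}(\pi))$ of Lemma \ref{lem right multi submulti}, this propagates the equality through $D_{\mathfrak{m}_{i_{0}-1}}$ back to $\pi$ itself.

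Combining the strong commutativity of $(\mathfrak{p}, \mathfrak{n}, \pi)$ with that of $(\mathfrak{m}, \mathfrak{n}, \pi)$ produces $(\mathfrak{m} + \mathfrak{p}', \mathfrak{n}, \pi)$ strongly RdLi-commutative for $\mathfrak{p}' = \mathfrak{p} - (\mathfrak{m} \cap \mathfrak{p})$; this enlarged multisegment realizes a $\Delta$-saturated completion of $(\mathfrak{m}, \Delta, \pi)$. I then repeatedly apply the intersection-union compatibilities of Propositions \ref{prop intersect union comm} and \ref{prop two strong commute dual 2}, combined with Lemmas \ref{lem minimal multisegments int} and \ref{lem minimal multisegments der}, to minimize $\mathfrak{m} + \mathfrak{p}'$ to its minimized form $\mathfrak{q} + \mathfrak{p}$ while preserving strong RdLi-commutativity with $\mathfrak{n}$.

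Finally, since $\mathfrak{q} + \mathfrak{p}$ is Rd-minimal to $\pi$, the $\mathfrak{m}$-side portion of the permutation argument in the proof of Corollary \ref{cor commute minimal strong rdli comm} applies: transpositions on the $\mathfrak{m}$-side only require $\mathfrak{m}$-side Rd-minimality, via Lemmas \ref{lem commute and minimal} and \ref{lem minimal multisegments der} together with Proposition \ref{cor strong commute linked commute case} (linked case) and Proposition \ref{prop unlinked commute derivative} (unlinked case). This lets me rearrange the ascending order of $\mathfrak{q} + \mathfrak{p}$ so that the segments of $\mathfrak{p}$ appear first, and Definition \ref{def strong commutation sequences} then directly yields $(\mathfrak{q}, \mathfrak{n}, D_{\mathfrak{p}}(\pi))$ strongly RdLi-commutative. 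The principal technical obstacle is the $\eta_{\Delta}$-propagation in the first step: Lemma \ref{lem right multi submulti} gives only the easy inequality $\eta_{\Delta}(\pi) \leq \eta_{\Delta}(I_{\mathfrak{n}}(\pi))$, and producing the matching reverse inequality requires the careful casework on Proposition \ref{prop eta function derivative} that crucially exploits the $\preceq^{R}$-maximality of $\Delta$.
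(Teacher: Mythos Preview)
Your first step has a concrete error and a structural problem that together break the argument.

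The direct error is in the case analysis: you claim that when $b(\Delta') = b(\Delta)$ the $\preceq^R$-maximality of $\Delta$ forces $\Delta' \supsetneq \Delta$, placing you in case (1) of Proposition~\ref{prop eta function derivative}. But the definition of $\preceq^R$ gives exactly the opposite: $\Delta' \prec^R \Delta$ with equal $b$-endpoints means $a(\Delta) < a(\Delta')$, so $\Delta' \subsetneq \Delta$ and you are in case (2), not case (1). So $\Delta$-saturated segments strictly shorter than $\Delta$ may well sit inside $\mathfrak m_{i_0-1}$, and case (2) is not excluded.

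The deeper structural problem is that even in cases (3) and (4) the propagation does not close into a sandwich. You have $\eta_{\Delta}(\pi) \leq \eta_{\Delta}(I_{\mathfrak n}(\pi))$ and $\eta_{\Delta}(D_{\mathfrak m_{i_0-1}}(\pi)) = \eta_{\Delta}(D_{\mathfrak m_{i_0-1}}I_{\mathfrak n}(\pi))$, and cases (3), (4) say $\eta_{\Delta}$ is non-decreasing under each $D_{\Delta_k}$. This yields $\eta_{\Delta}(\pi) \leq \eta_{\Delta}(D_{\mathfrak m_{i_0-1}}(\pi))$ and $\eta_{\Delta}(I_{\mathfrak n}(\pi)) \leq \eta_{\Delta}(D_{\mathfrak m_{i_0-1}}I_{\mathfrak n}(\pi))$, but there is no inequality pointing back down to $\eta_{\Delta}(\pi)$. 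The increase in $\eta_{\Delta}$ under $D_{\Delta_k}$ in cases (3), (4) depends on the removal sequence in $\mathfrak{hd}(\cdot)$, which differs for $\pi$ and $I_{\mathfrak n}(\pi)$, so you cannot argue the effect is identical on both sides either.

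The paper avoids this entirely: rather than trying to establish $(\Delta, \mathfrak n, \pi)$, it works at the other end. With $\Delta$ placed last in an ascending order of $\mathfrak m$, the strong commutativity of $(\Delta, \mathfrak n, D_{\mathfrak m - \Delta}(\pi))$ holds \emph{directly from Definition~\ref{def strong commutation sequences}}, and Lemma~\ref{lem produce more strong rdli commutative triples} then gives $(\mathfrak s, \mathfrak n, D_{\mathfrak m}(\pi))$ for $\mathfrak s = \mathfrak{mx}(D_{\mathfrak m}(\pi), \Delta)$. This concatenates with $(\mathfrak m, \mathfrak n, \pi)$ to give $(\mathfrak m + \mathfrak s, \mathfrak n, \pi)$ immediately, since the segments of $\mathfrak s$ can be appended at the end of an ascending order of $\mathfrak m$ (all have $b$-endpoint $b(\Delta)$, which is $\leq$-maximal). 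Note this also fixes the second gap in your argument: your ``combining'' of $(\mathfrak p, \mathfrak n, \pi)$ with $(\mathfrak m, \mathfrak n, \pi)$ into $(\mathfrak m + \mathfrak p', \mathfrak n, \pi)$ would require the strong commutativity conditions for segments of $\mathfrak p'$ at representations of the form $D_{\mathfrak m + \cdots}(\pi)$, which is not what $(\mathfrak p, \mathfrak n, \pi)$ provides. Your steps 3 and 4 are essentially the same as the paper's and are fine.
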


\begin{proof}
Let $\mathfrak s=\mathfrak{mx}(D_{\mathfrak m}(\pi), \Delta)$. Then, by Lemma \ref{lem produce more strong rdli commutative triples}, $(\mathfrak s, \mathfrak n, D_{\mathfrak m}(\pi))$ is a strongly RdLi-commutative triple and so $(\mathfrak s+\mathfrak m, \mathfrak n, \pi)$ is also a strongly RdLi-commutative triple. Thus, if $\mathfrak s+\mathfrak m$ is not minimal to $\pi$, the closedness property in \cite[Theorem 1.1]{Ch22+aa} implies that one can find a consecutive pair (see \cite{Ch22+aa}) for doing the intersection-union operation to get a new sequence, which still gives a strongly RdLi-commutative triple by Proposition \ref{prop two strong commute dual 2}. Repeatedly using this reasoning, we have that $(\mathfrak p+\mathfrak q, \mathfrak n, \pi)$ is also a strongly RdLi-commutative triple. By Corollary \ref{cor seq strong commut under minimal} (or more directly by Corollary \ref{cor commute minimal strong rdli comm}), we then have that $(\mathfrak q, \mathfrak n, D_{\mathfrak p}(\pi))$ is also a strongly RdLi-commutative triple.
\end{proof}

\subsection{Producing a sequence of strong commutations} \label{ss produce strong commute}

Let $\pi \in \mathrm{Irr}$. Let $\mathfrak m \in \mathrm{Mult}$ be minimal to $\pi$. Let $\mathfrak q_0=\mathfrak m$ and let $\pi_0=\pi$. We recursively produce the following data: for $i \geq 1$, 
\begin{enumerate}
\item Let $\Delta_i$ be a $\preceq^R$-maximal segment in $\mathfrak q_{i-1}$.
\item Let $\mathfrak p_i =\mathfrak{mx}(\pi_{i-1}, \Delta_i)$ and let $\mathfrak q_i$ be a reduced multisegment for $(\mathfrak q_{i-1}, \Delta_i, \pi_{i-1})$.
\item Set $\pi_i=D_{\mathfrak p_i}(\pi_{i-1})$. 
\end{enumerate}
The process terminates when $\mathfrak q_k=\emptyset$. Note that the data generated above depend on the choices of maximal segments in $\mathfrak q_{i-1}$. In the above data, we also let $\mathfrak r_i$ be the multisegment for minimizing $(\mathfrak q_{i-1}, \Delta_i, \pi_{i-1})$. We shall call
\[  (\mathfrak r_1, \mathfrak p_1, \mathfrak q_1, \pi_1), \ldots, (\mathfrak r_k, \mathfrak p_k, \mathfrak q_k, \pi_k) 
\]
to be a {\it sequence of minimized data} for $(\mathfrak m, \pi)$.

\begin{lemma} \label{lem strong commut triples}
Let $(\mathfrak m, \mathfrak n, \pi)$ be a strongly RdLi-commutative triple. Denote a sequence of minimized data for $(\mathfrak m, \pi)$ by:
\[ (\mathfrak r_1, \mathfrak p_1, \mathfrak q_1, \pi_1), \ldots, (\mathfrak r_k, \mathfrak p_k, \mathfrak q_k, \pi_k)  \]
(with $\mathfrak q_k=\emptyset$). Then 
\[  (\mathfrak q_1, \mathfrak n, \pi_1), \ldots, (\mathfrak q_{k-1}, \mathfrak n, \pi_{k-1})
\]
are strongly RdLi-commutative triples.
\end{lemma}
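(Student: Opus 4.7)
The plan is a straightforward induction on $i$, where each inductive step is a single direct application of Corollary \ref{cor reduced one strong commutation}. That corollary already takes as input a strongly RdLi-commutative triple $(\mathfrak m, \mathfrak n, \pi)$ together with a $\preceq^R$-maximal $\Delta \in \mathfrak m$, and outputs a new strongly RdLi-commutative triple $(\mathfrak q, \mathfrak n, D_{\mathfrak p}(\pi))$; this is exactly the shape of one step in the recursive construction of Section \ref{ss produce strong commute}. So the lemma amounts to organizing this one-step fact into an iteration.

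For the base case $i = 1$, I would apply Corollary \ref{cor reduced one strong commutation} to the given strongly RdLi-commutative triple $(\mathfrak m, \mathfrak n, \pi) = (\mathfrak q_0, \mathfrak n, \pi_0)$ with $\Delta = \Delta_1$. By the definition of a sequence of minimized data in Section \ref{ss produce strong commute}, $\Delta_1$ is $\preceq^R$-maximal in $\mathfrak q_0 = \mathfrak m$, $\mathfrak p_1 = \mathfrak{mx}(\pi_0, \Delta_1)$, and $\mathfrak q_1$ is the reduced multisegment for $(\mathfrak q_0, \Delta_1, \pi_0)$. Hence the corollary yields that $(\mathfrak q_1, \mathfrak n, \pi_1) = (\mathfrak q_1, \mathfrak n, D_{\mathfrak p_1}(\pi_0))$ is strongly RdLi-commutative.

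For the inductive step, assume $(\mathfrak q_{i-1}, \mathfrak n, \pi_{i-1})$ is strongly RdLi-commutative. By construction, $\Delta_i$ is $\preceq^R$-maximal in $\mathfrak q_{i-1}$, $\mathfrak p_i = \mathfrak{mx}(\pi_{i-1}, \Delta_i)$, $\mathfrak q_i$ is the reduced multisegment for $(\mathfrak q_{i-1}, \Delta_i, \pi_{i-1})$, and $\pi_i = D_{\mathfrak p_i}(\pi_{i-1})$. A second application of Corollary \ref{cor reduced one strong commutation} then gives the strong RdLi-commutativity of $(\mathfrak q_i, \mathfrak n, \pi_i)$. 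Iterating up to $i = k-1$ completes the proof.

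There is essentially no obstacle once Corollary \ref{cor reduced one strong commutation} is in hand; the only subtle point is that in order to keep feeding the output of the corollary back into its own hypothesis, one should verify that $\mathfrak q_i$ is in fact minimal to $\pi_i$ at each stage. This follows from the definition of the reduced multisegment, which forces $\mathfrak q_i + \mathfrak p_i$ to be the minimized (hence minimal) multisegment for $(\mathfrak q_{i-1} + \mathfrak r_i, \pi_{i-1})$, combined with the subsequent property of minimal sequences (Lemma \ref{lem commute and minimal}(2)) to pass to the submultisegment $\mathfrak q_i$ of $\mathfrak q_i + \mathfrak p_i$ and to the derivative $\pi_i = D_{\mathfrak p_i}(\pi_{i-1})$. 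This ensures the iteration is well-formed and independent of any intermediate choices of $\preceq^R$-maximal segments, as guaranteed by Propositions \ref{prop unlinked commute integral} and \ref{prop unlinked commute derivative}.
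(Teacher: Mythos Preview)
Your proof is correct and matches the paper's approach exactly: the paper's proof consists of the single sentence ``This follows multiple uses of Corollary \ref{cor reduced one strong commutation},'' and your induction is precisely the unpacking of that sentence. Your extra remark about minimality of $\mathfrak q_i$ to $\pi_i$ is not strictly needed as a hypothesis of Corollary \ref{cor reduced one strong commutation} (which only requires a strongly RdLi-commutative triple), but it is a correct observation about the recursive construction and does no harm.
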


\begin{proof}
This follows multiple uses of Corollary \ref{cor reduced one strong commutation}.
\end{proof}

\begin{lemma} \label{lem length of m in terms of p r}
We use the notations in the previous lemma. Then $l_a(\mathfrak m)=\sum_x l_a(\mathfrak p_x)-\sum_y l_a(\mathfrak r_y)$. 
\end{lemma}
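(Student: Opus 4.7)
The plan is to prove the identity by telescoping. The key observation is that the minimization process (which is a sequence of intersection–union operations) preserves total absolute length, so at each step $i$ of the recursion one obtains a clean length identity that allows summation.

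More precisely, I would first note that by the definition of a reduced multisegment, $\mathfrak q_i + \mathfrak p_i$ is the minimized multisegment for $(\mathfrak q_{i-1} + \mathfrak r_i,\pi_{i-1})$. Minimization is produced by consecutive intersection–union operations as recalled in \cite{Ch22+} (see Sections \ref{ss intersect union} and \ref{s comm minimal pairs}), and each such operation replaces a pair $\{\Delta_1,\Delta_2\}$ by $\{\Delta_1\cup\Delta_2,\Delta_1\cap\Delta_2\}$, which manifestly preserves the absolute length. Hence
\begin{equation*}
 l_a(\mathfrak q_i) + l_a(\mathfrak p_i) \;=\; l_a(\mathfrak q_i + \mathfrak p_i) \;=\; l_a(\mathfrak q_{i-1} + \mathfrak r_i) \;=\; l_a(\mathfrak q_{i-1}) + l_a(\mathfrak r_i),
\end{equation*}
which rearranges to $l_a(\mathfrak q_{i-1}) - l_a(\mathfrak q_i) = l_a(\mathfrak p_i) - l_a(\mathfrak r_i)$.

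Summing this identity over $i=1,\ldots,k$ gives a telescoping sum on the left: the total collapses to $l_a(\mathfrak q_0) - l_a(\mathfrak q_k) = l_a(\mathfrak m) - 0 = l_a(\mathfrak m)$, using that $\mathfrak q_0 = \mathfrak m$ and $\mathfrak q_k = \emptyset$ (the termination condition in Section \ref{ss produce strong commute}). The right-hand side becomes $\sum_x l_a(\mathfrak p_x) - \sum_y l_a(\mathfrak r_y)$, completing the proof.

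There is essentially no obstacle here once one has invoked length-preservation of intersection–union; the only point to verify carefully is that the minimization step really is realized by such operations, which is exactly the content of the closedness property from \cite{Ch22+} already used repeatedly in Part \ref{part defintions comm triples} (e.g.\ in the proof of Corollary \ref{cor reduced one strong commutation}). So the proof will be a short two- or three-line telescoping argument rather than any new combinatorial input.
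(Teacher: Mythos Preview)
Your proposal is correct and is essentially the same as the paper's proof: both derive the per-step identity $l_a(\mathfrak q_{i-1})+l_a(\mathfrak r_i)=l_a(\mathfrak p_i)+l_a(\mathfrak q_i)$ and then telescope using $\mathfrak q_0=\mathfrak m$ and $\mathfrak q_k=\emptyset$. You simply supply an explicit reason for that identity (length preservation under intersection--union, or equivalently that $D_{\mathfrak q_i+\mathfrak p_i}(\pi_{i-1})\cong D_{\mathfrak q_{i-1}+\mathfrak r_i}(\pi_{i-1})$ forces equal absolute lengths), which the paper leaves implicit.
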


\begin{proof}
We have that $l_a(\mathfrak m)+l_a(\mathfrak r_1)=l_a(\mathfrak p_1)+l_a(\mathfrak q_1)$ and $l_a(\mathfrak q_i)+l_a(\mathfrak r_{i+1})=l_a(\mathfrak p_{i+1})+l_a(\mathfrak q_{i+1})$. Note that $l_a(\mathfrak q_k)=0$. Combining the equations, we have the lemma.
\end{proof}

\begin{lemma} \label{lem some expression in reduced multisegments}
Let $\tau_0=I_{\mathfrak n}\circ D_{\mathfrak m}(\pi)$. Let $\tau_i=D_{\mathfrak r_i} \circ \ldots \circ D_{\mathfrak r_1}(\tau_0)$. Then 
\[  \tau_i \cong I_{\mathfrak n}\circ D_{\mathfrak q_i}(\pi_i) \cong I_{\mathfrak n}\circ D_{\mathfrak p_i+\mathfrak q_i}(\pi_{i-1}).
\]
Moreover, $\mathfrak{mx}(\Delta_i, \tau_i)=\emptyset$, where $\Delta_i$ is the segment involved in the minimizing process.
\end{lemma}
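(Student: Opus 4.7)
The proof proceeds by induction on $i$; the base case $i=0$ is immediate from the identifications $\mathfrak q_0=\mathfrak m$ and $\pi_0=\pi$. Set $\omega_i:=D_{\mathfrak q_{i-1}}(\pi_{i-1})$, so by the inductive hypothesis $\tau_{i-1}\cong I_{\mathfrak n}(\omega_i)$. For the easier second isomorphism, since $\mathfrak p_i+\mathfrak q_i$ is minimal to $\pi_{i-1}$ by definition of the minimized multisegment, Lemma \ref{lem commute and minimal} gives $D_{\mathfrak q_i}(\pi_i)=D_{\mathfrak q_i}\circ D_{\mathfrak p_i}(\pi_{i-1})\cong D_{\mathfrak p_i+\mathfrak q_i}(\pi_{i-1})$. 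Combining the minimization identity $D_{\mathfrak p_i+\mathfrak q_i}(\pi_{i-1})\cong D_{\mathfrak q_{i-1}+\mathfrak r_i}(\pi_{i-1})$ with any ascending ordering of $\mathfrak q_{i-1}+\mathfrak r_i$ that places $\mathfrak q_{i-1}$ before $\mathfrak r_i$ (a valid ascending order because $\Delta_i$ is $\preceq^R$-maximal in $\mathfrak q_{i-1}$ and $\mathfrak r_i$ is $\Delta_i$-saturated) identifies this further with $D_{\mathfrak r_i}(\omega_i)$.

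The main task is the first isomorphism $\tau_i\cong I_{\mathfrak n}(D_{\mathfrak q_i}(\pi_i))$, which by the above is equivalent to $D_{\mathfrak r_i}\circ I_{\mathfrak n}(\omega_i)\cong I_{\mathfrak n}\circ D_{\mathfrak r_i}(\omega_i)$. By Proposition \ref{prop strong commute imply commute} (iterated) it suffices to show that $(\mathfrak r_i,\mathfrak n,\omega_i)$ is a strongly RdLi-commutative triple, and the plan is to first establish the single-segment case $(\Delta_i,\mathfrak n,\omega_i)$. Since $\Delta_i$ is $\preceq^R$-maximal in $\mathfrak q_{i-1}$, one checks $\Delta'\not>\Delta_i$ for every $\Delta'\in\mathfrak q_{i-1}$, so $\Delta_i$ may be placed last in an ascending ordering of $\mathfrak q_{i-1}$. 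Applying Lemma \ref{lem strong commut triples} in this ordering yields strong commutativity of $(\Delta_i,\mathfrak n,\alpha)$ with $\alpha:=D_{\mathfrak q_{i-1}-\Delta_i}(\pi_{i-1})$; minimality of $\mathfrak q_{i-1}$ together with Corollary \ref{cor minimal derivative in any order} gives $\omega_i\cong D_{\Delta_i}(\alpha)$, and Proposition \ref{prop strong commute imply commute} gives $\tau_{i-1}\cong D_{\Delta_i}\circ I_{\mathfrak n}(\alpha)$. By Lemma \ref{lem strong commutative combin}, the strong commutativity of $(\Delta_i,\mathfrak n,\alpha)$ is equivalent to $\eta_{\Delta_i}(\alpha)=\eta_{\Delta_i}(I_{\mathfrak n}(\alpha))$, and summing coordinates gives $|\eta|_{\Delta_i}(\alpha)=|\eta|_{\Delta_i}(I_{\mathfrak n}(\alpha))$. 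Proposition \ref{prop eta function derivative}(2), applied with $\Delta'=\Delta=\Delta_i$ to both $\alpha$ and $I_{\mathfrak n}(\alpha)$, then yields
\[ |\eta|_{\Delta_i}(\omega_i)=|\eta|_{\Delta_i}(\alpha)+1=|\eta|_{\Delta_i}(I_{\mathfrak n}(\alpha))+1=|\eta|_{\Delta_i}(\tau_{i-1}). \]
Combined with the componentwise inequality $\eta_{\Delta_i}(\omega_i)\leq \eta_{\Delta_i}(\tau_{i-1})$ from Lemma \ref{lem right multi submulti}, this forces the componentwise equality $\eta_{\Delta_i}(\omega_i)=\eta_{\Delta_i}(\tau_{i-1})$, hence strong commutativity of $(\Delta_i,\mathfrak n,\omega_i)$ by Lemma \ref{lem strong commutative combin}.

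The completing property of $\mathfrak r_i$ together with the maximality characterization of $\mathfrak{mx}$ recorded after Definition \ref{def max multisegment} identifies $\mathfrak r_i=\mathfrak{mx}(\omega_i,\Delta_i)$, and Lemma \ref{lem produce more strong rdli commutative triples} then promotes $(\Delta_i,\mathfrak n,\omega_i)$ to the desired strong commutativity of $(\mathfrak r_i,\mathfrak n,\omega_i)$. For the closing assertion $\mathfrak{mx}(\Delta_i,\tau_i)=\emptyset$, the equality $\eta_{\Delta_i}(\omega_i)=\eta_{\Delta_i}(\tau_{i-1})$ forces $\mathfrak{mx}(\tau_{i-1},\Delta_i)=\mathfrak{mx}(\omega_i,\Delta_i)=\mathfrak r_i$, so $\tau_i=D_{\mathfrak{mx}(\tau_{i-1},\Delta_i)}(\tau_{i-1})$ is $\Delta_i$-reduced by the defining property of $\mathfrak{mx}$. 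The main technical obstacle is the bookkeeping step in the second paragraph, upgrading $\eta_{\Delta_i}(\alpha)=\eta_{\Delta_i}(I_{\mathfrak n}(\alpha))$ through $D_{\Delta_i}$ to the required componentwise equality $\eta_{\Delta_i}(\omega_i)=\eta_{\Delta_i}(\tau_{i-1})$ using the exact unit increment of $|\eta|_{\Delta_i}$ under $D_{\Delta_i}$.
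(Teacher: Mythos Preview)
Your proof is correct and follows essentially the same inductive strategy as the paper: reduce $\tau_i=D_{\mathfrak r_i}(\tau_{i-1})$ to commuting $D_{\mathfrak r_i}$ past $I_{\mathfrak n}$ on $\omega_i=D_{\mathfrak q_{i-1}}(\pi_{i-1})$, then use the minimization identity $\mathfrak p_i+\mathfrak q_i\leftrightarrow \mathfrak q_{i-1}+\mathfrak r_i$ and Lemma~\ref{lem commute and minimal}. The only substantive difference is in justifying the strong commutativity of $(\Delta_i,\mathfrak n,\omega_i)$: the paper appeals directly to Lemma~\ref{lem strong commut triples} together with Lemma~\ref{lem produce more strong rdli commutative triples} (whose proof already records that $D_{\Delta}$ decreases only the $\varepsilon_{\Delta}$-coordinate of $\eta_{\Delta}$, so the equality $\eta_{\Delta_i}(\alpha)=\eta_{\Delta_i}(I_{\mathfrak n}(\alpha))$ descends through $D_{\Delta_i}$), whereas you rederive this descent by the $|\eta|_{\Delta_i}$-counting argument via Proposition~\ref{prop eta function derivative}(2) combined with the componentwise inequality from Lemma~\ref{lem right multi submulti}. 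Both routes are valid; yours is slightly more explicit at that step, and your argument for the final assertion $\mathfrak{mx}(\Delta_i,\tau_i)=\emptyset$ (via $\mathfrak{mx}(\tau_{i-1},\Delta_i)=\mathfrak r_i$) is a clean alternative to the paper's, which instead tracks $\mathfrak{mx}(\Delta_i,D_{\mathfrak q_i}(\pi_{i-1}))=\mathfrak p_i$ through the strong commutation.
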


\begin{proof}
We shall prove inductively on $i$. By definition, $\tau_i=D_{\mathfrak r_i}(\tau_{i-1})$ and so we have $\tau_i=D_{\mathfrak r_i}\circ I_{\mathfrak n}\circ D_{\mathfrak q_{i-1}}(\pi_{i-1})$. 

We have the strong commutation for $(\mathfrak r_i, \mathfrak n, \pi_{i-1})$ by Lemma \ref{lem strong commut triples} and so we have the strong commutation for $(\mathfrak r_i, \mathfrak n, D_{\mathfrak q_{i-1}}(\pi_{i-1}))$ by Lemma \ref{lem produce more strong rdli commutative triples}. Thus $\tau_i=I_{\mathfrak n}\circ D_{\mathfrak r_i}\circ D_{\mathfrak q_{i-1}}(\pi_{i-1})$ and so, by $\mathfrak r_i+\mathfrak q_{i-1}=\mathfrak p_i+\mathfrak q_i$, we have:
\[   \tau_i = I_{\mathfrak n}\circ D_{\mathfrak p_i}\circ D_{\mathfrak q_i}(\pi_{i-1}) .
\]
Now, by Lemma \ref{lem commute and minimal}, 
\[  \tau_i=I_{\mathfrak n}\circ D_{\mathfrak q_i}\circ D_{\mathfrak p_i}(\pi_{i-1})=I_{\mathfrak n}\circ D_{\mathfrak q_i}(\pi_i) .
\]

It remains to prove the last assertion. Note that $\mathfrak{mx}(\Delta_i, D_{\mathfrak q_i}(\pi_{i-1}))=\mathfrak p_i$ and so 
\[\mathfrak{mx}(\Delta_i, I_{\mathfrak n}\circ D_{\mathfrak q_i}(\pi_{i-1}))=\mathfrak p_i \]
 by the subsequent property of Corollary \ref{cor seq strong commut under minimal and sub} and Theorem \ref{thm combinatorial def}. This implies that 
\[  \mathfrak{mx}(\Delta_i, I_{\mathfrak n}\circ D_{\mathfrak p_i}\circ D_{\mathfrak q_i}(\pi_{i-1}))=\mathfrak{mx}(\Delta_i, D_{\mathfrak p_i}\circ I_{\mathfrak n}\circ D_{\mathfrak q_i}(\pi_{i-1}))=\emptyset ,
\]
where the equality follows from Proposition \ref{prop strong commute imply commute}. Now the last assertion follows from the above expression of $\tau_i$.
\end{proof}

The following lemma follows from Frobenius reciprocity:

\begin{lemma} \label{lem inductive embeddings in completions}
We use the notations above. Let $\tau_i=I_{\mathfrak n}\circ D_{\mathfrak q_i}(\tau_{i-1})$. Then 
\[        \pi_{i-1} \hookrightarrow \pi_i \times \mathrm{St}(\mathfrak p_{i}) ,\quad       \tau_{i-1} \hookrightarrow \tau_i \times \mathrm{St}(\mathfrak r_i) .\]
\end{lemma}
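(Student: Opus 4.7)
\medskip
\noindent\textbf{Proof proposal.} Both embeddings are instances of the general principle that, whenever $\mathfrak p$ is a $\Delta$-saturated multisegment with $D_{\mathfrak p}(\sigma)$ $\Delta$-reduced and non-zero, the maximality of $\mathfrak{mx}(\sigma,\Delta)$ combined with Frobenius reciprocity gives $\sigma\hookrightarrow D_{\mathfrak p}(\sigma)\times \mathrm{St}(\mathfrak p)$. The plan is therefore to check, in each of the two cases, that the multisegment in question really is $\mathfrak{mx}(\cdot,\Delta_i)$, and then quote the embedding recalled at the opening of Section \ref{s complete to reduced}.

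For the first embedding, by construction $\mathfrak p_i=\mathfrak{mx}(\pi_{i-1},\Delta_i)$ and $\pi_i=D_{\mathfrak p_i}(\pi_{i-1})$, so the embedding $\pi_{i-1}\hookrightarrow \pi_i\times \mathrm{St}(\mathfrak p_i)$ is immediate.

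For the second embedding, I would argue that $\mathfrak r_i=\mathfrak{mx}(\tau_{i-1},\Delta_i)$. By the definition of a completing multisegment, $\mathfrak r_i$ is $\Delta_i$-saturated, and $D_{\mathfrak r_i}(\tau_{i-1})=\tau_i$ by the definition of $\tau_i$ in Lemma \ref{lem some expression in reduced multisegments}. The last assertion of Lemma \ref{lem some expression in reduced multisegments} gives $\mathfrak{mx}(\Delta_i,\tau_i)=\emptyset$, i.e.\ $\tau_i$ is $\Delta_i$-reduced. Any $\Delta_i$-saturated $\mathfrak r$ with $D_{\mathfrak r}(\tau_{i-1})\ne 0$ is a submultisegment of $\mathfrak{mx}(\tau_{i-1},\Delta_i)$, so $\mathfrak r_i\subseteq \mathfrak{mx}(\tau_{i-1},\Delta_i)$; and the $\Delta_i$-reducedness of $\tau_i=D_{\mathfrak r_i}(\tau_{i-1})$ forces the reverse inclusion, since any further $\Delta_i$-saturated segment on which one could still take a non-zero derivative would violate reducedness. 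Hence $\mathfrak r_i=\mathfrak{mx}(\tau_{i-1},\Delta_i)$, and the standard embedding yields $\tau_{i-1}\hookrightarrow \tau_i\times \mathrm{St}(\mathfrak r_i)$.

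The only step that requires a touch of care is the equality $\mathfrak r_i=\mathfrak{mx}(\tau_{i-1},\Delta_i)$; concretely, one must verify that both inclusions above are legitimate consequences of the $\Delta_i$-saturatedness, the $\Delta_i$-reducedness of $\tau_i$, and the maximal characterization of $\mathfrak{mx}$ (as invoked at the start of Section \ref{s complete to reduced} and in Lemma \ref{lem delta reduced subset}). Once this identification is in place, Frobenius reciprocity applied to the defining embedding $D_{\mathfrak r_i}(\tau_{i-1})\boxtimes\mathrm{St}(\mathfrak r_i)\hookrightarrow (\tau_{i-1})_N$ concludes the argument.
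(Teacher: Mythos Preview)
Your argument is correct, but it does more work than necessary. The paper's proof is the single phrase ``follows from Frobenius reciprocity'': the point is that the embedding $\sigma\hookrightarrow D_{\mathfrak p}(\sigma)\times\mathrm{St}(\mathfrak p)$ holds for \emph{any} multisegment $\mathfrak p$ with $D_{\mathfrak p}(\sigma)\neq 0$ whose segments are pairwise unlinked, not only for $\mathfrak p=\mathfrak{mx}(\sigma,\Delta)$. Indeed, iterating the single-segment embedding $\sigma\hookrightarrow D_{\Delta}(\sigma)\times\mathrm{St}(\Delta)$ (which is the standard consequence of the socle-irreducibility results of \cite{KKKO15,LM16}) yields $\sigma\hookrightarrow D_{\mathfrak p}(\sigma)\times\mathrm{St}(\Delta_r)\times\cdots\times\mathrm{St}(\Delta_1)$, and when the $\Delta_j$ are pairwise unlinked this product is exactly $\mathrm{St}(\mathfrak p)$. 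Since both $\mathfrak p_i$ and $\mathfrak r_i$ are $\Delta_i$-saturated (hence their segments share a common right endpoint and are automatically unlinked), both embeddings follow immediately.

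Your detour through the identification $\mathfrak r_i=\mathfrak{mx}(\tau_{i-1},\Delta_i)$ is nonetheless valid: the inclusion $\mathfrak r_i\subseteq\mathfrak{mx}(\tau_{i-1},\Delta_i)$ is the general bound recalled after Definition~\ref{def max multisegment}, and equality follows from the $|\eta|_{\Delta_i}$-count in Proposition~\ref{prop eta function derivative}(2), since each derivative along a $\Delta_i$-saturated segment drops $|\eta|_{\Delta_i}$ by exactly one and $\tau_i$ is $\Delta_i$-reduced by Lemma~\ref{lem some expression in reduced multisegments}. This extra identification is true and mildly interesting, but the lemma itself does not require it.
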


\section{Smallest derivatives for a strongly commutative triple} \label{s smallest derivative strongly}

We illustrate the idea of computing the smallest derivative in a segment case. Let $\pi'=D_{\Delta_1}(\pi)$. Our goal is to determine the smallest integer $i^*$ such that $\mathrm{Hom}(\pi^{(i^*)},{}^{(j)}\pi') \neq 0$. Roughly speaking, one compares $\eta_{\Delta_1}(\pi)$ and $\eta_{\Delta_1}({}^{(j)}\pi') \leq \eta_{\Delta_1}(\pi')$. The $\eta_{\Delta_1}(\pi)$ and $\eta_{\Delta_1}(\pi')$ are differed by $1$ on the coordinate for $\varepsilon_{\Delta_1}$. Thus in order to get from $\pi$ to $\pi'$, one has to take at least $l_a(\Delta_1)$-derivative. Such idea can be extended to a strongly commutative triple $(\Delta_1, \Delta_2, \pi)$ due to the combinatorial criteria (Definition \ref{def combinatorial comm triple}), and then to strongly commutative triples $(\mathfrak m, \mathfrak n, \pi)$, but then one needs tools from Section \ref{s complete to reduced}.

\subsection{Leibniz's rule}

So far, we mainly deal with the derivatives $D_{\Delta}$. We now start to discuss more on BZ derivatives. Again, a standard tool is the geometric lemma, which we shall also refer to Leibniz's rule. 

For a $G_{n_1}$-representation $\pi_1$ and a $G_{n_2}$-representation $\pi_2$, Leibniz's rule asserts that $(\pi_1\times \pi_2)^{(i)}$ (resp. ${}^{(i)}(\pi_1 \times \pi_2)$) admits a filtration with layers isomorphic to 
\[   \pi_1^{(i_1)}\times \pi_2^{(i_2)}, \quad \mbox{(resp. ${}^{(i_1)}\pi_1\times {}^{(i_2)}\pi_2$)}
\]
where $i_1$ and $i_2$ run for all non-negative integers satisfying $i_1+i_2=i$. 

\subsection{A lemma} 

\begin{lemma} \label{lem inequality on delta red}
Let $\Delta$ be a segment. Let $\omega \in \mathrm{Irr}$ be $\Delta$-reduced and let $\tau \in \mathrm{Irr}$. Let $\mathfrak p \in \mathrm{Mult}$ be $\Delta$-saturated. Let $\lambda \in \mathrm{Alg}(G_n)$. Suppose $\mathrm{Hom}( \mathrm{St}(\mathfrak p) \times  \lambda, \omega \times \tau) \neq 0$. Then $l_a(\mathfrak p)\leq n(\tau)$. Moreover, the inequality is strict if there exists $\rho$ in $\mathrm{csupp}(\tau)$, but not in $\mathrm{csupp}(\mathfrak p)$. 
\end{lemma}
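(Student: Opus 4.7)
Write $m = l_a(\mathfrak p)$ and let $n$ denote the $G$-rank of $\lambda$; matching sizes forces $m + n = n(\omega) + n(\tau)$, so $m \leq n(\tau)$ is equivalent to $n \geq n(\omega)$. The plan is to apply Frobenius reciprocity, decompose via the Bernstein--Zelevinsky geometric lemma, and use the $\Delta$-reducedness of $\omega$ to kill all but one layer. First, by Frobenius,
\[
\mathrm{Hom}_{G_m \times G_n}\bigl(\mathrm{St}(\mathfrak p) \boxtimes \lambda,\ (\omega \times \tau)_{N_{m,n}}\bigr) \neq 0 ,
\]
and the geometric lemma yields a filtration of $(\omega \times \tau)_{N_{m,n}}$ by layers $L_{i,j}$ indexed by $(i,j)$ with $i + j = m$, $0 \leq i \leq n(\omega)$ and $0 \leq j \leq n(\tau)$, each $L_{i,j}$ being parabolically induced (from the parabolic in $G_m \times G_n$ with Levi $G_i \times G_j \times G_{n(\omega)-i} \times G_{n(\tau)-j}$) from an appropriate shuffle of $\omega_{N_{i, n(\omega)-i}} \boxtimes \tau_{N_{j, n(\tau)-j}}$.

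The key step is to show that every layer with $i \geq 1$ is forced to vanish. Suppose $\mathrm{St}(\mathfrak p) \boxtimes \lambda \to L_{i, j}$ were non-zero for some $i \geq 1$. A further Frobenius step would then produce a non-zero map from $\mathrm{St}(\mathfrak p)$ into a parabolic product $A_i \times B_j$, where $A_i$ (resp.\ $B_j$) is the first-factor piece of $\omega_{N_{i, n(\omega)-i}}$ (resp.\ $\tau_{N_{j, n(\tau)-j}}$). Since $\mathfrak p$ is $\Delta$-saturated with $\Delta = [a, b]_\rho$, the module $\mathrm{St}(\mathfrak p)$ is a ladder-type simple representation whose cuspidal support lies entirely inside $\Delta$ and in which $\nu^b \rho$ appears with multiplicity equal to the number of segments in $\mathfrak p$. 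A cosocle and cuspidal-support bookkeeping of $\mathrm{St}(\mathfrak p)$ as a quotient of $A_i \times B_j$ then produces a $\Delta$-saturated submultisegment $\mathfrak p_1 \subseteq \mathfrak p$ with $l_a(\mathfrak p_1) = i \geq 1$ such that $\mathrm{St}(\mathfrak p_1)$ appears as a factor on the $\omega$-side. Choosing any segment $\Delta' \in \mathfrak p_1$ (which is $\Delta$-saturated) then yields $D_{\Delta'}(\omega) \neq 0$, contradicting $\Delta$-reducedness. Hence only $L_{0, m}$ can support the map, forcing $m = j \leq n(\tau)$.

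For the strict inequality, suppose additionally that $m = n(\tau)$ and $\rho_0 \in \mathrm{csupp}(\tau) \setminus \mathrm{csupp}(\mathfrak p)$. The unique surviving layer $L_{0, m}$ has trivial Jacquet steps on both sides: $\omega_{N_{0, n(\omega)}}$ is $\omega$ itself and $\tau_{N_{n(\tau), 0}}$ is $\tau$ itself, and the shuffle yields $L_{0, m} \cong \tau \boxtimes \omega$ as a $G_m \times G_n$-representation. A non-zero map $\mathrm{St}(\mathfrak p) \boxtimes \lambda \to \tau \boxtimes \omega$ then splits multiplicatively (since Hom from a tensor product into an irreducible tensor product factors through each factor), giving non-zero $\mathrm{St}(\mathfrak p) \to \tau$, and by irreducibility $\mathrm{St}(\mathfrak p) \cong \tau$. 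This forces $\mathrm{csupp}(\mathfrak p) = \mathrm{csupp}(\tau)$, contradicting the existence of $\rho_0$.

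The main obstacle will be the killing step for layers with $i \geq 1$: cleanly extracting a $\Delta$-saturated submultisegment $\mathfrak p_1$ such that $\mathrm{St}(\mathfrak p_1)$ genuinely appears on the $\omega$-side of $A_i \times B_j$. This requires exploiting the ladder structure of $\mathrm{St}(\mathfrak p)$ (all segments share the right endpoint $\nu^b \rho$), tracking how the multiplicity of $\nu^b \rho$ splits between the two parabolic factors, and invoking the unique-quotient characterization of $\mathrm{St}(\mathfrak m)$ from Section 2.1 together with the multiplicativity of cuspidal support across Jacquet modules.
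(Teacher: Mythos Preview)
Your approach is the same as the paper's---Frobenius reciprocity, the geometric lemma, and $\Delta$-reducedness of $\omega$ to kill all but one layer---but your setup has a genuine error in the adjointness step that breaks the killing argument.

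You wrote $\mathrm{Hom}_{G_m\times G_n}(\mathrm{St}(\mathfrak p)\boxtimes\lambda,(\omega\times\tau)_{N_{m,n}})$, and then identified $A_i$ with the \emph{first} factor of $\omega_{N_{i,n(\omega)-i}}$. But $\mathrm{Ind}_{P_{m,n}}$ is \emph{left} adjoint to the opposite Jacquet $r_{\bar P_{m,n}}$, not to $r_{P_{m,n}}$; after identifying $r_{\bar P_{m,n}}$ with $r_{P_{n,m}}$ (swapping factors), the $G_m$-slot into which $\mathrm{St}(\mathfrak p)$ lands is built from the \emph{second} Jacquet factors of $\omega$ and $\tau$, not the first. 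This matters: $\omega$ being R-$\Delta$-reduced is a statement about $D^R_{\Delta'}(\omega)$, i.e.\ about embeddings $?\boxtimes\mathrm{St}(\Delta')\hookrightarrow \omega_N$ in the \emph{second} factor. With your first-factor description you would be extracting information about $D^L$, which is not what the hypothesis controls. Once you correct the adjointness, the paper's argument goes through: the only layer that can receive the map is $\omega\,\dot\times^1\,\tau_{N_{l_a(\mathfrak p)}}$ (the paper cites \cite[Lemma~10.2]{Ch22+d} for this vanishing), and its existence forces $l_a(\mathfrak p)\le n(\tau)$.

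Two smaller points. You write ``$\mathrm{St}(\mathfrak p)$ as a quotient of $A_i\times B_j$'' where you mean a nonzero map \emph{from} $\mathrm{St}(\mathfrak p)$ \emph{into} the induced layer. And your extraction of a $\Delta$-saturated $\mathfrak p_1\subset\mathfrak p$ on the $\omega$-side is not justified as written: with your first-factor convention the Jacquet pieces of $\mathrm{St}(\mathfrak p)$ landing on that side are of the form $\mathrm{St}([a_k,c_k]_\rho)$ with $c_k<b$, which are \emph{not} $\Delta$-saturated. The correct second-factor convention yields pieces $\mathrm{St}([c_k+1,b]_\rho)$, which are. Your strict-inequality argument (forcing $\mathrm{St}(\mathfrak p)\cong\tau$ when $m=n(\tau)$, hence $\mathrm{csupp}(\mathfrak p)=\mathrm{csupp}(\tau)$) is fine and is exactly the paper's reasoning.
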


\begin{proof}
Applying Frobenius reciprocity, we have that:
\[   \mathrm{Hom}( \pi \boxtimes \mathrm{St}(\mathfrak p) , (\omega \times \tau)_{N_{n(\pi), l_a(\mathfrak p)}} ) \neq 0 . 
\]
Since $\omega$ is $\Delta$-reduced, the only layer in the geometric lemma that can contribute the Hom is $\omega \dot{\times}^1 (\tau_{N_{l_a(\mathfrak p)}})$. But this then implies the required inequality.

The strict part of the inequality follows from $\mathrm{csupp}(\mathfrak p)\subset \mathrm{csupp}(\tau)$ for getting a contribution of that layer to a non-zero Hom.
\end{proof}

\begin{proposition} \label{prop smallest integer in strong triple}
Let $(\mathfrak m, \mathfrak n, \pi)$ be a strongly RdLi-commutative triple. Let $s_1=l_a(\mathfrak m)$, let $s_2=l_a(\mathfrak n)$. Let $\tau= I_{\mathfrak n}\circ D_{\mathfrak m}(\pi)$. Let $n=n(\pi)$. Then $n(\tau)=n-s_1+s_2$. Then the smallest integer $i$ such that 
\[  \mathrm{Hom}_{G_{n-i}}(\pi^{(i)}, {}^{(i-s_1+s_2)}\tau) \neq 0
\]
is $l_a(\mathfrak m)$.
\end{proposition}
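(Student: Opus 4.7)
The dimension formula $n(\tau)=n-s_1+s_2$ is immediate from the definitions of $D_{\mathfrak m}$ and $I_{\mathfrak n}$. For the remaining assertion, I will separately establish non-vanishing at $i=s_1$ and vanishing for $i<s_1$.

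\emph{Non-vanishing at $i=s_1$.} Set $\sigma:=D^R_{\mathfrak m}(\pi)\cong D^L_{\mathfrak n}(\tau)$. The plan is to realize $\sigma$ as a simple quotient of $\pi^{(s_1)}$ and as a simple submodule of ${}^{(s_2)}\tau$, so that the composition $\pi^{(s_1)}\twoheadrightarrow\sigma\hookrightarrow{}^{(s_2)}\tau$ yields the desired nonzero element. Iterating Definition \ref{def integral derivative}(2) along an ascending ordering of $\mathfrak m$ and using Corollary \ref{cor commute minimal strong rdli comm} to compose the embeddings coherently produces $\sigma\boxtimes\mathrm{St}(\mathfrak m)\hookrightarrow\pi_{N_{n-s_1,s_1}}$. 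Taking $(U_{s_1},\psi)$-coinvariants on the second factor and invoking the one-dimensional Whittaker model of the generic representation $\mathrm{St}(\mathfrak m)$ realizes $\sigma$ inside $\pi^{(s_1)}$ with the appropriate quotient orientation; the parallel left-version yields $\sigma\hookrightarrow{}^{(s_2)}\tau$.

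\emph{Vanishing for $i<s_1$.} I plan to induct along the sequence of minimized data $(\mathfrak r_t,\mathfrak p_t,\mathfrak q_t,\pi_t)_{t=1}^k$ for $(\mathfrak m,\pi)$ from Section \ref{ss produce strong commute}, with the inductive statement that the smallest $i$ realizing a nonzero Hom for $(\mathfrak q_t,\mathfrak n,\pi_t)$ is $l_a(\mathfrak q_t)$. The base case $t=k$ (where $\mathfrak q_k=\emptyset$) follows from the left-integral analogue of the non-vanishing argument above together with trivial vanishing for negative index. For the inductive step, use the embeddings $\pi_{t-1}\hookrightarrow\pi_t\times\mathrm{St}(\mathfrak p_t)$ and $\tau_{t-1}\hookrightarrow\tau_t\times\mathrm{St}(\mathfrak r_t)$ from Lemma \ref{lem inductive embeddings in completions}. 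A nonzero Hom between the relevant BZ derivatives of $\pi_{t-1}$ and $\tau_{t-1}$ then factors, via exactness of the Jacquet functor and the Leibniz rule for BZ derivatives of parabolic inductions, through a layer of the form $\pi_t^{(i_1)}\times\mathrm{St}(\mathfrak p_t)^{(i_2)}$ with $i_1+i_2=i$. Applying Lemma \ref{lem inequality on delta red} with $\Delta$ the $\preceq^R$-maximal segment used to construct $\mathfrak p_t$ (for which $\pi_t=D_{\mathfrak p_t}(\pi_{t-1})$ is $\Delta$-reduced by the choice $\mathfrak p_t=\mathfrak{mx}(\pi_{t-1},\Delta)$) forces $i_2\geq l_a(\mathfrak p_t)$; combining with the inductive hypothesis $i_1\geq l_a(\mathfrak q_t)$ and Lemma \ref{lem length of m in terms of p r} then yields $i\geq l_a(\mathfrak q_t)+l_a(\mathfrak p_t)=l_a(\mathfrak q_{t-1})+l_a(\mathfrak r_t)\geq l_a(\mathfrak q_{t-1})$, completing the induction.

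The main obstacle will be the interplay between the right-exactness of BZ derivatives and the submodule embeddings from Lemma \ref{lem inductive embeddings in completions}: a priori a map $\pi_{t-1}^{(i)}\to{}^{(j)}\tau_{t-1}$ need not factor cleanly through a single Leibniz layer, so careful bookkeeping will be needed to isolate the Hom in a definite layer before invoking Lemma \ref{lem inequality on delta red}. The strict inequality clause in Lemma \ref{lem inequality on delta red} (when cuspidal supports mismatch) is what ultimately upgrades a naive weight count to the sharp bound $i\geq s_1$ rather than merely $i\geq s_1-s_2$.
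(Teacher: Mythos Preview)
Your overall strategy---iterating along the sequence of minimized data and appealing to Lemma \ref{lem inequality on delta red} at each step---is the same as the paper's, but there is a genuine gap in how you invoke that lemma. You apply it with $\pi_t=D_{\mathfrak p_t}(\pi_{t-1})$ as the $\Delta$-reduced module to conclude $i_2\geq l_a(\mathfrak p_t)$. But in Lemma \ref{lem inequality on delta red} the reduced module $\omega$ sits on the \emph{codomain} side of the Hom, and your Leibniz layer $\pi_t^{(i_1)}\times\mathrm{St}(\mathfrak p_t)^{(i_2)}$ is the domain; the lemma simply does not give that bound. The paper instead first rewrites the Hom via second adjointness as $\mathrm{Hom}(\pi^{(i)}\times\Pi,\tau)\neq 0$ (with $\Pi$ a Gelfand--Graev representation), then uses the codomain embedding $\tau\hookrightarrow\tau_1\times\mathrm{St}(\mathfrak r_1)$ together with the reducedness of $\tau_1$ (Lemma \ref{lem some expression in reduced multisegments}). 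Applying Lemma \ref{lem inequality on delta red} to $\mathrm{Hom}(\mathrm{St}(\mathfrak p_1)^{(i_1)}\times\cdots,\,\tau_1\times\mathrm{St}(\mathfrak r_1))$ then yields only $l_a(\mathfrak p_1)-i_1\leq l_a(\mathfrak r_1)$, i.e.\ $i_1\geq l_a(\mathfrak p_1)-l_a(\mathfrak r_1)$, not $i_1\geq l_a(\mathfrak p_1)$.

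Because of this weaker bound, your inductive arithmetic collapses, and in fact the paper does not induct on $t$. After obtaining $i_1\geq l_a(\mathfrak p_1)-l_a(\mathfrak r_1)$ it uses Frobenius reciprocity and the reducedness of $\tau_1$ to strip off $\mathrm{St}(\mathfrak p_1)^{(i_1)}$ and arrive at a Hom of the form $\mathrm{Hom}(\pi_1^{(i-i_1)}\times\Pi,\,\tau_1\times\kappa_1)$, where $\kappa_1$ is an auxiliary piece of size $l_a(\mathfrak r_1)-l_a(\mathfrak p_1)+i_1$ carrying the slack. Iterating produces a telescoping chain $i_t\geq l_a(\mathfrak p_t)-l_a(\mathfrak r_t)-(\text{accumulated slack})$ which sums, via Lemma \ref{lem length of m in terms of p r}, to $i\geq\sum_t l_a(\mathfrak p_t)-\sum_t l_a(\mathfrak r_t)=l_a(\mathfrak m)$. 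Your closing remark about the strict-inequality clause is also misplaced: that clause is what drives Corollary \ref{cor strict inequality for integer db}, not the present proposition; the sharp bound $i\geq s_1$ here comes from the telescoping sum, not from strictness.
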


\begin{proof}
We first remark that $\mathfrak n$ does not play much role in the proof, thanks to the properties of strongly RdLi-commutative triples.

Let a sequence of minimized data for $(\mathfrak m, \pi)$ be:
\[ (\mathfrak r_1, \mathfrak p_1, \mathfrak q_1, \pi_1), \ldots, (\mathfrak r_k, \mathfrak p_k, \mathfrak q_k, \pi_k) .
\] 
Let $n'=n(\tau)$. Let $\Pi=\Pi_{i-s_1+s_2}$. We first rewrite the inequality, by using the second adjointness, as:
\[  \mathrm{Hom}_{G_{n'}}(\pi^{(i)} \times \Pi, \tau) \neq 0. \]
Then, using Lemma \ref{lem inductive embeddings in completions} (the reducedness part of $\tau_1$ follows from Lemma \ref{lem some expression in reduced multisegments}),
\[  \mathrm{Hom}_{G_{n'}}( (\mathrm{St}(\mathfrak p_1)\times \pi_1)^{(i)} \times \Pi, \tau_1\times \mathrm{St}(\mathfrak r_1)) \neq 0 . \]
Thus, there exists $i_1$ such that 
\[  \mathrm{Hom} (\mathrm{St}(\mathfrak p_1)^{(i_1)} \times \pi_1^{(i-i_1)} \times \Pi, \tau_1 \times \mathrm{St}(\mathfrak r_1) ) \neq 0 .
\]
Then we have that $i_1 \geq l_a(\mathfrak p_1)-l_a(\mathfrak r_1)$ by Lemma \ref{lem inequality on delta red}. 

Now, by Frobenius reciprocity again, we have:
\[ \mathrm{Hom}( \pi_1^{(i-i_1)} \times \Pi\boxtimes \mathrm{St}(\mathfrak p_1)^{(i_1)}, (\tau_1\times \mathrm{St}(\mathfrak r_1))_{N}) \neq 0, 
\]
where $N=N_{l_a(\mathfrak p_1)-i_1}$. Using the reducedness of $\tau_1$, we then have that: 
\[  \mathrm{Hom}(\pi_1^{(i-i_1)} \times \Pi \boxtimes \mathrm{St}(\mathfrak p_1)^{(i_1)}, \tau_1 \dot{\times}^1 (\mathrm{St}(\mathfrak r_1)_{N_{l_a(\mathfrak p_1)-i_1}}) \neq 0 .
\]
Using adjointness, we then have that:
\[ \mathrm{Hom}(\pi_1^{(i-i_1)} \times \Pi, \tau_1 \times \kappa_1)\neq 0,
\]
for some $\kappa_1$ in $\mathrm{Alg}(G_{l_a(\mathfrak r_1)-l_a(\mathfrak p_1)+i_1})$.  

We now use Lemma \ref{lem inductive embeddings in completions} again to obtain:
\[  \mathrm{Hom}((\mathrm{St}(\mathfrak p_2) \times \pi_2)^{(i-i_1)} \times \Pi, \tau_2 \times \mathrm{St}(\mathfrak r_2) \times \kappa_1) \neq 0. 
\]
With the same reasoning as above, we have that: there exists $i_2$ such that
\[            l_a(\mathfrak r_2)+l_a(\mathfrak r_1)-(l_a(\mathfrak p_1)-i_1)         \geq l_a(\mathfrak p_2) -i_2,
\]
(here $l_a(\mathfrak r_2)$ comes from the largest possible contribution in a Jacquet module of $\mathrm{St}(\mathfrak r_2)$, and $l_a(\mathfrak r_1)-(l_a(\mathfrak p)-i_1)$ comes from the largest possible contribution in a Jacquet module from $\kappa_1$), equivalently
\[ i_2 \geq l_a(\mathfrak p_2)-l_a(\mathfrak r_2)+l_a(\mathfrak p_1)-l_a(\mathfrak r_1)-i_1 \]
such that
\[  \mathrm{Hom}( \pi_2^{(i-i_1-i_2)} \times \Pi, \tau_2 \times \kappa_2 ) \neq 0
\]
for some $\kappa_2 \in \mathrm{Irr}(G_{l_a(\mathfrak r_2)+l_a(\mathfrak r_1)-l_a(\mathfrak p_1)-l_a(\mathfrak p_2)+i_1+i_2})$. 

Now inductively, we have that:
\[   i_k \geq \sum_x l_a(\mathfrak p_x) -\sum_y l_a(\mathfrak r_y) -i_1-\ldots -i_{k-1} .
\]
Thus, $i \geq i_1+\ldots +i_k \geq l_a(\mathfrak m)$ by Lemma \ref{lem length of m in terms of p r}. 
\end{proof}

\begin{corollary} \label{cor strict inequality for integer db}
We use the notations in Proposition \ref{prop smallest integer in strong triple}, and in particular, let $(\mathfrak m, \mathfrak n, \pi)$ be a strongly RdLi-commutative triple. Let $(\mathfrak r_1, \mathfrak p_1, \mathfrak q_1, D_{\mathfrak p_1}(\pi))$ be the first term of a sequence of minimized data for $(\mathfrak m, \pi)$ in Section \ref{ss produce strong commute}. Let $\tau=I_{\mathfrak n}\circ D_{\mathfrak m}(\pi)$. Suppose
\[ \mathrm{Hom}_{G_{n-i'-i''}}(\mathrm{St}(\mathfrak p_1)^{(i')} \times (D_{\mathfrak p_1}(\pi))^{(i'')}, {}^{(i-s_1+s_2)}\tau) \neq 0
\]
for some $i' >l_a(\mathfrak p_1)-l_a(\mathfrak r_1)$ and some $i''$ with $i'+i''=i$. Then $i'+i''>l_a(\mathfrak m)$. 
\end{corollary}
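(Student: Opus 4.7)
The argument re-runs the inductive derivative extraction of Proposition \ref{prop smallest integer in strong triple}, but starting from the specific Leibniz layer provided by the hypothesis rather than an arbitrary one. Writing $\mathfrak p=\mathfrak p_1$, by second adjointness the given Hom rewrites as
\[
\mathrm{Hom}_{G_{n'}}\bigl(\mathrm{St}(\mathfrak p_1)^{(i')}\times \pi_1^{(i'')}\times \Pi,\ \tau\bigr)\neq 0,
\]
with $\Pi=\Pi_{i-s_1+s_2}$, $\pi_1=D_{\mathfrak p_1}(\pi)$, and $n'=n-s_1+s_2$. Combining the embedding $\tau\hookrightarrow \tau_1\times\mathrm{St}(\mathfrak r_1)$ from Lemma \ref{lem inductive embeddings in completions}, Frobenius reciprocity, and the $\Delta_1$-reducedness of $\tau_1$ (Lemma \ref{lem some expression in reduced multisegments}) to select the top layer in the geometric lemma, this reduces to
\[
\mathrm{Hom}\bigl(\pi_1^{(i'')}\times \Pi,\ \tau_1\times \kappa_1\bigr)\neq 0
\]
for some $\kappa_1\in\mathrm{Alg}(G_\delta)$ with $\delta=l_a(\mathfrak r_1)-l_a(\mathfrak p_1)+i'>0$, strict by the hypothesis $i'>l_a(\mathfrak p_1)-l_a(\mathfrak r_1)$.

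Next, I would iterate through steps $j=2,\ldots,k$ using the embeddings $\pi_{j-1}\hookrightarrow \pi_j\times\mathrm{St}(\mathfrak p_j)$ and $\tau_{j-1}\hookrightarrow \tau_j\times\mathrm{St}(\mathfrak r_j)$, Leibniz's rule on the BZ derivative, Frobenius reciprocity, and the $\Delta_j$-reducedness of $\tau_j$. At each step Lemma \ref{lem inequality on delta red} applied to the $\Delta_j$-saturated cuspidal support of $\mathrm{St}(\mathfrak p_j)^{(i_j)}$ provides the bound $i_j\geq l_a(\mathfrak p_j)-l_a(\mathfrak r_j)-\dim\kappa_{j-1}$ together with an auxiliary representation $\kappa_j$ of dimension $\sum_{l\leq j}i_l-\sum_{l\leq j}(l_a(\mathfrak p_l)-l_a(\mathfrak r_l))$. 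Summing the cumulative bounds at $j=k$ and applying Lemma \ref{lem length of m in terms of p r} gives the weak inequality $i=i'+i''\geq l_a(\mathfrak m)$.

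To promote this to the strict inequality $i>l_a(\mathfrak m)$, I would invoke the \emph{moreover} clause of Lemma \ref{lem inequality on delta red}. The positive-dimensional $\kappa_1$ carries cuspidal support in $\mathrm{csupp}(\mathfrak r_1)\subset \Delta_1$, and since $\Delta_1$ is $\preceq^R$-maximal in $\mathfrak m$, the cuspidal support $\mathrm{csupp}(\mathrm{St}(\mathfrak r_j)\times\kappa_{j-1})$ accumulated through the iteration must contain a cuspidal outside $\mathrm{csupp}(\mathfrak p_j)$ at some step $j$. At that step Lemma \ref{lem inequality on delta red} sharpens the local inequality to strict, which propagates through the cumulative partial sums to yield $i>l_a(\mathfrak m)$.

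The hard part will be to locate precisely the step at which the accumulated cuspidal support escapes $\mathrm{csupp}(\mathfrak p_j)$. The subtle point is that the intersection--union operations in the minimization process of Section \ref{ss produce strong commute} can lengthen segments, so $b(\Delta_1)$ may end up inside $\mathrm{csupp}(\mathfrak p_j)$ for some $j\geq 2$ and the ``fresh'' cuspidal need not appear at step $2$. Handling this requires careful bookkeeping of cuspidal supports along the sequence $(\mathfrak r_l,\mathfrak p_l,\mathfrak q_l,\pi_l)$, leveraging the $\preceq^R$-maximality of each $\Delta_j$ within its corresponding $\mathfrak q_{j-1}$ (together with the fact that $\mathfrak q_1$ excludes all $\Delta_1$-saturated segments) to ensure that a cuspidal outside $\mathrm{csupp}(\mathfrak p_j)$ eventually surfaces before the iteration terminates.
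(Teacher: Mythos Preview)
Your approach is the paper's approach: re-run the iteration of Proposition~\ref{prop smallest integer in strong triple} with the given first-step data $i_1=i'>l_a(\mathfrak p_1)-l_a(\mathfrak r_1)$, and upgrade the weak bound $i\ge l_a(\mathfrak m)$ to a strict one via the ``moreover'' clause of Lemma~\ref{lem inequality on delta red}. The paper's write-up is extremely terse (and contains at least one apparent typo), so your expanded outline is if anything clearer about the mechanism.

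On your ``hard part'': the paper resolves it by asserting that the $\preceq^R$-maximal choice of $\Delta_1$ forces $b(\Delta_j)<\rho:=b(\Delta_1)$ for every $j\ge 2$, so that the cuspidal contribution carried by the positive-dimensional $\kappa_1$ (whose support lies in $\Delta_1$) is never absorbed by later $\Delta_j$-saturated pieces and therefore forces strictness at the terminal step. Your worry about intersection--union is largely neutralized by one elementary observation you did not record: the elementary intersection--union move on a linked pair \emph{preserves the multiset of $b$-endpoints}. Hence no segment in any $\mathfrak q_j$ can acquire a $b$-endpoint strictly larger than $b(\Delta_1)$. The remaining point---that no segment with $b$-endpoint exactly $b(\Delta_1)$ survives into $\mathfrak q_1$---comes from counting: in $\mathfrak m+\mathfrak r_1$ every segment with $b=b(\Delta_1)$ is $\Delta_1$-saturated (by $\preceq^R$-maximality of $\Delta_1$ in $\mathfrak m$ and the construction of $\mathfrak r_1$), and the minimality of $\mathfrak m$ together with the $|\eta|_{\Delta_1}$-bookkeeping of Proposition~\ref{prop eta function derivative} shows their number matches $|\mathfrak p_1|=|\eta|_{\Delta_1}(\pi)$, so all of them are absorbed into $\mathfrak p_1$. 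The paper does not spell this out, but it is the content behind ``our choice guarantees\ldots''.
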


\begin{proof}
It follows from the previous proof that 
\[   i_k \geq l_a(\mathfrak p_k)-\left(\sum l_a(\mathfrak r_x) -\sum_{y=1}^{k-1} (l_a(\mathfrak p_y)-i_y) \right)
\]
If $k=1$, there is nothing to prove. Now, for $k \geq 2$, our choice guarantees that $b(\Delta) \not\geq \rho$ for any $ \Delta \in \mathfrak q_k$. Thus the inequality is strict by looking at a cuspidal representation in $\mathrm{csupp}(\kappa_k)$ (see the notations in Proposition \ref{prop smallest integer in strong triple} and $\kappa_k$ is defined analogously) which contains $b(\Delta)$. 
\end{proof}

\section{Characterizing the supporting layer in BZ filtration} \label{s characterize bz filtration for bl}

\subsection{Multiplicity-one results}

We first recall the following multiplicity one result:

\begin{theorem} \cite[Theorem 1.1]{Ch21+} \label{thm standard multiplicity one}
Let $\lambda$ be a standard module of $G_{n+1}$ and let $\lambda'$ be a standard module of $G_n$. Then 
\[  \mathrm{Hom}_{G_n}(\lambda, \lambda'{}^{\vee}) \cong \mathbb C.
 \]

\end{theorem}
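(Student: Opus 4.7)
\medskip\noindent\textbf{Proof proposal.} The plan is to prove both the upper bound $\dim\mathrm{Hom}_{G_n}(\lambda,\lambda'{}^\vee)\leq 1$ and the existence of a non-zero element by a joint induction on $n$, using the right Bernstein--Zelevinsky filtration of Section~\ref{ss bz filtration describe} together with the Leibniz rule applied to standard modules. Write $\lambda=\mathrm{St}(\Delta_1)\times\cdots\times \mathrm{St}(\Delta_r)$ and $\lambda'=\mathrm{St}(\Delta'_1)\times\cdots\times \mathrm{St}(\Delta'_s)$ in Langlands order. The first key observation is that iterated use of Leibniz's rule (together with the fact that $\mathrm{St}(\Delta)^{(i)}$ and ${}^{(i)}\mathrm{St}(\Delta)$ are either $0$ or again essentially square-integrable modules of the required form) shows that both $\lambda^{[i]}$ and ${}^{(j)}(\lambda'{}^\vee)$ admit filtrations whose successive quotients are (twists of) standard modules of strictly smaller rank. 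This is what will allow us to feed the induction hypothesis back into the argument.

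The main mechanism will be as follows. Restrict $\lambda$ to $G_n$ and apply $\mathrm{Hom}_{G_n}(-,\lambda'{}^\vee)$ to the right BZ filtration
\[
0\hookrightarrow \Lambda_n(\lambda)\hookrightarrow\cdots\hookrightarrow\Lambda_0(\lambda)=\lambda,
\]
with $\Sigma_i(\lambda)=\Lambda_{i-1}(\lambda)/\Lambda_i(\lambda)$. Using the Ext-adjunction
\[
\mathrm{Ext}^j_{G_n}(\Sigma_i(\lambda),\lambda'{}^\vee)\cong \mathrm{Ext}^j_{G_{n+1-i}}(\lambda^{[i]},{}^{(i-1)}(\lambda'{}^\vee))
\]
recalled in Section~\ref{ss bz filtration describe}, the $\mathrm{Hom}$ in question is controlled by the list of Hom/Ext groups between standard modules of strictly smaller rank. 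The second input we will need is a vanishing statement for higher Ext groups between standard modules of $G_m$ (this is the standard-module analogue of the D.~Prasad-type Ext vanishing exploited throughout the paper, and is available from \cite{Ch21+}); combined with the inductive upper bound $\leq 1$ on each Hom layer, this collapses the long exact sequences obtained from the BZ filtration and reduces the whole computation to determining exactly one contributing layer.

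Concretely, the next step is to identify the distinguished $i=i^\ast$ for which the Hom of layer $\Sigma_{i^\ast}(\lambda)$ with $\lambda'{}^\vee$ can be non-zero. A direct cuspidal-support and Langlands-parameter comparison between $\lambda^{[i]}$ and ${}^{(i-1)}(\lambda'{}^\vee)$ forces $i^\ast$ to be determined uniquely: for $i<i^\ast$ one violates the absolute-length constraint (Lemma~\ref{lem inequality on delta red}-type estimate), and for $i>i^\ast$ one strips off too much cuspidal support, so the inductive hypothesis gives $\mathrm{Hom}=0$. For this single $i^\ast$, the induction hypothesis yields $\mathrm{Hom}_{G_{n+1-i^\ast}}(\lambda^{[i^\ast]},{}^{(i^\ast-1)}(\lambda'{}^\vee))\cong\mathbb C$. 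Combined with the Ext-vanishing this produces the desired equality $\mathrm{Hom}_{G_n}(\lambda,\lambda'{}^\vee)\cong \mathbb C$.

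The base case is the cuspidal case, which is the Bernstein--Zelevinsky computation for restriction to the mirabolic subgroup. Existence, if needed separately, is handled by the Rankin--Selberg construction of Cogdell--Piatetski-Shapiro reviewed in Section~\ref{s construct branching via rs}: one realizes the unique generic constituents of $\lambda$ and $\lambda'$ via Whittaker models and extends the resulting non-zero functional through the standard module using the geometric lemma, which gives the lower bound $\geq 1$. The main obstacle will be the second step above: showing the higher-Ext vanishing for standard modules in the form precise enough to collapse the long exact sequences, and simultaneously proving that for $i\neq i^\ast$ no layer contributes. These two facts must be maintained together along the induction, so the cleanest formulation is to inductively prove a slightly stronger package asserting both the multiplicity-one and the concentration of the non-zero Hom in a single BZ layer $\Sigma_{i^\ast}$, in the spirit of Corollary~\ref{cor integer for branching law equal layer}.
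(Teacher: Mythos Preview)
The paper does not actually prove this theorem; it is quoted from \cite{Ch21+} as an external input (note the attribution ``\cite[Theorem 1.1]{Ch21+}'' and the absence of any proof in the text). So there is no ``paper's own proof'' to compare against. That said, your outline has a genuine gap that would prevent it from going through as written.

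The central claim that a single BZ layer $i^\ast$ contributes is false in general. Take $\lambda=\rho_1\times\rho_2$ a principal series of $G_2$ with $\rho_1,\rho_2$ characters on distinct cuspidal lines, and take $\lambda'=\rho_3$ with $\rho_3^\vee=\nu^{1/2}\rho_1$. Then the top layer $i=2$ gives $\mathrm{Hom}_{G_0}(\lambda^{[2]},{}^{(1)}\lambda'^\vee)=\mathrm{Hom}(\mathbb{C},\mathbb{C})\cong\mathbb{C}$, while the layer $i=1$ gives $\mathrm{Hom}_{G_1}(\lambda^{[1]},\lambda'^\vee)\supset\mathrm{Hom}_{G_1}(\nu^{1/2}\rho_1,\rho_3^\vee)\cong\mathbb{C}$. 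Both layers contribute, yet the total Hom is one-dimensional; this forces the connecting map $\mathrm{Hom}(\Sigma_2,\lambda'^\vee)\to\mathrm{Ext}^1(\Sigma_1,\lambda'^\vee)$ to be injective, in particular $\mathrm{Ext}^1(\Sigma_1,\lambda'^\vee)\neq 0$. So neither your ``unique $i^\ast$'' step nor the ``Ext-vanishing collapses the long exact sequence'' step holds. The Ext-vanishing you invoke from \cite{Ch21+} (and recorded in the paper as $\mathrm{Ext}^j_{G_k}(\lambda_1,\lambda_2^\vee)=0$ when $\lambda_1\not\cong\theta(\lambda_2)$) is precisely unavailable in the matching case, which is exactly the case producing a nonzero Hom contribution.

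What \cite{Ch21+} actually proves is the stronger package $\mathrm{Ext}^j_{G_n}(\lambda,\lambda'^\vee)=0$ for all $j\geq 1$ together with $\mathrm{Hom}\cong\mathbb{C}$ (an Ext-multiplicity theorem, not merely a Hom statement). The Hom part cannot be isolated by the layer-by-layer count you propose: one needs to control the boundary maps, which amounts to knowing the higher branching Ext groups simultaneously. If you want a self-contained argument along your lines, the induction hypothesis must carry the full Ext-vanishing for the branching problem (not just the equal-rank Ext between standard modules), and the inductive step must track cancellations between layers rather than asserting that only one survives. Citing the needed Ext-vanishing from \cite{Ch21+} is circular, since that is the same source for the theorem itself.
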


This immediately gives the following:

\begin{corollary} \label{cor quotient of standard mult one}
Let $\omega \in \mathrm{Alg}(G_{n+1})$ be a quotient of $\lambda$ and let $\omega' \in \mathrm{Alg}(G_n)$ be a quotient of $\lambda'$. Then
\[ \mathrm{dim}~\mathrm{Hom}_{G_n}(\omega, \omega'{}^{\vee}) \leq 1. 
\]
\end{corollary}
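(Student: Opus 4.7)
The plan is to deduce the bound directly from Theorem~\ref{thm standard multiplicity one} by means of two functorial inclusions induced by the surjections $\lambda \twoheadrightarrow \omega$ and $\lambda' \twoheadrightarrow \omega'$. The idea is that applying $\mathrm{Hom}_{G_n}(-, \omega'{}^{\vee})$ to the first surjection turns it into an injection, and dualizing the second surjection (which is exact on admissible finite-length representations, so that $\omega'{}^{\vee} \hookrightarrow \lambda'{}^{\vee}$) then allows us to further embed into the standard-to-standard Hom space, whose dimension is one.

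More concretely, first I would note that a standard module of $G_m$ is of finite length and admissible, and hence so is any quotient; this ensures that taking smooth dual is exact on the relevant subcategories and that $\omega'{}^{\vee} \hookrightarrow \lambda'{}^{\vee}$ from the surjection $\lambda' \twoheadrightarrow \omega'$. Next, the surjection $\lambda \twoheadrightarrow \omega$ induces an injection
\[
\mathrm{Hom}_{G_n}(\omega, \omega'{}^{\vee}) \hookrightarrow \mathrm{Hom}_{G_n}(\lambda, \omega'{}^{\vee}),
\]
and the inclusion $\omega'{}^{\vee} \hookrightarrow \lambda'{}^{\vee}$ induces an injection
\[
\mathrm{Hom}_{G_n}(\lambda, \omega'{}^{\vee}) \hookrightarrow \mathrm{Hom}_{G_n}(\lambda, \lambda'{}^{\vee}).
\]
Composing the two yields $\mathrm{Hom}_{G_n}(\omega, \omega'{}^{\vee}) \hookrightarrow \mathrm{Hom}_{G_n}(\lambda, \lambda'{}^{\vee})$, and Theorem~\ref{thm standard multiplicity one} gives that the right-hand side is one-dimensional, whence the claim.

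There is essentially no obstacle here beyond invoking exactness of smooth duality on admissible finite-length representations; the whole content lies in Theorem~\ref{thm standard multiplicity one}, which is already cited. The only minor point worth remarking on is that $\omega'$ must admit a well-behaved smooth dual, but this is automatic from admissibility of $\lambda'$ and the fact that quotients inherit admissibility.
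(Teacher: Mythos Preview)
Your proof is correct and is exactly the argument the paper has in mind; the paper merely writes ``This immediately gives the following'' after Theorem~\ref{thm standard multiplicity one}, and your two functorial injections are the obvious unpacking of that word ``immediately.''
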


When $\omega$ and $\omega'$ are irreducible, this in particular recovers the multiplicity-one theorem \cite{AGRS10} (see \cite{Ch21+}).

\begin{definition} \label{def integer determining branching}
\begin{enumerate}
\item Let $\pi \in \mathrm{Alg}(M_{n+1})$ and let $\pi' \in \mathrm{Alg}(G_n)$. Let $f$ be a non-zero element in $\mathrm{Hom}_{G_n}(\pi, \pi')$. We say that the {\it right layer supporting $f$} is $i^*$ (or $i^*$-th) if $i^*$ is the largest integer such that 
\[ f|_{\Lambda_{i^*-1}(\pi)} \in \mathrm{Hom}_{G_n}(\Lambda_{i^*-1}(\pi), \pi')\neq 0. \]  
We denote such integer $i^*$ by $\mathcal L_{rBL}(f)$.
\item Let $\lambda$ and $\lambda'$ be standard modules of $G_{n+1}$ and $G_n$ respectively. Let $\omega$ and $\omega'$ be (not necessarily irreducible) quotients of $\lambda$ and $\lambda'$. Suppose $\mathrm{Hom}_{G_n}(\omega, \omega'^{\vee})\neq 0$. We define $\mathcal L_{rBL}(\omega, \omega'{}^{\vee})=\mathcal L_{rBL}(f)$ for the unique (up to a scalar) non-zero element $f$ in $\mathrm{Hom}_{G_n}(\omega, \omega'{}^{\vee})$, see Corollary \ref{cor quotient of standard mult one}.   
\item For $\pi \in \mathrm{Alg}(G_{n+1})$ or $\pi \in \mathrm{Alg}(M_{n+1}^t)$. One defines the left layer supporting $f$ analogously. Then one also defines the {\it left layer supporting the branching law} and $\mathcal L_{lBL}$ analogously by using the filtration ${}_i\Lambda(\pi)$.
\end{enumerate}
\end{definition}

As hinted from \cite[Page 137]{CPS17}, for generic representations, the Whittaker functions from those layers in Definition \ref{def integer determining branching} contribute poles of $L$-functions.

For auxiliary results, we shall only prove the right version and we shall sometimes drop the term "right" if there is no confusion.

\subsection{Supporting layer in terms of BZ derivatives}

Recall that a standard module is discussed in Section \ref{ss basic notations}.

\begin{lemma} \label{lem std module filtration}
Let $\pi$ be a standard module of $G_n$. Then $\pi^{(i)}$ admits a filtration whose successive subquotients are also standard modules.
\end{lemma}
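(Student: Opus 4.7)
The strategy is to combine iterated Leibniz's rule (the geometric lemma) with the explicit BZ derivatives of a single generalized Steinberg. Write $\pi = \lambda(\mathfrak{m}) = \mathrm{St}(\Delta_1) \times \cdots \times \mathrm{St}(\Delta_r)$ with the segments arranged in an ascending order, so that $\Delta_1 \not\leq \cdots \not\leq \Delta_r$. Applying Leibniz's rule through the $r$-fold parabolic induction produces a filtration of $\pi^{(i)}$ whose successive subquotients, up to the $\nu$-twists carried by the geometric lemma, are of the form
\[
  \mathrm{St}(\Delta_1)^{(i_1)} \times \cdots \times \mathrm{St}(\Delta_r)^{(i_r)},
\]
ranging over all tuples of non-negative integers with $i_1 + \cdots + i_r = i$.

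Next I invoke the classical computation of the BZ derivatives of a generalized Steinberg: for $\Delta = [a,b]_{\rho}$, the derivative $\mathrm{St}(\Delta)^{(j)}$ vanishes unless $j = k \cdot n(\rho)$ for some $0 \leq k \leq b - a + 1$, in which case it is a $\nu$-twist of $\mathrm{St}([a, b-k]_{\rho})$ (interpreted as the trivial character of $G_0$ when $k = b-a+1$). Substituting this into each layer turns every subquotient into a $\nu$-twisted parabolic induction $\mathrm{St}(\widetilde{\Delta}_1) \times \cdots \times \mathrm{St}(\widetilde{\Delta}_s)$, where each $\widetilde{\Delta}_j$ is either empty or a right-truncation $[a_j, b_j - k_j]_{\rho_j}$ of $\Delta_j$ (dropping empty factors).

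The main obstacle is that right-truncation can destroy the $\not\leq$ ordering: even though $\Delta_1 \not\leq \cdots \not\leq \Delta_r$ originally, the truncated segments $\widetilde{\Delta}_j$ may acquire new linkings that violate the standard ordering, so the resulting product is not literally a standard module as written. To resolve this, I will further refine each layer by a sequence of local swaps of adjacent factors $\mathrm{St}(\widetilde{\Delta}_j) \times \mathrm{St}(\widetilde{\Delta}_{j+1})$. Unlinked pairs can be commuted freely, since such a product is irreducible. For a linked pair in the wrong order, the standard intertwining operator gives a short exact sequence whose two constituents are themselves parabolic inductions in the correct ordering, corresponding to either $\lambda(\{\widetilde{\Delta}_j, \widetilde{\Delta}_{j+1}\})$ or $\lambda(\{\widetilde{\Delta}_j \cup \widetilde{\Delta}_{j+1}, \widetilde{\Delta}_j \cap \widetilde{\Delta}_{j+1}\})$ (reusing the intersection-union combinatorics of Section \ref{ss intersect union}). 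Iterating these local moves on each layer until all factors are in the required ascending order produces the desired refinement whose successive subquotients are standard modules.

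The routine parts are the Leibniz decomposition and the Steinberg-derivative formula; the only genuinely delicate step is the reordering of linked truncated segments in the last paragraph, which is handled inductively on the number of order-violating adjacent pairs by shrinking a standard combinatorial statistic (e.g. the total number of $j < k$ with $\widetilde{\Delta}_j \leq \widetilde{\Delta}_k$) after each swap.
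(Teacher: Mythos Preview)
Your step invoking the derivative of a single Steinberg is wrong: the \emph{right} BZ derivative of $\mathrm{St}([a,b]_{\rho})$ truncates the segment on the \emph{left}, not the right. Concretely, $\mathrm{St}([a,b]_{\rho})_{N_{n-m,m}} \cong \mathrm{St}([a+k,b]_{\rho}) \boxtimes \mathrm{St}([a,a+k-1]_{\rho})$ for $m = k\,n(\rho)$, so $\mathrm{St}(\Delta)^{(k\,n(\rho))}$ is (a twist of) $\mathrm{St}([a+k,b]_{\rho})$. (It is $\langle \Delta \rangle$, not $\mathrm{St}(\Delta)$, whose right derivative truncates on the right.) This is exactly the point that makes the paper's proof immediate: the paper orders the segments so that $b(\Delta_k) \not< b(\Delta_l)$ for $k<l$, and since left truncation preserves all $b$-endpoints, each Leibniz layer $\mathrm{St}({}^{(i_1)}\Delta_1)\times\cdots\times\mathrm{St}({}^{(i_r)}\Delta_r)$ is already in standard order. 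Your entire ``main obstacle'' disappears once the correct formula and this ordering are used.

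Independently, your proposed fix for the (nonexistent) obstacle also has a gap. For a linked pair $\widetilde{\Delta}_j < \widetilde{\Delta}_{j+1}$ in the wrong order, the length-two filtration of $\mathrm{St}(\widetilde{\Delta}_j)\times\mathrm{St}(\widetilde{\Delta}_{j+1})$ has as its pieces the \emph{irreducible} representations $\mathrm{St}(\{\widetilde{\Delta}_j,\widetilde{\Delta}_{j+1}\})$ and $\mathrm{St}(\{\widetilde{\Delta}_j\cup\widetilde{\Delta}_{j+1},\ \widetilde{\Delta}_j\cap\widetilde{\Delta}_{j+1}\})$, not the corresponding standard modules $\lambda(\cdot)$. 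So after one swap you no longer have a product of Steinbergs to which you can apply the next swap, and the induction on the number of inversions cannot proceed as described.
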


\begin{proof}
We have that $\pi \cong \lambda(\mathfrak m)$ for some multisegment $\mathfrak m$. We write the segments in $\mathfrak m$ as $\Delta_1, \ldots, \Delta_r$ such that $b(\Delta_k)\not\leq b(\Delta_l)$for any $k<l$. By using the Leibniz's rule, $\pi^{(i)}$ admits a filtration whose successive subquotients are 
\[  \mathrm{St}(\Delta_1)^{(i_1)}\times \ldots \times \mathrm{St}(\Delta_r)^{(i_r)} ,
\]
with $i_1+\ldots+i_r=i$. Each $\mathrm{St}(\Delta_k)^{(i_k)}$ is isomorphic to $\mathrm{St}({}^{(i_k)}\Delta_k)$ (here ${}^{(i_k)}\Delta_k$ is a segment from $\Delta_k$ by truncating cuspidal representations on the left), and so the subquotient is still a standard module since $b({}^{(i_k)}\Delta_k) \not\leq  b({}^{(i_l)}\Delta_l)$ for any $k<l$ (drop the term if ${}^{(i_k)}\Delta$ is empty). This proves the lemma.
\end{proof}

The following homological property is the key for deducing a characterization of branching laws.

\begin{lemma} \label{lem vanishing ext filtration}
Let $\lambda$ and $\lambda'$ be standard modules of $G_n$ and $G_m$ respectively. Let $k=n-m$. For $i, i+k \geq 0$, label the standard modules in the filtration (see Lemma \ref{lem std module filtration}) of $\lambda^{(i+k)}$ by $\kappa_1, \ldots, \kappa_p$ and label the standard modules in the filtration (see Lemma \ref{lem std module filtration}) of $\lambda'{}^{(i)}$ by $\kappa_1', \ldots, \kappa_q'$. 
\begin{enumerate}
\item If $\kappa_s \cong \theta(\kappa_t')$ for some $s,t$, then $\mathrm{Hom}_{G_{m-i}}(\lambda^{(i+k)}, {}^{(i)}(\lambda'{}^{\vee})) \neq 0$.
\item If $\kappa_s \not\cong \theta(\kappa_t')$ for any $s,t$, then, for any $j$, 
\[  \mathrm{Ext}^j_{G_{m-i}}(\lambda^{(i+k)}, {}^{(i)}(\lambda'{}^{\vee})=0 .
\]
\end{enumerate}
\end{lemma}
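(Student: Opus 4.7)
The plan is to do simultaneous devissage on the two filtrations and reduce the whole computation to Ext groups between individual standard modules of $G_{m-i}$.

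First I would pin down ${}^{(i)}(\lambda'^{\vee})$ explicitly. By Gelfand--Kazhdan, $\theta(\pi)\cong \pi^{\vee}$ on irreducibles, and one checks the same formula on standard modules either directly (using $\theta(\mathrm{St}(\Delta))\cong \mathrm{St}(\Delta)^{\vee}$ and the fact that $\theta$ reverses the order of parabolic induction on $\mathrm{GL}$) or by Jordan--H\"older and exactness. Thus $\theta(\lambda'^{\vee})\cong \lambda'$, and the definition ${}^{(i)}\pi=\theta(\theta(\pi)^{(i)})$ gives ${}^{(i)}(\lambda'^{\vee})\cong \theta(\lambda'^{(i)})$. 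Applying $\theta$ (an exact auto-equivalence) to the filtration of $\lambda'^{(i)}$ provided by Lemma \ref{lem std module filtration}, we obtain a filtration of ${}^{(i)}(\lambda'^{\vee})$ whose successive subquotients are $\theta(\kappa_q'),\ldots,\theta(\kappa_1')$ (ordered from the bottom).

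For part (1), I would arrange the filtration of $\lambda^{(i+k)}$ coming from Leibniz's rule so that a chosen matching layer $\kappa_{s}$ appears as a cosocle-layer (a quotient), and similarly rearrange the filtration of ${}^{(i)}(\lambda'^{\vee})$ so that $\theta(\kappa_{t}')$ appears as a socle-layer (a submodule). Such a rearrangement is available because the Leibniz filtration can be refined: distinct cuspidal-support types split and within one cuspidal-support type one can use that standard modules have unique Langlands quotients. The composition
\[
\lambda^{(i+k)} \twoheadrightarrow \kappa_{s} \overset{\sim}{\longrightarrow} \theta(\kappa_{t}') \hookrightarrow {}^{(i)}(\lambda'^{\vee})
\]
then gives a non-zero Hom element.

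For part (2), the long exact sequences of $\mathrm{Ext}^j$ applied to both filtrations reduce the problem to showing
\[
\mathrm{Ext}^{j}_{G_{m-i}}\bigl(\kappa_s,\,\theta(\kappa_t')\bigr)=0 \quad\text{for all }j\text{ and all pairs }(s,t)
\]
under the non-matching hypothesis. When $\mathrm{csupp}(\kappa_s)\neq \mathrm{csupp}(\theta(\kappa_t'))$ the two modules lie in different Bernstein blocks and all Ext groups vanish trivially. The harder case is when the cuspidal supports agree but $\kappa_s\not\cong \theta(\kappa_t')$. Here I would invoke the homological rigidity of standard modules in a fixed block: using the Bernstein--Zelevinsky classification, any standard module admits a resolution by parabolic inductions of essentially square-integrable ones whose Langlands data strictly dominates, and running this resolution through $\mathrm{Hom}(\cdot,\theta(\kappa_t'))$ together with Theorem \ref{thm standard multiplicity one} (and the fact that the Langlands quotient of $\kappa_s$ differs from that of $\theta(\kappa_t')$) forces each term, and hence every $\mathrm{Ext}^j$, to vanish.

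The main obstacle is the Ext-vanishing between non-matching standard modules in the same block needed for part (2): the Hom vanishing in part (1) of Theorem \ref{thm standard multiplicity one} is easy, but extending this to all higher Ext requires the homological input above, which is the delicate piece. Everything else is formal devissage once the identification ${}^{(i)}(\lambda'^{\vee})\cong \theta(\lambda'^{(i)})$ is in place.
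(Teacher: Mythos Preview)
Your overall strategy---identify ${}^{(i)}(\lambda'^{\vee})\cong \theta(\lambda'^{(i)})$, then d\'evissage on both filtrations to reduce to $\mathrm{Ext}^j(\kappa_s,\theta(\kappa_t'))$---is exactly the paper's route. The paper's proof is two lines: it cites the fact that for standard modules $\lambda_1,\lambda_2$ one has $\mathrm{Ext}^j_{G_k}(\lambda_1,\lambda_2^{\vee})=0$ for all $j$ whenever $\lambda_1\not\cong\theta(\lambda_2)$ (referring to \cite[Lemma~5.5]{Ch22+b}, with the Langlands version deduced from the Zelevinsky one via Bernstein's cohomological duality), and then says the lemma follows from long exact sequences. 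So for part~(2) you are on the same track; the difference is that the paper treats the key $\mathrm{Ext}$-vanishing as a known black box rather than re-proving it. Your sketch of that re-proof is the weakest point: the invocation of Theorem~\ref{thm standard multiplicity one} is a mismatch (that theorem concerns a branching Hom between ranks $n+1$ and $n$, not same-rank $\mathrm{Ext}$), and ``resolution by parabolic inductions whose Langlands data strictly dominates'' is not spelled out enough to carry weight.

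For part~(1) there is a genuine gap in your proposed argument. You want to rearrange the Leibniz filtration so that a chosen layer $\kappa_s$ sits on top and $\theta(\kappa_t')$ sits on the bottom. Splitting off Bernstein components handles different cuspidal supports, but within a single cuspidal-support block your justification (``standard modules have unique Langlands quotients'') does not buy you the freedom to move a layer to the boundary: that would require the relevant $\mathrm{Ext}^1$ between adjacent layers to vanish, which is precisely the cited fact you have not yet used. The paper's (implicit) argument avoids this altogether: once you know $\mathrm{Ext}^j(\kappa_s,\theta(\kappa_t'))=0$ for all $j\geq 1$ and all non-matching $(s,t)$, the long exact sequences force $\dim\mathrm{Hom}(\lambda^{(i+k)},{}^{(i)}(\lambda'^{\vee}))$ to equal $\sum_{s,t}\dim\mathrm{Hom}(\kappa_s,\theta(\kappa_t'))$, and any matching pair contributes at least the identity of $\kappa_s$. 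So part~(1) also comes out of the same d\'evissage, with no filtration gymnastics needed.
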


\begin{proof}
Recall that for two standard modules $\lambda_1, \lambda_2$ of $G_k$, $\mathrm{Ext}^j_{G_k}(\lambda_1, \lambda_2^{\vee})=0$ if $\lambda_1 \not\cong \theta(\lambda_2)$ (see \cite[Lemma 5.5]{Ch22+b}, which is stated and shown for Zelevinsky classification, but one for Langlands classification can be proved similarly or deduced by applying Bernstein's cohomological duality). Now, the lemma follows from a standard application of long exact sequences.
\end{proof}

\begin{corollary} \label{cor integer determining branching law and layers}
Let $\lambda$ and $\lambda'$ be standard modules of $G_{n+1}$ and $G_n$ respectively. Let $\pi$ and $\pi'$ be (not necessarily irreducible) quotients of $\lambda$ and $\lambda'$ respectively. Suppose $\mathrm{Hom}_{G_n}(\pi, \pi'{}^{\vee})\neq 0$. Let $i^*=\mathcal L_{rBL}(\lambda, \lambda'{}^{\vee})$. Then, for $i<i^*$,
\[    \mathrm{Hom}_{G_n}(\Sigma_i(\pi), \pi'{}^{\vee}) =0, \quad \mbox{equivalently} \quad \mathrm{Hom}_{G_{n+1-i}}(\pi^{[i]}, {}^{(i-1)}(\pi'{}^{\vee})) =0 ,
\]
and 
\[   \mathrm{Hom}_{G_n}(\Sigma_{i^*}(\pi), \pi'{}^{\vee}) \cong \mathbb C, \quad \mbox{equivalently} \quad \mathrm{Hom}_{G_{n+1-i^*}}(\pi^{[i^*]}, {}^{(i^*-1)}(\pi'{}^{\vee})) \cong \mathbb C .
\]
In particular, $\mathcal L_{rBL}(\pi, \pi'{}^{\vee})=i^*$.
\end{corollary}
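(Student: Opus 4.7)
\medskip

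\noindent\textbf{Plan.} The strategy is to first establish the result at the level of the original standard modules $\lambda, \lambda'$, where Theorem \ref{thm standard multiplicity one} supplies a one-dimensional Hom-space and Lemma \ref{lem vanishing ext filtration} pinpoints exactly which BZ layer carries it. Then, using that $\pi^{[i]}$ is a quotient of $\lambda^{[i]}$ (since the functors $\Phi^{\pm}, \Psi^{\pm}$, and hence $\Sigma_i, \Lambda_j$, are exact) and that ${}^{(i-1)}(\pi'^{\vee})$ is a subrepresentation of ${}^{(i-1)}(\lambda'^{\vee})$ (since $\pi'^{\vee} \hookrightarrow \lambda'^{\vee}$ and ${}^{(i-1)}$ is exact), the quotient case embeds into the standard case and the conclusion is forced by dimension count.

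\medskip

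\noindent\textbf{Standard case.} Using the adjunction $\mathrm{Hom}_{G_n}(\Sigma_i(\lambda), \lambda'^{\vee}) \cong \mathrm{Hom}_{G_{n+1-i}}(\lambda^{[i]}, {}^{(i-1)}(\lambda'^{\vee}))$ from Section \ref{ss bz filtration describe}, I will apply Lemma \ref{lem vanishing ext filtration} to the filtrations of $\lambda^{[i]}$ and ${}^{(i-1)}(\lambda'^{\vee})$ by standard modules (Lemma \ref{lem std module filtration}). The key observation is: for those $i$ where no $\theta$-matching of standard factors occurs, all $\mathrm{Ext}^j$'s vanish, so the boundary map in the long exact sequence associated to $0 \to \Lambda_i(\lambda) \to \Lambda_{i-1}(\lambda) \to \Sigma_i(\lambda) \to 0$ produces isomorphisms $\mathrm{Hom}(\Lambda_{i-1}(\lambda), \lambda'^{\vee}) \cong \mathrm{Hom}(\Lambda_i(\lambda), \lambda'^{\vee})$. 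Starting from $\mathrm{Hom}(\Lambda_0(\lambda), \lambda'^{\vee}) = \mathrm{Hom}(\lambda, \lambda'^{\vee}) \cong \mathbb{C}$ and comparing with the defining property of $i^* = \mathcal{L}_{rBL}(\lambda, \lambda'^{\vee})$, I conclude:
\begin{align*}
\mathrm{Hom}_{G_n}(\Sigma_i(\lambda), \lambda'^{\vee}) &= 0 \quad \text{for } i < i^*, \\
\mathrm{Hom}_{G_n}(\Sigma_{i^*}(\lambda), \lambda'^{\vee}) &\cong \mathbb{C}, \\
\mathrm{Hom}_{G_n}(\Lambda_{i^*}(\lambda), \lambda'^{\vee}) &= 0.
\end{align*}

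\medskip

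\noindent\textbf{Transfer to quotients.} Exactness of $\Sigma_i$ and $\Lambda_j$ combined with the surjection $\lambda \twoheadrightarrow \pi$ and the inclusion $\pi'^{\vee} \hookrightarrow \lambda'^{\vee}$ yields natural injections
\[
\mathrm{Hom}_{G_n}(\Sigma_i(\pi), \pi'^{\vee}) \hookrightarrow \mathrm{Hom}_{G_n}(\Sigma_i(\lambda), \lambda'^{\vee}), \qquad \mathrm{Hom}_{G_n}(\Lambda_j(\pi), \pi'^{\vee}) \hookrightarrow \mathrm{Hom}_{G_n}(\Lambda_j(\lambda), \lambda'^{\vee}).
\]
The first vanishing in the theorem for $i < i^*$ follows at once. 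For $i = i^*$, the injection gives $\mathrm{Hom}(\Sigma_{i^*}(\pi), \pi'^{\vee}) \leq 1$ in dimension; the lower bound $\geq 1$ comes by running the long exact sequence for $\pi$: the vanishing at layers $i < i^*$ forces a chain of injections $\mathbb{C} \cong \mathrm{Hom}(\pi, \pi'^{\vee}) = \mathrm{Hom}(\Lambda_0(\pi), \pi'^{\vee}) \hookrightarrow \cdots \hookrightarrow \mathrm{Hom}(\Lambda_{i^*-1}(\pi), \pi'^{\vee})$, while the injection at $j = i^*$ into $\mathrm{Hom}(\Lambda_{i^*}(\lambda), \lambda'^{\vee}) = 0$ gives $\mathrm{Hom}(\Lambda_{i^*}(\pi), \pi'^{\vee}) = 0$; then $\mathrm{Hom}(\Sigma_{i^*}(\pi), \pi'^{\vee}) \cong \mathrm{Hom}(\Lambda_{i^*-1}(\pi), \pi'^{\vee}) \cong \mathbb{C}$. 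The final assertion $\mathcal{L}_{rBL}(\pi, \pi'^{\vee}) = i^*$ is immediate from the two vanishings above applied to the unique non-zero element of $\mathrm{Hom}(\pi, \pi'^{\vee})$.

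\medskip

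\noindent\textbf{Main obstacle.} The substantive input is really Lemma \ref{lem vanishing ext filtration}: once one grants the higher Ext-vanishing for the non-matching layers between the two standard filtrations, the rest is a bookkeeping exercise with long exact sequences and exactness of the BZ functors. The only subtle step is ensuring that the $\theta$-matching of standard subquotients happens at precisely one layer $i^*$ for the original pair $(\lambda, \lambda'^{\vee})$ — but this is exactly how $\mathcal{L}_{rBL}(\lambda, \lambda'^{\vee})$ is defined, so no additional combinatorics are required.
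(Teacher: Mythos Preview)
Your overall structure matches the paper's: use Theorem~\ref{thm standard multiplicity one} and Lemma~\ref{lem vanishing ext filtration} to control the standard case, then push to quotients via exactness of the BZ functors and the injections $\mathrm{Hom}(\Sigma_i(\pi),\pi'^{\vee})\hookrightarrow\mathrm{Hom}(\Sigma_i(\lambda),\lambda'^{\vee})$. The transfer-to-quotients paragraph is correct as written.

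There is, however, a real logical gap in your standard case. You assert that ``$\theta$-matching happens at precisely one layer $i^*$, and this is exactly how $\mathcal L_{rBL}(\lambda,\lambda'^{\vee})$ is defined.'' It is not. By Definition~\ref{def integer determining branching}, $i^*$ is the largest integer with $f|_{\Lambda_{i^*-1}(\lambda)}\neq 0$ for the unique $f$; nothing in that definition mentions matching of standard subquotients. That matching occurs only at $i^*$ is a \emph{consequence} of the corollary, not an input, so you cannot invoke Lemma~\ref{lem vanishing ext filtration}(2) for $i<i^*$ without first proving $\mathrm{Hom}(\Sigma_i(\lambda),\lambda'^{\vee})=0$.

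The paper closes this loop by an induction that reverses your order of implication: assuming $\dim\mathrm{Hom}(\lambda_{i-1},\lambda'^{\vee})=1$, the restriction map $f_i\colon\mathrm{Hom}(\lambda_{i-1},\lambda'^{\vee})\to\mathrm{Hom}(\lambda_i,\lambda'^{\vee})$ is nonzero for $i<i^*$ (directly from the definition of $i^*$), hence injective on a one-dimensional source, hence its kernel $\mathrm{Hom}(\Sigma_i(\lambda),\lambda'^{\vee})$ vanishes. Only \emph{then} does Lemma~\ref{lem vanishing ext filtration}(2) apply to give $\mathrm{Ext}^1(\Sigma_i(\lambda),\lambda'^{\vee})=0$, making $f_i$ an isomorphism and closing the induction. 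Insert this two-line argument and your proof is complete; as written, the ``Main obstacle'' paragraph is circular.
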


\begin{proof}
Set $\pi_i=\Lambda_i(\pi)$ and $\lambda_i=\Lambda_i(\lambda)$.

\noindent
{\it Claim:} For $i<i^*$, $\mathrm{Hom}_{G_n}(\pi_i, \pi'{}^{\vee}) \cong \mathbb C$ and $\mathrm{Hom}_{G_n}(\Sigma_i(\pi), \pi'{}^{\vee}) \cong 0$; and $\mathrm{Hom}_{G_n}(\lambda_i,\lambda'{}^{\vee})\cong \mathbb C$ and $\mathrm{Hom}_{G_n}(\Sigma_i(\lambda), \lambda'{}^{\vee}) \cong 0$. \\

\noindent
{\it Proof of Claim:} We consider the following commutative diagram:
\[ \xymatrix{ 0 \ar[r] & \mathrm{Hom}_{G_n}(\Sigma_i(\pi), \pi'{}^{\vee}) \ar[r]\ar@{^{(}->}[d] & \mathrm{Hom}_{G_n}(\pi_{i-1}, \pi'{}^{\vee}) \ar[r]^{f_i''} \ar@{^{(}->}[d] & \mathrm{Hom}_{G_n}(\pi_i, \pi'{}^{\vee}) \ar@{^{(}->}[d] &  \\
              0 \ar[r] & \mathrm{Hom}_{G_n}(\Sigma_i(\lambda), \pi'{}^{\vee})\ar[r] \ar@{^{(}->}[d] & \mathrm{Hom}_{G_n}(\lambda_{i-1}, \pi'{}^{\vee})\ar[r]^{f_i'} \ar@{^{(}->}[d] & \mathrm{Hom}_{G_n}(\lambda_i, \pi'{}^{\vee}) \ar@{^{(}->}[d] \\
							0 \ar[r] & \mathrm{Hom}_{G_n}(\Sigma_i(\lambda), \lambda'{}^{\vee}) \ar[r] & \mathrm{Hom}_{G_n}(\lambda_{i-1}, \lambda'{}^{\vee}) \ar[r]^{f_i} & \mathrm{Hom}_{G_n}(\lambda_i, \lambda'{}^{\vee})  \ar[r] & \mathrm{Ext}^1_{G_n}(\Sigma_i(\lambda), \lambda'{}^{\vee}) }
\]
When $i=0$, $\pi_0=\pi$ and so it follows from the assumption and Theorem \ref{thm standard multiplicity one}. We now consider $i\geq 1$. By considering the middle line, the injectivity and assumptions imply all Hom's in the middle are one-dimensional. Then from the assumption on $\mathcal L_{rBL}(\lambda, \lambda'{}^{\vee})=i^*$, $f_i$ is non-zero and so must be injective by one-dimensionality (from induction). Then, $\mathrm{Hom}_{G_n}(\Sigma_i(\lambda), \lambda'{}^{\vee})=0$. By Lemma \ref{lem vanishing ext filtration}, the last $\mathrm{Ext}^1$ is zero and so the map $f_i$ is an isomorphism and so \[ \mathrm{Hom}_{G_n}(\lambda_i, \lambda'{}^{\vee})\cong \mathbb C,\]
 and then by the commutativity diagram, we also have $\mathrm{Hom}_{G_n}(\lambda_i, \pi'{}^{\vee}) \cong \mathbb C$ and $f'_i$ is also an isomorphism. Now we repeat the argument and obtain that $\mathrm{Hom}_{G_n}(\pi_i, \pi'{}^{\vee})\cong \mathbb C$ and $f_i''$ is an isomorphism. This then also implies that $\mathrm{Hom}_{G_n}(\Sigma_i(\pi), \pi'{}^{\vee})=0$. This proves the claim. \\

We now return to the proof. It remains to establish the case for $i=i^*$. To this end, we consider the following exact sequence:
\[ 0 \rightarrow \mathrm{Hom}_{G_n}(\Sigma_{i^*}(\pi), \pi'{}^{\vee}) \rightarrow \mathrm{Hom}_{G_n}(\pi_{i^*-1}, \pi'{}^{\vee}) \rightarrow \mathrm{Hom}_{G_n}(\pi_{i^*}, \pi'{}^{\vee}) .
\]
The last term is zero by $\mathrm{Hom}_{G_n}(\lambda_{i^*}, \lambda'{}^{\vee})=0$. Thus, the first Hom is isomorphic to the second one, and then the corollary follows from the claim. 
\end{proof}
%We finally remark that the last assertion also follows from Lemma \ref{lem integer determine quotient} below.

%\begin{corollary}
%Let $\pi \in \mathrm{Irr}(G_{n+1})$ and let $\pi' \in \mathrm{Irr}(G_n)$. Suppose $\mathrm{Hom}_{G_n}(\pi, \pi') \neq 0$. Let $i^*$ be the integer determining the branching law. Then, for any $i<i^*$, $\mathrm{Hom}_{G_{n+1-i}}(\pi_1^{[i]}, {}^{(i-1)}\pi_2)=0$. 
%\end{corollary}

%\begin{proof}
%By second adjointness of Frobenius reciprocity, we have that 
%\begin{align*}
% \mathrm{Hom}_{G_n}(\Sigma_i(\pi), \pi')& \cong \mathrm{Hom}_{G_n}(\mathrm{ind}^{M_{n+1}}_{R_i}\pi^{(i)} \boxtimes \psi_i, \pi') \\
%  &\cong \mathrm{Hom}_{G_{n}}(\mathrm{ind}^{G_n}_{R_{i-1}}\pi^{[i]}\boxtimes \psi_i, \pi') \\
%	&\cong \mathrm{Hom}_{G_{n+1-i}}(\pi^{[i]}, {}^{(i-1)}\pi') .
%\end{align*}
%For $i<i^*$, the zeroness of the last term now follows from Corollary \ref{cor layer determing derivative}.
%\end{proof}

%\begin{remark}
% Theorem \ref{thm refinement conjecture} (we have not yet proved) suggests that the unique map $\mathrm{Hom}_{G_{n+1-i^*}}(\pi^{[i^*]}, {}^{(i-1)}\pi')$ factors through a simple quotient of $\pi^{[i^*]}$ and a simple submodule of ${}^{(i-1)}\pi'$. 
%\end{remark}

\begin{corollary} \label{cor integer for branching law equal layer}
Let $\pi \in \mathrm{Irr}(G_{n+1})$ and let $\pi'\in \mathrm{Irr}(G_n)$. Suppose $\mathrm{Hom}_{G_n}(\pi, \pi')\neq 0$.  Let $i^*$ be the smallest integer such that 
\[ \mathrm{Hom}_{G_n}(\Sigma_{i^*}(\pi), \pi')\neq 0, \mbox{ equivalently } \quad \mathrm{Hom}_{G_{n+1-i^*}}(\pi^{[i^*]}, {}^{(i^*-1)}\pi') \neq 0.\]
 Then $\mathcal L_{rBL}(\pi, \pi')=i^*$. 
\end{corollary}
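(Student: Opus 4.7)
The plan is that this corollary is essentially a direct specialization of Corollary \ref{cor integer determining branching law and layers} to the irreducible setting, so the work consists mainly of fitting $\pi$ and $\pi'$ into the framework of quotients of standard modules.

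First, I would invoke the Langlands classification to realize $\pi$ as a quotient of a standard module $\lambda$, and (since $\pi'^{\vee}$ is again irreducible) realize $\pi'^{\vee}$ as a quotient of a standard module $\lambda'$. Thus $\pi'=(\pi'^{\vee})^{\vee}$ fits the hypotheses of Corollary \ref{cor integer determining branching law and layers} in the role of the second argument, and the assumption $\mathrm{Hom}_{G_n}(\pi,\pi')\neq 0$ is precisely the hypothesis there.

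Next, set $i_0:=\mathcal L_{rBL}(\lambda,\lambda'{}^{\vee})$. Corollary \ref{cor integer determining branching law and layers} then delivers three facts simultaneously: $\mathrm{Hom}_{G_n}(\Sigma_i(\pi),\pi')=0$ for every $i<i_0$; $\mathrm{Hom}_{G_n}(\Sigma_{i_0}(\pi),\pi')\cong\mathbb C$; and $\mathcal L_{rBL}(\pi,\pi')=i_0$. The first two items force the smallest integer $i$ with $\mathrm{Hom}_{G_n}(\Sigma_i(\pi),\pi')\neq 0$ to coincide with $i_0$, i.e. $i^{*}=i_0$, and combining with the third item yields $\mathcal L_{rBL}(\pi,\pi')=i^{*}$. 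The equivalence between $\mathrm{Hom}_{G_n}(\Sigma_{i^{*}}(\pi),\pi')\neq 0$ and $\mathrm{Hom}_{G_{n+1-i^{*}}}(\pi^{[i^{*}]},{}^{(i^{*}-1)}\pi')\neq 0$ used in the statement is the $j=0$ case of the adjunction isomorphism
\[
\mathrm{Ext}^{j}_{G_n}(\Sigma_i(\pi),\pi')\cong \mathrm{Ext}^{j}_{G_{n+1-i}}(\pi^{[i]},{}^{(i-1)}\pi')
\]
recalled in Section \ref{ss bz filtration describe}.

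There is no real obstacle here: all the homological input (the multiplicity one theorem for standard modules in Theorem \ref{thm standard multiplicity one}, the Ext-vanishing in Lemma \ref{lem vanishing ext filtration}, and the long exact sequence chase carried out in the proof of Corollary \ref{cor integer determining branching law and layers}) has already done the work of propagating the single-dimensional Hom along the BZ filtration and cutting it off at the layer where the first non-trivial map to $\Sigma_i(\pi)$ appears. The only mild point to verify is that the identification of $\pi'$ as $(\pi'^{\vee})^{\vee}$ with $\pi'^{\vee}$ a quotient of a standard module is legitimate, which is immediate from Langlands.
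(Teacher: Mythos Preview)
Your proposal is correct and follows essentially the same route as the paper: both reduce immediately to Corollary~\ref{cor integer determining branching law and layers}, using that an irreducible $\pi$ (resp.\ $\pi'^{\vee}$) is a quotient of a standard module, and then read off that the smallest $i$ with $\mathrm{Hom}_{G_n}(\Sigma_i(\pi),\pi')\neq 0$ coincides with $\mathcal L_{rBL}(\lambda,\lambda'^{\vee})=\mathcal L_{rBL}(\pi,\pi')$. The paper's proof is just a terser two-line version of what you wrote.
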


\begin{proof}
Let $i^{\#}=\mathcal L_{rBL}(\pi, \pi')$. The zeroness part for $i<i^{\#}$ of Corollary \ref{cor integer determining branching law and layers} implies $i^*\geq i^{\#}$.  The multiplicity one part of Corollary \ref{cor integer determining branching law and layers} then implies $i^{\#}\geq i^*$.  
\end{proof}

\section{Some results on computing the layer supporting a branching law}

\subsection{Submodule and quotient constraints}

We first have a submodule constraint:

\begin{lemma} \label{lem integer branching law submodule}
Let $\pi \in \mathrm{Alg}(G_{n+1})$ and let $\pi' \in \mathrm{Alg}(G_n)$. Let $f$ be a non-zero element in $\mathrm{Hom}_{G_n}(\pi, \pi')$. Suppose $\pi$ admits a filtration as $M_{n+1}$-module:
\[  0 \rightarrow \omega \rightarrow \pi \rightarrow \omega' \rightarrow 0 .
\]
Suppose $f|_{\omega} \neq 0$. Then $\mathcal L_{rBL}(f) \geq \mathcal L_{rBL}(f|_{\omega})$.

\end{lemma}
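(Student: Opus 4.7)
My plan is to reduce the claim to the exactness of the left Bernstein--Zelevinsky functor. Set $j^{*}:=\mathcal L_{lBL}(f|_{\omega})$; by definition this means that $f|_{{}_{j^{*}-1}\Lambda(\omega)}$ is a nonzero element of $\mathrm{Hom}_{G_n}({}_{j^{*}-1}\Lambda(\omega),\pi')$. It therefore suffices to exhibit a natural inclusion ${}_{j^{*}-1}\Lambda(\omega)\hookrightarrow {}_{j^{*}-1}\Lambda(\pi)$ sitting compatibly inside $\omega\hookrightarrow\pi$, for then the restriction $f|_{{}_{j^{*}-1}\Lambda(\pi)}$ further restricts to $f|_{{}_{j^{*}-1}\Lambda(\omega)}\neq 0$, giving $\mathcal L_{lBL}(f)\geq j^{*}$ by the defining property of the left layer supporting $f$.

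The desired inclusion comes from exactness. Each functor ${}_i\Lambda = \theta\circ(\Phi^{+})^{i}\circ(\Phi^{-})^{i}\circ\theta$ is a composition of the categorical equivalence $\theta$ together with $\Phi^{-}$ (a twisted Jacquet functor) and $\Phi^{+}$ (an unnormalized parabolic-type induction), all of which are exact on smooth representations. Applying ${}_i\Lambda$ to the short exact sequence $0\to\omega\to\pi\to\omega'\to 0$ therefore yields a short exact sequence
\[
0\to {}_i\Lambda(\omega)\to {}_i\Lambda(\pi)\to {}_i\Lambda(\omega')\to 0,
\]
fitting naturally into a commutative square with the canonical embeddings ${}_i\Lambda(-)\hookrightarrow(-)$. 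Specialising to $i=j^{*}-1$ concludes the argument.

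I do not anticipate any real obstacle: the whole proof rests on the exactness of the left BZ functor, which is immediate from the way it is built. The only mildly delicate point is matching the $M_{n+1}$-structure of the given short exact sequence against the $M_{n+1}^{t}$-structure implicit in the definition of ${}_i\Lambda=\theta\circ\Lambda_i\circ\theta$; I would handle this by first applying $\theta$ to transfer the sequence to one of $M_{n+1}^{t}$-modules (automatic, since $\theta$ is an equivalence), invoking the right-BZ version of the preservation-of-subobjects argument for $\Lambda_i$, and then applying $\theta$ once more to recover the left version. If in addition one wants the analogous right-BZ statement $\mathcal L_{rBL}(f)\geq \mathcal L_{rBL}(f|_{\omega})$, exactly the same argument with $\Lambda_i$ in place of ${}_i\Lambda$ applies verbatim to the given $M_{n+1}$-filtration.
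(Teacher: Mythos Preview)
Your proof is correct and essentially identical to the paper's: both use exactness/functoriality of the BZ filtration to obtain a compatible inclusion of filtrations (the paper simply writes $\omega_i=\omega\cap\Lambda_i(\pi)$ and draws the same commutative square), then read off the inequality from the trivial fact that a nonzero restriction to the smaller piece forces a nonzero restriction to the larger. Your observation about the $M_{n+1}$ versus $M_{n+1}^t$ mismatch is on point---the paper's own proof uses $\Lambda_i$ (the right filtration), so the statement should really read $\mathcal L_{rBL}$, which is also how the lemma is invoked later.
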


\begin{proof}

Let $\pi_i=\Lambda_i(\pi)$, the BZ filtration of $\pi$. Set $\omega_i=\omega \cap \pi_i$, which gives the BZ filtration of $\omega$. For any $i$, we have the following commutative diagram from the functoriality of Bernstein-Zelevinsky filtrations:
\[  \xymatrix{     \omega_i  \ar@{^{(}->}[r]^{\iota' } \ar[d]^{h_i} &   \pi_i \ar[d]^{k_i}  &           \\
                   \omega    \ar@{^{(}->}[r]^{\iota}        &   \pi    \ar[r]^f & \pi'  
                  }
\]
where $k_i$ is the embedding map in the Bernstein-Zelevinsky filtration for $\pi_i$, and $h_i=k_i|_{\omega_i}$. Suppose $(f \circ \iota) \circ h_i \neq 0$. Then the commutative diagram implies $f\circ k_i \neq 0$. This implies the lemma.
\end{proof}

We next have a quotient constraint:

\begin{lemma} \label{lem integer determine quotient}
Let $\pi \in \mathrm{Alg}(G_{n+1})$ and let $\widetilde{\pi}$ be a quotient of $\pi$ with the quotient map $q$. Let $\pi' \in \mathrm{Alg}(G_n)$. Suppose there exists a non-zero map $f: \widetilde{\pi} \rightarrow \pi'$. Then 
\[ \mathcal L_{rBL}(f \circ q)=\mathcal L_{rBL}(f). \]
\end{lemma}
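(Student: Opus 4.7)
The plan is to exploit the naturality and exactness of the Bernstein--Zelevinsky functors. First I would let $K = \ker(q)$, giving a short exact sequence $0 \to K \to \pi \to \widetilde{\pi} \to 0$ in $\mathrm{Alg}(G_{n+1})$. Since each of $\Phi^{\pm}$ and $\Psi^{\pm}$ is exact (this is part of the BZ formalism \cite{BZ77}), the composite $\Lambda_i = (\Phi^+)^i\circ (\Phi^-)^i$ is an exact functor on $\mathrm{Alg}(M_{n+1})$. By naturality of the canonical embedding $\Lambda_i(-) \hookrightarrow (-)$, the surjection $q$ restricts to a surjection $q_i: \Lambda_i(\pi) \twoheadrightarrow \Lambda_i(\widetilde{\pi})$ fitting into a commutative square with the inclusions $\Lambda_i(\pi) \hookrightarrow \pi$ and $\Lambda_i(\widetilde{\pi}) \hookrightarrow \widetilde{\pi}$.

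The key observation is then that the restriction $(f \circ q)|_{\Lambda_{i-1}(\pi)}$ factors as $(f|_{\Lambda_{i-1}(\widetilde{\pi})}) \circ q_{i-1}$, and since $q_{i-1}$ is surjective, this composition vanishes if and only if $f|_{\Lambda_{i-1}(\widetilde{\pi})}$ vanishes. Consequently the largest integer $i^*$ for which $(f \circ q)|_{\Lambda_{i^*-1}(\pi)} \neq 0$ coincides with the largest $i^*$ for which $f|_{\Lambda_{i^*-1}(\widetilde{\pi})} \neq 0$. By Definition \ref{def integer determining branching}(1), this yields the desired equality $\mathcal L_{rBL}(f \circ q) = \mathcal L_{rBL}(f)$.

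The argument is essentially formal and I do not anticipate any genuine obstacle: once the exactness of the BZ functors and the functoriality of the BZ filtration are in hand, the statement reduces to the elementary fact that precomposing a linear map with a surjection does not alter its vanishing behaviour. The only mild care needed is to verify that $q$ truly restricts to the BZ filtration layers, which is immediate from applying the exact functor $\Lambda_i$ to the short exact sequence and interpreting the outcome inside the natural inclusion into $\pi$ (resp.\ $\widetilde{\pi}$).
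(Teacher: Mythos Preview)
Your proposal is correct and follows essentially the same approach as the paper: both arguments use the commutative square relating $\Lambda_i(\pi)\hookrightarrow\pi$ and $\Lambda_i(\widetilde{\pi})\hookrightarrow\widetilde{\pi}$ via the surjection $q_i$ induced by $q$, and then observe that $(f\circ q)|_{\Lambda_{i-1}(\pi)}$ vanishes iff $f|_{\Lambda_{i-1}(\widetilde{\pi})}$ does, precisely because $q_{i-1}$ is onto. Your version is slightly more explicit in invoking exactness of the BZ functors to justify the surjectivity of $q_i$, but the substance is identical.
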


\begin{proof}
Set $\pi_i=\Lambda_i(\pi)$ and $\widetilde{\pi}_i=\Lambda_i(\widetilde{\pi})$ for simplicity. Let $\widetilde{\pi}_i = \mathrm{im}~q_i$. We consider the following commutative diagram:
\[  \xymatrix{ \pi_i \ar@{^{(}->}[d]^{\iota_i} \ar[r]^{q_i} &   \widetilde{\pi}_i \ar@{^{(}->}[d]^{\widetilde{\iota}_i}  &          \\
               \pi \ar[r]^{q} & \widetilde{\pi} \ar[r]^f              &  \pi'
                     } .
\]
Hence, if $f\circ  q \circ \iota_i\neq 0$, the commutative diagram gives that $f\circ \widetilde{\iota}_i \neq 0$. Conversely, if $f\circ \widetilde{\iota}_i\neq 0$, then $f\circ \widetilde{\iota}_i \circ q_i \neq 0$ and so $f\circ q \circ \iota_i \neq 0$. Thus we now have the lemma.
\end{proof}

\subsection{Refined standard trick for the supporting layer}

\begin{lemma} \label{lem construct one more non-zero element}
Let $\pi \in \mathrm{Irr}(G_{n+1})$ and let $\pi' \in \mathrm{Irr}(G_n)$. Suppose there exists a $\rho \in \mathrm{Irr}^c$ good to $\nu^{1/2}\pi$ and $\rho$ is $\leq$-minimal in $\mathrm{csupp}(\pi')$. Let $\mathfrak p=\mathfrak{mxpt}^L(\pi', \rho)$ (see Definition \ref{def max multisegment}). Let $\sigma \in \mathrm{Irr}^c(G_{l_a(\mathfrak p)})$ be good to $\nu^{1/2}\pi$ and $\pi'$. Then
\[   \mathrm{Hom}_{G_{n}}(\pi, \sigma \times D^L_{\mathfrak p}(\pi') )\cong \mathbb C
 \]
and $\mathcal L_{rBL}(\pi, \sigma\times D^L_{\mathfrak p}(\pi'))=i^*$ if and only if
\[  \mathrm{Hom}_{G_{n}}(\pi, \mathrm{St}(\mathfrak p)\times D^L_{\mathfrak p}(\pi')) \cong \mathbb C 
\]
and $\mathcal L_{rBL}(\pi, \mathrm{St}(\mathfrak p)\times D^L_{\mathfrak p}(\pi'))=i^*$.
\end{lemma}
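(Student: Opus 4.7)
The plan is to compute both Hom spaces via the BZ-style filtration for parabolic induction (Proposition \ref{prop filtration on parabolic}) applied to the targets $\mathrm{St}(\mathfrak p) \times D^L_{\mathfrak p}(\pi')$ and $\sigma \times D^L_{\mathfrak p}(\pi')$ restricted to $G_n$, and to show that only the top layer survives in each case, yielding a common residual Hom space. Setting $k = l_a(\mathfrak p)$, Proposition \ref{prop filtration on parabolic} exhibits successive subquotients of the form $\mathrm{St}(\mathfrak p)^{[j]} \times \mathrm{RS}_{j-2}(D^L_{\mathfrak p}(\pi'))$ (and the low-$j$ variants), and likewise with $\sigma$ in place of $\mathrm{St}(\mathfrak p)$. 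The cuspidal-support analysis is the key leverage: for $j < k$, every constituent of $\mathrm{St}(\mathfrak p)^{[j]}$ still carries some $\nu^c \rho$ in its cuspidal support, and since $\rho$ is good to $\nu^{1/2}\pi$, no such layer can carry a non-zero map from $\pi$. Hence only the top layer $j = k$ contributes, where $\mathrm{St}(\mathfrak p)^{[-]}$ and $\sigma^{[-]}$ are both one-dimensional.

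With only the top layers surviving, Frobenius reciprocity reduces the two Hom spaces to the same essential computation
\[
\mathrm{Hom}_{G_{n+1-k}}\bigl(\pi^{[k]}, \ {}^{(k-1)}\!(D^L_{\mathfrak p}(\pi'))\bigr)
\]
(up to the identification of $\mathrm{RS}$ and Fourier--Jacobi models, via Lemma \ref{lem multiplicity one of standard module}), independently of whether the first factor was $\mathrm{St}(\mathfrak p)$ or $\sigma$. This forces the two Hom spaces to be simultaneously one-dimensional (using Corollary \ref{cor quotient of standard mult one} and the standard trick in Lemma \ref{lem standard trick first form} for the non-vanishing), giving the Hom-dimension equivalence. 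The hypothesis that $\sigma$ is good to both $\nu^{1/2}\pi$ and $\pi'$ guarantees irreducibility on the $\sigma$-side and isolates a unique top layer cleanly.

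For the layer assertion $\mathcal L_{rBL} = i^*$, I would apply Corollary \ref{cor integer for branching law equal layer}, which identifies $\mathcal L_{rBL}$ with the smallest $i$ such that $\mathrm{Hom}_{G_{n+1-i}}(\pi^{[i]}, {}^{(i-1)}(-)) \ne 0$. Since the above reduction on both sides routes non-zero maps through exactly the same family of layer-Hom computations (indexed by $i$) involving $D^L_{\mathfrak p}(\pi')$, the minimal such $i$ must coincide on both sides. In particular, if $\mathcal L_{rBL}(\pi,\sigma\times D^L_{\mathfrak p}(\pi'))=i^*$, the same value governs $\mathrm{Hom}_{G_n}(\pi, \mathrm{St}(\mathfrak p)\times D^L_{\mathfrak p}(\pi'))$, and conversely.

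The main obstacle will be the bookkeeping linking the outer BZ-filtration index $j$ on the target with the inner BZ-filtration index $i$ on $\pi$, particularly in keeping track of how the $\nu^{1/2}$-shift built into the notation $\pi^{[i]}$ and the left derivative ${}^{(i-1)}$ interact across the reduction. A secondary delicate point is verifying that the non-top layers genuinely vanish as opposed to potentially contributing through subtle cuspidal-support coincidences; this is handled by unwinding the Leibniz rule on $\mathrm{St}(\mathfrak p)^{[j]}$ and invoking the goodness of $\rho$ to $\nu^{1/2}\pi$, which closes off every competing layer.
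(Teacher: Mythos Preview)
Your overall strategy---isolate a single surviving layer via cuspidal support and observe that it is independent of whether the first factor is $\mathrm{St}(\mathfrak p)$ or $\sigma$---is exactly what the paper does. But there is a genuine gap in how you set up the filtration. You propose to apply Proposition~\ref{prop filtration on parabolic} ``to the targets $\mathrm{St}(\mathfrak p)\times D^L_{\mathfrak p}(\pi')$ and $\sigma\times D^L_{\mathfrak p}(\pi')$ restricted to $G_n$,'' producing layers of the form $\mathrm{St}(\mathfrak p)^{[j]}\times \mathrm{RS}_{j-2}(D^L_{\mathfrak p}(\pi'))$. However, $\mathrm{St}(\mathfrak p)\times D^L_{\mathfrak p}(\pi')$ is already a $G_n$-representation and is not being restricted anywhere in $\mathrm{Hom}_{G_n}(\pi,\,\mathrm{St}(\mathfrak p)\times D^L_{\mathfrak p}(\pi'))$; it is $\pi$ that is restricted from $G_{n+1}$. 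Proposition~\ref{prop filtration on parabolic} filters a parabolic induction upon restriction to a group one rank smaller, so it simply does not apply to the target as written, and the layers you write down do not arise.

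The paper repairs this by first invoking the duality of \cite[Proposition~4.1]{Ch22} (cf.\ equation~(\ref{eqn duality for restriction})):
\[
\mathrm{Hom}_{G_n}\bigl(\pi,\ \mathrm{St}(\mathfrak p)\times D^L_{\mathfrak p}(\pi')\bigr)\ \cong\ \mathrm{Hom}_{G_{n+1}}\bigl(\mathrm{St}(\mathfrak p)^{\vee}\times \sigma'\times D^L_{\mathfrak p}(\pi')^{\vee},\ \pi^{\vee}\bigr)
\]
for a suitable cuspidal $\sigma'$ of $G_2$. Now the parabolic induction sits on the \emph{source} side, as a $G_{n+2}$-representation being restricted to $G_{n+1}$, and Proposition~\ref{prop filtration on parabolic} legitimately applies. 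The cuspidal-support argument you describe then correctly kills all but one layer (the bottom Rankin--Selberg layer $\mathrm{RS}_{l_a(\mathfrak p)}(D^L_{\mathfrak p}(\pi')^{\vee})$), and this surviving layer is visibly the same whether one started from $\mathrm{St}(\mathfrak p)$ or from $\sigma$. The layer assertion is then handled essentially as you suggest: the paper passes through the ``smallest derivative'' invariant $\mathcal L_{smD}$ (see (\ref{eqn def of smallest derivative})) and Corollary~\ref{cor integer determining branching law and layers}, noting that a cuspidal comparison gives $\mathcal L_{smD}(\pi,\sigma\times D^L_{\mathfrak p}(\pi'))=\mathcal L_{smD}(\pi,D^L_{\mathfrak p}(\pi'))=\mathcal L_{smD}(\pi,\mathrm{St}(\mathfrak p)\times D^L_{\mathfrak p}(\pi'))$. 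So once you insert the duality step, your argument lines up with the paper's.
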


\begin{proof}
We only prove the only if direction and the if direction is very similar. We first prove the Hom result. The argument is more standard now and so will be slightly sketchy. By a duality  (see \cite[Proposition 4.1]{Ch22}), we have that:
\[   \mathrm{Hom}_{G_{n+1}}(\mathrm{St}(\mathfrak p)^{\vee} \times \sigma' \times D^L_{\mathfrak p}(\pi')^{\vee} , \pi^{\vee})  \cong \mathrm{Hom}_{G_{n}}(\pi,  \mathrm{St}(\mathfrak p)\times D^L_{\mathfrak p}(\pi'))
\]
for some cuspidal representation $\sigma'$ of $G_2$. Thus one then applies Proposition \ref{prop filtration on parabolic} on $(\mathrm{St}(\mathfrak p) \times \sigma') \times D^L_{\mathfrak p}(\pi')^{\vee},$ and the only possible layer (by an argument of comparing cuspidal supports) that can contribute the Hom is the bottom layer of certain Rankin-Selberg model involving $D^L_{\mathfrak p}(\pi')^{\vee}$ taking the form:
\begin{align} \label{eqn layer contribute in standard trick}
   \mathrm{RS}_{l_a(\mathfrak p)}(D^L_{\mathfrak p}(\pi')^{\vee})
\end{align}
(see Section \ref{ss bz filtrations variant}). 

 On the other hand, if one considers $\mathrm{Hom}_{G_{n}}(\pi, \sigma \times D^L_{\mathfrak p}(\pi')^{\vee})$, then one applies a similar duality and then the layer that can contribute the Hom is again of such form. Thus, now we have
\[ \mathrm{Hom}_{G_{n}}(\pi, \sigma \times D^L_{\mathfrak p}(\pi'))\cong \mathbb C  \Longrightarrow \  \mathrm{Hom}_{G_{n}}(\pi, \mathrm{St}(\mathfrak p) \times D^L_{\mathfrak p}(\pi'))\cong \mathbb C .
\]

For convenience, for representations $\omega_1\in \mathrm{Alg}(G_k)$ and $\omega_2 \in \mathrm{Alg}(G_l)$ of finite lengths, set
\begin{align} \label{eqn def of smallest derivative}
  \mathcal L_{smD}(\omega_1, \omega_2) =\mathrm{min}\left\{ i : \mathrm{Hom}_{G_{k-i}}(\omega_1^{[i]}, {}^{(i-k+l)}\omega_2) \neq 0 \right\} .
\end{align}

We now turn to the part of the layer supporting the branching law. By using Corollary \ref{cor integer determining branching law and layers}, $i^*=\mathcal L_{smD}(\pi, \sigma \times D^L_{\mathfrak p}(\pi'))$, and so a cuspidal consideration also implies that $i^*=\mathcal L_{smD}(\pi, D^L_{\mathfrak p}(\pi'))$. This then, by a cuspidal consideration, also implies that $i^*=\mathcal L_{smD}(\pi, \mathrm{St}(\mathfrak p)\times D^L_{\mathfrak p}(\pi'))$. Thus, now the lemma follows from Corollary \ref{cor integer determining branching law and layers} (or some variants of Corollary \ref{cor integer for branching law equal layer}).
\end{proof}

Note that similar lines of arguments also prove Lemma \ref{lem standard trick first form} as well as its refinement analogue to the above one. %We also remark that in the proof of Lemma \ref{lem construct one more non-zero element}, (\ref{eqn layer contribute in standard trick}) gives the submodule contributing the branching law, but in view of Lemma \ref{lem construct one more non-zero element}, it is not clear whether $\mathcal L_{rBL}$ is determined by such submodule.

\section{Constructing branching laws via Ranking-Selberg integrals} \label{s construct branching via rs}

%\subsection{Whittaker functionals}

%We say an irreducible representation $\pi$ of $G_n$ is generic if $\pi^{(n)}=1$. Let $\psi$ be a non-degenerate character on $F$. Let $\lambda$ be a non-trivial $\psi$-Whittaker functional on $\pi$ i.e. for $n \in N$ and $v \in \pi$,
%\[    \mu(n.v) =\psi(n)\mu(v) .\]
%Define $W_v: G_n \rightarrow \mathbb C$ as $W_v(g)=\lambda(g.v)$. The Whittaker model of a generic $\pi$ is the space of functions
%\[  \mathcal W(\pi, \psi) = \left\{ W_v : v \in \pi \right\} , \]
%with the action of $G_n$ by right translation.

In this section, we consider the Rankin-Selberg integrals \cite[Theorem 2.7]{JPSS83} and the work \cite{CPS17} of Cogdell--Piatetski-Shapiro.

\subsection{Preliminaries for Rankin-Selberg integrals} \label{ss prelim rs integral}
We consider an induced representation of the form:
\[  \lambda := \mathrm{St}(\Delta_1) \times \ldots \times \mathrm{St}(\Delta_r) 
\]
for some segments $\Delta_1, \ldots, \Delta_r$. Let $n+1=l_a(\Delta_1)+\ldots+l_a(\Delta_r)$. Let $u$ be an $r$-tuple $(u_1,\ldots, u_r)$, considered as indeterminants. We simply write $\mathbb C[q^{\pm u}]$ for $\mathbb C[q^{\pm u_1}_1, \ldots, q_r^{\pm u_r}]$. and let $\widetilde{\lambda}_u$ be the space of all smooth functions from $G_{n+1}$ to $\mathbb C[q^{\pm u}] \otimes \pi$ such that for any $f \in \mathbb C[q^{\pm u}]\otimes \pi$, 
\[  f(pg)= \nu^{u_1}(m_1)\ldots \nu^{u_r}(m_r) p.f(g)  .
\]
One can also embed $\pi$ into $\mathbb C[q^{\pm u}]\otimes \pi$ via the so-called compact realization, say the map to be $\mathcal F$, such that
\[  \mathcal F(f)(k)= f(k) 
\]
for all $k$ in the maximal compact subgroup $\mathrm{GL}_n(\mathcal O)$ of $\mathrm{GL}_n(F)$. Let $\lambda_u$ be the $G_n$-representation generated by $\mathrm{im}(\mathcal F)$. We fix a Whittaker function from $\psi: U_n \rightarrow \mathbb C$. Following \cite{CPS17}, for each $f \in \mathrm{im}(\mathcal F)$, Casselman-Shalika constructed a $\psi$-Whittaker linear functional $W(f): G_n \rightarrow \mathbb C[q^{\pm u}]$ i.e. satisfying, for $n' \in U_n$,
\[    W(f)(n'g)= \psi(n')f(g) .
\]
Explicitly, let $N$ be the unipotent radical of $P_{l_1,\ldots, l_r}$ ($l_k=l_a(\Delta_k)$) and let $\left\{ N_i \right\}$ be an exhaustive filtration of $N_Q$ by open compact subgroups. We have that the Whittaker functional is defined as:
\begin{align} \label{eqn whittaker function}
 W(f)(g)=  \int_{N_i} (\lambda(ng)f)(\dot{w}) \psi^{-1}(n)~dn , 
\end{align}
for sufficiently large $i$ so that the integral is stabilized, where $\dot{w}$ is a fixed representative of a Weyl group element in $S_{n+1}$. 

Let $\widetilde{\mathcal W}_{\lambda}$ be the space spanned by all those $\psi$-Whittaker linear functionals. The space $\widetilde{\mathcal W}_{\lambda}$ is not invariant under $G_n$-actions.  Let $\mathcal W_{\lambda}$ be the $G_n$-representation generated by $\widetilde{\mathcal W}_{\lambda}$. From our construction, we have an isomorphism as $G_n$-representations:
\[   \lambda_u \cong \mathcal W_{\lambda}  
\]
and so realize elements in $\lambda_u$ as $\psi$-Whittaker functionals.

We now consider a $G_n$-representation $\lambda'$
\[  \lambda' =\mathrm{St}(\Delta_1')\times \ldots \times \mathrm{St}(\Delta_t') .
\]
Let $v=(v_1,\ldots, v_t)$ be a $t$-tupble, and we similarly construct a $G_n$-representation $\lambda'_v$ and $\mathcal W_{\lambda'}$ in analog of $\lambda_u$ and $\mathcal W_{\lambda}$ above, respectively.

For $W \in \mathcal W_{\lambda}$ and $W'\in \mathcal W_{\lambda'}$, the Rankin-Selberg integral is defined as:
\begin{align} \label{eqn rankin selberg integral}
  I(s;W, W') = \int_{N_{m}\setminus G_{m}}     W\begin{pmatrix} g & \\ & 1 \end{pmatrix} W'(g) |\mathrm{det}(g)|^{s-1/2}    dg  ,
\end{align}
where $dg$ is a right-invariant measure on $N_m\setminus G_m$. It is shown in \cite{CPS17} that there exists a linear function $\ell_{\lambda, \lambda'}(u,v,s)$  such that $I(s;W, W')$ absolutely converges for 
\begin{align} \label{eqn real positive linear}
\mathrm{Re}(\ell_{\lambda, \lambda'}(u,v,s))>0.
\end{align}

\begin{lemma} \cite[Proposition 3.2]{CPS17} \label{lem rational rs integral}
Let $W \in \mathcal W_{\lambda}$ and let $W' \in \mathcal W_{\lambda'}$. Then $I(s; W, W')$ is a rational function of $q^{-u}$, $q^{-v}$ and $q^{-s}$. 
\end{lemma}

For two essentially discrete series $\mathrm{St}(\Delta)$ and $\mathrm{St}(\Delta')$, we denote by $L(s, \mathrm{St}(\Delta) \times \mathrm{St}(\Delta'))$ the corresponding Rankin-Selberg $L$-function, see discussions in the beginning of \cite[Page 167]{CPS17}.

\begin{lemma}
Let $W \in \mathcal W_{\lambda}$ and let $W' \in \mathcal W_{\lambda'}$. Then 
\[   \frac{ I(s; W, W')}{\prod_{1\leq i \leq r, 1\leq j \leq t} L(s+u_i+v_j, \mathrm{St}(\Delta_i)\times \mathrm{St}(\Delta_j)) } 
\]
has no poles and lies in the Laurent polynomial ring $\mathbb C[q^{\pm u}, q^{\pm v}, q^{\pm s}]$. Moreover, the expression is not identically zero.
\end{lemma}

\begin{proof}
This is shown in \cite[Page 167]{CPS17} that the above expression has no poles. Hence, the expression is analytic and so is in $\mathbb C[q^{\pm u}, q^{\pm v}, q^{\pm s}]$. Moreover, it follows from the construction of $I(s, W, W')$ that the expression is non-zero.
\end{proof}

\begin{corollary} \label{cor nonzero G equivariant map}
There exists a non-zero $G_n$-equivariant map from $\mathcal W_{\lambda}\otimes \mathcal W_{\lambda'}$ to $\mathbb C[q^{\pm u}, q^{\pm v}]$. 
\end{corollary}

\begin{proof}
Let $\widetilde{B}_s: \mathcal W_{\lambda} \times \mathcal W_{\lambda'} \rightarrow \mathbb C[q^{\pm u}, q^{\pm v}, q^{\pm s}]$ given by:
\[   \widetilde{B}_s(W, W') = \frac{I(s; W, W')}{\prod_{1\leq i \leq r, 1\leq j \leq t} L(s+u_i+v_j, \mathrm{St}(\Delta_i)\times \mathrm{St}(\Delta_j)) } .
\]
Now, we divide $(q^s-q^{1/2})^a$ for some non-negative $a$ on $\widetilde{B}_s$ such that the map $B_s: \mathcal W_{\lambda} \times \mathcal W_{\lambda'} \rightarrow \mathbb C[q^{\pm u}, q^{\pm v}, q^{\pm s}]$ given by
\[   (W, W') \mapsto \frac{B_s(W,W')}{(q^s-q^{1/2})^a} 
\]
is not an identically zero map when $s$ is specialized to $\frac{1}{2}$ and still has no pole at $s=\frac{1}{2}$.

Then, one checks that 
\[ B_s(g.W, g.W')= \nu(g)^{-s+1/2} B_s(W, W')  .\]
Specializing to $s=\frac{1}{2}$, we obtain a non-zero $G_n$-equivariant map from $\mathcal W_{\lambda} \times \mathcal W_{\lambda'}$ to $\mathbb C[q^{\pm u}, q^{\pm v}]$, and so lifts to a non-zero $G_n$-equivariant map in the corollary. 
\end{proof}

\begin{corollary} \label{cor G equivariant linear map}
Let $1 \leq i \leq r$ and $1\leq j \leq t$. Let $u_i^*, \ldots, u_r^*, v_j^*, \ldots, v_t^* \in \mathbb C$. Let $\widetilde{\mathrm{sp}}: \mathbb C[q^{\pm u}, q^{\pm v}, q^{\pm s}] \rightarrow \mathbb C$ be the specialization map for evaluating $u_1=u_1^*,\ldots, u_r=u_r^*, v_1=v_1^*, \ldots, v_t=v_t^*$. This induces a map $\mathrm{sp}: \mathbb C[q^{\pm u}]\otimes \lambda \rightarrow \mathbb C[q^{\pm u_1}, \ldots, q^{\pm u_i}]\otimes \lambda$ and a map $\mathrm{sp}': \mathbb C[q^{\pm v}]\otimes \lambda \rightarrow \mathbb C[q^{\pm v_1}, \ldots, q^{\pm v_j}]\otimes \lambda'$. There exists a non-zero $G_n$-equivariant linear map from $\mathrm{sp}(\lambda_u) \otimes \mathrm{sp}'(\lambda_u')$ to $\mathbb C[q^{\pm u_1}, \ldots q^{\pm u_{i-1}}, q^{\pm v_1}, \ldots, q^{\pm v_{j-1}}]$.  
\end{corollary}

\begin{proof}
Let $C: \mathcal W_{\lambda} \otimes \mathcal W_{\lambda'} \rightarrow \mathbb C[q^{\pm u}, q^{\pm v}]$ be the $G_n$-equivariant map obtained in Corollary \ref{cor nonzero G equivariant map}. Now, by dividing 
\[ (q^{u_i}-q^{u_i^*})^{a_i}\ldots (q^{u_r}-q^{u_r^*})^{a_r}(q^{v_j}-q^{v_j^*})^{b_j}\ldots (q^{v_t}-q^{v_t^*})^{b_t} \]
for some non-negative integers $a_i, \ldots, a_r, b_j, \ldots, b_t$ if necessary, we can then obtain a $G_n$-equivariant map, denoted by $C'$, from $\mathcal W_{\lambda} \otimes \mathcal W_{\lambda'}$ to $\mathbb C[q^{\pm u}, q^{\pm v}]$ which has no pole and is not an identically zero map. In other words, the composition
\[  \widetilde{\mathrm{sp}}\circ C'
\]
gives a non-zero and well-defined $G_n$-equivariant map. 

It remains to show that the map $\widetilde{\mathrm{sp}} \circ C'$ can descend to a map from $\mathrm{sp}(\lambda_u) \otimes \mathrm{sp}'(\lambda_u')$ to $\mathbb C[q^{\pm u_1}, \ldots q^{\pm u_{i-1}}, q^{\pm v_1}, \ldots, q^{\pm v_{j-1}}]$. In other words, for $f_1, f_2 \in \lambda_u$ and $f_1', f_2' \in \lambda_v'$, if $\mathrm{sp}(f_1)=\mathrm{sp}(f_2)$ and $\mathrm{sp}'(f_1')=\mathrm{sp}'(f_2')$, then 
\[   \widetilde{\mathrm{sp}}\circ C'(f_1\otimes f_1')=\widetilde{\mathrm{sp}}\circ C' (f_2\otimes f_2') .
\]
Now, one observes that $\widetilde{\mathrm{sp}}\circ W(f_1) =\widetilde{\mathrm{sp}}\circ W(f_2)$, and $\widetilde{\mathrm{sp}}\circ W'(f_1')=\widetilde{\mathrm{sp}}\circ W'(f_2')$ from the construction (\ref{eqn whittaker function}). One now chooses sufficiently large $u_1, \ldots, u_{i-1}, v_1, \ldots, v_{j-1}, s$ so that the condition (\ref{eqn real positive linear}) is satisfied, and so the corresponding specializations on $I(s; W(f_1), W'(f_1'))-I(s;W(f_2), W'(f_2'))$ then are zero. But now, since $I(s; W(f_1), W'(f_1'))-I(s;W(f_2), W'(f_2))$ is in $\mathbb C(q^{\pm u}, q^{\pm v}, q^{\pm s})$ by Lemma \ref{lem rational rs integral}, and so must be identically equal to zero, as desired.
\end{proof}

\begin{corollary} \label{cor vanishing for infinite many}
We use the notations in  Corollary \ref{cor G equivariant linear map}, and fix $i=2$ and $j=1$. Let $\widetilde{C}$ be the $G_n$-equivariant linear map in Corollary \ref{cor G equivariant linear map}. Let $f \in \lambda_u$ and $f'\in \lambda'_v$. If $\widetilde{C}(f\otimes f')=0$ for infinitely many values of $u_1$, then $\widetilde{C}(f\otimes f')=0$ for all values of $u_1$. 
\end{corollary}

\begin{proof}
This follows from that a non-zero Laurent polynomial in $\mathbb C[q^{\pm u}]$ has only finitely many roots and from Corollary \ref{cor G equivariant linear map}.
\end{proof}

\subsection{A construction}

\begin{lemma} \label{lem construct branching from deformation}
Let $\pi \in \mathrm{Irr}(G_n)$.  Let $\Delta$ be a segment of absolute length $k$. Let $\pi' \in \mathrm{Irr}(G_{n+k-1})$. Let $\sigma \in \mathrm{Irr}^c(G_k)$ such that $\sigma$ is good to $\pi$ and $\nu^{-1/2}\pi'$. Suppose
\[  \mathrm{Hom}_{G_{n+k-1}}(\sigma \times \pi, \pi') \neq 0
\]
with $\mathcal L_{rBL}(\sigma\times \pi, \pi')=i^*$. Then there exists a non-zero $f$ in $\mathrm{Hom}_{G_{n+k-1}}(\mathrm{St}(\Delta)\times \pi, \pi')$ such that $\mathcal L_{rBL}(f) \leq i^*$. 
\end{lemma}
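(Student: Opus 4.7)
The plan is to analyze the BZ filtration from Proposition \ref{prop filtration on parabolic} applied to both restrictions $(\sigma \times \pi)|_{G_{n+k-1}}$ and $(\mathrm{St}(\Delta)\times \pi)|_{G_{n+k-1}}$, exploiting that they share a common bottom submodule. Since $\sigma$ is cuspidal of $G_k$, the derivatives $\sigma^{[j]}$ vanish for $1 \leq j \leq k-1$ (we may assume $k \geq 2$; the case $k=1$ is immediate since $\mathrm{St}(\Delta)$ is then itself a cuspidal character), so the only nonzero layers of $(\sigma \times \pi)|_{G_{n+k-1}}$ are the top layer $\sigma \times (\pi|_{G_{n-1}})$ and the bottom layer $\sigma^{[k]} \times \mathrm{RS}_{k-2}(\pi) \cong \mathrm{RS}_{k-2}(\pi)$, using that $\sigma^{[k]}$ is the one-dimensional highest derivative. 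Applying Frobenius reciprocity to the top layer together with the goodness $\sigma \notin \mathrm{csupp}_{\mathbb Z}(\nu^{-1/2}\pi')$, any nonzero map $\sigma \times (\pi|_{G_{n-1}}) \to \pi'$ would require $\sigma$ (up to the half-integer shift) to appear as a cuspidal factor in a Jacquet module of $\pi'$, contradicting goodness. Hence the given $f_\sigma$ factors through the bottom submodule, yielding a nonzero $\widetilde f \in \mathrm{Hom}_{G_{n+k-1}}(\mathrm{RS}_{k-2}(\pi), \pi')$.

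Next, the bottom of the BZ filtration of $(\mathrm{St}(\Delta)\times \pi)|_{G_{n+k-1}}$ is also isomorphic to $\mathrm{RS}_{k-2}(\pi)$, because $\mathrm{St}(\Delta)^{[k]}$ is the one-dimensional highest derivative of the generic representation $\mathrm{St}(\Delta)$. I would construct a nonzero $f \in \mathrm{Hom}_{G_{n+k-1}}(\mathrm{St}(\Delta)\times \pi, \pi')$ extending $\widetilde f$ via a Rankin--Selberg integral in the spirit of Cogdell--Piatetski-Shapiro \cite{CPS17}: pair the Whittaker model of the generic $\mathrm{St}(\Delta)$ with functionals coming from $\pi$ and $\pi'{}^{\vee}$ to produce an explicit $G_{n+k-1}$-invariant bilinear form whose restriction to the bottom submodule $\mathrm{RS}_{k-2}(\pi) \hookrightarrow \mathrm{St}(\Delta)\times \pi$ recovers $\widetilde f$ up to a nonzero scalar. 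The bound $\mathcal L_{rBL}(f) \leq i^*$ then follows because both $f$ and $f_\sigma$ are supported by the same bottom layer, and Corollary \ref{cor integer for branching law equal layer} identifies the supporting BZ layer with the smallest BZ-derivative index contributing to the Hom, which is at most $i^*$ for both.

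The main obstacle is the Rankin--Selberg construction: one must verify that the integral converges and defines a nonzero functional extending $\widetilde f$. The Cogdell--Piatetski-Shapiro machinery provides this, using the genericity of $\mathrm{St}(\Delta)$ and the multiplicity-one result Lemma \ref{lem multiplicity one of standard module} to pin down the constructed functional uniquely up to scalar, thereby forcing agreement with $\widetilde f$ on the bottom submodule. A secondary technical point is to rule out unwanted contributions from higher layers of the BZ filtration of $\mathrm{St}(\Delta)\times \pi$; this is handled by cuspidal support comparisons combined with the multiplicity-one framework, ensuring that the constructed $f$ is genuinely supported at the bottom layer, giving the desired bound on $\mathcal L_{rBL}(f)$.
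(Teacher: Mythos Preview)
Your strategy of identifying $\mathrm{RS}_{k-2}(\pi)$ as a common bottom submodule is a natural first move, but the proposal has two genuine gaps.

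First, the extension step. You obtain $\widetilde f:\mathrm{RS}_{k-2}(\pi)\to\pi'$ and wish to extend it to $\mathrm{St}(\Delta)\times\pi$. Extension from a submodule requires injectivity of $\pi'$ or vanishing of the relevant $\mathrm{Ext}^1$, neither of which holds here. You invoke Rankin--Selberg integrals \`a la \cite{CPS17}, but that machinery pairs Whittaker functions, and $\pi,\pi'$ are not assumed generic; even if one constructs some nonzero $f$ this way, there is no a priori reason it restricts to $\widetilde f$ on the bottom layer. The multiplicity-one results you cite do not resolve this, since $\mathrm{St}(\Delta)\times\pi$ may be reducible and need not be a quotient of a standard module in the required ordering.

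Second, the layer bound. Knowing that $f$ is supported on $\mathrm{RS}_{k-2}(\pi)$ (the bottom of the Proposition~\ref{prop filtration on parabolic} filtration) does not bound $\mathcal L_{rBL}(f)$, which is defined via the BZ filtration $\Lambda_i$; these are different filtrations. Worse, by the principle of Lemma~\ref{lem integer branching law submodule}, nonvanishing on a submodule gives a \emph{lower} bound on the supporting layer, not an upper bound. Corollary~\ref{cor integer for branching law equal layer} is stated for irreducible $\pi$ and does not apply directly to $\mathrm{St}(\Delta)\times\pi$.

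The paper's approach is a \emph{deformation} argument that sidesteps both issues. One works with the one-parameter family $\nu^{u_0}\mathrm{St}(\Delta)\times\pi$, lifts $\pi,\pi'^\vee$ to their standard modules $\lambda,\lambda'$ (which are generic), and builds a polynomial family of Rankin--Selberg functionals $\mu_{u_0}$ on $(\nu^{u_0}\mathrm{St}(\Delta)\times\lambda)\otimes\lambda'$. For generic $u_0$ the standard trick (Lemma~\ref{lem construct one more non-zero element}) gives a one-dimensional Hom with $\mathcal L_{rBL}=i^*$, and multiplicity one (Corollary~\ref{cor quotient of standard mult one}) forces $\mu_{u_0}$ to descend to $(\nu^{u_0}\mathrm{St}(\Delta)\times\pi)\otimes\pi'^\vee$ for infinitely many and hence \emph{all} $u_0$; specializing at $u_0=0$ produces $f$. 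For the bound, one shows $\mu_{u_0}$ vanishes on $\Lambda_{i^*}$ for infinitely many $u_0$, hence identically as a polynomial in $q^{\pm u_0}$; the nontrivial technical input is that specialization respects the BZ filtration (Proposition~\ref{prop preserve bz filtration} in Appendix~A). This deformation-then-specialize idea is what your proposal is missing.
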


\begin{proof}
\noindent
{\bf Step 1: Construct a branching law} \\

From the non-vanishing Hom hypothesis in the lemma and a variation of the standard trick (see Lemma \ref{lem construct one more non-zero element}), we have that
\begin{align} \label{eqn nonzero hom in construction}
  \mathrm{Hom}_{G_{n+k-1}}(\nu^{u_0}\mathrm{St}(\Delta) \times \pi, \pi') \neq 0
\end{align}
for all except finitely many $u_0 \in \mathbb C$. 

Now we find standard representations $\lambda$ and $\lambda'$ for $\pi$ and $\pi'{}^{\vee}$ respectively:
\[ \lambda:=  \mathrm{St}(\Delta_1) \times \ldots \times \mathrm{St}(\Delta_r) \twoheadrightarrow \pi
\]
and
\[  \lambda':= \mathrm{St}(\Delta_1')\times \ldots \times \mathrm{St}(\Delta_s') \twoheadrightarrow \pi'{}^{\vee} .\]
Let $n_k=l_a(\Delta_k)$ for $k=1, \ldots, r$ and let $n_k'=l_a(\Delta_k')$ for $k=1, \ldots, s$. 

Let $u=(u_1, \ldots, u_r)$ be a $r$-tuple and let $v=(v_1, \ldots, v_s)$ be a $s$-tuple. Let $\widetilde{\lambda}_u$ be the space of functions from $G_n$ to $\mathbb C[q^{\pm u}, q^{\pm v}]\otimes (\mathrm{St}(\Delta_1)\boxtimes \ldots \boxtimes \mathrm{St}(\Delta_r))$ satisfying 
\[ f(pg)=\nu(m_1)^{u_1}\ldots \nu(m_r)^{u_r}p.f(g) ,\]
where $p=\mathrm{diag}(m_1, \ldots, m_r)\cdot n$ for $m_i \in G_{n_i}$ and $n \in N_{n_1, \ldots, n_r}$ and $p$ acts via the action on $ \mathrm{St}(\Delta_1) \boxtimes \ldots \boxtimes \mathrm{St}(\Delta_r)$. Let $\lambda_u$ be the subspace of $\widetilde{\lambda}_u$ described in the beginning of Section \ref{ss prelim rs integral}. We similarly define for $\lambda'_v$.

% and \cite[Theorem 4.1]{CPS17}, implies that there exists a non-zero $G_n$-equivariant linear functional 
%\[  \mu_{u_0,u,v}: (\nu^{u_0}\mathrm{St}(\Delta) \times \lambda_u)\otimes \lambda'_v \rightarrow \mathbb C[q^{\pm u_0}, q^{\pm u}, q^{\pm v}].\]
%(In {\it loc. cit.}, one has extra variable $q^s$ and one specializes $s=\frac{1}{2}$ in the Rankin-Selberg integral to get the $G_n$-equivariant.) Here $\mathbb C[q^{\pm u_0},q^{\pm u},q^{\pm v}]$ is the space of polynomial functions over $q^{\pm u_0},q^{\pm u},q^{\pm v}$. 

%We similarly consider $\widetilde{\lambda}_{u_0}$ to be all the smooth functions from $G_n$ to $\mathbb{C}[q^{\pm u_0}]\otimes (\mathrm{St}(\Delta)\times \pi)$ such that for $p=\begin{pmatrix} m_1 &x \\ & m_2 \end{pmatrix} \in P$,
%\[    f(pg)=\nu^{u_0}(m_1) p. f(g) ,\]
%where $p$ is the action on the part $\mathrm{St}(\Delta)\boxtimes \pi$. 

Let $\mathrm{sp}: \mathbb C[q^{\pm u_0}, q^{\pm u_1}, \ldots, q^{\pm u_r}, q^{\pm v_1}, \ldots, q^{\pm v_t}, q^{\pm s}]\otimes \lambda\rightarrow \mathbb C[q^{\pm u_1}]\otimes \lambda$ be the specialization map for $u_1=\ldots=u_r=v_1=\ldots=v_t=0$ and $s=\frac{1}{2}$. By Corollary \ref{cor G equivariant linear map}, we obtain a non-zero linear functional 
\[  \mu_{u_0}: \mathrm{sp}(\lambda_{u_0}) \otimes \lambda' \rightarrow \mathbb C[q^{\pm u_0}] .
\]
%(We do not know if $\mu_{u_0, u, v}$ is zero or not after the specialization. However, if $\mu_{u_0, u, v}$ is identically zero, then we simply remove the zeros one-by-one by dividing the factors $q^{u_i}-1$.)

Now, we have an embedding
\[   \mathrm{Hom}(\nu^{u^*}\mathrm{St}(\Delta)\times \pi, \pi')\hookrightarrow \mathrm{Hom}(\nu^{u^*}\mathrm{St}(\Delta)\times \lambda, \lambda'{}^{\vee}),
\]
which is an isomorphism for infinitely many $u_0$ by using the multiplicity one, see Corollary \ref{cor quotient of standard mult one}, for both spaces (recall that the former space is non-zero by (\ref{eqn nonzero hom in construction})). This implies that $\mu_{u^*}$ vanishes on the kernel of the natural map
\[   (\nu^{u^*}\mathrm{St}(\Delta)\times \lambda) \otimes \lambda' \rightarrow (\nu^{u^*}\mathrm{St}(\Delta) \times \pi)\otimes \pi'{}^{\vee}
\]
induced from the surjections $\lambda \rightarrow \pi$ and $\lambda' \rightarrow \pi'{}^{\vee}$, for infinitely many $u^*$ and so, by Corollary \ref{cor vanishing for infinite many}, for all $u^*$. Thus now $\mu_0$ descends to a linear functional 
\[   (\nu^{u_0}\mathrm{St}(\Delta)\times \pi) \otimes \pi'{}^{\vee} \rightarrow \mathbb C[q^{\pm u_0}].
\]
Let $u^*\in \mathbb C$. Again, by multiplying a normalizing factor if necessary, it descends to a non-zero map from \[ \mu_{u^*}':(\nu^{u^*}\mathrm{St}(\Delta)\times \pi)\otimes \pi'{}^{\vee} \rightarrow \mathbb C . \]
When $u^*=0$, we obtain a desired map in $\mathrm{Hom}_{G_n}((\mathrm{St}(\Delta)\times \pi)\otimes \pi'{}^{\vee}, \mathbb C)$. \\

\noindent
{\bf Step 2: Check the layer supporting the branching law} \\

 Set the Bernstein-Zelevinsky filtration, as a $M_{n+k}$-representation, on $\widetilde{\lambda}_{u_0}$ by
\[  \omega_{i} = \Lambda_i(\widetilde{\lambda}_{u_0}) .
\]
It remains to show that the layer supporting the branching law from $\mu'_0$ is at most $i^*$ i.e.
\[  \mu_0'(\omega_{i^*}\otimes \pi'^{\vee})=0 .
\]

Let $u^* \in \mathbb C$. We now let $\widetilde{\lambda}_{u^*} =(\nu^{u^*}\cdot\mathrm{St}(\Delta)) \times \pi$, that is the specialization of $\widetilde{\lambda}_{u_0}$ at $u^*$. Set
\[ \omega^*_{i} =\Lambda_i(\widetilde{\lambda}_{u^*}) .
\]

Now, we define 
\[ \mathrm{sp}_{u^*}: \widetilde{\lambda}_{u_0}\otimes \pi'{}^{\vee} \rightarrow \widetilde{\lambda}_{u^*}\otimes \pi'^{\vee} \]
by specializing $u_0=u^*$.  

Indeed, one has that, as a consequence of Proposition \ref{prop preserve bz filtration} (see Section \ref{rmk special parameters step by step} in the Appendix) and using that the exactness in the functoriality of Bernstein-Zelevinsky filtration, the specialization of $\omega_i$ for $u^*=u_0$ gives $\omega_i^*$. Thus, by tensoring with $\otimes \pi'{}^{\vee}$ (which is exact), 
\[  (*)\quad  \mathrm{sp}_{u^*}(\omega_i \otimes \pi'{}^{\vee})=\omega_i^*\otimes \pi'{}^{\vee} .
\]
Now, by a similar argument in Lemma \ref{lem construct one more non-zero element} and $\mathcal L_{rBL}(\sigma \times \pi, \pi')=i^*$, we have that 
\[  \mathcal L_{rBL}(\nu^{u^*}\mathrm{St}(\Delta) \times \pi, \pi') =i^* \]
 for infinitely many $u^*$. Thus, 
\begin{align*}
 \mu_{u_0}(\omega_{i^*} \otimes \pi'{}^{\vee})|_{u_0=u^*} &= \mu_{u^*}'(\mathrm{sp}_{u^*}(\omega_{i^*}\otimes \pi'{}^{\vee})) \\
   &= \mu_{u^*}'(\omega_{i^*}^* \otimes \pi'{}^{\vee}) \\
	 &=0 
\end{align*}
for infinitely many $u^*$. However, this then implies that those equations also hold for all $u^*$, particularly when $u^*=0$. This now proves the lemma by Definition \ref{def integer determining branching}.
\end{proof}

\section{The smallest derivative controlling some BZ-layers supporting branching laws} \label{s smallest der control branching law}

\subsection{Computation for a layer}

We need the following slightly technical computation later:

\begin{lemma} \label{lem compute layers by geometric lemma}
Let $\Delta$ be a segment and let $\mathfrak p \in \mathrm{Mult}$ be $\Delta$-saturated. Let $\rho' \in \mathrm{Irr}^c$ such that $\rho' \not\geq \widetilde{\rho}$ for any $\widetilde{\rho} \in \mathrm{csupp}(\mathfrak p)$ or $\widetilde{\rho} \in \mathrm{csupp}(\tau)$. Let $\mathfrak p'$ be a strongly L-$\rho'$-saturated multisegment (see Section \ref{ss standard trick intro}). Let $\tau \in \mathrm{Alg}(G_n)$ and let $\tau' \in \mathrm{Irr}(G_{n'})$, where $l_a(\mathfrak p)+n-k=l_a(\mathfrak p')+n'$. Let $\mathfrak r=\mathfrak{mx}(\tau', \Delta)$. Suppose $l_a(\mathfrak p)\geq l_a(\mathfrak r)$. Then
\begin{enumerate}
\item If $k<l_a(\mathfrak p)-l_a(\mathfrak r)$, then 
 \[ \mathrm{Hom}(\mathrm{St}(\mathfrak p)^{(k)}\times \tau, \mathrm{St}(\mathfrak p')\times \tau' ) =0 .\]
\item If $k=l_a(\mathfrak p)-l_a(\mathfrak r)$, then 
\[  \mathrm{Hom}(\mathrm{St}(\mathfrak p)^{(k)}\times \tau, \mathrm{St}(\mathfrak p') \times \tau') \cong \mathrm{Hom}(\tau, \mathrm{St}(\mathfrak p') \times D_{\mathfrak r}(\tau') ).
\]
 \end{enumerate}

\end{lemma}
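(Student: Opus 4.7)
My plan is to prove the lemma by combining Frobenius reciprocity with the geometric lemma, using the cuspidal support hypothesis on $\rho'$ to isolate a single contributing layer.

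First, I would apply Frobenius reciprocity to rewrite
\[
\mathrm{Hom}\bigl(\mathrm{St}(\mathfrak p)^{(k)}\times \tau,\, \mathrm{St}(\mathfrak p')\times \tau'\bigr)
\,\cong\, \mathrm{Hom}\bigl((\mathrm{St}(\mathfrak p)^{(k)}\times \tau)_{N_{l_a(\mathfrak p'),\, n'}},\, \mathrm{St}(\mathfrak p')\boxtimes \tau'\bigr),
\]
and invoke the geometric lemma to decompose the Jacquet module on the right-hand side into a filtration whose layers $L_i$ are indexed by a splitting parameter $i$ (the contribution from the $\mathrm{St}(\mathfrak p)^{(k)}$-side to the $G_{l_a(\mathfrak p')}$-factor). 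Each $L_i$ is induced from a Levi built out of Jacquet modules of $\mathrm{St}(\mathfrak p)^{(k)}$ and $\tau$ at partitions $(i, m-i)$ and $(l_a(\mathfrak p')-i, n-l_a(\mathfrak p')+i)$ respectively, where $m=l_a(\mathfrak p)-k$.

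Next, I would apply Frobenius reciprocity a second time on each layer, converting $\mathrm{Hom}(L_i, \mathrm{St}(\mathfrak p')\boxtimes\tau')$ into a Levi-level Hom between the source Jacquet modules and the Jacquet modules of $\mathrm{St}(\mathfrak p')$ and $\tau'$. Here the hypothesis $\rho'\not\leq\widetilde\rho$ for $\widetilde\rho\in\mathrm{csupp}(\mathfrak p)\cup\mathrm{csupp}(\tau)$ enters: since $\mathrm{csupp}(\mathrm{St}(\mathfrak p'))\subset\{\nu^c\rho':c\geq 0\}$ while $\mathrm{csupp}(\mathrm{St}(\mathfrak p)^{(k)})\cup\mathrm{csupp}(\tau)$ avoids this set, the only layers whose Levi-level Hom has compatible cuspidal support along the $G_{l_a(\mathfrak p')}$-factor are those where the $\rho'$-part of $\mathrm{St}(\mathfrak p')$ is fully absorbed by the Jacquet component of $\tau'$, with the $\rho := b(\Delta)$-component sitting on the opposite side. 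Combining this with the defining property of $\mathfrak r = \mathfrak{mxpt}(\tau', b(\Delta))$ as the maximal $\rho$-ending multisegment for which $D_{\mathfrak r}(\tau')\neq 0$ (see the discussion after Definition~\ref{def max multisegment}), one sees that the relevant component of $\tau'$ in this layer is the irreducible piece containing $\mathrm{St}(\mathfrak r)\boxtimes D_\mathfrak r(\tau')$.

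The cuspidal-support matching then forces $k \geq l_a(\mathfrak p)-l_a(\mathfrak r)$: the number of $\rho$-cuspidals remaining in $\mathrm{St}(\mathfrak p)^{(k)}$ after $k$ BZ-derivatives is $l_a(\mathfrak p)-k$, and this must be absorbed by the $\rho$-ending segments of $\tau'$, capped in size by $l_a(\mathfrak r)$. This proves (1) when $k<l_a(\mathfrak p)-l_a(\mathfrak r)$: no layer can contribute. At equality, exactly one layer survives, and tracing through the resulting Levi-level Hom (cancelling the matching $\mathrm{St}(\mathfrak r)$-factors and re-assembling $\mathrm{St}(\mathfrak p')$ with $D_\mathfrak r(\tau')$ on the $\tau'$-side) yields the isomorphism of (2). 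The main obstacle will be the layer-by-layer identification in the second step: although the cuspidal support hypothesis immediately rules out the grossly incompatible layers, pinpointing precisely which irreducible Jacquet component of $\tau'$ realizes $D_{\mathfrak r}(\tau')$ requires invoking the socle-irreducible property of the big derivative $\mathbb D_\rho(\tau')$ from \cite{Ch22+b} (or equivalently, the characterization of $\mathfrak{mxpt}$ as the unique maximal $\rho$-ending multisegment with non-vanishing derivative, as in \cite[Corollary 7.23]{Ch22+}). Once that identification is set up, the isomorphism of (2) is then a clean Frobenius/induction-restriction reassembly.
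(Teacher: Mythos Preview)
Your overall plan (Frobenius reciprocity, geometric lemma, cuspidal support elimination) is the right shape, but the direction of the first adjointness step is reversed, and this makes the rest of the argument inconsistent. You apply \emph{first} adjointness to the target's product structure, obtaining the Jacquet module of the \emph{source} $(\mathrm{St}(\mathfrak p)^{(k)}\times\tau)_{N_{l_a(\mathfrak p'),n'}}$. In every geometric-lemma layer of that Jacquet module, the $G_{l_a(\mathfrak p')}$-component is built from pieces of $\mathrm{St}(\mathfrak p)^{(k)}$ and $\tau$; there is no ``Jacquet component of $\tau'$'' on that side at all, so your sentence about ``the $\rho'$-part of $\mathrm{St}(\mathfrak p')$ being absorbed by the Jacquet component of $\tau'$'' has no meaning in the setup you wrote down. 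Worse, if one reads the hypothesis on $\mathrm{csupp}(\tau)$ literally (as you do), then the $G_{l_a(\mathfrak p')}$-component of \emph{every} layer has cuspidal support disjoint from $\mathrm{csupp}(\mathfrak p')$, so every layer maps to zero in $\mathrm{St}(\mathfrak p')$ and the argument would give $\mathrm{Hom}=0$ unconditionally---which is not what (2) says.

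The paper does the opposite: it applies \emph{second} adjointness to the source's product structure, giving
\[
\mathrm{Hom}\bigl(\tau\boxtimes\mathrm{St}(\mathfrak p)^{(k)},\,(\mathrm{St}(\mathfrak p')\times\tau')_{N}\bigr),
\]
with $N$ the unipotent for the partition $(n,\,l_a(\mathfrak p)-k)$. Now the geometric lemma filters the \emph{target}, and the cuspidal constraint is used on the $G_{l_a(\mathfrak p)-k}$-factor: since $\mathrm{St}(\mathfrak p)^{(k)}$ is $\Delta$-saturated, its cuspidal support is disjoint from $\mathrm{csupp}(\mathfrak p')$, so only the top layer $\mathrm{St}(\mathfrak p')\dot\times^1\tau'_{N_{l_a(\mathfrak r)}}$ can contribute (here the constraint comes from $b(\Delta)$, not from $\rho'$ or $\tau$). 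Part (1) then falls out from $D_{\mathfrak t}(\tau')=0$ when $\mathfrak t\not\subset\mathfrak r$, and (2) from the fact that $D_{\mathfrak r}(\tau')\boxtimes\mathrm{St}(\mathfrak r)$ is a direct summand of $\tau'_{N_{l_a(\mathfrak r)}}$ together with the observation that $\mathrm{St}(\mathfrak p)^{(k)}$ has a unique indecomposable component with cuspidal support matching $\mathrm{St}(\mathfrak r)$, and that component has $\mathrm{St}(\mathfrak r)$ as its unique simple quotient. No socle-irreducibility of a big derivative is needed; the direct-summand property (from $\mathfrak r=\mathfrak{mx}(\tau',\Delta)$) suffices. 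If you redo your argument with second adjointness on the source side, the pieces you describe in your last paragraph will then line up correctly.
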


\begin{proof}

One applies the second adjointness of Frobenius reciprocity to have that:
\[  \mathrm{Hom}_{G_n}(\mathrm{St}(\mathfrak p)^{(k)} \times \tau,   \mathrm{St}(\mathfrak p')\times \tau')\cong \mathrm{Hom}( \tau \boxtimes \mathrm{St}(\mathfrak p)^{(k)}, (  \mathrm{St}(\mathfrak p') \times \tau')_N) ,
\]
where $N$ is the corresponding unipotent radical for the parabolically induced module $\mathrm{St}(\mathfrak p)^{(k)}\times \tau$. Now one applies the geometric lemma to analyse the layers. 

We first consider (1). In such case, one can conclude that those related Hom's are zero by either
\begin{itemize}
\item using the cuspidal representation $b(\Delta')$; or
\item using $D_{\mathfrak t}(\pi')=0$ if $\mathfrak t$ is not a submultisegment of $\mathfrak r$ (e.g. by using \cite[Corollary 7.23]{Ch22+} and the removal process in Section \ref{def removal process}) and so, in particular, when a multisegment $\mathfrak t$ is of the form 
\[  \sum_{\Delta \in \mathfrak p}   {}^{(j_{\Delta})}\Delta \]
with sum of $j_{\Delta}$ equal to $l_a(\mathfrak p)-k$. 
\end{itemize}

Now we consider (2) and so $k=l_a(\mathfrak p)-l_a(\mathfrak r)$. Again, analysing the layers in the geometric lemma, it reduces to compute
\[  \mathrm{dim}~ \mathrm{Hom}( \tau \boxtimes \mathrm{St}(\mathfrak p)^{(k)}, \mathrm{St}(\mathfrak p')\dot{\times}^1 \tau'_N) ,
\]
where $N=N_{l_a(\mathfrak r)}$. By using $\mathfrak{mx}(\tau',\Delta)$, the only composition factor of $\mathrm{St}(\mathfrak p)^{(k)}$ that can contribute a non-zero Hom is $\mathrm{St}(\mathfrak r)$. Let $\zeta$ be the indecomposable component in $\mathrm{St}(\mathfrak p)^{(k)}$ that has the cuspidal support as $\mathrm{St}(\mathfrak r)$ . It is possibly zero, but if it is non-zero, $\zeta$ has a unique simple quotient isomorphic to $\mathrm{St}(\mathfrak r)$ (see e.g. \cite[Proposition 2.5 and Corollary 2.6]{Ch21}). Thus, it further reduces to compute
\[ \mathrm{dim}~\mathrm{Hom}(\tau \boxtimes \zeta, \mathrm{St}(\mathfrak p')\dot{\times}^1\tau'_N ).
\]

On the other hand, since $\mathfrak{mx}(\tau, \Delta)=\mathfrak r$, $D_{\mathfrak r}(\tau)\boxtimes \mathrm{St}(\mathfrak r)$ is a direct summand in $\tau_N$ (see \cite[Proposition 2.1]{Ch22+b}) and no other composition factors in $\tau_N$ takes the form $\omega \boxtimes \mathrm{St}(\mathfrak r)$. Hence, now with K\"unneth formula, we now reduce to compute:
\[\mathrm{dim}~\mathrm{Hom}_{G_n}(\tau, \mathrm{St}(\mathfrak p')\times D_{\mathfrak r}(\tau')) .
\]
This completes the proof.
\end{proof}

\subsection{Relation to the layer supporting the branching law}

\begin{lemma} \label{lem a branching factor through quotient}
Let $\pi \in \mathrm{Irr}(G_{n+1})$ and let $\pi' \in \mathrm{Irr}(G_n)$. Suppose $(\pi, \pi')$ is relevant with the associated strongly RdLi-commutative triple $(\mathfrak m, \mathfrak n, \pi)$. Let $\Delta$ be a $\preceq^R$-maximal element in $\mathfrak m$ and let $\mathfrak p=\mathfrak{mx}(\pi, \Delta)$. Suppose there exists a non-zero $f$ in
\[  \mathrm{Hom}_{G_n}(\mathrm{St}(\mathfrak p)\times D_{\mathfrak p}(\pi), \pi')  
\] 
with $\mathcal L_{rBL}(f)\leq l_a(\mathfrak m)$. Let $\mathfrak r$ be the multisegment completing $(\mathfrak m, \Delta, \pi)$. Let $\omega$ be the submodule of $\mathrm{St}(\mathfrak p)\times D_{\mathfrak p}(\pi)$ containing all the layers 
\[    \mathrm{St}(\mathfrak p)^{[k]} \times \mathrm{RS}_{k-2}(D_{\mathfrak p}(\pi))
\]
for $k > l_a(\mathfrak p)-l_a(\mathfrak r)$. Then $f|_{\omega}=0$. (Note that $\omega$ is possibly zero, which happens when $\mathfrak r=\emptyset$.)
\end{lemma}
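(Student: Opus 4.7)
The plan is a proof by contradiction combined with a refinement of the Bernstein--Zelevinsky layers. Assume for contradiction that $f|_{\omega}\neq 0$. The submodule $\omega$ inherits from Proposition~\ref{prop filtration on parabolic} a filtration whose successive subquotients are the layers $L_k := \mathrm{St}(\mathfrak p)^{[k]} \times \mathrm{RS}_{k-2}(D_{\mathfrak p}(\pi))$ for $k > l_a(\mathfrak p) - l_a(\mathfrak r)$. Choosing the maximal subfiltration of $\omega$ on which $f$ vanishes yields a non-zero $G_n$-homomorphism $g \colon L_{k_0} \to \pi'$ for some $k_0 > l_a(\mathfrak p) - l_a(\mathfrak r)$.

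The crucial step is to refine $L_{k_0}$ via the BZ-filtration of its inner factor. As recalled at the end of Section~\ref{ss bz filtrations variant}, $\mathrm{RS}_{k_0-2}(D_{\mathfrak p}(\pi))$ carries an $M$-filtration whose layers are twists of $(\Phi^+)^{r-1}\circ \Psi^+(D_{\mathfrak p}(\pi)^{(r)})$ for integers $r \geq 1$. Since the functor $\mathrm{St}(\mathfrak p)^{[k_0]}\times(-)$ is exact, this refines $L_{k_0}$ into subquotients which, after unwinding through Leibniz's rule for the derivative of $\tau' := \mathrm{St}(\mathfrak p) \times D_{\mathfrak p}(\pi)$, match the $(i_1,i_2)=(k_0,r)$ component of the $(k_0+r)$-th BZ layer $\Sigma_{k_0+r}(\tau')$. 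Selecting the minimal refined layer on which $g$ remains non-zero and applying second adjointness Frobenius reciprocity produces, for some $r \geq 0$, a non-zero element of
\[\mathrm{Hom}\bigl(\mathrm{St}(\mathfrak p)^{(k_0)} \times D_{\mathfrak p}(\pi)^{(r)},\ {}^{(j)}\pi'\bigr), \qquad j = k_0+r - l_a(\mathfrak m) + l_a(\mathfrak n).\]

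Because $k_0 > l_a(\mathfrak p) - l_a(\mathfrak r) = l_a(\mathfrak p_1) - l_a(\mathfrak r_1)$, Corollary~\ref{cor strict inequality for integer db} applied with $i'=k_0$ and $i''=r$ forces the strict inequality $k_0+r > l_a(\mathfrak m)$. On the other hand, this same non-zero element places $g$ non-trivially on the $(k_0+r)$-th BZ layer $\Sigma_{k_0+r}(\tau')$, hence via Corollary~\ref{cor integer for branching law equal layer} gives $\mathcal L_{rBL}(f) \geq k_0 + r > l_a(\mathfrak m)$, contradicting the hypothesis $\mathcal L_{rBL}(f) \leq l_a(\mathfrak m)$. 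Therefore $f|_{\omega} = 0$.

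The main obstacle lies in the refinement step: one must pin down exactly where each refined piece of $L_{k_0}$ sits in the BZ-filtration of $\tau'$ so that the indexing $k_0+r$ is correct, properly handle the $\nu^{\pm 1/2}$-twists relating the $(i)$- and $[i]$-versions of the derivatives, and reconcile the $M_{n+1}$-filtration of Proposition~\ref{prop filtration on parabolic} with the standard BZ-filtration of $\tau'$. A further care is needed to verify that the minimal refinement selection actually localizes $g$ onto a single $(k_0,r)$-piece rather than dispersing it across several Leibniz components, so that Frobenius reciprocity produces the precise form of Hom consumed by Corollary~\ref{cor strict inequality for integer db}.
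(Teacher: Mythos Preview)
Your overall architecture matches the paper's: contradiction, descent to a single layer $L_{k_0}$ from Proposition~\ref{prop filtration on parabolic}, refinement via the inner BZ-filtration of $\mathrm{RS}_{k_0-2}(D_{\mathfrak p}(\pi))$, and appeal to Corollary~\ref{cor strict inequality for integer db}. The gap is in your final inference.

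You claim that the non-zero refined Hom ``places $g$ non-trivially on the $(k_0+r)$-th BZ layer $\Sigma_{k_0+r}(\tau')$, hence via Corollary~\ref{cor integer for branching law equal layer} gives $\mathcal L_{rBL}(f) \geq k_0 + r$.'' This is incorrect on two counts. First, Corollary~\ref{cor integer for branching law equal layer} is stated for irreducible $\pi$, whereas $\tau'=\mathrm{St}(\mathfrak p)\times D_{\mathfrak p}(\pi)$ need not be. Second and more fundamentally, that corollary identifies $\mathcal L_{rBL}$ with the \emph{smallest} index of a non-vanishing $\Sigma_i$-Hom; exhibiting one non-vanishing Hom at index $k_0+r$ would at best bound $\mathcal L_{rBL}(f)$ from \emph{above}, not below. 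Moreover $g$ is a map out of the subquotient $L_{k_0}$, not out of $\tau'$, so the phrase ``places $g$ on $\Sigma_{k_0+r}(\tau')$'' has no direct meaning.

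The paper bridges this exactly where you flagged an obstacle but did not resolve it. Once Corollary~\ref{cor strict inequality for integer db} shows that \emph{every} BZ-layer of $L_{k_0}$ with index $\leq l_a(\mathfrak m)$ has zero Hom to $\pi'$, one gets $\mathcal L_{rBL}(\widetilde f)>l_a(\mathfrak m)$ for the induced $\widetilde f\colon L_{k_0}\to\pi'$. Then Lemma~\ref{lem integer determine quotient} (quotient constraint) gives $\mathcal L_{rBL}(\widetilde f\circ s)=\mathcal L_{rBL}(\widetilde f)$ for the surjection $s\colon\omega'\twoheadrightarrow L_{k_0}$, and Lemma~\ref{lem integer branching law submodule} (submodule constraint) gives $\mathcal L_{rBL}(f)\geq\mathcal L_{rBL}(f|_{\omega'})$. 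Chaining these yields $\mathcal L_{rBL}(f)>l_a(\mathfrak m)$. Replace your invocation of Corollary~\ref{cor integer for branching law equal layer} with these two transfer lemmas and the argument goes through.
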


\begin{proof}
Suppose not to derive a contradiction. Then there exists some submodule $\omega'$ of $\omega$ (from the filtration in Proposition \ref{prop filtration on parabolic}) such that we have the following commutative diagram:
\[  \xymatrix{  \omega' \ar@{^{(}->}[r] \ar@{->>}[d]^s  &   \omega  \ar[r] & \pi \ar[r]^f & \pi'  \\
               \mathrm{St}(\mathfrak p)^{[k^*]} \times \mathrm{RS}_{k^*-2}(D_{\mathfrak p}(\pi))   \ar@{->}[urrr]^{\widetilde{f}}   &   &   &
                          }
\]
for some $k^* >l_a(\mathfrak p)-l_a(\mathfrak r)$ and some $\widetilde{f}$; and the composition $\widetilde{f}\circ s\neq 0$. Set $\tau=D_{\mathfrak p}(\pi)$. Now, $\mathrm{St}(\mathfrak p)^{[k^*]}\times \mathrm{RS}_{k^*-2}(\tau)$ admits Bernstein-Zelevinsky filtration whose successive subquotients of the form:
\[   \mathrm{St}(\mathfrak p)^{[k^*]} \times \tau^{[l]} \times \Pi_{k^*+l-1} 
\]
with $l$ varying. Thus, in order to get $\mathrm{Hom}( \mathrm{St}(\mathfrak p)^{[k^*]} \times \tau^{[l]} \times \Pi_{k^*+l-1} , \pi')\neq 0$, we must have that $k^*+l>l_a(\mathfrak m)$ by Corollary \ref{cor strict inequality for integer db}. Then, $\mathcal{L}_{rBL}(\widetilde{f}) > l_a(\mathfrak m)$. Then, by Lemma \ref{lem integer determine quotient}, $\mathcal L_{rBL}(\widetilde{f}\circ s)> l_a(\mathfrak m)$. By Lemma \ref{lem integer branching law submodule}, $\mathcal L_{rBL}(f)>l_a(\mathfrak m)$, giving a contradiction. 
\end{proof}

%From the filtration in Proposition \ref{prop filtration on parabolic}, there exists some submodule $\omega'$ of $\omega$ such that we have the following nonzero commutative diagram:
%\[   \mathrm{St}(\mathfrak p)^{[k^*]} \times \mathrm{RS}_{k^*-2}(\pi)  \stackrel{\iota_1}{\hookrightarrow}    \omega  \stackrel{\iota_2}{\hookrightarrow} \mathrm{St}(\mathfrak p)\times D_{\mathfrak p}(\pi) \stackrel{f}{\rightarrow} \pi' \]
%for some $k^* >l_a(\mathfrak p)-l_a(\mathfrak r)$. Set $\tau=D_{\mathfrak p}(\pi)$. Now, $\mathrm{St}(\mathfrak p)^{[k^*]}\times \mathrm{RS}_{k^*-2}(\tau)$ admits the Bernstein-Zelevinsky filtration whose successive subquotients of the form:
%\[   \mathrm{St}(\mathfrak p)^{[k^*]} \times \tau^{[l]} \times \Pi_{k^*+l-1} \]
%with $l$ varying. Thus, in order to get $\mathrm{Hom}( \mathrm{St}(\mathfrak p)^{[k^*]} \times \tau^{[l]} \times \Pi_{k^*+l-1} , \pi')\neq 0$, we must have that $k^*+l>l_a(\mathfrak m)$ by Corollary \ref{cor strict inequality for integer db}. Then, $\mathcal{L}_{rBL}(f\circ \iota_2\circ \iota_1) > l_a(\mathfrak m)$.  By Lemma \ref{lem integer branching law submodule}, $\mathcal L_{rBL}(f) \geq  \mathcal L_{rBL}( f\circ \iota_2\circ \iota_1)$ and so $\mathcal L_{rBL}(f)> l_a(\mathfrak m)$, giving a contradiction. 

\section{Proof of sufficiency of the generalized relevant pairs} \label{s proof sufficiency}

\subsection{Strategy} 

We shall extract the branching law from some reducible modules and the following lemma will be useful . 

\begin{lemma} \label{lem general strategy}
Let $\tau$ and $\omega$ be $G_n$-modules with short exact sequences:
\[  0 \rightarrow \tau_2 \rightarrow \tau \rightarrow \tau_1 \rightarrow 0 
\]
and
\[  0 \rightarrow \omega_1 \rightarrow \omega \rightarrow \omega_2 \rightarrow 0.
\]
Suppose the following conditions hold:
\[ \mathrm{Hom}_{G_n}(\tau, \omega) \cong \mathbb C ,
\]
\[  \mathrm{Hom}_{G_n}(\tau, \omega_1) \cong \mathbb C, \quad \mathrm{Hom}_{G_n}(\tau_1, \omega) \cong \mathbb C. 
\]
Then $\mathrm{Hom}_{G_n}(\tau_1, \omega_1) \cong \mathbb C$. 
\end{lemma}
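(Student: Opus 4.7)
\smallskip

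\noindent
\textbf{Proof proposal.} The statement is a purely homological assertion; no representation-theoretic input about $G_n$ beyond the exactness of $\mathrm{Hom}$ is needed. The plan is to track a single nonzero morphism $f\colon \tau\to\omega$ (which exists by the hypothesis $\mathrm{Hom}_{G_n}(\tau,\omega)\cong\mathbb C$) through the two short exact sequences.

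First, I would apply $\mathrm{Hom}_{G_n}(\tau,-)$ to $0\to\omega_1\to\omega\to\omega_2\to 0$, obtaining a left exact sequence
\[
0\to \mathrm{Hom}_{G_n}(\tau,\omega_1)\to \mathrm{Hom}_{G_n}(\tau,\omega)\to \mathrm{Hom}_{G_n}(\tau,\omega_2).
\]
Both of the first two terms are $\mathbb C$, so the injection between them must be an isomorphism. In particular, every map $\tau\to\omega$ lands in $\omega_1$. Next I would dually apply $\mathrm{Hom}_{G_n}(-,\omega)$ to $0\to\tau_2\to\tau\to\tau_1\to 0$:
\[
0\to \mathrm{Hom}_{G_n}(\tau_1,\omega)\to \mathrm{Hom}_{G_n}(\tau,\omega)\to \mathrm{Hom}_{G_n}(\tau_2,\omega).
\]
Again the first two terms are $\mathbb C$, so the inclusion is an isomorphism, which forces every map $\tau\to\omega$ to vanish on $\tau_2$.

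Combining the two observations, the fixed nonzero $f\colon\tau\to\omega$ both factors through the quotient $\tau_1$ and has image in the submodule $\omega_1$, so it descends to a nonzero map $\bar f\colon \tau_1\to\omega_1$. This establishes $\mathrm{Hom}_{G_n}(\tau_1,\omega_1)\neq 0$. For the upper bound, apply $\mathrm{Hom}_{G_n}(\tau_1,-)$ to $0\to\omega_1\to\omega\to\omega_2\to 0$; left exactness gives an injection $\mathrm{Hom}_{G_n}(\tau_1,\omega_1)\hookrightarrow \mathrm{Hom}_{G_n}(\tau_1,\omega)\cong\mathbb C$. Hence $\mathrm{Hom}_{G_n}(\tau_1,\omega_1)$ is at most one-dimensional, and combined with the nonvanishing above we conclude it equals $\mathbb C$.

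There is no real obstacle here: the lemma is a soft diagram chase and the only subtle point is to verify that the three ambient multiplicity-one hypotheses suffice (no $\mathrm{Ext}^1$ vanishing is needed, because left exactness of $\mathrm{Hom}$ is enough once the two spaces that bound it are already one-dimensional). The content of the lemma is therefore merely the compatibility of the unique nonzero $f\in\mathrm{Hom}_{G_n}(\tau,\omega)$ with the two filtrations, and this is what the above diagram chase records.
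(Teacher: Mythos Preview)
Your proof is correct and is essentially the same as the paper's, just organized as a direct argument rather than by contradiction. The paper assumes $\mathrm{Hom}_{G_n}(\tau_1,\omega_1)=0$, then exhibits two linearly independent elements of $\mathrm{Hom}_{G_n}(\tau,\omega)$ (one with image in $\omega_1$, one not), contradicting the one-dimensionality; your version instead uses left exactness of $\mathrm{Hom}$ to show directly that the unique nonzero $f\colon\tau\to\omega$ lands in $\omega_1$ and kills $\tau_2$, hence descends --- the underlying content is identical.
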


\begin{proof}
It is clear that $\mathrm{dim}~\mathrm{Hom}_{G_n}(\tau_1, \omega_1) \leq 1$. Suppose the dimension is zero to derive a contradiction. Let $f \in \mathrm{Hom}_{G_n}(\tau_1, \omega)$ be non-zero. Then $\mathrm{im}~f \not\subset \omega_1$. Let $f' \in \mathrm{Hom}_{G_n}(\tau, \omega_1)$ be non-zero. Then $\mathrm{im}~f'\subset \omega_1$. Both $f$ and $f'$ can lift to a map in $\mathrm{Hom}_{G_n}(\tau, \omega)$. But $f$ and $f'$ are not a scalar multiple of each other. This implies that $\mathrm{dim}~\mathrm{Hom}_{G_n}(\tau, \omega) \geq 2$.
\end{proof}

%The above theorem immediately gives the following:

%\begin{corollary}
%Let $\pi \in \mathrm{Irr}(G_{n+1})$ and let $\pi' \in \mathrm{Irr}(G_n)$. If $(\pi, \pi')$ is relevant, then $\mathrm{Hom}_{G_n}(\pi, \pi')\neq 0$. 
%\end{corollary}

\subsection{Zero relative rank case}
We shall do an induction and need the following definition:

\begin{definition}
Let $\pi_1 \in \mathrm{Irr}(G_{n+1})$ and let $\pi_2 \in \mathrm{Irr}(G_n)$. A cuspidal representation $\rho$ in $\mathrm{csupp}(\pi_1)$ is said to be {\it relevant} to $\pi_2$ if $\nu^{1/2}\rho \in \mathrm{csupp}_{\mathbb Z}(\pi_2)$. The {\it relevant relative rank} of $(\pi_1, \pi_2)$, denoted by $\mathcal{RR}(\pi_1, \pi_2)$, is 
\[  \sum_{\sigma \in \mathrm{csupp}(\pi_1): \mbox{ relevant to $\pi_2$}} n(\sigma) +\sum_{\sigma \in \mathrm{csupp}(\pi_2): \mbox{ relevant to $\pi_1$}} n(\sigma) .\]
\end{definition}

We first prove a basic case:

\begin{lemma} \label{lem basic thm branching}
Let $\pi_1 \in \mathrm{Irr}(G_{n+1})$ and let $\pi_2 \in \mathrm{Irr}(G_n)$. Suppose the relative rank of $(\pi_1,\pi_2)$ is $0$. Then the following statements are equivalent:
\begin{enumerate}
\item $\mathrm{Hom}_{G_n}(\pi_1, \pi_2)\neq 0$; 
\item both $\pi_1$ and $\pi_2$ are generic;
\item $(\pi_1, \pi_2)$ is $(n+1)$-relevant;
\item $(\pi_1, \pi_2)$ is relevant.
\end{enumerate}
\end{lemma}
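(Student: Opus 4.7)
The plan is to prove $(3)\Rightarrow(4)\Rightarrow(2)$, $(2)\Rightarrow(3)$, $(2)\Rightarrow(1)$, and $(1)\Rightarrow(2)$, where the key engine throughout is the disjointness of $\nu$-integral cuspidal lines between $\pi_1$ and $\pi_2$ forced by $\mathcal{RR}(\pi_1,\pi_2)=0$.

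The implication $(3)\Rightarrow(4)$ is immediate from Definition \ref{def relevant pair content}. For $(4)\Rightarrow(2)$, I would argue as follows: if $(\pi_1,\pi_2)$ is relevant via $(\mathfrak m,\mathfrak n,\nu^{1/2}\pi_1)$, let $\omega\cong D_{\mathfrak m}^R(\nu^{1/2}\pi_1)\cong D_{\mathfrak n}^L(\pi_2)$. Since BZ derivatives of any kind do not enlarge cuspidal support, $\mathrm{csupp}(\omega)\subset \nu^{1/2}\mathrm{csupp}(\pi_1)$ and $\mathrm{csupp}(\omega)\subset \mathrm{csupp}(\pi_2)$. The zero relative rank hypothesis means these multisets are disjoint (no element of the first is a $\nu$-integral shift of any element of the second), so $\mathrm{csupp}(\omega)=\emptyset$ and $\omega\cong 1_{G_0}$. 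This forces $l_a(\mathfrak m)=n+1$ (matching ranks) and $\mathrm{lev}(\nu^{1/2}\pi_1)=n+1$, i.e.\ $\pi_1$ is generic; symmetrically $\pi_2$ is generic. As a bonus, the same argument gives $(4)\Rightarrow(3)$.

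For $(2)\Rightarrow(3)$, I take $\mathfrak m=\mathfrak{hd}^R(\nu^{1/2}\pi_1)$ and $\mathfrak n=\mathfrak{hd}^L(\pi_2)$ (Definition \ref{def highest der multi}), so that $D_{\mathfrak m}^R(\nu^{1/2}\pi_1)\cong 1_{G_0}\cong D_{\mathfrak n}^L(\pi_2)$ and $l_a(\mathfrak m)=n+1$. To verify strong RdLi-commutativity of $(\mathfrak m,\mathfrak n,\nu^{1/2}\pi_1)$, I observe that any segment $\Delta\in\mathfrak m$ and any $\Delta'\in\mathfrak n$ have cuspidal supports in disjoint $\nu$-integral lines (again by $\mathcal{RR}=0$), so they are unlinked, and for every intermediate $I_{\mathfrak n_j}\circ D_{\mathfrak m_i}(\nu^{1/2}\pi_1)$ the triple $(\Delta,\Delta',\cdot)$ is pre-RdLi-commutative by Example \ref{example pre commutative}(1), hence strongly RdLi-commutative by Theorem \ref{thm pre implies strong}. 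The implication $(2)\Rightarrow(1)$ is the Jacquet--Piatetski-Shapiro--Shalika construction of the $G_n$-invariant Rankin-Selberg functional on two generic representations, as recalled in the introduction.

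The remaining implication $(1)\Rightarrow(2)$ is the main point, but it follows cleanly from the same cuspidal support obstruction. Suppose $\mathrm{Hom}_{G_n}(\pi_1,\pi_2)\neq 0$; by the right BZ filtration there exists $i\geq 1$ with
\[
\mathrm{Hom}_{G_{n+1-i}}\!\bigl(\pi_1^{[i]},\,{}^{(i-1)}\pi_2\bigr)\neq 0,
\]
so some irreducible $\omega$ occurs simultaneously as a quotient of $\pi_1^{[i]}=\nu^{1/2}\pi_1^{(i)}$ and as a submodule of ${}^{(i-1)}\pi_2$. Then $\mathrm{csupp}(\omega)\subset \nu^{1/2}\mathrm{csupp}(\pi_1)$ and $\mathrm{csupp}(\omega)\subset \mathrm{csupp}(\pi_2)$, so $\mathcal{RR}(\pi_1,\pi_2)=0$ forces $\omega\cong 1_{G_0}$, which means $i=n+1$, $\pi_1^{(n+1)}\neq 0$ and ${}^{(n)}\pi_2\neq 0$, i.e.\ both are generic. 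The only subtle point I expect to verify carefully is the containment of cuspidal supports under BZ derivatives --- which follows from a standard application of the geometric lemma --- but no induction or deeper machinery from the rest of the paper is needed here, making this base case genuinely easy relative to the sufficiency arguments in Section \ref{ss outline sufficient}.
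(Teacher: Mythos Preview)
Your proposal is correct and follows essentially the same approach as the paper's (very terse) proof: the paper writes ``(1) $\Rightarrow$ (2) follows from an application of BZ filtration and comparing cuspidal supports'' and ``(3)$\Leftrightarrow$(4) follows by definitions and a comparison of cuspidal supports,'' and you have simply spelled these out. One small point worth making explicit in your $(4)\Rightarrow(2)$ step: the implication from $D_{\mathfrak m}^R(\nu^{1/2}\pi_1)\cong 1_{G_0}$ to $\mathrm{lev}(\pi_1)=n+1$ uses the fact (from \cite{Ch22+}) that $D_{\mathfrak m}(\pi)$, when nonzero, is a simple quotient of $\pi^{(l_a(\mathfrak m))}$, so $\pi_1^{(n+1)}\neq 0$; this is not quite ``matching ranks'' alone.
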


\begin{proof}
(2) $\Rightarrow$ (1) is well-known \cite{JPSS83}. (1) $\Rightarrow$ (2) follows from an application of BZ filtrations and comparing cuspidal supports. 

(2)$\Leftrightarrow$(3) is straightforward from definitions. (3)$\Leftrightarrow$(4) follows from definitions and note that one has to use the relative rank to be zero to obtain the level of layer supporting via comparison of cuspidal supports.
\end{proof}
%To prove (2) $\Rightarrow$ (3), we find the generic multisegments $\mathfrak m$ and $\mathfrak n$ such that $\pi_1 \cong \mathrm{St}(\mathfrak m)$ and $\pi_2 \cong \mathrm{St}(\mathfrak n)$. Then by repeatedly using Example \ref{example pre commutative}(1) and Theorem \ref{thm pre implies strong}, we have that $(\mathfrak m, \mathfrak n, \nu^{1/2}\cdot \pi_1)$ is a strongly RdLi-commutative triple. It is clear that $\nu^{1/2}\cdot D_{\mathfrak m}(\mathrm{St}(\mathfrak m))$ is the trivial representation of $G_0$ and so as $D_{\mathfrak n}(\mathrm{St}(\mathfrak n))$. This implies (3). 

%We now prove (3) $\Rightarrow$ (2). The $(n+1)$-relevance implies that $D_{\mathfrak m}(\mathrm{St}(\mathfrak m))=1$ and $D_{\mathfrak n}(\mathrm{St}(\mathfrak n))=1$. 

%It remains to prove the equivalence between (3) and (4). (3) $\Rightarrow$ (4) is clear. For the opposite direction, if the relevance is not $n+1$, then we cannot have $\mathrm{csupp}(\nu^{1/2}\cdot D_{\mathfrak m}(\pi_1)) = \mathrm{csupp}(D_{\mathfrak n}(\pi_2))$ since $\mathrm{csupp}(\nu^{1/2}\cdot D_{\mathfrak m}(\pi_1)) \not\subset \mathrm{csupp}(\nu^{1/2}\cdot \pi_1)$ and $\mathrm{csupp}_{\mathbb Z}(D_{\mathfrak n}(\pi_2)) \not\subset \mathrm{csupp}_{\mathbb Z}(\pi_2)$. 

\noindent
\subsection{Proof of sufficiency}

\begin{theorem} \label{thm sufficiency}
Let $\pi \in \mathrm{Irr}(G_{n+1})$ and let $\pi' \in \mathrm{Irr}(G_n)$. Suppose $(\pi, \pi')$ is $i^*$-relevant. Then 
\[  \mathrm{Hom}_{G_n}(\pi, \pi') \neq 0
\]
and $\mathcal L_{rBL}(\pi, \pi')=i^*$. 
\end{theorem}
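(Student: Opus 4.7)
The plan is to proceed by induction on the relevant relative rank $\mathcal{RR}(\pi,\pi')$, with base case handled by Lemma \ref{lem basic thm branching}. For the inductive step, I would fix a minimal strongly RdLi-commutative triple $(\mathfrak m,\mathfrak n,\nu^{1/2}\pi)$ witnessing $i^*$-relevance (Theorem \ref{thm unique of relevant pairs}) and choose a $\leq$-minimal $\rho'$ in $\mathrm{csupp}(\pi')$, together with a multisegment $\mathfrak p'$ such that $\pi'':=D^L_{\mathfrak p'}(\pi')$ is strongly L-$\rho'$-reduced. By Corollary \ref{cor seq strong commut under minimal} together with Lemma \ref{lem delta reduced subset}, the pair $(\pi,\pi'')$ remains relevant with the same $\mathfrak m$ on the right, so applying the inductive hypothesis to $\mathrm{Hom}(\pi,\sigma\times\pi'')$ for a good cuspidal $\sigma$ of $G_{l_a(\mathfrak p')}$ yields a non-zero map of the predicted supporting layer. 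The refined standard trick (Lemma \ref{lem construct one more non-zero element}) then upgrades this to
\[\mathrm{Hom}_{G_n}(\pi,\tau')\cong\mathbb C,\qquad\mathcal L_{rBL}(\pi,\tau')=i^*,\]
where $\tau':=\mathrm{St}(\mathfrak p')\times\pi''$ contains $\pi'$ as a submodule.

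Next, on the left side, I would pick a $\leq$-maximal $\rho$ in $\mathrm{csupp}(\mathfrak m)$, take $\Delta$ to be the longest segment in $\mathfrak m_{b=\rho}$, and choose $\mathfrak p$ with $\omega:=D^R_{\mathfrak p}(\pi)$ being R-$\Delta$-reduced; then $\tau:=\mathrm{St}(\mathfrak p)\times\omega\twoheadrightarrow\pi$. In the special case where $\rho$ is good to $\nu^{-1/2}\pi'$, the same standard-trick-plus-induction argument applied to $(\omega,\pi')$ delivers $\mathrm{Hom}_{G_n}(\tau,\pi')\cong\mathbb C$ and $\mathrm{Hom}_{G_n}(\tau,\tau')\cong\mathbb C$, and Lemma \ref{lem general strategy} instantly gives $\mathrm{Hom}_{G_n}(\pi,\pi')\cong\mathbb C$.

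In general $\rho$ need not be good to $\nu^{-1/2}\pi'$, so one cannot work directly with $\tau$. Following the outline in Section \ref{ss outline sufficient}, I would set $k^*:=l_a(\mathfrak p)-l_a(\mathfrak{mx}(\nu^{1/2}\Delta,\pi'))$ and truncate the variant BZ-filtration of Proposition \ref{prop filtration on parabolic} on $\mathrm{St}(\mathfrak p)\times\omega$ by quotienting out all layers $\mathrm{St}(\mathfrak p)^{[k]}\times\mathrm{RS}_{k-2}(\omega)$ with $k>k^*$, producing quotients $\widetilde\tau$ of $\tau$ and $\widetilde\pi$ of $\pi$. The crux is then to establish the three multiplicity-one statements in (\ref{eqn three part to multiplicity one}). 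The uniqueness clauses for $\mathrm{Hom}_{G_n}(\widetilde\tau,\pi')$ and $\mathrm{Hom}_{G_n}(\widetilde\tau,\tau')$ follow from Frobenius reciprocity applied layer-by-layer, using Lemma \ref{lem compute layers by geometric lemma} to single out $\mathrm{St}(\mathfrak p)^{[k^*]}\times\mathrm{RS}_{k^*-2}(\omega)$ as the only surviving contributor and then invoking the inductive hypothesis. For the existence clauses, I would take the non-zero map in $\mathrm{Hom}_{G_n}(\pi,\tau')$ constructed in the first paragraph and apply Lemma \ref{lem a branching factor through quotient}, whose proof rests on Corollary \ref{cor strict inequality for integer db} from the smallest-derivative analysis of Section \ref{s smallest derivative strongly}, to see that it vanishes on the killed submodule and hence factors through $\widetilde\pi$; the non-zero map in $\mathrm{Hom}_{G_n}(\widetilde\tau,\pi')$ would instead be produced by the Rankin-Selberg deformation of Lemma \ref{lem construct branching from deformation} applied to the pair $(\omega,\pi')$, which simultaneously bounds the supporting layer by $i^*$. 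Lemma \ref{lem general strategy} then yields $\mathrm{Hom}_{G_n}(\widetilde\pi,\pi')\cong\mathbb C$, whence $\mathrm{Hom}_{G_n}(\pi,\pi')\cong\mathbb C$.

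The hard part will be the simultaneous verification of all three Hom dimensions in (\ref{eqn three part to multiplicity one}): one must carefully control which BZ-layer of $\mathrm{St}(\mathfrak p)\times\omega$ supports a branching map, ensure this persists after passing to $\widetilde\pi$ and $\widetilde\tau$, and certify that the deformation-constructed map has supporting layer bounded by $i^*$. The bookkeeping is tightly governed by the invariant $k^*$, whose value is forced by Proposition \ref{prop smallest integer in strong triple}. Once these Hom computations are in place, the refined assertion $\mathcal L_{rBL}(\pi,\pi')=i^*$ drops out of Corollary \ref{cor integer for branching law equal layer}.
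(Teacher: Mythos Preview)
Your proposal closely tracks the paper's Case~1 argument and correctly identifies all the key ingredients: the truncation at level $k^*$, Lemma~\ref{lem a branching factor through quotient} together with Corollary~\ref{cor strict inequality for integer db} for the map from $\widetilde\pi$, the Rankin--Selberg deformation (Lemma~\ref{lem construct branching from deformation}) for the map from $\widetilde\tau$, and the strategy Lemma~\ref{lem general strategy} to conclude.

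There is, however, one genuine omission. Your first step picks a $\leq$-minimal $\rho'\in\mathrm{csupp}(\pi')$ and then invokes Lemma~\ref{lem delta reduced subset} to get $\mathfrak p'\subset\mathfrak n$. That containment requires $D^L_{\mathfrak n}(\pi')$ to be strongly L-$\rho'$-reduced, which the isomorphism $D^R_{\mathfrak m}(\nu^{1/2}\pi)\cong D^L_{\mathfrak n}(\pi')$ yields \emph{only when} $\rho'\notin\mathrm{csupp}(\nu^{1/2}\pi)$. You impose no such condition, and it can fail: every $\leq$-minimal element of $\mathrm{csupp}(\pi')$ may already lie in $\mathrm{csupp}(\nu^{1/2}\pi)$, in which case neither the inclusion $\mathfrak p'\subset\mathfrak n$ nor the drop in relative rank needed for induction is available. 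The paper treats this as a separate Case~2: one shows that then there exists a $\leq$-minimal $\rho\in\mathrm{csupp}(\pi)$ with $\rho\notin\mathrm{csupp}(\nu^{1/2}\pi')$, applies the duality $\mathrm{Hom}_{G_n}(\pi,\pi')\cong\mathrm{Hom}_{G_{n+1}}(\theta(\sigma)\times\pi',\pi)$ for a suitable cuspidal $\sigma$, and uses the symmetry of relevance (Theorem~\ref{thm symmetric property of relevant}) to land back in Case~1 at the same relative rank. You should add this reduction.
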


\begin{proof}

\noindent
{\bf Step 1 Inductive setup.} We shall prove by an induction on the relative rank. When the relative rank $\mathcal{RR}(\pi,\pi')$ is zero, it is shown in Lemma \ref{lem basic thm branching}.

%In this case, the relevance implies the genericity of $\pi$ and $\pi'$. In this case, $\mathrm{Hom}_{G_n}(\pi,\pi')\neq 0$ is a classical result \cite{JPSS83}. \\

%the number of cuspidal representations in 
%\[   \left\{ \sigma \in \mathrm{csupp}(\nu^{1/2} \cdot \pi)  :  \sigma \in \mathrm{csupp}_{\mathbb Z}(\pi') \right\} \cup \left\{ \sigma \in \mathrm{csupp}(\pi'): \sigma \in \mathrm{csupp}(\nu^{1/2}\cdot \pi) \right\} .\]
%We denote such number by $N(\pi, \pi')$. 

\noindent

We now consider the relative rank is non-zero. By the relevance condition, there exists a minimal strongly RdLi-commutative triple $(\mathfrak m, \mathfrak n, \nu^{1/2}\cdot \pi)$ such that 
\[    D_{\mathfrak m}^R(\nu^{1/2}\cdot \pi) \cong D^L_{\mathfrak n}(\pi') .
\]
By definition, we have $i^*=l_a(\mathfrak m)$. Note that the statement $\mathcal L_{rBL}(\pi, \pi')=i^*$ would follow from Proposition \ref{prop smallest integer in strong triple} and Corollary \ref{cor integer for branching law equal layer} once we show that $\mathrm{Hom}_{G_n}(\pi, \pi')\neq 0$.

Now we consider two cases: \\

\noindent
{\bf Case 1:} There exists a $\leq$-minimal element $\rho' \in \mathrm{csupp}(\pi')$ such that $\rho' \not\in \mathrm{csupp}(\nu^{1/2}\cdot \pi)$; and $\rho'$ is relevant to $\pi'$. \\ %Let $(\mathfrak m, \mathfrak n, \pi)$ be the minimal strongly RdLi-commutative triple that determines the relevance of $(\pi, \pi')$. \\
%By Sections \ref{s unlinked commute triples} and \ref{s linked strong commutation}, we may assume that $\mathfrak m$ is Rd-minimal to $\pi$ and $\mathfrak n$ is Li-minimal to $\pi'$. 

\noindent
{\bf Step 2:  Compute $\mathrm{Hom}_{G_n}(\pi, \omega)$ via the standard trick ($\omega$ is defined below)} \\

 Let $\mathfrak p'=\mathfrak{mxpt}^L( \pi', \rho')$. By using $D^R_{\mathfrak m}(\nu^{1/2}\cdot \pi)\cong D^L_{\mathfrak n}(\pi')$, we must have $D^L_{\mathfrak n} (\pi')$ to be strongly L-$\rho'$-reduced. Then, we have $\mathfrak p' \subset \mathfrak n$ by (the left version of) Lemma \ref{lem delta reduced subset}. Then, by Corollary \ref{cor seq strong commut under minimal and sub}, we still have that $(\mathfrak m, \mathfrak n-\mathfrak p', D_{\mathfrak p'}^L(\pi))$ is still a strongly RdLi-commutative triple and
\[   D^R_{\mathfrak m}(\nu^{1/2}\cdot \pi) \cong  D^L_{\mathfrak n-\mathfrak p'}\circ D^L_{\mathfrak p'}(\pi') .
\]
 Thus $(\pi, D^L_{\mathfrak p'}(\pi'))$ is still relevant. 

Now we pick a cuspidal representation $\sigma$ of $G_{l_a(\mathfrak p')}$ good to $\nu^{1/2}\cdot \pi$ and $\pi'$. Then we have that $(\pi, \sigma \times D^L_{\mathfrak p'}(\pi'))$ is still relevant  (determined by the triple $(\mathfrak m, \mathfrak n-\mathfrak p'+[\sigma], \nu^{1/2}\cdot \pi)$ as shown above). Now, $\mathcal{RR}(\pi, \sigma \times D^L_{\mathfrak p'}(\pi'))<\mathcal{RR}(\pi, \pi')$, by induction, we have that:
\[  \mathrm{Hom}_{G_n}(\pi, \sigma \times D^L_{\mathfrak p'}(\pi')) \neq 0 .
\]
Let $\omega = \mathrm{St}(\mathfrak p') \times D^L_{\mathfrak p'}(\pi') $. The standard trick of Lemma \ref{lem construct one more non-zero element} now implies that
\[  \mathrm{Hom}_{G_n}(\pi, \omega )\cong \mathbb C  
\]
and $\mathcal L_{rBL}(\pi, \omega)=i^*$. \\

\noindent
{\bf Step 3: Define $\tau'$ and its quotient $\tau$.} \\

On the other hand, let $\Delta$ be a $\preceq^R$-maximal (see Section \ref{ss reduced and saturated parts}) segment in $\mathfrak m$. Let $\mathfrak p=\mathfrak{mx}(\pi, \Delta)$ and let $\tau'= \mathrm{St}(\mathfrak p) \times D_{\mathfrak p}(\pi)$. We now consider the filtration of $\pi$ as given in Proposition \ref{prop filtration on parabolic}. Let $\kappa$ be the submodule of $\tau'$ containing all the layers 
\[    \mathrm{St}(\mathfrak p)^{[k]} \times \mathrm{RS}_{k-2}(D_{\mathfrak p}(\pi)) 
\]
for $k > l_a(\mathfrak p)-l_a(\mathfrak r) \geq 1$. 

Now let $\tau=\tau'/\kappa$. We pick $\sigma' \in \mathrm{Irr}^c$ good to $\pi$ and $\nu^{-1/2}\pi'$. By Corollary \ref{cor seq strong commut under minimal}, $(\sigma'\times D_{\Delta}(\pi), \pi')$ is still relevant. Hence, we can apply induction. By using Lemma \ref{lem construct branching from deformation} on $\mathrm{Hom}_{G_n}(\sigma' \times D_{\Delta}(\pi), \pi')\neq 0$ with $\mathcal L_{rBL}(\sigma'\times D_{\Delta}(\pi), \pi')=l_a(\mathfrak m)$, we have a non-zero $\widetilde{f}$ in 
\[   \mathrm{Hom}_{G_n}(\mathrm{St}(\Delta)\times D_{\Delta}(\pi), \pi') 
\]
with $\mathcal L_{rBL}(\widetilde{f})\leq l_a(\mathfrak m)$. Thus,  from the quotient map $\mathrm{St}(\mathfrak p)\times D_{\mathfrak p}(\pi)$ to $\mathrm{St}(\Delta)\times D_{\Delta}(\pi)$, we also obtain a non-zero map $\bar{f}$ in $\mathrm{Hom}_{G_n}(\mathrm{St}(\mathfrak p)\times D_{\mathfrak p}(\pi), \pi')$ with $\mathcal L_{rBL}(\bar{f}) \leq l_a(\mathfrak m)$. By Lemma \ref{lem a branching factor through quotient}, we then have
\[ (*) \quad  \mathrm{Hom}_{G_n}(\tau, \pi') \neq 0 .
\]
We want to show that the Hom has dimension one (see Claim 2 below). \\

\noindent
{\bf Step 4.} {\it Claim 1:} $\mathrm{dim}~\mathrm{Hom}_{G_n}(\tau, \omega) \leq 1$. \\

\noindent
{\it Proof of claim 1:} Note that $(\Delta, \mathfrak p', I_{\Delta}^R\circ D^L_{\mathfrak p'}(\pi'))$ is strongly RdLi-commutative (by Example \ref{example pre commutative}(2)), and so, by Lemma \ref{lem strong commutative combin}, 
\[  \eta_{\Delta}(I_{\Delta}^R\circ D^L_{\mathfrak p'}(\pi))=\eta_{\Delta}(I_{\mathfrak p'}\circ I_{\Delta}^R\circ D^L_{\mathfrak p'}(\pi))=\eta_{\Delta}(I_{\Delta}^R(\pi)) 
\]
and so
\[ \eta_{\Delta}(D^L_{\mathfrak p'}(\pi')) =\eta_{\Delta}(\pi') .
\]
Thus $\mathfrak{mx}(D^L_{\mathfrak p'}(\pi'), \Delta)=\mathfrak{mx}(\pi, \Delta)$ and we let $\mathfrak r=\mathfrak{mx}(\pi, \Delta)$. In Proposition \ref{prop filtration on parabolic}, the layers for $(\mathrm{St}(\mathfrak p)\times D^L_{\mathfrak p}(\pi))|_{G_n}$ take the form:
\[   \mathrm{St}(\mathfrak p)^{[k]} \times  \lambda\]
and so, by Lemma \ref{lem compute layers by geometric lemma} (1), the only possible layer that can contribute to a non-zero element in $\mathrm{Hom}_{G_n}(\tau, \omega)$ is 
\[  \mathrm{St}(\mathfrak p)^{[k^*]}\times  \mathrm{RS}_{k^*-2}(D_{\mathfrak p}(\pi)) ,
\]
where $k^*=l_a(\mathfrak p)-l_a(\mathfrak r)$.

Recall that $\omega=\mathrm{St}(\mathfrak p')\times D^L_{\mathfrak p'}(\pi')$. Now, combining with Lemma \ref{lem compute layers by geometric lemma} (2),
\[  \mathrm{dim}~\mathrm{Hom}_{G_n}(\tau, \omega) \leq \mathrm{dim}~\mathrm{Hom}_{G_{n-l_a(\mathfrak p)}}( \mathrm{RS}_{k^*-2}(D_{\mathfrak p}(\pi)), \mathrm{St}(\mathfrak p') \times D_{\mathfrak r}\circ D^L_{\mathfrak p'}(\pi')) .
\]
The last term has dimension at most one by Lemma \ref{lem multiplicity one of standard module} and so this proves the claim. \\

\noindent
{\bf Step 5.} {\it Claim 2:} Let $\omega_1 =\pi'$. Then 
\[ \mathrm{dim}~\mathrm{Hom}_{G_n}( \tau , \omega_1) = \mathrm{dim}~\mathrm{Hom}_{G_n}(\tau, \omega)=1 .
\]

\noindent
{\it Proof of claim 2:} This immediately follows from Claim 1 that 
\[  \mathrm{dim}~\mathrm{Hom}_{G_n}(\tau, \omega_1) \leq \mathrm{dim}~\mathrm{Hom}_{G_n}(\tau, \omega) \leq 1 .
\]
The equality part now follows from (*). \\

\noindent 
{\bf Step 6.} {\it Claim 3:} Let $\tau_1$ be the quotient of $\tau$ coming from the projection of $\tau'$ to $\pi$ i.e. $\tau_1$ is the pushout of two surjections $\tau' \rightarrow \pi$ and $\tau' \rightarrow \tau$. Then 
\[  \mathrm{dim}~\mathrm{Hom}_{G_n}(\tau_1, \omega) =1 .
\]

\noindent
{\it Proof of claim 3:} By Claim 1, we only have to show that $\mathrm{Hom}_{G_n}(\tau_1, \omega)\neq 0$. To this end, we appeal to Lemma \ref{lem construct one more non-zero element} that there exists a non-zero element $f$ in $\mathrm{Hom}_{G_n}(\pi, \omega)$ with $\mathcal L_{rBL}(\pi, \omega)=l_a(\mathfrak m)$. 

We consider the following commutative diagram:
\[  \xymatrix{ \kappa \ar@{^{(}->}[r]     \ar@{->>}[d]   & \tau'  \ar@{->>}[d]^q  &     \\
               \widetilde{\kappa} \ar@{^{(}->}[r]  & \pi    \ar[r]^f      & \omega                                        },
\]
where $\widetilde{\kappa}=q(\kappa)$. By Lemma \ref{lem integer determine quotient}, $\mathcal L_{rBL}(f\circ q)=l_a(\mathfrak m)=i^*$. Thus, by Lemma \ref{lem a branching factor through quotient}, $(f\circ q)|_{\kappa}=0$. This implies that $f|_{\widetilde{\kappa}} =0$. Since $\tau_1=\pi/\widetilde{\kappa}$, $f$ lifts to a non-zero map in $\mathrm{Hom}_{G_n}(\tau_1, \omega)$ as desired. \\

\noindent
{\bf Step 7: Complete Case 1 by the strategy.} Now, we return to the proof. By using Claims 2 and 3, Lemma \ref{lem general strategy} implies that $\mathrm{Hom}_{G_n}(\tau_1, \omega_1) \neq 0$. Since $\tau_1$ is a quotient of $\pi$, we then have that $\mathrm{Hom}_{G_n}(\pi, \pi') \neq 0$ (recall $\omega_1=\pi'$). This proves Case 1. \\

\noindent
{\bf Step 8: Use the duality to transfer Case 2 to Case 1.} \\
\noindent
{\bf Case 2:} Suppose we are not in Case 1. Recall that we are assuming  $\mathcal{RR}(\pi, \pi')>0$. Then there exists a $\leq$-minimal $\rho' \in \mathrm{csupp}(\pi)$ such that $\rho' \not\in \mathrm{csupp}(\nu^{1/2}\pi')$ and $\rho'$ is relevant to $\pi'$. (Otherwise, if we have $\rho' \in \mathrm{csupp}(\nu^{1/2}\pi')$, then $\nu^{-1/2}\rho' \in \mathrm{csupp}(\pi')$. But we also have $\nu^{-1/2}\rho' \not\in \mathrm{csupp}(\nu^{1/2}\cdot \pi)$ by minimality and then it gives a contradiction.)

Now applying $\vee$ and $\theta$, we have that there exists a $\leq$-minimal element $\rho'' \in \mathrm{csupp}(\theta(\pi)^{\vee})$ such that $\rho'' \not\in \mathrm{csupp}(\nu^{1/2}\theta(\pi')^{\vee})$. 

On the other hand,
\begin{align}
 \mathrm{Hom}_{G_n}(\pi, \pi') & \cong \mathrm{Hom}_{G_{n+1}}(\sigma \times \pi'{}^{\vee}, \pi^{\vee})       \\ 
                       & \cong \mathrm{Hom}_{G_{n+1}}(\theta(\sigma) \times \theta(\pi'{}^{\vee}), \theta(\pi^{\vee}))  \\
  \label{eqn hom in case 2}    & \cong \mathrm{Hom}_{G_{n+1}}(\theta(\sigma)\times \pi', \pi) .
\end{align}

Here $\sigma$ is a certain cuspidal representation of $G_2$ such that $\sigma \times \pi'{}^{\vee}$ is irreducible and $\nu^{1/2} \cdot \sigma$ is good to $\pi^{\vee}$. The first isomorphism follows from a duality \cite[Proposition 4.1]{Ch22} and the second isomorphism follows by applying $\theta$-action, and $\theta(\sigma \times \pi'{}^{\vee})\cong \theta(\pi'{}^{\vee})\times \theta(\sigma)\cong \theta(\sigma)\times \theta(\pi'{}^{\vee})\cong \theta(\sigma)\times \pi'$. The last isomorphism follows by the isomorphism for the Gelfand-Kazhdan involution.

We have that $\mathcal{RR}(\theta(\sigma) \times \pi'{}, \pi)=\mathcal{RR}(\pi, \pi')$. We also have that $(\theta(\sigma)\times \pi', \pi)$ is relevant by Theorem \ref{thm symmetric property of relevant}. Now the above discussions justify that we can use Case 1 to conclude the Hom in (\ref{eqn hom in case 2}) is non-zero. This implies that $\mathrm{Hom}_{G_n}(\pi, \pi')$ is non-zero.
\end{proof}

%\begin{remark}
%As seen in the proof, one passes to $\mathrm{Hom}_{G_n}(\pi, \sigma\times D^L_{\mathfrak p'}(\pi'))$. From here, one sees that the map accounting for the branching law gives a map in $\mathrm{Hom}_{G_{n+1-i}}(\pi^{[i]}, {}^{(i-1)}(\sigma\times D^L_{\mathfrak p'}(\pi')))$ ($i=l_a(\mathfrak m)$) factoring through $D_{\mathfrak m}(\pi)$. Then one may deduce the analogous statement for $\mathrm{Hom}_{G_{n+1-i}}(\pi^{[i]}, {}^{(i-1)}(\mathrm{St}(\mathfrak p')\times D^L_{\mathfrak p'}(\pi')))$. Then such map must map must come from $\mathrm{Hom}_{G_{n+1-i}}(\pi^{[i]}, {}^{(i-1)}(\pi'))$. This provides a strong evidence for the branching law $\mathrm{Hom}_{G_n}(\pi, \pi')$, but it seems that one needs more for a complete proof. Thus the proof is longer and requires the strategy of Lemma \ref{lem integer branching law submodule}.
%\end{remark}

\part{Proof of necessity of generalized relevance} \label{part necessity part}
For an overview of this section, see Sections \ref{ss outline necc} and \ref{ss outline exh}. Section \ref{s branching from bz derivatives} studies a special type of branching laws related to derivatives, in which a technique in Section \ref{s taking highest derivatives in BL} is needed. Section \ref{s deform and thicken simple quotients} studies a characterization of the module whose highest derivative gives a simple quotient of a BZ derivative (Corollary \ref{cor unique highest derivative to give submodule}). Sections \ref{s exhaustion relev conditions} and \ref{s necessarity and exhaustion} prove our main results.

\section{Taking highest derivatives in branching laws} \label{s taking highest derivatives in BL}

We first illustrate the idea of taking highest derivatives. Let $\pi$ be a smooth representation of the mirabolic subgroup $M_{n+1}$. Let $\tau$ be a $G_n$-quotient of $\pi|_{G_n}$. Then, as $G_n$-representations, one can take a derivative to obtain a natural projection:
\[ f:  {}^{(j)}(\pi|_{G_n}) \twoheadrightarrow {}^{(j)}\tau .
\]
(For a precise realization on those derivatives to make sense of the projection, see Section \ref{ss taking highest derivative on mir}.) We write ${}^{\langle j \rangle}\pi :={}^{(j)}(\pi|_{G_n})$. On the other hand, one can also take 'left $j$-th derivative', denoted ${}^{(j)}\pi$, on $\pi$ as $M_{n+1}$ (close to the one for $G_{n+1}$, and for a precise description, see (\ref{eqn left derivative on mirabolic}) below), which then gives a projection:
\[ p: {}^{\langle j \rangle}\pi \twoheadrightarrow {}^{(j)} \pi .
\]
The question is when $f$ factors through $p$. 

We are interested in the branching law case and so we consider $\pi \in \mathrm{Irr}(G_{n+1})$ (then regarded as a $M_{n+1}$-representation) which is thickened (see Definition \ref{def thickened multisegments} below) and $i=\mathrm{lev}(\pi)$. For a $G_n$-quotient $\tau$ of $\pi$, if the induced map $f$ from $f': \pi \rightarrow \tau$ factors through such $p$ above, we refer this to {\it deforming the branching law $f$}. Such a deformation {\it does not happen in general}. We show that when $\tau$ takes the form $\tau' \times \sigma$ for some simple quotient $\tau'$ of $\pi^{(i)}$ (for some $i$) and some suitable choice of $\sigma \in \mathrm{Irr}^c$, such the deformation happens (see Lemma \ref{lem projection factor through}). This relies on some analysis on the layers arising from the geometric lemma (or Leibniz's rule). As a result, we constructed a branching law $\widetilde{f}: {}^-\pi \rightarrow {}^-\tau' \times \sigma$. 

As shown in Part \ref{part proof sufficiency}, it is useful to determine the layer supporting a branching law. Indeed, in Lemma \ref{lem branching bz der thickened} (in next section), one can use some analysis on Bernstein-Zelevinsky layers to show that $\mathcal L_{rBL}(f)=i$, and $\mathcal L_{lBL}(f)=\mathrm{lev}(\pi)$. It turns out that a similar argument can prove that $\mathcal L_{rBL}(\widetilde{f})=i$ in the deformed branching law. Determining $\mathcal L_{lBL}(\widetilde{f})$ needs some other works (this is also the reason we have to consider the deformation). For this, in Section \ref{ss derivative mirabolic rest}, we study the derivative on the left BZ filtration ${}_k\Lambda(\pi)$ for $\pi$, which will then be used to show that $\mathcal L_{lBL}(\widetilde{f})=\mathrm{lev}(\pi)$ (see Proposition \ref{prop transitive smallest integer} and Section \ref{ss proof of branching from bz}). 

We finally also mention that Offen \cite[Section 6]{Of20} asks similar questions on the effect of taking highest derivatives in the content of Sp-distinction.

\subsection{Taking highest derivatives for mirabolic subgroup representations} \label{ss taking highest derivative on mir}

%Recall that any irreducible representation of $M_{n+1}$ is of the form:
%\[    (\Psi^{+})^i\circ \Phi^+(\tau)\]
%for some $i$ and some $\tau \in \mathrm{Irr}(G_{n+1-i})$. We denote such module by $\Gamma^{n+1}_i(\tau)$. Conversely, any representation of such form is irreducible. 

We use the notations in Section \ref{s bz filtration}. For the purpose of exposition, it is more convenient to use the definition of BZ derivatives in the form of coinvariants, which we are going to formulate. Let 
\[ \bar{R}_i= \left\{ \begin{pmatrix} u & x \\ & I_{n+1-i} \end{pmatrix}: u \in U_i, x\in \mathrm{Mat}_{i,n+1-i}  \right\} \subset G_{n+1}.
\]
Fix a non-trivial character $\psi_i: U_i \rightarrow \mathbb C$ and extend trivially to a character $\psi_i'$ of $\bar{R}_i$. For $\pi \in \mathrm{Alg}(M_{n+1})$ and some integer $i \leq n$, we define (by abuse of notations) 
\begin{align} \label{eqn left derivative on mirabolic}
 {}^{(i)}\pi =\pi_{\bar{R}_j, \psi_j} = \delta_{\bar{R}_i}^{-1/2}\cdot \frac{\pi}{\langle u.x-\psi_i'(u)x :u \in \bar{R}_i, x\in \pi \rangle} 
\end{align}
regarded as a $M_{n+1-i}$-representation via the embedding $m \mapsto \mathrm{diag}(I_i, m)$. This definition is equivalent (up to a natural isomorphism) to the one in Section \ref{s bz filtration} and we shall use such realization in this section.

We now define a smaller subgroup $\bar{S}_i$ in $\bar{R}_i$:
\[ \bar{S}_i= \left\{ \begin{pmatrix} u & x' & 0 \\ & I_{n-i} & \\ & & 1 \end{pmatrix} : u \in U_i, x' \in \mathrm{Mat}_{i, n-i} \right\} .
\]
We similarly define:
\[  {}^{\langle i \rangle}\pi= \delta_{\bar{S}_i}^{-1/2}\cdot \frac{\pi}{\langle u.x-\psi_i'(u)x :u \in \bar{S}_i, x\in \pi \rangle} ,
\]
regarded as a $G_{n-i}$representation via the embedding $g \mapsto \mathrm{diag}(I_i, g,1)$. Note that ${}^{\langle i \rangle}\pi={}^{(i)}(\pi|_{G_n})$ (recall we embed $G_n$ to $G_{n+1}$ by $g\mapsto \mathrm{diag}(g,1)$). 

We are going to study the natural projection: ${}^{\langle i\rangle}\pi \rightarrow {}^{(i)}\pi$. In other words, that is to study how certain derivative for $G_n$ can be extended to a derivative of $G_{n+1}$. 

We first give a computation on ${}^{(j)}$-derivative on $\Gamma(\pi)$, which follows from a direct computation from Mackey theory:

\begin{lemma} \label{lem transitive layer in derivative}
Let $\pi \in \mathrm{Alg}(G_n)$. Then ${}^{(j)}\Gamma^{k+1}(\pi)\cong \Gamma^{k+1}({}^{(j)}\pi)$.
\end{lemma}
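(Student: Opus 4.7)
The plan is to prove the lemma by a Mackey-theoretic computation, reducing it to separate commutation statements for the elementary functors $\Phi^+$ and $\Psi^+$. First I would reduce to the case $j=1$ by iteration: the group $\bar R_j$ (sitting in the top $j$ rows of $G_{m}$) admits a natural semidirect factorization whose quotient by $\bar R_{j-1}$ is an $\bar R_1$-type subgroup of the smaller mirabolic $M_{m-j+1}$, and the character $\psi_j'$ restricts compatibly. After accounting for the modular normalization in the definition of coinvariants, one gets ${}^{(j)}\pi \cong {}^{(1)}({}^{(j-1)}\pi)$ whenever the iteration makes sense. Granting the $j=1$ case, the general statement then follows from $k$-fold application on the outside of $\Gamma^{k+1}$.

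With $j=1$ fixed, I would use $\Gamma^{k+1}(\pi)=(\Phi^+)^k\circ \Psi^+(\pi)$ and prove the two commutation identities
\[
{}^{(1)}\Phi^+(\tau)\cong \Phi^+({}^{(1)}\tau)\quad (\tau\in\mathrm{Alg}(M_{m-1})),\qquad
{}^{(1)}\Psi^+(\sigma)\cong \Psi^+({}^{(1)}\sigma)\quad (\sigma\in\mathrm{Alg}(G_{m-1})).
\]
The desired conclusion comes from applying the first identity $k$ times, then the second once. Each identity is handled by the same double-coset decomposition: $\Phi^+(\tau)=\mathrm{Ind}_{M_{m-1}V_m}^{M_m}(\tau\boxtimes\psi)$, and to compute ${}^{(1)}$ on it one analyzes $\bar R_1\backslash M_m/(M_{m-1}V_m)$. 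Since $\bar R_1$ occupies the top row of $G_m$ while $V_m$ occupies the rightmost column of $M_m$, a direct matrix calculation shows that this double quotient consists of exactly two classes.

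Next I would check that only the trivial class contributes. On the non-open class, the two characters involved (the trivial extension on $\bar R_1$ and the Whittaker character on $V_m$, transported by a representative into a common unipotent) disagree on a joint one-parameter subgroup, so the $(\bar R_1,1)$-coinvariants on that piece vanish. The contribution of the trivial class is, after the usual rearrangement of modular characters, precisely $\mathrm{Ind}_{M_{m-2}V_{m-1}}^{M_{m-1}}({}^{(1)}\tau\boxtimes\psi) = \Phi^+({}^{(1)}\tau)$, as desired. The argument for $\Psi^+$ is formally the same but simpler, since $\Psi^+$ induces with trivial character, so no character-mismatch argument is needed on the non-trivial coset; instead the geometry of the double cosets alone forces the contribution to be $\Psi^+({}^{(1)}\sigma)$.

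The main technical obstacle is the bookkeeping of modular characters and of the compatibility between the various fixed Whittaker characters $\psi_i$ at different ranks, so that the characters that appear after taking Jacquet-with-character really cancel out in the right places. Once the double-coset decomposition is identified and the non-open piece is shown to be killed, everything reduces to a standard exercise in rewriting induced representations, and no new structural input beyond Mackey theory and the definitions of $\Phi^\pm,\Psi^\pm$ is used.
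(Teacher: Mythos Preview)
Your approach has a genuine gap: the commutation identity ${}^{(1)}\Phi^+(\tau)\cong \Phi^+({}^{(1)}\tau)$ is false for general $\tau$. Take $m=2$ and $\tau=\mathbb{C}\in\mathrm{Alg}(M_1)$. Then $\Phi^+(\tau)=\mathrm{Ind}_{V_2}^{M_2}\psi$, and in $M_2$ one has $\bar R_1=V_2$. Restricting to $V_2$, every isotypic piece of $\mathrm{Ind}_{V_2}^{M_2}\psi$ is a nontrivial character $\psi^a$ with $a\in F^\times$, so the untwisted $\bar R_1$-coinvariants vanish: ${}^{(1)}\Phi^+(\mathbb{C})=0$. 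On the other side $\Phi^+({}^{(1)}\mathbb{C})=\Phi^+(\mathbb{C})=\mathbb{C}\neq 0$. The source of the error is the double-coset count: $(M_{m-1}V_m)\backslash M_m\cong M_{m-1}\backslash G_{m-1}\cong F^{m-1}\setminus\{0\}$ via the $(m-1)$-th row, and the right action of $\bar R_1$ fixes the first coordinate of that row, so $\bar R_1\backslash M_m/(M_{m-1}V_m)$ is infinite, not two elements. Your ``open orbit dies by character mismatch'' picture does not apply because there is no finite orbit decomposition here.

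There is a second problem in the reduction step: the factorization ${}^{(j)}\cong{}^{(1)}\circ{}^{(j-1)}$ would require $\psi_j'|_{\bar R_{j-1}}=\psi_{j-1}'$, but these disagree on the entry $(j-1,j)$, which lies on the superdiagonal of $U_j$ (so $\psi_j'$ is nontrivial there) but outside $U_{j-1}$ (so $\psi_{j-1}'$ is trivial there). The paper avoids both issues by not pushing ${}^{(j)}$ through the individual $\Phi^+$'s at all. It factorizes ${}^{(j)}$ the other way --- first the ordinary Jacquet functor along $N=N_{j,\,n+k+1-j}$, then the Whittaker coinvariants on the $G_j$-factor --- and computes $\Gamma^{k+1}(\pi)_N$ in one step by treating $\Gamma^{k+1}(\pi)$ as a single induced representation from a subgroup containing $G_n$ in the top-left block, yielding $\Gamma^{k+1}(\pi_{N_{j,n-j}})$.
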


\begin{proof}
One can proceed by taking Jacquet functors in stage. Take $N$ to be the unipotent subgroup containing matrices of the form:
\[    \begin{pmatrix} I_j & * \\ & I_{k+1+n-j} \end{pmatrix} .
\]
 However, it follows from a similar computation as \cite[Lemma 4.5]{Ch21} that 
\[ \Gamma^{k+1}(\pi)_N \cong \Gamma^{k+1}(\pi_{N_{j,n-j}}) , \]
where $\Gamma^{k+1}(\pi_{N_{j,n-j}})$ is considered to take the functor $\Gamma^{k+1}$ in the second factor of $G_j\times G_{n-j}$. Then one further takes the Whittaker model on the first factor to obtain ${}^{(j)}\Gamma^{k+1}(\pi) \cong \Gamma^{k+1}({}^{(j)}\pi)$.
\end{proof}

\begin{lemma} \label{lem descending a bz layer}
Let $\pi$ be a representation of $G_m$ of finite length. Let $\kappa= \Gamma^{k+1}(\pi)$. Let $\tau$ be a simple $G_m$-quotient of $\pi$. Let $\sigma \in \mathrm{Irr}(G_k)$ good to $\nu^{1/2}\pi$. Let $j=\mathrm{lev}(\tau)$. Let $f: \Gamma^{k+1}(\pi)  \rightarrow (\nu^{1/2}\cdot\tau) \times \sigma$. As $G_{m+k}$-representations, this induces a map $\widetilde{f}:{}^{\langle j \rangle}\kappa \rightarrow \nu^{1/2}({}^-\tau) \times \sigma$. Then $\widetilde{f}$ factors through the projection map ${}^{\langle j\rangle}\kappa \rightarrow {}^{(j)}\kappa$. 
\end{lemma}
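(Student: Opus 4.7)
The strategy is to identify the kernel of the natural surjection ${}^{\langle j\rangle}\kappa \twoheadrightarrow ({}^{(j)}\kappa)|_{G_{m+k-j}}$ and to show that it is annihilated by $\widetilde{f}$. The key inputs will be the $M_{m+k+1}$-module structure of $\kappa = \Gamma^{k+1}(\pi) = (\Phi^+)^k\Psi^+\pi$, Lemma~\ref{lem transitive layer in derivative} giving ${}^{(j)}\kappa \cong \Gamma^{k+1}({}^{(j)}\pi)$, Leibniz's rule applied to the target $(\nu^{1/2}\tau)\times\sigma$, together with the hypotheses $j=\mathrm{lev}(\tau)$ and $\sigma\in\mathrm{Irr}^c(G_k)$ good to $\nu^{1/2}\pi$.

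First I would analyse the $G$-derivative of the target. By Leibniz's rule, ${}^{(j')}((\nu^{1/2}\tau)\times\sigma)$ is glued from layers $\nu^{1/2}({}^{(j_1)}\tau)\times{}^{(j_2)}\sigma$ with $j_1+j_2=j'$; cuspidality of $\sigma$ forces $j_2\in\{0,k\}$, while the goodness of $\sigma$ to $\nu^{1/2}\pi$ (hence to its quotient $\nu^{1/2}\tau$) ensures that the at most two non-trivial layers split as a direct sum, by disjointness of cuspidal supports. Since $j=\mathrm{lev}(\tau)$ forces ${}^{(j_1)}\tau=0$ for $j_1>j$, the summand $\nu^{1/2}({}^-\tau)\times\sigma$ (corresponding to $(j_1,j_2)=(j,0)$) is a direct summand of ${}^{(j)}((\nu^{1/2}\tau)\times\sigma)$, and $\widetilde f$ is the composition of ${}^{(j)}f$ with projection onto this summand. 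Second, I would describe the kernel of ${}^{\langle j\rangle}\kappa \twoheadrightarrow ({}^{(j)}\kappa)|_{G_{m+k-j}}$ using the $M_{m+k+1}$-structure on $\kappa$. Combining Lemma~\ref{lem transitive layer in derivative} with iterated uses of the Bernstein--Zelevinsky short exact sequence for $(\Phi^+)^k\Psi^+\pi$, restricted to $G_{m+k}$ and then passed through ${}^{(j)}$, produces a filtration on ${}^{\langle j\rangle}\kappa$ whose top layer is $({}^{(j)}\kappa)|_{G_{m+k-j}}$ and whose remaining layers are parabolically induced from modules built out of the higher left derivatives ${}^{(j+r)}\pi$ for $r\geq 1$. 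Any such higher layer, when pushed through $\widetilde{f}$ and compared via Leibniz with the target decomposition of the previous step, must land in a summand involving ${}^{(j_1)}\tau$ with $j_1>j$, and therefore vanishes by the level assumption. This gives the factorisation.

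The main technical obstacle will be making precise the filtration on ${}^{\langle j\rangle}\kappa$ sketched above, and matching its layers against the Leibniz decomposition of the target. This requires a refined commutativity between the BZ functors $(\Phi^+,\Phi^-,\Psi^+,\Psi^-)$, restriction from $M_{m+k+1}$ to $G_{m+k}$, and iterated left derivatives---essentially upgrading Lemma~\ref{lem transitive layer in derivative} from a single identity to a layer-wise compatibility. A potentially cleaner alternative is to bypass the explicit kernel description by rewriting $\mathrm{Hom}_{G_{m+k-j}}({}^{\langle j\rangle}\kappa,\ \nu^{1/2}({}^-\tau)\times\sigma)$ via second adjointness and Frobenius reciprocity, and then arguing, using the goodness of $\sigma$ and the vanishing ${}^{(j+r)}\tau=0$ for $r\geq 1$, that every such functional descends to one on $({}^{(j)}\kappa)|_{G_{m+k-j}}$; the desired factorisation of $\widetilde{f}$ is then read off from this identification of Hom-spaces.
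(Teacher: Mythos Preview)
Your overall plan---show that the kernel of ${}^{\langle j\rangle}\kappa\twoheadrightarrow{}^{(j)}\kappa$ is annihilated by $\widetilde f$---is reasonable, but your description of that kernel is wrong, and this is not merely the ``technical obstacle'' you flag but a genuine error in the mechanism. Identifying $\kappa|_{G_{m+k}}\cong \nu^{1/2}\pi\times\Pi_k$ and applying Leibniz's rule, the layers of ${}^{\langle j\rangle}\kappa$ are $\nu^{1/2}({}^{(j_1)}\pi)\times\Pi_{k-j_2}\otimes\mathfrak Z_{j_2}$ with $j_1+j_2=j$; the top (quotient) layer is the one with $(j_1,j_2)=(j,0)$, which is exactly $({}^{(j)}\kappa)|_{G_{m+k-j}}$ by Lemma~\ref{lem transitive layer in derivative}. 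Hence the kernel is built from layers with $j_2\ge 1$, i.e.\ from \emph{lower} left derivatives ${}^{(j_1)}\pi$ with $j_1<j$, not higher ones. Your proposed vanishing via ${}^{(j+r)}\tau=0$ therefore does not apply.

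The paper proceeds differently and avoids describing this kernel. It works on the target side: since ${}^-\widetilde\tau\times\sigma$ is the top geometric-lemma layer of ${}^{(j)}(\widetilde\tau\times\sigma)$, functoriality of the geometric lemma forces $\widetilde f$ to factor through ${}^{\langle j\rangle}\lambda$, where $\lambda\subset\kappa$ is the subspace of functions supported on the closed orbit $PQ$ in the Mackey realization of $\Gamma^{k+1}(\pi)$. One then checks the vector-space equalities ${}^{(j)}\kappa={}^{(j)}\lambda={}^{\langle j\rangle}\lambda$ directly (the first from the Mackey/geometric lemma computation as in \cite[5.9]{BZ77} and Lemma~\ref{lem transitive layer in derivative}, the second by definition of $\bar S_j$ versus $\bar R_j$ on this support), and the factorisation through ${}^{(j)}\kappa$ follows. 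Your filtration approach could be salvaged by arguing instead that the $j_2\ge 1$ layers land, under the functoriality of Leibniz's rule applied to $f$, in the summands $\nu^{1/2}({}^{(j_1)}\tau)\times{}^{(j_2)}\sigma$ with $j_2\ge 1$, which are orthogonal to the projection onto $\nu^{1/2}({}^-\tau)\times\sigma$; but making this precise requires exactly the compatibility of $f$ with the geometric-lemma filtrations that the paper invokes directly.
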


\begin{proof}
Let 
\[  Q =\left\{ \begin{pmatrix} u & x & v \\ & g & v' \\ & & 1 \end{pmatrix} : u \in U_j, x \in Mat_{j,m+k-j}, v \in F^j, v' \in F^{m+k-j}, g \in G_{m+k-j} \right\}
\]
and let 
\[ P =\left\{ \begin{pmatrix} g & v \\ & u \end{pmatrix} : g \in G_j, v \in Mat_{j,m+k+1-j}, u \in U_{m+k+1-j} \right\}.  
\]
Let $\lambda$ be the space of smooth compactly-supported functions from $PQ$ to $\pi\boxtimes \psi_{m+k+1-j}$. 

Let $\widetilde{\tau}=\nu^{1/2}\tau$. Since ${}^-\widetilde{\tau}\times \sigma$ appears in the top layer (in the filtration from the geometric lemma) of ${}^{(j)}(\widetilde{\tau} \times \sigma)$, ${}^-\widetilde{\tau} \times \sigma$ also appears in the top layer of ${}^{\langle j \rangle}\kappa$ via the functoriality of the geometric lemma. In other words, with taking the twisted Jacquet functor, we have the following commutative diagram:
\[  \xymatrix{  {}^{\langle j \rangle}\kappa                 \ar[r]  \ar[d]    &  {}^{\langle j \rangle}\lambda  \ar[d] \\
              {}^-(\widetilde{\tau}\times \sigma) \ar[r] & ({}^-\widetilde{\tau})\times \sigma
}  .
\]
Hence, we have that $\widetilde{f}$ factors as:
\[ \widetilde{f}: {}^{\langle j\rangle} \kappa \rightarrow {}^{\langle j \rangle}\lambda \rightarrow ( {}^-\widetilde{\tau})\times \sigma .
\]
Now projecting ${}^{\langle j \rangle}\kappa$ to ${}^{(j)}\kappa$, we have that
\[  {}^{( j )}\kappa= {}^{( j)}\lambda ={}^{\langle j \rangle}\lambda
\]
as vector spaces, where the first equality follows from an application of the geometric lemma (see e.g. \cite[5.9]{BZ77}), c.f. Lemma \ref{lem transitive layer in derivative} and the second follows from definitions. Thus, the map $\widetilde{f}$ factors through the projection from ${}^{\langle j\rangle}\kappa$ to ${}^{(j)}\kappa$.
\end{proof}

\begin{definition} \label{def thickened multisegments}
A multisegment $\mathfrak m$ is said to be {\it thickened} if any segment has relative length at least $2$. An irreducible representation $\pi \in \mathrm{Irr}$ is said to be {\it thickened} if $\pi \cong \langle \mathfrak m \rangle$ for some thickened multisegment $\mathfrak m$. 
\end{definition}

As also seen in \cite[Section 7]{Ch21}, the thickened case has good combinatorics and the general case needs more substantial work. 

\begin{lemma} \label{lem projection factor through}
  Let $\pi \in \mathrm{Irr}(G_{n+1})$ be thickened. Let $\tau$ be a simple quotient of $\pi^{[i]}$. Let $j=\mathrm{lev}(\pi) (=\mathrm{lev}(\tau))$. Let $\sigma \in \mathrm{Irr}^c(G_{i-1})$ be good to $\nu^{1/2}\cdot \pi$. Then the following holds:
\begin{itemize}
\item $\mathrm{Hom}_{G_n}(\pi, \tau \times \sigma) \neq 0$.
\item Let $f': {}^{\langle j \rangle}\pi \rightarrow {}^-\tau \times \sigma$ be the induced $G_{n-i}$-map from a non-zero map $f: \pi \rightarrow \tau \times \sigma$. Then $f'$ factors through the projection ${}^{\langle j\rangle}\pi$ to ${}^{(j)}\pi$. 
\end{itemize}

\end{lemma}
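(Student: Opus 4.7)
The proof splits into two parts: non-vanishing of the Hom, and the factorization of $f'$.

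For the non-vanishing, the plan is to exploit the right BZ filtration identity from Section \ref{ss bz filtration describe},
\[ \mathrm{Hom}_{G_n}(\Sigma_i(\pi), \tau \times \sigma) \cong \mathrm{Hom}_{G_{n+1-i}}(\pi^{[i]}, {}^{(i-1)}(\tau \times \sigma)), \]
followed by Leibniz's rule on ${}^{(i-1)}(\tau \times \sigma)$. Since $\sigma \in \mathrm{Irr}^c(G_{i-1})$ is good to $\nu^{1/2}\pi$ and cuspidal of rank $i-1$, a cuspidal-support argument kills every Leibniz layer except $\tau \boxtimes {}^{(i-1)}\sigma$, with ${}^{(i-1)}\sigma$ the trivial representation of $G_0$. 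The surviving Hom is $\mathrm{Hom}_{G_{n+1-i}}(\pi^{[i]}, \tau)$, which is non-zero since $\tau$ is a simple quotient of $\pi^{[i]}$. Another cuspidal-support argument along the remaining BZ layers $\Sigma_k(\pi)$ for $k \neq i$ (using thickenedness of $\pi$ to control $\mathrm{csupp}(\pi^{(k)})$) lifts this non-vanishing from $\Sigma_i(\pi)$ to all of $\pi$.

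For the factorization assertion, the strategy is to localize the argument on the single BZ layer $\Sigma_i(\pi) = \Gamma^i(\pi^{(i)})$ which supports the branching law. Cuspidal-support considerations analogous to those of the first part force a non-zero $f : \pi \to \tau \times \sigma$ to vanish on $\Lambda_i(\pi)$ and on each sublayer $\Sigma_k(\pi)$ with $k < i$, so that $f$ descends to a map $\Sigma_i(\pi) = \Gamma^i(\pi^{(i)}) \to \tau \times \sigma$ arising, through the functoriality of $\Gamma^i$, from the surjection $\pi^{(i)} \twoheadrightarrow \tau$. The plan is then to apply Lemma \ref{lem descending a bz layer} with $\pi^{(i)}$ in the role of the representation there, $\tau$ as its simple quotient, and $k+1 = i$: the conclusion is that the induced map ${}^{\langle j \rangle}\Gamma^i(\pi^{(i)}) \to {}^-\tau \times \sigma$ factors through ${}^{(j)}\Gamma^i(\pi^{(i)})$. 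By Lemma \ref{lem transitive layer in derivative} one has ${}^{(j)}\Gamma^i(\pi^{(i)}) \cong \Gamma^i({}^{(j)}\pi^{(i)})$, and thickenedness of $\pi$ (which gives $\mathrm{lev}(\pi) = \mathrm{lev}(\pi^{(i)}) = \mathrm{lev}(\tau) = j$) aligns this with the relevant contribution to ${}^{(j)}\pi$. Propagating the factorization back along the BZ filtration of $\pi$ then yields the desired factorization of $f'$.

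The main obstacle will be the precise bookkeeping on the interaction of the BZ filtration with the projection ${}^{\langle j\rangle}\pi \twoheadrightarrow {}^{(j)}\pi$. The thickened hypothesis is essential here in two ways: it enforces $\mathrm{lev}(\pi^{(i)}) = j$ so that the level hypothesis of Lemma \ref{lem descending a bz layer} is actually met; and it keeps the cuspidal supports of the higher BZ derivatives $\pi^{(k)}$ controlled enough that they contribute trivially in ${}^{\langle j\rangle}\pi$ after passing to the quotient ${}^{(j)}\pi$. Without thickenedness the statement can fail, and one must instead deform along the construction of Section \ref{s deform and thicken simple quotients} to reduce to the thickened case; the present lemma serves as the base case for that deformation.
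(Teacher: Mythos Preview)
Your approach matches the paper's, and the non-vanishing argument is fine. The factorization half is also on the right track — localize to $\Sigma_i(\pi)=\Gamma^i(\pi^{(i)})$ and invoke Lemma~\ref{lem descending a bz layer} — but the sentence ``propagating the factorization back along the BZ filtration of $\pi$'' hides exactly the step where the work lies, and as written it is a gap.

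The issue is that $\Sigma_i(\pi)$ is only a \emph{subquotient} of $\pi$, so knowing the induced map ${}^{\langle j\rangle}\Sigma_i(\pi)\to{}^-\tau\times\sigma$ factors through ${}^{(j)}\Sigma_i(\pi)$ does not by itself say $f'$ on ${}^{\langle j\rangle}\pi$ factors through ${}^{(j)}\pi$. Concretely: writing $\omega=\pi/\Lambda_{i-1}(\pi)$, Lemma~\ref{lem descending a bz layer} produces a map $g':{}^{(j)}\Lambda_{i-1}(\pi)\to{}^-\tau\times\sigma$, and you then need two further vanishing statements. First, $\mathrm{Ext}^1_{G_{n-j}}({}^{(j)}\omega,\,{}^-\tau\times\sigma)=0$, so that $g'$ lifts to some $h:{}^{(j)}\pi\to{}^-\tau\times\sigma$ along $0\to{}^{(j)}\Lambda_{i-1}(\pi)\to{}^{(j)}\pi\to{}^{(j)}\omega\to0$ (here Lemma~\ref{lem transitive layer in derivative} identifies the layers of ${}^{(j)}\omega$). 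Second, $\mathrm{Hom}_{G_{n-j}}({}^{\langle j\rangle}\omega,\,{}^-\tau\times\sigma)=0$, so that maps ${}^{\langle j\rangle}\pi\to{}^-\tau\times\sigma$ are determined by their restriction to ${}^{\langle j\rangle}\Lambda_{i-1}(\pi)$; since $h\circ(\text{projection})$ and $f'$ agree there, they agree everywhere. Both vanishings are indeed cuspidal-support arguments using that $\sigma$ is good, so your instinct is correct, but you should state and prove these two claims explicitly rather than absorb them into ``propagating back''.

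One minor phrasing point: $f$ does not ``descend to $\Sigma_i(\pi)$'' in the usual sense. What happens is that $f|_{\Lambda_{i-1}(\pi)}$ is nonzero and vanishes on $\Lambda_i(\pi)$, hence factors through $\Sigma_i(\pi)=\Lambda_{i-1}(\pi)/\Lambda_i(\pi)$; the layers $\Sigma_k(\pi)$ for $k<i$ sit \emph{above} $\Sigma_i(\pi)$ in the filtration, not below it.
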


\begin{proof}
By a standard argument of the BZ filtration (also see \cite[Lemma 1.1]{Ch22+}), we have an isomorphism:
\[  \mathbb C\cong \mathrm{Hom}_{G_n}(\pi, \tau \times \sigma) \rightarrow \mathrm{Hom}_{G_n}(\Lambda_{i-1}(\pi), \tau \times \sigma) . \]
Let $f$ be a non-zero map in $\mathrm{Hom}_{G_n}(\pi, \tau \times \sigma)$ and let the corresponding lift in 
\[ \mathrm{Hom}_{G_n}(\Lambda_{i-1}(\pi), \tau \times \sigma)\]
be $\widetilde{f}$. Taking the twisted Jacquet functor ${}^{\langle j\rangle}$ (and composing with the projection to ${}^-\tau\times \sigma$, we then obtain a map $f': {}^{\langle j \rangle}\pi\rightarrow {}^-\tau\times \sigma$. We then obtain a (non-zero) lift $\widetilde{f}'$ on $\mathrm{Hom}_{G_{n-j}}({}^{\langle j \rangle}\Lambda_{i-1}(\pi), {}^-\tau \times \sigma)$ by the above isomorphism. Since the map $\widetilde{f}$ comes from $\mathrm{Hom}_{G_n}(\Sigma_i(\pi), \tau \times \sigma)$, Lemma \ref{lem descending a bz layer} implies that $\widetilde{f}'$ factors through the projection ${}^{\langle j\rangle}\Lambda_{i-1}(\pi)$ to ${}^{(j)}\Lambda_{i-1}(\pi)$. Thus we also have a corresponding non-zero map, denoted $g'$, in $\mathrm{Hom}_{G_n}({}^{(j)}\Lambda_{i-1}(\pi), {}^-\tau\times \sigma)$.

Let $\omega$ be the cokernel of the embedding $\Lambda_{i-1}(\pi)$ to $\pi$. 
We consider the following commutative diagram:
\[ \xymatrix{ \mathrm{Hom}_{G_{n-j}}({}^{\langle j\rangle}\pi, {}^-\tau \times \sigma) \ar[r] & \mathrm{Hom}_{G_{n-j}}({}^{\langle j\rangle}\Lambda_{i-1}(\pi), {}^-\tau \times \sigma) & \\
                \mathrm{Hom}_{G_{n-j}}({}^{(j)}\pi, {}^-\tau \times \sigma) \ar[r] \ar[u] & \mathrm{Hom}_{G_{n-j}}({}^{(j)}\Lambda_{i-1}(\pi), {}^-\tau \times \sigma) \ar[u] \ar[r] & \mathrm{Ext}^1_{G_{n-j}}({}^{(j)}\omega, {}^-\tau \times \sigma)                                                     }
\]
 The Ext-group in the diagram is zero by an Ext group computation on the layers ${}^{(j)}\Sigma_{i'}(\pi)=\Sigma_{i'}({}^{(j)}\pi)$ ($i'<i$) (here we use Lemma \ref{lem transitive layer in derivative}) and a cuspidal support comparison (see the proof of Lemma \ref{lem branching bz der thickened} for more details). This implies that there is a lift of $g'$ in the bottom left corner Hom. It follows from the commutative diagram that the lift gives the desired map.

From the commutative diagram above, it remains to show that the image of $g'$ in the top left corner agrees with $f'$. However, this would follow if we can show that $\mathrm{Hom}_{G_{n-j}}({}^{\langle j \rangle}\omega, {}^-\tau \times \sigma)=0$, where $\omega=\pi/\Lambda_{i-1}(\pi)$. To this end, from the BZ filtration, layers in $\omega$ takes the form $\pi^{[k]}\times \Pi_{n-k}$ for $k<i$. After taking the twisted Jacquet functor, those layers for ${}^{\langle j \rangle}\omega$ take the form $({}^{(j_1)}(\pi^{[k]})\times \Pi_{k-j_2})\otimes \mathfrak{Z}_{j_2}$, where $j_1, j_2$ run for all $j_1+j_2=j$ and $\mathfrak{Z}_{j_2}$ is the Bernstein center of $G_{j_2}$. Applying the second adjointness and comparing cuspidal support, we always have that:
\[  \mathrm{Hom}_{G_{n-j}}(({}^{(j_1)}(\pi^{[k]})\times \Pi_{k-j_2})\otimes \mathfrak{Z}_{j_2}, {}^-\tau \times \sigma)=0 .
\]
This implies that $\mathrm{Hom}_{G_{n-j}}({}^{\langle j_1\rangle}\omega, {}^-\tau \times \sigma)=0$ as desired.
\end{proof}

\subsection{Derivatives on another mirabolic restriction} \label{ss derivative mirabolic rest}

%Let $\pi \in \mathrm{Irr}(G_{n+1})$. We admit a $M^t$-filtration on $\pi|_{M^t}$ given by:
%\[   0 ={}_{n+1}\pi \subset {}_n\pi \subset \ldots {}_1\pi \subset {}_0\pi \]
%such that 
%\[{}_{i-1}\pi/{}_i\pi \cong {}_{i}\Sigma(\pi) . \]

We now consider another mirabolic restriction and so we are considering $M_{n+1}^t$-modules. (Here $M_{n+1}^t$ denotes the transpose of $M_{n+1}$.) To avoid confusion and further abuse of notations, we shall simply use $\pi_{\bar{S}_j, \psi}$ to consider the co-invariant spaces.

Let $\Pi_i=\mathrm{ind}_{U_i}^{G_i}\psi_i$ be the Gelfand-Graev representation of $G_i$. By the induction in stages, for $\tau \in \mathrm{Alg}(G_j)$, we have ${}^i\Gamma(\tau)|_{G_{i+j-1}}\cong \Pi_{i-1}\times \tau$.

\begin{lemma} \label{lem composition factors of applying geo on mirabolic}
We regard $M_{n-j+1}^t$ as a subgroup $M_{n+1}^t$ via $m\mapsto \mathrm{diag}(I_j, m)$. Note that $\bar{S}_j$ is invariant under conjugation by $M_{n-j+1}^t$. Let $\tau \in \mathrm{Alg}(G_{n+1-i})$. Then 
\[ ({}^i\Gamma(\tau))_{\bar{S}_j, \psi_j} \]
admits a $M_{n-j+1}^t$-filtration of the form: with $j_1+j_2=j$,
\[  {}^{i-j_1}\Gamma({}^{(j_2)}\tau) \otimes \mathfrak Z_{j_1},
\]
where $\mathfrak Z_{j_1}$ is the Bernstein center $G_{j_1}$. Moreover, when restricting to $G_{n-j}$-representations, the filtration agrees with the filtration obtained by applying Leibniz's rule on ${}^{(j)}(\Pi_{i-1}\times \tau)$. 
\end{lemma}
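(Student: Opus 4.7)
The plan is to compute $({}^i\Gamma(\tau))_{\bar S_j,\psi_j}$ by Mackey theory applied to a compactly-induced realization of ${}^i\Gamma(\tau)$, and then match the resulting double-coset filtration, after restriction to $G_{n-j}$, with the Leibniz filtration on ${}^{(j)}(\Pi_{i-1}\times\tau)$.

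First, I would unfold definitions. By the formulas of Section \ref{ss bz functors}, $\Gamma^i(\tau)=(\Phi^+)^{i-1}\Psi^+(\tau)$ is the compactly-induced $M_{n+1}$-representation $\mathrm{ind}_H^{M_{n+1}}(\tau\boxtimes\chi)$ from a standard subgroup $H\subset M_{n+1}$, with $\chi$ the product of $\psi$'s built into the iterated $\Phi^+$. Applying $\theta$ transports this to a concrete compact induction on $M_{n+1}^t$ from the opposite subgroup $H^t$ with transferred character $\chi^t$; in this model the subgroup $\bar S_j\subset M_{n+1}^t$ acts explicitly by left translation.

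Next, I would take co-invariants in stages by decomposing $\bar S_j$ into its unipotent block $U_j$ and its additive block $\mathrm{Mat}_{j,n-j}$, and then further decomposing the columns of the additive block according to whether they meet the Whittaker part of $H^t$ or the $\tau$-part. Mackey decomposition with respect to the double cosets $\bar S_j\backslash M_{n+1}^t/H^t$ then produces a natural $M_{n-j+1}^t$-filtration whose strata are indexed by splittings $j=j_1+j_2$, where $j_1$ counts the columns absorbed into the Whittaker block and $j_2$ counts the columns absorbed into the $\tau$-block. On each stratum, the $j_2$-columns push a ${}^{(j_2)}$-derivative onto $\tau$; the $j_1$-columns together with the corresponding $U_{j_1}$-piece interact with $\chi^t$ and, by the standard computation for derivatives of Gelfand--Graev type inductions (compare Lemma~\ref{lem transitive layer in derivative} and the stages-of-induction reasoning in Section \ref{ss bz filtrations variant}), contribute the Bernstein-center factor $\mathfrak Z_{j_1}$; the residual $H^t$-structure reassembles into ${}^{i-j_1}\Gamma(\cdot)$. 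Collecting these gives the successive quotient ${}^{i-j_1}\Gamma({}^{(j_2)}\tau)\otimes\mathfrak Z_{j_1}$.

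For the last assertion, I would restrict each stratum to $G_{n-j}$: using the identity ${}^{i-j_1}\Gamma(\sigma)|_{G_{n-j}}\cong \Pi_{i-j_1-1}\times\sigma$ recorded just above the lemma, the stratum becomes $(\Pi_{i-1-j_1}\otimes\mathfrak Z_{j_1})\times{}^{(j_2)}\tau$, which matches the Leibniz layer ${}^{(j_1)}\Pi_{i-1}\times{}^{(j_2)}\tau$ of ${}^{(j)}(\Pi_{i-1}\times\tau)$ once one invokes the standard identification ${}^{(j_1)}\Pi_{i-1}\cong \Pi_{i-1-j_1}\otimes\mathfrak Z_{j_1}$ (another consequence of the same Mackey computation). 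Agreement of the two filtrations then follows by comparing them stage-by-stage, since both arise from the same double-coset indexing $j=j_1+j_2$. The main technical obstacle will be the bookkeeping of the characters on the unipotent pieces: one must verify coset-by-coset that the twist by $\psi_j$ along $\bar S_j$ is compatible with $\chi^t$, so that the centralizer computation in the $j_1$-block yields exactly the Bernstein center $\mathfrak Z_{j_1}$ rather than a smaller torus-character piece or, in the generic-character-misalignment scenario, forces the stratum to vanish outright.
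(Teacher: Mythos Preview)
Your approach is correct and close in spirit to the paper's, but the order of operations differs in a way worth noting. The paper first conjugates by an explicit Weyl-type element $w=\mathrm{diag}(\begin{smallmatrix}&J_i\\I_{n-i}&\end{smallmatrix},1)$ to identify ${}^i\Gamma(\tau)|_{G_n}\cong\Pi_{i-1}\times\tau$ as a parabolic induction on $G_n$, then applies the standard Leibniz rule to ${}^{(j)}(\Pi_{i-1}\times\tau)$ together with ${}^{(j_1)}\Pi_{i-1}\cong\Pi_{i-1-j_1}\otimes\mathfrak Z_{j_1}$, and only at the end observes that the resulting $G_{n-j}$-filtration carries an $M_{n-j+1}^t$-action. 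You instead work directly on the $M_{n+1}^t$-side via Mackey theory for the double cosets $\bar S_j\backslash M_{n+1}^t/H^t$, obtain the $M_{n-j+1}^t$-filtration first, and then restrict to $G_{n-j}$ to match with Leibniz. The paper's route is shorter because it offloads the double-coset bookkeeping to the known geometric lemma on $G_n$; your route is more self-contained and makes the $M_{n-j+1}^t$-equivariance transparent from the start, at the cost of the character-compatibility check you flag at the end. Both arguments ultimately rest on the same orbit decomposition indexed by $j_1+j_2=j$.
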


\begin{proof}
Let $R_i$ be the subgroup containing matrices of the form $\begin{pmatrix} g & * \\ & u \end{pmatrix}$ for $g \in G_{n+1-i}, u \in U_i$. Let $w= \mathrm{diag}(\begin{pmatrix} & J_i  \\ I_{n-i} &  \end{pmatrix},1)$, where $J_i$ is the matrix with $1$ in the anti-diagonal and $0$ elsewhere. In such case, $wR_i^tw^{-1}$ contains all matrices of the form:
\[  \begin{pmatrix} u & * & \\  & g & \\ * & * & 1 \end{pmatrix} ,
\]
where $u \in U_{i-1}$ and $g \in G_{n+1-i}$. 
%\[  wR^t_iw^{-1} = \left\{ \begin{pmatrix} u & x & \\  & g & \\ a & b & 1 \end{pmatrix}: u \in U_{i-1}, g \in G_{n+1-i}, x \in \mathrm{Mat}_{i,n-i+1}, a^t\in F^{n-i+1}, b^t\in F^i \right\} \]
Then, using the element $w$, as a $G_n$-representation, we have that:
\[  \mathrm{ind}_{R_i^t}^{M^t_n} (\tau \boxtimes \psi_i) \cong \Pi_{i-1}\times \tau .
\]
Now one applies the derivative and Leibniz's rule on $\Pi_{i-1}\times \tau$ and uses ${}^{(j_1)}(\Pi_{i-1})=\Pi_{i-1-j_1}\otimes \mathfrak Z_{j_1}$, and then we obtain a filtration of the form 
\[ (\Pi_{i-1-j_1}\times {}^{(j_2)}\tau) \otimes \mathfrak Z_{j_1} \]
 with $j_1+j_2=j$. Now one imposes the action of $M_{n-j+1}^t$ and obtains such form.
\end{proof}

\begin{lemma} \label{lem submodule embedding after derivative}
Let $\pi \in \mathrm{Irr}(G_{n+1})$ be thickened. Let $\tau={}^-({}^-\pi)$ (i.e. taking the highest derivatives twice). Let $j^*=\mathrm{lev}(\pi)$. Let $m=n+1-j^*$. Then, as shown in Lemma \ref{lem composition factors of applying geo on mirabolic}, $({}_{j^*}\Sigma(\pi))_{\bar{S}_{j^*},\psi_{j^*}}$ has a submodule ${}^{j^*}\Gamma(\tau)$, as $M_m$-modules. Denote such the submodule map by $\iota$. Let $p: ({}_{j^*}\Sigma(\pi))_{\bar{S}_{j^*},\psi_{j^*}} \rightarrow {}^{\langle j^*\rangle}\pi$ be the embedding induced from the BZ filtration. Denote the projection map from  ${}^{\langle j^*\rangle}\pi$ to ${}^{(j^*)}\pi={}^-\pi$ by $q$. Then $q\circ p\circ \iota \neq 0$. 
\end{lemma}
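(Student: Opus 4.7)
My plan is to first unwind $\iota$ and the target ${}^-\pi$ into terms involving ${}^{j^*}\Gamma$ applied to the two highest derivatives ${}^-\pi$ and $\tau={}^-({}^-\pi)$, then compute the $\bar R_{j^*}$-coinvariant of ${}_{j^*}\Sigma(\pi)$ to reduce $q\circ p\circ \iota$ to a concrete map into ${}^-\pi$, and finally invoke Frobenius reciprocity for a dimension count and adjunction-based identification.

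First I would unwind ${}_{j^*}\Sigma(\pi)=\theta(\Gamma^{j^*}(\theta(\pi)^{(j^*)}))={}^{j^*}\Gamma({}^-\pi)$, placing both the domain of $\iota$ and the target ${}^-\pi$ inside the framework of Lemma~\ref{lem composition factors of applying geo on mirabolic}. Then I would establish a natural isomorphism $({}_{j^*}\Sigma(\pi))_{\bar R_{j^*},\psi_{j^*}'}\cong {}^-\pi$. The two key inputs are: (i) the BZ cancellation identities $\Phi^-\Phi^+\cong\mathrm{id}$ and $\Psi^-\Psi^+\cong\mathrm{id}$, which yield ${}^{(k)}({}^k\Gamma(\sigma))\cong \sigma$ applied to $\sigma={}^-\pi$; and (ii) the vanishings $\Psi^-\Phi^+=0$ and $\Phi^-\Psi^+=0$, which imply $(\Gamma^k(\sigma))^{(j)}=0$ for $j\ne k$. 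Applying the exact functor ${}^{(j^*)}=(\_)_{\bar R_{j^*},\psi_{j^*}'}$ to $0\to {}_{j^*}\Sigma(\pi)\to \pi\to \pi/{}_{j^*}\Sigma(\pi)\to 0$ and noting that each BZ layer ${}_k\Sigma(\pi)={}^k\Gamma({}^{(k)}\pi)$ of the quotient (with $k<j^*$) is annihilated by ${}^{(j^*)}$ via (ii), I get the desired isomorphism.

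By naturality of Jacquet functors, $q\circ p$ factors as $({}_{j^*}\Sigma(\pi))_{\bar S_{j^*},\psi_{j^*}}\xrightarrow{q'}({}_{j^*}\Sigma(\pi))_{\bar R_{j^*},\psi_{j^*}'}\xrightarrow{\sim}{}^-\pi$, where $q'$ is the further $V$-coinvariant for $V=\bar R_{j^*}/\bar S_{j^*}$. Thus the claim reduces to $q'\circ \iota\ne 0$. At this point I would apply Frobenius reciprocity (adjointness between ${}^{j^*}\Gamma$ and ${}^{(j^*)}$) to compute
\[
\mathrm{Hom}({}^{j^*}\Gamma(\tau),{}^-\pi)\cong \mathrm{Hom}(\tau,{}^{(j^*)}({}^-\pi))=\mathrm{Hom}(\tau,\tau)\cong \mathbb{C},
\]
using the irreducibility of $\tau={}^-({}^-\pi)$, which in the thickened case follows from Zelevinsky's theorem on highest derivatives together with $\mathrm{lev}({}^-\pi)=\mathrm{lev}(\pi)$. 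So $q'\circ\iota$ lives in a one-dimensional space, and the problem becomes checking it is non-zero.

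The main obstacle is the final verification. My strategy is to apply ${}^{(j^*)}$ to $q'\circ\iota$ and trace the naturality: since $\iota$ corresponds to the layer $(j_1,j_2)=(0,j^*)$ in Lemma~\ref{lem composition factors of applying geo on mirabolic} (which is precisely ${}^{j^*}\Gamma({}^{(j^*)}({}^-\pi))$), its image under ${}^{(j^*)}$ should recover, up to the canonical identifications, the unit $\tau\to {}^{(j^*)}({}^{j^*}\Gamma(\tau))$ of the Frobenius adjunction; on the other hand, $q'$ followed by the iso to ${}^-\pi$ becomes, after ${}^{(j^*)}$, the canonical identification ${}^{(j^*)}(({}_{j^*}\Sigma(\pi))_{\bar R_{j^*},\psi_{j^*}'})={}^{(j^*)}({}^-\pi)=\tau$. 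The composition is then a non-zero scalar multiple of $\mathrm{id}_\tau$. The delicate step is matching the bottom-layer submodule of Lemma~\ref{lem composition factors of applying geo on mirabolic} with the adjunction unit at the level of the $\bar S_{j^*}$-coinvariant filtration; this is essentially an unraveling of the proof of that lemma, combined with the $M_{n+1-j^*}^t$-equivariance of the submodule embedding.
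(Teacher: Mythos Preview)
Your reduction in Steps~1--3 is sound and in fact parallels the paper's implicit setup: writing ${}_{j^*}\Sigma(\pi)={}^{j^*}\Gamma({}^-\pi)$, using the BZ identities to get $({}_{j^*}\Sigma(\pi))_{\bar R_{j^*},\psi'_{j^*}}\cong{}^-\pi$, and hence factoring $q\circ p$ through the further coinvariant $q'$.

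Step~4, however, is misstated. The functor $\Gamma^{i}=(\Phi^+)^{i-1}\Psi^+$ is \emph{not} left adjoint to $(\cdot)^{(i)}=\Psi^-(\Phi^-)^{i-1}$: while $\Phi^+\dashv\Phi^-$, for the $\Psi$-part one has $\Psi^-\dashv\Psi^+$ (coinvariants is left adjoint to inflation), so the right adjoint of $\Gamma^i$ is $\omega\mapsto ((\Phi^-)^{i-1}\omega)^{V}$ with $V$-\emph{invariants}, not the coinvariant $\Psi^-$. Thus $\mathrm{Hom}({}^{j^*}\Gamma(\tau),{}^-\pi)\cong\mathrm{Hom}(\tau,{}^{(j^*)}({}^-\pi))$ does not hold as a formal adjunction. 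This step is in any case not needed for nonvanishing.

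The crux is Step~5, and here your ``unraveling'' is left vague. To show $q'\circ\iota\neq 0$ you must control how the bottom layer of the Lemma~\ref{lem composition factors of applying geo on mirabolic} filtration interacts with the further $\bar R_{j^*}/\bar S_{j^*}$-coinvariant; this is precisely where the work lies, and tracing adjunction units across two different levels of derivatives is delicate. The paper bypasses this entirely with a short Jordan--H\"older argument: among all $M_m^t$-composition factors of ${}^{\langle j^*\rangle}\pi$ (running over the layers $({}_j\Sigma(\pi))_{\bar S_{j^*},\psi_{j^*}}$ and their sublayers from Lemma~\ref{lem composition factors of applying geo on mirabolic}), the \emph{only} one of the form ${}^{j^*}\Gamma(\kappa)$ is the image of $\iota$, since every other layer is ${}^{j-j_1}\Gamma(\cdots)\otimes\mathfrak Z_{j_1}$ with $j-j_1<j^*$. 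Since $q$ is surjective onto ${}^-\pi$, which (by thickenedness, so $\mathrm{lev}({}^-\pi)=j^*$) has ${}^{j^*}\Gamma(\tau)$ as a composition factor, this factor must come from $\iota$; hence $q\circ p\circ\iota\neq 0$. Your Step~5, done carefully, would ultimately reduce to this same observation.
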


\begin{proof}
Note that we have a surjection from $\pi_{\bar{S}_{j^*}, \psi_{j^*}}$ to ${}^-\pi$, which is $M_m^t$-equivariant. Thus we only have to observe that other composition factors of $({}_{j^*}\Sigma(\pi))_{\bar{S}_{j^*}, \psi_{j^*}}$ as well as any composition factor of $({}_j\Sigma(\pi))_{\bar{S}_{j^*},\psi_{j^*}}$ ($j<j^*$) does not take the form ${}^{j^*}\Gamma(\kappa)$ for some $\kappa \in \mathrm{Irr}$. Indeed, the last assertion follows from the description of those composition factors in Lemma \ref{lem composition factors of applying geo on mirabolic}.
\end{proof}

\subsection{Derivative-descentable quotient on another BZ filtrations}

\begin{lemma} \label{lem coming from smallest mirabolic}
Let $\omega \in \mathrm{Irr}(G_k)$. Let $\kappa \in \mathrm{Alg}(G_l)$. Let $\lambda$ be an irreducible quotient of $\kappa \times \omega$. Let $j^*=\mathrm{lev}(\omega)$. Let $q$ be the quotient map and let $q': {}^{(j^*)}(\kappa \times \omega) \rightarrow  {}^{(j^*)}\lambda$ be the induced map from $q$. Let $\iota: \kappa \times {}^-\omega \hookrightarrow {}^{(j^*)}(\kappa \times \omega) $ be the embedding from the bottom layer of Leibniz's rule. Then 
\begin{enumerate}
\item[(1)] $q'\circ \iota\neq 0$;
\item[(2)] Suppose $\kappa$ is of finite length. Suppose $\lambda$ can be written as $\lambda_1\times \lambda_2$ such that $\mathrm{csupp}(\lambda_1)\cap \mathrm{csupp}(\omega)=\emptyset$ and $\mathrm{csupp}(\lambda_1)\cap \mathrm{csupp}(\lambda_2)=\emptyset$. Then, we further have a map:
\[   s: {}^{(j^*)}(\lambda_1 \times \lambda_2)\twoheadrightarrow  \lambda_1 \times {}^{(j^*)}\lambda_2.
\]
Then $s\circ q' \circ \iota \neq 0$.
\end{enumerate}

\end{lemma}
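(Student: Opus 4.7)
The plan is to establish (1) via Frobenius reciprocity combined with exactness of the twisted Jacquet functors defining ${}^{(j^*)}$, and to deduce (2) from (1) by a cuspidal support comparison across the Leibniz layers of ${}^{(j^*)}(\lambda_1\times\lambda_2)$.

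For (1), the functor ${}^{(j^*)}$ is (a composition of) ordinary and twisted Jacquet functors, hence exact, so $q'={}^{(j^*)}q$ is surjective. Leibniz's rule exhibits ${}^{(j^*)}(\kappa\times\omega)$ with a filtration whose successive subquotients are the parabolic inductions ${}^{(j_1)}\kappa\times{}^{(j_2)}\omega$ with $j_1+j_2=j^*$, and $\iota$ embeds the submodule-end layer $\kappa\times{}^-\omega$ (the case $j_1=0$). To show $q'\circ\iota\neq 0$, I plan to realize $q$ via Frobenius reciprocity as a nonzero $M_{l,k}$-intertwining $\widetilde{q}:\kappa\boxtimes\omega\to\lambda_{N_{l,k}}$, where $N_{l,k}$ is the unipotent radical of $P_{l,k}$. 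Applying the highest left-derivative functor $\Psi^-\circ(\Phi^-)^{j^*-1}$ on the second factor (viewed as a functor on $\mathrm{Alg}(G_k)$) yields a nonzero map $\kappa\boxtimes{}^-\omega\to(\lambda_{N_{l,k}})_{\bar{S},\psi}$ for a suitable subgroup $\bar{S}$ and character $\psi$. By compatibility of iterated Jacquet functors with the geometric lemma (in the spirit of Lemma~\ref{lem transitive layer in derivative}), this nonzero map is canonically identified after adjunction with $q'\circ\iota$, which establishes the claim. The main obstacle here is making this identification precise, since the two sides come from different applications of the geometric lemma and the match-up requires careful bookkeeping of the Mackey decomposition and of the normalizations involved.

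For (2), given (1) the map $q'\circ\iota:\kappa\times{}^-\omega\to{}^{(j^*)}(\lambda_1\times\lambda_2)$ is nonzero, and it remains to show its image is not contained in $\ker s$. The kernel $\ker s$ is filtered by the Leibniz layers ${}^{(j_1)}\lambda_1\times{}^{(j_2)}\lambda_2$ for $j_1\geq1$, $j_1+j_2=j^*$. I will prove that
\[
\mathrm{Hom}\bigl(\kappa\times{}^-\omega,\,{}^{(j_1)}\lambda_1\times{}^{(j_2)}\lambda_2\bigr)=0\quad\text{for all }j_1\geq1
\]
via a cuspidal-support comparison. A composition factor of $\kappa\times{}^-\omega$ has cuspidal support $\mathrm{csupp}(\kappa)+\mathrm{csupp}(\omega)-B$ with $B\subseteq\mathrm{csupp}(\omega)$ and $|B|=j^*$, whereas a composition factor of ${}^{(j_1)}\lambda_1\times{}^{(j_2)}\lambda_2$ has cuspidal support $\mathrm{csupp}(\lambda_1)+\mathrm{csupp}(\lambda_2)-A_1-A_2$ with $A_i\subseteq\mathrm{csupp}(\lambda_i)$ and $|A_i|=j_i$. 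Since $\lambda$ is a composition factor of $\kappa\times\omega$ and equals $\lambda_1\times\lambda_2$, we have $\mathrm{csupp}(\kappa)+\mathrm{csupp}(\omega)=\mathrm{csupp}(\lambda_1)+\mathrm{csupp}(\lambda_2)$. Matching supports then forces $A_1+A_2=B\subseteq\mathrm{csupp}(\omega)$; combined with $A_1\subseteq\mathrm{csupp}(\lambda_1)$ and the disjointness hypothesis $\mathrm{csupp}(\lambda_1)\cap\mathrm{csupp}(\omega)=\emptyset$, this forces $A_1=\emptyset$, contradicting $j_1\geq1$. A Bernstein-block decomposition then yields $\mathrm{Hom}(\kappa\times{}^-\omega,\ker s)=0$, so the image of $q'\circ\iota$ meets $\ker s$ trivially and $s\circ q'\circ\iota\neq0$.
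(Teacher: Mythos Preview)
Your argument for (2) is correct and matches the paper's: the cuspidal-support comparison forces $j_1=0$, so the image of $q'\circ\iota$ lands in the direct summand $\lambda_1\times{}^{(j^*)}\lambda_2$.

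For (1), the gap you flag as ``the main obstacle'' is real and is precisely where the paper's work lies. Two issues. First, a correction: second adjointness for $\mathrm{Ind}_{P_{l,k}}$ produces $\widetilde q:\kappa\boxtimes\omega\to\lambda_{N^-_{l,k}}$ (opposite unipotent), not $\lambda_{N_{l,k}}$. After conjugating by $w=\left(\begin{smallmatrix}&I_l\\I_k&\end{smallmatrix}\right)$ this becomes $\omega\boxtimes\kappa\to\lambda_{N_{k,l}}$ with $\omega$ now in the \emph{first} slot, which is what is needed for the left derivative ${}^{(j^*)}$ to act on the $\omega$-factor and to relate (via Jacquet-in-stages) to ${}^{(j^*)}\lambda$. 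Without this swap your ``derivative on the second factor'' is acting on the wrong side.

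Second, and more seriously, the identification with $q'\circ\iota$ is not mere bookkeeping. The paper carries it out by an explicit function-space realization: write $\kappa\times\omega=C^\infty_c(P\setminus G,\kappa\boxtimes\omega)$ and invoke Bezrukavnikov--Kazhdan to see that the second-adjointness map is restriction to the open cell $P\setminus PU^t$ and that its composition with $q$ survives the $N_P^-$-Jacquet functor. After translating by $w$ and applying the twisted Jacquet functor ${}_{\bar R_{j^*},\psi_{j^*}}$, the open cell (saturated by $\bar R_{j^*}$) is exactly the piece of the function space that realizes the submodule layer $\kappa\times{}^-\omega$ in the Leibniz filtration of ${}^{(j^*)}(\kappa\times\omega)$. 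That geometric match-up is what identifies your abstractly-constructed map with $q'\circ\iota$; absent either this explicit realization or a worked-out naturality statement comparing ``second adjointness then derivative on the factor'' with ``derivative on the whole then bottom-layer inclusion then second adjointness'', the argument does not close. You should also explicitly justify why the derivative preserves nonvanishing: since $\omega$ is irreducible, $\mathrm{im}\,\widetilde q\cong\kappa''\boxtimes\omega$ for some nonzero quotient $\kappa''$, and then ${}^-\omega\neq 0$ suffices.
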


\begin{proof}
Let $P=P_{l,k}$ and let $U=U_{k+l}$. Let $N_P^-$ be the opposite unipotent radical of $P$.

We realize $\kappa \times \omega$ as the space $C^{\infty}_c(P\setminus G_{k+l}, \kappa \boxtimes \omega)$ of smooth compactly-supported $\kappa \boxtimes \omega$-valued functions. Let $C^{\infty}_c(P\setminus PU^t, \kappa\boxtimes \omega)$ as its subspace of compactly-supported functions with support in $P\setminus PU^t$. The following composition 
\[  C^{\infty}_c(P \setminus PU^t, \kappa \boxtimes \omega) \hookrightarrow \kappa \times \omega \twoheadrightarrow \lambda
\]
is non-zero, see Bezrukavnikov-Kazhdan \cite[Section 6.2]{BK15} or see \cite[Section 8]{Ch22+c}.

%(One has to compare the dual fact used in the proof of Proposition \ref{prop trivial orbit and irred}.)

One now twists the above sequence with the action from an element $w=\begin{pmatrix} & I_l \\ I_k &  \end{pmatrix}$, from $G_k \times G_l$-action to $G_l\times G_k$-action. Then we obtain a non-zero composition, after taking the Jacquet functor $w^{-1}N_P^-w=N_{P_{k,l}}$:
\[  C^{\infty}_c(P\setminus PU^tw , \kappa \boxtimes \omega) \hookrightarrow \kappa \times \omega \twoheadrightarrow \lambda .
\]
The composition takes the form:
\[    \omega \boxtimes \kappa  \rightarrow \lambda_{N_{P_{k,l}}}  .
\]
Using Jacquet functor in stages, this implies that after applying the twisted Jacquet functor ${}_{\bar{R}_{j^*}, \psi_{j^*}}$, we also have the following non-zero composition (see \cite[Section 8]{Ch22+c} for more discussions): (set $\tau=\kappa \boxtimes \omega$)
\[ \xymatrix{ C^{\infty}_c(P\setminus PU^tw , \tau)_{\bar{R}_{j^*}, \psi_{j^*}} \ar@{=}[r]  & C^{\infty}_c(P\setminus PU^tw\bar{R}_{j^*}, \tau)_{\bar{R}_{j^*}, \psi_{j^*}} \ar@{^{(}->}[r] \ar@{=}[d] & (\kappa \times \omega)_{\bar{R}_{j^*}, \psi_{j^*}} \ar@{->>}[r] \ar@{=}[d] & \lambda_{\bar{R}_{j^*}, \psi_{j^*}} \\
    &   \kappa \times {}^-\omega &  {}^{(j^*)}(\kappa\times \omega)} 
\]
and so we obtain (1). Here taking the twisted Jacquet functor ${}_{\bar{R}_{j^*}, \psi_{j^*}}$ on $\kappa\times \omega$ is the same as taking the BZ derivative.

For (2), $\lambda_1\times {}^{(j^*)}\lambda_2$ is an indecomposable component in ${}^{(j^*)}(\lambda_1\times \lambda_2)$ by the cuspidal support condition and hence gives the map $s$. Then, by a comparison of cuspidal support and (1), the image of $q \circ \iota'$ must lie in that component and so this gives (2).
\end{proof}

\begin{proposition} \label{prop transitive smallest integer} (c.f. Theorem \ref{thm simple quotient branching law} below)
Let $\pi \in \mathrm{Irr}(G_{n+1})$ be thickened. Let $j^*=\mathrm{lev}(\pi)$. Let $\tau$ be a simple quotient of $\pi^{[i]}$ for some $i$. Let $\sigma \in \mathrm{Irr}^c(G_{n-i})$ be good to $\nu^{1/2}\cdot \pi$. Suppose the quotient $G_n$-map $s$ from $\pi$ to $\tau \times \sigma$ is non-zero, when restricted to ${}_{j^*}\Sigma(\pi)$. Then the unique map from ${}^-\pi$ to ${}^-\tau \times \sigma$ is also non-zero, when restricted to ${}_{j^*}\Sigma({}^-\pi)$.
\end{proposition}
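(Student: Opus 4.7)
The plan is to identify $f|_{{}_{j^*}\Sigma({}^-\pi)}$ with a map produced by applying Lemma~\ref{lem coming from smallest mirabolic} to the restriction $s|_{{}_{j^*}\Sigma(\pi)}$, and then use that lemma to see the latter is non-zero.

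I would first set up the relevant irreducibilities. Since $\sigma$ is cuspidal and good to $\nu^{1/2}\cdot\pi$, and $\mathrm{csupp}(\tau)\subset\mathrm{csupp}_{\mathbb Z}(\nu^{1/2}\cdot\pi)$, the cuspidal $\sigma$ is also good to $\tau$; hence both $\tau\times\sigma\cong\sigma\times\tau$ and ${}^-\tau\times\sigma\cong\sigma\times{}^-\tau$ are irreducible. Because $\pi$ is thickened we have $\mathrm{lev}({}^-\pi)=j^*$, so $\tau_0:={}^-({}^-\pi)$ is defined and ${}_{j^*}\Sigma({}^-\pi)\cong{}^{j^*}\Gamma(\tau_0)$.

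Next I would realise ${}^{j^*}\Gamma(\tau_0)$ as a submodule of ${}^-\pi$ using Lemma~\ref{lem submodule embedding after derivative}: the composition $q\circ p\circ\iota\colon{}^{j^*}\Gamma(\tau_0)\hookrightarrow({}_{j^*}\Sigma(\pi))_{\bar S_{j^*},\psi_{j^*}}\hookrightarrow{}^{\langle j^*\rangle}\pi\twoheadrightarrow{}^-\pi$ is nonzero. Exactness of the twisted Jacquet functor applied to the inclusion ${}_{j^*}\Sigma(\pi)\hookrightarrow\pi$, together with the construction of $f$ from $s$ in Lemma~\ref{lem projection factor through}, then show that $f\circ(q\circ p\circ\iota)$ equals — modulo canonical identifications — the composite obtained by applying $\bar S_{j^*},\psi_{j^*}$ to $s|_{{}_{j^*}\Sigma(\pi)}$ and projecting the codomain to ${}^-\tau\times\sigma$. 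It therefore suffices to show this latter composite is nonzero.

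To see this, the $G_n$-identification ${}_{j^*}\Sigma(\pi)\cong\Pi_{j^*-1}\times{}^-\pi$ turns the given nonzero restriction into a surjective $G_n$-map $t\colon\Pi_{j^*-1}\times{}^-\pi\twoheadrightarrow\sigma\times\tau$, and Lemma~\ref{lem coming from smallest mirabolic}(2) — applied with $\kappa=\Pi_{j^*-1}$, $\omega={}^-\pi$, $\lambda_1=\sigma$, and $\lambda_2=\tau$ — produces a nonzero composition $\Pi_{j^*-1}\times\tau_0\hookrightarrow{}^{(j^*)}(\Pi_{j^*-1}\times{}^-\pi)\xrightarrow{t^{(j^*)}}{}^{(j^*)}(\sigma\times\tau)\twoheadrightarrow\sigma\times{}^-\tau$. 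The cuspidal-support disjointness hypotheses required by the lemma follow from keeping track of how $\mathrm{csupp}(\sigma)$ interacts with the $\nu^{1/2}$-shift between $\mathrm{csupp}({}^-\pi)\subset\mathrm{csupp}(\pi)$ and $\mathrm{csupp}(\tau)\subset\mathrm{csupp}(\nu^{1/2}\cdot\pi)$.

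The main obstacle, and the bulk of the technical work, is the matching verification: showing that the composition produced by Lemma~\ref{lem coming from smallest mirabolic} agrees with $f|_{{}_{j^*}\Sigma({}^-\pi)}$. I expect this to be a commutative-diagram chase using Lemma~\ref{lem composition factors of applying geo on mirabolic} to compare the filtration on $({}^{j^*}\Gamma({}^-\pi))_{\bar S_{j^*},\psi_{j^*}}$ with the Leibniz filtration on ${}^{(j^*)}(\Pi_{j^*-1}\times{}^-\pi)$, combined with the compatibility between the twisted Jacquet $(\cdot)_{\bar S_{j^*},\psi_{j^*}}$ and the BZ derivative $(\cdot)^{(j^*)}$ on $G$-representations. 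Once this compatibility is made precise, the proposition follows.
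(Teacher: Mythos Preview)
Your overall strategy matches the paper's: use Lemma~\ref{lem coming from smallest mirabolic} on the restriction $s|_{{}_{j^*}\Sigma(\pi)}$, and then identify the resulting nonzero map with $f|_{{}_{j^*}\Sigma({}^-\pi)}$ via Lemmas~\ref{lem submodule embedding after derivative} and~\ref{lem projection factor through}. The commutative-diagram ``matching verification'' you flag as the main obstacle is exactly what the paper carries out explicitly, so you have correctly located where the work lies.

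There is, however, one concrete gap. You invoke Lemma~\ref{lem coming from smallest mirabolic}(2) with $\kappa=\Pi_{j^*-1}$, but that part of the lemma explicitly assumes $\kappa$ has finite length, and the Gelfand--Graev representation $\Pi_{j^*-1}$ does not. The finite-length hypothesis is what makes the cuspidal-support comparison in the proof of part~(2) go through; without it, one cannot conclude that the image of $q'\circ\iota$ lands in the correct direct summand $\lambda_1\times{}^{(j^*)}\lambda_2$. The paper handles this by first observing that the nonzero map $\Pi_{j^*-1}\times{}^-\pi\twoheadrightarrow\tau\times\sigma$ factors through $\lambda\times{}^-\pi$ for some finite-length generic quotient $\lambda$ of $\Pi_{j^*-1}$ (constructed by applying an element of the Bernstein center that annihilates $\tau\times\sigma$), and only then applies Lemma~\ref{lem coming from smallest mirabolic} with $\kappa=\lambda$. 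You should insert this reduction step before citing the lemma; once you do, the rest of your outline goes through as in the paper.
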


\begin{proof}
One realizes ${}_{j^*}\Sigma(\pi)$ as $\Pi_{j^*}\times {}^-\pi$ and for such the explicit realization, see the proof of Lemma \ref{lem composition factors of applying geo on mirabolic}. Then there exists a (generic, not necessarily irreducible) quotient of finite length $\lambda$ of $\Pi_{j^*}$ such that we have an induced surjection
\[  t: \Pi_{j^*}\times {}^-\pi \rightarrow \lambda \times {}^-\pi 
\]
and the given non-zero map $\Pi_{j^*}\times {}^-\pi  \rightarrow \tau \times \sigma $ factors through $t$. (To construct such the quotient, one may apply an element in the Bernstein center \cite{BD84} that annihilates $\tau \times \sigma$.)

The next step is to apply Lemma \ref{lem coming from smallest mirabolic}. We replace $\kappa$ with $\Pi_{j^*}$, and $\omega$ with ${}^-\pi$, and $\lambda$ with $\lambda \times {}^-\pi$. We shall provide the details below.

We have the following commutative diagram:
\[  \xymatrix{ 
         \lambda \times {}^-{}^-\pi  \ar@{^{(}->}[r]^{\iota'} & {}^{(j^*)}(\lambda \times {}^-\pi) \ar[rrrd]^r  &&  \\                
{}^{j^*}\Gamma({}^-{}^-\pi)  \ar@{^{(}->}[r]^{\iota} \ar@{->>}[u]^{t''}   &                   {}_{j^*}\Sigma(\pi)_{\bar{S}_{j^*}, \psi}   \ar[r]^p \ar@{->>}[u]^{t'} & \pi_{\bar{S}_{j^*}, \psi} \ar[d]^{q} \ar[r]^{s'}   & {}^{(j^*)}(\sigma \times \tau) \ar[r]^{s''} & \sigma \times {}^-\tau \\                                   
	&	     & {}^-\pi=\pi_{\bar{R}_{j^*}, \psi} \ar[urr]^{f''} & & 		           															},
\]
where 
\begin{itemize}
\item $\iota$ comes from the bottom layer of Leibniz's rule in Lemma \ref{lem submodule embedding after derivative} (also see Lemma \ref{lem composition factors of applying geo on mirabolic});
\item $t'$ is induced from $t$; 
\item $t''$ is the functorial map from $t$ and Leibniz's rule;
\item $f''$ in the last triangle is from Lemma \ref{lem projection factor through};
\item $s'$ is induced from $s$ by taking ${}_{\bar{S}_{j^*}}$;
\item $s''$ is the projection to the indecomposable component $\sigma \times {}^-\tau$ in ${}^{(j^*)}(\pi \times \tau)$;
\item $p$ is induced from the embedding in the BZ filtration;
\item $r$ comes from the above discussion;
\item ${}^-{}^-\pi$ means the highest left derivative of ${}^-\pi$.
\end{itemize}

 Now, we have:
\begin{itemize}
\item  $r \circ \iota' \neq 0$ by Lemma \ref{lem coming from smallest mirabolic}. 
\end{itemize} 
These two imply that $ f''\circ (q\circ p\circ \iota)\neq 0$ as desired. (We remark that $q \circ p \circ \iota \neq 0$ in Lemma \ref{lem submodule embedding after derivative}, can also be deduced indirectly from here. This, in particular, gives that ${}^{j^*}\Gamma({}^-{}^-\pi)$ coincides with ${}_{j^*}\Sigma({}^-\pi)$ in ${}^-\pi$.)
\end{proof}

\section{Branching law from BZ derivatives} \label{s branching from bz derivatives}

\subsection{Branching law from simple quotients of BZ derivatives}

The goal of this section is to prove the following:

\begin{theorem} \label{thm simple quotient branching law}
Let $\pi \in \mathrm{Irr}(G_{n+1})$. Let $\tau$ be a simple quotient of $\pi^{[i]}$. Let $\sigma \in \mathrm{Irr}^c(G_{i-1})$ good to $\nu^{1/2}\cdot \pi$. Then
\begin{enumerate}
\item $\mathrm{Hom}_{G_n}(\pi, \sigma \times \tau) \neq 0$; and 
\item $\mathcal L_{rBL}(\pi, \sigma \times \tau)=i$; and
\item $\mathcal L_{lBL}(\pi, \sigma\times \tau)=\mathrm{lev}(\pi)$.
\end{enumerate}
\end{theorem}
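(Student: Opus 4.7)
The plan is to establish (1) and (2) via a direct analysis of the right BZ filtration, and then to treat (3) by reducing to the thickened case, where Proposition \ref{prop transitive smallest integer} applies.

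For parts (1) and (2), the key input is Frobenius reciprocity
\[ \mathrm{Hom}_{G_n}(\Sigma_j(\pi), \sigma \times \tau) \cong \mathrm{Hom}_{G_{n+1-j}}(\pi^{[j]}, {}^{(j-1)}(\sigma \times \tau)) \]
combined with Leibniz's rule for ${}^{(j-1)}(\sigma \times \tau)$, whose successive subquotients are of the form ${}^{(j_1)}\sigma \times {}^{(j_2)}\tau$ with $j_1 + j_2 = j-1$. Since $\sigma$ is cuspidal of rank $i-1$, only $j_1 \in \{0, i-1\}$ contributes. The $j_1 = 0$ layer $\sigma \times {}^{(j-1)}\tau$ is killed because the cuspidal support of $\sigma$ is good to $\nu^{1/2}\cdot\pi$ and hence disjoint from that of $\pi^{[j]}$. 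The $j_1 = i-1$ layer equals ${}^{(j-i)}\tau$, which forces $j \geq i$ and at $j = i$ reduces to the hypothesis that $\pi^{[i]}$ surjects onto $\tau$. Thus $\mathcal{L}_{smD}(\pi, \sigma \times \tau) = i$, and applying Lemma \ref{lem vanishing ext filtration} to standard modules covering $\pi$ and $\sigma \times \tau$, the inductive argument of Corollary \ref{cor integer determining branching law and layers} lifts the non-vanishing from $\Sigma_i(\pi)$ to all of $\pi$ while simultaneously identifying $\mathcal{L}_{rBL}(\pi, \sigma \times \tau) = i$.

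For part (3), I invoke Theorem \ref{thm deform segments} to pick a thickened $\pi' \in \mathrm{Irr}$ with ${}^-\pi' \cong \pi$, $\mathrm{lev}(\pi') = \mathrm{lev}(\pi)$, and a corresponding simple quotient $\tau'$ of $\pi'^{[i]}$ satisfying ${}^-\tau' \cong \tau$ via the bijection of Theorem \ref{thm deform segments}. Granting the thickened case of (3) for $(\pi', \tau', \sigma)$, the branching map $\pi' \to \tau' \times \sigma$ is necessarily supported on the bottom left layer ${}_{j^*}\Sigma(\pi')$ with $j^* = \mathrm{lev}(\pi')$, since ${}_{j^*}\Lambda(\pi') = 0$. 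This is precisely the hypothesis of Proposition \ref{prop transitive smallest integer}, whose conclusion transports the support information down to $\pi = {}^-\pi'$, giving the required non-vanishing on ${}_{j^*}\Sigma({}^-\pi')$ and hence $\mathcal{L}_{lBL}(\pi, \sigma \times \tau) = \mathrm{lev}(\pi)$.

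The main obstacle is the thickened case of (3). My approach is to use $\theta$-duality to convert the left BZ analysis into the right BZ form
\[ \mathrm{Hom}_{G_n}({}_j\Sigma(\pi), \sigma \times \tau) \cong \mathrm{Hom}_{G_{n+1-j}}({}^{[j]}\pi, (\sigma \times \tau)^{(j-1)}), \]
and then to run a Leibniz-plus-cuspidal-support analysis parallel to the one in part (2), now with the roles of left and right derivatives interchanged. The thickness hypothesis is used to control the intermediate left derivatives ${}^{[j]}\pi$ for $0 < j < \mathrm{lev}(\pi)$: because every segment in the Zelevinsky multisegment of $\pi$ has relative length at least two, the removal process of Definition \ref{def removal process} forces these intermediate derivatives either to vanish or to have cuspidal supports mismatched with the Leibniz layers of $(\sigma \times \tau)^{(j-1)}$. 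At $j = \mathrm{lev}(\pi)$, a direct combinatorial identification exhibits $\tau$ as a simple quotient of the relevant bottom Leibniz layer, producing the non-vanishing on ${}_{\mathrm{lev}(\pi)}\Sigma(\pi)$ that completes the thickened case.
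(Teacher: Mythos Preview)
Your overall architecture matches the paper: parts (1) and (2) via the right BZ filtration and cuspidal-support/Leibniz analysis, and part (3) by reducing to the thickened case through Theorem \ref{thm deform segments} and then applying Proposition \ref{prop transitive smallest integer}. This is exactly the paper's route.

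The problem is your sketch of the thickened case of (3). Two points. First, the removal process of Definition \ref{def removal process} is irrelevant here: it computes the effect of the derivative $D_{\Delta}$ on $\mathfrak{hd}(\pi)$, not the structure of the BZ derivative ${}^{[j]}\pi$. The intermediate ${}^{[j]}\pi$ for $0<j<\mathrm{lev}(\pi)$ are typically nonzero, and their cuspidal supports \emph{do} overlap with those of $\tau^{(j-i)}$ (both come from $\mathfrak m$), so a cuspidal-support argument cannot separate them. What is actually needed (and what the paper does in Lemma \ref{lem branching bz der thickened}) is a multisegment-level comparison: the simple subquotients of ${}^{[j]}\pi$ are controlled by multisegments in ${}^{[j]}\mathfrak m$ (each segment either shifted or truncated-and-shifted once on the left), while those of $\tau^{(j-i)}$ lie in a set $\mathfrak M$ allowing up to \emph{two} right truncations of each segment of $\mathfrak m$. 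The thickness hypothesis enters through a counting argument on $a$-points: for each $\leq$-minimal $\rho$ in $\mathrm{csupp}(\mathfrak m)$, one tracks $N_c=|\mathfrak n_{a=\nu^{-1/2+c}\rho}|$ and shows inductively that matching forces every segment to be truncated, i.e.\ $j=\mathrm{lev}(\pi)$.

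Second, your final sentence has the logic reversed. One does not produce the non-vanishing at $j=\mathrm{lev}(\pi)$ by a ``direct combinatorial identification'' of $\tau$ inside the bottom layer; rather, one proves the \emph{vanishing} $\mathrm{Hom}({}_j\Sigma(\pi),\sigma\times\tau)=0$ for all $j<\mathrm{lev}(\pi)$, and then the non-vanishing at the bottom layer is forced by part (1). So the combinatorial work is entirely on the vanishing side.
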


\subsection{A basic criterion for simple quotients of BZ derivatives} \label{ss basic criterion}

Before proving Theorem \ref{thm simple quotient branching law}, we introduce some basic results and more notations. For a segment $\Delta=[a,b]_{\rho}$, define:
\[  \Delta^-=[a,b-1]_{\rho}, \quad {}^-\Delta=[a+1,b]_{\rho} , \quad \Delta^0=\Delta, \quad {}^0\Delta=\Delta, 
\]
\[  \Delta^{[-]}=[a+\frac{1}{2}, b-\frac{1}{2}]_{\rho}, \quad {}^{[-]}\Delta=[a+\frac{1}{2}, b-\frac{1}{2}]_{\rho} ,
\]
and we also define $\Delta^{[0]}=[a+\frac{1}{2},b+\frac{1}{2}]_{\rho}$ and ${}^{[0]}\Delta=[a-\frac{1}{2}, b-\frac{1}{2}]_{\rho}$. 

Let $\mathfrak m=\left\{  \Delta_1, \ldots, \Delta_r\right\} \in \mathrm{Mult}$. Write $\Delta_k=[a_k, b_k]_{\rho_k}$ and let $n_k=n(\rho_k)$. Define
\[ \mathfrak m^{(i)}=\left\{ \left\{\Delta_1^{\#_1}, \ldots, \Delta_r^{\#_r} \right\}:\ \forall p,\ \#_p=- \mbox{ or } 0, \mbox{ and} \sum_{p:\ \#_p=-} n_p= i \right\}
\]
\[ {}^{(i)}\mathfrak m=\left\{ \left\{{}^{\#_1} \Delta_1, \ldots, {}^{\#_r}\Delta_r \right\}:\ \forall p,\ \#_p=- \mbox{ or } 0, \mbox{ and} \sum_{p:\ \#_p=-} n_p= i  \right\} .
\]
We similarly define the shifted version for $\mathfrak m^{[i]}$ and ${}^{[i]}\mathfrak m$ by replacing $-$ with $[-]$ and replacing $0$ by $[0]$. 

\begin{lemma} \label{lem multisegment simple q bz} \cite[Lemma 3.11]{Ch22+}
Let $\mathfrak m=\left\{ \Delta_1, \ldots, \Delta_r \right\} \in \mathrm{Mult}$. Let $\tau$ be a simple quotient of $\langle \mathfrak m \rangle^{(i)}$. Then $\tau \cong \langle \mathfrak n \rangle$ for some $\mathfrak n \in \mathfrak m^{(i)}$. 
\end{lemma}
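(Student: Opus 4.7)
The plan is to first reduce to studying $\zeta(\mathfrak m)^{(i)}$ via the embedding $\langle \mathfrak m \rangle \hookrightarrow \zeta(\mathfrak m)$, then pass to simple \emph{submodules} (rather than simple quotients) via contragredient duality so that a socle analysis applies cleanly.

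First, by the exactness of $(-)^{(i)}$, the embedding $\langle \mathfrak m \rangle \hookrightarrow \zeta(\mathfrak m)$ yields $\langle \mathfrak m \rangle^{(i)} \hookrightarrow \zeta(\mathfrak m)^{(i)}$. Zelevinsky's computation for segment representations shows that the only nonzero right BZ derivatives of $\langle \Delta_k \rangle$ (for $\Delta_k = [a_k,b_k]_{\rho_k}$) are $\langle \Delta_k \rangle^{(0)} = \langle \Delta_k \rangle$ and $\langle \Delta_k \rangle^{(n_k)} = \langle \Delta_k^- \rangle$. Applying Leibniz's rule to $\zeta(\mathfrak m) = \langle \Delta_1 \rangle \times \cdots \times \langle \Delta_r \rangle$, the module $\zeta(\mathfrak m)^{(i)}$ therefore admits a filtration whose successive subquotients are precisely $\zeta(\mathfrak n)$ for $\mathfrak n \in \mathfrak m^{(i)}$.

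Second, to convert simple quotients into simple submodules, I apply contragredient. Using the standard identification $(\pi^{(i)})^\vee \cong {}^{(i)}(\pi^\vee)$, a simple quotient $\tau$ of $\langle \mathfrak m \rangle^{(i)}$ corresponds bijectively to a simple submodule $\tau^\vee$ of ${}^{(i)}\langle \mathfrak m^\vee \rangle$. Since $\langle \mathfrak m^\vee \rangle \hookrightarrow \zeta(\mathfrak m^\vee)$, exactness of the left derivative gives ${}^{(i)}\langle \mathfrak m^\vee \rangle \hookrightarrow {}^{(i)}\zeta(\mathfrak m^\vee)$, and the left analogue of Leibniz's rule (combined with the left-derivative computation ${}^{(0)}\langle \Delta_k^\vee \rangle = \langle \Delta_k^\vee \rangle$ and ${}^{(n_k)}\langle \Delta_k^\vee \rangle = \langle {}^{-}\Delta_k^\vee \rangle$) presents ${}^{(i)}\zeta(\mathfrak m^\vee)$ as iterated extensions of $\zeta(\mathfrak p)$ for $\mathfrak p \in {}^{(i)}\mathfrak m^\vee$.

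Third, I carry out a socle induction. For each $\mathfrak p$, the module $\zeta(\mathfrak p)$ has simple socle $\langle \mathfrak p \rangle$, so every nonzero submodule of $\zeta(\mathfrak p)$ has socle $\langle \mathfrak p \rangle$. For any short exact sequence $0 \to A \to B \to C \to 0$, every simple submodule of $B$ is, up to isomorphism, a simple submodule of $A$ or of $C$. Inducting on the length of the filtration of ${}^{(i)}\zeta(\mathfrak m^\vee)$, every simple submodule of this filtered module, and hence of the subobject ${}^{(i)}\langle \mathfrak m^\vee \rangle$, is isomorphic to $\langle \mathfrak p \rangle$ for some $\mathfrak p \in {}^{(i)}\mathfrak m^\vee$. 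Hence $\tau^\vee \cong \langle \mathfrak p \rangle$, giving $\tau \cong \langle \mathfrak p^\vee \rangle$; the identity $({}^{-}\Delta_k^\vee)^\vee = \Delta_k^-$ together with $n(\rho_k^\vee) = n(\rho_k)$ identifies $\mathfrak p^\vee$ as an element of $\mathfrak m^{(i)}$, as desired.

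The main obstacle will be justifying the duality $(\pi^{(i)})^\vee \cong {}^{(i)}(\pi^\vee)$ on the non-irreducible module $\pi = \langle \mathfrak m \rangle^{(i)}$, where $\theta \cong \vee$ cannot be invoked pointwise. While this relation is essentially built into the Bernstein-Zelevinsky framework (it follows from the standard contragredient formula for twisted Jacquet functors applied to $\Phi^\pm, \Psi^\pm$, with modular-character shifts absorbed into the conventions), tracking the shifts carefully is the delicate point. An alternative that bypasses explicit use of $\vee$ on non-irreducible modules is to work directly with the Gelfand-Kazhdan involution via $\theta(\pi^{(i)}) = {}^{(i)}\theta(\pi)$ and perform the socle analysis on the $\theta$-image, after rephrasing "simple quotient" in terms of a dual socle-filtration on ${}^{(i)}\zeta(\mathfrak m^\vee)$ via Bernstein's second adjointness.
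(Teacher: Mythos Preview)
Your overall strategy---reduce to a product of segment representations, apply Leibniz, and run a socle/cosocle induction along the resulting filtration---is exactly the mechanism behind the paper's citation of \cite[Lemma 7.3]{Ch21}. The gap is in the passage from simple quotients to simple submodules. Your key identity $(\langle \mathfrak m\rangle^{(i)})^{\vee}\cong {}^{(i)}(\langle \mathfrak m\rangle^{\vee})$ unwinds (using ${}^{(i)}=\theta\circ (\,\cdot\,)^{(i)}\circ\theta$ and $\theta(\langle\mathfrak m\rangle)\cong\langle\mathfrak m\rangle^{\vee}$) to the assertion $\omega^{\vee}\cong\theta(\omega)$ for $\omega=\langle\mathfrak m\rangle^{(i)}$. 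Gelfand--Kazhdan gives this only for \emph{irreducible} $\omega$; for a general finite-length module the functors $\vee$ and $\theta$ differ (one is contravariant, the other covariant), so the identity can fail precisely by reversing the extension structure. Your suggested alternative via $\theta$ alone does not help either: $\theta$ is a covariant equivalence, so it sends simple quotients of $\langle\mathfrak m\rangle^{(i)}$ to simple \emph{quotients} of ${}^{(i)}\langle\mathfrak m^{\vee}\rangle$, and you are back where you started.

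The clean fix, which is how the cited lemma is actually used elsewhere in the paper (see the proof of Lemma~\ref{lem branching bz der thickened}, where the surjection $\theta(\zeta(\mathfrak m))^{\vee}\twoheadrightarrow\pi$ appears explicitly), is to replace the embedding by Zelevinsky's dual realization: $\langle\mathfrak m\rangle$ is also the unique simple \emph{quotient} of the product $\langle\Delta_r\rangle\times\cdots\times\langle\Delta_1\rangle$ in the opposite order (equivalently, of $\theta(\zeta(\mathfrak m))^{\vee}$). Applying the exact functor $(\,\cdot\,)^{(i)}$ to this surjection and then Leibniz, one runs your induction on the cosocle side directly: every simple quotient of a filtered module is a simple quotient of one of the successive subquotients, and each such subquotient surjects onto $\langle\mathfrak n\rangle$ for the corresponding $\mathfrak n\in\mathfrak m^{(i)}$. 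No duality on non-irreducible modules is needed.
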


\begin{proof}
It follows from \cite[Lemma 7.3]{Ch21} and the embedding $\langle \mathfrak m \rangle \hookrightarrow \zeta(\mathfrak m)$ in Section \ref{ss basic notations}.
\end{proof}

\subsection{Proof of Theorem \ref{thm simple quotient branching law} for thickened case} \label{ss bz for derivatives thickened}

\begin{lemma} \label{lem branching bz der thickened}
Theorem \ref{thm simple quotient branching law} holds if $\pi$ is thickened.
\end{lemma}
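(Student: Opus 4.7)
I would split the proof into two independent parts, treating (1) and (2) together via the right Bernstein--Zelevinsky filtration, and (3) via the left BZ filtration combined with the combinatorics specific to thickened multisegments.

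For (1) and (2), my plan is to compute $\mathrm{Ext}^*_{G_n}(\Sigma_j(\pi), \sigma \times \tau)$ for every $j$ using the identity
\[\mathrm{Ext}^k_{G_n}(\Sigma_j(\pi), \sigma \times \tau) \cong \mathrm{Ext}^k_{G_{n+1-j}}\bigl(\pi^{[j]}, {}^{(j-1)}(\sigma \times \tau)\bigr)\]
from Section \ref{ss bz filtration describe}, combined with Leibniz's rule applied to the Jacquet-type object on the right. Since $\sigma$ is cuspidal of rank $i-1$, only two layers of Leibniz survive: the layer $\sigma \times {}^{(j-1)}\tau$ (from ${}^{(0)}\sigma$) and, when $j \geq i$, the layer $\tau^{(j-i)}$ (from ${}^{(i-1)}\sigma = \mathbb{C}$). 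The first layer contributes zero to every Ext group, since by the goodness of $\sigma$ to $\nu^{1/2}\pi$ its cuspidal support misses that of $\pi^{[j]}$. For $j < i$ this kills everything, so both $\mathrm{Hom}$ and $\mathrm{Ext}^1$ vanish. For $j = i$ the second layer is $\tau$ itself, and $\mathrm{Hom}_{G_{n+1-i}}(\pi^{[i]}, \tau) \cong \mathbb{C}$ by hypothesis. Walking through the right BZ filtration $\Lambda_j(\pi)$ via the long exact sequence and using the Ext vanishing at lower $j$ lifts this to a (unique up to scalar) non-zero map $\pi \to \sigma\times\tau$, establishing (1); then Corollary \ref{cor integer for branching law equal layer} identifies the supporting layer as the $i$th, giving (2).

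For (3), I would apply the left analog of the identity above,
\[\mathrm{Hom}_{G_n}({}_j\Sigma(\pi), \sigma \times \tau) \cong \mathrm{Hom}_{G_{n+1-j}}\bigl({}^{[j]}\pi, (\sigma \times \tau)^{(j-1)}\bigr),\]
and Leibniz's rule on the right argument, which again leaves only $\sigma \times \tau^{(j-1)}$ (killed by cuspidal support of $\sigma$) and $\tau^{(j-i)}$ for $j \geq i$. By the left analog of Corollary \ref{cor integer for branching law equal layer}, part (3) reduces to showing that $\mathrm{Hom}_{G_{n+1-j}}({}^{[j]}\pi, \tau^{(j-i)})$ vanishes for $j < j^* := \mathrm{lev}(\pi)$ and is non-zero for $j = j^*$. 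For $j < j^*$, I would use Lemma \ref{lem multisegment simple q bz}: the simple subquotients of $\tau^{(j-i)}$ correspond to further right-truncations of the multisegment of $\tau$, while ${}^{[j]}\pi$ has cuspidal support coming from left-truncations of $\mathfrak{m}$; thickenedness together with Theorem \ref{thm change in highest derivative after d} shows these cuspidal supports disagree for $j < j^*$. For $j = j^*$, I would construct the required non-zero map by commuting the highest left derivative with the right derivative: the quotient $\pi^{[i]} \twoheadrightarrow \tau$ induces a map ${}^{[-]}(\pi^{[i]}) \twoheadrightarrow {}^{[-]}\tau$, and since $\pi$ is thickened we may identify ${}^{[-]}(\pi^{[i]}) \cong ({}^{[-]}\pi)^{[i]}$, giving the desired non-zero element of $\mathrm{Hom}({}^{[j^*]}\pi, \tau^{(j^*-i)})$ once reinterpreted through the Rankin--Selberg isomorphism.

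The main obstacle is the non-vanishing at $j = j^*$ in (3): one must verify that the map produced by commuting the highest left derivative past the right derivative genuinely detects the bottom layer ${}_{j^*}\Sigma(\pi)$. The commutation is valid because thickenedness forces $\mathrm{lev}({}^-\pi) = \mathrm{lev}({}^-\tau) = j^*$, so no level drops; the bookkeeping is carried out by tracking $\mathfrak{hd}^L$ under the removal process of Section \ref{ss highest derivative multisegment}, which shows that the induced map persists under the identifications. The remaining combinatorial verification, namely that the produced layer of $\tau^{(j^*-i)}$ is precisely $\nu^{-1/2}\cdot{}^-\langle \mathfrak{m}\rangle$, can be checked segment by segment using the thickened hypothesis.
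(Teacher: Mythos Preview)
Your argument for (1) and (2) is correct and matches the paper.

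For (3) there is both an unnecessary detour and a genuine gap. The detour first: once you have shown $\mathrm{Hom}({}_j\Sigma(\pi),\sigma\times\tau)=0$ for every $j<j^*$, the non-vanishing at $j=j^*$ is automatic from (1). Since $j^*=\mathrm{lev}(\pi)$, one has ${}_{j^*}\Lambda(\pi)=0$, so the non-zero map $\pi\to\sigma\times\tau$ must already be non-zero on ${}_{j^*-1}\Lambda(\pi)$; a short exact sequence chase through $\pi/{}_{j^*-1}\Lambda(\pi)$ (whose layers are the ${}_j\Sigma$ for $j<j^*$) gives this immediately. So your construction via commuting highest derivatives is unnecessary, and in fact the map you build lives at the wrong rank: it is a map $({}^{[j^*]}\pi)^{[i]}\to{}^{[j^*]}\tau$ of $G_{n+1-j^*-i}$-modules, whereas what is needed is a map ${}^{[j^*]}\pi\to\tau^{(j^*-i)}$ of $G_{n+1-j^*}$-modules. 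The vague invocation of a ``Rankin--Selberg isomorphism'' does not bridge this.

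The gap is in the vanishing argument itself. Theorem~\ref{thm change in highest derivative after d} is about $D_\Delta$-derivatives and the removal process on $\mathfrak{hd}(\pi)$; it has nothing to do with the BZ derivatives appearing here. Nor does a bare cuspidal-support comparison suffice: what one actually needs is that a non-zero Hom between a subquotient of ${}^{[j]}\pi$ and a subquotient of $\tau^{(j-i)}$ forces an equality of \emph{multisegments}, not merely of cuspidal supports. The paper filters ${}^{[j]}\pi$ by quotients of $\theta(\zeta(\mathfrak n))^\vee$ with $\mathfrak n\in{}^{[j]}\mathfrak m$, and $\tau^{(j-i)}$ by submodules of $\zeta(\mathfrak n')$ with $\mathfrak n'$ in a set $\mathfrak M$ of double right-truncations of $\mathfrak m$; a non-zero Hom then forces $\mathfrak n=\mathfrak n'$. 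The decisive combinatorial step counts $N_c=|\mathfrak n_{a=\nu^{-1/2+c}\rho}|$ along each $\le$-minimal cuspidal line $\rho$ and uses thickenedness to bound $N_c\le|\mathfrak m_{a=\nu^{c-1}\rho}|$; an induction on $c$ then forces $\mathfrak n=\{{}^{[-]}\Delta_1,\ldots,{}^{[-]}\Delta_r\}$, i.e.\ $j=j^*$. This is exactly where thickenedness enters, and it is not captured by the references you cite.
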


\begin{proof}
We use the notations in Section \ref{ss basic criterion}. We first prove (1) and (2). Let $\pi'=\sigma \times \tau$. The standard argument in BZ filtration gives that (e.g. see the argument in \cite[Proposition 2.5]{Ch21}): for all $i'<i$, and for all $k \geq 0$,
\[  \mathrm{Ext}^k_{G_{n+1-i'}}(\pi^{[i']}, {}^{(i'-1)}\pi') = 0 .
\]
Thus, a long exact sequence argument gives that:
\[  \mathrm{Hom}_{G_{n}}(\pi, \pi') \cong \mathrm{Hom}_{G_n}(\Lambda_{i-1}(\pi), \pi') .
\]
Now the latter Hom is non-zero since 
\[  \mathrm{Hom}_{G_n}(\Sigma_i(\pi), \pi') \cong \mathrm{Hom}_{G_{n-i+1}}(\pi^{[i]}, {}^{(i-1)}\pi') \cong \mathrm{Hom}_{G_{n-i+1}}(\pi^{[i]}, \tau) \neq 0. 
\]
Then a long exact sequence argument gives (1). Then, (2) also follows, for instance, by Corollary \ref{cor integer for branching law equal layer}.

Let $i^*=\mathrm{lev}(\pi)$. From (1), we would have that $\mathrm{Hom}_{M'}({}_{i^*}\Sigma(\pi), \tau \times \sigma) \neq 0$ and (3) if we can show the following claim:\\

\noindent
{\it Cliam:} For $j<i^*$, 
\[ \mathrm{Hom}_{G_n}({}_j\Sigma(\pi),  \tau \times \sigma) =0 .
\]

\noindent
{\it Proof of claim:} Suppose the Hom is non-zero for some $j<i^*$. Then, by the Frobenius reciprocity,
\[ \mathrm{Hom}_{G_{n-j+1}}({}^{[j]}\pi, \tau^{(k)}) \neq 0 ,
\]
where $k=j-i$. Now set $\mathfrak m =\left\{ \Delta_1, \ldots, \Delta_r \right\}$ for the multisegment associated to $\pi$. Then ${}^{[j]}\pi$ admits a filtration with successive quotients to be some quotients of $\theta(\zeta(\mathfrak n))^{\vee}$ for $\mathfrak n \in {}^{[j]}\mathfrak m$ by using the surjection $\theta(\zeta(\mathfrak n))^{\vee}\twoheadrightarrow \pi$ and the geometric lemma (see the proof of \cite[Lemma 7.3]{Ch21} again). 

Similarly, set 
\[   \mathfrak M= \left\{ \Delta_1^{\#_p}, \ldots, \Delta_r^{\#_p}: \forall p,\ \#_p=[--], [-], \mbox{ or } [0], \sum_{p:\ \#_p=[--]}2n_p+\sum_{q:\ \#_q=[-]}n_q=j \right\}, 
\]
where $[a,b]_{\rho}^{[--]}=[a+\frac{1}{2} ,b-\frac{3}{2}]_{\rho}$. (Note that the definition is well-defined for our situation by using the thickening condition.) There is a filtration on $\tau^{(k)}$ with successive quotients isomorphic to some submodules of $\zeta(\mathfrak n')$ for some $\mathfrak n' \in \mathfrak M$. 

Thus, by a standard argument (see e.g. \cite[Proposition 2.3]{Ch22}), the condition that $\mathrm{Hom}_{G_{n-j}}({}^{[j]}\pi, \tau^{(k)})\neq 0$ implies $\mathfrak n =\mathfrak n'$ for some $\mathfrak n \in {}^{[j]}\mathfrak m$ and $\mathfrak n' \in \mathfrak M$. Now it suffices to show that the last equation is not possible, which will be proceeded combinatorially as follows. 

Suppose $\mathfrak n=\mathfrak n'$ for some $\mathfrak n \in {}^{[j]}\mathfrak m$ and $\mathfrak n' \in \mathfrak M$. We fix any $\leq$-minimal $\rho$ in $\mathrm{csupp}(\mathfrak m)$. Let 
\[   N_c= |\mathfrak n_{a=\nu^{-1/2+c}\rho}| , \quad  N_c' =|\mathfrak n'_{a=\nu^{-1/2+c}\rho}| .
\]
It follows from the definition of $\mathfrak M$  that
\[  N_c'\leq |\mathfrak m_{a=\nu^{c-1}\rho}| 
\] 
and so $N_c \leq |\mathfrak m_{a=\nu^{c-1}\rho}|$ (by $\mathfrak n=\mathfrak n'$). 

Then, we have $N_0=0$, $N_1\leq |\mathfrak m_{a=\rho}|$, and more generally, $N_c \leq|\mathfrak m_{a=\nu^{c-1}\rho}|$. Then $N_0=0$ implies that the segments in $\mathfrak m_{a=\rho}$ are shifted and truncated to get segments in $\mathfrak n$. Hence, there are at least $|\mathfrak m_{a=\rho}|$ segments in $\mathfrak n_{a=\nu^{1/2}\rho}$ (by the thickening condition). Then this forces $N_1=|\mathfrak m_{a=\rho}|$ and then this implies that all segments in $\mathfrak m_{a=\nu\rho}$ are shifted and truncated to get segments in $\mathfrak n$. Inductively, with varying different $\leq$-minimal $\rho$,  we deduce that 
\[  \mathfrak n =\left\{ {}^{[-]}\Delta_1, \ldots, {}^{[-]}\Delta_r \right\}.\]
However, this contradicts that $j<i^*$. This proves the claim.
\end{proof}

\subsection{Proof of Theorem \ref{thm simple quotient branching law} for general case} \label{ss proof of branching from bz}

We now consider $\pi \in \mathrm{Irr}$ to be arbitrary. Let $\tau$ be a simple quotient of $\pi^{[i]}$ for some $i$. The proof for (1) and (2) is the same as the ones in Lemma \ref{lem branching bz der thickened}.

We consider (3). Let $\pi'$ be the irreducible (thickened) representation such that ${}^-\pi' \cong \pi$ and $\mathrm{lev}(\pi')=\mathrm{lev}(\pi)$. By Theorem \ref{thm deform segments} below (whose proof is independent of this section), there exists a simple quotient $\tau'$ of $\pi'{}^{[i]}$ such that ${}^-\tau' \cong \tau$ and $\mathrm{lev}(\tau')=\mathrm{lev}(\pi')$. 

Now choose a cuspidal representation $\sigma$ good to $\pi'$. We have that $\tau' \times \sigma$ is a simple quotient of $\pi'$ by Lemma \ref{lem projection factor through} (or by Lemma \ref{lem branching bz der thickened}). Now it follows from Proposition \ref{prop transitive smallest integer} that the unique quotient map from $\pi$ to $\tau \times \sigma$,  restricted to ${}_{j^*}\Sigma(\pi)$, is also non-zero. Since $j^*$ is the largest possible integer, we have that $\mathcal L_{rBL}(\pi, \tau\times \sigma)=j^*$.

\section{Deforming and thickening simple quotients of BZ derivatives} \label{s deform and thicken simple quotients}

Deforming a simple quotient of a BZ derivative is more straightforward by taking the highest BZ derivative. Thickening relies on the Zelevinsky combinatorial realization on the highest derivative \cite{Ze80} and then one does the integrals of some cuspidal representations steps by steps.

The idea is to establish some kind of commutativity of integrals and BZ derivatives. In priori, we do not have the exhaustion theorem for simple quotients for BZ derivatives at this point.

\subsection{A lemma for thickening}

We also later need the following result, which follows e.g. by an application of a result of M\'inguez \cite[Th\'eo\`eme 7.5]{Mi09} (also see Jantzen \cite{Ja07}): 

\begin{lemma} \label{lem repn after integral}
Let $\mathfrak m \in \mathrm{Mult}_{\rho}$ and let $\pi=\langle \mathfrak m \rangle$. Let $\rho \in \mathrm{Irr}^c$ such that any segment $\widetilde{\Delta}$ in $\mathfrak m$ satisfies that $a(\widetilde{\Delta})\not\cong \rho$. Let $\widetilde{\mathfrak m}$ be the multisegment associated to $I_{\rho}^k(\pi)$. Let $k$ be the number of segments $\widetilde{\Delta}$ in $\mathfrak m$ satisfying $\nu\cdot \rho \cong a(\widetilde{\Delta})$ i.e.
\[    k =|\mathfrak m_{a=\nu\cdot \rho}| .
\]
Then there is no segment $\widetilde{\Delta}$ in $\widetilde{\mathfrak m}$ such that $a(\widetilde{\Delta})\cong \nu^{-1}\rho$ i.e. $\widetilde{\mathfrak m}_{a=\nu^{-1}\rho}=\emptyset$.  
\end{lemma}

\begin{proof}
For a segment $\widetilde{\Delta}=[a,b]_{\rho}$, define ${}^+\widetilde{\Delta}=[a-1,b]_{\rho}$. Since $\mathfrak m_{a=\rho}=\emptyset$, the multisegment for $I_{\rho}^k(\pi)$ is obtained by replacing each segment $\widetilde{\Delta}$ in $\mathfrak m_{a=\nu^{-1}\rho}$ with ${}^+\widetilde{\Delta}$. As a result, there is no segment $\widetilde{\Delta}$ of the form $a(\widetilde{\Delta})\cong \nu^{-1}\rho$. 
\end{proof}

\begin{lemma} \label{lem repn after integral varepsilon}
We use the notations in Lemma \ref{lem repn after integral}. Let $\tau'$ be a simple quotient of $(I_{\rho}^k(\pi))^{(i)}$ (for some $i$). Then $\varepsilon_{\rho}(\tau')=k$.
\end{lemma}

\begin{proof}
By applying Lemmas \ref{lem repn after integral} and \ref{lem multisegment simple q bz}, one can then proceed to compute $\varepsilon_{\rho}(\tau')=k$. 
\end{proof}

\begin{lemma} \label{lem simple quotient after integral}
%Let $\mathfrak m \in \mathrm{Mult}_{\rho}$ and let $\pi=\langle \mathfrak m \rangle$. Let $\rho \in \mathrm{Irr}^c$ such that any segment $\widetilde{\Delta}$ in $\mathfrak m$ satisfies that $a(\widetilde{\Delta})\not\cong \rho$. Let $k$ be the number of segments $\widetilde{\Delta}$ in $\mathfrak m$ satisfying $\nu\cdot \rho \cong a(\widetilde{\Delta})$ i.e.
%\[    k =|\mathfrak m_{a=\nu\cdot \rho}| .\]
We use the notations in Lemma \ref{lem repn after integral}.  Let $\tau$ be a simple quotient of $\pi^{(i)}$ (for some $i$). Then $I_{\rho}^k(\tau)$ is a simple quotient of $(I_{\rho}^k(\pi))^{(i)}$.
\end{lemma}

\begin{proof}
Let $n=n(\pi)$ and let $p_a=k\cdot n(\rho)$. Let $\tau$ be an irreducible quotient of $\pi^{(i)}$.  We have surjections:
\[ \xymatrix{ (\rho^{\times k} \times \pi)^{(i)} \ar@{->>}[r] & \rho^{\times k} \times \pi^{(i)} \ar@{->>}[r] & \rho^{\times k} \times \tau \\
     (I^k_{\rho}(\pi))^{(i)} \ar@{^{(}->}[u]  & & I_{\rho}^{k}(\tau) \ar@{^{(}->}[u] \\
}.
\]
 Here the vertical injections are induced from the embeddings in Definition \ref{def integral derivative}.

For simplicity, set $\widetilde{\pi}=I_{\rho}^{k}(\pi)$ and $\widetilde{\tau}=I_{\rho}^{k}(\tau)$. Since $\varepsilon^L_{\rho}(\pi)=0$, we have
\begin{align} \label{eqn epsilon integeral 1}
  \varepsilon^L_{\rho}(\widetilde{\pi}) = k
\end{align}
(see e.g. \cite[Section 6]{Mi09}). By Lemma \ref{lem multisegment simple q bz}, we have that $\varepsilon^L_{\rho}(\tau)=0$, and so we also have:
\begin{align} \label{eqn epsilon integeral 2}
\varepsilon^L_{\rho}(\widetilde{\tau})=k .
\end{align}

On the other hand, we have a short exact sequence:
\[ \xymatrix{ 0 \ar[r] & (\widetilde{\pi})^{(i)} \ar[r] & (\rho^{\times k}\times \pi)^{(i)} \ar[r] & Q^{(i)} \ar[r] & 0 \\
    },
\]
where $Q$ is the cokernel of the embedding $\widetilde{\pi} \hookrightarrow \rho^{\times k}\times\pi$. We also observe that, any factor $\omega$ in $Q$ satisfies $\omega_{\bar{N}}$ cannot be of the form 
\[ (*)\quad \rho^{\times k} \boxtimes \omega'
\]
for some $\omega'$, and so in other words,
\[  \varepsilon^L_{\rho}(\omega) < k  . \]
 Here $\bar{N} \subset G_n$  takes all the matrices of the form $\begin{pmatrix} I_{p_a} & * \\ & I_{n-p_a} \end{pmatrix}$. 
%This implies $(Q_{\bar{N}})^{(i)}=(Q^{(i)})_{N'}$, where the first $i$-th derivative is taken on the second factor on a $G_{p_a}\times G_{n-p_a}$-module. Here $N'\subset G_{n-i}$ takes all the matrices of the form $\begin{pmatrix} I_{p_a} & * \\& I_{n-i-p_a} \end{pmatrix}$. 

Now we see that 
\begin{itemize}
\item[(1)] $Q^{(i)}$ cannot contain the factor $\widetilde{\tau}$ 
\end{itemize}
since $\widetilde{\tau}_{N'}$ contains a factor of the form (*).

One can similarly conclude the following:
\begin{itemize}
\item[(2)] any simple composition factor $\omega$ of the cokernel $\widetilde{\tau} \hookrightarrow \rho^{\times k} \times \tau$ has $\varepsilon_{\rho}^L(\omega)<k$.
\end{itemize}

On the other hand, we have:
\begin{itemize}
\item[(3)] any simple quotient $\omega$ of $\widetilde{\pi}^{(i)}$ has $\varepsilon_{\rho}^L(\omega)=k$ by Lemma \ref{lem repn after integral varepsilon}; and
\item[(4)] similarly, $\varepsilon_{\rho}^L(\widetilde{\tau})=k$ by (\ref{eqn epsilon integeral 2}).
\end{itemize}
Let $\mathrm{pr}$ be the composition of horizontal maps in the toppest diagram in this proof. By (1) and (4), the image of $\mathrm{pr}$ for $\widetilde{\pi}^{(i)}$ contains the factor $\widetilde{\tau}$ and hence is non-zero. Now, (2) and (3) imply that some simple quotient of $\widetilde{\pi}^{(i)}$ must be mapped to $\widetilde{\tau}$ under $\mathrm{pr}$, as desired.
\end{proof}

\subsection{Deformation of derivatives}

%It is a standard result that $\lambda$ has unique irreducible submodule, which satisfies the required property. (Moreover, any other composition factor $\tau$ satisfies $(\mathrm{Jac}_{\nu^a \rho})^k(\tau)=0$, which follows from an application of geometric lemma.)

We now explain a construction of simple quotients or submodules of Bernstein-Zelevinsky derivatives by taking the highest BZ derivative. The key idea for Theorem \ref{thm deform segments}(2) is to repeatedly apply Lemma \ref{lem simple quotient after integral}.

\begin{theorem} \label{thm deform segments}
Let $\pi \in \mathrm{Irr}(G_n)$ be thickened.
\begin{enumerate}
\item For any irreducible submodule $\tau$ of $\pi^{(i)}$, $\tau^-$ is also an irreducible submodule of $(\pi^{-})^{(i)}$. Moreover, $\mathrm{lev}(\tau)=\mathrm{lev}(\pi)$.
\item For any irreducible submodule $\tau'$ of $(\pi^{-})^{(i)}$, there is an irreducible submodule $\tau$ of $\pi^{(i)}$ such that $\tau^-\cong \tau'$. 
\end{enumerate}
\end{theorem}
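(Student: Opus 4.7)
Write $\pi \cong \langle \mathfrak m \rangle$ with $\mathfrak m = \{\Delta_1, \ldots, \Delta_r\}$ thickened, so each $\Delta_p$ has relative length at least two. Then $\mathrm{lev}(\pi) = r$ by Zelevinsky, and $\pi^- \cong \langle \mathfrak m^- \rangle$ where $\mathfrak m^- = \{\Delta_1^-, \ldots, \Delta_r^-\}$ still has $r$ non-empty segments. The key combinatorial observation is that right-truncation and $i$-truncation of multisegments commute, giving a bijection $\mathfrak m^{(i)} \leftrightarrow (\mathfrak m^-)^{(i)}$ via $\mathfrak n \mapsto \mathfrak n^-$. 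Thickening guarantees that every $\mathfrak n \in \mathfrak m^{(i)}$ still has $r$ non-empty segments, which combined with \leref{em multisegment simple q bz} immediately yields the level statement $\mathrm{lev}(\langle \mathfrak n\rangle) = r = \mathrm{lev}(\pi)$ in part (1), and identifies $\langle \mathfrak n\rangle^- \cong \langle \mathfrak n^-\rangle$ as the candidate simple constituent of $(\pi^-)^{(i)}$ corresponding to $\langle \mathfrak n\rangle$.

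To establish part (2), the plan is to iteratively apply the right-integral analogue of \leref{em simple quotient after integral}, rebuilding $\pi$ from $\pi^-$ one cuspidal endpoint at a time. Pick a $\leq$-maximal $\rho$ in $\{b(\Delta) : \Delta \in \mathfrak m\}$ and set $k = |\mathfrak m_{b=\rho}|$; then $(\mathfrak m^-)_{b=\rho} = \emptyset$ and $|(\mathfrak m^-)_{b=\nu^{-1}\rho}| = k$, so the hypotheses of the lemma are met. A direct multisegment computation via the right analogue of \leref{em repn after integral} shows that $(I^R_\rho)^k$ restores exactly the segments of $\mathfrak m$ ending at $\rho$, leaving the shorter segments of $\mathfrak m^-$ untouched. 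Iterating this procedure over the distinct endpoints of $\mathfrak m$ in decreasing order of $\leq$ rebuilds $\pi$ from $\pi^-$, and each step transfers simple constituents of the intermediate derivatives coherently, ultimately producing a simple constituent $\tau$ of $\pi^{(i)}$ with $\tau^- \cong \tau'$.

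For part (1), given a simple submodule (equivalently, via the Gelfand-Kazhdan involution $\theta$, a simple quotient) $\tau$ of $\pi^{(i)}$, the parametrization gives $\tau \cong \langle \mathfrak n \rangle$ with $\mathfrak n \in \mathfrak m^{(i)}$, and the candidate simple constituent $\tau^- \cong \langle \mathfrak n^- \rangle$ of $(\pi^-)^{(i)}$ is determined combinatorially. That this candidate indeed appears as a simple submodule of $(\pi^-)^{(i)}$ will follow by counting: part (2) shows that every simple constituent of $(\pi^-)^{(i)}$ arises as $\tau^-$ for some simple constituent $\tau$ of $\pi^{(i)}$, while the multisegment bijection forces the assignment $\tau \mapsto \tau^-$ to be injective on isomorphism classes; since both sides have matching finite sets of candidates, the assignment is a bijection, and in particular every $\tau^-$ appears. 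The exchange between submodules and quotients is handled via $\theta$, which preserves thickening and intertwines left and right derivatives.

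The main obstacle is upgrading the combinatorial matching of multisegment parameters to genuine module-theoretic statements about simple constituents of the (in general non-semisimple) derivatives $\pi^{(i)}$ and $(\pi^-)^{(i)}$. \leref{em simple quotient after integral} is the precise bridge for this upgrade, and its iterated application in part (2) is the technical heart of the argument; the thickening hypothesis is what allows the iteration to terminate cleanly without collapsing segments at any intermediate stage, and what simultaneously guarantees that the map $\tau \mapsto \tau^-$ is well-defined and bijective at the level of isomorphism classes.
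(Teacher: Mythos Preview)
Your approach to part (2) is essentially the paper's: iterate \leref{em simple quotient after integral} to rebuild $\pi$ from its highest derivative one cuspidal at a time. The paper works with left integrals $I_\rho$ and ${}^-\pi$, adding back the $a$-endpoints in increasing order; you propose the symmetric right-integral version rebuilding $\pi$ from $\pi^-$ via the $b$-endpoints. Either works, and the paper also passes to the quotient formulation (as does the underlying lemma), handling the submodule statement by the same duality you invoke.

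Your argument for part (1), however, has a genuine gap. Write $A$ for the isomorphism classes of simple submodules of $\pi^{(i)}$ and $B$ for those of $(\pi^-)^{(i)}$. Part (2) gives $B \subseteq \{\tau^- : \tau \in A\}$, and thickening makes $\tau \mapsto \tau^-$ injective on isomorphism classes of irreducibles. Together these yield only $|A| \geq |B|$. Your sentence ``both sides have matching finite sets of candidates'' appeals to the bijection $\mathfrak m^{(i)} \leftrightarrow (\mathfrak m^-)^{(i)}$, but \leref{em multisegment simple q bz} only says $A$ is contained in $\{\langle \mathfrak n\rangle : \mathfrak n \in \mathfrak m^{(i)}\}$, not equal to it, and likewise for $B$; so the ambient bijection of candidate multisegments says nothing about $|A|$ versus $|B|$. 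Without an independent inequality $|A| \leq |B|$, you cannot conclude $\{\tau^- : \tau \in A\} \subseteq B$, which is precisely what (1) asserts.

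The paper bypasses this entirely with a direct one-line argument: once $\mathrm{lev}(\tau) = \mathrm{lev}(\pi) =: i^*$ is known from \leref{em multisegment simple q bz}, exactness of Jacquet functors and the commutation of left and right BZ derivatives give
\[
\tau^{[-]} \cong {}^{[i^*]}\tau \hookrightarrow {}^{[i^*]}(\pi^{(i)}) \cong ({}^{[i^*]}\pi)^{(i)} = (\pi^{[-]})^{(i)},
\]
which is exactly the submodule claim of (1). No counting or appeal to (2) is needed.
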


\begin{remark} \label{rmk on deformation derivatives}
%\begin{enumerate}
%\item Any irreducible submodule $\tau$ of $\pi^{(i)}$ has level equal to that of $\pi$. To see this, we embed $\pi$ into $\zeta(\mathfrak m)$ for some $\mathfrak m \in \mathrm{Mult}$. In Theorem \ref{thm deform segments}, we saw that any submodule of $\pi^{(i)}$ is isomorphic to $\langle \mathfrak n \rangle$ for some $\mathfrak n$ whose segments of the form either $\Delta$ or $\Delta^-$ ($\Delta \in \mathfrak m$). The length $2$ condition implies that $\Delta^-$ is not empty and so we obtain the length of the submodules. 
%\item 
\begin{enumerate}
\item The condition of thickedness guarantees that the map from isomorphism classes of irreducible submodules of $\pi^{(i)}$ to isomorphism classes of irreducible submodules of $(\pi^-)^{(i)}$ given by $\tau \mapsto \tau^-$ is an injection. This is not true if we drop the condition.
\item One also compares with Proposition \ref{level preserving strong comm}, which uses derivatives instead of BZ derivatives.
\end{enumerate}
\end{remark}

\begin{proof}
We first prove (1). Let $\tau$ be an irreducible submodule of $\pi^{(i)}$. It follows from Lemma \ref{lem multisegment simple q bz} that 
\[ \mathrm{lev}(\tau)=\mathrm{lev}(\pi) .
\]
Then
\[ \tau^{[-]}\cong {}^{[i^*]}\tau \hookrightarrow {}^{[i^*]}(\pi^{(i)}) \cong ({}^{[i^*]}\pi)^{(i)}= (\pi^{[-]})^{(i)} ,
\]
where the commutativity between left and right derivatives follows from taking Jacquet functors in stages, see e.g. \cite[Lemma 5.8]{Of20} for more details.

We now consider (2). We shall prove a quotient version of the statement. Set $\pi'={}^-\pi$. Let $\mathfrak m'$ be the multisegment such that $\pi'\cong \langle \mathfrak m' \rangle$. 

Let $\rho_1', \ldots, \rho_r'$ be all the cuspidal representations appearing in $\left\{ a(\Delta): \Delta \in \mathfrak m' \right\}$. We shall arrange the cuspidal representations such that 
\[  \rho_i' \not> \rho_j'  \]
for any $i<j$. Let $\rho_j=\nu^{-1}\cdot \rho_j'$. Let 
\[  k_j=|\left\{ \Delta \in \mathfrak m': a(\Delta) \cong \rho_j' \right\}| .
\]

Let $\tau'$ be a simple quotient of $\pi^{(i)}$ for some $i$. Then, by Lemma \ref{lem simple quotient after integral}, $I_{\rho_1}^{k_1}(\tau')$ is a simple quotient of $(I_{\rho_1}^{k_1}(\pi'))^{(i)}$. But now using Lemma \ref{lem repn after integral}, one can apply Lemma \ref{lem simple quotient after integral} again for $I_{\rho_2}^{k_2}$. Repeatedly, we have that $(I_{\rho_r}^{k_r}\circ \ldots \circ I_{\rho_1}^{k_1})(\pi')^{(i)}$ has a simple quotient isomorphic to $I_{\rho_r}^{k_r}\circ \ldots I_{\rho_1}^{k_1}(\tau')$. Note that 
\[ (I_{\rho_r}^{k_r}\circ \ldots \circ I_{\rho_1}^{k_1})(\pi')\cong \pi, \quad {}^-(I_{\rho_r}^{k_r}\circ \ldots I_{\rho_1}^{k_1}(\tau')) \cong \tau' ,
\]
where the first isomorphism follows from again \cite[Th\'eor\`eme 7.5]{Mi09} and \cite[Theorem 8.1]{Ze80} and the second isomorphism follows similarly with the additional Lemma \ref{lem multisegment simple q bz}. Now shifting by $\nu^{-1}$, we then obtain (2).
\end{proof}

\begin{remark}
Even if $\pi$ is thickened, it is in general not true that a composition factor of $\pi^{(i)}$ has the same level as $\pi$ (c.f. Remark \ref{rmk on deformation derivatives}). For example, let $\mathfrak m=\left\{ [0,1], [0,1],[2,3] \right\}$ and let $\pi=\langle \mathfrak m \rangle$. In this case, $\pi=\langle [0,1]\rangle \times \langle \left\{ [0,1],[2,3]\right\} \rangle$, which is irreducible. Then $\pi^{(1)}$ admits a filtration with two successive quotients $\langle [0] \rangle \times \langle \left\{[0,1],[2,3]\right\}\rangle$ and $\langle [0,1] \rangle \times \langle [0]\rangle \times \langle [2,3]\rangle$. The second one has a composition factor $\langle [0,3] \rangle \times \langle [0] \rangle$, which has level $2$.  
\end{remark}

\begin{corollary} \label{cor unique highest derivative to give submodule}
Let $\pi \in \mathrm{Irr}$. Let $\tau$ be a simple quotient of $\pi^{(i)}$ for some $i$. Let $\pi'$ be the irreducible (thickened) representation such that ${}^-\pi' \cong \pi$ and $\mathrm{lev}(\pi)=\mathrm{lev}(\pi')$. Let $\mathfrak h$ be the highest left derivative multisegment of $\pi$. Then there is a unique representation $\tau'$ such that $D^L_{\mathfrak h}(\tau')=\tau$. For such $\tau'$, it satisfies $\mathrm{lev}(\tau')=\mathrm{lev}(\pi)$. 
\end{corollary}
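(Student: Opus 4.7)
The plan is to realize $\tau'$ as a simple quotient of $(\pi')^{(i)}$ and then verify the three assertions separately: the equation $D^L_{\mathfrak h}(\tau')=\tau$, the uniqueness of $\tau'$, and the level equality $\mathrm{lev}(\tau')=\mathrm{lev}(\pi)$. First I would pin down $\pi'$ explicitly using Zelevinsky's theory of highest derivatives \cite{Ze80}: writing $\pi\cong\langle\mathfrak m\rangle$, the lift $\pi'$ is $\langle\mathfrak m_+\rangle$ where $\mathfrak m_+$ is obtained from $\mathfrak m$ by prepending one cuspidal on the left of each segment. This is the unique irreducible satisfying ${}^-\pi'\cong\pi$ and $\mathrm{lev}(\pi')=\mathrm{lev}(\pi)$, and critically $\mathfrak m_+$ is thickened.

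For existence, I would invoke the quotient version of Theorem \ref{thm deform segments}(2), which is what is actually established in the proof of that theorem (the text there explicitly announces ``we shall prove a quotient version of the statement''). This yields a simple quotient $\tau'$ of $(\pi')^{(i)}$ with ${}^-\tau'\cong\tau$. Because $\pi'$ is thickened, Lemma \ref{lem multisegment simple q bz} forces every simple quotient of $(\pi')^{(i)}$ to have Zelevinsky parameter with exactly $|\mathfrak m_+|=\mathrm{lev}(\pi)$ segments, so $\mathrm{lev}(\tau')=\mathrm{lev}(\pi')=\mathrm{lev}(\pi)$, giving the level assertion. To upgrade the identification ${}^-\tau'\cong\tau$ to $D^L_{\mathfrak h}(\tau')\cong\tau$, I would identify $\mathfrak h=\mathfrak{hd}^L(\pi)$ with $\mathfrak{hd}^L(\tau')$; this follows from the combinatorics of the Zelevinsky parameters of $\pi$ and $\tau'$, both carrying the same $a$-points modulo the level-preserving operation relating them (or equivalently by a left-right transposed application of Lemma \ref{lem level preserve hd} once the level equality is in hand).

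For uniqueness, suppose $\tau'_1$ and $\tau'_2$ are irreducibles with $D^L_{\mathfrak h}(\tau'_i)\cong\tau$. Writing $\mathfrak h=\{\Delta_1,\dots,\Delta_r\}$ in an ascending order and applying Frobenius reciprocity segment by segment, each $\tau'_i$ embeds into $\mathrm{St}(\Delta_1)\times\cdots\times\mathrm{St}(\Delta_r)\times\tau$. The iterated left integral $I^L_{\mathfrak h}(\tau)$ is the unique irreducible submodule produced at each step, well-defined by the socle-irreducibility results of \cite{KKKO15,LM16}, so $\tau'_1\cong I^L_{\mathfrak h}(\tau)\cong\tau'_2$.

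The main obstacle will be the identification $\mathfrak{hd}^L(\tau')=\mathfrak h$ in the existence step, since the highest-left-derivative multisegment is not transparent from a general (non-thickened) Zelevinsky parameter. The cleanest route is probably to first establish the level equality $\mathrm{lev}(\tau')=\mathrm{lev}(\pi)$ directly from thickenedness of $\pi'$ via Lemma \ref{lem multisegment simple q bz}, and then read off the $a$-points of $\tau'$ from those of $\mathfrak m_+$, matching them to the $a$-points of $\mathfrak m$ by a single cuspidal shift that is invisible to $\mathfrak h$ once we account for the truncation defining ${}^-\pi'=\pi$.
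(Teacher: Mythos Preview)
Your proposal follows essentially the same route as the paper: construct $\tau'$ as a simple quotient of $(\pi')^{(i)}$ via Theorem~\ref{thm deform segments}(2), obtain the level equality from thickenedness of $\pi'$ and Lemma~\ref{lem multisegment simple q bz}, identify $\mathfrak{hd}^L(\tau')$ with $\mathfrak h$, and deduce uniqueness from the socle-irreducibility underlying the $I_\Delta$ operators. The paper's uniqueness argument is exactly your Frobenius-reciprocity-plus-socle argument, compressed to the phrase ``the uniqueness follows from the uniqueness of the operators $I_{\Delta}$.''

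The one place where your mechanism differs from the paper is the identification $\mathfrak{hd}^L(\tau')=\mathfrak h$. The paper does not read off $a$-points nor invoke Lemma~\ref{lem level preserve hd}; instead it argues that the surjection $(\pi')^{(i)}\twoheadrightarrow\tau'$ yields $\eta^L_{\Delta}(\tau')\leq\eta^L_{\Delta}(\pi')$ for every segment $\Delta$ (by an argument parallel to Lemma~\ref{lem right multi submulti}), and then the already-established level equality forces all these inequalities to be equalities, whence $\mathfrak{hd}^L(\tau')=\mathfrak{hd}^L(\pi')$. Your appeal to Lemma~\ref{lem level preserve hd} is not quite on the nose, since that lemma concerns a level-preserving \emph{integral} $I_\Delta$ rather than a quotient of a BZ derivative; however, the paper's $\eta^L$-inequality argument is precisely the analogue you want, and its proof structure (inequality from an embedding/surjection, then equality forced by matching levels) is the same spirit as Lemma~\ref{lem level preserve hd}. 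Your alternative route via the $a$-points of the Zelevinsky parameter would also work but is less clean than the $\eta^L$-comparison.
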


\begin{proof}
By Theorem \ref{thm deform segments}(2), there exists a simple quotient $\tau'$ of $\pi'{}^{(i)}$ such that ${}^-\tau' \cong \tau$.

We then have that
\[   \pi'{}^{(i)} \twoheadrightarrow \tau'.\]
 With a similar proof to Lemma \ref{lem right multi submulti}, we have that for all segments $\Delta$,
\[ \eta^L_{\Delta}(\tau') \leq \eta^L_{\Delta}(\pi') .\]
By using $\pi'$ is thickened and Lemma \ref{lem multisegment simple q bz},  $\mathrm{lev}(\tau')=\mathrm{lev}(\pi')$. This then implies that the above inequality has to be an equality. Hence we have $\mathfrak{hd}^L(\tau')=\mathfrak{hd}^L(\pi)$ (see \cite[Corollary 8.6]{Ch22+d} for an argument).

Now $D_{\mathfrak h}^L(\tau')\cong {}^-\tau'\cong \tau$, where the first isomorphism follows from \cite[Theorem 1.3]{Ch22+} and the second isomorphism follows from Theorem \ref{thm deform segments}(1). The uniqueness follows from the uniqueness of the operators $I_{\Delta}$, and $\mathrm{lev}(\tau')=\mathrm{lev}(\pi)$ follows from $\mathrm{lev}(\tau')=\mathrm{lev}(\pi')$, where the last equality on levels again follows from Lemma \ref{lem multisegment simple q bz}.
\end{proof}

\section{Exhaustion and relevance conditions} \label{s exhaustion relev conditions}

\subsection{Exhaustion condition}

We first define the exhaustion condition. \\

\noindent
({\bf Exhaustion Condition}) \label{thm exhaust bz derivatives} 
Let $\pi \in \mathrm{Irr}$. We say that the {\it exhaustion condition} holds for $\pi$ if for any $i$ and for any simple quotient $\tau$ of $\pi^{(i)}$, there exists a multisegment $\mathfrak m$ such that $D_{\mathfrak m}(\pi)\cong \tau$.

\begin{lemma} \label{lem producing mor exhuastion}
Suppose $\pi \in \mathrm{Irr}$ satisfies the exhaustion condition. For any $\sigma \in \mathrm{Irr}^c$ good to $\pi$, the exhaustion condition also holds for $\sigma \times \pi$.
\end{lemma}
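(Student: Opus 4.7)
The plan is to decompose $(\sigma\times\pi)^{(i)}$ using Leibniz's rule and reduce each piece to the exhaustion hypothesis for $\pi$. Write $k=n(\sigma)$. Since $\sigma$ is cuspidal, the only non-zero BZ derivatives of $\sigma$ are $\sigma^{(0)}=\sigma$ and $\sigma^{(k)}=\mathbf 1_{G_0}$, so Leibniz's rule gives a filtration of $(\sigma\times\pi)^{(i)}$ whose only non-zero successive subquotients are $\sigma\times\pi^{(i)}$ and (when $i\ge k$) $\pi^{(i-k)}$. The goodness of $\sigma$ makes the cuspidal supports of these two subquotients disjoint (one contains $\sigma$, the other does not), so they lie in distinct Bernstein blocks and the filtration splits; consequently any simple quotient $\tau$ of $(\sigma\times\pi)^{(i)}$ is a simple quotient of exactly one of the two factors.

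Suppose first that $\tau$ is a simple quotient of $\sigma\times\pi^{(i)}$. Because $\sigma$ is good, $\sigma\times(-)$ preserves irreducibility on the relevant block, so $\tau\cong\sigma\times\tau'$ for some simple quotient $\tau'$ of $\pi^{(i)}$. The exhaustion hypothesis for $\pi$ supplies $\mathfrak m$ with $D_{\mathfrak m}(\pi)\cong\tau'$. Every segment $\Delta\in\mathfrak m$ has cuspidal support in $\mathrm{csupp}(\pi)$, and $\sigma\notin\mathrm{csupp}_{\mathbb Z}(\pi)$, so $\Delta$ is disjoint from $\sigma$. A direct analysis of $(\sigma\times\pi)_N$ by the geometric lemma, combined with the uniqueness in Definition \ref{def integral derivative}, yields $D_\Delta(\sigma\times\pi)\cong\sigma\times D_\Delta(\pi)$; iterating gives $D_{\mathfrak m}(\sigma\times\pi)\cong\sigma\times D_{\mathfrak m}(\pi)\cong\tau$, as required.

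Suppose instead that $\tau$ is a simple quotient of $\pi^{(i-k)}$ (so $i\ge k$). The exhaustion hypothesis for $\pi$ supplies $\mathfrak m$ with $D_{\mathfrak m}(\pi)\cong\tau$. I would then set $\mathfrak n=\mathfrak m+[\sigma]$ and verify $D_{\mathfrak n}(\sigma\times\pi)\cong\tau$. By the same disjoint-cuspidal-support argument, $[\sigma]$ is unlinked with every segment in $\mathfrak m$, so Lemma \ref{lem unlinked comm der and integral basic} lets the derivatives commute:
\[ D_{\mathfrak n}(\sigma\times\pi)\cong D_{\mathfrak m}\circ D_{[\sigma]}(\sigma\times\pi). \]
The identification $D_{[\sigma]}(\sigma\times\pi)\cong\pi$ follows from the geometric lemma applied to $(\sigma\times\pi)_N$ where $N$ is the unipotent radical corresponding to the partition $(n(\pi),k)$: one layer produces $\pi\boxtimes\sigma$, and goodness makes it a direct summand, so uniqueness forces $D_{[\sigma]}(\sigma\times\pi)\cong\pi$. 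Composing with $D_{\mathfrak m}(\pi)\cong\tau$ completes the argument.

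The proof is essentially routine; the only structural input beyond the exhaustion hypothesis for $\pi$ is the disjoint-cuspidal-support principle supplied by the goodness of $\sigma$, which simultaneously underlies the splitting of the Leibniz filtration and the commutation $D_\Delta(\sigma\times\pi)\cong\sigma\times D_\Delta(\pi)$.
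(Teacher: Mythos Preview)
Your proof is correct and follows the same approach as the paper, which simply says ``This follows by using Leibniz's rule.'' You have spelled out the details: the splitting of the Leibniz filtration by Bernstein decomposition, and the commutation $D_\Delta(\sigma\times\pi)\cong\sigma\times D_\Delta(\pi)$ and $D_{[\sigma]}(\sigma\times\pi)\cong\pi$ coming from goodness.
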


\begin{proof}
This follows by using Leibniz's rule.
\end{proof}

%\subsection{An inductive construction of exhaustion}

%\begin{lemma}
%Let $\pi, \lambda \in \mathrm{Alg}(G_n)$ be admissible such that $\lambda \twoheadrightarrow \pi$. Let $\tau \in \mathrm{Irr}(G_{n-i})$ such that 
%\[  \mathrm{dim}~\mathrm{Hom}_{G_{n-i}}(\pi^{(i)}, \tau) =\mathrm{dim}~\mathrm{Hom}_{G_{n-i}}(\lambda^{(i)}, \tau) =1 .
%\]
%Let $\Delta_1, \ldots, \Delta_r$ be a sequence of segments in an ascending order such that $l_a(\Delta_1)+\ldots +l_a(\Delta_r)=i$ and there is a non-zero map 
%\[  \lambda_{N_i} \rightarrow   \tau \boxtimes (\mathrm{St}(\Delta_r)\times \ldots \times \mathrm{St}(\Delta_1)) .
%\]
%\end{lemma}

%\begin{lemma} \label{lem admissible exhuast construction}
%Let $\omega \in \mathrmm{Irr}$. Let $\rho \in \mathrm{Irr}^c$ such that $\rho' \not>\rho$ for any $\rho' \in \mathrm{csupp}(\omega)$. Let $\mathfrak p$ be a strongly $R$-$\rho$-saturated multisegment and let $\pi=I^R_{\mathfrak p}(\omega)$. Let $\tau$ be an irreducible simple quotient of $\pi^{(i)}$ for some $i$ such that $\tau \cong D_{\mathfrak m}(\omega)$ for some multisegment $\mathfrak m$. Then $\tau \cong D_{\mathfrak m+\mathfrak p}(\pi)$. 
%\end{lemma}%
%%%%%%%%%

\subsection{A technical consequence from the exhaustion condition}

\begin{lemma} \label{lem admissible exhaust form}
Suppose the exhaustion condition holds for some $\pi \in \mathrm{Irr}$. Let $\rho$ be a $\leq$-maximal element in $\mathrm{csupp}(\pi)$. Let $\mathfrak p=\mathfrak{mxpt}(\pi, \rho)$ (see Section \ref{ss multisegment for eta}). Let $\omega=D_{\mathfrak p}(\pi)$. Let $\mathfrak n \in \mathrm{Mult}$ be Rd-minimal to $\omega$. Let $i=l_a(\mathfrak n+\mathfrak p)$. Then $D_{\mathfrak n}(\omega)$ is also a simple quotient of $\pi^{(i)}$ if and only if $\mathfrak n+\mathfrak p$ is also minimal to $\pi$.
\end{lemma}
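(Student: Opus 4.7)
The strategy hinges on the $\leq$-maximality of $\rho$ together with the strong R-$\rho$-reducedness of $\omega = D_{\mathfrak p}(\pi)$ (which holds since $\mathfrak p = \mathfrak{mxpt}(\pi,\rho)$ exhausts the $\rho$-endpoint derivatives of $\pi$). Any $\mathfrak n$ with $D_{\mathfrak n}(\omega)\neq 0$ must avoid segments with $b$-point $\cong\rho$, and the $\leq$-maximality of $\rho$ in $\mathrm{csupp}(\pi)\supseteq\mathrm{csupp}(\omega)$ forces every segment in $\mathfrak n$ to have $b$-point strictly less than $\rho$. Consequently, in any ascending ordering of $\mathfrak n+\mathfrak p$, the segments of $\mathfrak n$ precede those of $\mathfrak p$, and no cuspidal representation in $\mathrm{csupp}(\mathfrak n)$ can equal $\rho$.

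For the \emph{if} direction, assume $\mathfrak n+\mathfrak p$ is admissible to $\pi$. By the ascending-order observation, $D_{\mathfrak n+\mathfrak p}(\pi)=D_{\mathfrak p}(D_{\mathfrak n}(\pi))\neq 0$. The multisegment $\mathfrak p$ is stable under intersection-union (all its segments share the common $b$-endpoint $\rho$), hence is Rd-minimal to $\pi$. After Rd-minimizing $\mathfrak n+\mathfrak p$ with respect to $\pi$ and applying Lemma \ref{lem commute and minimal} together with Corollary \ref{cor minimal derivative in any order}, one obtains the identification
\[
D_{\mathfrak n+\mathfrak p}(\pi)\cong D_{\mathfrak n}(D_{\mathfrak p}(\pi))=D_{\mathfrak n}(\omega).
\]
Since $l_a(\mathfrak n+\mathfrak p)=i$ and this derivative is non-zero, the standard BZ realization of derivatives as simple quotients of BZ derivatives (obtained by iterating the embedding $D_{\Delta}(\pi)\boxtimes\mathrm{St}(\Delta)\hookrightarrow\pi_N$ and applying the Whittaker projection on the $\mathrm{St}(\Delta)$-factor) produces $D_{\mathfrak n}(\omega)$ as a simple quotient of $\pi^{(i)}$.

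For the \emph{only if} direction, assume $D_{\mathfrak n}(\omega)$ is a simple quotient of $\pi^{(i)}$. The exhaustion condition yields a multisegment $\mathfrak m$, which we may take to be Rd-minimal to $\pi$ by intersection-union minimization, with $D_{\mathfrak m}(\pi)\cong D_{\mathfrak n}(\omega)$. Comparing cuspidal supports gives $\mathrm{csupp}(\mathfrak m)=\mathrm{csupp}(\mathfrak p)+\mathrm{csupp}(\mathfrak n)$, and the rank drop yields $l_a(\mathfrak m)=i=l_a(\mathfrak n+\mathfrak p)$. The crucial step is to show $\mathfrak m_{b=\rho}=\mathfrak p$. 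Since $\rho$ is $\leq$-maximal, the segments in $\mathfrak m_{b=\rho}$ are precisely the $\preceq^R$-maximal segments of $\mathfrak m$, so the Rd-minimality of $\mathfrak m$ places the triples corresponding to segments of $\mathfrak m_{b=\rho}$ in the setting of $R$-reduced triples; iterating Lemma \ref{lem delta reduced subset} gives $\mathfrak m_{b=\rho}\subseteq\mathfrak{mxpt}(\pi,\rho)=\mathfrak p$. By $\rho$-maximality no $\rho$ appears in $\mathrm{csupp}(\mathfrak n)$, so the $\rho$-multiplicity on both sides of $\mathrm{csupp}(\mathfrak m)=\mathrm{csupp}(\mathfrak p)+\mathrm{csupp}(\mathfrak n)$ forces $|\mathfrak m_{b=\rho}|=|\mathfrak p|$, yielding the equality $\mathfrak m_{b=\rho}=\mathfrak p$. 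Writing $\mathfrak m'=\mathfrak m-\mathfrak p$, the $\rho$-maximality-based ordering together with Lemma \ref{lem commute and minimal} gives $D_{\mathfrak m'}(\omega)\cong D_{\mathfrak n}(\omega)$, and the uniqueness in Theorem \ref{thm minimality of d and i} (applied after minimizing $\mathfrak m'$ with respect to $\omega$) identifies $\mathfrak m'=\mathfrak n$. Hence $\mathfrak n+\mathfrak p=\mathfrak m$ is admissible to $\pi$.

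The main obstacle is the identification $\mathfrak m_{b=\rho}=\mathfrak p$ in the only-if direction: the exhaustion condition a priori only guarantees \emph{some} $\mathfrak m$ with the correct derivative, and pinning down its $\rho$-endpoint part requires simultaneously exploiting the $\leq$-maximality of $\rho$, the Rd-minimality that can be forced on $\mathfrak m$, and the maximal characterization of $\mathfrak{mxpt}(\pi,\rho)$ through Lemma \ref{lem delta reduced subset}.
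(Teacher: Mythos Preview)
Your approach matches the paper's, but there is a misstatement in the only-if direction that you should fix. Lemma~\ref{lem delta reduced subset} says that for an R-reduced triple $(\mathfrak m,\Delta,\pi)$ one has $\mathfrak{mx}(\pi,\Delta)\subset\mathfrak m$; applied here (with $\Delta$ the longest segment in $\mathfrak m_{b=\rho}$, which is $\preceq^R$-maximal since $\rho$ is $\leq$-maximal) it yields $\mathfrak p\subseteq\mathfrak m$, hence $\mathfrak p\subseteq\mathfrak m_{b=\rho}$ --- the \emph{opposite} inclusion to what you wrote. Your cardinality argument then correctly upgrades this to $\mathfrak m_{b=\rho}=\mathfrak p$, so the argument survives once the direction is corrected.

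The paper's proof is in fact slightly leaner at this step: once $\mathfrak p\subset\mathfrak m$ is known, Lemma~\ref{lem commute and minimal} directly gives that $\mathfrak m-\mathfrak p$ is Rd-minimal to $\omega$ with $D_{\mathfrak m-\mathfrak p}(\omega)\cong D_{\mathfrak m}(\pi)\cong D_{\mathfrak n}(\omega)$, and uniqueness (Theorem~\ref{thm minimality of d and i}) forces $\mathfrak m-\mathfrak p=\mathfrak n$. There is no need to first prove the equality $\mathfrak m_{b=\rho}=\mathfrak p$, so your cuspidal-support counting step, while correct, is unnecessary. For the if direction the paper first argues (by the same mechanism) that $\mathfrak n+\mathfrak p$ is itself minimal to $\pi$, after which Lemma~\ref{lem commute and minimal} gives $D_{\mathfrak n+\mathfrak p}(\pi)\cong D_{\mathfrak n}\circ D_{\mathfrak p}(\pi)=D_{\mathfrak n}(\omega)$ immediately; your route through minimizing first is a bit more indirect but arrives at the same place.
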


\begin{proof}
For the only if direction, suppose $D_{\mathfrak n}(\omega)$ is a simple quotient of a BZ derivative of $\pi$. Then the exhaustion condition implies that there exists a multisegment $\mathfrak n'$ such that $D_{\mathfrak n'}(\pi) \cong D_{\mathfrak n}(\omega)$. We choose $\mathfrak n'$ to be the minimal one. Then by Lemma \ref{lem delta reduced subset}, $\mathfrak p \subset \mathfrak n'$. Now by Lemma \ref{lem commute and minimal}, we have that $\mathfrak n'-\mathfrak p$ is minimal to $\omega$ and $D_{\mathfrak n'-\mathfrak p}(\omega)\cong D_{\mathfrak n}(\omega)$. By the uniqueness, we have $\mathfrak n=\mathfrak n'-\mathfrak p$ and so $\mathfrak n'=\mathfrak n+\mathfrak p$ is admissible to $\pi$.

For the if direction, suppose $\mathfrak n+\mathfrak p$ is minimal. Then we have that 
\[  D_{\mathfrak n+\mathfrak p}(\pi) \cong D_{\mathfrak n}\circ D_{\mathfrak p}(\pi)=D_{\mathfrak n}(\omega)\]
by Lemma \ref{lem commute and minimal}. Now we have that $D_{\mathfrak n+\mathfrak p}(\pi)$ is a simple quotient of $\pi^{(i)}$ (see \cite[Proposition 1.2]{Ch22+}). 
\end{proof}

\subsection{Relevance condition}

We now introduce another condition: \\

\noindent
({\bf Relevance condition}) Let $\pi \in \mathrm{Irr}(G_{n+1})$ and $\pi' \in \mathrm{Irr}(G_n)$ with $\mathrm{Hom}_{G_n}(\pi, \pi')\neq 0$. We say that $(\pi, \pi')$ satisfies the {\it relevance condition} if $(\pi, \pi')$ is also a relevant pair.

\begin{proposition} \label{prop exhaustion thm from branching}
Fix an integer $n$. Suppose the relevance condition holds for all pairs of the form $(\omega \times \sigma, \pi')$ satisfying
\begin{itemize}
\item $\mathrm{Hom}_{G_n}(\omega \times \sigma, \pi')\neq 0$;
\item $\omega \in \mathrm{Irr}(G_m)$ for some $m \leq n$;
\item $\pi' \in \mathrm{Irr}(G_n)$;
\item $\sigma \in \mathrm{Irr}^c(G_{n+1-m})$ good to $\omega$ and $\nu^{-1/2}\pi'$. 
\end{itemize}
 Then the exhaustion condition holds for all $\pi \in \mathrm{Irr}(G_{n+1})$. 
\end{proposition}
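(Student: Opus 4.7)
The plan is to combine a branching law coming from the simple quotient $\tau$ with the relevance hypothesis, then repeatedly apply level-preserving commutativity together with the double-integral construction to extract the required derivative. Concretely, given a simple quotient $\tau$ of $\pi^{(i)}$ for some $i$, choose a cuspidal $\sigma \in \mathrm{Irr}^c(G_{i-1})$ that is good to $\nu^{1/2}\cdot \pi$. Then Theorem \ref{thm simple quotient branching law} supplies a nonzero
\[ f \in \mathrm{Hom}_{G_n}(\pi,\, \sigma\times \tau), \qquad \mathcal{L}_{lBL}(f)=\mathrm{lev}(\pi), \]
so the bottom layer of the left BZ filtration supports the branching law. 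This existence places us inside the scope of the relevance hypothesis applied to the pair $(\pi,\sigma\times\tau)$ (using the duality (\ref{eqn duality for restriction}) and Gelfand--Kazhdan to match the assumed form).

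Next, I would unpack the relevance into multisegments. The hypothesis gives a minimal strongly RdLi-commutative triple $(\mathfrak{m}_0,\mathfrak{n}_0,\nu^{1/2}\cdot\pi)$ with $D^R_{\mathfrak{m}_0}(\nu^{1/2}\cdot\pi) \cong D^L_{\mathfrak{n}_0}(\sigma\times\tau)$. Since $\sigma$ is cuspidal and good, a cuspidal-support argument lets one split off the $\sigma$-factor and rewrite this as an equality involving $\tau$ alone. Setting $\widetilde{\pi}:=\nu^{-1/2}\cdot\pi$ and $\mathfrak{h}:=\mathfrak{hd}^L(\widetilde{\pi})$, the symmetry of relevance (Theorem \ref{thm symmetric property of relevant}) combined with the LdRi--RdLi duality of Proposition \ref{prop dual multi strong comm} converts the resulting data into a multisegment $\mathfrak{n}$ satisfying $D^L_{\mathfrak{h}}\circ I^R_{\mathfrak{n}}(\widetilde{\pi})\cong \tau$. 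To promote this identity to a genuine commutation $D^L_{\mathfrak{h}}\circ I^R_{\mathfrak{n}}(\widetilde{\pi})\cong I^R_{\mathfrak{n}}\circ D^L_{\mathfrak{h}}(\widetilde{\pi})\cong \tau$, one needs $\mathrm{lev}(I^R_{\mathfrak{n}}(\widetilde{\pi}))=\mathrm{lev}(\widetilde{\pi})$, which follows from Corollary \ref{cor unique highest derivative to give submodule} applied to the unique lift. The commutation itself is then Proposition \ref{level preserving strong comm}.

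The final step is to invert $I^R_{\mathfrak{n}}$ using double integrals. By Theorem \ref{thm double integral}, there exists a multisegment $\mathfrak{n}'$ with
\[ {}^-\bigl(I^R_{\mathfrak{n}'}\circ I^R_{\mathfrak{n}}(\widetilde{\pi})\bigr) \cong \widetilde{\pi}, \qquad \mathrm{lev}\bigl(I^R_{\mathfrak{n}'}\circ I^R_{\mathfrak{n}}(\widetilde{\pi})\bigr) = \mathrm{lev}\bigl(I^R_{\mathfrak{n}}(\widetilde{\pi})\bigr). \]
The level equality triggers Proposition \ref{level preserving strong comm} once more, giving
\[ \nu\cdot\widetilde{\pi} \cong D^L_{\mathfrak{h}}\circ I^R_{\mathfrak{n}'}\circ I^R_{\mathfrak{n}}(\widetilde{\pi}) \cong I^R_{\mathfrak{n}'}\circ D^L_{\mathfrak{h}}\circ I^R_{\mathfrak{n}}(\widetilde{\pi}) \cong I^R_{\mathfrak{n}'}(\tau). \]
Applying $D^R_{\mathfrak{n}'}$ yields $\tau \cong D^R_{\mathfrak{n}'}(\nu^{1/2}\cdot \pi)$, and after the customary $\nu^{1/2}$-shift built into the relevance convention, this exhibits $\tau$ as $D^R_{\mathfrak{m}}(\pi)$ for an explicit multisegment $\mathfrak{m}$ obtained from $\mathfrak{n}'$.

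The main obstacle will be the construction and verification step producing $\mathfrak{n}$ from the relevance data so that it is correctly positioned with respect to the highest left derivative $\mathfrak{h}$. This requires combining the minimality of the strongly RdLi-commutative triple (Corollaries \ref{cor seq strong commut under minimal and sub} and \ref{cor seq strong commut under minimal}) with Lemma \ref{lem admissible exhaust form}, which is precisely the bridge between admissibility of a multisegment for $\pi$ and the appearance of $D_{\mathfrak{n}}(\cdot)$ as a simple quotient of a BZ derivative. A second delicate point is maintaining level preservation through the two commutation steps and the double-integral step; if any intermediate level jumps, the chain of isomorphisms breaks, so one must rely carefully on Corollary \ref{cor unique highest derivative to give submodule} and the level-preserving aspect of Theorem \ref{thm double integral} at each stage.
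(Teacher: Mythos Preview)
Your overall architecture---branching law from Theorem~\ref{thm simple quotient branching law}, then relevance, then level-preserving commutation, then double integrals---matches the paper's proof. However, there is a genuine gap at the point where you invoke the relevance hypothesis.

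The hypothesis assumes relevance only for pairs whose \emph{first} entry has the form $\omega\times\sigma$ with $\omega\in\mathrm{Irr}(G_m)$, $m\le n$, and $\sigma$ cuspidal and good. Your $\pi\in\mathrm{Irr}(G_{n+1})$ is not assumed to decompose this way, and the duality~(\ref{eqn duality for restriction}) together with Gelfand--Kazhdan does not fix this: it produces a $\mathrm{Hom}_{G_{n+1}}$ in which the second argument lies in $G_{n+1}$, so the resulting pair is still outside the hypothesis. The paper handles this by an extra reduction step: pick a $\le$-minimal $\rho\in\mathrm{csupp}(\pi)$, set $\mathfrak p=\mathfrak{mxpt}^L(\pi,\rho)$ and $\omega=D^L_{\mathfrak p}(\pi)$, use the surjection $\omega\times\mathrm{St}(\mathfrak p)\twoheadrightarrow\pi$, and then the (left version of the) standard trick to obtain $\mathrm{Hom}_{G_n}(\omega\times\sigma',\tau\times\sigma)\neq 0$ with $\mathcal L_{lBL}=j^*$. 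Only now is the first argument of the required shape.

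This missing reduction also causes a second gap. After applying relevance and stripping the cuspidals, one obtains a multisegment $\mathfrak n'$ Ld-minimal to $\widetilde\omega=\nu^{-1/2}\omega$; but to run the double-integral argument you need $\mathfrak n'+\nu^{-1/2}\mathfrak p$ to equal the \emph{highest} left derivative multisegment $\mathfrak h=\mathfrak{hd}^L(\widetilde\pi)$. This is not formal: the paper isolates it as a Claim and proves it by a multiplicity-one argument---if $D^L_{\mathfrak n'}(\widetilde\omega)\not\cong{}^{[-]}\pi$, one produces two distinct maps ${}^{[j^*]}\omega\to\tau^{(i_r^*)}$, contradicting the one-dimensionality coming from Corollary~\ref{cor integer determining branching law and layers}. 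Your appeal to Lemma~\ref{lem admissible exhaust form} here is misplaced (that lemma is used in the converse direction, in Proposition~\ref{prop refinement conjecture}); the correct input is the layer equality $\mathcal L_{lBL}(\omega\times\sigma',\tau\times\sigma)=j^*$ carried through from the reduction step. Once $\mathfrak n'=\mathfrak h-\nu^{-1/2}\mathfrak p$ is established, your Steps~4--5 (level preservation via Corollary~\ref{cor unique highest derivative to give submodule}, then Theorem~\ref{thm double integral} and Proposition~\ref{level preserving strong comm}) go through exactly as in the paper.
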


\begin{proof}
\noindent
{\bf Step 1: Reduction to a case in the hypothesis.} Fix an integer $i$. Let $\tau$ be a simple quotient of $\pi^{[i]}$. Then we can find $\sigma \in \mathrm{Irr}^c$ such that 
\[   \mathrm{Hom}_{G_n}(\pi, \tau\times \sigma) \neq 0
\]
and $\tau \times \sigma$ is irreducible. Let $j^*=\mathrm{lev}(\pi)$. Then $\mathcal L_{lBL}(\pi, \tau \times \sigma)=j^*$ by Theorem \ref{thm simple quotient branching law}. 

Now let $\rho$ be a $\leq$-minimal element in $\mathrm{csupp}(\pi)$. Let $\mathfrak p=\mathfrak{mxpt}^L(\pi, \rho)$. Let $\omega=D^L_{\mathfrak p}(\pi)$. Then, we have 
\[   \omega  \times \mathrm{St}(\mathfrak p) \twoheadrightarrow \pi .
\]
Thus $\mathrm{Hom}_{G_n}(\omega \times \mathrm{St}(\mathfrak p), \tau \times \sigma) \neq 0$. Then one can find $\sigma' \in \mathrm{Irr}^c$ by a refinement of the left version of Lemma \ref{lem standard trick first form} (also see Lemma \ref{lem construct one more non-zero element}) such that 
\[   \mathrm{Hom}_{G_n}(\omega \times \sigma', \tau \times \sigma)\neq 0
\]
and  
\begin{align} \label{eqn level in exhaustion}
\mathcal L_{lBL}(\omega \times \sigma', \tau \times \sigma)=j^*.
\end{align}

\ \\

\noindent
{\bf Step 2: Use the relevance condition.} Let $\widetilde{\omega}=\nu^{-1/2}\cdot \omega$. By using the relevance condition and Theorem \ref{thm symmetric property of relevant}, there exist multisegments $\mathfrak m$ and $\mathfrak n$ such that 
\[   D^L_{\mathfrak n}((\widetilde{\omega} \times \nu^{-1/2}\sigma')) \cong D^R_{\mathfrak m}(\tau \times \sigma) 
\]
and they satisfy the strong commutativity condition i.e. $(\mathfrak n, \mathfrak m, \nu^{-1/2}\cdot (\omega \times \sigma'))$ is a strongly LdRi-commutative triple by Proposition \ref{prop dual multi strong comm}.

By the cuspidal condition, we must have that $[\sigma'] \in \mathfrak n$ and $[\sigma] \in \mathfrak m$, and so we have:
\[   D^L_{\mathfrak n-[\nu^{-1/2}\cdot \sigma']}(\widetilde{\omega}) \cong D^R_{\mathfrak m-[\sigma]}(\tau) .
\]
Let $\mathfrak n'=\mathfrak n-[\nu^{-1/2}\cdot \sigma']$ and let $\mathfrak m'=\mathfrak m-[\sigma]$. By (the left version of) Proposition \ref{prop smallest integer in strong triple} and Corollary \ref{cor integer for branching law equal layer} with (\ref{eqn level in exhaustion}), we then have that 
\begin{align} \label{eqn matching level}
  l_a(\mathfrak n')=j^*-l_a(\mathfrak p) .
\end{align}
By Theorem \ref{thm unique of relevant pairs}, we can assume that $(\mathfrak n',\mathfrak m',  \omega)$ is strongly minimal LdRi-commutative.\\

\noindent
{\bf Step 3.}
\noindent
{\it Claim:} Let $\mathfrak h=\mathfrak{hd}^L(\nu^{-1/2}\cdot \pi)$. Then $\mathfrak n'=\mathfrak h-\nu^{-1/2}\cdot \mathfrak p$.

\noindent
{\it Proof of claim:} Suppose not. Then $D^L_{\mathfrak n'}(\widetilde{\omega})\not\cong {}^{[-]}\pi$ (by Lemmas \ref{lem delta reduced subset}, \ref{lem commute and minimal} and uniquness of minimality). Let $i_l^*=l_a(\mathfrak n')$ and $i_r^*=l_a(\mathfrak m')$. Then we obtain a map from ${}^{[i_l^*]}\omega$ to $\tau^{(i_r^*)}$ factoring through  $D^L_{\mathfrak n'}(\omega)$. On the other hand, we have a map from ${}^{[i_l^*]}\omega$ to $\tau^{(i_r^*)}$ factoring through ${}^{[-]}\pi$ by (\ref{eqn matching level}). This contradicts the multiplicity one of 
\[   \mathrm{Hom}_{G_{n-j^*}}({}^{[i_l^*]}\omega, \tau^{(i_r^*)}) ,
\]
which is deduced from the multiplicity one of $\mathrm{Hom}_{G_{n-j^*}}({}^{[j^*]}(\omega \times \sigma'), (\tau\times \sigma)^{(i_r^*)})$ by Corollary \ref{cor integer determining branching law and layers}. 

Thus, we must have that $D_{\mathfrak n'}(\widetilde{\omega}) \cong {}^{[-]}\pi$. By the commutativity result, we have that $\mathfrak h-\nu^{-1/2}\cdot\mathfrak p$ is Ld-minimal to $\widetilde{\omega}$ and $D^L_{\mathfrak h-\nu^{-1/2}\cdot\mathfrak p}(\widetilde{\omega})\cong {}^{[-]}\pi$. Hence, uniqueness of minimality implies that $\mathfrak n'=\mathfrak h-\nu^{-1/2}\cdot\mathfrak p$. This proves the claim. \\

\noindent
{\bf Step 4: Show $I^R_{\mathfrak m'}$ preserves levels.} We now return to the proof. Let $\widetilde{\pi}=\nu^{-1/2}\pi$. The strong LdRi-commutativity of $( \mathfrak n, \mathfrak m, \nu^{-1/2}\cdot(\omega \times \sigma'))$ now implies the strong LdRi-commutativity of $( \mathfrak n',\mathfrak m', \widetilde{\omega})$. This implies that $(\mathfrak h, \mathfrak m', \widetilde{\pi})$ is still a strongly LdRi-commutative triple by Corollary \ref{cor commute max rho}. Hence, we have 
\begin{align} \label{eqn isomorphism in exhuastion part}
 D_{\mathfrak h}^L\circ I^R_{\mathfrak m'}(\widetilde{\pi})\cong I^R_{\mathfrak m'}\circ D_{\mathfrak h}^L(\widetilde{\pi}) \cong I^R_{\mathfrak m'}\circ D_{\mathfrak n'}^L(\widetilde{\omega})\cong \tau ,
\end{align}
where the first isomorphism follows from Proposition \ref{prop strong commute imply commute}, the second isomorphism follows from Corollary \ref{cor commute minimal strong rdli comm}, and the third isomorphism follows from canceling the derivatives from integrals.

Now, the uniqueness in Corollary \ref{cor unique highest derivative to give submodule} implies that 
\begin{align} \label{eqn preserve level for integrals}
   \mathrm{lev}(I^R_{\mathfrak m'}(\nu^{-1/2}\cdot \pi))=\mathrm{lev}(\pi) .
\end{align}

\noindent
{\bf Step 5: Complete the proof using the double integral.} 
Now, by Theorem \ref{thm double integral} with (\ref{eqn preserve level for integrals}), we have a multisegment $\mathfrak m''$ such that 
\[ (I^R_{\mathfrak m''}\circ I^R_{\mathfrak m'}(\widetilde{\pi}))^- \cong \widetilde{\pi}  \]
and $\mathrm{lev}(I^R_{\mathfrak m''}\circ I^R_{\mathfrak m'}(\widetilde{\pi}))=\mathrm{lev}(\widetilde{\pi})$. Now the level preservation of $I^R_{\mathfrak m'}$ also implies the strong commutativity of $(\mathfrak h, \mathfrak m'', I^R_{\mathfrak m'}(\widetilde{\pi}))$. Thus, we have:
\begin{align*}
 I^R_{\mathfrak m''}(\tau) &  \cong I^R_{\mathfrak m''}\circ D_{\mathfrak h}^L \circ I^R_{\mathfrak m'}(\widetilde{\pi}) \\
 &\cong D^L_{\mathfrak h}(I^R_{\mathfrak m''}\circ I^R_{\mathfrak m'}(\widetilde{\pi})) \\
   & \cong {}^-(I^R_{\mathfrak m''}\circ I^R_{\mathfrak m'}(\widetilde{\pi}))  \\
	 & \cong  \nu \cdot (I^R_{\mathfrak m''}\circ I^R_{\mathfrak m'}(\widetilde{\pi}))^- \\
	 & \cong \nu^{1/2}\cdot \pi , 
\end{align*}
where the first isomorphism follows from (\ref{eqn isomorphism in exhuastion part}), the second isomorphism follows from Lemma \ref{level preserving strong comm} (from (\ref{eqn preserve level for integrals})) and Proposition \ref{prop strong commute imply commute}, and the third isomorphism follows from $\mathrm{lev}(I^R_{\mathfrak m''}\circ I^R_{\mathfrak m'}(\widetilde{\pi}))=\mathrm{lev}(\widetilde{\pi})=|\mathfrak h|$. Taking $D^R_{\mathfrak m'}$, we now have $D^R_{\mathfrak m'}(\nu^{1/2}\cdot \pi)\cong \tau$, as desired.
\end{proof}

\begin{remark}
The exhaustion theorem above also has some similarity to the proof of symmetry property in Theorem \ref{thm symmetric property of relevant}. One may regard that instead of determining the relevance for $(\pi, \tau\times \sigma)$ directly, we determine the relevance for $(\tau \times \sigma, \pi)$ in the above proof. 
\end{remark}

\section{Necessity of generalized relevance and exhaustion theorem} \label{s necessarity and exhaustion}

\subsection{Exhuastion $\Rightarrow$ necessity }

%We shall prove Theorems \ref{thm refinement conjecture} and \ref{thm exhaust bz derivatives} simultaneously. 

\begin{proposition} \label{prop refinement conjecture}
Let $\pi \in \mathrm{Irr}(G_{n+1})$ and let $\pi' \in \mathrm{Irr}(G_n)$. Suppose the following two conditions hold:
\begin{enumerate}
\item the exhaustion condition holds for $\pi$; and
\item the exhaustion condition holds for any irreducible representation of $G_k$ with $k \leq n$, particularly $\pi'$.
\end{enumerate}
If $\mathrm{Hom}_{G_n}(\pi, \pi')\neq 0$ and $\mathcal L_{rBL}(\pi, \pi')=i^*$, then $(\pi, \pi')$ is $i^*$-relevant. 
\end{proposition}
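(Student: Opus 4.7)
The plan is to induct on the relative rank $\mathcal{RR}(\pi,\pi')$ following the outline in Section \ref{ss outline necc}. The base case $\mathcal{RR}(\pi,\pi')=0$ reduces to the generic situation of Lemma \ref{lem basic thm branching}. For the inductive step, I will choose a $\leq$-maximal element $\rho \in \mathrm{csupp}(\pi)$ and treat two cases according to whether $\rho \in \mathrm{csupp}(\nu^{-1/2}\pi')$ or not. The case $\rho \notin \mathrm{csupp}(\nu^{-1/2}\pi')$ is the main one; the other case will be handled by applying the duality $\mathrm{Hom}_{G_n}(\pi, \pi') \cong \mathrm{Hom}_{G_{n+1}}(\theta(\sigma)\times \pi', \pi)$ from (\ref{eqn duality for restriction}), the symmetry Theorem \ref{thm symmetric property of relevant}, and interchanging the roles of left and right BZ filtrations exactly as in the proof of Theorem \ref{thm sufficiency}, reducing to the first case.

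So assume $\rho \notin \mathrm{csupp}(\nu^{-1/2}\pi')$. Let $\mathfrak p$ be a strongly R-$\rho$-saturated multisegment such that $D^R_{\mathfrak p}(\pi)$ is strongly R-$\rho$-reduced, giving a surjection $\mathrm{St}(\mathfrak p)\times D^R_{\mathfrak p}(\pi) \twoheadrightarrow \pi$. Then $\mathrm{Hom}_{G_n}(\mathrm{St}(\mathfrak p)\times D^R_{\mathfrak p}(\pi),\pi')\neq 0$, and the refined standard trick (Lemma \ref{lem construct one more non-zero element}) produces a good cuspidal $\sigma$ with $\mathrm{Hom}_{G_n}(\sigma\times D^R_{\mathfrak p}(\pi),\pi')\neq 0$ and identifies the supporting layer. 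Since $\mathcal{RR}(\sigma\times D^R_{\mathfrak p}(\pi),\pi')<\mathcal{RR}(\pi,\pi')$, induction produces a minimal strongly RdLi-commutative triple $(\mathfrak m,\mathfrak n,\nu^{1/2}\cdot D^R_{\mathfrak p}(\pi))$ with $D^R_{\mathfrak m}(\nu^{1/2}\cdot D^R_{\mathfrak p}(\pi))\cong D^L_{\mathfrak n}(\pi')$, where the $i$-index part is controlled by Corollary \ref{cor integer for branching law equal layer} and Proposition \ref{prop smallest integer in strong triple}.

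The crucial step, carried out in (1) of Section \ref{ss outline necc}, is to upgrade this to $D^R_{\mathfrak m+\mathfrak p'}(\nu^{1/2}\cdot \pi) \cong D^L_{\mathfrak n}(\pi')$ with $\mathfrak p'=\nu^{1/2}\mathfrak p$ being admissible to $\nu^{1/2}\cdot \pi$. Here I use that the multiplicity one theorem for quotients of standard modules (Corollary \ref{cor quotient of standard mult one}) forces both
\[ \mathrm{dim}\,\mathrm{Hom}_{G_n}((\mathrm{St}(\mathfrak p)\times D^R_{\mathfrak p}(\pi))^{[i]},{}^{(i-1)}\pi')\leq 1,\qquad \mathrm{dim}\,\mathrm{Hom}_{G_n}(\pi^{[i]},{}^{(i-1)}\pi')\leq 1 \]
with $i=l_a(\mathfrak m+\mathfrak p')$, and these are equalities by induction combined with the refined layer information from Corollary \ref{cor integer for branching law equal layer}. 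Since the simple quotient $D^R_{\mathfrak m}(\nu^{1/2}\cdot D^R_{\mathfrak p}(\pi))$ of $(\mathrm{St}(\mathfrak p)\times D^R_{\mathfrak p}(\pi))^{[i]}$ hits the same one-dimensional Hom, by uniqueness it must already appear as a simple quotient of $\pi^{[i]}$. This is where the exhaustion condition for $\pi$ enters decisively: invoking Lemma \ref{lem admissible exhaust form}, the hypothesis of the exhaustion condition on $\pi$ promotes this simple quotient to a genuine derivative $D^R_{\mathfrak m+\mathfrak p'}(\nu^{1/2}\cdot \pi)$, so $\mathfrak m+\mathfrak p'$ is admissible to $\nu^{1/2}\cdot \pi$, and the theory of minimal sequences (Lemma \ref{lem commute and minimal}, Corollary \ref{cor minimal derivative in any order}) yields $D^R_{\mathfrak m+\mathfrak p'}(\nu^{1/2}\cdot \pi)\cong D^L_{\mathfrak n}(\pi')$, as well as Rd-minimality of $\mathfrak m+\mathfrak p'$.

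It remains to verify that $(\mathfrak m+\mathfrak p',\mathfrak n,\nu^{1/2}\cdot\pi)$ is strongly RdLi-commutative; this is step (2) of Section \ref{ss outline necc}. The strong commutativity of $(\mathfrak m,\mathfrak n,\nu^{1/2}\cdot D^R_{\mathfrak p}(\pi))$ is inherited from the induction, while the strong commutativity of the $\mathfrak p'$-part with $\mathfrak n$ against $\nu^{1/2}\cdot\pi$ follows from the $\preceq^R$-maximality of segments in $\mathfrak p'$ together with Example \ref{example pre commutative} and Lemma \ref{lem strong commutative combin}. Combining using Corollary \ref{cor commute minimal strong rdli comm} (compatibility of minimality with strong commutation) assembles these into the full strongly RdLi-commutative triple $(\mathfrak m+\mathfrak p',\mathfrak n,\nu^{1/2}\cdot \pi)$. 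The $i^*$-refinement follows from Proposition \ref{prop smallest integer in strong triple}, since $l_a(\mathfrak m+\mathfrak p')$ matches the supporting layer $i^*$ by Corollary \ref{cor integer for branching law equal layer}. The main technical obstacle will be the bookkeeping in step (1): ensuring that the simple quotient matching forced by multiplicity-one actually corresponds to $\mathfrak m+\mathfrak p'$ (rather than a different representation of the same simple quotient), and that the exhaustion-produced derivative is genuinely this one. This is where the minimality of the chosen triple $(\mathfrak m,\mathfrak n,\nu^{1/2}\cdot D^R_{\mathfrak p}(\pi))$ and the uniqueness in Theorem \ref{thm unique of relevant pairs} must be invoked carefully to pin down $\mathfrak m+\mathfrak p'$ uniquely.
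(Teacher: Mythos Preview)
Your treatment of Case 1 (when $\rho \notin \mathrm{csupp}(\nu^{-1/2}\pi')$) is essentially the paper's argument: the surjection $\mathrm{St}(\mathfrak p)\times D^R_{\mathfrak p}(\pi)\twoheadrightarrow \pi$, the standard trick, the inductive relevance, the multiplicity-one constraints from Corollary \ref{cor integer determining branching law and layers}, the exhaustion condition for $\pi$ via Lemma \ref{lem admissible exhaust form}, and finally assembling strong commutation via Corollary \ref{cor commute minimal strong rdli comm}. This is correct.

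Your Case 2, however, diverges from the paper and is incomplete as stated. You propose to apply the duality $\mathrm{Hom}_{G_n}(\pi,\pi')\cong \mathrm{Hom}_{G_{n+1}}(\theta(\sigma)\times\pi',\pi)$ and then ``reduce to the first case'' as in the sufficiency proof. But after the swap you must verify that the new pair $(\theta(\sigma)\times\pi',\pi)$ actually falls under Case 1, i.e., that some $\leq$-maximal element of $\mathrm{csupp}(\theta(\sigma)\times\pi')$ lies outside $\mathrm{csupp}(\nu^{-1/2}\pi)$. Your case split is based on a single chosen $\rho$ in $\mathrm{csupp}(\pi)$, and the failure of that one $\rho$ does not by itself produce such an element on the other side; the sufficiency proof (Theorem \ref{thm sufficiency}, Case 2) handles this with a specific argument using the negation of Case 1 across \emph{all} minimal elements, which you have not set up here.

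The paper avoids this entirely: its Case 2 does not swap but instead runs the Case 1 argument \emph{symmetrically on $\pi'$}, setting $\mathfrak q=\mathfrak{mx}(\pi',\rho)$, obtaining the relevance of $(\pi,D_{\mathfrak q}(\pi'))$ by induction, and then invoking the exhaustion condition for $\pi'$ (together with Lemma \ref{lem admissible exhaust form} applied to $\pi'$) to upgrade to $(\pi,\pi')$, with Theorem \ref{thm symmetric property of relevant} providing the final symmetry. This is precisely why the hypothesis of the proposition asks for exhaustion on all $G_k$ with $k\le n$ and in particular on $\pi'$---a hypothesis your duality approach does not visibly use. The paper's route is both more direct and explains the shape of the statement.
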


%\begin{theorem} \label{thm refinement conjecture left}
%Let $\pi \in \mathrm{Irr}(G_{n+1})$ and let $\pi' \in \mathrm{Irr}(G_n)$. Let $j^*$ be integer. The following two statements are equivalent:
%\begin{enumerate}
%\item $\mathrm{Hom}_{G_n}(\pi, \pi')\neq 0$ and the integer determining the right branching law is $j^*$. 
%\item $(\pi', \pi)$ are $j^*$-relevant. 
%\end{enumerate}
%\end{theorem}i

\begin{proof}

{\bf Step 1: Induction setup} \\

When the relative rank for $(\pi, \pi')$ is $0$, it follows from Lemma \ref{lem basic thm branching}. We now assume that the relative rank for $(\pi, \pi')$ is non-zero. By  Proposition \ref{prop smallest integer in strong triple} and Corollary \ref{cor integer for branching law equal layer}, it suffices to show the relevance (instead of $i^*$-relevance).

Let $\rho$ be a $\leq$-maximal element in $\mathrm{csupp}( \pi)\cup \mathrm{csupp}(\nu^{-1/2}\cdot\pi')$ such that $\rho$ is relevant to $\pi'$ if $\rho \in \mathrm{csupp}(\pi)$; or $\rho$ is relevant to $\pi$ otherwise. (Such $\rho$ exists since we are assuming $\mathcal{RR}(\pi, \pi')>0$.) \\

\noindent
{\bf Step 2: First case of induction and reducing to the inductive case by BZ filtration technique} \\
{\bf Case 1:} Suppose $\rho \notin \mathrm{csupp}(\nu^{-1/2}\cdot\pi')$. Let $\mathfrak p=\mathfrak{mxpt}^{R}(\pi, \rho)$. Let $\omega =D_{\mathfrak p}^R(\pi)$. This gives a surjection:
\[  \mathrm{St}(\mathfrak p)\times \omega \twoheadrightarrow \pi
\]
with the kernel denoted by $\kappa$.

Now, we have
\[ (*)\quad  \mathrm{dim}~\mathrm{Hom}_{G_n}(\mathrm{St}(\mathfrak p)\times \omega, \pi') =1 .
\]
Let $i^*=\mathcal L_{rBL}(\mathrm{St}(\mathfrak p)\times \omega, \pi')$. Thus, we also have:
\begin{itemize}
\item By Corollary \ref{cor integer determining branching law and layers}, 
\[  \mathrm{dim}~\mathrm{Hom}_{G_{n+1-i^*}}((\mathrm{St}(\mathfrak p)\times \omega)^{[i^*]}, {}^{(i^*-1)}\pi') =1 .
\]
\item By Lemma \ref{lem construct one more non-zero element},
\[ \mathrm{dim}~\mathrm{Hom}_{G_n}(\sigma \times \omega, \pi') =1 \]
for some cuspidal representation $\sigma$ good to $\nu^{1/2}\pi$ and $\pi'$, and $\mathcal L_{rBL}(\sigma \times \omega, \pi')=i^*$. 
\item Note that $\sigma \times \omega$ also satisfies the exhaustion condition by the given hypothesis and Lemma \ref{lem producing mor exhuastion}.
\item Now, by induction on the relative rank, there exist multisegments $\mathfrak m$ and $\mathfrak n$ such that
\begin{align} \label{eqn isomorphism under strong in relevance}
  D^R_{\mathfrak m}(\nu^{1/2}\cdot(\sigma \times \omega)) \cong D^L_{\mathfrak n}(\pi') .
\end{align}
and $l_a(\mathfrak m)=i^*$. By a cuspidal condition, we have that $[\sigma]\in \mathfrak m$. Let $\mathfrak m'=\mathfrak m-[\sigma]$ and $\widetilde{\omega}=\nu^{1/2}\omega$. We shall assume that $\mathfrak m'$ is minimal to $\widetilde{\omega}$ (by Theorem \ref{thm unique of relevant pairs}). \\
\end{itemize}

\noindent
{\bf Step 3: Determine the admissibility of $\mathfrak m'+\nu^{1/2}\mathfrak p$ by using information from simple quotients of BZ derivatives} \\

\noindent
{\it Claim:} Let $\widetilde{\pi}=\nu^{1/2}\pi$. We have that $\mathfrak m'+\nu^{1/2}\mathfrak p$ is admissible to $\widetilde{\pi}$. Moreover, $\mathfrak m'+\nu^{1/2}\mathfrak p$ is minimal to $\widetilde{\pi}$. \\

 For simplicity, let $\lambda= \mathrm{St}(\mathfrak p)\times \omega $. Note $D_{\mathfrak m'}(\widetilde{\omega})$ is a simple quotient of $\lambda^{[i^*]}$ (by using $\lambda^{[i^*]}$ has a quotient $\omega^{[i^*-n(\sigma)]}$ and $l_a(\mathfrak m')=i^*-n(\sigma)$ in the last bullet of Step 2) and we let $q$ be the quotient map. Suppose the first assertion (admissibility) of the claim does not hold. Then, by the exhaustion condition on $\pi$ and Lemma \ref{lem admissible exhaust form}, $D_{\mathfrak m'}(\widetilde{\omega})$ is not a simple quotient of $\pi^{[i^*]}$. Thus, $q$ restricted to $\kappa^{[i^*]}$ is non-zero. 

 On the other hand, we also have a non-zero map $\pi^{[i^*]}$ to ${}^{(i^*-1)}\pi'$ by the independence part in Corollary \ref{cor integer determining branching law and layers}. Composing with the quotient map
\[  \lambda^{[i^*]} \rightarrow \pi^{[i^*]} ,
\]
we have a map from $\lambda^{[i^*]}$ to ${}^{(i^*-1)}\pi'$ whose restriction to $\kappa^{[i^*]}$ is zero. Hence, we arrive that
\[ \mathrm{dim}~ \mathrm{Hom}_{G_{n+1-i^*}}(\lambda^{[i^*]}, {}^{(i^*-1)}\pi') \geq 2,
\]
giving a contradiction to the first bullet in Step 2. \\

We now show that $\mathfrak m'+\nu^{1/2}\mathfrak p$ is minimal to $\widetilde{\pi}$. This is similar to the argument in Lemma \ref{lem admissible exhaust form} by using Lemma \ref{lem delta reduced subset} and the uniqueness of minimality for $\mathfrak m'$. In more details, let $\mathfrak k$ be the minimal multisegment such that $D_{\mathfrak k}(\widetilde{\pi}) \cong D_{\mathfrak m'}\circ D_{\nu^{1/2}\mathfrak p}(\widetilde{\pi})$. By Lemma \ref{lem delta reduced subset}, $\mathfrak k=\mathfrak m''+\nu^{1/2}\mathfrak p$ for some multisegment $\mathfrak m''$. By Lemma \ref{lem commute and minimal}, $\mathfrak m''$ is minimal to $\widetilde{\omega}$ and by the uniqueness of minimality, $\mathfrak m''=\mathfrak m'$ as desired.
\\

\noindent
{\bf Step 4: Get back the original case by using commutativity of the minimal sequences}

We now return to the proof. The strong commutativity for $(\mathfrak m,  \mathfrak n, \nu^{1/2}\cdot (\sigma\times \omega))$ implies the strong commutativity for $(\mathfrak m', \mathfrak n, \nu^{1/2}\cdot \omega)$. By Step 3, we have $\mathfrak m'+\nu^{1/2}\mathfrak p$ is also minimal to $\widetilde{\pi}$. Thus, we have that $(\mathfrak m'+\nu^{1/2}\mathfrak p, \mathfrak n, \widetilde{\pi})$ is also a minimal strongly RdLi-commutative triple by Corollary \ref{cor commute max rho}. Now, 
\[   D_{\mathfrak m'+\nu^{1/2}\mathfrak p}(\widetilde{\pi})\cong D_{\nu^{1/2}\mathfrak p}\circ D_{\mathfrak m'}(\widetilde{\pi})\cong D_{\mathfrak m'} \circ D_{\nu^{1/2}\mathfrak p}(\widetilde{\pi})\cong D^L_{\mathfrak n}(\pi') , \]
where the second isomorphism follows from Lemma \ref{lem commute and minimal} and the last isomorphism follows from (\ref{eqn isomorphism under strong in relevance}). This shows the relevance for $(\pi, \pi')$. \\

\noindent
{\bf Step 5: The second case and using the symmetry of relevance}

{\bf Case 2:} Suppose $\rho \in \mathrm{csupp}(\pi')$. Then $ \rho \notin \mathrm{csupp}(\nu^{-1/2} \cdot \pi)$ by the maximality. The argument is very similar to Case 1, and so we will sketch some main steps:
\begin{itemize}
 \item Let $\mathfrak q=\mathfrak{mx}(\pi',\rho)$. Then
\[ \mathrm{dim}~\mathrm{Hom}_{G_n}(\pi,  D_{\mathfrak q}(\pi') \times \mathrm{St}(\mathfrak q))= 1. \]
 \item By the right version of Lemma \ref{lem construct one more non-zero element}, there exists a $\sigma \in \mathrm{Irr}^c$ good to $\nu^{-1/2}\cdot \pi$ and $\pi'$ such that 
\[ \mathrm{dim}~ \mathrm{Hom}_{G_n}(\pi,   D_{\mathfrak q}(\pi') \times \sigma) =1
\]
\item By induction on the relative rank, $(\pi, D_{\mathfrak q}(\pi') \times \sigma)$ is relevant and so is $(\pi, D_{\mathfrak q}(\pi'))$. Then, with Theorem \ref{thm symmetric property of relevant}, there exist multisegments $\mathfrak m$ and $\mathfrak n'$ such that:
\[   D^L_{\mathfrak m}(\nu^{-1/2}\cdot \pi) \cong D^R_{\mathfrak n'}\circ D^R_{\mathfrak q}(\pi') 
\]
and $(\mathfrak n', \mathfrak m, D^R_{\mathfrak q}(\pi'))$ is a strongly RdLi-commutative triple. We shall choose $\mathfrak m$ ito be Li-minimal to $D^R_{\mathfrak q}(\pi')$ and choose $\mathfrak n'$ to be Rd-minimal to $D^R_{\mathfrak q}(\pi')$. 
\item As in the argument of above claim (which uses the exhaustion condition for $\pi'$, and Lemma \ref{lem admissible exhaust form}, and one considers simple submodules of $\pi'^{(i^*)}$), we have $\mathfrak n'+\mathfrak q$ is Rd-admissible to $\pi'$. By Lemma \ref{lem delta reduced subset}, uniqueness of minimality implies that $\mathfrak n'+\mathfrak q$ is also Rd-minimal to $\pi'$. 
\item Now, the relevance of $(\pi', \pi)$ follows by a commutation using the minimal sequence in Step 4.
\item This concludes the relevance for $(\pi, \pi')$ by the symmetry in Theorem \ref{thm symmetric property of relevant}.
\end{itemize}
\end{proof}

\subsection{Proof of necessity and exhaustion} \label{ss proof of sufficiency exh}

\begin{theorem} \label{thm refine brancing}
Let $\pi \in \mathrm{Irr}(G_{n+1})$ and let $\pi' \in \mathrm{Irr}(G_n)$. If $\mathrm{Hom}_{G_n}(\pi, \pi')\neq 0$ and $\mathcal L_{rBL}(\pi, \pi')=i^*$, then $(\pi, \pi')$ is $i^*$-relevant. 
\end{theorem}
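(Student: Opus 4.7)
The plan is to prove Theorem \ref{thm refine brancing} jointly with the exhaustion theorem (Theorem \ref{thm exhaustion thm}), since the two statements feed into each other through Propositions \ref{prop refinement conjecture} and \ref{prop exhaustion thm from branching}. Granted Proposition \ref{prop refinement conjecture}, the mere relevance of $(\pi,\pi')$ reduces to verifying the exhaustion condition for $\pi$ and for every irreducible of $G_k$ with $k\leq n$; conversely, Proposition \ref{prop exhaustion thm from branching} reduces the exhaustion condition at rank $n+1$ to the relevance condition for certain auxiliary pairs. Once ordinary relevance is in hand, the refinement to $i^*$-relevance follows automatically from Proposition \ref{prop smallest integer in strong triple} combined with Corollary \ref{cor integer for branching law equal layer}, which matches the smallest derivative $l_a(\mathfrak m)$ of a minimal triple with the layer $\mathcal L_{rBL}(\pi,\pi')$.

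I would organize the induction in two tiers: an outer tier on the ambient rank and, within each rank, an inner tier on the relative rank $\mathcal{RR}(\pi,\pi')$. The base case $\mathcal{RR}=0$ is covered by Lemma \ref{lem basic thm branching}. In the inductive step at rank $n$, first I would establish the exhaustion condition for all $\pi\in\mathrm{Irr}(G_{n+1})$ by invoking Proposition \ref{prop exhaustion thm from branching}: inside its proof the relevance hypothesis is applied only to auxiliary pairs of the form $(\omega\times\sigma',\tau\times\sigma)$, where $\omega=D^L_{\mathfrak p}(\pi)$ for $\mathfrak p=\mathfrak{mxpt}^L(\pi,\rho)$ and $\rho$ a $\leq$-minimal cuspidal element of $\mathrm{csupp}(\pi)$ relevant to $\pi'$. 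Since $\sigma,\sigma'$ are chosen good to the relevant factors, they contribute nothing to $\mathcal{RR}$, whereas $D^L_{\mathfrak p}$ strictly reduces the relevant cuspidal support, so the auxiliary pair has strictly smaller relative rank and the inner induction hypothesis applies. Once exhaustion is established at rank $n+1$, Proposition \ref{prop refinement conjecture} yields the relevance statement of Theorem \ref{thm refine brancing}, using the symmetry of relevance (Theorem \ref{thm symmetric property of relevant}) to dispatch the case in which the $\leq$-maximal cuspidal lies on the $\pi'$ side.

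The hard part will be the bookkeeping that upgrades plain relevance to $i^*$-relevance, since at every appeal to the inductive hypothesis the correct BZ layer has to be tracked. This rests on the multiplicity-one theorem for standard representations (Theorem \ref{thm standard multiplicity one}, Corollary \ref{cor quotient of standard mult one}) together with the homological vanishing of Lemma \ref{lem vanishing ext filtration}, feeding into Corollary \ref{cor integer for branching law equal layer} to pin down the layer supporting the Hom. A companion subtlety is verifying the strict decrease of relative rank in the inner induction: this requires interlocking the smallest-derivative computation of Proposition \ref{prop smallest integer in strong triple} with the refined standard trick of Lemma \ref{lem construct one more non-zero element}, so that the auxiliary pair produced during the exhaustion reduction is legitimately handled by the inner induction hypothesis at the correct layer.
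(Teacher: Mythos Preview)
Your overall strategy---prove Theorem \ref{thm refine brancing} and Theorem \ref{thm exhaustion thm} simultaneously by induction, feeding Proposition \ref{prop refinement conjecture} and Proposition \ref{prop exhaustion thm from branching} into one another, and then upgrade relevance to $i^*$-relevance via Proposition \ref{prop smallest integer in strong triple} and Corollary \ref{cor integer for branching law equal layer}---is exactly what the paper does. The base case and the cited ingredients are all correct.

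The gap is in the organization of the induction. To invoke Proposition \ref{prop exhaustion thm from branching} at rank $n+1$ you must supply relevance for the pairs $(\omega\times\sigma,\pi')$ with $\omega\in\mathrm{Irr}(G_m)$, $m\le n$; these pairs already live at ambient rank $n+1$. You propose to supply this by an ``inner induction on relative rank'', but at this point you have not yet proved a single instance of relevance at rank $n+1$, so there is no inner hypothesis to invoke; the argument is circular. (There is also a smaller confusion: the exhaustion statement concerns a single $\pi$, not a pair, so there is no relative rank attached to it, and in the proof of Proposition \ref{prop exhaustion thm from branching} the $\leq$-minimal cuspidal $\rho\in\mathrm{csupp}(\pi)$ is not chosen relevant to any $\pi'$.)

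The paper breaks the circle with the one ingredient missing from your proposal, Lemma \ref{lem producing mor exhuastion}: exhaustion for $\omega\in\mathrm{Irr}(G_m)$ (available from the rank-$n$ hypothesis) immediately yields exhaustion for $\sigma\times\omega\in\mathrm{Irr}(G_{n+1})$. With that in hand, Proposition \ref{prop refinement conjecture} applies to the pairs $(\sigma\times\omega,\pi')$ and delivers their relevance (its own proof contains the relative-rank descent), Proposition \ref{prop exhaustion thm from branching} then gives exhaustion for all of $\mathrm{Irr}(G_{n+1})$, and a second application of Proposition \ref{prop refinement conjecture} finishes Theorem \ref{thm refine brancing}. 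So the induction is a single tier on the rank; the relative-rank induction lives entirely inside Proposition \ref{prop refinement conjecture} and need not be externalized.
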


\begin{theorem} \label{thm exhaustion thm}
The exhaustion condition holds for all irreducible representations of $G_n$.
\end{theorem}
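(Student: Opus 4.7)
The plan is to prove Theorem \ref{thm exhaustion thm} by strong induction on the rank $n$, establishing the exhaustion condition simultaneously for all of $\mathrm{Irr}(G_n)$. The key observation that breaks the apparent circular dependency between exhaustion and the necessity of relevance (Theorem \ref{thm refine brancing}) is that Proposition \ref{prop exhaustion thm from branching} only requires the relevance condition for pairs $(\omega\times\sigma,\pi')$ in which $\omega$ lies in a strictly smaller $G_m$ (with $m\leq n$), so that exhaustion for $\omega\times\sigma$ can be deduced from exhaustion for $\omega$ together with Lemma \ref{lem producing mor exhuastion}.

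The base case $n\leq 1$ is immediate, since for such small ranks the only nontrivial BZ derivative is the zeroth one and the empty multisegment suffices. For the inductive step, assume exhaustion holds for $\mathrm{Irr}(G_k)$ for every $k\leq n$; we deduce exhaustion for $\mathrm{Irr}(G_{n+1})$ as follows. By Proposition \ref{prop exhaustion thm from branching}, it suffices to verify the relevance condition for every pair $(\omega\times\sigma,\pi')$ satisfying the four bullets of that proposition; in particular $\omega\in\mathrm{Irr}(G_m)$ for some $m\leq n$ and $\sigma\in\mathrm{Irr}^c(G_{n+1-m})$ is good to $\omega$ and $\nu^{-1/2}\pi'$. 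For any such pair we apply Proposition \ref{prop refinement conjecture} with the role of $\pi$ played by $\pi_0:=\omega\times\sigma\in\mathrm{Irr}(G_{n+1})$. The two hypotheses needed are exhaustion for $\pi_0$, and exhaustion for every $\mathrm{Irr}(G_k)$ with $k\leq n$. The latter is the inductive hypothesis; for the former, the inductive hypothesis applied at $G_m$ gives exhaustion for $\omega$, and Lemma \ref{lem producing mor exhuastion} upgrades this to exhaustion for $\omega\times\sigma$. Proposition \ref{prop refinement conjecture} then delivers the required relevance of $(\omega\times\sigma,\pi')$, which feeds back into Proposition \ref{prop exhaustion thm from branching} to close the inductive step.

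Thus the proof itself amounts to a short bookkeeping argument once the nontrivial technical machinery has been assembled. The main obstacle was therefore not in the inductive packaging but in identifying the correct induction invariant: the induction is on the rank $n$ rather than on level or relative rank, because Proposition \ref{prop exhaustion thm from branching} pushes the problem to pairs whose ``left half'' $\omega$ already sits in a strictly smaller group, while still allowing $\omega\times\sigma$ to inhabit the full $G_{n+1}$. All the genuinely hard work has been absorbed into the proof of Proposition \ref{prop exhaustion thm from branching}, which combines Theorem \ref{thm simple quotient branching law} (the branching law for simple quotients of BZ derivatives), the double-integral construction of Theorem \ref{thm double integral}, the level-preserving commutativity of Proposition \ref{level preserving strong comm}, and the uniqueness of minimality from Theorem \ref{thm minimality of d and i}. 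As a bonus, combining the now-established Theorem \ref{thm exhaustion thm} with Proposition \ref{prop refinement conjecture} applied to an arbitrary pair $(\pi,\pi')\in\mathrm{Irr}(G_{n+1})\times\mathrm{Irr}(G_n)$ with $\mathrm{Hom}_{G_n}(\pi,\pi')\neq 0$ immediately yields Theorem \ref{thm refine brancing}.
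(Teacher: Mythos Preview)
Your argument is correct and matches the paper's own proof essentially step for step: strong induction on $n$, using the inductive hypothesis together with Lemma \ref{lem producing mor exhuastion} to get exhaustion for $\omega\times\sigma$, then Proposition \ref{prop refinement conjecture} to obtain relevance for the pairs required by Proposition \ref{prop exhaustion thm from branching}, which closes the loop. The paper phrases it as a simultaneous induction on Theorems \ref{thm refine brancing} and \ref{thm exhaustion thm}, but as you observed the relevance statement is not actually used in the inductive step for exhaustion and falls out as a corollary; one small slip is your base-case justification: for $G_1$ the first derivative $\pi^{(1)}\cong\mathbb C$ is nontrivial, but this is handled by the singleton multisegment $\{[\pi]\}$, so the conclusion stands.
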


\noindent
{\it Proof of Theorems \ref{thm refine brancing} and  \ref{thm exhaustion thm}.} We shall prove by induction on $n$ (for $G_n$). When $n=0,1$, the statements are clear. 

 Suppose Theorems \ref{thm refine brancing} and \ref{thm exhaustion thm} hold for some $n>0$. Then, for any $\omega \in \mathrm{Irr}(G_m)$ ($m\leq n$) and any $\sigma \in \mathrm{Irr}^c(G_{n+1-m})$ good to $\omega$,  Lemma \ref{lem producing mor exhuastion} implies that the exhaustion condition holds for $\sigma \times \omega$. The assumption of course also implies the exhaustion condition holds for $\pi' \in \mathrm{Irr}(G_n)$. Then, Proposition \ref{prop refinement conjecture} implies that the relevance condition holds for any such pairs $(\sigma \times \omega, \pi')$. 

The above verifies conditions in Proposition \ref{prop exhaustion thm from branching} and so we have Theorem \ref{thm exhaustion thm} holds for $n+1$. This, with Proposition \ref{prop refinement conjecture} again, in turn gives that Theorem \ref{thm refine brancing} holds for $n+1$. \qed

%\subsection{A remark on composition factors of BZ derivatives}

%One may also ask to determine the composition multiplicity of $\pi^{(i)}$ for $\pi \in \mathrm{Irr}$. Such question has a general answer, at least in principle. In the Grothendieck group level, one writes $\pi$ in terms of linear combinations of standard modules (using $p$-adic analogue of Kazhdan-Lusztig theory conjectured by Zelevinsky \cite{Ze81} and established by Chriss-Ginzburg and Suzuki \cite{CG97, Su98}). The BZ derivatives of a standard module can be computed by Leibniz's rule, and then one obtains the composition multiplicity for $\pi^{(i)}$. 

\part{Appendices}
\section{Appendix A: Specialization map under Bernstein-Zelevinsky filtrations}

\subsection{Some adjointness}
We use the notations in Section \ref{s bz filtration}. We have the following adjointness:
\[  \mathrm{Hom}_{M_n}(\Phi^+(\pi), \tau) \cong \mathrm{Hom}_{M_{n-1}}(\pi, \Phi^-(\tau)) ,
\]
Given a map $\Phi^+(\pi) \stackrel{f}{\rightarrow} \tau$, the adjunction map is given by \cite[Proposition 5.12]{BZ76} (and its proof):
\[    \pi \cong \Phi^-\circ \Phi^+(\pi)  \stackrel{\Phi^-(f)}{\longrightarrow} \Phi^-(\tau) 
\]
On the other hand, given a map $\pi \stackrel{h}{\longrightarrow} \Phi^-(\tau)$, the adjunction map is given by:
\[   \Phi^+(\pi) \stackrel{\Phi^+(h)}{\longrightarrow} \Phi^+\circ \Phi^-(\tau) \hookrightarrow \tau .
\]
Moreover, the embedding $\Phi^+\circ \Phi^-(\tau)$ to $\tau$ is adjoint to the identity morphism from $\Phi^-(\tau)$ to $\Phi^-(\tau)$.

\subsection{Deformation of representations}

Let $P$ be a standard parabolic subgroup of $G_n$ with Levi decomposition $LN$. Identify $L$ with $G_{n_1}\times \ldots \times G_{n_r}$ via $\mathrm{diag}(g_1, \ldots, g_r)\mapsto (g_1, \ldots, g_r)$. Let $\sigma \in \mathrm{Alg}(L)$. For indeterminates $u_1, \ldots, u_r$, define $\pi_u=\mathrm{Ind}_P^{G_n}\sigma$ to be the space of functions from $G_n$ to $\mathbb C[q^{\pm u_1}, \ldots, q^{\pm u_r}]$ satisfying:
\[   f(pg)=\nu(g_1)^{u_1}\ldots \nu(g_r)^{u_r}p.f(g) ,
\]
where $p=\mathrm{diag}(g_1, \ldots, g_r)v$ with $g_i \in G_{n_i}$ and $v$ in the unipotent radical. Let $u^*=(u_1^*, \ldots, u_r^*) \in \mathbb C^r$. Define $\mathrm{sp}: \mathbb{C}[q^{\pm u_1}, \ldots, q^{\pm u_r}] \rightarrow \mathbb C$ by evaluating $u_i=u_i^*$ for all $i$. This induces a $G_n$-representation map, denoted by $\widetilde{\mathrm{sp}}$, from $\pi_u$ to $\pi_{u^*}$ given by:
\[  \widetilde{\mathrm{sp}}(f)(g)=\mathrm{sp}\circ f(g) .
\]

\begin{lemma} \label{lem surjective of sp}
We use the notations above. Then  $\widetilde{\mathrm{sp}}$ is surjective.
%\item[(2)] $\Phi^i(\widetilde{\mathrm{sp}})$ is a natural surjective map from $\Psi^i(\pi_u)$ to $\Psi^i(\pi_{u^*})$ and $\Psi\circ \Phi^{i-1}(\widetilde{\mathrm{sp}})$ is  also a natural surjective map from $\pi_u^{(i)}$ to $\pi_{u^*}^{(i)}$. 
\end{lemma}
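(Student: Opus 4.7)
The plan is to invoke an Iwasawa decomposition to reduce the surjectivity of $\widetilde{\mathrm{sp}}$ to the (obvious) surjectivity of the evaluation map $\mathbb{C}[q^{\pm u_1},\ldots,q^{\pm u_r}] \twoheadrightarrow \mathbb{C}$ at $u = u^*$.

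First, I would fix a maximal compact open subgroup $K \subset G_n$ (e.g.\ $K = \mathrm{GL}_n(\mathcal{O}_F)$) so that the Iwasawa decomposition $G_n = PK$ holds. The key elementary observation is: for every $p \in P\cap K$, writing $p = \mathrm{diag}(g_1,\ldots,g_r)\,v$ with $v$ in the unipotent radical, each $g_i$ lies in $G_{n_i}(\mathcal{O}_F)$, so $\nu(g_i) = 1$ and hence $\nu(g_1)^{u_1}\cdots\nu(g_r)^{u_r} = 1$ identically in $\mathbb{C}[q^{\pm u_1},\ldots,q^{\pm u_r}]$. In particular, the transformation property defining $\pi_u$, when restricted to $P\cap K$, is independent of the deformation parameter and coincides with the one defining $\pi_{u^*}$.

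Second, I would use the map $f \mapsto f|_K$ to identify both $\pi_u$ and $\pi_{u^*}$ with the space
\[
  W := \mathrm{Ind}_{P\cap K}^{K}\bigl(\sigma|_{P\cap K}\bigr)
\]
(as $\mathbb{C}$-vector spaces), the identification for $\pi_u$ being $\mathbb{C}[q^{\pm u_1},\ldots,q^{\pm u_r}]$-linear so that $\pi_u \cong W \otimes_{\mathbb{C}} \mathbb{C}[q^{\pm u_1},\ldots,q^{\pm u_r}]$. The inverse of restriction is given by the extension
\[
  f(pk) := \nu(g_1)^{u_1}\cdots\nu(g_r)^{u_r}\, p\cdot f|_K(k),
\]
which is well-defined precisely because the $\nu^{u_i}$-factors trivialize on $P\cap K$ as noted above; for $\pi_{u^*}$ one uses the same formula with $u$ replaced by $u^*$.

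Finally, under these identifications the specialization map $\widetilde{\mathrm{sp}}$ becomes $\mathrm{id}_W \otimes \mathrm{sp}$, where $\mathrm{sp} : \mathbb{C}[q^{\pm u_1},\ldots,q^{\pm u_r}] \to \mathbb{C}$ is evaluation at $u = u^*$. Since $\mathrm{sp}$ is surjective and tensoring a surjection with a flat module remains surjective, $\widetilde{\mathrm{sp}}$ is surjective, as desired. The only point requiring a little care is checking that a given $f' \in \pi_{u^*}$, lifted by the above extension formula (with the indeterminates $u_i$ in place of $u_i^*$), actually lies in $\pi_u$ (i.e.\ is smooth and of finite support modulo $P$-translation); but this is immediate from the corresponding properties of $f'|_K$ and the fact that smoothness is tested locally on $K$, where no deformation parameter appears.
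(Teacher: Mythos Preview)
Your proof is correct and follows essentially the same approach as the paper: both use the Iwasawa decomposition $G_n = PK$ with $K = \mathrm{GL}_n(\mathcal{O}_F)$ to identify $\pi_u$ and $\pi_{u^*}$ (via restriction to $K$) with the same underlying space, so that a lift of $f \in \pi_{u^*}$ is obtained by extending $f|_K$ using the $\nu^{u_i}$-twisted transformation law. Your write-up is in fact more careful than the paper's sketch, as you make explicit why the extension is well-defined (the factors $\nu(g_i)^{u_i}$ are identically $1$ on $P\cap K$).
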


\begin{proof}
See e.g. \cite[Section 3.1]{CPS17}. Indeed, let $K_0=\mathrm{GL}_n(\mathcal O)$ and we have $G_n=PK_0$. Then, for each $f \in \pi_{u^*}$, $\widetilde{f}$ is determined by its values $\widetilde{f}(k)$ for $k \in K_0$. This similarly holds for $f \in \pi_{u^*}$. Thus, for any $f \in \pi_{u^*}$, one defines $\widetilde{f} \in \pi_u$ such that $\widetilde{f}(k)=f(k)$ for $k \in K_0$. One checks it is well-defined from definitions and then $\mathrm{sp}(\widetilde{f})=f$. This shows surjectivity. 
\end{proof}

\subsection{Specialization for Bernstein-Zelevinsky layers}

\begin{lemma} \label{lem commutative with specialization}
Let $\tau_i=(\Phi^-)^i(\pi_u)$. Similarly, let $\tau^*_i=(\Phi^-)^i(\pi_{u^*})$. Let $s_i=(\Phi^-)^i(\widetilde{\mathrm{sp}})$ and let $s_{i+1}=(\Phi^-)^{i+1}(\widetilde{\mathrm{sp}})$, where $\widetilde{\mathrm{sp}}$ is the specialization map defined in the previous section. Then the following diagram commutes:
\[  \xymatrix{        \Phi^+(\tau_{i+1}) \ar[r]^{\iota} \ar[d]^{\Phi^+(s_{i+1})} & \tau_i \ar[d]^{s_i}  \\
                      \Phi^+(\tau_{i+1}^*) \ar[r]^{\iota^*}       & \tau_i^*   },
\]
where $\iota$ (resp. $\iota^*$) is the adjunction map to the identity morphism from $\tau_{i+1}=\Phi^-(\tau_i)$ to $\Phi^-(\tau_i)$ (resp. from $\tau_{i+1}^*=\Phi^-(\tau_i^*)$ to $\Phi^-(\tau_i^*)$).
\end{lemma}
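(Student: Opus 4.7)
\begin{proof}[Proof proposal]
The plan is to recognize the diagram as an instance of the naturality of the counit of the adjunction $(\Phi^+, \Phi^-)$. Recall from the preceding discussion that $\iota: \Phi^+\circ \Phi^-(\tau_i) \to \tau_i$ is the adjoint of the identity morphism on $\Phi^-(\tau_i) = \tau_{i+1}$, and similarly for $\iota^*$. In other words, $\iota$ and $\iota^*$ are the counit maps of the adjunction at the objects $\tau_i$ and $\tau_i^*$ respectively.

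First I would observe that by definition
\[
   s_{i+1} = (\Phi^-)^{i+1}(\widetilde{\mathrm{sp}}) = \Phi^-\bigl((\Phi^-)^{i}(\widetilde{\mathrm{sp}})\bigr) = \Phi^-(s_i).
\]
Thus the left vertical arrow $\Phi^+(s_{i+1})$ of the diagram equals $\Phi^+\circ \Phi^-(s_i)$, which is precisely the functor $\Phi^+\circ \Phi^-$ applied to the morphism $s_i: \tau_i \to \tau_i^*$. Then by naturality of the counit, applied to the morphism $s_i$, we obtain the commutative square
\[
 \xymatrix{ \Phi^+\circ\Phi^-(\tau_i) \ar[r]^-{\iota} \ar[d]_-{\Phi^+\Phi^-(s_i)} & \tau_i \ar[d]^-{s_i} \\
            \Phi^+\circ\Phi^-(\tau_i^*) \ar[r]^-{\iota^*} & \tau_i^* }
\]
which is exactly the diagram in the statement after rewriting $\Phi^+\circ\Phi^-(\tau_i) = \Phi^+(\tau_{i+1})$ and $\Phi^+\Phi^-(s_i) = \Phi^+(s_{i+1})$.

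If one wishes to avoid quoting naturality as a black box, the same identity can be unwound from the explicit description of $\iota$ recalled just above the lemma: using $\Phi^-\circ \Phi^+ \cong \mathrm{id}$, the map $\iota$ is the adjoint of $\mathrm{id}_{\tau_{i+1}}$, and one checks that both $s_i\circ \iota$ and $\iota^*\circ \Phi^+(s_{i+1})$ are the adjoints of the same morphism $s_{i+1}: \tau_{i+1} \to \tau_{i+1}^*$ under the adjunction $\mathrm{Hom}(\Phi^+(\tau_{i+1}), \tau_i^*) \cong \mathrm{Hom}(\tau_{i+1}, \Phi^-(\tau_i^*))$. I do not anticipate a genuine obstacle here; the only mild care is to make sure the identification $s_{i+1} = \Phi^-(s_i)$ is compatible with the identifications $\Phi^-\Phi^+(\pi) \cong \pi$ used when unpacking the definition of $\iota$, which is straightforward from \cite[Proposition 5.12]{BZ76}.
\end{proof}

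\begin{remark}\label{rmk special parameters step by step}
Iterating the commuting square of the lemma layer by layer shows that the specialization map $\widetilde{\mathrm{sp}}$ carries the Bernstein--Zelevinsky filtration $\Lambda_i(\pi_u)$ onto $\Lambda_i(\pi_{u^*})$, which is the property used in the proof of Lemma \ref{lem construct branching from deformation}. Surjectivity on each layer follows by combining Lemma \ref{lem surjective of sp} with the exactness of $\Phi^{\pm}$.
\end{remark}
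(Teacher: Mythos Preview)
Your proposal is correct and is essentially the paper's own argument, phrased in more categorical language: recognizing $\iota$ and $\iota^*$ as counits and invoking naturality is exactly what the paper verifies by hand when it applies $\Phi^-$ to both compositions $s_i\circ\iota$ and $\iota^*\circ\Phi^+(s_{i+1})$ and checks each equals $s_{i+1}$. Your ``unwinding'' paragraph is literally the paper's proof, so there is no substantive difference.
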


\begin{proof}
%We consider
%\[  \mathrm{Hom}_{M_{n-i}}(\Phi^+(\tau_{i+1}), \tau_i) \rightarrow \mathrm{Hom}_{M_{n-i}}(\Phi^+(\tau_{i+1}), \tau_i^*) .\]
%Taking the adjunction $\mathrm{Hom}_{M_{n-i}}(\Phi^+(.),.) \cong \mathrm{Hom}_{M_{n-Wei-1}}(.,\Phi^-(.))$, the adjunction for $s_i \circ \iota$ is $\Phi^-(s_i)\circ \mathrm{Id}=\Phi^-(s_i)$. 

We first have:
\[ \Phi^-(s_i\circ \iota)=\Phi^-(s_i)\circ \Phi^-(\iota)=\Phi^-(s_i)\circ \mathrm{id}=s_{i+1},
\]
where the first equation follows from the functoriality of $\Phi^-$, the second equation follows from $\Phi^-(\iota)=\mathrm{id}$ and the third one follows from the definitions of $s_i$ and $s_{i+1}$.

Similarly, we have
\[  \Phi^-(\iota^*\circ \Phi^+(s_{i+1}))=\Phi^-(\iota^*)\circ (\Phi^-\circ \Phi^+)(s_{i+1})=\mathrm{Id}\circ s_{i+1}=s_{i+1} .
\]
Thus, taking back the adjointness, we have the commutative diagram.
\end{proof}

\begin{proposition} \label{prop preserve bz filtration}
Denote the Bernstein-Zelevinsky filtration for $\pi_u$ by
\[                         0 \subset     \lambda_n    \subset  \ldots  \subset  \lambda_1  \subset \lambda_0=\pi_u
\]
and the Bernstein-Zelevinsky filtration for $\pi_{u^*}$ by
\[   0 \subset \lambda_n^* \subset \ldots \subset \lambda_1^*\subset \lambda_0^*=\pi_{u^*} .
\]
Here $\lambda_i$ is the submodule given by the natural embedding from $\Lambda_i(\pi_u)$ to $\pi_u$, and $\lambda_i^*$ is the submodule given by the natural embedding from $\Lambda_i(\pi_{u^*})$ to $\pi_{u^*}$. Then $\widetilde{\mathrm{sp}}(\lambda_i)=\lambda_i^*$.
\end{proposition}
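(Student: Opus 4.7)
The plan is to iterate Lemma \ref{lem commutative with specialization} through the full Bernstein--Zelevinsky filtration, and then combine the resulting commutative diagram with the exactness of the functors $\Phi^+$ and $\Phi^-$ and the surjectivity of $\widetilde{\mathrm{sp}}$ from Lemma \ref{lem surjective of sp}.

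First I would introduce the notation $\tau_j = (\Phi^-)^j(\pi_u)$, $\tau_j^* = (\Phi^-)^j(\pi_{u^*})$, and $s_j = (\Phi^-)^j(\widetilde{\mathrm{sp}})\colon \tau_j \to \tau_j^*$. For each $i \geq 1$, Lemma \ref{lem commutative with specialization} produces a commutative square with horizontal edges the BZ adjunction maps $\iota\colon \Phi^+(\tau_i) \to \tau_{i-1}$ and $\iota^*\colon \Phi^+(\tau_i^*) \to \tau_{i-1}^*$ and vertical edges $\Phi^+(s_i)$ and $s_{i-1}$. Since $\Phi^+$ is exact and additive, applying $(\Phi^+)^{i-1}$ to this square yields
\[
\begin{array}{ccc}
\Lambda_i(\pi_u) & \longrightarrow & \Lambda_{i-1}(\pi_u) \\
\downarrow S_i & & \downarrow S_{i-1} \\
\Lambda_i(\pi_{u^*}) & \longrightarrow & \Lambda_{i-1}(\pi_{u^*})
\end{array}
\]
where $S_j := (\Phi^+)^j \circ (\Phi^-)^j(\widetilde{\mathrm{sp}})$ and the horizontal arrows are the embeddings used to define $\lambda_i \subset \lambda_{i-1}$ and $\lambda_i^* \subset \lambda_{i-1}^*$. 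Stacking these squares from level $i$ down to level $0$ (where $S_0 = \widetilde{\mathrm{sp}}$ and $\Lambda_0 = \mathrm{id}$) produces the composite commutative diagram
\[
\begin{array}{ccc}
\Lambda_i(\pi_u) & \hookrightarrow & \pi_u \\
\downarrow S_i & & \downarrow \widetilde{\mathrm{sp}} \\
\Lambda_i(\pi_{u^*}) & \hookrightarrow & \pi_{u^*}
\end{array}
\]
whose horizontal arrows are, by construction, the canonical embeddings with respective images $\lambda_i$ and $\lambda_i^*$. Commutativity gives $\widetilde{\mathrm{sp}}(\lambda_i) \subseteq \lambda_i^*$ for free.

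For the reverse inclusion I would invoke the exactness of $\Phi^+$ and $\Phi^-$ from \cite{BZ77}: since $\widetilde{\mathrm{sp}}$ is surjective by Lemma \ref{lem surjective of sp}, so is each composite $S_i = (\Phi^+)^i \circ (\Phi^-)^i(\widetilde{\mathrm{sp}})$. Thus any element of $\lambda_i^* \subseteq \Lambda_i(\pi_{u^*})$ lifts through $S_i$ to some element of $\Lambda_i(\pi_u)$, and commutativity of the square forces its image in $\pi_{u^*}$ (via the two paths) to coincide with a vector in $\widetilde{\mathrm{sp}}(\lambda_i)$. This yields $\lambda_i^* \subseteq \widetilde{\mathrm{sp}}(\lambda_i)$ and completes the argument.

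The main point that requires care is bookkeeping in the iterated diagram: verifying that the vertical arrow $s_{i-1}$ produced at the target of Lemma \ref{lem commutative with specialization} at level $i$ is the same arrow that serves as $\Phi^+(s_{i-1})$'s input in the next iteration at level $i-1$, and similarly matching $(\Phi^+)^{i-1}$ applied to the lemma's vertical arrows with the maps $S_i, S_{i-1}$. Once this compatibility is verified by unwinding the definitions, the remainder is elementary diagram chasing together with the standard exactness of the BZ functors; I do not expect any further obstacle.
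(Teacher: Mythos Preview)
Your proposal is correct and follows essentially the same approach as the paper: iterate Lemma \ref{lem commutative with specialization} to obtain the commutative square with vertical maps $S_i=(\Phi^+)^i\circ(\Phi^-)^i(\widetilde{\mathrm{sp}})$ and $\widetilde{\mathrm{sp}}$, then use exactness of $\Phi^\pm$ together with Lemma \ref{lem surjective of sp} to conclude that $S_i$ is surjective, forcing $\widetilde{\mathrm{sp}}(\lambda_i)=\lambda_i^*$. The paper's proof is slightly terser (it does not spell out the two inclusions separately), but the content is the same.
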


\begin{proof}
We fix a $i$. Recall that we have the identification:
\[  (\Phi^+)^{i}\circ  (\Phi^-)^{i}(\pi_u) \cong \lambda_i , \quad  (\Phi^+)^{i}\circ  (\Phi^-)^{i}(\pi_{u^*}) \cong \lambda_i^*
\]
Let $t_i=(\Phi^+)^{i}\circ (\Phi^-)^{i}(\mathrm{sp}_{u^*})$. By Lemma \ref{lem commutative with specialization}, we inductively have the following commutative diagram:
\[   \xymatrix{ (\Phi^+)^{i}\circ  (\Phi^-)^{i}(\pi_u) \ar[d]^{t_i}  \ar[r] &      \lambda_i   \ar[d]^{\widetilde{\mathrm{sp}}} \ar@{^{(}->}[r] & \lambda_0 \ar[d]^{\widetilde{\mathrm{sp}}} \\
              (\Phi^+)^{i}\circ  (\Phi^-)^{i}(\pi_{u^*})  \ar[r]        &         \lambda_i^*       \ar@{^{(}->}[r]           & \lambda_0^*   },
\]
where the two horizontal maps are the above isomorphisms. Now, note that $t_i$ is surjective by Lemma \ref{lem surjective of sp} and the exactness of the functors. Thus $\widetilde{\mathrm{sp}}$ on $\lambda_{i}$ is also surjective by using the above commutative diagram.
\end{proof}

\begin{remark} \label{rmk special parameters step by step}
Instead of specializing all the parameters $u_1, \ldots, u_r$ at once, one can also specialize the parameters step-by-step. The above result still holds.
\end{remark}

\section{Appendix B: A short proof for the asymmetry property of left-right BZ derivatives}

With the development of new tools, we give a shorter and more conceptual proof for a result proved in \cite{Ch21}. This also illustrates some powerfulness of those new techniques.

\begin{theorem} \cite{Ch21}
Let $\pi \in \mathrm{Irr}(G_{n+1})$. Let $i^*=\mathrm{lev}(\pi)$. For $i<i^*$ with $\pi^{[i]}\neq 0$ and ${}^{[i]}\pi\neq 0$,
\[  \mathrm{Hom}_{G_{n+1-i}}( \mathrm{cosoc}(\pi^{[i]}), \mathrm{cosoc}({}^{[i]}\pi))=0 .
\]
Here $\mathrm{cosoc}(\pi^{[i]})$ and $\mathrm{cosoc}({}^{[i]}\pi)$ denote the cosocles of $\pi^{[i]}$ and ${}^{[i]}\pi$ respectively.
\end{theorem}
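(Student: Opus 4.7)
The plan is to argue by contradiction, using the hypothesis to manufacture a single branching map carrying two incompatible Bernstein-Zelevinsky layer data. Since both $\mathrm{cosoc}(\pi^{[i]})$ and $\mathrm{cosoc}({}^{[i]}\pi)$ are semisimple, a nonzero element in the Hom space produces an irreducible $\tau \in \mathrm{Irr}(G_{n+1-i})$ which is simultaneously a simple quotient of $\pi^{[i]}$ and of ${}^{[i]}\pi$. I would then choose $\sigma \in \mathrm{Irr}^c(G_{i-1})$ good to $\nu^{\pm 1/2}\cdot\pi$ and to $\tau$, so that $\sigma \times \tau \cong \tau \times \sigma$ is irreducible by \cite{Ze80}.

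Applying \thref{simple quotient branching law} to the simple quotient $\tau$ of $\pi^{[i]}$ would produce a nonzero
\[ f \in \mathrm{Hom}_{G_n}(\pi, \sigma \times \tau) \]
with $\mathcal{L}_{rBL}(f) = i$ and $\mathcal{L}_{lBL}(f) = \mathrm{lev}(\pi) = i^*$. Transporting \thref{simple quotient branching law} through the Gelfand-Kazhdan involution $\theta$, which satisfies $\theta({}^{[i]}\pi) \cong \theta(\pi)^{[i]}$ and exchanges left and right BZ filtrations via ${}_i\Lambda(\pi) = \theta(\Lambda_i(\theta(\pi)))$, I would apply the result to the simple quotient $\theta(\tau)$ of $\theta(\pi)^{[i]}$ and then pull back through $\theta$ to obtain a nonzero
\[ g \in \mathrm{Hom}_{G_n}(\pi, \tau \times \sigma) \]
with $\mathcal{L}_{lBL}(g) = i$ and $\mathcal{L}_{rBL}(g) = i^*$.

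Finally the multiplicity one theorem \cite{AGRS10} forces $\dim\mathrm{Hom}_{G_n}(\pi, \sigma \times \tau) \leq 1$, hence $f$ and $g$ are proportional. This gives $\mathcal{L}_{rBL}(f) = \mathcal{L}_{rBL}(g)$, i.e., $i = i^*$, contradicting the hypothesis $i < i^*$.

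The main potential obstacle is the bookkeeping around the $\theta$-involution, specifically verifying that (i) $\mathcal{L}_{rBL}$ and $\mathcal{L}_{lBL}$ are exchanged when $\theta$ is applied to the ambient modules (this is built into the definitions of the left-right BZ filtrations recalled in Section \ref{ss bz filtration describe}), and (ii) the level is intrinsic to $\pi$, so that $\mathrm{lev}(\theta(\pi)) = \mathrm{lev}(\pi)$ (clear from the equivalent characterizations of level listed just after the definition in Section \ref{ss bz functors}). Apart from these routine verifications, the argument is short and conceptual, invoking only Theorem \ref{thm simple quotient branching law} (BD), the multiplicity-one theorem, and the functoriality of $\theta$; no induction, no combinatorics of multisegments, and no appeal to the refined relevance are needed, which illustrates the promised efficiency of the new tools.
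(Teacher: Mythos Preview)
Your argument is correct and provides a genuinely different route from the paper's. The paper applies \thref{simple quotient branching law} once to a simple quotient $\tau$ of $\pi^{[i]}$ to obtain $\mathcal L_{lBL}(\pi,\tau\times\sigma)=i^*$, and then invokes (the left version of) Corollary~\ref{cor integer for branching law equal layer}---the (BZd) input, built on multiplicity one for \emph{standard} modules and the Ext-vanishing of Lemma~\ref{lem vanishing ext filtration}---to conclude directly that $\mathrm{Hom}_{G_{n+1-i}}({}^{[i]}\pi,(\tau\times\sigma)^{(i-1)})=0$, whence $\mathrm{Hom}({}^{[i]}\pi,\tau)=0$ by a cuspidal-support argument. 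Your approach instead applies \thref{simple quotient branching law} twice, once directly and once through the Gelfand--Kazhdan involution $\theta$, and then uses only multiplicity one for \emph{irreducible} representations \cite{AGRS10} to force $i=i^*$. The trade-off: the paper's proof is shorter and avoids the $\theta$-bookkeeping and the contradiction setup, while yours is lighter in its multiplicity-one input (no standard-module version, no Ext-vanishing) and makes the left--right symmetry of the situation completely transparent. Both illustrate the advertised slogan $(\mathrm{BD})+(\mathrm{BZd})\Rightarrow$ asymmetry, with your version replacing (BZd) by $\theta$-duality plus ordinary multiplicity one.
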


\begin{proof}
Let $\tau$ be a simple quotient of $\pi^{[i]}$ for some $i<i^*$. Then, by Theorem \ref{thm simple quotient branching law}, we can choose a cuspidal representation $\sigma$ of $G_{i-1}$ good to $\nu^{1/2}\pi$ and $\tau$ so that $\mathrm{Hom}_{G_n}(\pi, \tau \times \sigma)\neq 0$ and $\mathcal L_{lBL}(\pi, \tau \times \sigma)=i^*$. By Corollary \ref{cor integer for branching law equal layer}, $\mathrm{Hom}_{G_{n+1-i}}({}^{[i]}\pi, (\tau\times \sigma)^{(i-1)}) = 0$. A cuspidal condition constraint then also gives that 
\[  \mathrm{Hom}_{G_{n+1-i}}({}^{[i]}\pi, \tau)= 0. 
\]
\end{proof}

%%%%%%%
%\begin{example}
%We consider a $P_2$-representation $\pi=C^{\infty}(F)$ given by:
%\[  (\begin{pmatrix} a & m \\ & 1 \end{pmatrix}.f)(x) =f(x+m) .\]
%On the other hand, we have another $P_2$-representation $\hat{\pi}=C^{\infty}(F)$ given by:
%\[  (\begin{pmatrix} a & m \\ & 1 \end{pmatrix}.f)(y) =\nu(a) \psi(-\frac{m}{a}y) f(ay)\]
%Then, applying the Fourier transform, we have:
%\[  \hat{f}(y) = \int_F \psi(xy)f(x)~ dx .\]

%Let $\mathcal S$ be the subspace of $C^{\infty}(F)$ containing all functions $f(y)$ vanishing at $y=0$. 

%\end{example}
%%%%%%%%%

%Then for $i < l_a(\mathfrak m)$, $\mathrm{csupp}(\pi^{[i]})$ has an element not in $\mathrm{csupp}(\pi')$. Thus, we have that, for $i < l_a(\mathfrak m)$,
%\[  \mathrm{Ext}^j_{G_n}(\Sigma_i(\pi), \pi')= \mathrm{Ext}^j_{G_{n+1-i}}(\pi^{[i]} , {}^{(i-1)}\pi') =0\]
%for all $i$, by using a cuspidal support condition. On the other hand, by the strong commutation, we have that: set $i^*=l_a(\mathfrak m)$ and so
%\[ \mathrm{Hom}_{G_n}(\Sigma_{i^*}, \pi') \cong \mathrm{Hom}_{G_{n+1-i^*}}(\pi^{[i^*]}, {}^{(i-1)}\pi') \neq 0 . \]
%Thus a standard argument gives this basic case. 

\end{document}